\newtheorem{thm}{Theorem}[section]
\newtheorem{cor}[thm]{Corollary}
\newtheorem{lem}[thm]{Lemma}
\newtheorem{prop}[thm]{Proposition}
\theoremstyle{definition}
\newtheorem{defn}[thm]{Definition}
\newtheorem{conj}{Conjecture} 
\newtheorem{ex}[thm]{Examples}
\newtheorem{example}[thm]{Example}
\theoremstyle{remark}
\newtheorem{rem}[thm]{Remark}
\numberwithin{equation}{section}
\newcommand{\Z}{\mathbb Z}
\newcommand{\p}{\mathfrak{p}}
\newcommand{\R}{\mathbb R}
\newcommand{\Pro}{\mathbb P}
\newcommand{\xx}{\mathsf{x}}
\newcommand{\gr}{\mathrm{gr}}
\newcommand{\sss}{\mathsf{s}}
\font \rus= wncyr10
\newcommand{\sha}{\, \hbox{\rus x} \,}
\newcommand{\Ho}{\mathcal{H}}
\newcommand{\MT}{\mathcal{MT}}
\newcommand{\PP}{\mathfrak{p} }
\newcommand{\ee}{\mathfrak{e} }
\newcommand{\ff}{\mathfrak{f} }
\newcommand{\hh}{\mathfrak{h} }
\newcommand{\gd}{\mathfrak{dg}^{\mathfrak{m}}}
\newcommand{\gm}{\mathfrak{g}^{\mathfrak{m}}}
\newcommand{\RR}{\mathfrak{R} }
\newcommand{\rr}{\overline{\rho} }
\newcommand{\Ss}{\mathsf{P} }
\newcommand{\sfe}{\mathsf{e} }
\newcommand{\sfo}{\mathsf{o} }
\newcommand{\Sf}{\mathsf{P}^{\sfe} }
\newcommand{\So}{\mathsf{P}^{\sfo} }
\newcommand{\sh}{\sigma^\mathfrak{h} }
\newcommand{\tauh}{\tau^\mathfrak{h}}
\newcommand{\qq}{\mathfrak{q} }
\newcommand{\eact}{\circledast}
\newcommand{\Der}{\mathsf{d}}
\newcommand{\ls}{\mathfrak{ls}}
\newcommand{\zetam}{\zeta^{ \mathfrak{m}}}
\newcommand{\stu}{*}
\newcommand{\Deltat}{\Delta_{\stu}}
\newcommand{\y}{\mathsf{y}}
\newcommand{\studot}{\underline{\cdot}\,}
\newcommand{\Osha}{{}^{\sha} \! \Or}
\newcommand{\Ostu}{{}^{\star} \Or}
\newcommand{\Q}{\mathbb Q}
\newcommand{\Lo}{\mathcal{L}}
\newcommand{\U}{\mathcal{U}}
\newcommand{\F}{\mathbb F}
\newcommand{\To}{\longrightarrow}
\newcommand{\A}{\mathbb{A}}
\newcommand{\x}{\mathsf{x}}
\newcommand{\pp}{\mathfrak{p}}
\newcommand{\tone}{\overset{\rightarrow}{1}\!}
\newcommand{\Or}{\mathcal{O}}
\newcommand{\g}{\mathfrak{g}}
\newcommand{\circb}{\, \underline{\circ}\, }
\newcommand{\dd}{\mathfrak{D}}
\newcommand{\Res}{\mathrm{Res}}
\newcommand{\V}{\mathcal{V}}
\newcommand{\sll}{\mathfrak{sl} }
\newcommand{\eis}{\mathfrak{u}^{\varepsilon} }
\newcommand{\eiso}{\mathfrak{u}^{\varepsilon} _0 }
\newcommand{\dmr}{\mathfrak{dmr} }
\newcommand{\eee}{\mathsf{e} }
\newcommand{\e}{\mathbf{e}}
\newcommand{\Lie}{\mathrm{Lie}\,}
\newcommand{\ad}{\mathrm{ad}}
\begin{document}
\author{Francis Brown}
\begin{title}[Anatomy of an associator]{Anatomy of an associator}\end{title}
\maketitle
\begin{abstract} 
We study some  Lie algebras defined by   solutions to the double shuffle equations with poles and 
 construct   families of  explicit solutions to these equations in all  weights and depths. These provide universal coordinates in which to write down  `zeta elements':  the images of  generators  of the Lie algebra of the motivic Galois group of mixed Tate motives over the integers. We expect that a similar statement holds for associators.  In particular, these coordinates  encode algebraic relations between multiple zeta values, and enable one to compress the currently used tables for relations between multiple zeta values in, for example, weights $\leq 13$, already by a factor of a thousand. 
The Lie algebras and groups studied here  form part of  a large algebraic structue which  is related to the work of Ecalle on the calculus of moulds, and also  related to the theory of  universal mixed elliptic motives, and modular forms for the full modular group.
  \end{abstract}
\section{Introduction}

\subsection{Remarks added in September 2017}
These are private notes  justifying the theorems announced in a talk at the IHES on the 5th December 2012, during the conference `Amplitudes and periods' and were written at that time. After distributing these notes to an increasing number of colleagues over the intervening years, and never finding the time to  completely rewrite  them to my satisfaction,  I finally decided to make them   available even if the `end' of this story is not even remotely in sight.

The impetus for this project was the following observation: in \cite{Depth}, we associated to any  normalised even period polynomial  a  solution to the linearised double shuffle equations in depth four. 
Applying this  to  non-normalised period polynomials  gave explicit solutions to the same equations \emph{with poles}. This led to the following questions: do there exist explicit solutions to the  full double shuffle equations with poles?  Can one use them to construct  rational associators? Astonishingly, the answer to both questions seems to be `yes'.  In my talk,  I described explicit elements $\psi_{2n+1}$ of weight $2n+1$ for every $n\geq 1$ which correspond to the odd zeta values $\zeta(2n+1)$, and a further element $\psi_{-1}$ of weight $-1$, which solve the double shuffle equations.  The space of solutions to the double shuffle equations with poles forms a Lie algebra which we call $\p\dmr$ in these notes, and the elements $\psi$ generate  a Lie subalgebra $\Lo$ of $\p \dmr$ with some special properties.  The definitions of  these objects  and proofs of  these  statements are given in the present notes.
Furthemore, I  conjectured that any zeta elements $\sigma_{2n+1}$, $n\geq 1$, which are images of generators of the Lie algebra of the motivic Galois group of mixed Tate motives over the integers,  can be expressed as Lie brackets of the $\psi$'s. I called such an expression, which encodes the arithmetic of relations between multiple zeta values,   its `anatomy'.   Furthermore, I explained how to extend the theory, at least in small depths ($\leq 4$), to torsors over the motivic Galois group, which are conjecturally the space of rational associators $\tau$.  In a similar way, therefore,  I expect that any rational associator $\tau$ should also admit a kind of `Taylor expansion' in terms of the Lie algebra $\Lo$ which encodes its internal and arithmetic structure. This was the justification for the title of the talk and hence this paper.  However, since this part of the story was subsequently published in \cite{Sigma} \S7, it has been removed from the present notes, so a more appropriate title  might  have been `anatomy of motivic zeta elements'. 
After giving my talk,  I learned  that a similar project had been undertaken by Ecalle using his language of moulds some years before, but with some notable  differences. It would be interesting to compare the approach described here with his and to see how they might  agree or differ.

The project took an unexpected turn when  I subsequently found a \emph{different} family of   canonical elements $\chi_{2n+1} \in \p\dmr$,  which are also solutions to the double shuffle equations to all depths, but constructed by a completely different method. These elements are  different from the $\psi$ family of solutions described above: they satisfy relations associated to cusp forms for the full modular group, and   have a quite different pole structure. In particular,  the elements $\chi_{2n+1}$ and $\psi_{2n+1}$ differ starting from depth 3, but strangely, $\chi_{-1}$ and $\psi_{-1}$ differ  starting from  depth \emph{five}. The elements $\chi$ are constructed out of a single  exceptional element  $\psi_0 \in \p \dmr$ in weight zero which again is possibly not  unique.   Using such an element, one can obtain an unconditional `anatomical' decomposition of zeta elements to all orders.

The statements of these facts are given in these notes, but the not the  proofs, since this aspect of the theory was partially superseded by the paper \cite{Sigma}.  In that paper, I realised how some of this structure could be interpreted geometrically using  the de Rham  fundamental group of the infinitesimal Tate curve. Thus, in small depths at least, a slightly  modified version of the elements $\chi$ (called $\xi$ in \cite{Sigma}), can be understood in terms of the interplay between Grothendieck-Teichm\"uller theory in genus zero and genus one. Whether this geometric interpretation extends to higher depths or not is still open.   Furthermore, I gave a geometric and unconditional interpretation of `anatomy' in  \cite{Sigma} remark 3.8 (where `anatomy' is used in a somewhat different sense from the one used above), which was proved in \cite{MMV} \S20.4.

Although the theory of motivic fundamental groups of curves in  genus $0$ and $1$ clarifies  a \emph{part} of the structure of $\p \dmr$, it  muddies the waters since  many of the structures and phenomena described in the present (older) notes  remain unexplained, and are all the more mysterious for it.
This is the reason why  I decided finally   to make them available.

In conclusion, therefore,  the Lie algebra $\p \dmr$ is a very rich  algebraic object which knows about the action of the motivic Galois group of $\MT(\Z)$ on fundamental groups of curves in both genus zero and one, but  also  seems to contain a considerable amount of further structure, whose geometric interpretation is completely unkown. These notes do not come close to a complete description of $\p \dmr$.
It would seem, with hindsight, that the different algebraic structures which coinhabit $\p \dmr$ should be singled out by placing further restrictions on the structure of the poles.

\subsection{The motivic Lie algebra}

Let $\MT(\Z)$ be the Tannakian category of mixed Tate motives over $\Z$. Our starting point is the fact   \cite{BrMTZ}  that its motivic Galois group, with respect to the de Rham fiber functor,  acts faithfully on the de Rham fundamental
groupoid of the projective line minus three points:
\begin{equation} \label{introGinjects}
\mathrm{Gal}( \MT(\Z) , \omega_{dR}) \hookrightarrow \mathrm{Aut}(\pi_1^{dR} ( \Pro^1 \backslash \{0,1,\infty\}),\tone_0,-\tone_1)\ .
\end{equation} 
The group on the left is a semi-direct product of  the multiplicative group  with a  prounipotent affine group scheme, whose  graded Lie algebra  $\gm$ will be called  the \emph{motivic Lie algebra}.
One knows from the theory of mixed Tate motives that
\begin{equation} \label{introgmfree}
\gm \cong \mathrm{Free}\, \Lie_{\!\Q} \langle \sigma_{3}, \sigma_5, \ldots \rangle\ .
\end{equation}Ê
It is the free graded Lie algebra with one generator $\sigma_{2n+1}$ in every (positive, in these notes)  odd degree $2n+1$, for $n\geq 1$.  The elements $\sigma_{2n+1}$ are only well-defined up to commutators.  
 The injectivity of $(\ref{introGinjects})$ gives rise to a canonical  embedding
 \begin{equation} \label{introgmintoT}
 \gm \hookrightarrow T( e_0 \Q \oplus e_1 \Q)
 \end{equation}
 in the tensor algebra on  $e_0, e_1$,  
 whose image is contained in the free graded Lie algebra generated by $e_0,e_1$. 
 The $\sigma_{2n+1}$ satisfy 
$$\sigma_{2n+1} = \ad(e_0)^{2n} e_1+   \hbox{ higher order terms} \ .$$
 Hereafter,  we shall always identify  $\gm$ with its image under the map   $ (\ref{introgmintoT})$.
  Although one can define
 canonical choices of generators $\sigma_{2n+1}$ of $\gm$ using motivic multiple zeta values (see \S\ref{sectCanRatAssoc}), 
 the image of the map $(\ref{introgmintoT})$ is very poorly understood from an  arithmetic point of view.

  In order to get to grips with $\gm$, we  use the standard relations for multiple zeta values. 
 There are three main types: the motivic relations, associator relations, and double shuffle relations. The  sets of solutions to these equations are related   
 as  follows:
 \begin{equation}\label{associnclusions}
\{\hbox{Motivic Associators}\} \subset \{ \hbox{Drinfeld Associators}\} \subset \{\hbox{Solutions to Dsh}\}\ ,
\end{equation}
where Dsh refers to the regularised double shuffle equations (see below). The second inclusion is a theorem due to Furusho \cite{F} which states that 
the coefficients of  Drinfeld associators  satisfy  regularised double shuffle equations. 
 The set of motivic associators is defined by the motivic relations between multiple zeta values, which are not known explicitly. In these notes we work with the double shuffle equations as a means of controlling $\gm$, because they are the most explicit, and adapted to the depth filtration.
 
  The above sets are in fact affine schemes which are  torsors over certain proalgebraic groups.   Passing to their underlying graded Lie algebras, we obtain the inclusions
  \begin{equation} 
   \gm \subset \mathfrak{grt} \subset \dmr 
   \end{equation}
 where $\mathfrak{grt}$ is the Grothendieck-Teichm\"uller Lie algebra, and 
 $\dmr$ is the Lie algebra of solutions to the double shuffle equations modulo products defined by Racinet \cite{R} (which he denotes $\dmr_0$. We drop the subscript $0$ for convenience).
 A conjecture due to Drinfel'd is equivalent to  the statement $\gm = \mathfrak{grt}$, and a conjecture due to Zagier, which states that all relations between multiple zeta 
 values are implied by double shuffle relations,  suggests  that  $\gm = \dmr$. This would  imply that   $(\ref{associnclusions})$ are all
 equalities.
 It is important to point out  that knowing these equalities would actually be  of very little help in  understanding the structure of $\gm$: for example the equations defining $\dmr$ cannot presently be solved in weight 30 since they are far beyond the reach of present computer algebra systems.  The goal of these notes is to address precisely this problem by constructing the \emph{solutions} to these equations.
 
 \subsection{Polar solutions}
 By a well-known trick of replacing non-commutative formal power series with power series in commuting variables, one can rewrite the defining equations of $\dmr$ as functional equations for certain sequences of polynomials. 
 We define $\p \dmr$, the Lie algebra  of \emph{polar solutions to the double shuffle equations modulo products} to be the set of solutions to these equations in a certain space of rational functions. This idea has previously been exploited in the work of Ecalle \cite{Ecalle1, Ecalle2, Ecalle3}.
 
 The Lie algebra   $\dmr$, which contains the  motivic Lie algebra $\gm$, can 
 be retrieved from $\p \dmr$ (it is not quite true that $\dmr$ is contained in the subspace of elements in $\p \dmr$ with no poles, because, for example, the zeta elements  $\sigma_{2n+1}$ satisfy the double shuffle equations modulo products only in depths $1\leq d \leq 2n$ and not in depth $2n+1$).
 
 Note that there is no obvious way of interpolating between the zeta elements $\sigma_{2n+1}$ for different $n$. However, we show:
 
 \begin{thm} There exist explicit elements $\psi_{2n+1} \in \p \dmr$ of every degree $2n+1$.
  \end{thm}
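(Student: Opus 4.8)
The plan is to recast the problem in the commuting-variable (``mould'') form of the double shuffle equations referred to above. Following Racinet \cite{R} and Ecalle \cite{Ecalle1,Ecalle2,Ecalle3}, an element of $\dmr$ of weight $N$ is encoded by a sequence $\phi=(\phi_d)_{d\ge 1}$, where $\phi_d(x_1,\dots,x_d)$ is the generating series of the depth-$d$ coefficients of the associated Lie element; since we work modulo products the defining equations become linear, and $\phi$ lies in $\dmr$ exactly when it is \emph{bialternal} in Ecalle's sense, i.e.\ alternal for the first shuffle and, after the involution exchanging the two shuffles, alternal again for the second (stuffle) shuffle, subject to the usual regularisation at the origin. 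The Lie algebra $\p\dmr$ is obtained by keeping the very same functional equations but allowing each $\phi_d$ to be a rational function whose denominator is a product of the permitted linear forms in the $x_i$, with the prescribed growth at infinity. So the theorem amounts to producing, for each $n\ge 1$, one explicit bialternal sequence of rational functions of the homogeneity type corresponding to weight $2n+1$.

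First I would write down the Ansatz. In depth $1$ it is forced and in depth $2$ it is the natural choice: in depth $1$ one takes (up to normalisation) the monomial of degree $2n$, the generating series of $\mathrm{ad}(e_0)^{2n}e_1$, so that $\psi_{2n+1}$ ``corresponds to $\zeta(2n+1)$''; in depth $2$ one takes the rational solution of the linearised equations built in \cite{Depth} from the \emph{non-normalised} period polynomial of weight $2n+2$, which is precisely where poles become unavoidable. For general $d$ I would take $\phi_d$ to be an explicit finite signed sum of products of the elementary kernels $\bigl(x_i+x_{i+1}+\dots+x_j\bigr)^{-1}$ against a monomial numerator, the summands and signs being dictated by the requirement that the first shuffle relation hold. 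Verifying that relation is then the first step of the proof proper: in these coordinates it is a linear ``alternality'' identity (a signed sum over shuffles, equivalently a sum of residues), and for an Ansatz built from these kernels it reduces to a telescoping partial-fraction identity, which I would prove by induction on $d$, the base cases $d=1,2$ being the input just described.

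The crux is the second (stuffle) functional equation. Under the change of variables conjugating the two shuffles, the elementary pole loci $x_i+\dots+x_j=0$ are not permuted among themselves, so the cancellations needed for alternality on the stuffle side are genuinely nontrivial: this is exactly the ``bialternality'' difficulty isolated by Ecalle for rational moulds. I expect to resolve it in one of two ways: either by recognising $\phi$ as the weight-$(2n+1)$ graded component of a single group-like (``symmetral/symmetril'') bimould of Ecalle's $\mathrm{pal}/\mathrm{pil}$ type, whose two symmetries can be established independently and then passed to the Lie (logarithmic) level; or by an induction on depth in which, granting the stuffle relation through depth $d-1$, the depth-$d$ obstruction — an honest constraint in the polynomial world — is shown to be absorbed by the extra rational functions that the allowed poles make available (this is the structural reason $\sigma_{2n+1}$, which has \emph{no} poles, fails the double shuffle equations in depth $2n+1$, whereas $\psi_{2n+1}$ does not). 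Once both shuffle relations are in hand, it remains to check the bookkeeping: that the chosen $\phi_d$ have denominators only of the permitted shape, the correct homogeneity degree in weight $2n+1$ and depth $d$, and the prescribed behaviour at infinity, so that $\phi$ genuinely defines an element $\psi_{2n+1}\in\p\dmr$, and that this element is nonzero. Following the closed-form Ansatz through all of this is what makes the resulting elements explicit; the stuffle verification is the one real obstacle, everything else being manipulation of partial fractions and degrees.
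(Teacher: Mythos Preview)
Your outline has the right general shape---encode by depth components, verify the two shuffle systems as linear functional equations in rational functions---but it diverges from what actually works in several concrete places, and the core mechanism of the paper's proof is absent.

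First, two factual slips. The depth-$2$ component $\psi_{2n+1}^{(2)}$ is a \emph{polynomial}, not a polar object built from a non-normalised period polynomial; it equals $\sigma_{2n+1}^{(2)}$, and poles first appear in depth $3$. Also, the allowed pole locus in $\p\dmr$ is along $x_i=0$ and $x_i=x_j$ (the ring $\Or_d$), not along consecutive sums $x_i+\dots+x_j=0$; the latter only arises after applying $\sharp$.

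More seriously, your plan treats the stuffle verification as either ``recognise the whole thing as the log of a single symmetral/symmetril object'' or ``induct on depth and absorb the obstruction into the extra rational freedom.'' Neither is how the proof goes, and the second option is precisely the hard step you would still owe. The paper's method is structural rather than inductive: one writes down the explicit closed formula for $\psi_{2n+1}^{(d)}$ (Definition~\ref{defnpsi2n+1}) and then gives \emph{two different} three-term decompositions,
\[
\psi_{2n+1}=\tfrac{1}{2}(A+B+C) \quad\text{for stuffle},\qquad \psi_{2n+1}=\tfrac{1}{2}(D+E+F) \quad\text{for shuffle},
\]
where each summand is \emph{separately} primitive for the relevant coproduct. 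Primitivity is obtained not by telescoping residues but by Hopf-algebraic devices: $A$ is a canonical ``lift'' $\widetilde{x_1^{2n}}$ of a depth-$1$ solution (Lemma~\ref{propD1isstuff}), $B$ is a conjugate $(\tau A)^\star$ by the group-like element $\exp_\star(\nu)$, and $C$ is a stuffle bracket of known primitives; on the shuffle side $D,E$ are conjugates of a depth-$1$ primitive by the group-like generating series of grapes $G$, and $F$ is obtained from $D$ by a flip lemma. The point is that one never solves a depth-$d$ obstruction equation; one manufactures primitives directly.

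So the genuine gap is that you have not produced the Ansatz (which is the ``explicit'' content of the theorem) and have not identified the mechanism---distinct decompositions into Hopf-algebraically primitive pieces---that makes both verifications tractable. Without that, ``the obstruction is absorbed by the extra rational functions'' is a hope, not an argument.
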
 
 
 These elements  are defined by a sequence of rational functions 
 $$\psi^{(d)}_{2n+1} \in \Q(x_1,\ldots, x_d)$$
  defined by a  closed  formula (definition \ref{defnpsi2n+1}) which is uniform in $n$ and $d$.   To say that these rational functions lie in $\p \dmr$ is equivalent to an infinite sequence of functional relations. Despite their apparent simplicity, it is    highly-non trivial to find such  a family of elements since at each order $d$  their extension to the next order $d+1$ is not unique, and could possibly be obstructed if the wrong choices have  been made at  previous steps.
 
  The elements $\psi_{2n+1}$ are dual to the numbers $\zeta(2n+1)$  and coincide  with the $\sigma_{2n+1}$ to lowest orders (depths 1 and 2). Furthermore, we show 
 
 \begin{thm} There exists an explicit element $\psi_{-1} \in \p \dmr$ in degree $-1$.  
 \end{thm}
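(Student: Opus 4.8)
The plan is to define $\psi_{-1}$ by the same closed formula that produces the family $\psi_{2n+1}$ of Definition~\ref{defnpsi2n+1}, specialised to the boundary value $n=-1$, and then to show that the double shuffle relations with poles persist under this specialisation. Concretely, I would first examine the rational functions $\psi^{(d)}_{2n+1}\in\Q(x_1,\ldots,x_d)$ and check that their dependence on $n$ is through expressions regular at $n=-1$ (a rational function of $n$ with no pole there, or binomial-type coefficients that remain well defined), so that
\[
\psi^{(d)}_{-1}\ :=\ \bigl(\psi^{(d)}_{2n+1}\bigr)\big|_{n=-1}\ \in\ \Q(x_1,\ldots,x_d)
\]
is a well-defined sequence of rational functions, uniform in $d$. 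The degree count (a depth-$d$, degree $-1$ element is homogeneous of total degree $-1-d$ in the $x_i$) then forces it to be genuinely polar; for instance its depth-one component should be $x_1^{-2}$ up to a scalar, the specialisation of $x_1^{2n}$.

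Next I would verify that this sequence lies in $\p\dmr$. Recall that $\p\dmr$ is cut out, in the formulation that replaces non-commutative power series by power series in the commuting $x_i$, by the shuffle (first) and the stuffle-modulo-products (second) families of functional equations. For the family $\psi_{2n+1}$ each of these is an identity between rational functions in the $x_i$, and I would check that in every depth its two sides are rational in $n$ with no pole at $n=-1$; then, since the identity holds for all integers $n\geq1$ and a rational identity in $n$ valid for infinitely many values holds identically, it remains valid at $n=-1$. This is exactly the statement that $\psi^{(d)}_{-1}$ solves the double shuffle equations with poles in every depth $d$.

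The main obstacle is that the uniformity in $n$ degenerates precisely at the small-weight end, and three points need care. First, with negative weight there is no analytic or iterated-integral avatar of the relations, so one must work with the purely algebraic form of each functional equation rather than any derivation that passes through convergence. Second, the stuffle relations are modulo products and for $\sigma_{2n+1}$ the double shuffle equations fail in depth $2n+1$; at weight $-1$ there is no such exceptional depth, so one must confirm that the relations hold in \emph{every} depth with no omission. Third, and I expect this to be the crux, the shuffle and stuffle operators involve substitutions such as the shifts $x_i\mapsto x_i-x_{i+1}$ and restrictions to the corner loci $x_i=0$ or $x_i=x_{i+1}$; since $\psi^{(d)}_{-1}$ has genuine poles, these substitutions can create denominators whose cancellation is no longer automatic and must be checked. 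I would control this either by a direct computation in depths $1$ and $2$ as a base case followed by the same depth-by-depth extension already used for $\psi_{2n+1}$ — the observation that such extensions are not unique and may be obstructed signals that induction on depth is the natural frame — or, more cleanly, by recognising that the closed formula packages the whole shuffle/stuffle verification into a single generating-function identity that is manifestly rational, hence regular, in $n$ on a Zariski-dense set of weights, so that evaluation at $n=-1$ is legitimate in one stroke.
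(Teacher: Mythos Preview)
Your specialisation $n\mapsto -1$ in Definition~\ref{defnpsi2n+1} does produce a weight~$-1$ element of $\p\dmr$; the paper itself remarks this (Remark~\ref{rempsi1notunique}). But your proposed \emph{argument} for why contains a genuine gap, and in any case the element you obtain is not the one the paper constructs.

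\medskip
\noindent\textbf{The gap.} The dependence of $\psi^{(d)}_{2n+1}$ on $n$ is through factors such as $x_d^{2n}$ and $(x_i-x_{i-1})^{2n}$, which are \emph{exponential} in $n$, not rational. An identity between sums of such terms that holds for every integer $n\geq 1$ cannot be propagated to $n=-1$ by Zariski density in the $n$-line; your interpolation step simply does not apply. What does work, and what the Part~III proofs are set up to exploit, is linearity in the \emph{seed function} rather than in $n$: the decompositions $\Psi_{2n+1}=\tfrac{1}{2}(A+B+C)$ and $\tfrac{1}{2}(D+E+F)$ of \S\ref{sectABCdef} and the shuffle section are built from an arbitrary depth-one datum $f(x_1)$ in place of $x_1^{2n}$ (Lemma~\ref{propD1isstuff} is stated for Laurent series, Lemma~\ref{orbitlemma} for any $m\in\Z$, and so on), and the double shuffle identities are linear functional relations of finite type in $f$. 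The density principle of \S\ref{trivialdensity} then carries them from polynomial $f$ to rational $f$, in particular to $f(x_1)=x_1^{-2}$. That is the correct extension mechanism; it is logically distinct from, and not a repair of, your rationality-in-$n$ argument.

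\medskip
\noindent\textbf{The difference from the paper.} The paper defines $\psi_{-1}$ by an entirely separate construction via the Hopf algebra of vineyards (Definition~\ref{Psi-1def}). Its shuffle equations follow because the vineyard logarithm is primitive and the realisation map $p:\mathcal{V}\to\Osha$ is a Hopf morphism (Proposition~\ref{propvinestoosha}); its stuffle equations are proved by decomposing $\psi_{-1}$ into pieces $P_n$ indexed by vine height, handling $P_1$ directly (Lemma~\ref{lemP1isstuff}), and running the recursion~\eqref{cnasdifferenceofQs}. The payoff is structural: the vineyard $\psi_{-1}$ has only \emph{simple} poles everywhere except for a single double pole along $x_d=0$, and it satisfies the residue pattern of Proposition~\ref{propResstructure}. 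These features are essential inputs to the pole-cancellation machinery (Proposition~\ref{propCancelofpoles}, Theorem~\ref{thmcycpoles}) on which the whole anatomy programme of \S\ref{sectAnatomy} rests. Your specialised element proves bare existence but does not have this pole structure, so it could not play the role of $\psi_{-1}$ in the subsequent theory.
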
 

It is defined in a quite different manner from the $\psi_{2n+1}$ by associating rational functions to  elements in a combinatorial Hopf algebra of trees.
Out of these elements $\psi$ we define a  graded Lie subalgebra of $\p \dmr$ denoted by  
 $$\Lo = \Lie_{\! \Q} \langle \psi_{-1}, \psi_{3}, \psi_{5},\ldots \rangle\ .$$
 The main point is that the generators of $\Lo$, and its Lie bracket, which we denote by  $\{ \,, \, \}$,  are completely explicitly defined.
 We conjecture that every element of $\gm$ can be expressed   as a (truncation of) an element in $\Lo$. 
  The  `anatomy' of the title is  a kind of `Taylor expansion' for elements in $\gm$ in terms of the larger   algebra  $\Lo$. 
  As a practical application, an expansion, or `anatomy', of generators  of the motivic Lie algebra
$$\sigma_{2n+1} =\psi_{2n+1} + \hbox{commutators of } \psi\hbox{'s}$$
  leads to extremely compact representations for the structural coefficients of the space of multiple zeta values, in contrast to  the vast tables  which are currently in use. For example, 
   the first two generators $\sigma_3, \sigma_5$ of $\gm$ can be written
\begin{eqnarray}
\sigma_3  &\equiv & \psi_3   \pmod{D^3}  \nonumber \\
\sigma_5  &\equiv & \psi_5 - {1 \over 60} \{ \psi_{-1},\{  \psi_{-1} , \psi_{7} \}\} -  {1 \over 5} \{ \psi_{3},\{  \psi_{3} , \psi_{-1} \}\} \pmod{D^5} \nonumber 
\end{eqnarray}
where the equivalence sign is modulo terms of depth $\geq$ to the weight. 
Further examples are given in \S\ref{ExamplesAnatomy1}.  Since the $\psi$ are explicit, the coefficients in such decompositions yield information about the  denominators of $\sigma_{2n+1}$.

 In \cite{Sigma}, \S7    we  outlined how the theory can be extended, at least in small depths  ($\leq 3$), to associators $\tau$, which justifies the title of the paper.  An `anatomical' decomposition of an associator $\tau$ encodes arithmetic  data relating to all  even zeta values $\zeta(2n)$, $n\geq 1$. 
 We also verified that  case of depth 4 works similarly, but did  not include it in the present notes.  It would be interesting to extend  this construction  to all higher depths.

\subsection{Weight zero and depth-splitting}
All the elements in $\p \dmr$ described above have \emph{odd} weights. In fact, we show that all elements  in  $\p \dmr$  of non-negative weight necessarily
have odd weight, except for the possible exception of weight zero. 

Therefore,  in \S\ref{sectDepthSplit}, we write down an element 
$$\psi_0   \in \p \dmr$$
of weight zero, which is completely explicit. It is a  generating series of a certain  copy of the Witt algebra
in the space of rational functions, equipped with the Ihara bracket, which acts on a Hopf algebra of rational functions encoded by certain trees. The geometric meaning of all this is mysterious. 
The element $\psi_0$  is again not unique, and it would be interesting to determine  fully the Lie subalgebra of $\p \dmr$ in weight zero. 

By twisting with  the element $\psi_0$, we can split the depth filtration on $\p \dmr$ and therefore lift any solution to the depth-graded, or linearized, double shuffle equations to the full double shuffle equations modulo products. This provides an unconditional way to decompose the elements of the motivic Lie algebra $\gm$ in terms of elements in $\p \dmr$. In particular, we define elements 
$$\chi_{2n+1} \in \p \dmr $$
for all $n\geq 1$, obtained by `lifting' the leading parts of the zeta elements $\sigma_{2n+1}$.  We similarly construct an element $\chi_{-1}$ of weight $-1$. Note that the $\chi_{2n+1}$ are quite different from the elements $\psi$ defined above: they have a more complicated pole structure, and satisfy relations. 
 Furthermore, the anatomical decompositions of zeta elements $\sigma_{2n+1}$ obtained in this manner involve, starting from depth 5, a new element $Q_4 \in \p \dmr$, defined in remark \ref{remQ4}, whose  meaning is also unclear. 
 
 The proofs of these statements are not included in the present document, but are essentially elementary statements about functional equations between rational functions. We expect that they can be obtained using the same techniques used to prove the two theorems described above.

 \subsection{Depth graded motivic Lie algebra}
 Since the depth filtration is used in a fundamental way in this theory, the depth-graded motivic multiple zeta values necessarily play a central role.
 It is expected \cite{Depth} that the latter are closely related to the theory of modular forms of level one. In  these notes we prove a number of properties relating to a Lie algebra $ \p \ls $ of polar solutions to the \emph{linearised double shuffle equations}, some of which, but not all, were subsequently reproduced in \cite{Sigma}, along with a geometric interpretation in terms of the motivic fundamental group of the infinitesimal Tate curve.  We refer to the introduction of that paper for further details.

 We formulate some conjectures about $\p \ls$ which are sufficient to ensure that every element of $\gm$ can indeed be expanded in the Lie algebra $\Lo$. To prove this involves some delicate results about cancellation of poles in $\Lo$, which are given in the third part of these notes.
 Some of the statements involving $\p \ls$ were subsequently reproduced in \cite{Sigma}, and especially its appendix.
 However the amount of overlap between it and the present notes is limited, and the proofs are occasionally different, so we decided to keep the relevant parts of these notes unchanged from their 2012 version.  
\\

{\it Acknowledgements}. This work was undertaken in 2012 and  partially supported by ERC grant PAGAP, ref. 257638.

\section{An aside: a canonical construction of rational associators} \label{sectCanRatAssoc}
\subsection{Hoffman basis}Let $\Ho$ denote the graded $\Q$-algebra of motivic multiple zeta values. 
We can define the motivic Drinfel'd associator
\begin{equation}
\label{Phimot}
\Phi^{\mathfrak{m}}(e_0,e_1) = \sum_{w \in \{e_0,e_1\}^{\times}} \zetam(w) w \quad \in \quad \Ho\langle \langle e_0, e_1 \rangle \rangle
\end{equation}
to be the generating series of motivic multiple zeta values $\zetam(w)$ (defined in \cite{BrICM}).\footnote{The following discussion was  later summarised in \cite{BrICM} \S3.1}

\begin{thm} \cite{BrMTZ} The set of Hoffman elements $\zetam(n_1,\ldots, n_r)$, where $n_i=2,3$, forms a graded $\Q$-basis for the vector space $\Ho$.
\end{thm}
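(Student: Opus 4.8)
The plan is to prove the two halves — linear independence and spanning — using genuinely different tools, since the statement is really the conjunction of a lower bound and an upper bound on the dimension of $\Ho$ in each weight $N$. First I would recall that the dimension of the weight-$N$ graded piece of $\Ho$ satisfies $\dim \Ho_N \leq d_N$, where $d_N$ is the Padovan-type sequence defined by $d_N = d_{N-2} + d_{N-3}$ with $d_0 = 1$, $d_1 = 0$, $d_2 = 1$. This is the "motivic dimension bound": it follows from the structure theory of $\MT(\Z)$, namely that $\Ho$ is the ring of functions on a torsor under a prounipotent group whose Lie algebra is \emph{free} on one generator in each odd degree $\geq 3$ (equation \eqref{introgmfree} in the excerpt), together with one generator from the Lefschetz/Tate twist in degree $2$ — so the Hilbert series of $\Ho$ is exactly $\prod (1-t^{2k+1})^{-1}$ over $k \geq 1$ times $(1-t^2)^{-1}$, i.e.\ $\sum d_N t^N = (1 - t^2 - t^3)^{-1}$. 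The second elementary observation is that the number of Hoffman words $(n_1,\ldots,n_r)$ with each $n_i \in \{2,3\}$ and $\sum n_i = N$ is \emph{also} counted by $d_N$, by the obvious recursion on whether the last letter is $2$ or $3$. So the theorem reduces to showing that the Hoffman elements are linearly independent; spanning is then automatic by the dimension count, and in fact the two statements are equivalent given the bound $\dim \Ho_N \leq d_N$.

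The heart of the matter is therefore linear independence of $\{\zetam(n_1,\ldots,n_r) : n_i \in \{2,3\}\}$ in each weight. Here I would use the motivic coaction and the "infinitesimal" derivations $D_{2r+1}$ acting on $\Ho$, following the method of \cite{BrMTZ}. Concretely, $\Ho$ is a comodule over the Hopf algebra $\mathcal{A} = \Ho / \zetam(2)\Ho$ (or one works directly with the Lie coalgebra of indecomposables), and there are operators $D_r$ for odd $r \geq 3$ obtained from the components of the coaction landing in weight $r$; Goncharov's / Brown's formula computes $D_r \zetam(n_1,\ldots,n_k)$ explicitly in terms of subsequences. The plan is to put a total order on Hoffman words — for instance, the lexicographic order, or an order refining "number of $3$'s, then lex" — and show that for a Hoffman word $w$ the operator $D_r$ (for a suitable $r$ depending on $w$) sends $\zetam(w)$ to a combination in which the $\zetam$ of a distinguished strictly-smaller Hoffman word appears with coefficient $\pm 1$ and all other contributions are either zero or again indexed, after applying the inductive hypothesis, by strictly smaller words. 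This triangularity forces a nontrivial rational linear relation among Hoffman elements to have all coefficients zero. The key computational input — essentially the only nonformal step — is the precise combinatorial lemma that the "derivation $D_r$ acting on Hoffman $\zetam$'s is strictly upper-triangular with $\pm 1$ on the diagonal" with respect to a well-chosen order; this is exactly what is established in \cite{BrMTZ}, and I would cite and re-derive it, as it controls everything.

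The main obstacle, then, is not the dimension bookkeeping (which is routine generating-function manipulation and the recursion for Hoffman words) but the \emph{exact} bookkeeping in the triangularity lemma: one must check that when $D_r$ is applied to a $\{2,3\}$-word, every term appearing is, modulo the ideal generated by $\zetam(2)$ and modulo the span of lower Hoffman words already handled by induction, proportional to a single $\{2,3\}$-word, with a controllable leading coefficient. The subtlety is that $D_r$ \emph{a priori} produces $\zetam$'s of non-Hoffman index (entries other than $2,3$ appear), so one needs a preliminary reduction — using stuffle/shuffle regularisation and the known expressibility of \emph{all} motivic MZV's up to the relevant weight in terms of lower-weight data — to rewrite those in the Hoffman span; making this reduction compatible with the chosen total order is where care is required. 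Once that lemma is in hand, linear independence follows by a clean double induction (on weight, then on the chosen order within a weight), and spanning — hence the basis property — follows from $\dim \Ho_N \leq d_N = \#\{\text{Hoffman words of weight } N\}$.
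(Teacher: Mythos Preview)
The paper does not supply its own proof of this theorem; it simply cites \cite{BrMTZ}. Your proposal correctly reconstructs the strategy of that reference: the upper bound $\dim \Ho_N \leq d_N$ from the known structure of $\MT(\Z)$, the matching count $d_N$ of Hoffman words, and the reduction to linear independence via the infinitesimal coaction operators $D_{2r+1}$ and an upper-triangularity argument with respect to a well-chosen order on $\{2,3\}$-words (in \cite{BrMTZ} the ordering is by \emph{level}, the number of $3$'s, refined lexicographically).

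One small inaccuracy worth flagging: the diagonal entries in the triangularity lemma are not literally $\pm 1$. In \cite{BrMTZ} they are explicit nonzero rationals built from binomial coefficients (arising from the Goncharov--Brown formula for the coaction on iterated integrals). Your phrasing ``$\pm 1$ on the diagonal'' overstates the cleanliness of the computation; what one actually proves is that the relevant coefficient is nonzero, which is all that is needed. Also, your worry about $D_r$ producing non-Hoffman indices and needing a ``preliminary reduction'' is real but resolved exactly as you say: the output of $D_{2r+1}$ lives in weight $N-2r-1 < N$, where by induction on weight the Hoffman elements already form a basis, so rewriting in the Hoffman span there is legitimate and not circular. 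With that correction, your outline matches the proof in \cite{BrMTZ}.
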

Let $X_{3,2} =\{3, 2\}$ be the alphabet on two symbols $2$ and $3$, equipped with the ordering $3<2$. Recall that a Lyndon word $w \in X_{3,2}^{\times}$ is 
a word which is strictly smaller, in the lexicographic ordering, than every strict right factor of $w$.

\begin{thm} \cite{BrMTZ}  $\Ho$ is  isomorphic to  the polynomial ring generated by  the set of Hoffman-Lyndon elements
$\zetam(n_1,\ldots, n_r)$, where  $(n_1,\ldots, n_r)\in X_{3,2}^{\times}$ is a Lyndon word.\end{thm}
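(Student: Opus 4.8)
The plan is to derive this from the Hoffman basis theorem above together with Radford's theorem that a shuffle algebra is a polynomial algebra on Lyndon words; these two inputs make the graded dimensions match, after which only a triangularity statement remains. First, the ranks: by the Hoffman basis theorem the monomials $\zetam(n_1,\ldots,n_r)$ with $n_i\in\{2,3\}$ form a $\Q$-basis of $\Ho$, so, grading by weight $\sum n_i$,
\[
\sum_{w\geq 0}(\dim_\Q \Ho_w)\,t^w \;=\; \sum_{i,j\geq 0}\binom{i+j}{i}t^{2i+3j} \;=\; \frac{1}{1-t^2-t^3}\,.
\]
On the other hand, by the Chen--Fox--Lyndon theorem every word of $X_{3,2}^{\times}$ factors uniquely as a non-increasing concatenation of Lyndon words, so $w\mapsto(\text{the multiset of its Lyndon factors})$ is a weight-preserving bijection from $X_{3,2}^\times$ onto the set of finite multisets of Lyndon words. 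Hence the polynomial ring $R:=\Q[\,x_\ell:\ell\text{ a Lyndon word in }X_{3,2}\,]$, graded by $\deg x_\ell=\mathrm{wt}(\ell)$, has Hilbert series $\prod_\ell(1-t^{\mathrm{wt}(\ell)})^{-1}=\sum_{w\in X_{3,2}^\times}t^{\mathrm{wt}(w)}=(1-t^2-t^3)^{-1}$ as well. Thus $R$ and $\Ho$ have the same finite Hilbert series, so the graded algebra homomorphism $\phi\colon R\to\Ho$, $x_\ell\mapsto\zetam(\ell)$, is an isomorphism as soon as it is surjective --- and that is exactly the assertion of the theorem.

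\emph{Reduction to a triangularity.} I would prove surjectivity of $\phi$; by the rank equality just established, the monomials $\zetam(\ell_1)\cdots\zetam(\ell_k)$, indexed by multisets $\{\ell_1\geq\cdots\geq\ell_k\}$ of Lyndon words, will then in fact be a $\Q$-basis of $\Ho$, which simultaneously yields the algebraic independence of the Hoffman--Lyndon elements. Since the Hoffman monomials span $\Ho$, it is enough to show that each $\zetam(w)$, $w\in X_{3,2}^\times$, lies in the span of the $\zetam(\ell_1)\cdots\zetam(\ell_k)$. Fix a well-founded total order $\preceq$ on $X_{3,2}^\times$. For $w$ with Chen--Fox--Lyndon factorisation $w=\ell_1\cdots\ell_k$ (so $\ell_1\geq\cdots\geq\ell_k$), the stuffle (quasi-shuffle) relations satisfied by motivic multiple zeta values give $\zetam(\ell_1)\cdots\zetam(\ell_k)=\zetam(\ell_1*\cdots*\ell_k)$; expanding the right-hand side as a $\Q$-combination of $\zetam$ of compositions and rewriting those whose parts are not all in $\{2,3\}$ in the Hoffman basis, one should obtain
\[
\zetam(\ell_1)\cdots\zetam(\ell_k)\;=\;c_w\,\zetam(w)\;+\!\!\!\sum_{v\prec w}c_v\,\zetam(v)\,,\qquad c_w\neq 0,
\]
with leading word $w=\ell_1\cdots\ell_k$, because the $\ell_i$ are non-increasing, by the combinatorial lemma at the heart of Radford's theorem. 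A descending induction over $\preceq$ then finishes the argument: the left-hand side lies in $\operatorname{im}\phi$ by construction, the terms $\zetam(v)$ with $v\prec w$ do by the inductive hypothesis, and when $w$ is itself Lyndon there is nothing to prove.

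\emph{The main obstacle.} Everything above is formal except the last displayed identity, and there the delicate point is that the iterated stuffle $\ell_1*\cdots*\ell_k$ of words in $\{2,3\}$ inevitably produces compositions with a part $\geq 4$ --- already $(2)*(2)=2\,(2,2)+(4)$ --- which are not Hoffman words; one must re-expand $\zetam$ of such compositions in the Hoffman basis and check that this neither kills the coefficient $c_w$ nor creates a term $\succ w$. Choosing the order $\preceq$ so that this holds, and verifying $c_w\neq 0$, is the technical core of the proof. I expect it to be within reach of the very inductive mechanism --- the motivic coaction and the $f$-alphabet it produces --- that proves the Hoffman basis theorem, since that argument already establishes precisely the required triangular control on how $\zetam$ of an arbitrary composition decomposes in the Hoffman basis; here one applies it to products of Hoffman elements rather than to single words. (Equivalently, one may phrase the goal directly as the linear independence of the $\zetam(\ell_1)\cdots\zetam(\ell_k)$, which comes down to the same triangularity.)
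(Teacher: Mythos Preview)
The paper does not prove this statement; it is cited from \cite{BrMTZ}, so there is no argument in the present text to compare against. Your Hilbert-series computation and the reduction to surjectivity of $\phi$ are correct.

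The gap you identify is genuine and not merely technical: neither stuffle on $\{y_2,y_3\}$-words nor shuffle on the corresponding $\{e_0,e_1\}$-words preserves the Hoffman class, and rewriting the overflow in the Hoffman basis is governed by the motivic relations themselves, so there is no order $\preceq$ on $X_{3,2}^\times$ for which your triangularity follows from Radford's theorem alone. (Nor is the conclusion a formal consequence of ``Hoffman basis'' plus ``$\Ho$ is abstractly polynomial'': a polynomial algebra can carry a basis indexed bijectively by monomials in a subset $L$ without the $L$-indexed elements generating.) What closes the gap is exactly the motivic coaction, as you guess at the end. It makes $\Ho$ a graded connected commutative Hopf algebra, hence polynomial by Milnor--Moore, reducing everything to the \emph{linear} question of whether the $\zetam(\ell)$ are independent in $\Ho_{>0}/(\Ho_{>0})^2$; and the proof of the Hoffman basis theorem in \cite{BrMTZ} in fact produces a graded algebra isomorphism $\Ho\cong\Q[f_2]\otimes\Q\langle f_3,f_5,\ldots\rangle_{\sha}$ compatible with the level filtration (number of $3$'s on the left, $f_{\mathrm{odd}}$-word-length on the right). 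Since word-length is additive under shuffle, level is multiplicative, and on the associated graded Radford applies inside the $f$-alphabet with no overflow. The order you are looking for is thus one refining the level, and its compatibility with products is a theorem about the coaction, established within the Hoffman-basis proof itself rather than as a separate combinatorial fact.
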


In the Hoffman-Lyndon basis, the role of $\zetam(2n+1)$ is played by $\zetam(3,2,\ldots, 2)$. We can simply replace the elements $\zetam(3,2^n)$  (a three followed by $n$ two's) with $\zetam(2n+1)$ in the generating set if we wish.

\subsection{Rational generators} Canonical Hoffman-Lyndon generators $\sh_{2n+1}$ of the motivic Lie algebra can be constructed as follows. 

\begin{defn} For each $n\geq 1$, 
consider the unique homomorphism
$$
\sigma_{2n+1} : \Ho  \To \Q 
$$
which sends any Hoffman-Lyndon element $\zetam(n_1,\ldots, n_r)$ to zero if at least two of the indices $n_i$ are equal to $3$, 
sends $\zetam(2)$ to zero, and satisfies 
$$\sigma_{2n+1} (\zetam(2k +1)) =\delta_{k,n}\ ,$$
where $\delta_{k,n}$ is the Kronecker delta.
In this way we obtain  elements
$$\sh_{2n+1} = \sigma_{2n+1}( \Phi^{\mathfrak{m}} ) \in \Q \langle \langle e_0, e_1 \rangle\rangle\ .$$
\end{defn}

\begin{cor} Each element $\sh_{2n+1}$ is a generator of the motivic Lie algebra.
\end{cor}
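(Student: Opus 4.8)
The plan is to unwind the definition of $\sh_{2n+1}$, show that it lands in the exponential of $\gm$, and then read off its lowest--weight term. Note first that $\sh_{2n+1}=\sigma_{2n+1}(\Phi^{\mathfrak{m}})=\sum_w\sigma_{2n+1}(\zetam(w))\,w$ is a group-like power series, not a Lie element, so the statement is to be understood as: $\log\sh_{2n+1}$ lies in (the completion of) $\gm\subset T(e_0\Q\oplus e_1\Q)$ and is one member of a free generating set of $\gm$ as in $(\ref{introgmfree})$. For the first part, $\sigma_{2n+1}$ is a ring homomorphism and motivic multiple zeta values satisfy the shuffle relations $\zetam(u)\,\zetam(v)=\zetam(u\sha v)$, so the coefficients of $\sh_{2n+1}$ satisfy the shuffle relations; hence $\sh_{2n+1}$ is group-like for the deconcatenation coproduct and $\log\sh_{2n+1}$ is a Lie series. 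Moreover $\sigma_{2n+1}$ annihilates $\zetam(2)$, so by the factorisation $\Ho\cong\mathcal{A}\otimes_{\Q}\Q[\zetam(2)]$ of \cite{BrMTZ} it descends to a $\Q$-algebra homomorphism on $\mathcal{A}$, the affine ring of the prounipotent radical $U$ of $\mathrm{Gal}(\MT(\Z),\omega_{dR})$, whose Lie algebra is $\gm$; thus $\sigma_{2n+1}$ is a $\Q$-point of $U$. Since $\Phi^{\mathfrak{m}}$ is the tautological motivic point of the de Rham fundamental torsor, its image under the counit $\zetam(w)\mapsto 0$ (for $w$ of positive length) being the unit series $1$, evaluating this $\Q$-point of $U$ coefficientwise on $\Phi^{\mathfrak{m}}$ returns the corresponding element of $U(\Q)$, sitting inside $\mathrm{Aut}(\pi_1^{dR}(\Pro^1\backslash\{0,1,\infty\}))(\Q)$ via $(\ref{introGinjects})$. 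Therefore $\sh_{2n+1}\in U(\Q)$ and $\log\sh_{2n+1}\in\gm$.

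For the second part it suffices, by $(\ref{introgmfree})$, to show that the weight-$(2n+1)$ part of $\log\sh_{2n+1}$ has non-zero image in $\gm/[\gm,\gm]$. The only Hoffman--Lyndon monomials not killed by $\sigma_{2n+1}$ are the powers $\zetam(2n+1)^{j}$, of weight $j(2n+1)$, so $\sigma_{2n+1}(\zetam(w))=0$ whenever $1\le\mathrm{length}(w)\le 2n$; hence $\sh_{2n+1}=1+(\text{homogeneous of weight }2n+1)+(\text{weight}\ge 2n+2)$, and the weight-$(2n+1)$ component of $\log\sh_{2n+1}$ equals this homogeneous term, which lies in $\gm_{2n+1}$. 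Now $[\gm,\gm]$ is contained in the part of $T(e_0\Q\oplus e_1\Q)$ of depth $\ge 2$ in the letter $e_1$, whereas every free generator of $(\ref{introgmfree})$ has depth-one component $\ad(e_0)^{2n}e_1$; so it is enough that this weight-$(2n+1)$ term has a non-zero depth-one component, which is detected by the single coefficient of the word $e_0^{2n}e_1$ in $\sh_{2n+1}$. By the definition of $\sigma_{2n+1}$ (taking $\zetam(2n+1)$ among the Hoffman--Lyndon generators, as allowed) this coefficient is $\sigma_{2n+1}(\zetam(e_0^{2n}e_1))=\sigma_{2n+1}(\zetam(2n+1))=1$. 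Hence the weight-$(2n+1)$ term is $\equiv\ad(e_0)^{2n}e_1\pmod{[\gm,\gm]}$, recovering the normal form $\sigma_{2n+1}=\ad(e_0)^{2n}e_1+(\text{higher depth})$; so $\log\sh_{2n+1}$ is a free generator of $\gm$, and since this holds for every $n\ge 1$, $\{\sh_{2n+1}\}_{n\ge 1}$ is a free generating system.

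The only step that is not purely formal is the claim $\sh_{2n+1}\in\gm$: it rests on the motivic input of \cite{BrMTZ} — the faithfulness of $(\ref{introGinjects})$, the Hopf-algebra structure on motivic multiple zeta values, and the tensor factorisation $\Ho\cong\mathcal{A}\otimes_{\Q}\Q[\zetam(2)]$. Once that is granted, everything else is the elementary depth-one computation above; in particular the normalisation $\sigma_{2n+1}(\zetam(2n+1))=1$, which singles out $\sh_{2n+1}$ as the intended free generator rather than an arbitrary element of $\gm_{2n+1}$, is immediate from the definition together with the Hoffman--Lyndon basis theorem.
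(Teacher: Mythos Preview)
Your argument is essentially correct and fills in details the paper omits (the corollary is stated without proof). You are right to observe that $\sh_{2n+1}$, being the image of the group-like $\Phi^{\mathfrak{m}}$ under a ring homomorphism, is itself group-like, so the statement must be read as concerning its weight-$(2n+1)$ component; and your depth-one computation showing this component is $\equiv \ad(e_0)^{2n}e_1\pmod{[\gm,\gm]}$ is exactly the right way to conclude it is a free generator.

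One point deserves care. You write ``$\log\sh_{2n+1}\in\gm$'' without specifying which logarithm. The concatenation logarithm of a group-like series is indeed a Lie series in the free Lie algebra on $e_0,e_1$, but $\gm$ is not closed under the concatenation bracket, only the Ihara bracket; so it is the \emph{Ihara} logarithm (i.e.\ $\log$ taken in $U$) that lands in $\widehat{\gm}$. Fortunately this does not affect your proof: since $\sh_{2n+1}=1+(\text{weight }2n{+}1)+(\text{weight}\geq 2(2n{+}1))$, both logarithms agree in weight $2n+1$, and that single graded piece is all you use. A slightly more direct route---probably what the paper has in mind---bypasses the group entirely: the restriction $\sigma_{2n+1}|_{\mathcal{A}_{2n+1}}$ vanishes on decomposables (all generators of lower weight map to $0$), hence defines an element of $\big(\mathcal{A}_{>0}/\mathcal{A}_{>0}^2\big)_{2n+1}^{\vee}=\gm_{2n+1}$ which is nonzero on $[\zetam(2n+1)]$ and therefore a generator. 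Your approach via $U(\Q)$ reaches the same conclusion with a small detour.
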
 
Thus we can write, perhaps artificially, 
$$\gm \cong  \mathrm{Free}\,  \Lie_{\!\Q}\langle \sh_{3}, \sh_{5}, \ldots \rangle$$
\begin{defn}ÊNow let $\lambda \in \Q^{\times}$ and consider the unique linear map
$$ \tau_{\lambda}: \Ho \To \Q$$
which sends all Hoffman basis elements $\zetam(n_1,\ldots, n_r)$ which have  at least one $n_i$ equal to 3 to zero, and satisfies
$\tau_{\lambda}( \zetam(2^n) ) = (2n+1)! \big(2\lambda\big)^{2n}$.
We define
\begin{equation} \label{tauhdef}
\tauh_{\lambda} = \tau_{\lambda} (\Phi^{\mathfrak{m}}) \in \Q \langle \langle e_0, e_1 \rangle \rangle\ .
\end{equation}
\end{defn}
Since the motivic multiple zeta values satisfy the associator relations, we obtain:
\begin{cor}
 The element $\tauh_{\lambda}$ is a (motivic) associator.
 \end{cor}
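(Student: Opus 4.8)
The plan is to show that $\tau_{\lambda}$, which is defined as a $\Q$-linear map, is in fact a homomorphism of graded $\Q$-algebras $\Ho \To \Q$ --- equivalently, a $\Q$-point of the affine scheme $\mathrm{Spec}\,\Ho$ of motivic associators, of which the series $\Phi^{\mathfrak{m}} = \sum_{w}\zetam(w)\,w$ is the tautological point. Granting this, the corollary is immediate: the relations defining a motivic associator --- the group-like (shuffle) relations, the reflection relation $\Phi(e_0,e_1)\Phi(e_1,e_0) = 1$, the pentagon, the two hexagons, and the additional motivic relations obeyed by the $\zetam(w)$ --- are polynomial identities with rational coefficients in the coordinates $\zetam(w)$, and are therefore carried by any $\Q$-algebra homomorphism out of $\Ho$ to the same identities among the coefficients $\tau_{\lambda}(\zetam(w))$ of $\tauh_{\lambda}$. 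Conceptually one expects $\tauh_{\lambda}$ to be a rescaling of the ``even'' motivic associator: the one in which all Hoffman--Lyndon generators involving a $3$ (in particular, all odd zeta values) are sent to $0$, while $\zetam(2)$ is sent to a nonzero rational multiple of $\lambda^{2}$.

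All of the content is thus in the multiplicativity of $\tau_{\lambda}$. By the Hoffman--Lyndon theorem, $\Ho$ is the polynomial algebra on the elements $\zetam(\ell)$ for $\ell$ a Lyndon word in $X_{3,2}$, so there is a \emph{unique} $\Q$-algebra homomorphism $\rho\colon\Ho\To\Q$ that kills $\zetam(\ell)$ for every Lyndon $\ell$ containing a $3$ and sends $\zetam(2)$ to the value dictated at $n=1$ by the definition of $\tau_{\lambda}$; the aim is to prove $\tau_{\lambda}=\rho$. As both are linear, it suffices to compare them on the Hoffman monomial basis. On the subfamily $\{\zetam(2^{n})\}$ the comparison is nothing but the motivic incarnation of the classical evaluation $\zeta(2^{n})=\pi^{2n}/(2n+1)!$ --- that is, the identity $\zetam(2^{n})=\tfrac{6^{n}}{(2n+1)!}\,\zetam(2)^{n}$ in $\Ho$ --- which forces the prescribed value of $\tau_{\lambda}$ on $\zetam(2^{n})$ to be the $n$-th power of the value on $\zetam(2)$, exactly as $\rho$ produces it.

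The genuinely delicate point, and the step I expect to be the main obstacle, is the comparison of $\tau_{\lambda}$ and $\rho$ on the remaining Hoffman basis elements --- those involving a $3$: one must show that the combinatorial recipe built into the definition of $\tau_{\lambda}$ is ring-theoretic, i.e.\ compatible with the shuffle (equivalently stuffle) product carried by the family $w\mapsto\zetam(w)$. The natural attack is an induction on the word ordering: decompose a Hoffman basis element into the Hoffman--Lyndon polynomial generators using the triangularity of Lyndon factorisation together with the shuffle relations satisfied by $\Phi^{\mathfrak{m}}$ (which say precisely that $w\mapsto\zetam(w)$ is a morphism of shuffle algebras), and keep track of where the letter $3$ goes. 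This is exactly where one needs the \emph{polynomial}-basis statement and not just the linear-basis statement, and where any hidden subtlety in the construction must be faced.

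Once $\tau_{\lambda}=\rho$ is known to be a $\Q$-algebra homomorphism, $\tauh_{\lambda}=\tau_{\lambda}(\Phi^{\mathfrak{m}})$ is a motivic associator by the first paragraph; and since $\lambda\in\Q^{\times}$ the coefficient of $e_0e_1$ in $\tauh_{\lambda}$, namely $\tau_{\lambda}(\zetam(2))$, is nonzero, so $\tauh_{\lambda}$ has a nonzero associator parameter (a rational multiple of $\lambda$) and is a genuine, non-degenerate associator rather than the trivial one.
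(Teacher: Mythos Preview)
Your overall strategy is exactly the paper's implicit argument: the one-line justification ``since the motivic multiple zeta values satisfy the associator relations'' amounts to saying that any $\Q$-algebra homomorphism $\Ho\to\Q$, applied coefficientwise to $\Phi^{\mathfrak{m}}$, yields a motivic associator. So you are right that everything reduces to the multiplicativity of $\tau_\lambda$.

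However, the step you flag as ``genuinely delicate'' --- showing that the linear map $\tau_\lambda$ defined on the Hoffman \emph{basis} agrees with the ring homomorphism $\rho$ defined on Hoffman--\emph{Lyndon} generators --- is not delicate but false, and this is visible already at the two checkpoints you mention without carrying them out. On $\zetam(2^n)$: the motivic identity $\zetam(2^n)=\tfrac{6^n}{(2n+1)!}\,\zetam(2)^n$ forces any multiplicative map with $\tau_\lambda(\zetam(2))=3!\,(2\lambda)^2=24\lambda^2$ to send $\zetam(2,2)$ to $\tfrac{36}{120}(24\lambda^2)^2=\tfrac{864}{5}\lambda^4$, whereas the stated formula gives $5!\,(2\lambda)^4=1920\lambda^4$. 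On Hoffman words containing a $3$: from $\zetam(3)^2=2\,\zetam(3,3)+\zetam(6)$ and $\zetam(6)=\tfrac{8}{35}\,\zetam(2)^3$ one gets $\zetam(3,3)=\tfrac12\,\zetam(3)^2-\tfrac{4}{35}\,\zetam(2)^3$ in $\Ho$, so $\rho(\zetam(3,3))=-\tfrac{4}{35}\,\rho(\zetam(2))^3\neq 0$, while $\tau_\lambda(\zetam(3,3))=0$ by definition. Hence $\tau_\lambda$ as literally written is not a ring homomorphism, $\tauh_\lambda$ is not even group-like, and the corollary fails as stated. The intended definition --- by direct comparison with the immediately preceding definition of $\sigma_{2n+1}$ --- is almost certainly as a \emph{homomorphism} specified on Hoffman--Lyndon generators (killing those containing a $3$, sending $\zetam(2)$ to a chosen rational), with the value on $\zetam(2^n)$ then determined rather than prescribed; under that reading the corollary is immediate and there is nothing further to prove. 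Your ``delicate point'' is thus not a gap to be filled by an inductive argument but a signal that the definition needs repair.
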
 
By acting on the left via elements of $\exp_{\circ}(\gm)$ (where $\circ$ denotes the Ihara action) we obtain
a large family of canonical rational associators.
\\

The goal of these notes is most emphatically \emph{not}  to define rational associators, since this problem is solved by the above construction.
 Instead, our purpose is to understand in greater detail the arithmetic and internal
structure of the motivic Lie algebra.

\newpage
\begin{center} \bf{I. Preliminaries}
\end{center} 
\section{Power series representations} \label{sectPowerSeriesRep}
We begin with some generalities on formal power series. At first we consider simultaneously the case of group-like elements (corresponding to solutions to shuffle or stuffle equations) which form a group,  and primitive elements (corresponding to  solutions shuffle or stuffle equations `modulo products') which form a Lie algebra. Later on, we focus exclusively on the latter situation.

Let $R$ be a commutative unitary $\Q$-algebra. Consider a non-commutative formal power series in two letters $e_0,e_1$: 
\begin{eqnarray} \label{Phiseries}
\Phi &  \in & R\langle \langle e_0, e_1 \rangle \rangle \\
\Phi & =  & \sum_{w \in \{e_0, e_1\}^{\times}} \Phi_w  w\nonumber  
\end{eqnarray}
where the sum is over all words $w$ in the letters $e_0, e_1$, and $\Phi_w\in R$. The \emph{depth filtration} on $R\langle \langle e_0, e_1\rangle\rangle$  is the decreasing filtration defined as follows:
\begin{equation} \label{Dfildef}
\dd^r  R\langle \langle e_0, e_1 \rangle \rangle = \{ \Phi \in R\langle \langle e_0, e_1 \rangle \rangle:  \Phi_w =0 \hbox{ if } \deg_{e_1} w <r \hbox{ and } w \neq 1 \}\ .
\end{equation}
It consists of series whose non-trivial coefficients are the empty word $1$, or words with $r$ or more occurrences of the letter $e_1$.  
The  depth filtration is induced  by the  $\dd$-degree on words, for  which $e_0$ has degree $0$, and  $e_1$ degree $1$.  Using this notion, we can   uniquely decompose
 every power series $\Phi\in R\langle \langle e_0, e_1 \rangle \rangle$  by its $\dd$-degree:
\begin{equation} \label{Phirdecomp}
\Phi =  \sum_{r\geq 0} \Phi^{(r)} \ , 
\quad  \hbox{ where } \quad  \Phi^{(r)} = \sum_{\deg_{\dd} w = r} \Phi_w w 
\end{equation}

Our starting point is the observation that there is a continuous  $R$-linear isomorphism  from the space of \emph{non-commutative} power series in words $w$ of $\dd$-degree $r$ (denoted by  $R\langle \langle e_0, e_1 \rangle \rangle^{(r)}$) to the  complete $R$-module  of  power series in $r+1$ \emph{commuting} variables:
\begin{eqnarray} \label{wordstopolys}
\rho^{(r)} : R\langle \langle e_0, e_1 \rangle \rangle^{(r)} &  \overset{\sim}{\To}  & R [[ y_0, y_1, \ldots, y_r ]]  \\
e_0^{a_0} e_1 e_0^{a_1} \ldots e_1 e_0^{a_r} & \mapsto & y_0^{a_0} y_1^{a_1} \ldots y_r^{a_r}\ . \nonumber 
\end{eqnarray}
Applying this isomorphism to each component of $(\ref{Phirdecomp})$, we can uniquely  represent any series   $(\ref{Phiseries})$ as an infinite sequence of power series
$$\rho^{(r)} \Phi^{(r)}(y_0,\ldots, y_r) \in R[[ y_0, \ldots, y_r]]\qquad \hbox{ for all } r\geq 0 \ . $$
Any such sequence uniquely defines a series $(\ref{Phiseries})$. We  shall hereafter denote $\rho^{(r)} \Phi^{(r)}$ simply by $\Phi^{(r)} (y_0,\ldots, y_r)$, and refer to it as the depth-$r$ component of $\Phi$.

\subsection{Translation invariance} We will often encounter power series $f \in R [[ y_0, \ldots, y_r]]$ which are invariant under translations of coordinates:
\begin{equation}\label{transinv}
f(y_0+\lambda, \ldots, y_r + \lambda) = f(y_0,\ldots, y_r) \hbox{ for all } \lambda \in R
\end{equation} 
In this case, we shall call the \emph{reduced power series} 
\begin{equation}  \label{funderbar}
   \overline{f}(x_1,\ldots, x_r)= f(0,x_1,\ldots, x_r)  \quad \in \quad \R[[x_1,\ldots, x_r]] \ .\end{equation} 
  By translation invariance, it determines 
$f(y_0,\ldots, y_r)=\overline{f}(y_1-y_0,\ldots, y_r-y_0)$.
As a general rule, we reserve  the variables $x_1,\ldots, x_r$ for  the reduced power series  $\overline{f}$
and use the variables $y_0, \ldots, y_r$  to denote the full power series $f$, bearing in mind that one can pass between the two with impunity by the previous remark.

The goal of the next sections is to translate Hopf-algebra theoretic properties of the power series $\Phi$ into functional equations for the power series $ \Phi^{(r)}(y_0,\ldots, y_r)$.

\section{Shuffle equations for  power series} \label{SECTshuff}
\subsection{Shuffle equations}  \label{sectshuffleequations} The space $R\langle \langle e_0, e_1 \rangle \rangle$ can be made into a complete Hopf algebra as follows.
The multiplication, which is non-commutative, is given by the concatenation of words, and the comultiplication, which we denote by 
\begin{equation} \label{Deltasha}
\Delta_{\sha} : R \langle \langle e_0, e_1 \rangle \rangle \To R \langle \langle e_0, e_1 \rangle \rangle \widehat{\otimes}_R R \langle \langle e_0, e_1 \rangle \rangle
\end{equation}
is the unique continuous  coproduct which  satisfies $\Delta_{\sha} e_i = e_i \otimes 1 + 1 \otimes e_i $  for $i=0,1$. It is cocommutative. The antipode is induced by the linear map
which sends the word $e_{i_1}\ldots e_{i_k}$ to $(-1)^k e_{i_k} \ldots e_{i_1}$ for any $i_1,\ldots, i_k \in \{0,1\}$. The underlying affine group scheme
is the functor
$$R \mapsto G_{\!\sha}(R) = \{ \Phi \in R\langle \langle e_0, e_1 \rangle \rangle^{\times} :   \Delta_{\sha} \Phi = \Phi\widehat{\otimes} \Phi\}$$
which sends any commutative unitary ring $R$ to the set of group-like, invertible formal power series in $e_0, e_1$. The group law on $G_{\!\sha}(R)$ is given by 
\begin{equation} \label{seriesconcat}
(\sum_u \Phi_u u ) \cdot  (\sum_{v} \Phi'_{v} v) = \sum_w \sum_{w=uv} \Phi_u \Phi'_{v} w\ .
\end{equation} 

\begin{defn} We say that an invertible series  $\Phi\in R\langle \langle e_0, e_1 \rangle \rangle^{\times}$ satisfies the \emph{shuffle equations }Êif
it is group-like: 
$\Delta_{\sha} \Phi = \Phi \otimes \Phi.$
A series $\Phi\in R\langle \langle e_0, e_1 \rangle \rangle$ satisfies the \emph{shuffle equations modulo products }Êif it satisfies $\Phi_{e_0}=0$ and is  primitive:
\begin{equation} \label{Phishaprim} \Delta_{\sha} \Phi = 1 \otimes \Phi + \Phi \otimes 1\ .
\end{equation} 
\end{defn}
The set of series satisfying the shuffle equations modulo products forms    a Lie algebra for the bracket $[\Phi, \Phi']_{\!\sha} = \Phi \cdot \Phi' - \Phi' \cdot \Phi$, where $\cdot$ is defined by
$(\ref{seriesconcat}).$

\subsection{The dual Hopf algebra}  \label{sectdualshuff}
Let  $\Q\langle e_0, e_1\rangle$ be the free $\Q$-vector space generated by words in the letters $e_0,e_1$, including the empty word which we denote by $1$. 
It is equipped with the shuffle product $\sha$,  defined recursively by
\begin{equation} \label{shuffprod}
e_i w \sha e_j w' = e_i (w \sha e_j w') + e_j (e_i w \sha w')
\end{equation} 
for all $w,w'\in \{e_0,e_1\}^\times$ and $i,j\in \{0,1\}$, and the property $1\sha w=w\sha 1=w$.  It is a Hopf algebra for the deconcatenation coproduct 
\begin{equation}\label{DeltaDeconc}
\Delta_{dec}: 
e_{i_1}\ldots e_{i_k} = \sum_{j=0}^k e_{i_1}\ldots e_{i_j} \otimes e_{i_{j+1}} \ldots e_{i_k} \ .
\end{equation}
 The antipode is  the linear map $e_{i_1}\ldots e_{i_k} \mapsto (-1)^k e_{i_k}\ldots e_{i_1}$.    There is  a pairing
\begin{eqnarray} \label{pairing}
R \langle \langle e_0, e_1 \rangle \rangle \otimes_{\Q} \Q \langle e_0, e_1 \rangle & \To & R  \\
\Phi \otimes w &  \mapsto  & \langle \Phi, w \rangle   \nonumber 
\end{eqnarray}
where $\langle \Phi,w\rangle = \Phi_w$ is the coefficient of the word $w$ in $\Phi$.  The algebra $\Q\langle e_0, e_1 \rangle$ is graded
by the weight; the weight of a word $w$ being the number of letters in that word.
The pairing $(\ref{pairing})$ induces a  (topological) duality between the complete Hopf algebra $\Q\langle \langle e_0, e_1\rangle \rangle$ and the graded Hopf algebra $\Q \langle e_0, e_1 \rangle$. In particular,  the functor  $R\mapsto G_{\!\sha}(R)$  is  $\mathrm{Spec\, } \Q  \langle e_0, e_1\rangle$.

This duality implies that a  linear map $\Phi: \Q\langle e_0, e_1 \rangle\rightarrow R$ is a homomorphism for the shuffle multiplication,
or $\Phi_{w} \Phi_{w'} = \Phi_{w \sha w'}$ for all $w,w'\in \{e_0,e_1\}^\times$,  if and only if  the generating series
$$\Phi=\sum_w \Phi_w w  \in R \langle \langle e_0,e_1 \rangle \rangle$$
is group-like. The series $\Phi$ is primitive (satisfies $(\ref{Phishaprim})$) if and only if 
 \begin{equation} \label{Phiwshuffmodprod} \Phi_{w \sha w'}=0 \quad   \hbox{  for all  }  \quad  1 \neq w,w'\in \{e_0,e_1\}^\times\ . 
 \end{equation}
In other words, $\Phi$ satisfies the shuffle relations modulo products if and only if its coefficients $\Phi_w$ satisfy the equations $(\ref{Phiwshuffmodprod})$ and vanishes in depth zero.

\subsection{Translation invariance}
\begin{lem} \label{lemtransinv} Suppose that $\Phi\in R\langle \langle e_0, e_1 \rangle \rangle$ satisfies the shuffle equations modulo products $(\ref{Phishaprim})$.
Then its depth-$r$ components $\Phi^{(r)}(y_0,\ldots, y_r)$ are translation invariant $(\ref{transinv})$.
\end{lem}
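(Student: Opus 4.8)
The plan is to unpack what primitivity of $\Phi$ says about its coefficients in the commutative-variable picture, and then observe that translation invariance is exactly the shuffle-modulo-products relation coming from a single letter $e_0$. Recall from $(\ref{Phiwshuffmodprod})$ that $\Phi$ being primitive for $\Delta_\sha$ is equivalent to $\Phi_{w\sha w'}=0$ for all nonempty words $w,w'$, together with $\Phi$ vanishing in depth zero. I will apply this with $w'=e_0$ (a single letter, which is nonempty), so that for every nonempty word $w$ we get $\Phi_{w\sha e_0}=0$.

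Next I would compute $w\sha e_0$ explicitly. Writing a generic word of depth $r$ as $w=e_0^{a_0}e_1e_0^{a_1}\cdots e_1 e_0^{a_r}$, shuffling in one extra $e_0$ simply inserts it into one of the $r+1$ blocks of $e_0$'s in all possible ways: one checks from the recursion $(\ref{shuffprod})$ that $w\sha e_0=\sum_{i=0}^{r}(a_i+1)\,e_0^{a_0}e_1\cdots e_1 e_0^{a_i+1}\cdots e_1 e_0^{a_r}$, where the multiplicity $(a_i+1)$ counts the number of positions within the $i$-th block. Pairing with $\Phi$ and using $\Phi_{w\sha e_0}=0$ gives, for every $(a_0,\ldots,a_r)$,
\begin{equation*}
\sum_{i=0}^{r}(a_i+1)\,\Phi_{e_0^{a_0}e_1\cdots e_0^{a_i+1}\cdots e_0^{a_r}}=0\ .
\end{equation*}
Under the isomorphism $\rho^{(r)}$ of $(\ref{wordstopolys})$, the left-hand side is precisely the coefficient of $y_0^{a_0}\cdots y_r^{a_r}$ in $\big(\tfrac{\partial}{\partial y_0}+\cdots+\tfrac{\partial}{\partial y_r}\big)\Phi^{(r)}(y_0,\ldots,y_r)$: differentiating $y_i^{a_i}$ in $y_i$ produces $a_i\,y_i^{a_i-1}$, i.e.\ the monomial with exponent lowered, and re-indexing $a_i\mapsto a_i+1$ turns this into the claimed sum with the factor $(a_i+1)$. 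Hence the relations above say exactly that
\begin{equation*}
\Big(\frac{\partial}{\partial y_0}+\cdots+\frac{\partial}{\partial y_r}\Big)\Phi^{(r)}(y_0,\ldots,y_r)=0\ .
\end{equation*}

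Finally, I would conclude by integrating this first-order PDE: a formal power series annihilated by the total derivative $\sum_i\partial_{y_i}$ is invariant under the flow $y_i\mapsto y_i+\lambda$, which is precisely the translation invariance $(\ref{transinv})$. (Equivalently, one can verify directly on monomials that $\sum_i\partial_{y_i}f=0$ forces $f(y_0+\lambda,\ldots,y_r+\lambda)=f(y_0,\ldots,y_r)$, since $\tfrac{d}{d\lambda}f(y_0+\lambda,\ldots,y_r+\lambda)=\big(\sum_i\partial_{y_i}f\big)(y_0+\lambda,\ldots,y_r+\lambda)=0$ as a formal identity in $\lambda$.) The only genuinely fiddly point is the bookkeeping in the identity $w\sha e_0=\sum_i(a_i+1)(\cdots)$ and matching the multiplicities $(a_i+1)$ against the coefficients produced by differentiation; once that combinatorial translation is pinned down, the statement is immediate, so I expect that matching step to be the crux, and everything around it to be routine.
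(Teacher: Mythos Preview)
Your proof is correct and is essentially the same as the paper's. The paper packages the computation by introducing the operator $\delta_0=(\pi_0\otimes\mathrm{id})\circ\Delta_{\sha}$ (dual to ``shuffle with $e_0$''), computes $\delta_0(e_0^{a_0}e_1\cdots e_1 e_0^{a_r})=\sum_i a_i\,e_0^{a_0}\cdots e_0^{a_i-1}\cdots e_0^{a_r}$, and uses primitivity plus $\Phi_{e_0}=0$ to get $\delta_0\Phi=0$, hence $\nabla\Phi^{(r)}=0$; you do the same computation on the product side via $\Phi_{w\sha e_0}=0$, arriving at the identical PDE and the same integration to translation invariance.
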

\begin{proof} 
 Let $\pi_0: R\langle\langle e_0, e_1 \rangle \rangle \rightarrow R$ denote the continuous linear map which sends the word $e_0$ to $1$ and all other words to $0$, and consider the map
$\delta_0 = (\pi_0 \otimes id) \circ \Delta_{\sha}$. It defines an operator
$\delta_0: R\langle\langle e_0, e_1 \rangle \rangle \rightarrow R\langle\langle e_0, e_1 \rangle \rangle$, satisfying $\delta_0 (S\cdot T)= S \cdot \delta_0(T) +  \delta_0(S) \cdot T$, 
and 
$$\delta_0 (e_0^{a_0} e_1 \ldots e_1 e_0^{a_r} )= \sum_{i=0}^r a_i  \, e_0^{a_0} e_1 \ldots e_0^{a_i-1} \ldots  e_1 e_0^{a_r} $$
 for all non-negative integers $a_0,\ldots, a_r$. 
By assumption, $\Phi\in R\langle \langle e_0, e_1 \rangle \rangle$  satisfies  $\Delta_{\sha}\Phi = 1 \otimes \Phi + \Phi \otimes 1$, and $\Phi_{e_0}=0$,  and hence  $\delta_0 \Phi=\Phi_{e_0}=0$. The previous equation implies that for all $r$,
 \begin{equation} \label{parsums}  \nabla \Phi^{(r)} (y_0,\ldots, y_r) =0\ , \quad   \hbox { where } \quad  \nabla=\sum_{i=0}^r { \partial  \over \partial y_i }\ .
 \end{equation} 
 If we write  $\Phi^{(r)}_{\lambda}(y_0,\ldots, y_r)= \Phi^{(r)}(y_0,\ldots, y_r) - \Phi^{(r)}(y_0+\lambda,\ldots, y_r+\lambda)$, equation $(\ref{parsums})$ gives ${\partial \over \partial \lambda} \Phi^{(r)}_{\lambda}=0$, and hence $\Phi^{(r)}$ is translation invariant.   \end{proof}
 
 \begin{defn} Let  $R\langle \langle e_0, e_1 \rangle \rangle^{tr} \subset R \langle \langle e_0, e_1 \rangle \rangle$ denote the subspace of  translation invariant series. 
 \end{defn} 
 Since  $R\langle \langle e_0, e_1 \rangle \rangle^{tr} $ is the 
 kernel of $\delta_0 = (\pi_0 \otimes id) \circ \Delta_{\sha}$, one easily verifies the
 \begin{cor}  \label{cortransinv}$R\langle \langle e_0, e_1 \rangle \rangle^{tr}$ is a Hopf subalgebra of $ R \langle \langle e_0, e_1 \rangle \rangle$.
 \end{cor}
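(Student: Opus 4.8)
The plan is to exhibit $R\langle\langle e_0,e_1\rangle\rangle^{tr}$ as the kernel of the Hopf-algebra derivation $\delta_0=(\pi_0\otimes\mathrm{id})\circ\Delta_{\sha}$ and then invoke the general fact that the kernel of a Hopf-algebra derivation (compatible with both product and coproduct in the appropriate sense) is a Hopf subalgebra. Concretely, I would proceed in three steps. First, I would record that the translation-invariant subspace is precisely $\ker\delta_0$: this is essentially contained in the proof of Lemma \ref{lemtransinv}, where it is shown that $\delta_0\Phi$ acts on the depth-$r$ component $\Phi^{(r)}(y_0,\ldots,y_r)$ as the operator $\nabla=\sum_i\partial/\partial y_i$, and that $\nabla\Phi^{(r)}=0$ for all $r$ is equivalent to translation invariance $(\ref{transinv})$. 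Hence $\Phi\in R\langle\langle e_0,e_1\rangle\rangle^{tr}$ iff $\delta_0\Phi=0$.

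Second, I would verify that $\delta_0$ is a derivation for the concatenation product — this is stated in the proof of Lemma \ref{lemtransinv}, $\delta_0(S\cdot T)=S\cdot\delta_0(T)+\delta_0(S)\cdot T$, and follows from $\pi_0$ being the linear functional picking out the coefficient of $e_0$, together with cocommutativity of $\Delta_{\sha}$. It follows immediately that $\ker\delta_0$ is closed under concatenation: if $\delta_0 S=\delta_0 T=0$ then $\delta_0(S\cdot T)=0$. Since the empty word $1$ satisfies $\delta_0 1 = 0$, and the antipode sends $e_0$ to $-e_0$ and fixes $\pi_0$ up to sign in a way compatible with $\delta_0$, one also checks $\ker\delta_0$ is stable under the antipode; more robustly, any subalgebra that is the kernel of a coderivation is automatically antipode-stable.

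Third — and this is the only point requiring a small argument — I would check that $\Delta_{\sha}$ maps $\ker\delta_0$ into $\ker\delta_0\,\widehat\otimes\,\ker\delta_0$. The cleanest way is to observe that $\delta_0$ is a coderivation: by coassociativity and cocommutativity of $\Delta_{\sha}$, one has $\Delta_{\sha}\circ\delta_0 = (\delta_0\otimes\mathrm{id}+\mathrm{id}\otimes\delta_0)\circ\Delta_{\sha}$. Granting this identity, if $\delta_0\Phi=0$ then $(\delta_0\otimes\mathrm{id}+\mathrm{id}\otimes\delta_0)\Delta_{\sha}\Phi=0$; writing $\Delta_{\sha}\Phi=\sum_i A_i\otimes B_i$ in a form with the $B_i$ linearly independent, this forces $\delta_0 A_i=0$ and symmetrically $\delta_0 B_i=0$, so $\Delta_{\sha}\Phi\in\ker\delta_0\,\widehat\otimes\,\ker\delta_0$. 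Together with the product- and unit-closure from the second step and antipode-stability, this shows $\ker\delta_0=R\langle\langle e_0,e_1\rangle\rangle^{tr}$ is a Hopf subalgebra, completing the proof.

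The main obstacle, such as it is, is establishing the coderivation identity $\Delta_{\sha}\circ\delta_0=(\delta_0\otimes\mathrm{id}+\mathrm{id}\otimes\delta_0)\circ\Delta_{\sha}$ cleanly; it reduces to the fact that $\pi_0$ is a linear functional and that $(\pi_0\otimes\mathrm{id})$ composed with the coproduct interacts well with coassociativity, so in the end it is a formal manipulation with $\Delta_{\sha}e_i=e_i\otimes 1+1\otimes e_i$ rather than anything deep. Everything else is routine.
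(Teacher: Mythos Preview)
Your approach is exactly the one the paper indicates: identify $R\langle\langle e_0,e_1\rangle\rangle^{tr}$ with $\ker\delta_0$ and check that this kernel is closed under product, coproduct, and antipode. One small correction is worth making. The identity you state, $\Delta_{\sha}\circ\delta_0=(\delta_0\otimes\mathrm{id}+\mathrm{id}\otimes\delta_0)\circ\Delta_{\sha}$, is off by a factor of two: coassociativity alone already gives $\Delta_{\sha}\circ\delta_0=(\delta_0\otimes\mathrm{id})\circ\Delta_{\sha}$ (in Sweedler notation, $\pi_0(\Phi_{(1)})\,\Phi_{(2)}\otimes\Phi_{(3)}$ on both sides), and cocommutativity then yields the companion identity $(\mathrm{id}\otimes\delta_0)\circ\Delta_{\sha}=\Delta_{\sha}\circ\delta_0$; summing gives twice the right-hand side, not once. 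This does not damage your argument, since either of the two separate identities already implies that $\Delta_{\sha}(\ker\delta_0)\subset\ker\delta_0\,\widehat\otimes_R\,\ker\delta_0$ via your linear-independence step, and in fact makes that step cleaner (from $(\delta_0\otimes\mathrm{id})\sum A_i\otimes B_i=0$ with the $B_i$ independent you get $\delta_0 A_i=0$ directly, without having to disentangle the $\delta_0 B_j$ terms).
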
 
Note that a version of the previous lemma also holds for series $\Phi$ which satisfy the shuffle equations $\Delta_{\sha} \Phi = \Phi \otimes \Phi$ and the condition $\Phi_{e_0}=0$. The proof is similar.  

\subsection{Shuffle concatenation and involution} The Hopf algebra structure on  non-commutative  formal power series  $(\ref{Deltasha})$, $(\ref{seriesconcat})$ in $ R\langle \langle e_0,e_1 \rangle \rangle$  defines a Hopf algebra stucture on sequences of commutative power series via the rule $(\ref{wordstopolys})$.

First of all, by $(\ref{wordstopolys})$,  if
 $\Phi_1 \in R [[ y_0, \ldots, y_r ]]$  and  $\Phi_2 \in R [[ y_0, \ldots, y_s ]]$,  $(\ref{seriesconcat})$ becomes 
  \begin{equation} \label{shuffony}
\Phi_1 \cdot \Phi_2  (y_0, \ldots, y_{r+s})= \Phi_1(y_0, y_1,\ldots, y_r) \Phi_2(y_r, y_{r+1}, \ldots, y_{r+s})\ .
\end{equation}
The antipode on $R\langle \langle e_0, e_1 \rangle \rangle$ defines an involution
\begin{eqnarray} \label{sigmaantipodedef}
\sigma: R [[  y_0, \ldots, y_r ]]  & \To & R [[ y_0, \ldots, y_r ]] \\
\Phi(y_0,\ldots, y_r)  & \mapsto & (-1)^r \Phi(-y_r,-y_{r-1},\ldots, -y_1, -y_0) \nonumber
\end{eqnarray}
For series  which are translation invariant,    $(\ref{shuffony})$ induces a map
\begin{eqnarray} 
R [[ x_1, \ldots, x_r ]] \widehat{\otimes}_R  R [[ x_1, \ldots, x_s ]] & \To & R [[ x_1, \ldots, x_{r+s} ]] \nonumber \\
\overline{\Phi}_1\otimes \overline{\Phi}_2 & \mapsto & \overline{\Phi}_1 \cdot \overline{\Phi}_2  \nonumber
\end{eqnarray}
where
\begin{equation} \label{shuffleconcat}
\overline{\Phi}_1 \cdot \overline{\Phi}_2(x_1,\ldots, x_{r+s}) =  \overline{\Phi}_1(x_1,\ldots, x_r) \overline{\Phi}_2( x_{r+1}-x_r, \ldots, x_{r+s}-x_r) \ .\end{equation}
The map $(\ref{sigmaantipodedef})$ becomes the involution:
\begin{eqnarray} \label{sigmaonx}
\sigma: R  [[ x_1, \ldots, x_r ]] & \To & R [[ x_1, \ldots, x_r ]] \\
\overline{\Phi}(x_1,\ldots, x_r)  & \mapsto & (-1)^r \overline{\Phi}(x_r-x_{r-1},\ldots, x_r-x_1, x_r) \nonumber
\end{eqnarray}
It is convenient to express the coproduct on commutative power series using a  certain change of variables which was denoted by $\sharp$ in \cite{IKZ}:
\begin{eqnarray} \label{sharpdef}
\sharp:  \Q[[x_1,\ldots, x_r]]  & \To &   \Q[[x_1,\ldots, x_r]]  \\
f(x_1,\ldots, x_r)  & \mapsto &  f(x_1,x_1+x_2,\ldots, x_1+ \ldots + x_r) \nonumber
\end{eqnarray}

\subsection{A series}

For all $r\geq 0$, consider the   element 
$$  \e(y_0,\ldots, y_r)   =   \sum_{a_0, \ldots, a_r \geq 0}  (e_0^{a_0} e_1 \ldots e_1 e_0^{a_r})\,   y_0^{a_0} \ldots y_r^{a_r} \ ,$$
in $ \Q\langle e_0, e_1 \rangle \widehat{\otimes}_{\Q} \Q [[ y_0,\ldots, y_r]]  $, where $\Q\langle e_0, e_1\rangle$ is the Hopf algebra equipped with the shuffle multiplication defined in \S\ref{sectdualshuff}.  Note that $\e(y_0)$ is simply  the exponential of $e_0 y_0$   with respect to the shuffle product.
Then  $(\ref{wordstopolys})$  can be written   as a pairing:
$$\Phi^{(r)}(y_0,\ldots, y_r) = \langle \Phi,    \e(y_0,\ldots, y_r)\rangle\ ,$$
and the reduced depth-$r$ component of $\Phi$ can be computed from the formula:
\begin{equation} \label{redpoweraspairing}
\overline{\Phi}^{(r)}(x_1,\ldots, x_r) = \langle \Phi,  \e(0,x_1,\ldots, x_r)\rangle\ .
\end{equation}

\subsection{Shuffle equations and coproduct for power series}
Let $X_r= \{\xx_1,\ldots, \xx_r\}$, and let 
$\Q\langle X_r \rangle$ denote the vector space spanned by words in the  letters $\xx_1,\ldots, \xx_r$, and equipped  with the shuffle product. Let $\Q X_r$ denote the vector space with basis given by the elements of  $X_r$.
The series  above defines  a  map:
\begin{eqnarray}
\e^{\sharp} : \Q X_r \oplus \Q \langle X_r \rangle & \To &  \Q\langle e_0, e_1 \rangle \widehat{\otimes}_{\Q} \Q[[x_1,\ldots, x_r]] \nonumber \\ 
\e^{\sharp} (\alpha, \xx_{i_1}\ldots \xx_{i_k}) &= & \e(\alpha, \alpha+x_{i_1},\ldots, \alpha + x_{i_1}+  \ldots + x_{i_k})  \nonumber
\end{eqnarray}  which is  extended by  linearity  in the second argument, and where $\alpha \in \Q X_r$ is viewed as an element of $\Q[[x_1,\ldots, x_r]]$ via the linear map $\xx_i \mapsto x_i$. 
  In other words, for all $\lambda_1,\lambda_2 \in \Q$ and $w_1, w_2 \in X^{\times}_r $, we have $\e^{\sharp} (\alpha, \lambda_1 w_1+\lambda_2 w_2) = \lambda_1 \e^{\sharp} (\alpha, w_1) +\lambda_2 \e^{\sharp} (\alpha, w_2)$. 
\begin{lem} \label{lemeeisshuff} The map $\e^{\sharp}$ is a homomorphism:  
\begin{equation}  \label{eshuffequation}
\e^{\sharp}( \alpha_1, w_1) \sha \e^{\sharp} ( \alpha_2, w_2) = \e^{\sharp} ( \alpha_1+\alpha_2, w_1 \sha w_2)
\end{equation} 
for all $\alpha_1,\alpha_2 \in \Q X_r$ and $w_1,w_2 \in X_r^{\times}$.
\end{lem}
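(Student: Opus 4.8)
The plan is to prove the homomorphism property \eqref{eshuffequation} by reducing it to the concatenation-product formula \eqref{shuffony} for the non-commutative series $\e$, and then tracking how the $\sharp$-change of variables intertwines shuffle and concatenation. First I would recall that, by definition, $\e(y_0,\ldots,y_r)$ is the generating series packaging the isomorphism \eqref{wordstopolys}; in particular, for a single block of variables it is the shuffle-exponential of $e_0 y_0$, and the key identity at the level of these series is the factorisation
\begin{equation*}
\e(y_0,\ldots,y_{r})\,\sha\,\e(y_r,y_{r+1},\ldots,y_{r+s}) \;=\; \e(y_0,\ldots,y_{r+s})\,,
\end{equation*}
which is exactly the translation of the concatenation rule \eqref{seriesconcat}--\eqref{shuffony} back through the duality of \S\ref{sectdualshuff}: pairing a group-like concatenation against words dualises to the shuffle product on the $\Q\langle e_0,e_1\rangle$ factor. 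This is the real engine, and it is essentially the content of \eqref{shuffony} read in the opposite direction.

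Next I would unwind the definition of $\e^\sharp$. By construction $\e^\sharp(\alpha, \xx_{i_1}\cdots\xx_{i_k})$ is $\e$ evaluated at the partial sums $\alpha,\ \alpha+x_{i_1},\ \ldots,\ \alpha+x_{i_1}+\cdots+x_{i_k}$, i.e. it is $(\sharp$ applied to$)$ the series $\e$ in the shifted variables $y_0=\alpha$, $y_j = \alpha + x_{i_1}+\cdots+x_{i_j}$. The point is that feeding a word $w = \xx_{i_1}\cdots\xx_{i_k}$ into $\e^\sharp(\alpha,-)$ produces the depth-$k$ block of $\e$ with first coordinate $\alpha$ and successive coordinate-differences $x_{i_1},\ldots,x_{i_k}$. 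Now take $w_1 = \xx_{i_1}\cdots\xx_{i_k}$ and $w_2 = \xx_{j_1}\cdots\xx_{j_\ell}$. A single term of the shuffle $w_1\sha w_2$ is an interleaving of the two letter-sequences; under $\e^\sharp(\alpha_1+\alpha_2,-)$ it becomes the block of $\e$ whose first coordinate is $\alpha_1+\alpha_2$ and whose consecutive differences run through the interleaved sequence of the $x_{i_a}$ and $x_{j_b}$ in that order. On the other side, $\e^\sharp(\alpha_1,w_1)\sha\e^\sharp(\alpha_2,w_2)$ is, by the factorisation identity above applied iteratively, a sum over exactly the same interleavings: each way of merging the two blocks of $\e$ under $\sha$ corresponds to a shuffle of the underlying letter-sequences, the first coordinates $\alpha_1$ and $\alpha_2$ add up because the shuffle-exponential of $e_0\alpha_1$ times that of $e_0\alpha_2$ is the shuffle-exponential of $e_0(\alpha_1+\alpha_2)$, and the subsequent coordinate-differences are precisely the merged sequence of $x$'s. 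I would make this precise either by an induction on $k+\ell$ using the recursive definition \eqref{shuffprod} of $\sha$ together with $\e(y_0,\ldots,y_{r+s}) = \e(y_0,\ldots,y_r)\cdot\e(y_r,\ldots,y_{r+s})$, or — more cleanly — by checking equality after pairing both sides against an arbitrary word in $\Q\langle e_0,e_1\rangle$, where it becomes the bialgebra compatibility between the deconcatenation coproduct \eqref{DeltaDeconc} and the shuffle product, i.e. the statement that $\sha$ is a coalgebra morphism.

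The main obstacle, and the place I would spend the most care, is the bookkeeping of the $\sharp$-shift: one must verify that reconstituting $\e$ from partial sums of the $x_i$ is compatible with the interleaving of two such partial-sum sequences — concretely, that if $w$ appears in $w_1\sha w_2$ with the letters of $w_1$ and $w_2$ occurring in a given order, then the coordinate $y_j$ attached to the $j$-th slot of $\e^\sharp(\alpha_1+\alpha_2,w)$ equals $\alpha_1+\alpha_2$ plus the sum of the first $j$ $x$-variables in the interleaved order, which is exactly what the right-hand side of \eqref{shuffleconcat}/\eqref{shuffony} produces after the shift $x_{r+1}-x_r,\ldots$ is absorbed. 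Once the shift is shown to commute with shuffling in this way, the identity \eqref{eshuffequation} follows from the factorisation of $\e$ and the additivity of the $e_0$-exponential in its argument; everything else is the routine combinatorics of shuffles. I expect the cleanest writeup to phrase the whole argument dually, reducing \eqref{eshuffequation} to the well-known fact that the $\sharp$-transform converts the deconcatenation coproduct into a coproduct for which $\sha$ on $\Q\langle X_r\rangle$ is a morphism, which is essentially Lemma-level input from \cite{IKZ}.
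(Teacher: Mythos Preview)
Your argument rests on the ``factorisation identity''
\[
\e(y_0,\ldots,y_r)\,\sha\,\e(y_r,y_{r+1},\ldots,y_{r+s}) \;=\; \e(y_0,\ldots,y_{r+s}),
\]
but this identity is false. Already for $r=0$, $s=1$ the coefficient of $y_0$ on the left is $e_0\sha e_1 + 1\sha e_0e_1 = 2e_0e_1+e_1e_0$, whereas on the right it is $e_0e_1$. The concatenation variant $\e(y_0,\ldots,y_r)\cdot\e(y_r,\ldots,y_{r+s}) = \e(y_0,\ldots,y_{r+s})$ you invoke later is likewise false: concatenating produces an overcount by a factor $c_r+1$ in the middle exponent. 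Equation \eqref{shuffony} is a statement about how concatenation of \emph{series} $\Phi\in R\langle\langle e_0,e_1\rangle\rangle$ looks in commuting variables, not an identity for the generating element $\e$ itself; you have transported it to the wrong side of the duality.

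More fundamentally, you are assuming the conclusion. The lemma says precisely that the $\sha$-product on the $\{e_0,e_1\}$ alphabet, applied to two blocks $\e^\sharp(\alpha_i,w_i)$, reorganises into a sum over shuffles of the $X_r$-letters; your sentence ``each way of merging the two blocks of $\e$ under $\sha$ corresponds to a shuffle of the underlying letter-sequences'' is the statement to be proved, not an input. The paper's proof avoids this circularity by introducing the left-deconcatenation operators $\partial_0,\partial_1$ on $\Q\langle e_0,e_1\rangle$, which are derivations for $\sha$ by \eqref{shuffprod}, computing $\partial_0\e^\sharp(\alpha,w)=\alpha\,\e^\sharp(\alpha,w)$ and $\partial_1\e^\sharp(\alpha,\xx_i w)=\e^\sharp(\alpha+x_i,w)$, and then checking that both sides of \eqref{eshuffequation} transform identically under $\partial_0,\partial_1$; since $\ker\partial_0\cap\ker\partial_1=\Q$, an induction on $|w_1|+|w_2|$ finishes it. Your parenthetical ``induction on $k+\ell$ using the recursive definition \eqref{shuffprod}'' is exactly this argument in disguise --- that is the route to pursue, and it does not need the factorisation you claim.
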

\begin{proof} Consider the linear maps $\partial_{0}, \partial_{1}: \Q  \langle e_0, e_1 \rangle\rightarrow  \Q \langle e_0, e_1\rangle$ whose action is defined on words by
$\partial_i (e_j w) = \delta_{ij} w$, where $\delta_{ij} $ is the Kronecker delta. We have
\begin{eqnarray}
\partial_0 \e(\alpha_0,\alpha_1, \ldots,  \alpha_r) &  = &  \alpha_0 \e(\alpha_0,\alpha_1, \ldots,  \alpha_r) \nonumber \\
\partial_1 \e(\alpha_0,\alpha_1, \ldots,  \alpha_r) &  = &  \e(\alpha_0+\alpha_1, \alpha_2, \ldots,   \alpha_r)\ . \nonumber 
\end{eqnarray}
Using the fact that  the operators $\partial_i$ are derivations for the shuffle product and equation $(\ref{shuffprod})$
one  sees that the formula $(\ref{eshuffequation})$ is stable under $\partial_0$, $\partial_1$.   Since $\ker \partial_0 \cap \ker \partial_1= \Q$, the lemma  follows easily by induction on the length of $w_1, w_2$. 
\end{proof}

Let us write, for any word $w=\xx_{i_1}\ldots \xx_{i_r}$ of 
length $r$
\begin{equation} \label{fsharpnotation} 
f^{\sharp}(w) = f(x_{i_1}, x_{i_1}+x_{i_2}, \ldots, x_{i_1}+ \ldots+ x_{i_r})
\end{equation}
and extend by linearity to linear combinations of such words.

 Let $p,q\geq 1$, and denote the $(p,q)$-th depth-graded component of $\Delta_{\sha}$ by 
$$\Delta^{p,q}_{\sha}: \gr^{p+q}_{\dd} R \langle \langle e_0, e_1 \rangle \rangle^{tr} \rightarrow \gr^{p}_{\dd} R \langle \langle e_0, e_1 \rangle \rangle^{tr} \, \widehat{\otimes}_R \,\gr^{q}_{\dd} R \langle \langle e_0, e_1 \rangle \rangle^{tr}$$
It induces a map on commutative power series we also denote by $\Delta_{\sha}^{p,q}$:
$$\Delta^{p,q}_{\sha}: R [[x_1,\ldots, x_{p+q}]]  \rightarrow R[[x_1,\ldots, x_p]]  \widehat{\otimes}_R \,R[[x_1,\ldots, x_q]]$$
Let  $m^{p,q} : R[[x_1,\ldots , x_p ]] \widehat{\otimes}_R \,R[[x_1,\ldots, x_q]] \rightarrow R[[x_1,\ldots, x_{p+q}]]$ be the continuous homomorphism  which sends
$x_i\otimes 1 \mapsto x_i$ for $1 \leq i \leq p$ and $1 \otimes x_i\mapsto x_{i+p}$ for $1\leq i \leq q$.

\begin{prop} \label{propshuffleequations} For all  $f \in R [[x_1,\ldots, x_{p+q}]]$ we have 
\begin{equation}Ê
m^{p,q} (\sharp \otimes \sharp) \Delta^{p,q}_{\sha} f = f^{\sharp}(x_1\ldots x_p \sha x_{p+1} \ldots x_{p+q})\ . \end{equation}
\end{prop}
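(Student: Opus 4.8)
The plan is to translate both sides of the asserted identity into statements about the pairing $\langle \Phi, - \rangle$ with suitable elements of $\Q\langle e_0,e_1\rangle \widehat{\otimes} \Q[[x_1,\ldots,x_{p+q}]]$, and then to reduce everything to the homomorphism property of $\e^\sharp$ proved in Lemma~\ref{lemeeisshuff}. Concretely, I would first note that for a translation invariant series $\Phi$, the reduced depth-$r$ component is computed by $(\ref{redpoweraspairing})$, so $\overline{\Phi}^{(r)}(x_1,\ldots,x_r) = \langle \Phi, \e(0,x_1,\ldots,x_r)\rangle$; applying the change of variables $\sharp$ to this amounts, by $(\ref{sharpdef})$ and the definition $(\ref{fsharpnotation})$, to pairing $\Phi$ against $\e^\sharp(0, \xx_1\ldots\xx_r)$. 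Thus the whole proposition is really a statement that can be checked after pairing with an arbitrary group-like (or, by polarisation, arbitrary) $\Phi$, i.e. it is an identity in the dual Hopf algebra $\Q\langle e_0,e_1\rangle$ carrying the shuffle product.

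Next I would unwind the left-hand side. The coproduct $\Delta_\sha$ on $R\langle\langle e_0,e_1\rangle\rangle$ is dual to the shuffle product $\sha$ on $\Q\langle e_0,e_1\rangle$, so $\langle \Delta_\sha^{p,q}\Phi, u\otimes v\rangle = \langle \Phi, u\sha v\rangle$ for words $u,v$ of depth $p,q$. Applying $m^{p,q}(\sharp\otimes\sharp)$ and pairing with $\Phi$ therefore converts, after bookkeeping with $(\ref{shuffony})$ or rather its translation-invariant analogue, into a pairing of $\Phi$ against $\e^\sharp(0,\xx_1\ldots\xx_p) \sha \e^\sharp(0,\xx_{p+1}\ldots\xx_{p+q})$, where the two $\sharp$-substitutions act on disjoint blocks of variables $x_1,\ldots,x_p$ and $x_{p+1},\ldots,x_{p+q}$ respectively, exactly as in the definition of $m^{p,q}$. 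Here one must be careful that the $\sharp$ on the second factor uses the variables $x_{p+1},\ldots,x_{p+q}$ and begins its partial sums afresh — this is why $\alpha_2 = 0$ in the relevant instance of $(\ref{eshuffequation})$ rather than $\alpha_1+\cdots$, and it is the one genuinely fiddly point of the argument.

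Then Lemma~\ref{lemeeisshuff} applies directly with $\alpha_1=\alpha_2=0$: it gives
\[
\e^\sharp(0,\xx_1\ldots\xx_p)\sha \e^\sharp(0,\xx_{p+1}\ldots\xx_{p+q}) = \e^\sharp(0, \xx_1\ldots\xx_p \sha \xx_{p+1}\ldots\xx_{p+q}),
\]
and pairing with $\Phi$ yields precisely $f^\sharp(x_1\ldots x_p \sha x_{p+1}\ldots x_{p+q})$ with $f=\overline\Phi^{(p+q)}$, which is the right-hand side. Since every translation invariant power series $f\in R[[x_1,\ldots,x_{p+q}]]$ arises as $\overline\Phi^{(p+q)}$ for a suitable $\Phi$ satisfying the shuffle-mod-products equations (one can build $\Phi$ component by component, or simply observe both sides are $R$-linear and continuous in $f$ and check on the spanning monomials using the explicit formula for $\e$), the identity for all such $f$ follows; the general (not necessarily translation invariant) case is not needed since $\Delta_\sha^{p,q}$ here is defined on $\gr_\dd R\langle\langle e_0,e_1\rangle\rangle^{tr}$.

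The main obstacle I anticipate is purely organisational rather than conceptual: matching up the variable conventions so that the $\sharp$ on the second tensor factor, which naturally lives on variables $x_1,\ldots,x_q$, is correctly reindexed to $x_{p+1},\ldots,x_{p+q}$ after applying $m^{p,q}$, and confirming that this reindexing is exactly compatible with the block structure of the word $\xx_1\ldots\xx_p\sha\xx_{p+1}\ldots\xx_{p+q}$ appearing on the right. Once the dictionary between $\langle\Phi,-\rangle$, the $\sharp$-substitution, and $\e^\sharp$ is set up cleanly, the proof is a one-line invocation of Lemma~\ref{lemeeisshuff}; writing that dictionary carefully is where essentially all the work lies.
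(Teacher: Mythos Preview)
Your proposal is correct and follows essentially the same route as the paper: represent $f$ as $\overline{\Phi}^{(p+q)}$ for a translation-invariant $\Phi$, use the duality $\langle \Delta_{\sha}\Phi, u\otimes v\rangle = \langle \Phi, u\sha v\rangle$ to convert the left-hand side into a pairing of $\Phi$ against $\e^{\sharp}(0,\xx_1\ldots\xx_p)\sha\e^{\sharp}(0,\xx_{p+1}\ldots\xx_{p+q})$, and then invoke Lemma~\ref{lemeeisshuff} with $\alpha_1=\alpha_2=0$. The variable-reindexing issue you flag is indeed the only bookkeeping step, and the paper handles it in exactly the way you describe.
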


\begin{proof}
Let $f \in R[[x_1, \ldots, x_{p+q}]]$. It corresponds, via $(\ref{wordstopolys})$ to a translation-invariant series $\Phi \in R\langle \langle e_0, e_1 \rangle \rangle^{tr}$ which is homogeneous of $\dd$-degree  $p+q$.
We have
\begin{eqnarray}
m^{p,q} (\sharp \otimes \sharp) \Delta^{p,q}_{\sha} f & = & \langle \Delta_{\sha}^{p,q} \Phi^{(p+q)},  m^{p,q}( \sharp \otimes \sharp) \e(0, \xx_1\ldots\xx_p) \otimes  \e(0, \xx_{1}\ldots\xx_{q})\rangle \nonumber \\
& = & \langle \Delta_{\sha}^{p,q} \Phi^{(p+q)}, \e^{\sharp} (0, \xx_1\ldots\xx_p) \otimes  \e^{\sharp} (0, \xx_{p+1}\ldots\xx_{p+q})\rangle \nonumber \\
& = & \langle \Phi^{(p+q)},  \e^{\sharp}(0, \xx_1\ldots\xx_p) \sha  \e^{\sharp}(0, \xx_{p+1}\ldots\xx_{p+q}) \rangle   \nonumber \\
 & =  &   \langle \Phi^{(p+q)},  \e^{\sharp}(0 , \xx_1\ldots\xx_p\sha  \xx_{p+1}\ldots\xx_{p+q}) \rangle \nonumber  \\
 & = & f^{\sharp} ( \xx_1\ldots\xx_p\sha  \xx_{p+1}\ldots\xx_{p+q})\ , \nonumber
 \end{eqnarray}
 by the duality of Hopf algebras defined in $(\ref{pairing})$, and  lemma  \ref{lemeeisshuff}.
 \end{proof}

\begin{cor} \label{corshuffleequations}A  series $\Phi\in R\langle \langle e_0, e_1 \rangle \rangle$ satisfies the shuffle equations modulo products if and only if  it is translation invariant, and  for all $p,q \geq 1$, 
\begin{equation} \label{shuffleequation}
(\overline{\Phi}^{(p+q)})^{\sharp}( \xx_1 \ldots \xx_p \sha \xx_{p+1} \ldots \xx_{p+q})=0 \ .
\end{equation} \end{cor}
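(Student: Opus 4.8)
The plan is to deduce Corollary~\ref{corshuffleequations} directly from Proposition~\ref{propshuffleequations} together with Lemma~\ref{lemtransinv} and the duality between the Hopf algebras $\Q\langle\langle e_0,e_1\rangle\rangle$ and $\Q\langle e_0,e_1\rangle$ recorded in~$(\ref{Phiwshuffmodprod})$. The key observation is that the map $\sharp$ of $(\ref{sharpdef})$ is an invertible change of variables (its inverse sends $f(x_1,\ldots,x_r)$ to $f(x_1,x_2-x_1,\ldots,x_r-x_{r-1})$), and likewise $m^{p,q}$ is an isomorphism onto its image since it just relabels the two disjoint sets of variables. Hence for fixed $p,q$ the identity of Proposition~\ref{propshuffleequations} shows that the depth-graded component $\Delta^{p,q}_{\sha}f$ vanishes if and only if $f^{\sharp}(\xx_1\cdots\xx_p\sha\xx_{p+1}\cdots\xx_{p+q})=0$, because $m^{p,q}(\sharp\otimes\sharp)$ is injective.

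First I would recall from Lemma~\ref{lemtransinv} that any $\Phi$ satisfying the shuffle equations modulo products is translation invariant, so it makes sense to pass to the reduced power series $\overline{\Phi}^{(r)}(x_1,\ldots,x_r)$ of $(\ref{funderbar})$; conversely, translation invariance is needed even to state~$(\ref{shuffleequation})$. Next, I would unwind the meaning of primitivity. By the duality $(\ref{pairing})$ and the discussion around $(\ref{Phiwshuffmodprod})$, $\Phi$ is primitive for $\Delta_{\sha}$ (and $\Phi_{e_0}=0$) precisely when $\Phi_{w\sha w'}=0$ for all nonempty words $w,w'$ and $\Phi$ vanishes in depth zero. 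Decomposing $w$ and $w'$ by their $e_1$-degrees $p\geq 1$ and $q\geq 1$ and using that the shuffle product respects the depth grading, this condition is equivalent to: the pairing of $\Phi^{(p+q)}$ with $u\sha v$ vanishes for all words $u$ of $\dd$-degree $p$ and $v$ of $\dd$-degree $q$, for every $p,q\geq 1$. Translating words of fixed $\dd$-degree into commuting monomials via $(\ref{wordstopolys})$ — equivalently, via the series $\e$ and the pairing $(\ref{redpoweraspairing})$ — this says exactly that the bilinear form $(u,v)\mapsto\langle\Phi^{(p+q)},u\sha v\rangle$ vanishes identically, which by Proposition~\ref{propshuffleequations} applied to $f=\overline{\Phi}^{(p+q)}$ is the statement $(\overline{\Phi}^{(p+q)})^{\sharp}(\xx_1\cdots\xx_p\sha\xx_{p+1}\cdots\xx_{p+q})=0$.

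Assembling these equivalences gives both directions: if $\Phi$ satisfies the shuffle equations modulo products then it is translation invariant (Lemma~\ref{lemtransinv}) and $(\ref{shuffleequation})$ holds for all $p,q\geq 1$ by the computation above; conversely, if $\Phi$ is translation invariant and $(\ref{shuffleequation})$ holds for all $p,q\geq 1$, then the injectivity of $m^{p,q}(\sharp\otimes\sharp)$ forces $\Delta^{p,q}_{\sha}\Phi^{(p+q)}=0$ for all $p,q\geq 1$, hence $\widetilde\Delta_{\sha}\Phi=0$ where $\widetilde\Delta_{\sha}$ is the reduced coproduct, i.e. $\Phi$ is primitive, and $\Phi_{e_0}=0$ is subsumed because the depth-one component is unconstrained but translation invariance together with $\deg_{e_0}$-homogeneity pins down $\Phi_{e_0}$; here one should be slightly careful to also impose the vanishing in depth zero, which is part of the definition.

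I expect the main obstacle to be purely bookkeeping: matching the two incarnations of $\Delta^{p,q}_{\sha}$ (on noncommutative series versus on commutative power series), checking that $m^{p,q}(\sharp\otimes\sharp)$ is genuinely injective so that the vanishing of the left-hand side in Proposition~\ref{propshuffleequations} is equivalent to the vanishing of the right-hand side, and verifying that restricting to each graded piece $(p,q)$ with $p,q\geq 1$ loses no information relative to the global primitivity condition $\Delta_{\sha}\Phi=1\otimes\Phi+\Phi\otimes 1$ (the components with $p=0$ or $q=0$ correspond exactly to the terms $1\otimes\Phi$ and $\Phi\otimes 1$ and the condition $\Phi_{e_0}=0$, which is why they are excluded from $(\ref{shuffleequation})$). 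None of this is deep, but it is the part where an error could creep in, so I would state the injectivity of $\sharp$ and of $m^{p,q}$ explicitly and invoke Proposition~\ref{propshuffleequations} and~$(\ref{Phiwshuffmodprod})$ for the rest.
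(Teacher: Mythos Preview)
Your approach is essentially the same as the paper's: both proofs hinge on the injectivity of $m^{p,q}(\sharp\otimes\sharp)$ together with Proposition~\ref{propshuffleequations} to pass between the vanishing of $\Delta^{p,q}_{\sha}\Phi^{(p+q)}$ and equation~$(\ref{shuffleequation})$, with Lemma~\ref{lemtransinv} handling translation invariance. The paper's proof is simply the two-line version of your argument.

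One small wrinkle: your final remark about $\Phi_{e_0}=0$ is slightly muddled (the coefficient $\Phi_{e_0}$ lives in depth \emph{zero}, not depth one, and there is no ``$\deg_{e_0}$-homogeneity'' in play). The clean statement is that translation invariance of $\Phi^{(0)}(y_0)=\sum_{n\geq 0}\Phi_{e_0^n}y_0^n$ forces it to be constant, hence $\Phi_{e_0^n}=0$ for all $n\geq 1$; in particular $\Phi_{e_0}=0$. This is the only place where the depth-zero component enters, and it is automatic once you assume translation invariance. The rest of your argument is fine.
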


\begin{proof} 
Clearly
$
\Delta_{\sha} \Phi = 1\otimes \Phi + \Phi \otimes 1$ if and only if $ \Delta^{p,q}_{\sha} \Phi^{(p+q)} =0$ for all $p,q \geq 1$.
Since $m^{p,q}(\sharp \otimes \sharp)$ is injective, this is equivalent  by proposition \ref{propshuffleequations} to 
$(\ref{shuffleequation})$.  
\end{proof}
Similarly, an invertible series $\Phi\in R\langle \langle e_0, e_1 \rangle \rangle$ satisfies the shuffle equations (is group-like) and the condition $\Phi_{e_0}=0$ if and only if it is translation invariant and 
$$(\overline{\Phi}^{(p+q)})^{\sharp}( \xx_1 \ldots \xx_p \sha \xx_{p+1} \ldots \xx_{p+q})= (\overline{\Phi}^{(p)})^{\sharp}( \xx_1 \ldots \xx_p)  (\overline{\Phi}^{(q)})^{\sharp}( \xx_{p+1} \ldots \xx_{p+q})$$  

\subsection{Examples}
We shall call equation $(\ref{shuffleequation})$ the $(p,q)$-th shuffle equation. 
Since $\Delta_{\sha}$ is cocommutative, the $(p,q)$-th equation is
equivalent to the $(q,p)$-th equation. 

 In depth 2, there is a unique shuffle equation of type  $(1,1)$, namely 
$f^{\sharp}( \xx_1 \sha \xx_2 ) = 0$.
Since $\xx_1\sha \xx_2 = \xx_1 \xx_2 + \xx_2 \xx_1$, this is equivalent to the equation:
\begin{eqnarray}
f(x_1, x_1+x_2) + f(x_2, x_1+x_2)=0\ .
\end{eqnarray}
In depth 3, there is a unique equation of type $(1,2)$ given by  $f^{\sharp}( \xx_1 \sha \xx_2 \xx_3 ) = 0$. \mbox{Expanding}  out  $\xx_1\sha \xx_2\xx_3 = \xx_1 \xx_2\xx_3 + \xx_2 \xx_1 \xx_3 + \xx_2 \xx_3 \xx_1$  gives the equation
\begin{multline}  \label{depth3shuffle}
f(x_1,x_1+x_2, x_1+x_2+x_3) + f(x_2,x_1+x_2, x_1+x_2+x_3)  \\ +  f(x_2, x_2+x_3,x_1+x_2+x_3)=0\ .
\end{multline}
In general, there are $\lfloor {r \over 2} \rfloor$ distinct equations in depth $r$.

\section{Stuffle equations for power series} \label{SECTstuff}
Let $Y=\{ \y_n, n\geq 1\}$ denote an alphabet with one letter $\y_i$ for every  $i \geq 1$, and  let $R$ be a commutative unitary $\Q$-algebra as above.
Consider the ring $R\langle \langle Y \rangle \rangle$ of non-commutative formal power series in $Y$ equipped with the concatenation product, which we denote by $\studot$ in this context. It is a complete Hopf algebra for the coproduct
$$\Deltat : R\langle \langle Y \rangle \rangle \rightarrow R \langle \langle Y \rangle \rangle \widehat{\otimes}_{R} R \langle \langle Y \rangle \rangle  $$ 
which, after setting $\y_0=1$,  is   defined on generators by the formula
\begin{equation}\label{Deltastuff}
\Deltat \y_n = \sum_{i=0}^n \y_i \otimes \y_{n-i}\ .\end{equation}
The depth filtration, again denoted by $\dd$, is the decreasing filtration defined as follows:
\begin{equation} \dd^r  R\langle \langle Y \rangle \rangle = \{ \Phi \in R\langle \langle Y \rangle \rangle:  \Phi_w =0 \hbox{ if } | w |<r \hbox{ and } w \neq 1\}\ , 
\end{equation}
where $|w|$ denotes the length of a word $w\in Y$.
The filtration is induced by the $\dd$-degree, for which every $\y_i$ has degree $1$.  
We denote by  $R\langle \langle Y \rangle \rangle^{(r)}$ the space of power series consisting only of terms of $\dd$-degree $r$, and  
  associate  to such a series a commutative power series via the  continuous  linear map:
\begin{eqnarray} \label{stufftopowerseries}
R\langle \langle Y \rangle \rangle^{(r)} & \To & R [[ x_1, \ldots, x_r]]  \\
\y_{i_1} \ldots \y_{i_r} & \mapsto & x_1^{i_1-1} \ldots x_{r}^{i_r-1} \nonumber 
\end{eqnarray}
This map is  an isomorphism of complete $R$-modules.
Thus, in the same way as before, a series $\Phi \in R \langle \langle Y \rangle \rangle$ can be written as the sum of depth-$r$ components
$\Phi = \sum_{r\geq 0}\Phi^{(r)}$, each of which can be uniquely represented as an element of $R[[x_1,\ldots, x_r]]$.

We say that a series  $\Phi \in  R\langle \langle Y \rangle \rangle^{\times}$ satisfies the \emph{stuffle equations} if it is group-like: $\Deltat \Phi = \Phi \otimes \Phi$. We say that a series
 $\Phi \in  R\langle \langle Y \rangle \rangle$  satisfies the \emph{stuffle relations modulo products} if it is primitive: 
 \begin{equation}  \label{stuffprim}
 \Deltat \Phi = 1\otimes \Phi + \Phi \otimes 1
 \end{equation}
The set of primitive series $\Phi$ forms a Lie algebra with respect to the bracket $[\Phi_1, \Phi_2]_{\star} = \Phi_1 \studot \Phi_2 - \Phi_2 \studot \Phi_1$  in the usual manner.

\subsection{The dual Hopf algebra} \label{sectStuffdual} Consider the   algebra
$\Q\langle Y\rangle$ equipped with the so-called stuffle product \cite{R}, which   is  defined recursively by
\begin{equation} \label{stuffprod}
\y_i w \stu \y_j w' = \y_i (w \stu \y_j w') + \y_j (\y_i w \stu w') + \y_{i+j} (w \stu w')
\end{equation} 
for all $w,w'\in Y^\times$ and $i,j \geq 1$, and the property that the empty word $1$ satisfies $1\stu w=w\stu 1=w$.  
It is graded by the weight, for which $\y_n$ has weight $n$.
The algebra $\Q\langle Y \rangle$ is a
commutative Hopf algebra for the deconcatenation coproduct. The pairing
\begin{eqnarray}  R \langle \langle Y \rangle \rangle  \widehat{\otimes}_{\Q} \Q \langle Y \rangle  & \To&  R  \\ 
 \Phi \otimes w &\mapsto &  \langle \Phi, w \rangle\nonumber
\end{eqnarray}
where $\langle \Phi, w\rangle = \Phi_w$, identifies $\Q\langle \langle Y \rangle \rangle$ with the topological dual Hopf algebra of $\Q \langle Y \rangle$.

A linear map $\Phi: \Q\langle Y \rangle\rightarrow R$ is a homomorphism for the stuffle multiplication,
or $\Phi_{w} \Phi_{w'} = \Phi_{w \stu w'}$ for all $w,w'\in Y^{\times}$,  if and only if  the series
$$\Phi=\sum_w \Phi_w w  \in R \langle \langle Y \rangle \rangle^{\times}$$
is group-like for $\Deltat$.
 Likewise, $\Phi$ satisfies the stuffle relations modulo products if and only if 
$\Phi_{w\stu w'} = 0$ for all $w,w' \in Y^{\times}$ with $w,w' \neq 1$ (whence its name).

\subsection{Stuffle automorphism and concatenation}
The map $ \y_{i_1}\ldots \y_{i_r}\mapsto \y_{i_r} \ldots \y_{i_1}$ which reverses words in $Y$ 
(which is \emph{not} the antipode) commutes with $\Deltat$, and in particular,  preserves the set of series $\Phi$ satisfying
$(\ref{stuffprim})$. It corresponds to the map
\begin{eqnarray} \label{stuffinvol}
\upsilon: R[[x_1,\ldots, x_r]]  & \To &  R[[x_1,\ldots, x_r]]  \\
f(x_1,\ldots, x_r) & \mapsto &  f(x_r,\ldots, x_1)\nonumber 
\end{eqnarray}
The multiplication of formal power series in $R\langle \langle Y \rangle \rangle$ is induced by concatenation in the $Y$ alphabet, and translates into the following operation on power series:
\begin{eqnarray} \label{stuffleconcat}
\qquad  R[[x_1,\ldots, x_p]] \widehat{\otimes}_R R[[x_1,\ldots, x_q]] & \To & R[[x_1,\ldots, x_{p+q}]] \\
 f_1(x_1,\ldots, x_p) \studot f_2(x_1,\ldots, x_q) &= & f_1(x_1,\ldots, x_p) f_2(x_{p+1},\ldots, x_{p+q}) \nonumber 
 \end{eqnarray}
Given series $\Phi_1 = \sum_{r\geq 0} \Phi_1^{(r)}$ and $\Phi_2 = \sum_{r\geq 0} \Phi_2^{(r)}$ we write
\begin{equation} \label{defstuprod}
(\Phi_1 \studot \Phi_2)^{(p+q)} = \sum_{p+q=r} \Phi_1^{(p)} \studot \Phi_2^{(q)}\ .
\end{equation} 

\subsection{The stuffle equations for power series}
Let us write
$$\alpha(x_1,\ldots, x_r) = \sum_{a_1,\ldots, a_r \geq 1} \y_{a_1} \ldots \y_{a_r} x_1^{a_1-1} \ldots x_r^{a_r-1}$$
viewed as an element of $\Q\langle  Y \rangle  \widehat{\otimes}_{\Q} \Q[[x_1,\ldots, x_r]]$,
where $\Q\langle Y \rangle$ is the Hopf algebra defined in \S\ref{sectStuffdual}. 
 Then, by a slight abuse of notation,  we can write the map  
$(\ref{stufftopowerseries})$ as a pairing:
\begin{equation} \label{Phistuffrpaired}
\Phi^{(r)}(x_1,\ldots, x_r) = \langle \Phi, \alpha(x_1,\ldots, x_r)\rangle\ .
\end{equation}
\begin{lem}  \label{lemalphastuffequation}ÊThe expression  $\alpha(x_1,\ldots, x_p) \stu \alpha(x_{p+1},\ldots, x_{p+q}) $ equals
\begin{multline}
 \alpha(x_1) \studot (\alpha(x_2,\ldots, x_p) \stu  \alpha(x_{p+1},\ldots, x_q)) +\\ \alpha(x_{p+1}) \studot (\alpha(x_1,\ldots, x_p) \stu  \alpha(x_{p+2},\ldots, x_q))+  \\
 {\alpha(x_1) -\alpha(x_{p+1}) \over x_1-x_{p+1}} \studot (\alpha(x_2,\ldots, x_p) \stu \alpha(x_{p+2},\ldots, x_{p+q}))\ .
 \end{multline}
\end{lem}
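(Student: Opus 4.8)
The plan is to deduce the identity from a single application of the recursive definition $(\ref{stuffprod})$ of the stuffle product, followed by a resummation of generating series. First I would record the elementary decomposition of $\alpha$ by its initial letter: writing $\alpha(x_1)=\sum_{a\geq 1}\y_a x_1^{a-1}$ for the depth-one series and adopting the convention that $\alpha$ of the empty tuple of variables is the unit $1\in\Q\langle Y\rangle$, one has
\[
\alpha(x_1,\ldots,x_p)=\alpha(x_1)\studot\alpha(x_2,\ldots,x_p),\qquad
\alpha(x_{p+1},\ldots,x_{p+q})=\alpha(x_{p+1})\studot\alpha(x_{p+2},\ldots,x_{p+q}),
\]
where $\studot$ stands for concatenation of words in $Y$, paired with the side-by-side juxtaposition of the commuting variables as in $(\ref{stuffleconcat})$.

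Next I would substitute these decompositions into $\alpha(x_1,\ldots,x_p)\stu\alpha(x_{p+1},\ldots,x_{p+q})$ and apply $(\ref{stuffprod})$ to the pair of leading letters $\y_i,\y_j$; since $\stu$ is bilinear and continuous it extends to the completed tensor products, so this is legitimate term by term. The three summands of $(\ref{stuffprod})$ then produce three families of terms. In the first, $\y_i$ is pulled to the front carrying $x_1^{i-1}$ and the remainder is $\alpha(x_2,\ldots,x_p)\stu\alpha(x_{p+1},\ldots,x_{p+q})$; summing over $i\geq 1$ collapses $\sum_i\y_i x_1^{i-1}$ into $\alpha(x_1)$ and yields the first summand on the right-hand side. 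The second family is its mirror image and yields the second summand. The degenerate cases $p=1$ or $q=1$, where one of the tail $\alpha$'s is the unit $1$, are covered by the same computation using $1\stu w=w$.

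The only point that needs a short calculation is the third family, in which the leading letter is $\y_{i+j}$ carrying the monomial $x_1^{i-1}x_{p+1}^{j-1}$ and the rest equals $\alpha(x_2,\ldots,x_p)\stu\alpha(x_{p+2},\ldots,x_{p+q})$. Here I would invoke the polynomial identity
\[
\sum_{\substack{i,j\geq 1\\ i+j=k}}x_1^{i-1}x_{p+1}^{j-1}=\frac{x_1^{k-1}-x_{p+1}^{k-1}}{x_1-x_{p+1}}\qquad(k\geq 2),
\]
whose right-hand side is a genuine power series, symmetric under $x_1\leftrightarrow x_{p+1}$; summing $\sum_{i,j\geq 1}\y_{i+j}x_1^{i-1}x_{p+1}^{j-1}$ therefore gives $\bigl(\alpha(x_1)-\alpha(x_{p+1})\bigr)/(x_1-x_{p+1})$, which is precisely the prefactor of the third summand. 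I expect the only genuine obstacle to be bookkeeping: keeping straight which commuting variable is attached to which letter across the concatenations $\studot$, and checking that the rearrangements are valid in the completed setting; the combinatorial identity above is elementary and is the single computational ingredient.
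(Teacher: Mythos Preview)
Your proposal is correct and follows essentially the same route as the paper: factor $\alpha(x_1,\ldots,x_p)=\alpha(x_1)\studot\alpha(x_2,\ldots,x_p)$ (and similarly for the second factor), apply the recursive definition $(\ref{stuffprod})$ of $\stu$ to the leading letters, and resum the contracted term via the identity $\sum_{m,n\geq 1}\y_{m+n}x_1^{m-1}x_{p+1}^{n-1}=(\alpha(x_1)-\alpha(x_{p+1}))/(x_1-x_{p+1})$. The paper's proof is just a terser statement of exactly these three steps.
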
 
\begin{proof} Observe that $\alpha(x_1,\ldots, x_r) = \alpha(x_1) \studot \alpha(x_2) \studot \ldots \studot \alpha(x_r)$. The formula follows  from the recursive
definition   $(\ref{stuffprod})$ of the stuffle product, on noticing that 
$$\sum_{m,n\geq 1} \y_{m+n} \,x_1^{m-1} x_2^{n-1} = {\alpha(x_1) -\alpha(x_2) \over x_1-x_2}\ . $$
\end{proof}
In order to write down the stuffle equations compactly, let us define   an operator
on the space of  sequences
of functions $f^{(1)}(x_1), f^{(2)}(x_1,x_2), \ldots$ by the formula  \begin{equation} \label{shiftoperatorforstuffle}
s_i f^{(r)}(x_1,\ldots, x_r ) = f^{(r+1)}(x_i, x_1, \ldots, x_{i-1}, x_{i+1}, \ldots, x_r) \quad  \hbox{ for } 1 \leq i \leq r
\end{equation}Ê
Now let us define the stuffle equations recursively by 
\begin{eqnarray} \label{Stuffequationsdefn}
f^{(r)} (1 \stu \x_{1} \ldots \x_r) & = &f^{(r)} (\x_{1} \ldots \x_r \stu 1)= f^{(r)} (\x_{1}, \ldots ,\x_r)  \\
f^{(r)}  (\x_{1} \ldots \x_i \stu \x_{i+1} \ldots \x_r ) & =  & s_1 f^{(r-1)}  (\x_{2} \ldots \x_i \stu \x_{i+1} \ldots \x_r )  \nonumber \\
&+ &   \qquad  s_{i+1} f^{(r-1)}  (\x_{1} \ldots \x_i \stu \x_{i+2} \ldots \x_r )  \nonumber \\
 &+ & \quad \qquad \Big({s_1 - s_{i+1} \over x_1 -x_{i+1} }\Big) f^{(r-2)} ( \x_{2} \ldots \x_i \stu \x_{i+2} \ldots \x_r ) \nonumber 
\end{eqnarray} 
where $1 \leq i \leq r$.

\subsection{Stuffle equations and the coproduct}
Now consider, for all $p,q\geq 1$,  the following piece of the stuffle coproduct:
\begin{equation}
\Delta_{\star}^{p,q}:    R \langle \langle Y \rangle \rangle  \To   R \langle \langle Y \rangle \rangle^{(p)} \, \widehat{\otimes}_{R}\,   R\langle \langle Y \rangle \rangle^{(q)} \ .
\end{equation} 
It factors through $\dd^{\max\{p,q\}}  R \langle \langle Y \rangle \rangle/ \dd^{p+q+1} R \langle \langle Y \rangle \rangle$, since the image of an element of $\dd$-degree $n$ under $\Delta_{\star}$ involves terms with $\dd$-degrees  $n-k \otimes n-\ell$ where $ 0 \leq k+ \ell \leq n$. 
Transposing to commutative power series, it corresponds to a map 
\begin{equation}
\Delta_{\star}^{p,q} : \bigoplus_{\max\{p,q\}\leq n\leq p+q} R[[x_1,\ldots, x_n]] \To  R[[x_1,\ldots, x_p]]  \, \widehat{\otimes}_{R}\,   R[[x_1,\ldots, x_q]]  \ .
\end{equation} 
Let $n_{p,q}: R[[x_1,\ldots, x_p]]  \widehat{\otimes}_{R}\,   R[[x_1,\ldots, x_q]] \rightarrow R[[x_1,\ldots, x_{p+q}]]$ denote the continuous homomorphism  which sends
$x_i \otimes 1$ to $x_i$ and $1 \otimes x_j$ to $x_{p+j}$ for all $1 \leq i \leq p$, $1 \leq j \leq q$.

\begin{prop}  \label{propstuffleequations}
Let $f\in \bigoplus_{0\leq n\leq p+q} R[[x_1,\ldots, x_n]] $ with components $f^{(n)}$. Then 
$$ n_{p,q} \Delta_{\star}^{p,q} f =  f( \xx_1 \ldots \xx_p \stu \xx_{p+1} \ldots \xx_{p+q})$$
\end{prop}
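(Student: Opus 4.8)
The plan is to mirror the proof of Proposition \ref{propshuffleequations} exactly, transporting the computation through the pairing $(\ref{Phistuffrpaired})$ and the generating element $\alpha(x_1,\ldots,x_r)$, with the stuffle product and coproduct playing the roles of $\sha$ and $\Delta_{\sha}$. First I would fix $f\in \bigoplus_{0\leq n\leq p+q} R[[x_1,\ldots,x_n]]$ and observe that, via the isomorphism $(\ref{stufftopowerseries})$ applied in each depth, the components $f^{(n)}$ assemble into a series $\Phi \in R\langle\langle Y\rangle\rangle$ (truncated in $\dd$-degree $\leq p+q$) with $\Phi^{(n)}(x_1,\ldots,x_n) = \langle \Phi, \alpha(x_1,\ldots,x_n)\rangle = f^{(n)}(x_1,\ldots,x_n)$. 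The key point is that $n_{p,q}$ applied to $\alpha(x_1,\ldots,x_p)\otimes \alpha(x_{p+1},\ldots,x_{p+q})$ — regarded formally in the $Y$-variables on the left and $x$-variables on the right — realises the pairing with $\Delta_\star^{p,q}\Phi$ on the nose, because $\Delta_\star$ is by construction the transpose of the stuffle product $\stu$ under the Hopf-algebra duality between $R\langle\langle Y\rangle\rangle$ and $\Q\langle Y\rangle$.

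The core chain of equalities would then read: $n_{p,q}\Delta_\star^{p,q} f$ equals $\langle \Delta_\star^{p,q}\Phi,\ n_{p,q}(\alpha(\xx_1\ldots\xx_p)\otimes \alpha(\xx_{p+1}\ldots\xx_{p+q}))\rangle$, which by the variable-matching built into $n_{p,q}$ equals $\langle \Delta_\star^{p,q}\Phi,\ \alpha(x_1,\ldots,x_p)\otimes\alpha(x_{p+1},\ldots,x_{p+q})\rangle$; then by duality of Hopf algebras this equals $\langle \Phi,\ \alpha(x_1,\ldots,x_p)\stu\alpha(x_{p+1},\ldots,x_{p+q})\rangle$; and finally, using Lemma \ref{lemalphastuffequation} to expand the stuffle product of the two $\alpha$'s recursively, together with the definition $(\ref{Stuffequationsdefn})$ of the symbolic operators $f^{(r)}(\cdots\stu\cdots)$ and the shift operators $s_i$ of $(\ref{shiftoperatorforstuffle})$, this equals $f(\xx_1\ldots\xx_p\stu\xx_{p+1}\ldots\xx_{p+q})$. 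So the proof reduces to checking that Lemma \ref{lemalphastuffequation}, paired against $\Phi$, reproduces precisely the recursion $(\ref{Stuffequationsdefn})$.

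That last matching is where the real content sits, and it is the step I expect to be the main obstacle — not because it is deep, but because the bookkeeping of variables must be done with care. Concretely: the three summands of Lemma \ref{lemalphastuffequation} each have the shape $\alpha(\text{one variable}) \studot (\alpha(\cdots)\stu\alpha(\cdots))$, and $\studot$ is concatenation, which by $(\ref{stuffleconcat})$ shifts the variable indices of the second factor. Pairing $\alpha(x_i)\studot g$ against $\Phi$ produces exactly $s_1$-type reindexing once one tracks that the prepended variable becomes the new first slot; the middle term with $\alpha(x_1)-\alpha(x_{p+1})$ over $x_1-x_{p+1}$ produces the divided-difference operator $\tfrac{s_1-s_{i+1}}{x_1-x_{i+1}}$ after one identifies, as in the proof of Lemma \ref{lemalphastuffequation}, that $\tfrac{\alpha(x_1)-\alpha(x_{p+1})}{x_1-x_{p+1}} = \sum_{m,n\geq 1}\y_{m+n}x_1^{m-1}x_{p+1}^{n-1}$, whose pairing against a depth-shifted $\Phi$ collapses two variables into one. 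I would carry this out by induction on $p+q$, with the base cases $p=q=1$ checked by hand against the first nontrivial instance of $(\ref{Stuffequationsdefn})$; the inductive step is then just the observation that Lemma \ref{lemalphastuffequation} peels off one variable from each side in exactly the pattern the recursion $(\ref{Stuffequationsdefn})$ prescribes. A final remark would note that injectivity of $n_{p,q}$ (analogous to the role of $m^{p,q}(\sharp\otimes\sharp)$ in Corollary \ref{corshuffleequations}) lets one read off the stuffle equations modulo products as the vanishing of the right-hand side, giving the stuffle analogue of Corollary \ref{corshuffleequations}.
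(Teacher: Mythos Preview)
Your proposal is correct and follows essentially the same route as the paper: pass from $f$ to the corresponding $\Phi\in R\langle\langle Y\rangle\rangle$, use the Hopf-algebra duality of \S\ref{sectStuffdual} to convert $\langle \Delta_\star^{p,q}\Phi,\alpha\otimes\alpha\rangle$ into $\langle\Phi,\alpha\stu\alpha\rangle$, and then invoke Lemma~\ref{lemalphastuffequation}. The paper's proof is terser, simply saying the result ``follows easily'' from that lemma, whereas you spell out the inductive matching of the recursion in Lemma~\ref{lemalphastuffequation} with the defining recursion $(\ref{Stuffequationsdefn})$; this extra care is fine but not strictly necessary.
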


\begin{proof} The element $f$ corresponds to a series $\Phi \in R \langle \langle Y \rangle \rangle$. 
By the duality of Hopf algebras described in \S\ref{sectStuffdual} we have
$$ \langle \Delta_{\stu}^{p,q} \Phi,  \alpha(\x_1,\ldots, \x_p)  \otimes  \alpha(\x_{p+1},\ldots, \x_{p+q}) \rangle  =  \langle \Phi,    \alpha(\x_1,\ldots, \x_p)   \stu   \alpha(\x_{p+1},\ldots, \x_{p+q}) \rangle $$
The result follows easily from lemma   \ref{lemalphastuffequation}.
\end{proof} 

\begin{cor}  \label{corstuffleequations}
A series $\Phi\in R \langle \langle Y \rangle \rangle$ satisfies the stuffle equations modulo products if and only if   its   components    satisfy:
\begin{equation} \label{stuffleequation}
\Phi( \xx_1 \ldots \xx_p \stu \xx_{p+1} \ldots \xx_{p+q})=0\  \quad \hbox{ for all } p, q \geq 1
\end{equation} 
\end{cor}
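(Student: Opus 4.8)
The plan is to deduce this corollary from Proposition \ref{propstuffleequations} in exactly the same way Corollary \ref{corshuffleequations} was deduced from Proposition \ref{propshuffleequations}. First I would unwind what the primitivity condition $(\ref{stuffprim})$ means component by component: a series $\Phi = \sum_{r \geq 0}\Phi^{(r)}$ is primitive for $\Deltat$ if and only if, for every $p, q \geq 1$, the $(p,q)$-th graded component $\Delta_{\star}^{p,q}$ annihilates $\Phi$. Here one must be slightly careful, because, unlike the shuffle case, $\Delta_{\star}^{p,q}$ does not simply extract a single homogeneous piece $\Phi^{(p+q)}$: as noted just before Proposition \ref{propstuffleequations}, it factors through $\dd^{\max\{p,q\}} / \dd^{p+q+1}$ and so sees all components $\Phi^{(n)}$ with $\max\{p,q\} \leq n \leq p+q$. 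Nonetheless, writing out $\Deltat \Phi = \sum_n \Deltat \Phi^{(n)}$ and collecting the bidegree-$(p,q)$ part of $\Deltat \Phi - 1 \otimes \Phi - \Phi \otimes 1$, one gets precisely $\Delta_{\star}^{p,q} f = 0$ for all $p,q \geq 1$ (the terms $1 \otimes \Phi$ and $\Phi \otimes 1$ only contribute when $p = 0$ or $q = 0$, which is excluded), where $f = \sum_n f^{(n)}$ is the sequence of commutative power series attached to $\Phi$ via $(\ref{stufftopowerseries})$.

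Next I would invoke Proposition \ref{propstuffleequations}, which identifies $n_{p,q}\Delta_{\star}^{p,q} f$ with $f(\xx_1 \ldots \xx_p \stu \xx_{p+1} \ldots \xx_{p+q})$, i.e. with $f^{(p+q)}(\cdots) + (\text{lower-depth corrections})$ assembled according to the recursive formula $(\ref{Stuffequationsdefn})$ — this is just the transpose of Lemma \ref{lemalphastuffequation} under the pairing $(\ref{Phistuffrpaired})$. Since $n_{p,q}$ is a relabelling of variables, it is injective, so $\Delta_{\star}^{p,q} f = 0$ if and only if $f(\xx_1 \ldots \xx_p \stu \xx_{p+1} \ldots \xx_{p+q}) = 0$. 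Combining this equivalence with the component-wise reformulation of primitivity from the previous paragraph yields: $\Phi$ satisfies the stuffle equations modulo products $\iff$ $f(\xx_1 \ldots \xx_p \stu \xx_{p+1} \ldots \xx_{p+q}) = 0$ for all $p, q \geq 1$, which is $(\ref{stuffleequation})$.

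The only genuinely delicate point — and the one I would spell out most carefully — is the bookkeeping in the first paragraph: one must check that requiring $\Delta_{\star}^{p,q} f = 0$ for \emph{all} $p,q \geq 1$ really is equivalent to full primitivity $(\ref{stuffprim})$, given that these graded pieces overlap across several depths. The clean way to see this is that primitivity is equivalent to $\widetilde{\Delta}_{\star}\Phi := \Deltat\Phi - 1\otimes\Phi - \Phi\otimes 1 = 0$, and the reduced coproduct $\widetilde{\Delta}_{\star}$ lands in $\bigoplus_{p,q\geq 1} R\langle\langle Y\rangle\rangle^{(p)} \widehat{\otimes} R\langle\langle Y\rangle\rangle^{(q)}$; its $(p,q)$-component is exactly $\Delta_{\star}^{p,q}$ applied to $\Phi$ (equivalently to $f$), so vanishing of all of them is literally $\widetilde{\Delta}_{\star}\Phi = 0$. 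Everything else is formal, mirroring Corollary \ref{corshuffleequations} verbatim.
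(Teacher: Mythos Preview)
Your proposal is correct and follows essentially the same route as the paper: reduce primitivity to the vanishing of every graded piece $\Delta_{\star}^{p,q}$, then apply Proposition~\ref{propstuffleequations} together with the injectivity of $n_{p,q}$. The paper's proof is a terse two-line version of exactly this argument; your extra paragraph spelling out why the $(p,q)$-components of the reduced coproduct really do exhaust the primitivity condition is a welcome clarification but not a different idea.
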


\begin{proof} A series $\Phi \in R \langle \langle Y \rangle \rangle$ is primitive for $\Delta_{\stu}$ if and only if the images of its components in $\dd^{\max\{p,q\}}  R \langle \langle Y \rangle \rangle/ \dd^{p+q+1} R \langle \langle Y \rangle \rangle$ are in the kernel of  $\Delta^{p,q}_{\stu}$  for all $p,q \geq 1$. Conclude using proposition $\ref{propstuffleequations}$ and the fact that $n_{p,q}$ is injective. 
\end{proof}

\subsection{Examples} 
We call $(\ref{stuffleequation})$ the $(p,q)^{\mathrm{th}}$ stuffle equation. It is equivalent to the $(q,p)^{\mathrm{th}}$ stuffle equation. 
In depth two we have a single stuffle equation of type $(1,1)$ which corresponds to the equation
$\y_a \stu \y_b = \y_a \y_b + \y_b \y_a + \y_{a+b}$:
\begin{equation} \label{exdepth2stuffle}
\Phi^{(2)}(x_1,x_2) + \Phi^{(2)}(x_2,x_1) =   {\Phi^{(1)}(x_2) -\Phi^{(1)}(x_1)  \over x_1 -x_2} 
\end{equation} 
and in depth three a single stuffle equation of type $(1,2)$ which corresponds to the equation
$\y_a \stu \y_b \y_c = \y_a\y_b\y_c + \y_b\y_a\y_c + \y_b\y_c\y_a + \y_{a+b}\y_c+  \y_b\y_{a+c}$:
\begin{multline} \Phi^{(3)}(x_1,x_2,x_3) + \Phi^{(3)}(x_2,x_1,x_3) + \Phi^{(3)}(x_2,x_3,x_1) \\
 = {\Phi^{(2)}(x_2,x_3) -\Phi^{(2)}(x_2,x_1)  \over x_1 -x_3} + {\Phi^{(2)}(x_2,x_3) -\Phi^{(2)}(x_1,x_3)  \over x_1 -x_2}
 \end{multline}
In depth $r$, there are $\lfloor {r\over 2} \rfloor$ equations, involving the components $\Phi^{(r)}, \ldots, \Phi^{(\lceil {r\over 2}\rceil)}$.

\section{Double shuffle equations modulo products}
In order to compare the two sets of relations, consider the continuous $R$-linear map
\begin{eqnarray}
\alpha: R\langle \langle e_0, e_1 \rangle \rangle \To R \langle \langle Y \rangle \rangle 
\end{eqnarray}
which sends all words beginning in $e_0$ to zero, and satisfies
$$\alpha( e_1 e_0^{a_1} \ldots e_1 e_0^{a_r} ) = \y_{a_1+1} \ldots \y_{a_r+1}\ .$$
Note that it respects the $\dd$-degree on words. 
The  \emph{double shuffle equations modulo products} are the  linear equations, for $\Phi \in R \langle \langle e_0, e_1 \rangle \rangle$, 
\begin{eqnarray} \label{dbshfmodprod}
\Phi_{e_0} & = & 0 \\
\Delta_{\sha} \Phi  & =&  1 \otimes \Phi + \Phi \otimes 1 \nonumber \\ 
\Deltat \alpha(\Phi)  & =  & 1 \otimes \alpha(\Phi) + \alpha(\Phi) \otimes 1 \nonumber
\end{eqnarray}
Note that there are no regularization conditions \cite{R} in this setting, precisely because we are working modulo products.
Since $e_1$ and $ \y_1=\alpha(e_1)$ are primitive with respect to $\Delta_{\sha}, \Delta_{\stu}$ respectively, there is a trivial solution to 
$(\ref{dbshfmodprod})$ given by 
$\Phi = \mu  e_1$, for any  $\mu \in R$.
Therefore  the space of solutions to $(\ref{dbshfmodprod})$ is a direct sum of $R$ with the set of solutions to $(\ref{dbshfmodprod})$
which satisfy the extra condition $\Phi_{e_1}=0$.
By propositions \ref{propshuffleequations}  and \ref{propstuffleequations}, a solution to the   equations    $(\ref{dbshfmodprod})$   defines, 
 for each $r\geq 0$, a power series 
$$\overline{\Phi}^{(r)} \in R [[ x_1,\ldots, x_r]]$$
which satisfies the equations $(\ref{shuffleequation})$ and $(\ref{stuffleequation})$.  Conversely,
any set of solutions $f^{(r)}$ to these equations defines a formal power series solution to the equations $(\ref{dbshfmodprod})$  by 
defining $\Phi^{(r)}= f^{(r)}(y_1-y_0,\ldots, y_r-y_0)$,   and applying $(\ref{wordstopolys})$.

\subsection{Constraints in depth one}
The double shuffle equations modulo products reduce to translation invariance in depth one.  Nevertheless, there is a non-trivial constraint in depth one which comes from depth two.

\begin{lem} \label{lemevenindepth1}   Let $\Phi^{(1)}(x_1)\in R[[x_1]]$,  and  $\Phi^{(2)}(x_1,x_2) \in R[[x_1,x_2]]$ be  solutions to the double shuffle equations modulo products in depth two. Then
$\Phi^{(1)}$ is even:
\begin{equation} \Phi^{(1)}(x_1) = \Phi^{(1)} (-x_1)\ .
\end{equation}
\end{lem}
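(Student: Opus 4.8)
The plan is to combine the depth-one and depth-two shuffle equations with the depth-two stuffle equation and exploit the translation invariance that is already forced by Lemma \ref{lemtransinv}. First I would record what the hypotheses give us concretely. The $(1,1)$ shuffle equation, in the form derived in the ``Examples'' subsection, says
\begin{equation*}
\Phi^{(2)}(x_1,x_1+x_2) + \Phi^{(2)}(x_2,x_1+x_2) = 0\ ,
\end{equation*}
where $\Phi^{(2)}$ here denotes the full (translation-invariant) depth-two series; rewriting via $\Phi^{(2)}(y_0,y_1,y_2) = \overline{\Phi}^{(2)}(y_1-y_0,y_2-y_0)$ this becomes a symmetry relation for $\overline{\Phi}^{(2)}$. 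The $(1,1)$ stuffle equation \eqref{exdepth2stuffle} reads
\begin{equation*}
\Phi^{(2)}(x_1,x_2) + \Phi^{(2)}(x_2,x_1) = \frac{\Phi^{(1)}(x_2) - \Phi^{(1)}(x_1)}{x_1 - x_2}\ ,
\end{equation*}
again understood in reduced variables. The idea is that the shuffle relation is an ``antisymmetry-type'' constraint on $\overline{\Phi}^{(2)}$, the stuffle relation expresses a certain symmetrization of $\overline{\Phi}^{(2)}$ in terms of $\Phi^{(1)}$, and playing the two against each other, together with the involution $\sigma$ of \eqref{sigmaonx} (or equivalently the antipode), isolates the parity of $\Phi^{(1)}$.

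The concrete route I would take: apply the shuffle involution $\sigma$ (or the reversal-of-variables coming from it) to one of the two equations above and add or subtract from the other, so that the $\overline{\Phi}^{(2)}$ terms cancel and one is left with a relation involving only $\Phi^{(1)}(x_1)$, $\Phi^{(1)}(x_2)$ and $\Phi^{(1)}$ evaluated at $\pm x_1$, $\pm x_2$, or at $x_1-x_2$. More precisely, in the stuffle equation set the variables so that after translation the right-hand side involves $\Phi^{(1)}(x_1)$ and $\Phi^{(1)}(-x_1)$ (using translation invariance of the depth-one series to move the base point), while the left-hand side is a sum of two values of $\overline{\Phi}^{(2)}$ that the shuffle equation forces to cancel; what remains is an identity of the shape $\Phi^{(1)}(x_1) - \Phi^{(1)}(-x_1) = 0$ up to clearing the denominator $x_1-x_2$ and then specializing. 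One has to be a little careful that the denominator $x_1-x_2$ does not cause trouble: since everything lies in $R[[x_1,x_2]]$ (or $R[[x_1]]$), the quotients appearing are genuine power series, and after the cancellation one can set $x_2$ to a convenient value (e.g. $x_2 = -x_1$, or $x_2=0$) to extract the parity statement.

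The step I expect to be the main obstacle is the bookkeeping of the change of variables between the $y$-coordinates and the reduced $x$-coordinates together with the action of $\sigma$: one must make sure that the argument shifts in $\Phi^{(2)}$ produced by $\sharp$ in the shuffle equation \eqref{shuffleequation} are matched correctly against the arguments appearing in \eqref{exdepth2stuffle}, and that the reversal in $\sigma$ (which sends $(x_1,\ldots,x_r)\mapsto(x_r-x_{r-1},\ldots,x_r-x_1,x_r)$) is applied consistently. Once the variables are aligned, the cancellation of the $\overline{\Phi}^{(2)}$-terms and the resulting identity $\Phi^{(1)}(x_1)=\Phi^{(1)}(-x_1)$ should drop out essentially formally. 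I would therefore organize the proof as: (i) write both depth-two equations in reduced coordinates; (ii) apply $\sigma$ to the stuffle equation and use the shuffle equation to kill the depth-two part; (iii) simplify the surviving depth-one identity and specialize to conclude evenness of $\Phi^{(1)}$.
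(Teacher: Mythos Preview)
Your primary plan---apply $\sigma$ once and add or subtract so that the $\overline{\Phi}^{(2)}$ terms cancel---does not work. Write the two depth-two constraints as $(1+\sigma)\Phi^{(2)}=0$ and $(1+\upsilon)\Phi^{(2)}=I$, with $I=(x_1-x_2)^{-1}\bigl(\Phi^{(1)}(x_2)-\Phi^{(1)}(x_1)\bigr)$. Any single application of $\sigma$ or $\upsilon$, followed by addition or subtraction, leaves a nontrivial operator on $\Phi^{(2)}$: for example $\sigma$ applied to the stuffle equation gives $(-1+\sigma\upsilon)\Phi^{(2)}=\sigma I$, and no $\Q$-linear combination with the two original relations kills $\Phi^{(2)}$. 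Likewise, there is no single specialisation of $(x_1,x_2)$ that turns the stuffle left-hand side $\Phi^{(2)}(a,b)+\Phi^{(2)}(b,a)$ into a shuffle pair $\Phi^{(2)}(u,v)+\Phi^{(2)}(v-u,v)$ except trivially. So the cancellation you are hoping for simply does not occur at this level.

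What the paper does is exploit the group generated by $\sigma,\upsilon$: one has $(\upsilon\sigma)^3=\iota$ where $\iota(x_1,x_2)=(-x_1,-x_2)$. From the two relations one gets $(1-\upsilon\sigma)\Phi^{(2)}=I$; iterating three times yields $(1-\iota)\Phi^{(2)}=(1+\upsilon\sigma+(\upsilon\sigma)^2)I$. Now project onto the $\iota$-\emph{even} part: the left side vanishes, and one obtains a three-term functional equation purely in the odd part $\Phi^{(1)}_o$. Sending $x_2\to 0$ (a limit, not a substitution; one needs $\Phi^{(1)}_o(0)=0$) gives the ODE $x_1(\Phi^{(1)}_o)'=-2\Phi^{(1)}_o$, which has no nonzero solution in $R[[x_1]]$. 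Your fallback idea of specialising can in fact be pushed through to the same ODE, but it requires several coordinated moves rather than one: set $x_2=0$ in stuffle, use shuffle at $b=0$ to get oddness of $\Phi^{(2)}(\,\cdot\,,0)$, use shuffle at $x_2=0$ to get $\Phi^{(2)}(0,x_1)=-\Phi^{(2)}(x_1,x_1)$, and take the diagonal limit $x_2\to x_1$ in stuffle to evaluate $\Phi^{(2)}(x_1,x_1)$; combining these reproduces the ODE above. Either way, the missing idea is that eliminating $\Phi^{(2)}$ requires a threefold use of the relations, not one.
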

\begin{proof} The power series $\Phi^{(1)}, \Phi^{(2)}$ satisfy the equations
\begin{eqnarray}\label{inproofdepth2equations}
\Phi^{(2)}(x_1,x_{1}+x_2) + \Phi^{(2)}(x_2,x_{1}+x_2) & = &0  \\
\Phi^{(2)}(x_1,x_2) + \Phi^{(2)}(x_2,x_1)  & =  &  {\Phi^{(1)}(x_2) -\Phi^{(1)}(x_1)  \over x_1 -x_2} \nonumber
\end{eqnarray}
The first equation is equivalent to $(1+ \sigma) \Phi^{(2)}=0$, where $\sigma$ is defined by $(\ref{sigmaonx})$. The second is
$(1+ \upsilon) \Phi^{(2)} = I$, where $\upsilon$ is the involution defined in $(\ref{stuffinvol})$, and $I$  is the right-hand side of the second equation of  $(\ref{inproofdepth2equations})$. We have 
$ \upsilon \sigma(x_1,x_2)=(x_2, x_2-x_1)$ and hence  $( \upsilon \sigma )^3 = \iota$ where $\iota(x_1,x_2)=(-x_1, -x_2)$. Therefore
$$(1- \iota) \Phi^{(2)} =  (1+  \upsilon\sigma  + ( \upsilon\sigma )^2)  (1- \upsilon \sigma) \Phi^{(2)} = (1+  \upsilon\sigma  + ( \upsilon\sigma )^2) I\ . $$
Let $\Phi^{(r)}= \Phi^{(r)}_o + \Phi^{(r)}_e$ be the decomposition into odd and even parts with respect to $\iota$, for $r=1,2$.  Then $I_e = (x_1-x_2)^{-1}  (\Phi_o^{(1)}(x_2) -\Phi_o^{(1)}(x_1))$  is invariant under $\iota$, and hence even, and so  
$0=  (1+ \upsilon \sigma + (\upsilon \sigma )^2) I_e$. Using $\Phi^{(1)}_o(x)=-\Phi^{(1)}_o(x)$,  this  becomes
\begin{equation} \label{Phi1constraint}
{\Phi_o^{(1)}(x_1)  - \Phi_o^{(1)}(x_2)  \over  x_1-x_2}  + {\Phi_o^{(1)}(x_1)- \Phi_o^{(1)}(x_1 \! -\! x_2)   \over x_2} + {\Phi_o^{(1)}(x_2)+\Phi_o^{(1)}(x_1\!-\!x_2)\over x_1 }=0
\end{equation} 
Taking the limit as $x_2 \rightarrow 0$, and using $\Phi_o^{(1)}(0)=0$, gives the differential equation
\begin{equation} \label{Phiode} 
 {d\over dx_1} \Phi_o^{(1)}(x_1) =- 2 \, { \Phi_o^{(1)}(x_1) \over x_1} 
 \end{equation} which has no non-trivial solutions in $R[[x_1]]$. Thus $\Phi_o^{(1)}(x_1)=0$.
\end{proof}

\begin{rem} The equation  $(\ref{Phi1constraint})$ has a 
 unique solution (up to multiplication by an element of $R$) in the ring of $R$-Laurent series in one variable, namely
$$\Phi^{(1)}(x_1) = {1 \over x_1}\ .$$
It is a  remarkable fact that this solution can be extended to all higher depths, see \S \ref{sectDepthSplit}. 
\end{rem}

The evenness of $\overline{\Phi}$ in depth $1$ is in fact the only constraint. 

\begin{prop} For every even power series
$f(x_1)\in R[[x_1]]$,  there exists a rational solution  $\Phi$ to the double shuffle equations modulo products  such that   $\overline{\Phi}^{(1)}=f(x_1)$.
\end{prop}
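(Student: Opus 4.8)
The plan is to reduce the statement to a homogeneous one and then build a solution one depth at a time. Since the equations $(\ref{dbshfmodprod})$ and the operation $\Phi \mapsto \overline{\Phi}^{(1)}$ are $R$-linear and compatible with the grading by weight — under which a weight-$w$ series $\Phi$ has $\overline{\Phi}^{(1)}(x_1)$ equal to a scalar multiple of $x_1^{w-1}$, so that evenness forces $w$ odd — it suffices to produce, for each $n \geq 1$, a rational solution $\Phi_n$ with $\overline{\Phi_n}^{(1)}(x_1) = x_1^{2n}$, the constant case $w=1$ being covered by the trivial solution $e_1$. Given such $\Phi_n$, for an arbitrary even $f(x_1) = \sum_{k \geq 0} c_k x_1^{2k}$ one sets $\Phi = c_0 e_1 + \sum_{k \geq 1} c_k \Phi_k$: this makes sense in $R\langle\langle e_0, e_1 \rangle\rangle$ because exactly one summand contributes in each weight, it solves $(\ref{dbshfmodprod})$ since those equations are homogeneous and continuous, and it is rational in each fixed weight (a genuine rational function when $f$ is a polynomial, and a weight-graded sum of rational functions in general).

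For fixed $n \geq 1$ I would construct the reduced components $\overline{\Phi}^{(r)}$ by induction on the depth $r$, starting from $\overline{\Phi}^{(1)} = x_1^{2n}$. By Corollaries \ref{corshuffleequations} and \ref{corstuffleequations} the task at step $r$ is to choose a rational function $\overline{\Phi}^{(r)}(x_1,\ldots,x_r)$ satisfying the $\lfloor r/2 \rfloor$ shuffle equations $(\ref{shuffleequation})$ and the $\lfloor r/2 \rfloor$ stuffle equations $(\ref{stuffleequation})$ in depth $r$. The shuffle equations are homogeneous linear conditions on $\overline{\Phi}^{(r)}$, while the stuffle equations $(\ref{Stuffequationsdefn})$ equate a symmetrisation of $\overline{\Phi}^{(r)}$ to $\overline{\Phi}^{(r)}$ itself plus completely explicit terms built from the already-fixed lower-depth components $\overline{\Phi}^{(1)},\ldots,\overline{\Phi}^{(r-1)}$. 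So the induction step amounts to solving one inhomogeneous linear system $L_r(\overline{\Phi}^{(r)}) = (0\,;\,R_r)$ with known right-hand side $R_r$, for $\overline{\Phi}^{(r)}$ in the space of rational functions whose poles lie along the hyperplanes produced by the coordinate substitutions $\sharp$, $\sigma$, $\upsilon$ and by the divided differences $\tfrac{s_1 - s_{i+1}}{x_1 - x_{i+1}}$ appearing in the equations. The operator $L_r$ is equivariant for the finite group of coordinate changes generated by these substitutions — in depth two this is the order-six group generated by $\sigma$ and $\upsilon$ of Lemma \ref{lemevenindepth1} — so, once consistency is known, a solution can be produced by averaging a particular lift over that group, which preserves rationality; the evenness of $\overline{\Phi}^{(1)}$ is exactly the vanishing of the obstruction at the first step, cf. Lemma \ref{lemevenindepth1}.

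The crux, and the step I expect to be the main obstacle, is proving that $L_r(\cdot) = (0\,;\,R_r)$ is consistent in \emph{all} depths simultaneously: the shuffle-side condition (primitivity after $\sharp$) and the stuffle-side condition pull in different directions, and over polynomials they cannot in general be reconciled — this is precisely why $\dmr$ itself is hard to compute — so one must show that the obstruction can always be absorbed once poles are allowed and that the iterated choices never get stuck. Rather than an abstract consistency argument, I would establish this uniformly in $n$ and $r$ by writing down explicit rational solutions with the prescribed depth-one component. This is what is done later in these notes: the elements $\psi_{2n+1} \in \p\dmr$ of definition \ref{defnpsi2n+1} are constructed and shown to agree with $\sigma_{2n+1} = \ad(e_0)^{2n}e_1 + \cdots$ in depth one, so that $\overline{\psi_{2n+1}}^{(1)}(x_1) = x_1^{2n}$. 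Taking $\Phi_n = \psi_{2n+1}$, base-changed to $R$, then completes the argument, modulo the construction and verification of the $\psi$'s (and, for the closely related extension of $1/x_1$, the element $\psi_{-1}$ of \S\ref{sectDepthSplit}), which occupies the rest of the paper.
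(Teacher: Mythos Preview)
Your reduction to the homogeneous case and use of $e_1$ for the constant term are correct and match the paper. The divergence is in how you produce the weight-$(2n{+}1)$ building blocks.

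The paper does not build these depth by depth, nor does it invoke the polar elements $\psi_{2n+1}$. Instead it uses the canonical Hoffman--Lyndon generators $\sh_{2n+1}$ constructed in \S\ref{sectCanRatAssoc}: these are images of the motivic Drinfel'd associator under explicit $\Q$-linear functionals, hence lie in $\Q\langle\langle e_0,e_1\rangle\rangle$ as genuine (pole-free) formal power series, they sit in $\gm\subset\dmr$ and therefore satisfy $(\ref{dbshfmodprod})$, and one checks $\overline{\sh_{2n+1}}^{(1)}=x_1^{2n}$. Linearity and continuity then give the general case. This is a two-line proof relying only on material already established at that point in the paper.

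Your proposal to take $\Phi_n=\psi_{2n+1}$ has a genuine gap: the $\psi_{2n+1}$ live in $\p\dmr$, not in $\dmr$. Their components $\psi_{2n+1}^{(d)}$ have honest poles for $d\geq 3$, so they do not correspond to any element of $R\langle\langle e_0,e_1\rangle\rangle$ and cannot serve as the $\Phi$ required by the proposition (which, in context, asks for a power-series solution of $(\ref{dbshfmodprod})$; ``rational'' here refers to the coefficients, not to rational functions). Your inductive sketch in the second paragraph is also incomplete: the averaging-over-symmetries idea does not by itself produce solutions to an inhomogeneous linear system, and you correctly identify the consistency of $L_r(\cdot)=(0;R_r)$ as the crux --- but then sidestep it by appealing to the $\psi$'s, which as just noted solve a different problem. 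The paper's route via $\sh_{2n+1}$ avoids all of this by importing the existence of pole-free solutions from the theory of motivic multiple zeta values.
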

\begin{proof} The trivial  solution  $\Phi= e_1$ has depth one component $\Phi^{(1)}=1$.  For all $n\geq 1$, we defined canonical elements $\sh_{2n+1}$ using the Hoffman-Lyndon basis for motivic multiple zeta values.
The depth one component of $\sh_{2n+1}$  is $x_1^{2n}$. The result follows by linearity and the continuity of $\Delta_{\sha}$ and $\Delta_{\star}$. 
\end{proof}

\subsection{Derivations}
Recall that $R\langle \langle e_0, e_1\rangle \rangle$ is the complete Hopf algebra
whose multiplication is given by concatenation, and whose   comultiplication $\Delta_{\sha}$ is determined by $\Delta_{\sha} e_i = 1\otimes e_i + e_i \otimes 1$ for $i=0,1$.
We call a derivation  a continuous linear map
\begin{eqnarray}
\delta : R\langle \langle e_0, e_1\rangle \rangle & \rightarrow & R\langle \langle e_0, e_1\rangle \rangle\nonumber \\
 \hbox{ such that }  \qquad\delta (w_1 w_2)  & = & \delta(w_1)  w_2 + w_1 \delta(w_2) \nonumber \\ 
 \hbox{ and }  \qquad \qquad\qquad\ \Delta_{\sha} \delta & = & (id \otimes \delta+ \delta \otimes id) \Delta_{\sha}  \ .\nonumber
\end{eqnarray} 
The completion of the free Lie algebra $\Lie_{\!R} \langle \langle e_0, e_1 \rangle \rangle$ (non-standard notation) may be identified with the subspace  of primitive elements in $R \langle \langle e_0, e_1 \rangle \rangle$.
For every $a \in  \Lie_{\!R} \langle \langle e_0, e_1 \rangle \rangle$, we obtain a derivation
\begin{eqnarray}
\delta_a : R\langle \langle e_0, e_1\rangle \rangle & \rightarrow & R\langle \langle e_0, e_1\rangle \rangle \nonumber \\
\delta_a(e_0)  & = & 0  \nonumber \\ 
\delta_a(e_1)  & = & [a ,e_1 ] \ . \nonumber
\end{eqnarray} 
The \emph{Ihara action} is  then defined to be the continuous bilinear map
\begin{eqnarray} 
 \circ : \Lie_{\!R} \langle \langle e_0, e_1 \rangle \rangle \widehat{\otimes}_{R}  R\langle \langle e_0, e_1 \rangle \rangle & \To &  R\langle \langle e_0, e_1 \rangle \rangle  \\
a \otimes w & \mapsto &  \delta_a w + w.a\nonumber
\end{eqnarray}
where the $w.a$ on the right is the concatenation product. These formulae are identical to those given in \cite{DG} except that all words are reversed. 
Note that since we are working on the Lie algebra level, it makes little difference (only up to a sign) whether we choose to view this as a left or right action. 

 Now since the antipode $a \mapsto a^*$, where  $(e_{i_1}\ldots e_{i_r})^* = (-1)^r e_{i_r} \ldots e_{i_1}$, acts by $-1$ on the set of primitive 
elements in $R \langle \langle e_0, e_1 \rangle \rangle$, we can write $[a,e_1] = a e_1 + e_1 a^*$.  This innocuous remark motivates  the following definition \cite{Depth}, \S2.2.

\begin{defn} Define a continuous $R$-bilinear map 
$$ \circb  : R\langle \langle e_0,e_1 \rangle \rangle  \widehat{\otimes}_{R} R\langle \langle e_0,e_1 \rangle \rangle \rightarrow R\langle \langle e_0,e_1 \rangle \rangle\ $$
 as follows. For   any words $a, w$ in $e_0,e_1$, and for any integer  $n\geq 0$,   let 
\begin{equation}\label{circbdef}
 a \circb  (e_0^{n} e_1 w) =        e_0^n  a e_1  w  +   e_0^n e_1 a^* w +e_0^n e_1 (a \circb w)   \end{equation}
where  $a \circb e_0^n =  e_0^n \, a $, and  for any $a_i \in \{e_0,e_1\}$, 
 $(a_1\ldots a_n)^* = (-1)^n a_n \ldots a_1.$
 \end{defn} 
We  call $\circb$ the `linearized Ihara action'. It is immediate to verify that the restriction of $\circb$ to $ \Lie_{\!R} \langle \langle e_0, e_1 \rangle \rangle \widehat{\otimes}_R  R\langle \langle e_0, e_1 \rangle \rangle $
gives back the action $\circ$ (Prop. 2.2 in \cite{Depth}).

\begin{defn} We shall call the following bilinear map the  `Ihara bracket':
\begin{eqnarray} 
\{ , \} : \textstyle{\bigwedge^{\!2} R}\langle \langle e_0,e_1 \rangle \rangle & \To & R\langle \langle e_0,e_1 \rangle \rangle  \\
\{ f, g\}  & = & f \circb g - g \circb f \nonumber
\end{eqnarray}
Its restriction to $\Lie_{\!R} \langle \langle e_0,e_1 \rangle \rangle $ is the usual Ihara bracket.
\end{defn} 
\begin{lem} For any $a,b,c \in R\langle \langle e_0, e_1 \rangle \rangle$, let $A(a,b,c) = a\circb ( b\circb c) - (a\circb b) \circb c$. Then 
\begin{equation} \label{Aabc}
A(a,b,c)= A(b,a,c) 
\end{equation} 
In particular, the Ihara bracket satisfies the Jacobi identity. 
\end{lem}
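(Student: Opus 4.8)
The plan is to prove the "pre-Lie--like" symmetry identity $A(a,b,c)=A(b,a,c)$ directly by brute expansion of the definition $(\ref{circbdef})$ of $\circb$, reducing first to the case where $a,b,c$ are words, since all maps in sight are $R$-bilinear and continuous. The Jacobi identity for $\{\,,\,\}$ is then a completely formal consequence: writing out $\{\{f,g\},h\}+\{\{g,h\},f\}+\{\{h,f\},g\}$ in terms of $\circb$, every term groups into a difference of the form $A(\cdot,\cdot,\cdot)-A(\cdot,\cdot,\cdot)$ with the first two arguments swapped, which vanishes by $(\ref{Aabc})$. So the whole content is in $(\ref{Aabc})$.

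To organize the computation of $A(a,b,c)$ I would stratify the recursion in $(\ref{circbdef})$ by the position of the first occurrence of $e_1$ in the rightmost argument. First I would record the base behaviour: $a\circb e_0^n = e_0^n a$, so if $c$ has no $e_1$ then $b\circb c$ has no "new" $e_1$ beyond those in $b$, and one can check $A(a,b,e_0^n)=0$ on the nose; this handles the induction base. For the inductive step, write $c = e_0^n e_1 w$ and expand $b\circb c = e_0^n\,b e_1 w + e_0^n e_1 b^* w + e_0^n e_1(b\circb w)$ using $(\ref{circbdef})$, then apply $a\circb(-)$ to each of the three summands, again via $(\ref{circbdef})$ — noting that $a\circb$ acts on a concatenation $e_0^n(\cdots)$ by pushing past the $e_0^n$ and then acting on the first letter of $(\cdots)$, which is either a letter of $b$ (resp.\ $b^*$) or the explicit $e_1$. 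Symmetrically expand $(a\circb b)\circb c$. The terms sort into three types: (i) terms where both $a$ and $b$ get inserted "outside" into the $e_0^n e_1 w$ slot, which are manifestly symmetric in $a\leftrightarrow b$ once one checks $(a\circb b)$-insertion versus nested insertion match up — here the identity $(ab)^* = b^* a^*$ and the defining recursion for $a\circb b$ itself are used; (ii) terms where one of $a,b$ is inserted into the tail $w$, which are handled by the induction hypothesis $A(a,b,w)=A(b,a,w)$ together with the linearity of $e_0^n e_1(-)$; (iii) genuine "interaction" terms where $a$ lands adjacent to $b$ (or $b^*$) — these are exactly the terms that force us to use the recursive definition of $a\circb b$ rather than treating it as a black box, and their cancellation/symmetrization is the crux.

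The main obstacle I anticipate is bookkeeping in type (iii): when $a\circb(e_0^n b e_1 w)$ is expanded, the letter of $b$ that $a$ first meets depends on whether $b$ itself starts with $e_0$ or $e_1$, so one really has to run the recursion on $b$ in parallel, and the same happens with $b^*$; matching these against the expansion of $(a\circb b)\circb c$ requires the identity $a\circb b$ "commutes with" the outer $e_0^n e_1(-)$ and $(-)e_1 w$ operations in a precise sense, plus careful handling of the antipode (the $(a\circb b)^*$ that appears must be re-expanded, and one needs $(a\circb b)^*$ to relate correctly to $b^*$ and $a^*$ — this is where a small auxiliary lemma, or an appeal to \cite{Depth}, may be cleanest). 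A cleaner alternative I would try in parallel: show $\circb$ underlies a (right or left) pre-Lie product, i.e.\ prove the associator identity $(\ref{Aabc})$ by interpreting $\circb$ as the "insertion/infinitesimal-braiding" operation and citing the known pre-Lie property of the Ihara/$\mathfrak{dg}$ structure; but since the statement here is for \emph{all} of $R\langle\langle e_0,e_1\rangle\rangle$ and not just the Lie part, the self-contained inductive verification above is probably what the paper intends, with the restriction to $\Lie$ recovering Prop.~2.2 of \cite{Depth}.
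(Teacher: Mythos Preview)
Your inductive expansion via $(\ref{circbdef})$ is correct and is precisely what the paper means by ``follows from the definitions.'' The paper also sketches a cleaner alternative that you do not take: use the derivation-like identity $(\ref{IharaderivShuffle})$, $f\circb(g\cdot h)=(f\circb g)\cdot h + g\cdot(f\circb h)-g\cdot f\cdot h$ (stated just after, and itself immediate from $(\ref{circbdef})$), to reduce to $c$ a single letter. A short computation with $(\ref{IharaderivShuffle})$ gives
\[
A(a,b,c_1c_2)-A(b,a,c_1c_2)=\big[A(a,b,c_1)-A(b,a,c_1)\big]\,c_2+c_1\,\big[A(a,b,c_2)-A(b,a,c_2)\big],
\]
with every cross-term manifestly symmetric in $a\leftrightarrow b$; this bypasses essentially all of your type-(iii) interaction bookkeeping. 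The base case $c=e_0$ is then $A=0$ from $a\circb e_0=e_0a$. (A caution: the paper writes $a\circb e_1=ae_1$, but from $(\ref{circbdef})$ one actually has $a\circb e_1=ae_1+e_1a+e_1a^*$ for general $a$; the shorter formula holds only when $a$ is primitive. So even in the paper's route the $c=e_1$ base case still needs the small antipode computation you correctly flag.) Your approach buys self-containedness and avoids forward-referencing $(\ref{IharaderivShuffle})$; the paper's alternative buys a much lighter induction.
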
 
\begin{proof} Equation $(\ref{Aabc})$ follows  from the definitions. Alternatively, 
 use  $(\ref{IharaderivShuffle})$ to reduce to the case $c=e_i$, and use $ a \circb e_0 = e_0 a$ and $ a\circb e_1 = a e_1$. 
It defines a pre-Lie structure on $R\langle \langle e_0, e_1 \rangle \rangle$, so its antisymmetrization
is therefore a Lie algebra.
\end{proof}

\subsection{Properties of $\circb$ and Racinet's theorem} \label{sectpropertiesofcircb}
The operator $\circb$  clearly  respects the $\dd$-grading. Therefore,  passing to  power series  defines a map
\begin{eqnarray} \circb: R[[y_0,\ldots, y_r]] \widehat{\otimes}_{R} R[[y_0,\ldots, y_s]]   & \To & R[[y_0,\ldots, y_{r+s}]]   \\
f(y_0,\ldots, y_r) \otimes g(y_0,\ldots, y_s)  & \mapsto &  f\circb g\, ( y_0,\ldots, y_{r+s}) \nonumber
\end{eqnarray}
which can be read off   equation $(\ref{circbdef})$. Explicitly, it  is
\begin{multline} \label{circformula}
f \circb g \, (y_0,\ldots, y_{r+s})  =  \sum_{i=0}^s f(y_i,y_{i+1}, \ldots, y_{i+r}) g(y_0,\ldots, y_i, y_{i+r+1}, \ldots, y_{r+s})  \\  + (-1)^{\deg f + r} \sum_{i=1}^s  f(y_{i+r},\ldots, y_{i+1},y_i) g(y_0,\ldots, y_{i-1}, y_{i+r}, \ldots, y_{r+s}) 
 \end{multline}
In the usual way, we define, for sequences $\Phi_i^{(r)} \in R[[y_0, \ldots, y_r]]$
$$(\Phi_1 \circb \Phi_2)^{(r)}=  \sum_{i +j=r} \Phi^{(i)}_1 \circb \Phi_2^{(j)}$$
There is  also a translation-invariant version of $(\ref{circformula})$
$$ \circb: R[[x_1,\ldots, x_r]]  \widehat{\otimes}_{R} R[[x_1,\ldots, y_s]]    \To  R[[x_1,\ldots, x_{r+s}]]  $$
which we spell out explicitly for later reference:
\begin{multline} \label{circformulax}
f \circb g \, (x_1,\ldots, x_{r+s})  =  \sum_{i=0}^s f(x_{i+1}-x_i, \ldots, x_{i+r}-x_i) g(x_1,\ldots, x_i, x_{i+r+1}, \ldots, x_{r+s})  \nonumber \\  + (-1)^{\deg f + r} \sum_{i=1}^s  f(x_{i+r-1}-x_{i+r},\ldots, 
x_i-x_{i+r}) g(x_1,\ldots, x_{i-1}, x_{i+r}, \ldots, x_{r+s}) 
 \end{multline}
 In the following, elements $f,g,h,\psi$ can be taken in $R\langle \langle e_0, e_1 \rangle \rangle$ or  $R [[y_0,\ldots, y_r]$; 
 or   $R \langle \langle Y \rangle \rangle$ or  $R[[x_1,\ldots, x_r]]$ (when referring to the stuffle product), as appropriate.
\begin{prop} The linearized Ihara action satisfies the identities
\begin{eqnarray}
f \circb (g \cdot h) & = & (f \circb g) \cdot h + g \cdot (f \circb h) - g \cdot f \cdot h  \label{IharaderivShuffle}   \\
f \circb (g \studot h) & = & (f \circb g) \studot h + g \studot (f \circb h)  - g \studot f \studot h    \label{IharaderivStuffle}  
\end{eqnarray} 
where $\cdot$ denotes the shuffle product, and $\studot$ denotes the stuffle product.
\end{prop}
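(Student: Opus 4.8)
The plan is to prove the two identities $(\ref{IharaderivShuffle})$ and $(\ref{IharaderivStuffle})$ separately, exploiting the fact that everything in sight is $R$-bilinear and continuous, so it suffices to verify the identities on (products of) words — or, even better, to leverage the derivation property of the Ihara action $\circ$ at the Hopf-algebra level and then transport it to $\circb$. First I would recall that for a primitive element $a$, the map $x\mapsto a\circ x = \delta_a x + x\cdot a$ is not quite a derivation for concatenation, but the ``defect'' $x\mapsto x\cdot a$ is precisely the inner term appearing on the right of $(\ref{IharaderivShuffle})$: indeed, for the concatenation product $\cdot$ one has $(g\cdot h)\cdot a = g\cdot(h\cdot a)$, whereas $\delta_a$ genuinely is a derivation, so $a\circ(g\cdot h) = \delta_a g\cdot h + g\cdot\delta_a h + g\cdot h\cdot a = (a\circ g)\cdot h + g\cdot(a\circ h) - g\cdot a\cdot h$. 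This is exactly $(\ref{IharaderivShuffle})$ in the Lie (primitive) case. The point is then to upgrade from primitive $a$ to arbitrary words/series: since $\circb$ is defined directly on all of $R\langle\langle e_0,e_1\rangle\rangle$ by the recursion $(\ref{circbdef})$, I would simply check $(\ref{IharaderivShuffle})$ by induction using that recursion, with the base cases $a\circb e_0 = e_0 a$ and $a\circb e_1 = a e_1$ handled by hand.

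Concretely, for $(\ref{IharaderivShuffle})$ I would induct on the word $h$ (equivalently, on the number of occurrences of $e_1$ in $h$, and then on the length). Write $h = e_0^n e_1 w$ or $h = e_0^n$. In the latter case $a\circb(g\cdot e_0^n)$ and the three terms on the right are all computed trivially from $a\circb e_0^k = e_0^k a$ and the fact that concatenation is associative; the identity reduces to a bookkeeping check. In the former case, apply $(\ref{circbdef})$ to $g\cdot h = g\cdot e_0^n e_1 w$: one has to be slightly careful because $g$ may itself begin with letters $e_0$ or with $e_1$, so the ``$e_0^n e_1$-prefix'' of $g\cdot h$ is that of $g$ when $g\neq$ (a power of $e_0$), and is $e_0^n e_1$ (coming from $h$) when $g$ is a power of $e_0$. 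I would split into these two cases; in each, expand both sides via $(\ref{circbdef})$ and the inductive hypothesis applied to the strictly shorter word $w$ (or to $g'$ where $g = e_0^k g'$), and match terms. The shuffle product $\cdot$ here is concatenation, so the recursion is clean; the only subtlety is tracking how the $a^*$-term in $(\ref{circbdef})$ interacts with the splitting of $g\cdot h$, and this is where I expect the bulk of the casework to live.

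For $(\ref{IharaderivStuffle})$ the cleanest route is to use the transported formula $(\ref{circformulax})$ (equivalently $(\ref{circformula})$) together with the explicit stuffle concatenation rule and the recursive definition $(\ref{stuffprod})$ of $\stu$. Alternatively — and this is the approach I would actually take — I would note that $(\ref{IharaderivStuffle})$ is formally identical to $(\ref{IharaderivShuffle})$ with ``concatenation'' replaced by ``stuffle'', so it suffices to check that the linearized Ihara action $\circb$, when written on power series in the $x_i$ via $(\ref{circformulax})$, is compatible with the stuffle coproduct $\Delta_\star$ in the same way it is with $\Delta_\sha$. Equivalently: one shows $a\circb(-)$ acts as a ``derivation up to the inner term $g\studot a\studot h$'' for the stuffle product. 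This reduces, by bilinearity and continuity, to verifying the identity on $\alpha(x_1)\studot\cdots\studot\alpha(x_r)$-type generators, where one can use Lemma \ref{lemalphastuffequation}-style manipulations, i.e. the three-term recursion for $\stu$. One performs induction on $p+q$ (the combined depth of $g$ and $h$): the base case is depth one, and the inductive step matches the three pieces produced by $(\ref{stuffprod})$ against the three terms on the right of $(\ref{IharaderivStuffle})$, using the inductive hypothesis on the lower-depth stuffle products.

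The main obstacle I anticipate is purely combinatorial: in the stuffle case $(\ref{IharaderivStuffle})$, the ``diagonal'' term $\y_{i+j}$ in the recursion $(\ref{stuffprod})$ has no analogue in concatenation, so the inductive matching is genuinely more intricate than for $(\ref{IharaderivShuffle})$ — one must verify that the contributions of the $a^*$-reversal term in $(\ref{circbdef})$/$(\ref{circformulax})$ conspire correctly with this diagonal term. I would organize this by fixing a leading variable (say peeling off $\alpha(x_1)$ from $g$ and $\alpha(x_{p+1})$ from $h$), writing out the at-most-three ways the Ihara action can interact with the leading letters, and checking term-by-term that nothing is over- or under-counted. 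This is routine but requires care; everything else is a direct unwinding of the definitions.
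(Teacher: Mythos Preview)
Your approach to $(\ref{IharaderivShuffle})$ is essentially the paper's: prove it directly from the recursive definition $(\ref{circbdef})$ of $\circb$, the base cases being $a\circb e_0^n = e_0^n a$ and $a\circb e_1 = a e_1$. Your observation that for primitive $a$ one has $a\circ(g\cdot h) = (\delta_a g)\cdot h + g\cdot(\delta_a h) + g\cdot h\cdot a$ is a clean way to see why the identity should hold, and the recursion does the rest.

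For $(\ref{IharaderivStuffle})$, however, there is a genuine confusion in your plan. You repeatedly invoke the recursion $(\ref{stuffprod})$ for the stuffle product $\stu$ and worry about the ``diagonal'' term $\y_{i+j}$; but the operation $\studot$ appearing in the proposition is \emph{not} the stuffle product $\stu$. It is the \emph{stuffle concatenation} defined in $(\ref{stuffleconcat})$, i.e.\ simply concatenation of words in the alphabet $Y$. There is no diagonal term, no three-term recursion, and the combinatorial obstacle you anticipate does not exist.

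Once this is clear, the paper's argument is immediate: embed $R\langle\langle Y\rangle\rangle$ into $R\langle\langle e_0,e_1\rangle\rangle$ via $\y_n\mapsto e_1 e_0^{n-1}$. Under this embedding the stuffle concatenation $\studot$ is nothing but the restriction of the concatenation product $\cdot$ to the subspace of series whose nontrivial words begin with $e_1$. One checks from $(\ref{circbdef})$ that $\circb$ preserves this subspace (if $a$ and $w$ both begin with $e_1$, every term in $a\circb w$ does too). Hence $(\ref{IharaderivStuffle})$ is literally the restriction of the already-proved identity $(\ref{IharaderivShuffle})$ to this subspace, and there is nothing further to verify. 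Your proposed induction on $p+q$ using the $\stu$-recursion would at best reprove this, and at worst lead you to chase irrelevant terms.
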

\begin{proof} The first equation is almost immediate from the recursive definition    $(\ref{circbdef})$. 
We can view  $R \langle \langle Y \rangle \rangle$ as a subspace of  $R \langle \langle e_0, e_1 \rangle \rangle$
via the map  $\y_n \mapsto e_1 e_0^{n-1}$.  The second equation follows from the first since the stuffle concatenation product
is simply the  restriction of the shuffle concatenation  to the subspace  $R \langle \langle Y \rangle \rangle$.
 \end{proof}

If $\Delta$ is any coproduct, let  $\Delta^r = \Delta - 1 \otimes id - id \otimes 1$ be the  reduced coproduct.
\begin{lem} If $f,g $ are  solutions to the shuffle equations modulo products,
\begin{equation} \label{Deltarsha} 
\Delta^r_{\sha} (f \circb g) = f \otimes  g + g \otimes f\ .
\end{equation}
\end{lem}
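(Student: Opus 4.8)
I would compute $\Delta_{\sha}(f\circb g)$ by hand. Since $f$ and $g$ satisfy the shuffle equations modulo products they are in particular primitive, hence lie in the completed free Lie algebra $\Lie_{\!R}\langle\langle e_0,e_1\rangle\rangle$. By the identification of $\circb$ with the Ihara action $\circ$ on a first argument in $\Lie_{\!R}\langle\langle e_0,e_1\rangle\rangle$ (Prop.~2.2 of \cite{Depth}), one has $f\circb g = f\circ g = \delta_f g + g\cdot f$, where $\cdot$ denotes concatenation and $\delta_f$ is the derivation with $\delta_f(e_0)=0$, $\delta_f(e_1)=[f,e_1]$. So it suffices to compute $\Delta_{\sha}(g\cdot f)$ and $\Delta_{\sha}(\delta_f g)$ separately and add.

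The concatenation term is immediate: $\Delta_{\sha}$ is an algebra homomorphism for concatenation, so using primitivity of $f$ and $g$, $\Delta_{\sha}(g\cdot f)=(1\otimes g+g\otimes 1)(1\otimes f+f\otimes 1)=1\otimes(g\cdot f)+(g\cdot f)\otimes 1+f\otimes g+g\otimes f$; the two cross terms $f\otimes g$ and $g\otimes f$ are exactly what should survive in the reduced coproduct. For the derivation term I would use that $\delta_f$ is a coderivation for $\Delta_{\sha}$, i.e. $\Delta_{\sha}\delta_f=(id\otimes\delta_f+\delta_f\otimes id)\Delta_{\sha}$ (part of the definition of $\delta_f$ as a derivation recalled above); applied to $g$, together with $\Delta_{\sha}g=1\otimes g+g\otimes 1$ and $\delta_f(1)=0$, this yields $\Delta_{\sha}(\delta_f g)=1\otimes\delta_f g+\delta_f g\otimes 1$, so $\delta_f g$ stays primitive and contributes nothing to the reduced coproduct. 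Adding the two contributions, $\Delta_{\sha}(f\circb g)=1\otimes(f\circb g)+(f\circb g)\otimes 1+f\otimes g+g\otimes f$, and subtracting the primitive part $1\otimes(f\circb g)+(f\circb g)\otimes 1$ leaves $\Delta^r_{\sha}(f\circb g)=f\otimes g+g\otimes f$.

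I do not expect a genuine obstacle; this is a short Hopf-algebraic computation. The two points requiring a little care are (i) the identity $f\circb g=\delta_f g+g\cdot f$, which is where the hypothesis that $f$ is primitive enters, and (ii) keeping track of which terms get absorbed into the primitive part $1\otimes(-)+(-)\otimes 1$ versus the genuine cross terms. One could instead argue entirely at the level of the commutative power-series formula $(\ref{circformula})$, applying the depth-graded pieces $\Delta^{p,q}_{\sha}$ and the shuffle functional equations of Corollary~\ref{corshuffleequations}, but that route is considerably more laborious and I would avoid it. It is worth noting that only the primitivity of $f$ and $g$, and not the normalisations $f_{e_0}=g_{e_0}=0$, is actually used.
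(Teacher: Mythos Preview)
Your proposal is correct and follows essentially the same argument as the paper: identify $f\circb g$ with $\delta_f g + g\cdot f$ using that $f$ is primitive (hence a Lie element), observe that $\delta_f g$ remains primitive because $\delta_f$ is a coderivation, and compute $\Delta^r_{\sha}(g\cdot f)=f\otimes g+g\otimes f$ from the primitivity of $f$ and $g$. Your write-up is slightly more explicit than the paper's, and your closing remark that only primitivity (not the normalisation $f_{e_0}=g_{e_0}=0$) is used is a correct and useful observation.
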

\begin{proof} If $\Delta^r_{\sha} f=\Delta^r_{\sha} g=0$, then  in particular $f$ is a Lie element. Therefore, $\Delta^r_{\sha} (f\circb g) =  \Delta^r_{\sha} (f\circ g) =   \Delta^r_{\sha} (\delta_f g + f\cdot g)$. Since
$\delta_f$ is a derivation, $\delta_f g$ is primitive by definition, and     $     \Delta^r_{\sha} (\delta_f g)= 0$. Therefore 
$\Delta_{\sha}^r( f \circb g) =  \Delta_{\sha}^r (f \cdot g)= f \otimes g + g \otimes f$, since $f,g$ are primitive.  
 \end{proof}

The following theorem is a consequence of Racinet's thesis.
\begin{thm} \label{thmRacinet} \cite{R} Let $\psi$ be a solution to the double shuffle equations modulo products.
Then if $f$ is a solution to the stuffle equations modulo products,
$$\Delta^r_{\stu} (\psi \circb f)  = \psi \,\otimes f + f \,\otimes \psi\ .$$
In particular, the set of solutions to the double shuffle equations modulo products  $(\ref{dbshfmodprod})$ forms a Lie algebra with respect to the Ihara bracket $\{ , \}$.
\end{thm}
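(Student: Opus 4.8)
The plan is to show that $\Delta^r_{\stu}(\psi \circb f) = \psi \otimes f + f \otimes \psi$ whenever $\psi$ solves the full double shuffle equations modulo products and $f$ solves only the stuffle equations modulo products; the Lie algebra statement then follows by symmetry, since $\{\psi_1,\psi_2\} = \psi_1 \circb \psi_2 - \psi_2 \circb \psi_1$ is primitive for $\Delta_{\sha}$ by $(\ref{Deltarsha})$ and primitive for $\Delta_{\stu}$ by the coproduct identity just established (applied with $f = \psi_2$, and then with the roles reversed), while the Jacobi identity for $\{\,,\,\}$ has already been proved. So the whole content is the displayed coproduct formula.

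First I would reduce to a statement purely about the $Y$-alphabet. Via the embedding $\y_n \mapsto e_1 e_0^{n-1}$ of $R\langle\langle Y\rangle\rangle$ into $R\langle\langle e_0,e_1\rangle\rangle$, the stuffle concatenation is the restriction of shuffle concatenation, and $\alpha(\psi)$, $\alpha(f)$ (or just $f$, already living in the $Y$-algebra) are primitive for $\Delta_{\stu}$. The key algebraic input is identity $(\ref{IharaderivStuffle})$: $f \circb (g \studot h) = (f\circb g)\studot h + g \studot (f \circb h) - g \studot f \studot h$, which says $\circb$ acts on the stuffle-concatenation algebra like a derivation twisted by the extra $g\studot f\studot h$ term. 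The strategy is to compute $\Delta_{\stu}(\psi \circb f)$ by using co-multiplicativity: write $\Delta_{\stu} f = \sum f' \otimes f''$ (primitive, so $f \otimes 1 + 1 \otimes f$ only), and analyze how $\circb$ interacts with $\Delta_{\stu}$. Concretely, one wants a "Leibniz-type" identity relating $\Delta_{\stu}(\psi \circb f)$ to $\psi \circb (\Delta_{\stu} f)$ plus correction terms; the correction terms are exactly where the \emph{shuffle} part of the hypothesis on $\psi$ is consumed. This is the point of Racinet's theorem and the reason $\psi$ must satisfy both families of equations while $f$ needs only one.

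The main obstacle, and the technical heart, is establishing that compatibility identity between $\circb$ and $\Delta_{\stu}$ — essentially that $\delta_\psi$ (the derivation attached to $\psi$) is, up to inner/multiplicative corrections, a coderivation for $\Delta_{\stu}$ when $\psi$ is group-like-primitive for \emph{both} products. In the $e_0,e_1$ picture $\delta_\psi$ is manifestly a coderivation for $\Delta_{\sha}$ (that is how it was defined), so the work is to transport this through $\alpha$ to the stuffle coproduct; the shuffle-primitivity of $\psi$ is what makes the transported operator well-behaved, and the stuffle-primitivity of $\psi$ handles the residual concatenation term $w \mapsto w\cdot\psi$ in $a \otimes w \mapsto \delta_a w + w.a$. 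I expect the cleanest route is to verify the needed identity on the level of the dual Hopf algebras $\Q\langle Y\rangle$, $\Q\langle e_0,e_1\rangle$ — i.e. check it on generators/words by induction on depth using $(\ref{stuffprod})$ and $(\ref{IharaderivStuffle})$ — rather than wrestling directly with the completed series, and then dualize. Once the coproduct formula is in hand, closure of the solution set under $\{\,,\,\}$, together with the already-proven Jacobi identity, gives the Lie algebra structure immediately.
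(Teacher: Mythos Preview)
The paper does not actually prove this theorem: it is stated as ``a consequence of Racinet's thesis'' and attributed to \cite{R} without further argument. So there is no proof in the paper to compare against; what I can do is assess your sketch on its own terms.

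Your overall architecture is sound and matches the shape of Racinet's argument: the Lie algebra statement reduces to the coproduct identity, and the coproduct identity amounts to showing that the operation $g\mapsto \psi\circb g$ is, up to the explicit inner correction coming from the concatenation term $w\mapsto w\cdot\psi$, a coderivation for $\Delta_{\stu}$. You have also correctly located where each hypothesis enters: shuffle-primitivity of $\psi$ makes $\psi$ a Lie element so that $\delta_\psi$ is a genuine derivation of the concatenation algebra (and hence restricts to the $Y$-subalgebra via $(\ref{IharaderivStuffle})$), while stuffle-primitivity of $\psi$ handles the residual $w\cdot\psi$ term.

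Where your proposal remains a genuine outline rather than a proof is exactly at the step you flag as ``the main obstacle'': establishing that $\delta_\psi$ is a coderivation for $\Delta_{\stu}$. This is not a formality. Unlike $\Delta_{\sha}$, the stuffle coproduct is \emph{not} the one for which $e_0,e_1$ are primitive, so the compatibility of $\delta_\psi$ with $\Delta_{\stu}$ does not follow from its being a derivation on the concatenation algebra. Racinet's argument proceeds by introducing an auxiliary exponential correction (a group-like element built from $\psi_{\y_1}$) and showing that conjugation by it intertwines the shuffle and stuffle structures in the right way; the induction-on-depth you propose on the dual side is workable in principle but requires a careful bookkeeping of the ``contraction'' terms $\y_{i+j}(w\stu w')$ in $(\ref{stuffprod})$, and it is precisely there that the shuffle hypothesis on $\psi$ is consumed in a nontrivial way. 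Your sketch does not yet supply that mechanism, so as written it identifies the crux correctly but does not resolve it.
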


\section{Linearized double shuffle equations} \label{sectLinDS}

Retaining only the leading depth part of the double shuffle equations modulo products leads to a simpler set of  equations called the linearized double shuffle
equations \cite{IKZ}. The Ihara bracket acquires a remarkable dihedral symmetry after grading for the depth filtration, as first noticed by Goncharov \cite{GG}.

\subsection{Linearized equations}
The shuffle equations are graded for the depth, but the stuffle equations are not.
By inspection of the coproduct $\Deltat$, we see that:
\begin{eqnarray}
\gr_{\dd}\Deltat: \gr_{\dd} R\langle \langle Y \rangle \rangle  & \To &   \gr_{\dd} R\langle \langle Y \rangle \rangle\widehat{\otimes}_R  \, \gr_{\dd} R\langle \langle Y \rangle \rangle\\
\gr_{\dd}  \Deltat \y_n & =  & 1 \otimes \y_n + \y_n \otimes 1   \nonumber 
\end{eqnarray}
Thus the $\y_n$ are primitive for $\gr_{\dd}\Deltat$, and $\gr_{\dd} R\langle \langle Y \rangle \rangle$ is just  the completed shuffle Hopf algebra on $Y$. Another way to see this is 
on the dual Hopf algebra $\Q \langle Y \rangle$. Here, the depth filtration is the increasing filtration for which $\y_n$ has degree $1$.
By inspection of $(\ref{stuffprod})$, the term beginning with $\y_{i+j}$ is of smaller depth and  so the associated graded product $\star$ reduces to the shuffle product $\sha$ on the alphabet $Y$:
\begin{equation} 
\gr^{\dd} ( \Q \langle Y \rangle, \stu) \cong (\Q \langle Y \rangle, \sha)\ .
\end{equation} 

\begin{defn}  Let $\Phi \in R\langle \langle Y \rangle \rangle $. It satisfies the \emph{linearized stuffle equations} if 
  $$\Delta^{Y}_{\sha} \Phi = 1 \otimes \Phi + \Phi \otimes 1$$
  where $\Delta^{Y}_{\sha}$ is the continuous  coproduct for  which the  $\y_n$ are primitive.
\end{defn}
Power series versions of these equations can be obtained from proposition \ref{propstuffleequations} by suppressing all terms of lower depth. The $(p,q)^\mathrm{th}$ relation is 
\begin{equation} \label{linstuffpqequation} 
f(\x_1 \ldots \x_p \sha \x_{p+1}\ldots \x_{p+q})=0\ ,\end{equation}
which is exactly equation $(\ref{shuffleequation})$ without the $\sharp$.

\begin{defn}  Let $\Phi \in R\langle \langle Y \rangle \rangle $. We say that $\Phi$ satisfies the \emph{linearized double shuffle equations} if the following equations hold:
 \begin{eqnarray} \label{lindoubleshuffle}
\Phi_{e_0} & = & 0  \\ 
 \Delta_{\sha} \Phi  & =  &1 \otimes \Phi + \Phi \otimes 1   \nonumber \\
 \Delta^{Y}_{\sha} \alpha(\Phi) & = &  1 \otimes \alpha(\Phi) + \alpha(\Phi) \otimes 1\nonumber \\
\Phi_{e_0^i e_1} & = &   0\quad  \hbox{ if } i \hbox{ is odd or zero}.  \nonumber
\end{eqnarray}
\end{defn}
The last equation in  $(\ref{lindoubleshuffle})$ requires some explanation. We know from lemma \ref{lemevenindepth1} that any solution to the double shuffle equations modulo products
is even in depth $1$. This information, which is equivalent to the last line of $(\ref{lindoubleshuffle})$,  is lost after passing to the linearized stuffle equations, and  must added back.

\begin{defn} Let $\ls_r$ denote the graded $\Q$-vector space of solutions to the linearized double shuffle equations in depth $r$ and weight $>1$.
Let $\ls =\bigoplus_{r\geq 1} \ls_r$.
\end{defn}

A corollary of  the proof of theorem \ref{thmRacinet} is that $\ls$ is a Lie algebra \cite{Depth}.
\begin{thm} \label{thmlsisIharastable} The  vector space $\ls$ of solutions to the linearized double shuffle equations is a bigraded Lie algebra with respect to the linearized Ihara bracket $\{,  \}$.
\end{thm}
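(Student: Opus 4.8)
The plan is to upgrade Theorem \ref{thmRacinet} (Racinet's theorem) to its linearized counterpart, exploiting the fact that everything is already set up to respect the $\dd$-grading. First I would record that $\ls$ is by definition the common kernel, in each depth $r$, of the shuffle operators $f \mapsto (\overline{\Phi}^{(p+q)})^{\sharp}(\xx_1\ldots\xx_p \sha \xx_{p+1}\ldots\xx_{p+q})$ from Corollary \ref{corshuffleequations}, of the linearized stuffle operators $f \mapsto f(\xx_1\ldots\xx_p \sha \xx_{p+1}\ldots\xx_{p+q})$ from $(\ref{linstuffpqequation})$, together with the depth-one evenness condition $\Phi_{e_0^i e_1}=0$ for $i$ odd or zero. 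Since all three families of conditions are homogeneous for the depth grading, it suffices to show that if $f,g \in \ls$ then $\{f,g\} = f\circb g - g\circb f$ satisfies each of them; bigradedness (by weight and depth) is then automatic because $\circb$ preserves weight and adds depths.

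The shuffle side is essentially already done: $\circb$ respects the $\dd$-grading (as noted in \S\ref{sectpropertiesofcircb}), and the identity $(\ref{IharaderivShuffle})$, $f\circb(g\cdot h) = (f\circb g)\cdot h + g\cdot(f\circb h) - g\cdot f\cdot h$, is exactly the statement needed to run the argument of Lemma preceding Theorem \ref{thmRacinet}: it gives $\Delta^r_{\sha}(f\circb g) = f\otimes g + g\otimes f$ whenever $f,g$ are shuffle-primitive, hence $\Delta^r_{\sha}\{f,g\} = 0$. For the linearized stuffle side, the key point is that the leading-depth part of $\circb$, acting on $R\langle\langle Y\rangle\rangle$ viewed inside $R\langle\langle e_0,e_1\rangle\rangle$ via $\y_n \mapsto e_1 e_0^{n-1}$, still satisfies a Leibniz-type identity with respect to the \emph{shuffle} product $\sha$ on the alphabet $Y$ (the associated graded of $\stu$): concretely, one takes $(\ref{IharaderivStuffle})$, $f\circb(g\studot h) = (f\circb g)\studot h + g\studot(f\circb h) - g\studot f\studot h$, and passes to depth-graded pieces, checking that the lower-depth corrections coming from the $\y_{i+j}$ term in $(\ref{stuffprod})$ drop out. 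Granting this graded Leibniz identity, the same formal manipulation as in Theorem \ref{thmRacinet} — using that a linearized-double-shuffle element is in particular a Lie element, so the associated derivation sends primitives to primitives — yields $(\Delta^Y_{\sha})^r(f\circb g) = f\otimes g + g\otimes f$, hence $(\Delta^Y_{\sha})^r\{f,g\} = 0$. Finally the depth-one evenness is preserved for trivial degree reasons: the depth-one component of $\{f,g\}$ is a commutator of lower-degree terms whose depth-one pieces multiply, and the depth-one component of a bracket of two elements each of depth $\geq 1$ is zero unless both have depth one, in which case $f\circb g$ in depth one reduces to $f^{(1)}\cdot g^{(0)} = 0$ since $g^{(0)}=0$; so there is nothing to check beyond what the weight grading already forces, and one simply cites Lemma \ref{lemevenindepth1} applied to the (genuine, non-linearized) closure if needed.

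The main obstacle is the stuffle-to-linearized-stuffle passage: Racinet's theorem is proved for the full stuffle coproduct, and one must verify that the argument survives after taking $\gr_{\dd}$, i.e. that no cross-terms between different depths are needed. Concretely, the delicate check is that in $(\ref{IharaderivStuffle})$ the terms involving $\y_{i+j}$ (which lower the depth) assemble into something that is separately primitive for $\Delta^Y_{\sha}$, or else cancel, so that the identity descends cleanly to $\gr_{\dd}$. One clean way to organize this is to observe, following \cite{Depth}, that $\ls$ is literally the depth-associated-graded of $\dmr$ as a filtered Lie algebra, so the bracket on $\gr_{\dd}\dmr$ is induced from the Ihara bracket on $\dmr$ (which is a Lie bracket by Theorem \ref{thmRacinet}), and one need only check that this induced bracket coincides with the linearized Ihara bracket $\{,\}$ defined via $\circb$ — which is immediate from the fact that the restriction of $\circb$ to Lie elements is $\circ$ (Prop. 2.2 in \cite{Depth}) together with the compatibility of $\circb$ with the $\dd$-grading. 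I would present the proof in this second form, reducing Theorem \ref{thmlsisIharastable} to Theorem \ref{thmRacinet} plus the observation that passing to $\gr_{\dd}$ turns the $\dmr$ conditions into the $\ls$ conditions, with the depth-one evenness condition re-imposed by hand exactly as flagged after $(\ref{lindoubleshuffle})$.
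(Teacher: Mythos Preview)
Your first approach is correct and is exactly what the paper does: it simply states that the result is ``a corollary of the proof of theorem \ref{thmRacinet}'' with a citation to \cite{Depth}. The point is that Racinet's argument for the full stuffle coproduct $\Delta_{\stu}$ is depth-filtered, and passing to $\gr_{\dd}$ turns it into the identical argument for $\Delta^Y_{\sha}$; combined with $(\ref{Deltarsha})$ for the shuffle side and the trivial vanishing of the depth-one component of any bracket, this gives the theorem.

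However, the second form---the one you say you would actually present---has a genuine gap. The claim that ``$\ls$ is literally the depth-associated-graded of $\dmr$'' is \emph{not} known. What is immediate is the inclusion $\gr_{\dd}\dmr \subset \ls$: the leading-depth component of a double-shuffle solution satisfies the linearized equations. The reverse inclusion is the statement that every linearized solution lifts to a genuine one, which is essentially the conjecture $\gd = \ls$ (or a close variant) discussed around \S\ref{sectQBK} and after Conjecture \ref{ConjZag}. So you cannot deduce that $\ls$ is closed under $\{\,,\,\}$ merely from the fact that $\gr_{\dd}\dmr$ is; you really do need to re-run the Racinet argument at the linearized level, as in your first approach. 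Present that one.
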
 
An element of $\ls$ of weight $N$ and depth $r$ can be viewed, in the usual way, as a homogeneous polynomial in $\Q[x_1,\ldots, x_r]$ of degree $N-r$.

We shall think of the linearized equations as the homogeneous version of  the double shuffle  equations modulo products. For example, in depth $2$ they are
\begin{eqnarray}
 f^\sharp(\x_1 \sha \x_2)=0,  &  \hbox{ that is, } &  f(x_1,x_1+x_2) + f(x_2, x_1+x_2)  =  0\, ,\\
\hbox{ and } \quad  f(\x_1 \sha \x_2)=0,    & \hbox{ that is, } &   f(x_1,x_2) + f(x_2,x_1)  = 0 \ .\nonumber  
\end{eqnarray} 
The second equation is obtained from $(\ref{exdepth2stuffle})$  by dropping terms on  the right-hand side.

\subsection{Dihedral symmetries} \label{sectDihedralsymm}
The linearized equations admit a dihedral symmetry coming from the antipodal symmetries on the two Hopf algebra structures.
Since the  stuffle algebra, graded for the depth filtration, is isomorphic to the shuffle algebra on $Y$, its antipode
is  the map $\y_{i_1}\ldots \y_{i_r} \mapsto (-1)^r\y_{i_r}\ldots \y_{i_1}$. This defines an involution
\begin{eqnarray}
\overline{\tau}: R[[x_1,\ldots, x_r]] & \To & R[[x_1,\ldots, x_r]]  \\
\overline{\tau}(\overline{f}) (x_1,\ldots, x_r) & = & (-1)^r \overline{f}(x_r,\ldots, x_1)  \nonumber
\end{eqnarray}
  It follows that if  $\overline{f}\in R[[x_1,\ldots, x_r]]$ satisfies the linearized stuffle relations in depth $r$, then
$\overline{f} + \overline{\tau}(\overline{f})=0$.
Note that  the involution $\overline{\tau}$ lifts to an involution
 \begin{eqnarray}
 \tau: R[[y_0,y_1,\ldots, y_r]] & \To & R[[y_0, y_1,\ldots, y_r]]  \\
\tau(f) (y_0,y_1,\ldots, y_r) & = & (-1)^r f(y_0,y_r,\ldots, y_1)  \nonumber
\end{eqnarray}
and therefore if  $f\in R[[y_0,\ldots, y_r]]$  satisfies both translational invariance  and the linearized stuffle relations, we have
$f+ \tau(f)=0$.
Recall that $\sigma$ is the involution  $(\ref{sigmaantipodedef})$  which preserves the solutions to the  shuffle equations.
Therefore if $f$ is a solution to the linearized double shuffle equations, it satisfies
\begin{equation}
f+ \sigma(f)= 0 \quad \hbox{ and } \quad  f + \tau(f) = 0 \  .
\end{equation} 
The composition $\tau \sigma$ is a signed cyclic rotation of order $r+1$
\begin{equation} \label{tausigmaformula}
\tau \sigma(f) (y_0,\ldots, y_r) =   f(-y_r,-y_0,\ldots, -y_{r-1})
\end{equation}
and $\sigma,\tau$ generate a dihedral group $D_{r+1}=\langle \sigma, \tau \rangle $ of symmetries of  order $2r+2$.

\begin{lem} \label{lemfisdihed} Let $f$ be a  solution to the linearized double shuffle equations in depth $r$. Then $\Q f$ is isomorphic to the sign representation $\varepsilon$ of the dihedral group $D_{r+1}$.
\end{lem}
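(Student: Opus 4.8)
The plan is to show that the group $D_{r+1}=\langle \sigma,\tau\rangle$ acts on the one-dimensional space $\Q f$ via the sign character $\varepsilon$, i.e.\ that each generator acts by $-1$. The two defining relations of $D_{r+1}$ (in the form used here) are $\sigma^2=\tau^2=1$ and $(\tau\sigma)^{r+1}=1$; by Lemma~\ref{lemfisdihed}'s hypothesis $f$ satisfies the linearized double shuffle equations, so Section~\ref{sectDihedralsymm} already records that $f+\sigma(f)=0$ and $f+\tau(f)=0$. Thus $\sigma(f)=-f$ and $\tau(f)=-f$, which is exactly the statement that $f$ spans a copy of the sign representation $\varepsilon$ — \emph{provided} one checks that this assignment is consistent with the group relations. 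First I would verify that $\sigma$ and $\tau$ are genuinely involutions on the relevant space of (translation-invariant) power series: for $\sigma$ this is clear from formula~$(\ref{sigmaantipodedef})$ since reversing the order twice and applying $(-1)^r$ twice is the identity; for $\tau$ it is clear from its formula $\tau(f)(y_0,y_1,\ldots,y_r)=(-1)^r f(y_0,y_r,\ldots,y_1)$ for the same reason. Then $(\tau\sigma)^{r+1}=1$ follows from~$(\ref{tausigmaformula})$, which exhibits $\tau\sigma$ as a signed cyclic rotation of order $r+1$ on $r+1$ variables: applying the signed rotation $r+1$ times cycles the variables back to their original positions and multiplies by $(-1)^{r+1}$ an appropriate number of times, giving the identity.

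With the group structure in hand, the proof is short. Since $D_{r+1}$ is generated by $\sigma$ and $\tau$, any homomorphism $D_{r+1}\to\{\pm1\}$ is determined by the images of $\sigma$ and $\tau$; the assignment $\sigma\mapsto -1$, $\tau\mapsto -1$ respects $\sigma^2=\tau^2=1$ trivially and respects $(\tau\sigma)^{r+1}=1$ because $((-1)(-1))^{r+1}=1$. Hence there is a well-defined one-dimensional representation $\varepsilon$ of $D_{r+1}$ on which both generators act by $-1$, and the computations of Section~\ref{sectDihedralsymm} show precisely that $D_{r+1}$ acts on $\Q f$ through this character. Therefore $\Q f\cong\varepsilon$ as a $D_{r+1}$-module.

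The only subtlety — and the step I expect to require the most care — is the claim that $D_{r+1}$ really is a dihedral group of order $2r+2$ acting faithfully in this context, rather than something smaller, and in particular that the relation $(\tau\sigma)^{r+1}=1$ holds \emph{on the nose} on power series (including the sign bookkeeping) and not merely up to a scalar. Once one has checked via~$(\ref{tausigmaformula})$ that $\tau\sigma$ literally has order $r+1$ (the signs collapse because a cyclic permutation of $r+1$ letters, iterated $r+1$ times, returns each letter to its slot and the accumulated sign is $(-1)^{r+1}$ times itself an even number of times — more precisely one tracks that the composite is $f(y_0,\ldots,y_r)\mapsto f(y_0,\ldots,y_r)$ with trivial sign), the rest is formal. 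So the main obstacle is really just the careful verification of this order-$(r+1)$ relation; everything else is immediate from the two antipodal symmetry identities $f+\sigma(f)=0$ and $f+\tau(f)=0$ already established for solutions of the linearized double shuffle equations.
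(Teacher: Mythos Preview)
Your proposal is correct and takes essentially the same approach as the paper, which gives no separate proof and treats the lemma as an immediate consequence of the two antipodal identities $f+\sigma(f)=0$ and $f+\tau(f)=0$ established in the preceding paragraphs.

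One remark: the concern in your final paragraph is unnecessary for this lemma. You do not need $(\tau\sigma)^{r+1}=1$ to hold as an operator identity on the ambient space of power series; you only need the relations of $D_{r+1}$ to hold on $\Q f$. But you already observed in your second paragraph that $\sigma$ and $\tau$ each act by $-1$ on $f$, so $\tau\sigma$ acts by $+1$ on $\Q f$, and hence every relation of the form $(\tau\sigma)^{r+1}=1$ is automatically satisfied there. The sign-bookkeeping you worry about (whether the accumulated sign in $(\tau\sigma)^{r+1}$ is trivial on general power series) is a question about faithfulness of the $D_{r+1}$-action on the ambient space, not about the structure of $\Q f$, and is irrelevant to the statement being proved.
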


\subsection{Parity constraints} The double shuffle equations modulo products are a torsor over the linearized double shuffle equations in the following sense: given 
two solutions $\Phi_1, \Phi_2$ to the double shuffle equations modulo products such that $\Phi_1^{(r)}= \Phi_2^{(r)}$ for $1 \leq r <N$, the difference
$
\Phi^{(N)}_1- \Phi_2^{(N)}
$
is an element of $\ls_N$.

The following proposition was first proved by Tsumura \cite{Tsumura}. A proof is given in \cite{Depth}.

\begin{prop} \label{propparity} Let $f\in \Q[x_1,\ldots, x_r]$ be a homogeneous polynomial of degree $d$, which is a solution to the linearized double shuffle equations.
If $d$ is odd, then $f=0$. \end{prop}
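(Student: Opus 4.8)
The plan is to exploit the dihedral symmetry established in Lemma \ref{lemfisdihed}, which says that a solution $f$ of the linearized double shuffle equations in depth $r$ spans a copy of the sign representation $\varepsilon$ of the dihedral group $D_{r+1} = \langle \sigma, \tau \rangle$. The key observation is that the overall parity operator $\iota \colon f(x_1,\dots,x_r) \mapsto f(-x_1,\dots,-x_r)$ acts on a homogeneous polynomial of degree $d$ by the scalar $(-1)^d$. Since $f$ is assumed homogeneous of odd degree $d$, we have $\iota(f) = -f$. So it suffices to show that $\iota$ acts by $+1$ on the sign representation $\varepsilon$ of $D_{r+1}$ that $\Q f$ realizes, which would force $f = -f$, hence $f = 0$.

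First I would verify that $\iota$ lies in the dihedral group generated by $\sigma$ and $\tau$ (on translation-invariant series, or equivalently after passing to the reduced variables). From the excerpt, $\tau\sigma$ acts as a signed cyclic rotation of order $r+1$ — in the $y$-variables, $\tau\sigma(f)(y_0,\dots,y_r) = f(-y_r,-y_0,\dots,-y_{r-1})$. Composing this rotation with itself $r+1$ times returns the identity, but tracking the signs and the negation carefully shows that some power of $\tau\sigma$ equals $\iota$ (one already sees in the proof of Lemma \ref{lemevenindepth1} that $(\upsilon\sigma)^3 = \iota$ in depth $2$, the case $r=2$ where $d$ odd is handled by hand). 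Concretely, $(\tau\sigma)^{r+1}$ introduces the sign $(-1)^{?}$ together with a total negation of all variables; I would compute the exponent of $-1$ picked up after $r+1$ iterations and identify it with $\iota$ up to a scalar, keeping track of the degree-$r$ twist in the definition of $\tau$.

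Next I would compute how the sign character $\varepsilon$ of $D_{r+1}$ evaluates on this particular element. In the sign representation, $\sigma \mapsto -1$ and $\tau \mapsto -1$, so $\tau\sigma \mapsto +1$, and any power of $\tau\sigma$ maps to $+1$. Since the rotation subgroup of $D_{r+1}$ is cyclic of order $r+1$ and acts trivially under $\varepsilon$, and since $\iota$ is built out of $\sigma,\tau$ in a way that must be expressed purely in terms of the rotation $\tau\sigma$ and an even number of reflections (because $\iota$ commutes with everything and is central), $\varepsilon(\iota) = +1$. Combining: on $\Q f \cong \varepsilon$ we get $\iota(f) = f$, while homogeneity of odd degree gives $\iota(f) = -f$; hence $2f = 0$ and $f = 0$.

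The main obstacle I anticipate is the careful sign bookkeeping in identifying $\iota$ inside $D_{r+1}$: the involutions $\sigma$ and $\tau$ each carry a sign $(-1)^r$ and a reversal of variables, and disentangling the "reversal" part from the "negation" part across $r+1$ compositions is where errors creep in. One clean way around this is to argue representation-theoretically without pinning down $\iota$ as an explicit word: note that $\iota$ is central in the full symmetry group acting on $\Q[x_1,\dots,x_r]$ and normalizes $D_{r+1}$, and on the one-dimensional space $\Q f$ it must act by $\pm 1$; then use a single well-chosen specialization (for instance, the behaviour of $f$ under $x_i \mapsto -x_i$ combined with one reflection $\sigma$ whose action on $f$ is already known to be $-1$) to pin down the sign as $+1$. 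Either route reduces the proposition to the elementary fact that an odd-degree homogeneous polynomial fixed by total negation must vanish.
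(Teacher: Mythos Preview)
Your argument works cleanly when the depth $r$ is even: then $(\tau\sigma)^{r+1}=\iota$ (each application negates and cyclically shifts, and after $r+1$ iterations the shift is trivial while an odd number of negations remain), and since $\varepsilon(\tau\sigma)=\varepsilon(\tau)\varepsilon(\sigma)=1$ you get $\iota(f)=f$, hence $f=0$ when $d$ is odd.

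The gap is the case $r$ odd. Then $(\tau\sigma)^{r+1}=\mathrm{id}$, not $\iota$, because an even number of negations occur. In fact $\iota\notin\langle\sigma,\tau\rangle$ in this case: the map sending a word in $\sigma,\tau$ to the parity of the number of $\sigma$'s descends to a homomorphism $\langle\sigma,\tau\rangle\to\Z/2$ precisely when $r$ is odd (it respects the relation $(\tau\sigma)^{r+1}=1$), and $\iota$ would have to be an even word in $\sigma,\tau$ giving the identity permutation, which forces $\iota=1$. So the dihedral symmetry alone cannot see $\iota$ when $r$ is odd. That this is a genuine obstruction, not just a failure of one particular expression, is confirmed by the paper's own definition of $\overline{\PP}_r$ in \S\ref{sectdihedihara}: evenness (condition (1)) is imposed \emph{separately} from the two dihedral relations (conditions (2) and (3)), precisely because the latter do not imply the former in general.

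Your fallback is circular: you already know $\iota$ acts on $\Q f$ by $(-1)^d$ from homogeneity; the content of the proposition is exactly to pin down this sign using the equations, and the dihedral group gives no independent handle on it when $r$ is odd. A ``well-chosen specialization combined with one reflection $\sigma$'' cannot help, because $\iota$ commutes with $\sigma$ and acts by an undetermined scalar on any one-dimensional $D_{r+1}$-isotypic piece.

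The paper does not prove this proposition via dihedral symmetry at all: it cites Tsumura and \cite{Depth}, and points to \S\ref{sectproofofalphaid} for a stronger statement. There the six-term relation (Proposition~\ref{prop6term}) is derived from the \emph{full} shuffle and stuffle equations, not merely their antipodal consequences; comparing it with the stuffle antipode yields two expressions $f^{(d)}\pm\overline{\tau}(f)^{(d)}=(\text{lower depth})$ with opposite signs, which together force the parity constraint. To repair your argument for odd $r$ you would need to bring in at least one genuine shuffle or stuffle identity beyond the dihedral ones --- for instance the $(1,r-1)$ relations --- which is essentially what those other proofs do.
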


A stronger statement which implies this proposition  is proved in \S\ref{sectproofofalphaid}.

\subsection{The dihedral Lie algebras and the Ihara bracket} \label{sectdihedihara}
For all $r\geq 1$, consider the  graded vector space $\PP_r$ of polynomials  $f\in \Q[y_0,\ldots, y_r]$ which satisfy 
\begin{eqnarray} \label{goodcyclicpolys}
f(y_0,\ldots, y_r) & =  &f(-y_0,\ldots,- y_r) \\ 
f + \sigma(f)  &= & f+   \tau (f) \quad  = \quad 0 \ .   \nonumber
\end{eqnarray}
In particular, they  invariant under cyclic permutations of $(y_0,y_1, \ldots, y_n)$.   Recall that $D_{r+1}$ is the dihedral symmetry group generated by $\sigma, \tau$.
\begin{prop} \label{propdihedbracket} Suppose that $f\in \PP_r$ and $g\in \PP_s$ are polynomials  satisfing $(\ref{goodcyclicpolys})$.  Then
the Ihara bracket is given by  summing over the dihedral symmetry group:
\begin{eqnarray} \label{dihedbracket}
\{f,g\} = \sum_{\mu  \in D_{r+s+1}} \varepsilon(\mu) \mu\big( f(y_0,y_1,\ldots, y_r) g(y_{r},y_{r+1}\ldots, y_{r+s})\big)
\end{eqnarray}
In particular,  $\{ . , .\}: \PP_r \times \PP_s \rightarrow \PP_{r+s}$, and $\PP=\bigoplus_{r\geq 1} \PP_r $ is a bigraded Lie algebra.
\end{prop}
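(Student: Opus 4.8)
The plan is to evaluate the Ihara bracket $\{f,g\}=f\circb g-g\circb f$ using the explicit power-series formula $(\ref{circformula})$ for the linearized Ihara action, and then to reorganize the $2(r+s)+2$ resulting summands into the single dihedral sum $(\ref{dihedbracket})$ by systematically exploiting the symmetries $(\ref{goodcyclicpolys})$ of $f$ and $g$; the structural statements (closure, bigrading, Jacobi) will then follow formally.

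First I would put the hypotheses $(\ref{goodcyclicpolys})$ into workable shape. From $\sigma(f)=-f$ and $\tau(f)=-f$ one gets $\tau\sigma(f)=f$; moreover $\sigma^2=\tau^2=1$ as operators, and by $(\ref{tausigmaformula})$ the operator $(\tau\sigma)^{r+1}$ is either the identity or the substitution $y_i\mapsto-y_i$, so it fixes $f$ by the first line of $(\ref{goodcyclicpolys})$. Hence $\langle\sigma,\tau\rangle$ acts as the abstract group $D_{r+1}$ on the subspace of even polynomials in $y_0,\ldots,y_r$, the element $f$ is invariant under the rotation subgroup $\langle\tau\sigma\rangle$, and (Lemma \ref{lemfisdihed}) $\varepsilon$ is the character with $\varepsilon(\sigma)=\varepsilon(\tau)=-1$, taking the value $+1$ on rotations and $-1$ on reflections. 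The same holds for $g$ in $\Q[y_0,\ldots,y_s]$, and, since $f$ and $g$ are even, for $P:=f(y_0,\ldots,y_r)\,g(y_r,\ldots,y_{r+s})$, which is an even polynomial in $y_0,\ldots,y_{r+s}$ on which $D_{r+s+1}$ therefore acts.

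The combinatorial heart is to match, term by term, the $2s+1$ summands of $f\circb g$ in $(\ref{circformula})$ together with the $2r+1$ summands of $-\,g\circb f$ against the $2(r+s+1)$ polynomials $\varepsilon(\mu)\,\mu(P)$, $\mu\in D_{r+s+1}$. The guiding picture is the polygon dissection on the vertex set $\{0,1,\ldots,r+s\}$: a summand $f(y_i,\ldots,y_{i+r})\,g(y_0,\ldots,y_i,y_{i+r+1},\ldots,y_{r+s})$ from the first sum of $f\circb g$ assigns to $f$ the contiguous arc $[i,i+r]$ and to $g$ the complementary arc; after cyclically rotating the arguments of $g$ (legitimate by the previous paragraph) this summand becomes $\varepsilon(\mu)\,\mu(P)$ for the unique rotation $\mu\in\langle\tau\sigma\rangle$ carrying the $f$-arc of $P$ onto $[i,i+r]$. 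The summands of the second sum of $f\circb g$ carry the reversed block $f(y_{i+r},\ldots,y_i)$; the reversal is absorbed by $\tau(f)=-f$, turning these into $\varepsilon(\mu)\,\mu(P)$ for the reflections $\mu$, and one checks that the prefactor $(-1)^{\deg f+r}$ in $(\ref{circformula})$ equals $\varepsilon$ on the relevant reflection. The analysis of $g\circb f$ is identical with the roles of the two complementary arcs exchanged, accounting for the remaining half of $D_{r+s+1}$, and the minus sign in $\{f,g\}=f\circb g-g\circb f$ matches $\varepsilon$ on the coset that swaps the $f$- and $g$-arcs. The one genuinely fiddly point, and where I expect essentially all the work to lie, is verifying that the index ranges $0\le i\le s$, $1\le i\le s$ and their $g\circb f$ counterparts hit every element of $D_{r+s+1}$ exactly once and with exactly the right sign.

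Granting $(\ref{dihedbracket})$, closure and the bigrading are immediate: for any $\nu\in D_{r+s+1}$, $\nu\big(\sum_{\mu}\varepsilon(\mu)\mu(P)\big)=\sum_{\mu}\varepsilon(\mu)(\nu\mu)(P)=\varepsilon(\nu)\sum_{\mu}\varepsilon(\mu)\mu(P)$, so $\sigma(\{f,g\})=\tau(\{f,g\})=-\{f,g\}$; and $\{f,g\}$ is even because $P$ is and negation of all variables commutes with the $D_{r+s+1}$-action on even polynomials. Thus $\{f,g\}\in\PP_{r+s}$, and $\{\,,\,\}$ restricts to a bilinear map $\PP_r\times\PP_s\to\PP_{r+s}$ which manifestly preserves weight and depth. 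Antisymmetry is built into the definition of $\{\,,\,\}$, and the Jacobi identity holds on all of $R\langle\langle e_0,e_1\rangle\rangle$ — hence a fortiori on $\PP$ — by the pre-Lie identity $(\ref{Aabc})$. Therefore $\PP=\bigoplus_{r\ge1}\PP_r$ is a bigraded Lie algebra.
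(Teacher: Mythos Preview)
Your proposal is correct and follows the same route as the paper: compute $\{f,g\}$ directly from the explicit formula $(\ref{circformula})$ and use the symmetries $(\ref{goodcyclicpolys})$ of $f$ and $g$ to reorganize the $2(r+s+1)$ resulting summands into the dihedral sum, from which closure and the bigrading follow formally. The paper is merely terser, recording the intermediate form $\{f,g\}=\sum_i f(y_i,\ldots,y_{i+r})\big(g(y_{i+r},\ldots,y_{i-1})-g(y_{i+r+1},\ldots,y_i)\big)$ (indices mod $r+s+1$) and declaring the rest a ``straightforward calculation''; your more careful bookkeeping of the term-by-term matching and signs is exactly what that calculation amounts to.
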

\begin{proof}
A straightforward calculation from $(\ref{circformula})$   gives
$$\{f,g\} = \sum_{i }  f(y_i, y_{i+1}, \ldots, y_{i+r}) \big( g(y_{i+r}, y_{i+r+1}, \ldots,y_{i-1})- g(y_{i+r+1}, y_{i+r+2}, \ldots,y_i)    \big)$$
where the summation indices are taken modulo $r+s+1$. \end{proof}
The terms in the Ihara bracket can be represented pictorially as  polygons with  labels which are permuted cyclically, as follows (or  equivalently as marked points on a circle \cite{GG}).
The signs are fixed as shown: there is no dependence on $r$ or $s$.

\begin{figure}[h!]
  \begin{center}
   \epsfxsize=10cm \epsfbox{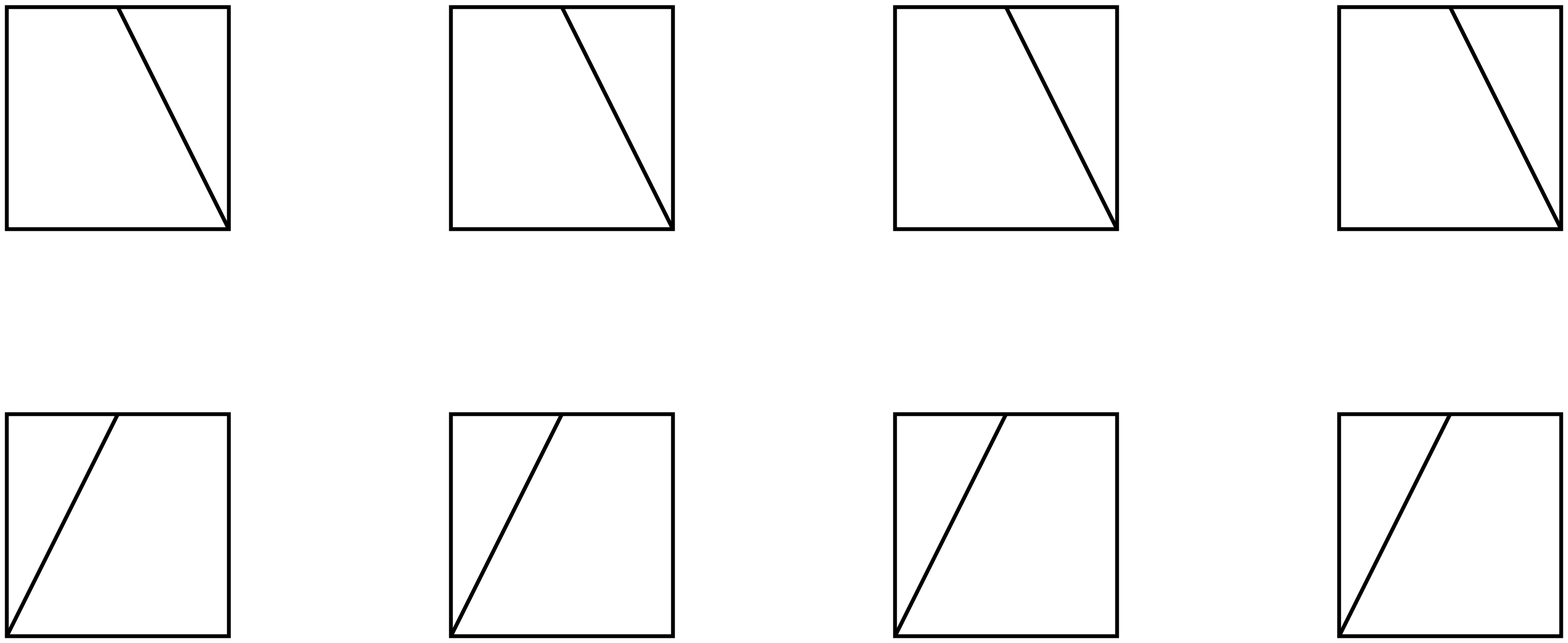}
  \label{Boxes}
\put(-310,95){$+$}
\put(-298,95){$y_3$} \put(-275,90){$f$}\put(-244,95){$y_1$}\put(-257,103){$g$}\
\put(-270,120){$y_0$} \put(-270,67){$y_2$}
\put(-229,95){$+$}
\put(-217,95){$y_0$} \put(-194,90){$f$}\put(-163,95){$y_2$}\put(-176,103){$g$}\
\put(-189,120){$y_1$} \put(-189,67){$y_3$}
\put(-149,95){$+$}
\put(-137,95){$y_1$} \put(-114,90){$f$}\put(-83,95){$y_3$}\put(-96,103){$g$}\
\put(-109,120){$y_2$} \put(-109,67){$y_0$}
\put(-68,95){$+$}
\put(-56,95){$y_2$} \put(-33,90){$f$}\put(-2,95){$y_0$}\put(-15,103){$g$}\
\put(-28,120){$y_3$} \put(-28,67){$y_1$}
\put(-310,20){$-$}
\put(-298,20){$y_3$} \put(-265,15){$f$}\put(-244,20){$y_1$}\put(-283,28){$g$}\
\put(-270,45){$y_0$} \put(-270,-7){$y_2$}
\put(-229,20){$-$}
\put(-217,20){$y_0$} \put(-184,15){$f$}\put(-163,20){$y_2$}\put(-202,28){$g$}\
\put(-189,45){$y_1$} \put(-189,-7){$y_3$}
\put(-149,20){$-$}
\put(-137,20){$y_1$} \put(-104,15){$f$}\put(-83,20){$y_3$}\put(-122,28){$g$}\
\put(-109,45){$y_2$} \put(-109,-7){$y_0$}
\put(-68,20){$-$}
\put(-56,20){$y_2$} \put(-23,15){$f$}\put(-2,20){$y_0$}\put(-40,28){$g$}\
\put(-28,45){$y_3$} \put(-28,-7){$y_1$}
\put(-275,98){$\curvearrowright$}\put(-265,23){$\curvearrowright$}
 \caption{The terms in $\{f,g\}$. The top left diagram corresponds to 
 $g(y_0, y_1) f(y_2,y_3,y_0)$, the bottom right one to $- g(y_2,y_3)f(y_3,y_0,y_1)$.}
  \end{center} 
\end{figure}
A polygon with  sides labelled by $\{y_{i_1}, \ldots, y_{i_n}\}$, and an inscribed $f$, for example,  denotes a term
$f(y_{i_1}, \ldots, y_{i_n})= f(y_{i_2}, \ldots, y_{i_n},y_{i_1})=\cdots= f(y_{i_n}, y_{i_1} \ldots, y_{i_{n-1}})$.

\begin{defn}
Let $\overline{\PP}_r \subset \PP_r $ denote the subspace of polynomials  which satisfy $(\ref{goodcyclicpolys})$ and are invariant under translation, and write $\overline{\PP}= \bigoplus_{r \geq 1} \overline{\PP}_r$. 
\end{defn}

\begin{lem} \label{lemtransinvisstable}  The space $\overline{\PP}$ is a bigraded Lie subalgebra of $\PP$.
\end{lem}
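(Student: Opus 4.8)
The plan is to show that the bracket formula $(\ref{dihedbracket})$ preserves translation invariance. We already know from Proposition \ref{propdihedbracket} that $\{\,,\,\}$ maps $\PP_r \times \PP_s$ into $\PP_{r+s}$, so the only thing left to check is that if $f \in \overline{\PP}_r$ and $g \in \overline{\PP}_s$ are both translation invariant, then $\{f,g\} \in \overline{\PP}_{r+s}$ is translation invariant as well, i.e.\ annihilated by $\nabla = \sum_{i=0}^{r+s} \partial/\partial y_i$. This is the content of Lemma \ref{lemtransinvisstable}, and the argument is essentially bookkeeping with the explicit summation formula.

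First I would use the expression for the bracket obtained in the proof of Proposition \ref{propdihedbracket},
$$\{f,g\}(y_0,\ldots,y_{r+s}) = \sum_{i} f(y_i,\ldots,y_{i+r})\big(g(y_{i+r},\ldots,y_{i-1}) - g(y_{i+r+1},\ldots,y_i)\big),$$
with indices taken modulo $r+s+1$. Applying $\nabla$ and using the chain rule: since the arguments of each occurrence of $f$ and of $g$ are disjoint sublists of a cyclic sequence whose union is the whole list $\{y_0,\ldots,y_{r+s}\}$ for the two terms paired up, one sees that applying $\nabla$ to a product $f(\text{block}) \cdot g(\text{complementary block})$ gives $(\nabla_f f)(\cdots)\,g(\cdots) + f(\cdots)\,(\nabla_g g)(\cdots)$, where $\nabla_f$ is the sum of partials over the variables appearing in $f$'s block and likewise $\nabla_g$. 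By translation invariance of $f$, $\nabla_f f = 0$; by translation invariance of $g$, $\nabla_g g = 0$. Hence each summand is killed by $\nabla$, so $\nabla\{f,g\} = 0$.

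The one point needing a little care — and the main (mild) obstacle — is the second term $g(y_{i+r+1},\ldots,y_i)$, whose variables, together with $f(y_i,\ldots,y_{i+r})$'s variables, overlap in the single index $y_i$ (and the first term $g(y_{i+r},\ldots,y_{i-1})$ overlaps $f$'s block in $y_{i+r}$). So the blocks are not quite complementary: they share exactly one variable. Nonetheless the total set of variables occurring in $f(y_i,\ldots,y_{i+r})\,g(y_{i+r},\ldots,y_{i-1})$ is all of $\{y_0,\ldots,y_{r+s}\}$, with $y_{i+r}$ counted in both factors; differentiating and summing, the contribution of $\partial/\partial y_{i+r}$ splits between the two factors exactly as in the product rule, and the $f$-part of $\nabla$ is the full sum over $f$'s variables (which vanishes) while the $g$-part is the full sum over $g$'s variables (which vanishes). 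The same holds for the other term. So translation invariance of each factor is precisely what is needed, and no cyclic cancellation between different summands is required.

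Finally, $\overline{\PP}$ is graded by weight and depth because $\overline{\PP}_r$ inherits the grading from $\PP_r$, and it is closed under the bracket by the above, so it is a bigraded Lie subalgebra of $\PP$. This completes the proof.
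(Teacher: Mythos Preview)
Your proof is correct and follows the same approach as the paper, which simply asserts that ``it is clear from $(\ref{dihedbracket})$ that translation invariance is preserved by $\{\,,\,\}$.'' You have carefully unpacked this one-line claim, correctly handling the single-variable overlap between the $f$- and $g$-blocks so that the product rule yields exactly $(\nabla f)\cdot g + f\cdot(\nabla g)=0$ on each summand.
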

\begin{proof} It is clear from $(\ref{dihedbracket})$ that translation invariance is preserved by $\{, \}$.
\end{proof}
We can equivalently view $\overline{\PP}_r$ as the space of polynomials  $\overline{f}\in \Q[x_1,\ldots, x_r]$ such that
  $ \overline{f}(y_1 -y_0,\ldots, y_r-y_0)\in\PP_r$.  Explicitly,  $\overline{\PP}_r$ is the vector space of polynomials satisfying
 \begin{enumerate}
 \item  $\overline{f}(x_1,\ldots, x_r )=  \overline{f}(-x_1,\ldots, -x_r ) $
\item  $\overline{f}(x_1,\ldots, x_r ) + (-1)^r \overline{f}(x_r,\ldots, x_1 ) =0 $
\item $\overline{f}(x_1,\ldots, x_r ) + (-1)^r \overline{f}(x_{r-1}-x_r,\ldots, x_1-x_r, -x_r ) =0 $
\end{enumerate}
with the Lie bracket  induced by $(\ref{dihedbracket})$. 

\begin{thm}  \label{thmlsisclosed} The space of solutions to  the linearized double shuffle  equations $\ls$ are a bigraded Lie subalgebra of 
$\PP$.
\end{thm}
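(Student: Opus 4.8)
The plan is to split the statement into two parts: (i) every solution to the linearized double shuffle equations already lies in $\PP$ — in fact in the translation-invariant Lie subalgebra $\overline{\PP}$ — and (ii) $\ls$ is closed under the Ihara bracket $\{\,,\}$. Since Proposition \ref{propdihedbracket} already identifies $\PP$ (and hence $\overline{\PP}$) as a bigraded Lie algebra whose bracket \emph{is} the Ihara bracket, (i) and (ii) together immediately yield the theorem.

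For (i), take $f\in\ls_r$, regarded via $(\ref{wordstopolys})$ as a polynomial $f(y_0,\dots,y_r)$. Translation invariance of $f$ follows from Lemma \ref{lemtransinv}, because the defining equations $(\ref{lindoubleshuffle})$ include the (unlinearized) shuffle equations modulo products $\Delta_{\sha}f=1\otimes f+f\otimes 1$ together with $f_{e_0}=0$. The antipodal relations $f+\sigma(f)=0$ and $f+\tau(f)=0$ are exactly those recorded in \S\ref{sectDihedralsymm}: the antipode of the shuffle Hopf algebra acts by $-1$ on primitive elements and is implemented by $\sigma$, while the antipode of the depth-graded stuffle Hopf algebra on the alphabet $Y$ is implemented by $\tau$ and acts by $-1$ on the primitive element $\alpha(f)$. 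Finally, by Proposition \ref{propparity} a nonzero homogeneous solution has even degree $N-r$, so $f(-y_0,\dots,-y_r)=f(y_0,\dots,y_r)$ (the zero polynomial satisfying this trivially). Thus $f$ satisfies $(\ref{goodcyclicpolys})$ and is translation invariant, i.e. $f\in\overline{\PP}_r\subseteq\PP_r$. (Equivalently one may cite Lemma \ref{lemfisdihed} for the dihedral symmetries and add the translation and parity statements.)

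For (ii), let $f\in\ls_r$, $g\in\ls_s$ and set $\{f,g\}=f\circb g-g\circb f$. The shuffle side is immediate: $\{f,g\}_{e_0}=0$, and since $f$ and $g$ solve the shuffle equations modulo products, the lemma giving $(\ref{Deltarsha})$ yields $\Delta^r_{\sha}(f\circb g)=f\otimes g+g\otimes f$ and the same with $f,g$ interchanged, so $\Delta^r_{\sha}\{f,g\}=0$; hence $\{f,g\}$ again solves the shuffle equations modulo products. For the linearized stuffle side one runs the top-depth version of Racinet's argument (Theorem \ref{thmRacinet}, \cite{R}; see \cite{Depth}): using the stuffle Leibniz identity $(\ref{IharaderivStuffle})$ for $\circb$ and retaining only the leading-depth terms, where the stuffle product degenerates to the shuffle product on $Y$, one shows that $\alpha(f\circb g)$ has reduced $\Delta^{Y}_{\sha}$-coproduct equal to $\alpha(f)\otimes\alpha(g)+\alpha(g)\otimes\alpha(f)$, so that $\alpha(\{f,g\})$ is primitive for $\Delta^{Y}_{\sha}$. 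The remaining condition in $(\ref{lindoubleshuffle})$ — the depth-one parity vanishing — is vacuous, since $\{f,g\}$ has depth $r+s\ge 2$. Therefore $\{f,g\}\in\ls_{r+s}$, and combining with (i) and Proposition \ref{propdihedbracket} we conclude that $\ls$ is a bigraded Lie subalgebra of $\PP$. The one genuinely substantial point — and the step I expect to be the main obstacle — is the linearized stuffle stability: unlike the shuffle case it is not a formal consequence of a single coproduct identity but requires tracking the leading-depth terms through $(\ref{IharaderivStuffle})$, which is precisely the content of the corollary of Racinet's theorem invoked as Theorem \ref{thmlsisIharastable}; the present theorem adds to that only the identification of the ambient dihedral Lie algebra $\PP$.
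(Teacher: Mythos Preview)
Your proposal is correct and follows essentially the same approach as the paper, which proves the theorem in one line by citing Proposition~\ref{propparity}, Proposition~\ref{propdihedbracket}, and Theorem~\ref{thmlsisIharastable}. You have simply unpacked these citations: the parity proposition gives evenness, the dihedral symmetries (Lemma~\ref{lemfisdihed}) together with translation invariance give membership in $\overline{\PP}\subset\PP$, and Theorem~\ref{thmlsisIharastable} gives closure under the bracket; your closing remark that the stuffle stability is the only substantial point and is precisely Theorem~\ref{thmlsisIharastable} matches the paper's intent exactly.
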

\begin{proof} This follows from propositions  \ref{propparity} and \ref{propdihedbracket}, and theorem   \ref{thmlsisIharastable}.
\end{proof} 
The dihedral symmetries on the depth-graded motivic Lie algebra was first discovered by Goncharov \cite{GG}. In \S\ref{sectEisder} we shall give an explanation of the cyclic symmetry in terms of the pro-unipotent fundamental group of the punctured infinitesimal Tate curve.

\section{Period polynomials and the Broadhurst-Kreimer conjecture}
We briefly review the results from the paper \cite{Depth}, which, if    conjectures due to Broadhurst-Kreimer and Zagier are correct, will  give a complete description of the  solutions to the linearized double shuffle equations.

\subsection{Reminders on period polynomials}
Let  $S_{2k}$  denote the space of   cusp forms of weight $2k$  for the full modular group $SL_2(\Z)$.

\begin{defn} \label{defperiodpoly} Let $n\geq 1$ and let $W_{2n}\subset \Q[x,y]$ denote the vector space of homogeneous polynomials $P(x,y)$  of degree  $2n-2$ satisfying
\begin{eqnarray}
&P(x,y) + P(y,x)=  0& \label{periodsign} \\ 
&P(x, y) + P(x-y,x) + P (-y, x-y) =0 \ . &  \label{periodrel}
\end{eqnarray}
\end{defn}
Let $W^e_{2n}\subset W_{2n}$ (respectively $\Ss^{\sfo}_{2n} \subset W_{2n}$) denote the subspace of polynomials which are even (respectively odd)  in  $x$ and $y$.
The  space $W_{2n}^e$ contains the polynomial $p_{2n}= x^{2n-2}-y^{2n-2}$, and is  a direct sum
$$W_{2n}^e \cong  \Ss^{\sfe}_{2n}  \oplus \Q \,p_{2n} $$
where $\Ss^{\sfe}_{2n}$ is the subspace of polynomials which vanish at $(x,y)=(1,0)$.  We write $\Sf= \bigoplus_{n} \Ss^{\sfe}_{2n}$,  and $\So= \bigoplus_{n} \Ss^{\sfo}_{2n}$. It follows from  the Eichler-Shimura theorem that $$\dim S_{2k} = \dim \Ss^{\sfe}_{2k}= \dim \Ss^{\sfo}_{2k}$$ and   the generating series for the dimension of the space of cusp forms is
\begin{equation} \label{cuspgf} \mathbb{S}(s)=\sum_{n\geq 0} \dim \Ss^{\bullet}_{2n}\, s^{2n} = {s^{12} \over (1-s^4)(1-s^6)}  \ . \end{equation}
A  generator of $\Ss^{\sfe}$ of lowest degree is 
$s_{12} = x^2y^2(x^2-y^2)^3  = x^8 y^2 -3 x^6 y^4+3 x^4 y^6-x^2y^8.$

\subsection{Generators in depth 1: Zeta elements} 
The  solutions to the linearized double shuffle equations in depth $1$  are given by $x_1^{2n}$ for all $n\geq 0$.
The case $n=0$ corresponds to the trivial solution $\Phi= e_1$. 
If $\sigma_{2n+1}$ denotes any choice of normalized generators of the motivic Lie algebra, we have
$$\sigma^{(1)}_{2n+1} = x_1^{2n}  \quad \hbox{ for } n \geq 1\ .$$
It follows that  $\gr^{1}_{\dd} \g  \cong \ls_1  $ (since by definition, $\ls$ is concentrated in weight $>1$).
\subsection{Quadratic relations in depth 2} The linearized Ihara bracket gives a map
\begin{equation}
\{ . , . \} : \ls_1 \wedge \ls_1 \To \ls_2\ .
\end{equation}
It follows almost immediately from the formula for the linearized Ihara bracket that 
the kernel is isomorphic to the space of even period polynomials, hence an exact sequence
$$ 0 \rightarrow \Sf \rightarrow \ls_1 \wedge \ls_1 \rightarrow \ls_2$$
The  element $s_{12}$ corresponds to the relation, due to Ihara:
\begin{equation} \label{IharaRel} 
\{ x_1^2, x_1^8\} - 3 \{ x_1^4, x_1^6\} =0
\end{equation}
\subsection{Generators in depth 4: exceptional elements} \label{sectExotic}
Any $f(x,y)\in \Sf$   vanishes along $x=0,y=0$ and $x=y$. Writing  $f_0= (xy(x-y))^{-1} f$, and $f_1= (xy)^{-1} f$, we 
defined an element
$\e_f \in  \Q[y_0,y_1,y_2,y_3,y_4] $ by the formula
$$
 \e_f  =   \sum_{\Z/ \Z 5}   f_1(y_4-y_3,y_2-y_1) + (y_0-y_1) f_0 (y_2-y_3,y_4-y_3)
 $$
 where the sum is over cyclic permutations
$(y_0,y_1,y_2,y_3, y_4 ) \mapsto (y_1,y_2,y_3, y_4, y_0)$.
Its reduction 
$\overline{\e}_f\in \Q[x_1,\ldots, x_4]$ is obtained by setting $y_0=0, y_i =x_i$, for $i=1,\ldots, 4$.

It was shown in \cite{Depth} that the elements $\overline{\e}_f$ are solutions to the linearized double shuffle equations in depth 4, i.e., for every $f \in \Sf$ we have exceptional generators
 $\overline{\e}_f \in \ls_4.$

\subsection{Quantitative Broadhurst-Kreimer-Zagier conjecture} \label{sectQBK}
The strongest conjecture that one can make is the following.

 \begin{conj}  \label{conjBKZls} (Strong Broadhurst-Kreimer and Zagier conjecture)
\begin{eqnarray} \label{homologyconj}
H_1(\ls, \Q) & \cong & \ls_1 \oplus \e(\Sf)  \\
H_2(\ls, \Q) & \cong & \Sf \nonumber \\
H_i(\ls, \Q) & =  & 0  \quad \hbox{for all} \quad  i \geq 3\ .\nonumber 
\end{eqnarray}
\end{conj}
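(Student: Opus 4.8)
The plan is to treat Conjecture \ref{conjBKZls} as the structural refinement of the Broadhurst--Kreimer dimension conjecture and to separate it into an elementary low-depth part, recoverable from \cite{Depth} and the present notes, and one hard structural input. First I would translate everything into generating series. Bigrading by weight $N$ and depth $r$, and writing $P_M(s,t)=\sum_{N,r}(\dim M_{N,r})\,s^Nt^r$, the Poincar\'e--Birkhoff--Witt theorem gives $P_{\U(\ls)}(s,t)=\prod_{N,r}(1-s^Nt^r)^{-\dim(\ls)_{N,r}}$ and the Euler--Poincar\'e principle for Lie algebra homology gives $P_{\U(\ls)}(s,t)^{-1}=\sum_{i\ge 0}(-1)^iP_{H_i(\ls)}(s,t)$. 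Thus Conjecture \ref{conjBKZls} is equivalent to the conjunction of: (a) $H_i(\ls,\Q)=0$ for $i\ge 3$; (b) the closed form
$$P_{\U(\ls)}(s,t)^{-1}=1-\frac{s^3}{1-s^2}\,t+\mathbb{S}(s)\,t^2-\mathbb{S}(s)\,t^4\,,$$
with $\mathbb{S}(s)$ as in $(\ref{cuspgf})$; and (c) the identification of $H_1,H_2$ with $\ls_1\oplus\e(\Sf)$ and $\Sf$. Given (a) and (b), statement (c) follows by an induction on the depth once the natural injections below are in place.

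Second, I would build those injections $\ls_1\oplus\e(\Sf)\hookrightarrow H_1(\ls)$ and $\Sf\hookrightarrow H_2(\ls)$ directly from the Chevalley--Eilenberg complex of $\ls$, using only the depth $\le 4$ results recalled in \S\ref{sectExotic}. The summand $\ls_1$ injects into $H_1$ since $[\ls,\ls]$ has depth $\ge 2$; the summand $\e(\Sf)\subset\ls_4$ injects since $[\ls,\ls]\cap\ls_4=\{\ls_1,\ls_3\}+\{\ls_2,\ls_2\}$ and one checks, as in \cite{Depth}, that $\overline{\e}_f$ avoids this subspace --- Proposition \ref{propparity} and the dihedral description of the bracket in Proposition \ref{propdihedbracket} make the depth $2$ and depth $3$ parts computable, and the exact sequence $0\to\Sf\to\ls_1\wedge\ls_1\xrightarrow{\{\,,\,\}}\ls_2$ pins down $\ls_2$. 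The same exact sequence produces $\Sf\hookrightarrow H_2$: in lowest bidegree the differential $d_2\colon\bigwedge^2\ls\to\ls$ restricts to $\{\,,\,\}$ on $\ls_1\wedge\ls_1$, so $\Sf\subset\ker d_2$, while $\Sf\cap\operatorname{im}d_3=0$ because $d_3$ raises the depth strictly past $2$ on the relevant components. By Theorem \ref{thmlsisclosed} all of this happens inside $\PP$, which keeps the bookkeeping under control.

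The hard part is (a) together with the exact count (b); this is where genuinely new input is needed, and it is essentially equivalent to Broadhurst--Kreimer. The route I would pursue is to show that $\ls$ is \emph{mild} in the sense of Anick and Labute: presented by the generators $\ls_1\cup\e(\Sf)$ modulo an \emph{inert} (strongly free) system of relations isomorphic to $\Sf$. Labute's criterion then yields \emph{simultaneously} the vanishing $H_i(\ls,\Q)=0$ for $i\ge 3$ and the closed form for $P_{\U(\ls)}(s,t)^{-1}$ in (b). Applying it requires a combinatorial certificate: a monomial order on $\Q[x_1,\ldots,x_r]$, uniform in $r$, under which the leading terms of the full defining system of $\ls$ --- the $\sharp$-twisted shuffle relations $(\ref{shuffleequation})$, the linearized stuffle relations $(\ref{linstuffpqequation})$, and the parity and dihedral constraints $(\ref{goodcyclicpolys})$ --- cut out a PBW-type basis of $\U(\ls)$ compatible with both product structures. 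Producing such an order is the main obstacle, and is open.

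An alternative, more geometric route is to identify $\gr_\dd$ of the ambient Lie algebra with the image of the Lie algebra of Eisenstein derivations acting on the pro-unipotent fundamental group of the punctured infinitesimal Tate curve, as in \S\ref{sectEisder}, and to read off the homology from the Eichler--Shimura description of modular symbols: the copies of $\Sf$ in $H_1$ and $H_2$ would then be precisely the genus-one contribution, and $H_i=0$ for $i\ge 3$ would follow from the cohomological dimension of the relevant arithmetic group. The obstacle here is to prove that the comparison map is an isomorphism in \emph{all} depths, not merely in depths $\le 4$ where it can be checked by hand. In either route the elementary steps above --- the two injections and the low-depth identifications --- are within reach of the methods of \cite{Depth} and of this paper; the missing ingredient is a uniform-in-depth structural model for $\ls$.
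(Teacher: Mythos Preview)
The statement you are addressing is a \emph{conjecture}, not a theorem: the paper offers no proof of Conjecture~\ref{conjBKZls} and presents it explicitly as the strong Broadhurst--Kreimer and Zagier conjecture, referring to \cite{Depth} for its origin and consequences. There is therefore nothing in the paper to compare your proposal against. You have in fact recognised this --- your write-up is a strategy rather than a proof, and you flag the hard steps (the Anick--Labute inertness certificate, or the all-depths comparison with the Eisenstein derivation algebra of \S\ref{sectEisder}) as open. That is the correct assessment: both routes you sketch are reasonable programmes, consonant with how the paper and \cite{Depth} discuss the conjecture, but neither is close to complete.

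Two remarks on the parts you call elementary. First, the paper notes just after the conjecture that vanishing of $H_i$ for a pronilpotent Lie algebra in some degree $i$ forces vanishing in all higher degrees; so your item (a) reduces to $H_3(\ls,\Q)=0$, which slightly sharpens the target. Second, your claimed injection $\e(\Sf)\hookrightarrow H_1(\ls)$ is not, as far as I can see, established in \cite{Depth}: one must show that no nonzero $\overline{\e}_f$ lies in $\{\ls_1,\ls_3\}+\{\ls_2,\ls_2\}$, and while this is expected (and is equivalent to a piece of the Broadhurst--Kreimer dimension count in depth~$4$), it is part of what is being conjectured rather than an input one can simply cite. Your injection $\Sf\hookrightarrow H_2(\ls)$ via the depth-$2$ piece of the Chevalley--Eilenberg complex, on the other hand, is genuinely elementary for the reason you give. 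So even the ``low-depth'' portion of your plan contains an open point, and the structural core --- producing a mildness certificate or an all-depths geometric model --- remains the main obstacle, exactly as you say.
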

Some variations and consequences of this conjecture are explored in \cite{Depth}. In particular, it implies that the solutions to the linearized double shuffle equations are generated by the elements
$x_1^{2n}$, for $n\geq 1$, and $\overline{\e}_f$, as $f$ ranges over a basis of $\Sf$.

There is an analogous conjecture for the depth-graded motivic Lie algebra $\gd$: first, that $\overline{\e}( \Sf) \subset \gd_4$ and that
\begin{eqnarray} \label{homologyconj}
H_1(\gd, \Q) & \cong & H_1(\gm;\Q)  \oplus \e(\Sf)  \\
H_2(\gd, \Q) & \cong & \Sf \nonumber \\
H_i(\gd, \Q) & =  & 0  \quad \hbox{for all} \quad  i \geq 3\ .\nonumber 
\end{eqnarray}
It is well-known that if  the homology of a pronilpotent Lie algebra vanishes in degree $i$ then it automatically vanishes in all higher degrees 
\cite{Sigma} remark 18.6, and 
\cite{EnriquezLochak}.
\newpage

\begin{center} \bf{II. Double shuffle equations with poles}
\end{center} 

\section{Polar solutions to the double shuffle equations}

We consider solutions to the double shuffle equations modulo products, and various other Lie algebras,  in the space of rational power series with poles.

\subsection{Rational power series} \label{sectLiealg}
Consider the complement of the set  of hyperplanes $\{x_i = x_j: i<j \}$ in $(\A^1 \backslash \{0\})^d$. Denote its ring of regular functions over $\Q$ by 
$$\Or_d = \Q\big[x_1,\ldots, x_d , {1 \over x_1}, \ldots, {1\over x_d},\Big( {1\over x_i - x_j}\Big)_{1\leq i < j \leq d}\big]\ .$$
There is an analogous ring of  formal power series  denoted by 
$$\widehat{\Or}_d = \Q\Big[ {1 \over x_1}, \ldots, {1\over x_d},\Big( {1\over x_i - x_j}\Big)_{1\leq i < j \leq d}\Big] [[ x_1, \ldots, x_d]]\ .$$
which  is required for the construction of associators, but   will not be used in the present paper. 
Taking the product over all $d$, we define
$$\Or = \prod_{d \geq 1} \Or_d\ .$$
As before, the component of depth $d$ of an  element $f\in \Or$ will be denoted by $f^{(d)} \in \Or_d$. In \S \ref{sectpropertiesofcircb}, we defined a linearized version of the Ihara action on power series. The same formula yields a bilinear map:
\begin{eqnarray} \circb:  \Or_r \otimes_{\Q} \Or_s   & \To &\Or_{r+s}   \\
f(x_1,\ldots, x_r) \otimes g(x_1,\ldots, x_s)  & \mapsto &  f\circb g\, ( x_1,\ldots, x_{r+s}) \nonumber
\end{eqnarray}
If $f=(f^{(d)})_{d\geq 1}$ and $g=(g^{(d)})_{d\geq 1}$ in $\Or$, we have
 $$(f\circb g)^{(d)} =\sum_{i+j= d} f^{(i)} \circb g^{(j)}\ .$$ 
Antisymmetrizing, we obtain  the linearized  Ihara  bracket:
\begin{eqnarray} \label{polariharadef}
\Or   \, \otimes_{\Q} \, \Or &  \To &  \Or   \\
\{ f, g\} &  =  &f \circb g - g \circb f \nonumber \ .
\end{eqnarray}
In the same vein, the formulae for shuffle concatenation $\cdot$ and stuffle concatenation $\studot$ 
give  multiplication laws
$\Or   \otimes_{\Q}  \Or \rightarrow  \Or$.
All the properties of $\circb$ described in \S\ref{sectpropertiesofcircb} remain valid in this setting. Since the  shuffle    equations $(\ref{shuffleequation})$ and stuffle  equations  $(\ref{stuffleequation})$ modulo products are formally defined over $\Or$, it makes sense to consider
solutions to the double shuffle equations modulo products in $\Or$. 

\begin{defn} Let $\p\dmr \subset \Or$ denote the set of solutions to the double shuffle equations modulo products which have  poles of the form described above.
\end{defn}

Since theorem \ref{thmRacinet}  is a statement about functions satisfying
certain functional equations, we immediately deduce (e.g., by the argument in \S \ref{trivialdensity}).

\begin{thm} \label{thmPolarRacinet}  $\p \dmr$ is a Lie algebra with respect to the Ihara bracket $(\ref{polariharadef})$. 
\end{thm}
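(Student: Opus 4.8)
The plan is to reduce Theorem~\ref{thmPolarRacinet} to Theorem~\ref{thmRacinet}, exploiting the fact that the latter is entirely a formal consequence of functional identities among power series — identities which make equally good sense in the ring $\Or$ of rational functions with the prescribed poles. Concretely, recall that $\p\dmr\subset\Or$ consists of those $\psi=(\psi^{(d)})_{d\geq 1}$ with $\psi^{(d)}\in\Or_d$ whose associated (translation-invariant) data satisfy the shuffle equations $(\ref{shuffleequation})$ and the stuffle equations $(\ref{stuffleequation})$, together with the depth-one evenness constraint built into the definition of $\dmr$. To show $\p\dmr$ is closed under the Ihara bracket $(\ref{polariharadef})$, it suffices to show: (i) if $f,g\in\p\dmr$ then $f\circb g$ still satisfies the shuffle equations modulo products, i.e.\ $\Delta_{\sha}^r(f\circb g)=f\otimes g+g\otimes f$; and (ii) if $f,g\in\p\dmr$ then $f\circb g$ satisfies the stuffle equations modulo products, i.e.\ $\Delta_{\stu}^r(f\circb g)=f\otimes g+g\otimes f$. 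Antisymmetrizing then kills the symmetric term $f\otimes g+g\otimes f$ on the right, so $\{f,g\}$ is primitive for both coproducts and hence lies in $\p\dmr$.

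For step~(i), I would simply reuse the proof of Lemma~\ref{Deltarsha}: the key inputs there are that $f$ is a Lie element (which follows from $\Delta_{\sha}^r f=0$), that $\delta_f$ is a derivation for the shuffle coproduct, and that the shuffle product and coproduct act on sequences of rational functions by exactly the same formulas $(\ref{shuffleconcat})$, $(\ref{shuffony})$ used in the polynomial case — none of these manipulations introduces or cancels poles in an uncontrolled way, since $\circb$ is given by the explicit formula $(\ref{circformulax})$ which visibly sends $\Or_r\otimes\Or_s$ into $\Or_{r+s}$ (substitutions $x_i\mapsto x_i-x_j$ etc.\ only produce poles of the allowed type). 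For step~(ii), the content is genuinely Racinet's theorem, but again his argument is a purely formal computation with the coproducts $\Delta_{\sha}$, $\Delta_{\stu}$, the Ihara action $\circ$ (equivalently $\circb$), and the compatibility identities $(\ref{IharaderivShuffle})$, $(\ref{IharaderivStuffle})$ — all of which were already noted (in \S\ref{sectpropertiesofcircb} and the remark following Definition of $\p\dmr$) to remain valid verbatim over $\Or$. So the proof is: run Racinet's argument with $R[[x_1,\ldots,x_r]]$ replaced by $\Or_r$ throughout. The paper's own phrasing ``theorem~\ref{thmRacinet} is a statement about functions satisfying certain functional equations'' is exactly this observation.

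The one point requiring a little care — and the place I expect the main (minor) obstacle to be — is making precise the assertion that ``the same argument works over $\Or$.'' Racinet's proof is phrased for formal power series, and one must check that every step is an identity of rational functions, not merely of their power-series expansions: in particular that the pairing with the generating series $\e$, $\alpha$ of \S\ref{sectdualshuff}, \S\ref{SECTstuff} is legitimate, that the various substitutions and the division operation $\tfrac{\alpha(x_1)-\alpha(x_{p+1})}{x_1-x_{p+1}}$ appearing in Lemma~\ref{lemalphastuffequation} stay inside the ring $\Or$ with its specified poles, and that no step secretly uses that the objects are polynomials or have no poles (e.g.\ no appeal to a degree or an order-of-vanishing argument). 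The cleanest way to dispatch this is the ``density'' argument alluded to in the parenthetical reference to \S\ref{trivialdensity}: the identities defining $\p\dmr$ and the identity ``$\{f,g\}$ is primitive'' are all $\Q$-linear (indeed polynomial) in the finitely many coefficients of $f,g$ in each fixed depth and with a fixed bound on pole orders, so they hold on all of $\Or$ as soon as they hold on the Zariski-dense subset of genuine power series, where they are Theorem~\ref{thmRacinet}. I would present the proof in two or three sentences: invoke Lemma~\ref{Deltarsha} for the shuffle side, invoke Theorem~\ref{thmRacinet} for the stuffle side, note that both are identities valid over any $\Q$-algebra in which the coproducts and $\circb$ are defined — in particular over $\Or$ — and conclude by antisymmetrization that $\{f,g\}\in\p\dmr$; then remark that the Jacobi identity for $\{,\}$ is already known on all of $\Or$ by the pre-Lie identity $(\ref{Aabc})$.
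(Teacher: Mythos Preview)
Your proposal is correct and takes essentially the same approach as the paper: the paper's entire proof is the one-sentence observation that Theorem~\ref{thmRacinet} is a statement about functional equations, hence transfers to $\Or$ by the density argument of \S\ref{trivialdensity}. You have simply unpacked this in more detail, including the verification that $\circb$ lands in $\Or$ and the separate treatment of the shuffle and stuffle sides, which is entirely reasonable.
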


We shall say that an element $f\in \Or$ is homogeneous of weight $w$ if for all $d\geq 1$,  $f^{(d)} \in \Or_d$ is homogeneous of degree $w-d$. If $f, g$ are homogeneous of weights $w_1,w_2$ respectively,
then $f\circb g $ is homogeneous of weight $w_1+w_2$.

There is an  obvious analogue of the  rings $\Or$ in  variables $y_0,\ldots, y_n$, in which we can consider solutions
to the   `unreduced' double shuffle equations modulo products. 

\subsection{Polar version of the dihedral Lie algebra} \label{sectPolardihedradLie}
We  need to consider a version of the dihedral Lie algebra $\p$  with poles
along  $y_i=y_j$ for \emph{consecutive} $i,j$ only.

\begin{defn} \label{defncr} For all $r \geq 1$, consider the  polynomial
\begin{equation} c_r = - \prod_{i=0}^r (y_i-y_{i+1})
\end{equation}
where the indices are taken modulo $r+1$.  Thus $c_1 = (y_1-y_0)^2$. 
Define $\pp\pp_r$
to be the  vector space of homogeneous rational functions $f$ in $y_0,\ldots, y_1$ such that
$$ c_r f \in \Q[y_0,\ldots, y_r]\ ,$$
which are even and  satisfy the dihedral symmetry properties $(\ref{goodcyclicpolys})$, i.e., 
\begin{eqnarray} \label{dihedsymmetries}
f(y_0,\ldots, y_r) & = & f(-y_0, \ldots, -y_r) \\
f(y_0,\ldots, y_r)  & = & f(y_1,y_2, \ldots, y_r,y_0 ) \nonumber \\
 f(y_0,\ldots, y_r) &  = &  (-1)^{r+1} f(y_r,\ldots, y_0) \ . \nonumber 
\end{eqnarray}
Then $\p\p_r$ is graded by weight, where the weight of a homogeneous rational function  $f$ is defined to be  $\deg f + r$. Define a bigraded vector space  
$\p\p= \bigoplus_{r \geq 1} \p\p_r.$ 
\end{defn}

Elements of $\pp\pp_r$ can be represented by  polygons in an identical manner to  \S\ref{sectdihedihara}.

\begin{lem} \label{lemppisclosed} $\pp\pp$ is a bigraded Lie algebra with respect to the bracket $\{\,\, ,\,\, \}$.
\end{lem}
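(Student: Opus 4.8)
The plan is to reduce the statement to the explicit dihedral expression for the bracket, established in the proof of Proposition \ref{propdihedbracket}, and then to track poles. First I would observe that the computation in that proof uses only the symmetry properties $(\ref{goodcyclicpolys})$ of $f$ and $g$ and the explicit formula $(\ref{circformula})$ for $\circb$, both of which make sense verbatim for rational functions in $\Or$ (as recorded in \S\ref{sectLiealg}). Hence for $f\in\pp\pp_r$ and $g\in\pp\pp_s$ the bracket is again
\[
\{f,g\} = \sum_{i} f_i\,(g_i - g_{i+1}), \qquad f_i = f(y_i,\ldots,y_{i+r}),\quad g_i = g(y_{i+r},\ldots,y_{i-1}),
\]
where all indices are read modulo $r+s+1$. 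Being a sum over $D_{r+s+1}$ weighted by the sign character $\varepsilon$, the expression $\{f,g\}$ transforms under $D_{r+s+1}$ by $\varepsilon$, i.e.\ it satisfies $(\ref{dihedsymmetries})$ in depth $r+s$; it is even because $f,g$ are even and $y\mapsto -y$ commutes with the $D_{r+s+1}$-action; and it is homogeneous of weight equal to the sum of the weights of $f$ and $g$. So the only substantive point is the pole condition $c_{r+s}\{f,g\}\in\Q[y_0,\ldots,y_{r+s}]$.

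For this I would use that $f$ and $g$ have at most \emph{simple} poles, since $c_r$ (resp.\ $c_s$) is a product of distinct linear forms. Each $f_i$ then has simple poles only along the consecutive hyperplanes $y_{i+k}=y_{i+k+1}$, $0\le k<r$ — which are permitted for $\pp\pp_{r+s}$ — together with the single ``wrap'' hyperplane $H^f_i\colon y_{i+r}=y_i$; similarly each $g_i$ has permitted consecutive poles together with the wrap hyperplane $H^g_i\colon y_{i+r}=y_{i-1}$. Thus the only potentially illegal poles of $\{f,g\}$ lie along the $H^f_i$ and the $H^g_i$, and these must cancel. On $H^f_i$, cyclic invariance of $g$ shows that $g_i|_{H^f_i}=g(y_i,y_{i+r+1},\ldots,y_{i-1})$ and $g_{i+1}|_{H^f_i}=g(y_{i+r+1},\ldots,y_{i-1},y_i)$ are cyclic rotations of one another, hence equal, so $g_i-g_{i+1}$ vanishes on $H^f_i$ and already annihilates the simple pole of $f_i$ inside the single term $f_i(g_i-g_{i+1})$. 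The hyperplane $H^g_i$ occurs in exactly the two adjacent terms $f_i(g_i-g_{i+1})$ and $f_{i-1}(g_{i-1}-g_i)$, with opposite signs on $g_i$, so their combined polar part along $H^g_i$ is $(f_i-f_{i-1})\,g_i$; and $f_i|_{H^g_i}$ and $f_{i-1}|_{H^g_i}$ are again cyclic rotations of one another, so $f_i-f_{i-1}$ vanishes on $H^g_i$ and kills the simple pole of $g_i$. (One checks directly that no other term of the sum is singular along a given $H^f_i$ or $H^g_i$, the relevant variables simply not occurring in the remaining factors.) It follows that $\{f,g\}$ has at most simple poles along the $r+s+1$ consecutive hyperplanes and nowhere else, so $c_{r+s}\{f,g\}$ is a polynomial and $\{f,g\}\in\pp\pp_{r+s}$.

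Finally, $\{f,g\}=-\{g,f\}$ is immediate from the definition, and the Jacobi identity follows from the pre-Lie identity $(\ref{Aabc})$ for $\circb$, which holds for rational functions in $\Or$ because $(\ref{circformula})$ is a formal identity — this is part of the assertion in \S\ref{sectLiealg} that all properties of $\circb$ persist over $\Or$. Therefore $\pp\pp=\bigoplus_{r\ge 1}\pp\pp_r$ is a bigraded Lie algebra. The main obstacle is the middle step: showing that the wrap poles $H^f_i$ and $H^g_i$ cancel, where both the \emph{cyclic} (not merely translation-) invariance of $f$ and $g$ and the simplicity of the poles are essential.
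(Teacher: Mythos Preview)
Your argument has a gap in depth $1$. You assert that $c_r$ is a product of distinct linear forms, so that $f$ and $g$ have at most simple poles. But $c_1 = -(y_0-y_1)(y_1-y_0) = (y_1-y_0)^2$ has a repeated factor, and $\pp\pp_1$ genuinely contains elements with a double pole along $y_0=y_1$ (for instance $c_1^{-1}$ itself, i.e.\ $x_1^{-2}$ in reduced variables). When $r=1$, your ``consecutive'' hyperplane and your ``wrap'' hyperplane $H^f_i$ coincide, and $f_i$ has a \emph{double} pole there; your cancellation step ``$g_i-g_{i+1}$ vanishes on $H^f_i$ and annihilates the simple pole of $f_i$'' no longer finishes the job. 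One order of pole is indeed cancelled by your observation that $g_i$ and $g_{i+1}$ restrict to cyclic rotations of each other, but you must still argue that the remaining pole along $y_i=y_{i+1}$ is at most simple (which is what membership in $\pp\pp_{1+s}$ requires), and you have not done so.

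This is exactly the case the paper isolates first: it checks directly that for $c_1^{-1}\in\pp\pp_1$ the two terms of $\{c_1^{-1},f\}$ potentially singular along $y_0=y_1$ are
\[
(y_1-y_0)^{-2}\bigl(f(y_0,y_2,\ldots,y_r)-f(y_1,y_2,\ldots,y_r)\bigr),
\]
whose double pole visibly drops to a simple one, and then invokes cyclic symmetry. Once the depth-$1$ case is handled, your treatment of the wrap hyperplanes $H^f_i$ and $H^g_i$ for $r,s\ge 2$ is correct and is essentially the same pairing-and-cancellation argument as in the paper (the paper phrases it as ``at most four cuts'' contributing to a fixed non-consecutive hyperplane $y_k=y_0$, which after cyclic symmetry is your $H^f_i$/$H^g_i$ analysis). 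So the fix is localized: treat $r=1$ (and by symmetry $s=1$) separately before running your general argument.
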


\begin{proof} 
The  element $c_1^{-1} \in \p\p_1$  has a double pole, so we first check that 
$\{ c_1^{-1}, f\}$ has no double poles for all $f\in \p\p_r$.  There are only two terms in the formula for $\{c_1^{-1}, f\}$ (see figure 1) 
which could  potentially give rise to a double pole along $y_0=y_1$, namely
$$ (y_1-y_0)^{-2} \big( f(y_0,y_2,\ldots, y_r) -  f(y_1,y_2,\ldots,  y_r))\ ,$$
and the double pole clearly cancels.  By cyclic symmetry $\{c_1^{-1}, f\}$ has no double poles along  $y_i =y_j$
for any consecutive $i,j$.  Now let $f \in \p\p_r, g\in \p\p_s$.  It suffices to show that  $\{f,g\}$ has no  poles
along $y_k=y_0$ for all $k=2,\ldots, r+s-2$. By  the cyclic symmetry of $\{f,g\}$ it will then follow that $\{f,g\}$ only has 
simple poles along the $D_{r+s+1}$ orbit of $y_1=y_0$.  Since $f$ and $g$ only have poles along  $y_i=y_j$ for consecutive $i,j$, there are at most 
four cuts that could potentially give a pole along $y_k=y_0$ (figure 2).
\begin{figure}[h!]
\vspace{-0.2in}  \begin{center}
   \epsfxsize=9cm \epsfbox{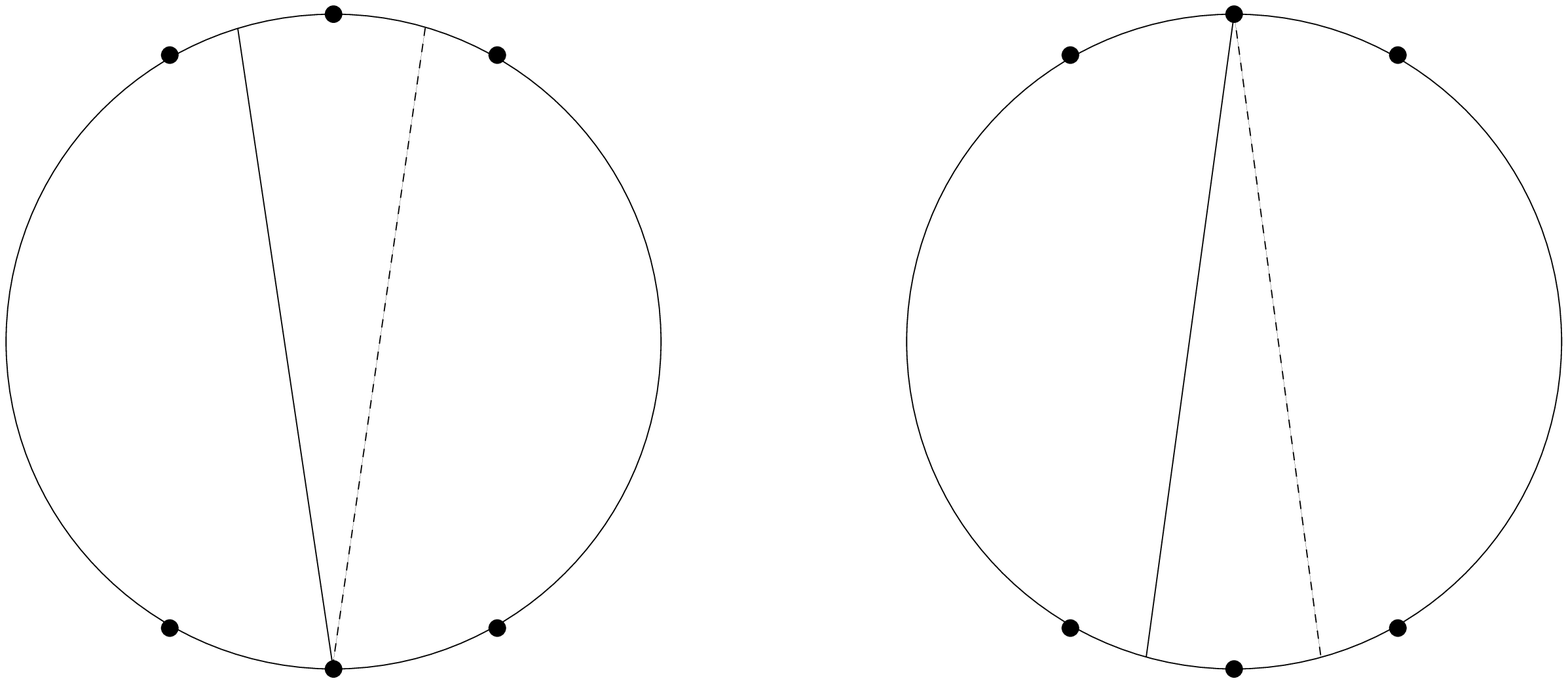}
  \label{TwoCircles}
\put(-210,115){$y_0$} \put(-180,110){$y_1$}  \put(-243,110){$y_{r+s}$} 
\put(-210,-7){$y_k$} \put(-180,-2){$y_{k-1}$}  \put(-240,-2){$y_{k+1}$} 
\put(-210,60){$\curvearrowright$}
\put(-60,115){$y_0$} \put(-30,110){$y_1$}  \put(-93,110){$y_{r+s}$} 
\put(-60,-7){$y_k$} \put(-30,-2){$y_{k-1}$}  \put(-90,-2){$y_{k+1}$} 
\put(-62,60){$\curvearrowright$}
\caption{(Proof of lemma  \ref{lemppisclosed}). Two pairs of terms in the linearized Ihara bracket with non-trivial poles along $y_k=y_0$.}
  \end{center}
\end{figure}
The  pair of diagrams  with solid lines can only arise when $r=k$, the pair with dashed lines  only when $s=k$. The  total contribution from the  former  pair (solid lines) is
$$  f(y_0,y_1, \ldots, y_k) \big( g(y_k, \ldots, y_{r+s}) - g(y_{k+1}, \ldots, y_{r+s},y_0)\big) $$
and so any pole at $y_k=y_0$ cancels by cyclic symmetry of $g$.   The same is true for the second pair.
\end{proof}

\begin{defn}  Let $\overline{\p\p}_r$  be the subspace of $\p\p_r$ consisting of those elements which are  translation invariant   $(\ref{transinv})$ in addition to $(\ref{dihedsymmetries})$.
\end{defn}
It follows from lemma \ref{lemtransinv} that $\overline{\p\p}= \bigoplus_{r\geq 1} \overline{\p\p}_r$ is a bigraded Lie subalgebra of $\p\p$. 
In the usual manner, we can view the depth $r$ part  $\overline{\p\p}_r$ as elements of
$$\overline{c_r}^{-1} \Q [x_1,\ldots, x_r]\ , \quad  \hbox{ where } \quad  \overline{c}_r = x_1(x_2-x_1)\ldots (x_r-x_{r-1})x_r\ . $$

\subsection{Linearized double shuffle equations with poles} 
We shall mainly consider solutions to the linearized double shuffle equations with a restricted set of  poles.

\begin{defn} Let $\p\ls_r$ denote the graded vector space of rational functions 
$$f \in {c_r}^{-1}\Q[y_0,\ldots, y_r]$$
for $r\geq 1$
which are  homogeneous solutions to the linearized double shuffle equations. We shall write
$\p \ls = \bigoplus_{r\geq 1} \p\ls_r$ and denote the reduced version by 
$\overline{\p \ls} = \bigoplus_{r\geq 1} \overline{\p\ls}_r$.
\end{defn} 

\begin{thm} \label{thmplsclosed} The space $\p \ls$ is a bigraded Lie subalgebra of $\overline{\p \p}$.
\end{thm}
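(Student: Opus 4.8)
The plan is to show two things: first, that $\p\ls$ is closed under the Ihara bracket $\{\,,\,\}$, and second, that $\p\ls \subset \overline{\p\p}$, i.e.\ every element of $\p\ls$ actually satisfies the dihedral symmetry properties $(\ref{dihedsymmetries})$ and is translation invariant. The closure under the bracket will follow by combining Theorem \ref{thmlsisIharastable} (which gives that $\ls$ is Lie-stable as a system of functional equations, with the same argument applying verbatim over the larger coefficient ring of rational functions, exactly as in the passage from Theorem \ref{thmRacinet} to Theorem \ref{thmPolarRacinet}) with Lemma \ref{lemppisclosed} (which controls the poles: the bracket of an element with poles only along consecutive $y_i=y_j$ again has poles only of that restricted type). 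So the only genuine content is: (a) checking that solutions to the linearized double shuffle equations with poles in $c_r^{-1}\Q[y_0,\ldots,y_r]$ automatically lie in $\overline{\p\p}_r$, and (b) checking that the restricted pole condition is preserved by the bracket, which is literally the statement of Lemma \ref{lemppisclosed} once we know $\p\ls_r \subset \p\p_r$.

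First I would establish the inclusion $\p\ls \subset \overline{\p\p}$. Translation invariance in the $y$-variables is handled by Lemma \ref{lemtransinv}, whose proof only used the shuffle-mod-products equation and the operator $\delta_0$, and which goes through unchanged for rational functions (the argument that $\nabla\Phi^{(r)}=0$ forces translation invariance is purely formal). The symmetry $f + \sigma(f) = 0$ is the content of Corollary \ref{corshuffleequations} / the shuffle antipode, valid over any coefficient ring, and $f + \tau(f) = 0$ comes from the linearized stuffle antipode as in \S\ref{sectDihedralsymm}; together $\sigma,\tau$ generate $D_{r+1}$, giving the cyclic invariance $(\ref{goodcyclicpolys})$. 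The evenness $f(y_0,\ldots,y_r) = f(-y_0,\ldots,-y_r)$ follows from the composition $\tau\sigma$ having order $r+1$ as in formula $(\ref{tausigmaformula})$, combined with $\sigma$ and $\tau$. Thus any $f\in\p\ls_r$ satisfies all of $(\ref{dihedsymmetries})$; since by hypothesis $c_r f\in\Q[y_0,\ldots,y_r]$, we get $f\in\p\p_r$, and with translation invariance, $f\in\overline{\p\p}_r$.

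Second, the closure of $\p\ls$ under $\{\,,\,\}$: given $f\in\p\ls_r$, $g\in\p\ls_s$, the element $\{f,g\}$ again solves the linearized double shuffle equations by Theorem \ref{thmlsisIharastable} applied over the rational-function coefficient ring (the proof there is, as noted in the excerpt, a corollary of the proof of Racinet's theorem, hence a statement about functional equations — compare the remark preceding Theorem \ref{thmPolarRacinet} and the reference to \S\ref{trivialdensity}). It remains to verify that $\{f,g\}$ has poles only along $y_i = y_j$ for consecutive indices, i.e.\ lies in $c_{r+s}^{-1}\Q[y_0,\ldots,y_{r+s}]$; but since we have just shown $f\in\p\p_r$ and $g\in\p\p_s$, this is exactly Lemma \ref{lemppisclosed}. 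Hence $\{f,g\}\in\p\ls_{r+s}\cap\overline{\p\p}_{r+s}$, and $\p\ls$ is a bigraded Lie subalgebra of $\overline{\p\p}$.

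The main obstacle, such as it is, lies in part (a): one must be careful that the passage from the non-commutative Hopf-algebraic formulation to the functional equations $(\ref{shuffleequation})$, $(\ref{stuffleequation})$ and the extraction of the dihedral symmetries $(\ref{goodcyclicpolys})$ genuinely only used formal identities with coefficients in an arbitrary commutative $\Q$-algebra, so that nothing breaks when the coefficients are rational functions with the prescribed poles — in particular that the antipode-derived involutions $\sigma,\tau$ and the operator $\delta_0$ act on $\Or_r$ (they do, since $\sigma,\tau$ merely permute and negate variables, and $\delta_0$ is a differential operator). Once this bookkeeping is granted, everything reduces to results already proved in the excerpt, and the proof is short.
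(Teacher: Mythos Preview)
Your approach mirrors the paper's one-line proof, which just cites Lemma \ref{lemppisclosed} and Theorem \ref{thmlsisclosed}: closure of $\p\ls$ under the Ihara bracket comes from the Lie-closure of the linearized double shuffle equations (Theorem \ref{thmlsisIharastable}, transported to rational functions by the density principle of \S\ref{trivialdensity}) together with Lemma \ref{lemppisclosed} for the pole structure.

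There is, however, one genuine gap in your verification of the inclusion $\p\ls_r\subset\overline{\p\p}_r$, namely the evenness condition $f(y_0,\ldots,y_r)=f(-y_0,\ldots,-y_r)$. You claim this follows from the dihedral relations via $\tau\sigma$ having order $r+1$. But compute: $(\tau\sigma)^{r+1}$ acts on functions as $f(y)\mapsto f\bigl((-1)^{r+1}y\bigr)$. When $r$ is even this is $f\mapsto f(-y)$, and combined with $(\tau\sigma)(f)=f$ you do obtain evenness. When $r$ is \emph{odd}, however, $(\tau\sigma)^{r+1}$ is the identity operator and the relation is vacuous --- the dihedral symmetries generated by $\sigma,\tau$ alone do not exclude odd-degree solutions in odd depth. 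What is actually needed here is the parity constraint of Proposition \ref{propparity} (a homogeneous solution of odd degree to the linearized double shuffle equations must vanish); this is precisely why the proof of Theorem \ref{thmlsisclosed}, which the paper invokes, appeals to it. That proposition is stated for polynomials, but its proof (Tsumura \cite{Tsumura}, or \cite{Depth}) derives a finite-type linear functional identity from the shuffle and linearized stuffle relations, and so transfers to $c_r^{-1}\Q[y_0,\ldots,y_r]$ by \S\ref{trivialdensity}. Invoke it explicitly and your argument is complete.
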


\begin{proof}
 This follows from lemma \ref{lemppisclosed} and theorem \ref{thmlsisclosed}.  
\end{proof}

It follows from the definitions
 that  $\overline{\p \ls}_1 \cong \overline{\p \p}_1 = \bigoplus_{n \geq 0} \Q x_1^{2n-2}$. 

\begin{lem}\label{lemweightzeroofpls} If $W_{\cdot}$ denotes the filtration  associated to the weight grading, then
$$W_{-1} \p\ls = \Q x_1^{-2} \ , \hbox{ and  } \gr^W_0 \p \ls =0\ .$$
\end{lem}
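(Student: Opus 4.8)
The plan is to work one depth at a time with the dihedral/polar structure from Lemma~\ref{lemppisclosed} and Theorem~\ref{thmplsclosed}, combined with the parity result (Proposition~\ref{propparity}) and the depth-one and depth-two constraints already established. First I would record the weight bookkeeping: an element $f\in \p\ls_r$ is a rational function of weight $w$ precisely when $c_r f\in\Q[y_0,\ldots,y_r]$ is homogeneous of degree $w+r$; since the $y_i=y_j$ (consecutive) poles are at worst simple, $c_rf$ is in fact \emph{polynomial}, so $\deg(c_rf)=w+r\geq 0$, giving $w\geq -r$. Thus $W_{-1}\p\ls$ and $\gr^W_0\p\ls$ can only receive contributions from depths $r=1$ (for $w=-1$) and $r\leq 2$ (for $w=0$), and the whole statement reduces to a finite, low-depth computation.

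For $W_{-1}\p\ls$: in depth one, $\overline{\p\ls}_1=\bigoplus_{n\geq 0}\Q x_1^{2n-2}$, and weight $-1$ forces $2n-2=-1$, which is impossible, while the unreduced picture gives $\p\ls_1\cong\overline{\p\ls}_1$ after translation invariance — wait, here the subtlety is that elements of $\p\ls_1$ need not be translation invariant. So I would instead argue directly in the $y_0,y_1$ variables: $f\in \p\ls_1$ of weight $-1$ means $c_1 f=(y_1-y_0)^2 f$ is a homogeneous polynomial of degree $1$, and $f$ must satisfy the dihedral symmetries $(\ref{dihedsymmetries})$ together with the depth-one parity/linearized-stuffle condition $\Phi_{e_0^ie_1}=0$ for $i$ odd or zero (equivalently the evenness from Lemma~\ref{lemevenindepth1} encoded in $(\ref{lindoubleshuffle})$). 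A one-line check shows the only such $f$ (up to scalar) is $f=(y_1-y_0)^{-2}$, whose reduced form is $x_1^{-2}$; this identifies $W_{-1}\p\ls=\Q x_1^{-2}$. The extra degree of freedom compared with $\overline{\p\ls}_1$ is exactly this non-translation-invariant generator, and one should note $x_1^{-2}$ is itself translation invariant after reduction so it lands in $\overline{\p\ls}$ as well.

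For $\gr^W_0\p\ls=0$: I must show every weight-$0$ element of $\p\ls$ lies in $W_{-1}\p\ls$, i.e. is a scalar multiple of $x_1^{-2}$ placed in depth $1$ — but $x_1^{-2}$ has weight $-1$, so $\gr^W_0$ really asks for weight-$0$ solutions modulo lower weight, hence I must show there are \emph{no} new weight-$0$ solutions. In depth $1$, weight $0$ forces $2n-2=0$, i.e. the polynomial $x_1^0=1$, which is the trivial solution $e_1$ and is excluded since $\ls$ (hence $\p\ls$) is defined in weight $>1$ — here I should double-check the sign/indexing convention, but the weight-zero depth-one candidate is ruled out either by the weight $>1$ convention or because the constant function fails the last line of $(\ref{lindoubleshuffle})$ in the appropriate normalization. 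In depth $2$, a weight-$0$ element $f\in\p\ls_2$ has $c_2f$ of degree $2$; imposing the dihedral symmetries $(\ref{dihedsymmetries})$, the evenness, and the linearized shuffle equation $f(\x_1\sha\x_2)=0$ together with the linearized stuffle equation $f(\x_1\x_2)+f(\x_2\x_1)=0$ forces $f=0$ by a short linear-algebra computation on the at most few-dimensional space of candidate numerators. The main obstacle is making this depth-$2$ elimination genuinely airtight: one has to enumerate the possible pole configurations ($1/x_1x_2$, $1/x_1(x_2-x_1)$, etc., each times a degree-$0$ numerator, i.e. a constant) and verify that the shuffle and stuffle relations, which mix these terms, leave no survivor — this is where an unlucky sign could hide a spurious solution, so I would carry the computation out explicitly in both the $y$-coordinates and the reduced $x$-coordinates as a cross-check. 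Once depths $1$ and $2$ are cleared and the weight bookkeeping rules out $r\geq 3$, both assertions follow.
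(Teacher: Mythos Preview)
Your degree bookkeeping contains a critical error. The polynomial $c_r$ (equivalently $\overline{c}_r$ in reduced coordinates) is a product of $r+1$ linear factors and hence has degree $r+1$. Since an element $f\in\p\ls_r$ of weight $w$ satisfies $\deg f = w-r$, one obtains $\deg(c_r f) = w+1$, not $w+r$. The non-negativity of this degree therefore yields only the uniform bound $w\geq -1$, independent of $r$. Consequently weight $-1$ elements (for which $c_r f$ is a nonzero constant) and weight $0$ elements (for which $c_r f$ is a linear form) can occur in \emph{every} depth, and your reduction of the problem to depths $r\leq 2$ is invalid.

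The paper's proof (\S\ref{sectproofoflemweightzero}) treats all depths at once. For weight $-1$ in depth $n$, the only candidate is a scalar multiple of $\overline{c}_n^{-1}$; writing this as $(x_{g_1^n}\,x_n)^{-1}$ and invoking Lemma~\ref{orbitlemma} together with Proposition~\ref{propvinestoosha}, it satisfies the shuffle equations precisely when the vine $g_1^n$ is primitive in $\V$, which forces $n=1$. For weight $0$ in depth $n$, the candidate is $L\cdot\overline{c}_n^{-1}$ with $L$ a linear form; since $\overline{c}_n^{-1}$ is cyclically invariant and elements of $\p\ls_n$ satisfy the dihedral symmetry $(\ref{dihedsymmetries})$, the numerator $L$ must itself be invariant under the order-$(n+1)$ cyclic rotation of $(y_0,\ldots,y_n)$, and the only translation-invariant linear form with this property is zero. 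Your depth-$2$ computation, even if carried out correctly, would leave infinitely many higher-depth candidates unexamined.
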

\begin{proof} See \S\ref{sectproofoflemweightzero}.  \end{proof}

\subsection{The residue filtration} The solutions to the polar double shuffle equations can be filtered by their pole structure as follows.  For all $r\geq 1$, define a  map
\begin{eqnarray} \label{Rrdefn}  R_r :  c_r^{-1} \Q[y_0, y_1,\ldots, y_r]   &\To  &  c_{r-1}^{-1} \Q[y_0,y_1,\ldots, y_{r-1}]  \\
{ f(y_0,\ldots, y_r) \over c_r}   & \mapsto &  { f(y_0,\ldots, y_{r-1}, y_0) \over c_{r-1}} \   ,  \nonumber 
\end{eqnarray}
where $f\in   \Q[y_0, y_1,\ldots, y_r] $, $c_0=1$. Let   $R_0$ be the zero map. Define an increasing filtration $\RR$ on $\p\p$ by   $\RR_{-1} \p\p_r = 0$, and by the property
\begin{equation} 
 f \in \RR_k  \p\p_r \quad \Longleftrightarrow \quad R_{r-k} \cdots R_{r-1}   R_{r} f  =0 \quad \hbox{ for all } k \geq 0 \ . 
 \end{equation}
 Since $R$ preserves translation invariance,  we obtain an analogous filtration, also denoted by  $\RR_k$, on $\overline{\p\p}$.
 From now on, we shall work with reduced functions in $\overline{\p\p}$ for convenience.
  In this case, note that if $\overline{f} \in \p\p_r$, then the map $R_r$ is just the residue:
  $$R_r \overline{f} = \Res_{x_r=0} \overline{f}$$
  when   $r\geq 2$, or  $\overline{f}$ is homogeneous of weight $\geq 0$. Note, however, that    $R_1   x_1^{-2} =1$.  

\begin{rem}  \label{remarkonRRfiltration} Let $\gr^{\RR}_r$ denote the associated grading. We have
\begin{equation} 
\gr^{\RR}_{0} \overline{\p\p}_1 = \bigoplus_{n\geq 0} \Q x_1^{2n} \quad  \hbox{ and } \quad   \gr^{\RR}_{1} \overline{\p\p}_1 \cong \Q x_1^{-2} \ .
\end{equation}
In general, we have $\RR_r \overline{\p \p}_r = \overline{\p \p}_r$ and    $\gr^{\RR}_r \overline{\p\p}_r \cong \gr^W_{-1} \overline{\p \p}_r$.
\end{rem}

  If $f$ is any rational function of $x_1,\ldots, x_r$, let us write
\begin{equation} \nabla_r f = \sum_{i=1}^r {\partial f \over \partial x_i}\ . \end{equation}

\begin{lem} \label{lempropertiesofRes} Suppose that $f\in \overline{\p\p}_r$ and $\overline{c_s} g \in \Q[x_1,\ldots, x_s]$, where $s >1$. 
Then the order of  pole of $f\circb g$ along $x_{r+s}=0$ is at most one. We have
\begin{eqnarray} \label{Resfcircg}
\Res_{x_{r+s}=0}  (f\circb g)   &  = &   f \circb (\Res_{x_s=0}\, g) - ( \Res_{x_s=0} \, g) \, \studot f  \ . \\
\Res_{x_{r+1} =0} ( f\circb x_1^{-2}) & = & \nabla_r f\ . \label{Resfnabla}
\end{eqnarray} 
For any  $f, g$ such that    $\overline{c_r} f \in \Q[x_1,\ldots, x_r]$ and  $\overline{c_s} g \in \Q[x_1,\ldots, x_s]$
we have
\begin{eqnarray} \label{Resfstug}
\Res_{x_{r+s}=0}  (f \,\studot  g)  &   =   & f \, \studot (\Res_{x_s=0} \, g) \ , \\
 \Res_{x_{r=0}}  (\nabla_r f)  &=  &\nabla_{r-1} (\Res_{x_r=0} f)\ .  \label{ResnablaRes}
\end{eqnarray} 
\end{lem}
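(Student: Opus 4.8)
The plan is to verify each of the four identities by reducing to the explicit power-series formula for $\circb$ in \eqref{circformulax} together with the elementary fact that $\Res_{x_k=0}$ of a rational function with at most a simple pole along $x_k=0$ picks out the appropriate coefficient, and commutes with substitutions not involving $x_k$. I would treat the four formulas in the order $\eqref{Resfnabla}$, $\eqref{Resfstug}$, $\eqref{ResnablaRes}$, $\eqref{Resfcircg}$, since the last one is by far the most involved and the others provide warm-up and, in the case of $\eqref{Resfstug}$, will actually be used inside it.

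First I would establish the order-of-pole claim and \eqref{Resfnabla}. Looking at \eqref{circformulax} with $g=x_1^{-2}$ (so $s=1$, $\deg g=-2$), only the terms in which the variable $x_{r+s}=x_{r+1}$ appears inside the $g$-factor $g(x_1,\ldots,x_i,x_{i+r+1},\ldots)$ can contribute a pole at $x_{r+1}=0$; since $g$ has a double pole at its \emph{first} argument, which is $x_1$ here, in fact $x_{r+1}$ enters $g$ linearly, giving at worst a simple pole, while in the terms where $x_{r+1}$ enters the $f$-factor there is no pole at all because $f$ is regular away from consecutive differences and zeros of its own variables. Taking the residue at $x_{r+1}=0$ then forces $i=r$ in the first sum and $i=r$ again in the second (reversed) sum; unwinding the substitutions, the two surviving terms combine, after using translation invariance of $f$, to $\sum_{i=1}^r \partial f/\partial x_i = \nabla_r f$. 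This is a finite, mechanical computation. The identities \eqref{Resfstug} and \eqref{ResnablaRes} are even easier: \eqref{Resfstug} is immediate from the definition \eqref{stuffleconcat} of $\studot$, since $x_{r+s}$ only occurs in the second factor and as its last variable; \eqref{ResnablaRes} follows by splitting $\nabla_r f = \sum_{i<r}\partial_{x_i} f + \partial_{x_r} f$, noting that $\Res_{x_r=0}$ annihilates $\Res_{x_r=0}\partial_{x_r}f$ (the residue of an exact derivative in $x_r$ is zero for functions with at most simple poles, which holds here since $f\in\overline{\p\p}_r$) and commutes with $\partial_{x_i}$ for $i<r$.

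The main obstacle is \eqref{Resfcircg}. Here I would again expand $f\circb g$ via \eqref{circformulax} and isolate the terms that can have a pole along $x_{r+s}=0$: these are exactly the terms where $x_{r+s}$ sits in the $g$-factor as one of its \emph{first} arguments, equivalently the summation index satisfies $i+r+1 > r+s$, i.e. $i\in\{s-1,s\}$ roughly — one must track the two sums in \eqref{circformulax} separately. Since $\overline{c_s}g\in\Q[x_1,\ldots,x_s]$ and $s>1$, $g$ has at most a simple pole along $x_s=0$, so $f\circb g$ has at most a simple pole along $x_{r+s}=0$, establishing the order statement. Taking $\Res_{x_{r+s}=0}$ then replaces, in those surviving terms, the factor $g(\ldots,x_{r+s})$ by $\Res_{x_s=0}g$ evaluated at the remaining variables; reorganising the indices, the terms coming from the first ($+$) sum of \eqref{circformulax} assemble into $f\circb(\Res_{x_s=0}g)$, while the terms from the second, reversed ($-$) sum assemble into $-(\Res_{x_s=0}g)\studot f$ — the sign $(-1)^{\deg f+r}$ and the reversal of arguments in \eqref{circformulax} conspire with the definition \eqref{stuffleconcat} of $\studot$ to produce exactly the stated concatenation. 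The delicate point, and where I expect to spend the most care, is the bookkeeping of which index values survive after taking the residue and checking that the residue of $g$ at $x_s=0$ really sits in the correct slot (first variable versus an interior variable) in each reassembled term; getting the variable relabelling and the sign exactly right in the $(-)$-sum is the crux. Once the pattern matches, the identity drops out.
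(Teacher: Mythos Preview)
Your overall strategy---expand $f\circb g$ via \eqref{circformulax}, pick out the terms with a pole at $x_{r+s}=0$, take residues, and reassemble---is exactly what the paper does, and your treatment of \eqref{Resfstug} and \eqref{ResnablaRes} is fine. But the execution of \eqref{Resfnabla} and \eqref{Resfcircg} has real gaps, and in both cases the missing ingredient is the same: you never use the dihedral symmetry of $f\in\overline{\p\p}_r$, namely $f(x_1,\ldots,x_r)=(-1)^{r+1}f(-x_r,\ldots,-x_1)$, and without it the computations do not close.

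For \eqref{Resfnabla}: with $s=1$ the formula \eqref{circformulax} gives exactly three terms,
\[
f(x_1,\ldots,x_r)\,x_{r+1}^{-2}\;+\;f(x_2-x_1,\ldots,x_{r+1}-x_1)\,x_1^{-2}\;+\;(-1)^{r}f(x_r-x_{r+1},\ldots,x_1-x_{r+1})\,x_{r+1}^{-2}.
\]
The first and third each carry a genuine \emph{double} pole at $x_{r+1}=0$; your claim that ``$x_{r+1}$ enters $g$ linearly, giving at worst a simple pole'' is false. The double poles cancel only after you apply the symmetry above to the third term, which turns it into $-f(x_{r+1}-x_1,\ldots,x_{r+1}-x_r)\,x_{r+1}^{-2}$; expanding this in $x_{r+1}$ kills the double pole against the first term and leaves residue $\nabla_r f$. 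Your index bookkeeping (``forces $i=r$'') is also off: the sums run over $i\in\{0,1\}$ here, not up to $r$.

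For \eqref{Resfcircg}: the pole of $g$ relevant to $x_{r+s}=0$ sits at its \emph{last} argument (since $\overline{c_s}$ has the factor $x_s$), not its first, and $x_{r+s}$ is the last argument of $g$ in \emph{every} term of both sums except $i=s$ in the first sum. So essentially all terms---not just $i\in\{s-1,s\}$---contribute to the residue. When you take residues, the first sum for $i=0,\ldots,s-1$ together with the second sum for $i=1,\ldots,s-1$ reassemble into $f\circb(\Res_{x_s=0}g)$; only the single leftover term $i=s$ in the second sum produces $-(\Res_{x_s=0}g)\studot f$, and to get it into that form you must again invoke $(-1)^r f(x_{r+s-1},\ldots,x_s)=-f(x_s,\ldots,x_{r+s-1})$. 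Your proposed decomposition ``first sum $\to f\circb(\Res g)$, second sum $\to -(\Res g)\studot f$'' is therefore incorrect.
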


\begin{proof}
If $f$ is of even degree, one verifies  using the formula for $\circb$ given in \S\ref{sectpropertiesofcircb} that 
\begin{multline}
\Res_{x_{r+s}=0}  (f\circb g)     =     f \circb (\Res_{x_s=0}\, g) + \Res_{x_{r+s}=0} f(x_{s+1}-x_s,\ldots, x_{r+s}-x_s) g(x_1,\ldots, x_s)  \nonumber \\ 
 +  (-1)^r \Res_{x_{r+s}=0} \big(f(x_{r+s}-x_{r+s-1}, \ldots, x_{r+s}-x_{s}) g(x_1,\ldots, x_{s-1}, x_{r+s}  )  \big) 
 \end{multline}
 The second term on the right-hand side of the equality vanishes. 
This implies  $(\ref{Resfcircg})$ on applying  the symmetry
$f(x_1,\ldots, x_r) = (-1)^{r+1} f(-x_r, \ldots, -x_1)$.
 For the second equation, the same formula for $\circb$ gives 
\begin{multline} 
f \circb x_1^{-2} =f(x_1,\ldots, x_r )  x_{r+1} ^{-2} 
+ f(x_2-x_1,\ldots x_{r+1}-x_1)  x_{1}^{-2}  \\
+(-1)^r f(x_r -  x_{r+1},\ldots, x_1- x_{r+1}) x_{r+1}^{-2} 
\end{multline}
and only the last term contributes to the residue.  Equation   $(\ref{Resfnabla})$ follows on applying the same symmetry for $f$ as above and expanding in $x_{r+1}$.
Equation $(\ref{Resfstug})$ follows immediately from  definition  $(\ref{defstuprod})$, and equation 
$(\ref{ResnablaRes})$ is again immediate from the definition, since $\Res_{x_r=0} { \partial \over \partial x_r} f= 0$ and $\Res_{x_r=0}$ commutes with $\partial \over \partial x_i$ for $i<r$.
\end{proof}

\begin{prop} \label{propRisIharacompatible} The filtration $R$ is compatible with the  Ihara bracket: 
\begin{equation} 
\{\RR_p \overline{\p\p} ,  \RR_q \overline{\p\p}\}\subset  \RR_{p+q} \overline{\p\p} \ . 
\end{equation}
\end{prop}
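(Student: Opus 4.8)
The plan is to reduce the statement to an identity for the residue map $R_r = \Res_{x_r=0}$ of the following shape: for $f \in \overline{\p\p}_r$ and $g \in \overline{\p\p}_s$,
\begin{equation}
R_{r+s}\{f,g\} = \{R_r f, g\} \pm \{f, R_s g\} + (\text{possible lower-residue correction terms}),
\end{equation}
or rather a version in which each application of the residue kills at least one of the arguments' residue-degrees. More precisely, I would first recall that by Definition \ref{defncr} and the remarks following, for a reduced function $\overline{f} \in \overline{\p\p}_r$ with $r \geq 2$ one has $R_r \overline{f} = \Res_{x_r = 0} \overline{f} \in \overline{\p\p}_{r-1}$, while on $\overline{\p\p}_1$ the map $R_1$ sends $x_1^{-2} \mapsto 1$ and kills the polynomial part. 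So $\RR_k \overline{\p\p}_r$ is the subspace annihilated by $R_{r-k}\cdots R_r$. To prove $\{\RR_p, \RR_q\} \subseteq \RR_{p+q}$, it suffices to control how iterated residues interact with the Ihara bracket $\{f,g\} = f \circb g - g \circb f$.

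The key computational input is Lemma \ref{lempropertiesofRes}, especially equation $(\ref{Resfcircg})$:
\begin{equation}
\Res_{x_{r+s}=0}(f \circb g) = f \circb (\Res_{x_s=0} g) - (\Res_{x_s=0} g) \studot f,
\end{equation}
valid when $s > 1$, together with $(\ref{Resfstug})$ for the stuffle product and $(\ref{Resfnabla})$ for the exceptional case $g = x_1^{-2}$. First I would establish the base case: for the generator $c_1^{-1} = x_1^{-2} \in \RR_1 \overline{\p\p}_1$, I need $\{x_1^{-2}, g\} \in \RR_{1+q}$ whenever $g \in \RR_q \overline{\p\p}_s$; by $(\ref{Resfnabla})$ the top residue $\Res_{x_{s+1}=0}\{x_1^{-2}, g\}$ (coming from the $f \circb g$ term) is $\pm \nabla_s g$ plus a $\studot$-term, and one then iterates using $(\ref{ResnablaRes})$, $\Res_{x_r=0} \nabla_r f = \nabla_{r-1} \Res_{x_r=0} f$, to peel off residues and land in the right filtration level. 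For the generic case $f \in \RR_p \overline{\p\p}_r$, $g \in \RR_q \overline{\p\p}_s$ with both factors having positive depth and $s > 1$, I apply $(\ref{Resfcircg})$ and $(\ref{Resfstug})$ to compute $R_{r+s}\{f,g\}$ as a combination of terms each of which is $(\text{residue of one factor}) \circb (\text{other})$ or a $\studot$-product with one factor residued; since residuing lowers the $\RR$-index by (at most) one on that factor and the $\studot$-product respects the total residue filtration additively (immediate from $(\ref{defstuprod})$ and $(\ref{Resfstug})$), an induction on $p+q$ (or on $r+s$) closes the argument.

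The main obstacle I anticipate is bookkeeping around the exceptional element $x_1^{-2} \in \overline{\p\p}_1$, whose pole is double rather than simple and for which $R_1$ behaves differently (it produces a constant, raising residue-degree issues in depth $1$), and around the hypothesis $s > 1$ in $(\ref{Resfcircg})$: when the factor being residued has depth $1$ one must fall back on the explicit formula $(\ref{Resfnabla})$ and on the $\gr^{\RR}$ description in Remark \ref{remarkonRRfiltration} ($\gr^{\RR}_0 \overline{\p\p}_1 = \bigoplus_n \Q x_1^{2n}$, $\gr^{\RR}_1 \overline{\p\p}_1 \cong \Q x_1^{-2}$). So the clean way to organize the proof is: (i) reduce to generators of $\overline{\p\p}$ under the bracket using bilinearity, but since $\overline{\p\p}$ is not obviously finitely generated, instead (ii) argue directly by induction on depth $r+s$, splitting into the case where neither argument has any $x^{-2}$-type top residue (handled by $(\ref{Resfcircg})$/$(\ref{Resfstug})$ and induction) and the case involving the weight $-1$ part (handled by $(\ref{Resfnabla})$ and $(\ref{ResnablaRes})$). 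The symmetry properties $(\ref{dihedsymmetries})$ and the fact, from Lemma \ref{lemppisclosed}, that $\{f,g\}$ has only simple poles along the dihedral orbit of $x_1 = 0$ guarantee that taking $\Res_{x_{r+s}=0}$ captures the entire obstruction to lying in $\RR_{r+s-1}$, so iterating the residue computation exactly $r+s-(p+q)$ times and checking it vanishes completes the proof.
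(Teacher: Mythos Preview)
Your approach is essentially the same as the paper's: both rely on Lemma~\ref{lempropertiesofRes}, particularly equations $(\ref{Resfcircg})$, $(\ref{Resfstug})$, $(\ref{Resfnabla})$, $(\ref{ResnablaRes})$, to control iterated residues of $f\circb g$, and both separate out the weight $-1$ exceptional case. The paper streamlines what you describe as an induction by deriving directly the closed formula
\[
R^{[m]}(f\circb g) = f\circb R^{[m]}g - \sum_{\substack{i+j=m\\ i\geq 1}} R^{[i]}g \,\studot\, R^{[j]}f
\]
from repeated use of $(\ref{Resfcircg})$ and $(\ref{Resfstug})$, then reads off vanishing at $m=p+q+1$; the weight $-1$ case is handled, as you anticipated, via $(\ref{Resfnabla})$ and $(\ref{ResnablaRes})$, and the case of both factors having weight $-1$ is disposed of by noting $\overline{\p\p}$ has weights $\geq -1$.
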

\begin{proof} Suppose that $f\in \RR_p \overline{\p\p}_r$, and $g\in   \RR_q \overline{\p\p}_s$ are homogeneous of weight $\geq 0$.
Since the map $R$ preserves the weight (the degree plus the number of variables), and coincides with the residue on elements of  weight $\geq 0$,
 it suffices to show that
\begin{equation}  \label{restoshow}
\Res_{x_{r+s - p -q}=0} \cdots \Res_{x_{r+s} =0} (f\circb g) = 0\ .
\end{equation}
Repeatedly applying  equation $(\ref{Resfcircg})$ and   identity $(\ref{Resfstug})$ gives
$$ R^{[m]} ( f \circb g) = f \circb R^{[m]} g - \sum_{i+j = m, i \geq 1} R^{[i]} g  \, \studot  R^{[j]} f$$
where $R^{[m]}$ denotes applying $R_{k}$ $m$ times. This proves $(\ref{restoshow})$ since $ R^{[p]}  f = R^{[q]} g=0$.

In the case when 
$g$ is homogeneous of  weight $-1$, and $f$ is as above,  we need to invoke $(\ref{Resfnabla})$ after taking $s-1$ iterated residues. By applying $(\ref{ResnablaRes})$,
 we obtain  $(\ref{restoshow})$ in this case also.   The case when $f$ has weight $-1$ and $g$ has weight $\geq 0$ is similar and left to the reader. 
  Note that,   by lemma \ref{lemppisclosed}, 
  $ \{  \gr^W_{-1} \overline{\p \p},  \gr^W_{-1} \overline{\p \p}\} =0$, since $\overline{\p\p}$ has weights $\geq -1$, so we have covered all cases.
 We conclude that $\{f,g\} \in \RR_{p+q} \overline{\p\p}$. \end{proof}

It is clear by cyclic symmetry that the elements in $\RR_0 \p\p$ are precisely those elements which are polynomials, i.e., which do not have any poles.
Therefore
\begin{equation} \RR_0 \p\p = \p \quad  \hbox{ and }  \quad \RR_0  \overline{\p\p} = \overline{\p}\ ,
\end{equation}
The dihedral Lie algebra is thus the lowest slice in the polar dihedral Lie algebra.
Similarly,  the linearized double shuffle Lie algebra  is characterized by
\begin{equation} \label{lsisR0pls}
\ls= \RR_0 \p \ls \ .
\end{equation}
 
\section{The  Lie algebra $\Lo$}

We construct an explicit Lie algebra $\Lo \subset \p \dmr$ of solutions to the polar double shuffle  equations modulo products. The interpretation of the associated depth-graded version of this 
Lie algebra is postponed to \S\ref{sectEisder}. 

\subsection{Special elements}  We first write down  an infinite family of solutions  $\psi_{2n+1}$ to the double shuffle equations  modulo products of weights $2n+1$, for $n\geq 1$. 

For any sets of indices $A,B \subset \{0,\ldots, d\}$, let us write
\begin{equation} \label{Xnotation}
x_{A,B} = \prod_{a\in A, b\in B} (x_a - x_b)\ .
\end{equation}
If $A$  or $B$ is the empty set,  $x_{A,B}=1$.
\begin{defn} \label{defnpsi2n+1} For every $n,d\geq 1$, define an element $\psi^{(d)}_{2n+1} \in \Or_d$ by the formula
\begin{eqnarray}
 \psi^{(d)}_{2n+1}   & =  & {1 \over 2} \sum_{i=1}^d \Big({ (x_i - x_{i-1})^{2n} \over x_{\{0,\ldots, i-2\},\{i-1\}}\, x_{\{i+1,\ldots,d\},\{i\}}}  +  { x_d^{2n} \over x_{\{1,\ldots, i-1\},\{0\}}\, x_{\{i,\ldots,d-1\},\{d\}}}Ê\Big)   \nonumber \\
  & & + {1\over 2 }    \sum_{i=1}^{d-1}\Big( { (x_1-x_d)^{2n} \over x_{\{2,\ldots, i\},\{ 1\} } \, x_{\{i+1,\ldots, d-1,0\}, \{d\} }}   -  { x_{d-1}^{2n} \over x_{\{d, 1,\ldots, i-1\}, \{0\}} \, x_{\{i,\ldots, d-2\}, \{d-1\}}} \Big) 
  \nonumber 
\end{eqnarray}
where $x_0=0$, and let $\psi^{(0)}=0$.   Let $\psi_{2n+1}$ be the element in  $\Or$  whose depth $d$ component is $\psi^{(d)}_{2n+1}$.
It is   homogeneous of weight $2n+1$. 
\end{defn}

The first main result is that the elements $\psi_{2n+1}$ are solutions to the  double shuffle equations modulo products.
 Its proof is postponed  to part III.
\begin{thm} \label{theorem: main1} For all $n\geq 1$,  $\psi_{2n+1}\in \p\dmr$.
\end{thm}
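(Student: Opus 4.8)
The plan is to verify directly that the rational functions $\psi^{(d)}_{2n+1}$ satisfy translation invariance, the shuffle equations $(\ref{shuffleequation})$, and the stuffle equations $(\ref{stuffleequation})$, using the closed formula of Definition \ref{defnpsi2n+1}. The first step is to rewrite $\psi^{(d)}_{2n+1}$ in the ``unreduced'' variables $y_0,\ldots,y_d$, where the four sums in the definition should assemble into a single expression possessing a manifest cyclic symmetry of order $d+1$: indeed, the typical summand $(y_i-y_{i-1})^{2n}/\bigl(\prod_{k\neq i-1,\,k<i}(y_k-y_{i-1})\cdot \prod_{k\neq i,\,k>i}(y_k-y_i)\bigr)$ is, after suitable interpretation, one term in a cyclic orbit, and the presence of the $x_d^{2n}$ and $(x_1-x_d)^{2n}$ terms records the ``wraparound'' from the choice $x_0=0$. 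Recognizing this cyclic (and dihedral) structure is the key structural observation, because translation invariance then follows from homogeneity of each cyclically symmetric block, and the dihedral symmetries $(\ref{dihedsymmetries})$ become almost tautological, which in particular places $\psi_{2n+1}$ in the ambient Lie algebra $\overline{\p\p}$ where the machinery of \S\ref{sectPolardihedradLie} applies.

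The second and central step is the shuffle equation. Here I would exploit the fact, developed in \S\ref{SECTshuff}, that the shuffle equations are equivalent, after the change of variables $\sharp$, to the vanishing $(\overline{\psi}^{(p+q)}_{2n+1})^{\sharp}(\x_1\ldots\x_p\sha\x_{p+1}\ldots\x_{p+q})=0$. The natural strategy is a partial-fractions / residue argument: each term of $\psi^{(d)}_{2n+1}$ is a product of simple poles in the differences $x_a-x_b$, and the alternating sum over the $(p,q)$-shuffles of $(y_i-y_{i-1})^{2n}$-type numerators against these denominators should telescope. Concretely, I would try to prove the shuffle relation by induction on $d=p+q$, peeling off one letter at a time and using the recursive structure of the shuffle product $(\ref{shuffprod})$ together with a recursion for $\psi^{(d)}_{2n+1}$ expressing it in terms of $\psi^{(d-1)}_{2n+1}$ (which the block structure should provide, via specialization or residue in $x_d$). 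Alternatively, and perhaps more robustly, one can check the relation by verifying that the rational function $(\overline{\psi}^{(p+q)})^{\sharp}(\x_1\ldots\x_p\sha\x_{p+1}\ldots\x_{p+q})$ has no poles (all residues along $x_a=x_b$ cancel by the symmetry of the summands) and is homogeneous of strictly negative degree, hence is zero; controlling the poles is where the specific shape of the denominators $x_{A,B}$ is used.

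The third step is the stuffle equation $(\ref{stuffleequation})$, i.e. $\psi^{(d)}_{2n+1}(\x_1\ldots\x_p\stu\x_{p+1}\ldots\x_{p+q})=0$ with the recursion $(\ref{Stuffequationsdefn})$ built from the shift operators $s_i$ and the divided-difference term $(s_1-s_{i+1})/(x_1-x_{i+1})$. The divided differences are precisely what generate poles along $x_1=x_{i+1}$, and matching them against the poles present in $\psi^{(d)}_{2n+1}$ is the crux; I expect the computation to again reduce, via Lemma \ref{lemalphastuffequation} and an induction on depth, to a finite identity among rational functions that can be checked by examining residues. As a sanity check one would verify the low-depth cases directly: depth $1$ gives $\psi^{(1)}_{2n+1}=x_1^{2n}$ (even, as required by Lemma \ref{lemevenindepth1}), depth $2$ must satisfy the two relations $(\ref{inproofdepth2equations})$, and these can be confirmed by hand from the formula.

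\textbf{Main obstacle.} The hard part will be the shuffle equation in general depth: the closed formula is a sum of $O(d)$ terms each with an intricate product of $d-1$ linear factors in the denominator, and the shuffle operation mixes these in a way that does not obviously simplify term-by-term. Making the telescoping/residue cancellation precise—i.e. finding the right bookkeeping (most likely a depth recursion for $\psi^{(d)}_{2n+1}$ compatible with the recursive definition of $\sha$, or a clean pole-cancellation lemma) so that the argument is uniform in $n$ and $d$—is the technical heart of the proof, and is presumably why the author defers it to Part III.
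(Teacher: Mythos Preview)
Your plan contains a conceptual misstep and lacks the structural idea that actually makes the proof work.

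First, the framework error: you propose to recognise a dihedral/cyclic symmetry of $\psi_{2n+1}$ in the $y$-variables and thereby place it in $\overline{\p\p}$. But $\overline{\p\p}$ is the polar \emph{dihedral} Lie algebra, the ambient space for solutions of the \emph{linearised} double shuffle equations; full solutions to the double shuffle equations modulo products (elements of $\p\dmr$) do \emph{not} enjoy the dihedral symmetry $(\ref{dihedsymmetries})$ in general. The cyclic symmetry you hope to see simply is not there for $\psi^{(d)}_{2n+1}$ as a whole. Relatedly, your ``no poles $+$ negative degree $\Rightarrow$ zero'' alternative for the shuffle equation cannot work: $(\overline{\psi}^{(d)}_{2n+1})^{\sharp}(\ldots\sha\ldots)$ is homogeneous of degree $2n+1-d$, which is non-negative for $d\le 2n+1$.

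Second, the missing idea. The paper does not attempt a direct telescoping/residue verification. Instead it finds \emph{two different} decompositions of $\psi_{2n+1}$, one adapted to each equation:
\[
\psi_{2n+1}=\tfrac{1}{2}(A_{2n+1}+B_{2n+1}+C_{2n+1})=\tfrac{1}{2}(D_{2n+1}+E_{2n+1}+F_{2n+1}),
\]
and shows that each summand is individually a solution to the relevant equation, not by computing the shuffle/stuffle sums, but by recognising each piece as the output of an operation that \emph{preserves} the property of being a solution. For stuffle: $A_{2n+1}=\widetilde{x_1^{2n}}$ is a canonical depth-lift (Lemma~\ref{propD1isstuff}), $B_{2n+1}=(\tau A_{2n+1})^{\star}$ is a conjugate of a solution by the group-like element $1+\mu_+$, and $C_{2n+1}$ is a Lie bracket $[\widetilde{f}_1,\tau(\widetilde{f}_2)]_{\stu}$ of solutions. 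For shuffle: $E_{2n+1}=S(G)\cdot\alpha_n\cdot G$ and $D_{2n+1}=(G\cdot\alpha_0\cdot S(G))\,x_d^{2n}$ are conjugates of primitives in the Hopf algebra $\Osha$ (via the vine Hopf algebra, Proposition~\ref{propvinestoosha}), and $F_{2n+1}$ is obtained from $D_{2n+1}$ by a flip (Lemma on $(\ref{fdflipped})$) followed by multiplication by $x_d^{-1}$ (Lemma~\ref{orbitlemma}). The point is that all verifications are reduced to Hopf-algebraic closure properties (primitives form a Lie algebra, conjugation by group-likes preserves primitives, multiplication by $x_d^m$ preserves shuffle-primitivity), rather than to any direct manipulation of the $(p,q)$-shuffle sums.

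Your inductive/residue strategy is not obviously wrong, but without the $A+B+C$ and $D+E+F$ splittings there is no evident recursion of $\psi^{(d)}_{2n+1}$ in $d$ compatible with the shuffle recursion~$(\ref{shuffprod})$, and you have not identified one; this is exactly the ``bookkeeping'' you flag as the main obstacle, and the paper's solution to it is to change the problem rather than to find such a recursion.
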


In depths one and two   definition \ref{defnpsi2n+1} gives 
\begin{eqnarray} 
\psi_{2n+1}^{(1)} & = & x_1^{2n}  \nonumber  \\
\psi_{2n+1}^{(2)} & = & {1 \over 2} \Big( {x_2^{2n}- (x_1-x_2)^{2n} \over x_1 } +   {(x_1-x_2)^{2n} -x_1^{2n} \over x_2 }+   {x_2^{2n} -x_1^{2n} \over x_1-x_2 } \Big)\nonumber 
\end{eqnarray}
which are polynomials.  Since the linearized double shuffle equations in even depth and odd weight have no solutions, $\psi_{2n+1}^{(2)}$ is uniquely determined by 
$\psi_{2n+1}^{(1)}$ and so 
$$\psi^{(1)}_{2n+1} =  \sigma_{2n+1}^{(1)} \qquad \hbox{and} \qquad \psi^{(2)}_{2n+1} =  \sigma_{2n+1}^{(2)} \ .$$
 For $d\geq 3$, the  $\psi^{(d)}_{2n+1}$ are rational functions and have non-trivial poles.

\begin{rem} \label{rempsinnotunique} The elements $\psi^{(d)}_{2n+1}$ are not  unique: in \S \ref{sectDepthSplit}  we shall show how to  construct many other universal solutions
to the double shuffle equations modulo products in all weights and depths.  A key property  of the elements $\psi^{(d)}_{2n+1}$ is discussed in 
\S\ref{sectResiduesFinal}.

\end{rem}

 By theorem \ref{thmPolarRacinet}, the elements $\psi_{2n+1}$ generate an infinite-dimensional Lie algebra of polar solutions to the double shuffle equations modulo products via the Ihara bracket. In order to cancel out poles, we construct a pure polar solution  $\psi_{-1}$ of homogeneous weight $-1$. Its  purpose is to provide counterterms for  the polar parts
of the $\psi_{2n+1}$.

\subsection{Vineyards} The definition of the element $\psi_{-1}\in \Or$ involves  terms which are indexed by a certain family of trees, which are constructed as follows.

\begin{defn} We shall call a \emph{bunch of} $n$ \emph{grapes} $g_n$  any  tree with vertices labelled from the set $\{i, i+1,\ldots, i+n\}$ of the form depicted below:

\begin{center}
\fcolorbox{white}{white}{
  \begin{picture}(100,74) (196,-40)
    \SetWidth{1.0}
    \SetColor{Black}
    \Vertex(241,11){3}
    \Vertex(200,-19){3}
    \Vertex(221,-19){3}
    \Vertex(262,-19){3}
    \Vertex(284,-19){3}
    \Line(241,12)(261,-19)
    \Line(241,12)(283,-19)
    \Line(241,12)(221,-19)
    \Line(241,12)(199,-19)
    \Text(248,12)[lb]{\small{\Black{$i$}}}
    \Text(188,-30)[lb]{\small{\Black{$i+1$}}}
    \Text(215,-30)[lb]{\small{\Black{$i+2$}}}
    \Text(275,-30)[lb]{\small{\Black{$i+n$}}}
       \Text(170,10)[lb]{\Black{$g_n$}}
    \Text(237,-19)[lb]{\small{\Black{$\ldots$}}}
   \Line(241,12)(199,-19)
  \end{picture}
}
\end{center}
\noindent
The vertex $i$ will be called the \emph{stalk}, and the vertices $i, \ldots, i+n$ the \emph{grapes}.

A \emph{vine} is a rooted tree whose vertices have  distinct labels $\{0,1,\ldots, n\}$,   where $0$ denotes the root vertex, obtained by grafting bunches of grapes together as follows.
Any vine $v$ is uniquely represented by a sequence $v=g_{i_1}\ldots g_{i_k}$
 where the stalk of each bunch of grapes $g_{i_{\ell}}$  is grafted to the  grape  with the highest label of the vine $g_{i_1}\ldots g_{i_{\ell}-1}$. The index $k$ will be called the \emph{height} $h(v)$ of the vine.
 
Let $\V_n$ denote the set of vines with $n$ grapes. A \emph{vineyard} is any (possibly infinite)  $\Q$-linear combination of vines.  A vineyard can be identified with an element of the 
free associative algebra generated by elements $g_i$ of degree $i$, for $i\geq 1$, and completed with respect to the degree:
\begin{equation} 
\widehat{\V}= \Q\langle \langle g_1, g_2, \ldots \rangle\rangle\ .
\end{equation}
By \S\ref{SECTstuff}, the algebra $\widehat{\V}$ is isomorphic to the stuffle Hopf algebra, and hence  a complete Hopf algebra with respect to the continuous coproduct 
$$\Delta: \widehat{\V} \To \widehat{\V} \, \widehat{\otimes}_{\Q}  \, \widehat{\V}$$
which satisfies $\Delta (g_n ) = \sum_{i+j=n} g_i \otimes g_j,$ where we set $g_0=1$.  
\end{defn}

\noindent
To every vine we associate a polynomial as follows.
\begin{defn} Let $x_0 =0$.   For every vine $T \in \V_n$, define
\begin{equation} \label{xofvine}
x_T = \prod_{(i,j)\in E(T)} (x_j -x_i) \quad  \in \quad  \Z[x_1,\ldots, x_n]
\end{equation}Ê
where   $(i,j)$ denotes the edge   between  endpoints $i,j$ of $T$,  where $0\leq i<j \leq n$.  \end{defn}

\begin{example}
 The sets $\V_1,\V_2,\V_3$ are depicted below:
\begin{center}
\fcolorbox{white}{white}{
  \begin{picture}(556,102) (43,10)
    \SetWidth{0.5}
    \SetColor{Black}
      \Text(44,105)[lb]{\small{\Black{$g_1$}}}
    \Text(44,85)[lb]{\small{\Black{$0$}}}
    \Text(44,45)[lb]{{\Black{$1$}}}
       \Vertex(46,78){2}
    \Vertex(46,56){2}
   \Line[
   ](46,78)(46,57)
   \Text(98,105)[lb]{{\Black{$g_2$}}}
   \Text(98,85)[lb]{{\Black{$0$}}}
     \Text(87,45)[lb]{{\Black{$1$}}}
    \Text(109,45)[lb]{{\Black{$2$}}}
    \Vertex(100,78){2}
    \Vertex(90,56){2}
    \Vertex(111,56){2}
       \Line[](100,78)(90,56)
    \Line[](100,78)(111,56)
   \Text(145,105)[lb]{{\Black{$g_1g_1$}}}
   \Text(151,85)[lb]{{\Black{$0$}}}
    \Text(158,55)[lb]{{\Black{$1$}}}
    \Text(158,30)[lb]{{\Black{$2$}}}
\Vertex(153,78){2}
    \Vertex(153,56){2}
    \Vertex(153,34){2}
        \Line[](153,78)(153,57)
    \Line[](153,56)(153,35)
      \Text(215,105)[lb]{{\Black{$g_3$}}}
       \Text(218,85)[lb]{{\Black{$0$}}}
      \Text(194,45)[lb]{{\Black{$1$}}}
    \Text(218,45)[lb]{{\Black{$2$}}}
    \Text(240,45)[lb]{{\Black{$3$}}}
    \Vertex(219,78){2}
    \Vertex(219,56){2}
    \Vertex(196,56){2}
    \Vertex(240,56){2}
       \Line[](219,78)(219,56)
    \Line[](219,78)(196,56)
    \Line[](219,78)(240,56)
      \Text(275,105)[lb]{{\Black{$g_1g_2$}}}
      \Text(281,85)[lb]{{\Black{$0$}}}
      \Text(288,58)[lb]{{\Black{$1$}}}
    \Text(269,23)[lb]{{\Black{$2$}}}
    \Text(294,23)[lb]{{\Black{$3$}}}
  \Vertex(283,78){2}
    \Vertex(283,56){2}
    \Vertex(272,34){2}    \Vertex(294,34){2}
  \Line[](283,78)(283,57)
    \Line[](283,56)(272,35)
    \Line[](283,56)(294,35)
   \Text(328,105)[lb]{{\Black{$g_2g_1$}}}
  \Text(335,85)[lb]{{\Black{$0$}}}
   \Text(324,45)[lb]{{\Black{$1$}}}
    \Text(354,54)[lb]{{\Black{$2$}}}
    \Text(354,30)[lb]{{\Black{$3$}}}
    \Vertex(336,78){2}
    \Vertex(326,56){2}
  \Vertex(347,56){2}
    \Vertex(347,34){2}
 \Line[](336,78)(326,56)
    \Line[](336,78)(347,56)
    \Line[](347,56)(347,34)
    \Text(378,105)[lb]{{\Black{$g_1g_1g_1$}}}
         \Text(388,85)[lb]{{\Black{$0$}}}
      \Text(396,55)[lb]{{\Black{$1$}}}
    \Text(396,33)[lb]{{\Black{$2$}}}
    \Text(396,14)[lb]{{\Black{$3$}}}
      \Vertex(389,78){2}
    \Vertex(389,56){2}
    \Vertex(389,34){2}
    \Vertex(389,13){2}
         \Line[](389,78)(389,57)
    \Line[](389,56)(389,35)
    \Line[](389,34)(389,13)
  \end{picture}
}
\end{center}
We have   $x_{g_1g_2}= x_1 (x_2-x_1)(x_3-x_1)$ and 
 $x_{g_1g_1g_1}= x_1 (x_2-x_1) (x_3-x_2)$.

\end{example}

\subsection{The  element $\psi_{-1}$}

\begin{defn} \label{Psi-1def}
 For all $d\geq 1$  define
$$\psi^{(d)}_{-1} = \sum_{v\in \V_d}  { (-1)^{h(v)+1} \over h(v)} { 1 \over  x_v x_d} \ .$$
Define $\psi_{-1}\in \Or$ to be the element whose depth $d$ component is $\psi^{(d)}_{-1}$.
\end{defn}

The element $\psi_{-1}$ is homogeneous of weight $-1$. 
 Its first two terms are \begin{equation}
\psi_{-1}^{(1)}= { 1 \over x_1^2} \quad  \hbox{ and }   \quad 
\psi_{-1}^{(2)}= {1 \over x_1 x_2^2}-{1 \over  2\, x_1 (x_2-x_1)x_2 }\ .
\end{equation} 

The element $\psi_{-1}$ satisfies the  double shuffle equations modulo products.
\begin{thm} \label{theorem: main2}  $\psi_{-1} \in \p \dmr$.
\end{thm}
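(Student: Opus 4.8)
The plan is to verify that the sequence $(\psi_{-1}^{(d)})_{d\geq 1}$ satisfies the three families of conditions that define $\p\dmr$: translation invariance (equivalently $\delta_0\psi_{-1}=0$ in the notation of Lemma~\ref{lemtransinv}), the shuffle equations modulo products $(\ref{shuffleequation})$, and the stuffle equations modulo products $(\ref{stuffleequation})$. Since each $\psi_{-1}^{(d)}$ is a sum over vines $v\in\V_d$, the starting point is to exploit the identification of $\widehat\V=\Q\langle\langle g_1,g_2,\ldots\rangle\rangle$ with the stuffle Hopf algebra (as stated after Definition~\ref{Psi-1def}). I would first show that the map $v\mapsto x_v^{-1}$ intertwines the deconcatenation structure on vines with the stuffle product on power series in a suitable sense; concretely, one checks that the grafting operation $v=g_{i_1}\cdots g_{i_k}$ produces exactly the denominator $x_v=x_{i_1}(x_{i_1+\cdots}-\cdots)\cdots$ appearing in Definition~\ref{Psi-1def}, and that the harmonic coefficient $\tfrac{(-1)^{h(v)+1}}{h(v)}$ is precisely the coefficient of $v$ in $\log$ of the group-like generating series of all vines. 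This reframes $\psi_{-1}$ as (a depth-graded piece of) the logarithm of an explicit group-like element, which is the cleanest way to see that it is \emph{primitive}.

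\textbf{Key steps in order.} (1) Translation invariance: show $\nabla_d\,\psi_{-1}^{(d)}=0$ where $\nabla_d=\sum_i \partial/\partial x_i$ — here I would use the fact that each edge factor $(x_j-x_i)$ in $x_v$ is translation invariant except for the unique edge incident to the root $0$ (recall $x_0=0$), so $\nabla$ acting on $x_v^{-1}x_d^{-1}$ produces a telescoping sum over vines that cancels after summing over $\V_d$ with the given coefficients; alternatively, invoke that $\psi_{-1}$ is primitive for $\Delta_\sha$ and apply Lemma~\ref{lemtransinv} directly once step (3) is done. (2) Stuffle equations: this should be the \emph{formal} consequence of the Hopf-algebraic setup. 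Since $\widehat\V$ carries the stuffle coproduct $\Delta(g_n)=\sum_{i+j=n}g_i\otimes g_j$, and $\psi_{-1}$ is built as a primitive element (a logarithm) for the dual structure, Corollary~\ref{corstuffleequations} reduces the claim to checking that the generating series $\sum_v x_v^{-1}(\cdots)$ is group-like for the concatenation-of-vines product, which matches $(\ref{stuffleequation})$ under $(\ref{stufftopowerseries})$. (3) Shuffle equations modulo products: verify $(\overline{\psi}_{-1}^{(p+q)})^\sharp(\x_1\cdots\x_p\sha\x_{p+1}\cdots\x_{p+q})=0$ using Corollary~\ref{corshuffleequations}. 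This is the step where the explicit rational-function structure must be confronted: one expands the shuffle of words, applies the $\sharp$ change of variables, and shows that the resulting sum of vine-denominators cancels in pairs. Finally, (4) confirm $(\psi_{-1})_{e_0}=0$, which is immediate since every $\psi_{-1}^{(d)}$ has $d\geq 1$, i.e. the depth-zero component vanishes.

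\textbf{Main obstacle.} I expect the shuffle equation, step (3), to be the hard part. Unlike the stuffle relation — which is essentially built into the construction of $\psi_{-1}$ via the vine Hopf algebra — the shuffle relation is a genuine compatibility that must be proven, and it is exactly the kind of identity between sums of rational functions with poles along $\{x_i=x_j\}$ that has no obvious combinatorial shortcut. The likely route is an induction on depth (or on the number of grapes), using the recursive structure of vines: a vine in $\V_d$ is $g_{i_1}$ followed by a vine grown on its top grape, so the denominator $x_v$ factors compatibly with splitting off the first bunch of grapes. One would match this recursion against the recursive definition $(\ref{shuffprod})$ of the shuffle product after applying $\sharp$, reducing the depth-$(p+q)$ identity to lower-depth cases plus a residue/partial-fraction identity among the factors $(x_a-x_b)^{-1}$. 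A cleaner alternative, if available, is to prove that $\psi_{-1}$ arises as a limit or specialization of a family of group-like elements already known to satisfy the shuffle equation (analogous to the density arguments referenced in \S\ref{trivialdensity}), so that primitivity for $\Delta_\sha$ follows by a continuity argument rather than direct computation. Either way, the combinatorics of cancellation among vine-indexed terms is the crux, and I would organize the proof around making that cancellation manifest — presumably this is what is deferred to Part~III.
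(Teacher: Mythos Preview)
You have the right structural insight—$\psi_{-1}$ is (up to the factor $x_d^{-1}$) the image under $v\mapsto x_v^{-1}$ of the logarithm of a group-like vineyard—but you have the roles of shuffle and stuffle \emph{reversed}. The key computation is that the realisation map $p:\V\to\Or$, $v\mapsto x_v^{-1}$, sends concatenation of vines to the \emph{shuffle} concatenation $(\ref{shuffleconcat})$, not the stuffle concatenation $(\ref{stuffleconcat})$: for instance $p(g_1g_1)=x_1^{-1}(x_2-x_1)^{-1}$, which is $p(g_1)\cdot p(g_1)$ in the sense of $(\ref{shuffleconcat})$, not $p(g_1)\studot p(g_1)=(x_1x_2)^{-1}$. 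The identity $(\ref{gnshuffle})$ then shows that $p$ carries the vine coproduct $\Delta g_n=\sum g_i\otimes g_j$ to the coproduct on $\Osha$, so $p$ is a morphism of Hopf algebras into the \emph{shuffle} side. Your abstract identification of $\widehat\V$ with the stuffle Hopf algebra $\Q\langle\langle Y\rangle\rangle$ is correct but irrelevant here: it is merely a relabelling $g_n\leftrightarrow\y_n$ and has nothing to do with the rational-function realisation $x_v^{-1}$. Thus your step~(2) does not prove the stuffle equations; it proves the shuffle equations, together with the observation (Lemma~\ref{orbitlemma}) that multiplying by $x_d^{-1}$ preserves them.

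The genuine work is therefore the stuffle side, and it requires more than a generic ``induction on depth plus partial fractions''. The paper splits $\psi_{-1}=2\sum_{n\geq1}(-1)^nP_n/n$ by \emph{height} of the vine, and shows each $P_n$ separately satisfies the stuffle equations modulo products. This uses three ingredients that your sketch does not anticipate: (i) a conjugation operator $\rho\mapsto\rho^{\star}=(1+\mu_+)\studot\rho\studot(1-\mu_-)$ which preserves stuffle solutions; (ii) a lifting mechanism (Proposition~\ref{proplift2}) sending any depth-$n$ solution of the \emph{linearised} stuffle equations to a solution $\widetilde{f}$ of the full stuffle equations in all depths; and (iii) the identity $(\widetilde{c}_n)^{\star}=P_n+P_{n+1}$, where $c_n=(x_1(x_2-x_1)\cdots(x_n-x_{n-1})x_n)^{-1}$ is the cyclic element associated to the linear vine $g_1^n$. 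This last identity, combined with the base case $P_1=(1+\mu_+)\studot(x_1^{-2},0,0,\ldots)$, gives the inductive step. So your plan needs to swap the labels on steps~(2) and~(3), and for the stuffle part you will need to discover (or import) the height decomposition and the $\mu_\pm$-conjugation trick.
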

The proof of the theorem is again  rather technical, and postponed to  part III. 

\begin{rem} \label{rempsi1notunique} The element $\psi_{-1}$ is not the unique  solution to the polar double shuffle equations of weight $-1$.  For example, we could set
$n=-1$ in   definition \ref{defnpsi2n+1} to obtain another solution. A crucial feature of $\psi_{-1}$ is the fact that it has only single poles along  all divisors 
$x_i=0$ and $x_i=x_j$, except for a  unique double pole along $x_n=0$.
\end{rem}

\subsection{The Lie algebra $\Lo$} 
\begin{defn} Let $\Lo= \Lie_{\!\Q} \langle \psi_{-1}, \psi_{3}, \psi_{5}, \ldots \rangle$ denote the Lie subalgebra  of $\Or$  generated by  $\psi_{-1}$ and $\psi_{2n+1}$ for all $n\geq 1$, equipped with the Ihara bracket $\{, \}$. 
\end{defn}

It follows from  theorems \ref{thmPolarRacinet}, \ref{theorem: main1} and \ref{theorem: main2},   that every element of $\Lo$  is a solution to the  double shuffle equations modulo products.

\begin{thm} $\Lo \subset \p \dmr$.
\end{thm}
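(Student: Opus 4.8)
The statement $\Lo \subset \p \dmr$ follows almost immediately from the results already assembled. The plan is to combine the closure property with the membership of the generators. First I would recall that by Theorem \ref{thmPolarRacinet}, the set $\p \dmr$ of polar solutions to the double shuffle equations modulo products is a Lie subalgebra of $\Or$ with respect to the Ihara bracket $\{\,,\,\}$. Next, by Theorem \ref{theorem: main1} we have $\psi_{2n+1}\in \p\dmr$ for every $n\geq 1$, and by Theorem \ref{theorem: main2} we have $\psi_{-1}\in \p\dmr$. Hence all the distinguished generators of $\Lo$ lie in $\p\dmr$.

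By definition, $\Lo = \Lie_{\!\Q}\langle \psi_{-1}, \psi_3, \psi_5, \ldots\rangle$ is the smallest Lie subalgebra of $(\Or,\{\,,\,\})$ containing these generators. Since $\p\dmr$ is itself a Lie subalgebra of $(\Or,\{\,,\,\})$ containing all of $\psi_{-1}$ and the $\psi_{2n+1}$, minimality of $\Lo$ forces $\Lo \subset \p\dmr$. Concretely, every element of $\Lo$ is a $\Q$-linear combination of iterated Ihara brackets of the generators, and one argues by induction on bracket length: length-one elements are the generators themselves, which lie in $\p\dmr$ by the two main theorems, and if $f,g\in\p\dmr$ then $\{f,g\}\in\p\dmr$ by Theorem \ref{thmPolarRacinet}. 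This completes the proof.

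There is essentially no obstacle here: the content of the statement has been entirely discharged into Theorems \ref{thmPolarRacinet}, \ref{theorem: main1} and \ref{theorem: main2}, whose proofs are deferred to part III of the paper. The only point worth flagging is that one should check the generators genuinely lie in $\Or$ (so that the Ihara bracket is defined on them and the ambient Lie algebra structure applies) — but this is immediate from Definitions \ref{defnpsi2n+1} and \ref{Psi-1def}, where $\psi_{2n+1}$ and $\psi_{-1}$ are exhibited as elements of $\Or$ with the prescribed pole structure.
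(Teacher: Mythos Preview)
Your proof is correct and matches the paper's own argument exactly: the paper states that the theorem follows from Theorems \ref{thmPolarRacinet}, \ref{theorem: main1} and \ref{theorem: main2}, which is precisely the combination of the closure of $\p\dmr$ under the Ihara bracket with the membership of the generators that you spell out.
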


The Lie algebra has a natural integral structure defined by the elements $\psi_{2n+1}, \psi_{-1}$. Note that this integral structure is not identical to, but related to, the 
integral structure on $\Or$;  the elements $\psi_{2n+1}$ are half-integral in this respect, but  the elements $\psi_{-1}$ have denominators $\gcd(1,\ldots, d)$ in depth $d$.

\begin{rem}  I do not know if the Lie algebra $\Lo$ is free. 
\end{rem} 

\subsection{Cancellation of poles and filtrations on $\Lo$}
The algebra $\Lo$ potentially contains elements with double poles due to the presence of double poles in the definition of $\psi_{-1}$  at all depths.
It turns out that they cancel. 
\begin{prop}  \label{propCancelofpoles} Let $\xi \in \Lo$ be any element of  non-negative weight. Then the depth $d$ component  $\xi^{(d)}\in \Or$  has at most simple poles
for all $d$. 
\end{prop}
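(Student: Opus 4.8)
The plan is to exploit the residue filtration $\RR$ of \S\ref{sectPolardihedradLie} together with Proposition~\ref{propRisIharacompatible}, and the fact (Remark~\ref{rempsi1notunique}) that $\psi_{-1}$ has a unique double pole along $x_d=0$ and simple poles elsewhere, while $\psi_{2n+1}$ has only simple poles. The key observation is that the $\RR$-filtration measures exactly the failure of a function to have only simple poles: $\RR_k\overline{\p\p}_r$ is characterised by the vanishing of $k$ iterated residues, and an element of $\RR_0\overline{\p\p}_r$ is precisely a polynomial. First I would record where the generators sit: each $\psi_{2n+1}$ lies in $\RR_0\,\p\dmr$ in every depth (no poles at all in depth $\leq 2$, and in higher depth the poles are simple — one should check that iterated residues along a descending chain of divisors $x_d=0,\ x_{d-1}=0,\dots$ vanish, which follows from the explicit shape of Definition~\ref{defnpsi2n+1}, where each summand contributes a pole of order at most one along any coordinate or difference hyperplane). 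By contrast $\psi_{-1}\in\RR_1$ but not $\RR_0$, because of the single double pole along $x_d=0$; applying $R_d$ once (the residue along $x_d=0$) kills it in the sense required, so $R_d\psi_{-1}^{(d)}$ still has only simple poles and $\psi_{-1}\in\RR_1\p\dmr$ in each depth, with the crucial point that applying $R$ twice gives something with no worse than simple poles — in fact $R\circ R$ applied to $\psi_{-1}$ lands in lower depth where the recursion continues.

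Next I would set up the induction. Write an arbitrary element $\xi\in\Lo$ of non-negative weight as a $\Q$-linear combination of iterated Ihara brackets of the generators $\psi_{-1},\psi_3,\psi_5,\dots$. By Proposition~\ref{propRisIharacompatible}, an iterated bracket involving $m$ copies of $\psi_{-1}$ and any number of $\psi_{2n+1}$'s lies in $\RR_m\,\p\dmr$ (since each $\psi_{2n+1}\in\RR_0$ and each $\psi_{-1}\in\RR_1$, and the bracket is additive on the filtration index). So a priori $\xi^{(d)}$ could have poles of order up to $m$ along $x_d=0$ and its dihedral orbit. The assertion to prove is that in fact $\xi\in\RR_1\,\p\dmr$, i.e. the higher-order poles all cancel. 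The mechanism for this cancellation should be a \emph{weight} argument combined with Lemma~\ref{lemweightzeroofpls}: iterated residues $R^{[m]}\xi^{(d)}$ land, after $m$ steps, in depth $d-m$ and in weight equal to $\mathrm{wt}(\xi) - m\cdot 0 = \mathrm{wt}(\xi)$ coming from the $\psi_{2n+1}$'s but shifted downward by the residues of the $\psi_{-1}$'s; tracking weights shows that $R^{[m]}\xi^{(d)}$ for $m\geq 2$ would have to be a double-shuffle solution in a weight/depth range forced to vanish — precisely, by Lemma~\ref{lemweightzeroofpls}, $\gr^W_0\,\p\ls=0$ and $W_{-1}\p\ls=\Q x_1^{-2}$ is one-dimensional, so there is essentially no room.

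More concretely, the engine is the commutation formula from the proof of Proposition~\ref{propRisIharacompatible},
\begin{equation}
R^{[m]}(f\circb g) = f\circb R^{[m]}g - \sum_{\substack{i+j=m\\ i\geq 1}} R^{[i]}g \,\studot\, R^{[j]}f,
\end{equation}
together with $\Res_{x_{r+1}=0}(f\circb x_1^{-2})=\nabla_r f$ from \eqref{Resfnabla}. Iterating this down the entire bracket expression for $\xi$, every term in $R^{[2]}\xi^{(d)}$ is built from residues and $\nabla$'s of the $\psi$'s; I would show each such term, viewed in its own depth and weight, is a solution to the polar linearised double shuffle equations (using Theorem~\ref{thmplsclosed} and the fact that residues and $\nabla_r$ preserve these equations, via \eqref{ResnablaRes} and Lemma~\ref{lempropertiesofRes}) lying in $\gr^W_{\leq 0}$, hence forced to be either zero or a multiple of $x_1^{-2}$ — and a separate bookkeeping check on the leading $x_1^{-2}$-coefficient, using that the $\psi_{-1}$-double-poles all have the same normalising constant, shows those residual pieces cancel in pairs. \textbf{The main obstacle} is this last cancellation: controlling the coefficients produced when several $\psi_{-1}$'s interact through nested brackets and verifying that, after taking two residues, everything either dies for weight reasons or cancels combinatorially. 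This is where the specific structure of $\psi_{-1}$ in terms of vineyards (Definition~\ref{Psi-1def}), in particular the coefficients $(-1)^{h(v)+1}/h(v)$, should enter decisively; I expect the identity needed is an avatar of the telescoping behind $\Delta(g_n)=\sum_{i+j=n}g_i\otimes g_j$ in the vineyard Hopf algebra $\widehat{\V}$, which forces the double-pole contributions to assemble into a coproduct-primitive combination and hence to vanish after one residue is stripped off. The remaining details — that the surviving simple-pole part genuinely lies in $\p\dmr$, not merely in $\Or$ — follow from Theorem~\ref{thmPolarRacinet} since $\Lo\subset\p\dmr$ already.
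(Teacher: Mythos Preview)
Your proposal rests on a misreading of the residue filtration $\RR$. The filtration $\RR_k\overline{\p\p}_r$ counts how many \emph{iterated} residues along the chain $x_r=0,\,x_{r-1}=0,\ldots$ are required to annihilate an element; it does \emph{not} measure the order of the poles. Thus $\RR_0$ consists of polynomials, but $\RR_1$ does not mean ``at most simple poles'': an element with only simple poles along every consecutive divisor can perfectly well lie in $\RR_{r-1}$, while conversely an element with a double pole at $x_r=0$ could land in $\RR_1$ if its first residue happens to be polynomial. In particular, your assertion that each $\psi_{2n+1}$ lies in $\RR_0$ is false, since $\psi_{2n+1}^{(d)}$ has genuine poles for $d\geq 3$. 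More fundamentally, $\RR$ is defined only on $\overline{\p\p}$, i.e.\ on functions whose poles lie along the \emph{cyclic} divisors $x_i=x_{i+1}$; elements of $\Lo\subset\p\dmr\subset\Or$ can have poles along arbitrary $x_i=x_j$, so Proposition~\ref{propRisIharacompatible} does not apply to them as stated. Your weight argument via Lemma~\ref{lemweightzeroofpls} is therefore built on the wrong invariant.

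The paper's argument is shorter and far more direct. One isolates the class of $f\in\Or_d$ with at most simple poles along $x_i=0$ for $i<d$ and at most a double pole along $x_d=0$, and checks from formula~(\ref{circformula}) that this class is preserved by $\{\,,\,\}$: the only terms in $\{f_1,f_2\}$ that could produce a double pole at $x_p=0$ (for $f_1\in\Or_p$) combine to
\[
f_1(x_1,\ldots,x_p)\big(f_2(x_{p+1},\ldots,x_{p+q})-f_2(x_{p+1}-x_p,\ldots,x_{p+q}-x_p)\big),
\]
and the second factor vanishes at $x_p=0$. Since the generators $\psi_{2n+1},\psi_{-1}$ lie in this class, so does every element of $\Lo$. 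For $\xi$ of non-negative weight one then runs induction on $d$: if $\xi^{(1)},\ldots,\xi^{(d-1)}$ have no double poles, the lower-depth terms in the stuffle equation for $\xi^{(d)}$ are also free of double poles, so the double-pole part of $\xi^{(d)}$ satisfies the \emph{linearised} double shuffle equations and is hence dihedrally symmetric; since the dihedral orbit of $\{x_i=0:1\leq i\leq d-1\}$ covers every divisor, and $\xi^{(d)}$ has no double pole there by the stability property above, it has none anywhere.
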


Proofs are given in \S\ref{sectResiduesFinal}.
 The next,  crucial, result states that the first non-polynomial component of every element of $\Lo$ only has poles  along the cyclic orbits of $x_1=0$.
\begin{thm} \label{thmcycpoles} Let $\xi \in \Lo$ such that $\xi^{(1)}, \ldots, \xi^{(d-1)}$ have no poles. Then 
$\xi^{(d)} \in \Or_d$
 has poles only along the main cyclic orbit $x_1=0, \ldots, x_i=x_{i+1},\ldots, x_d=0$.
 \end{thm}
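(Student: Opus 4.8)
The plan is to induct on $d$ and analyze the pole structure of $\xi^{(d)}$ by localizing at divisors $x_i = x_j$ or $x_i = 0$ that do \emph{not} lie on the main cyclic orbit. Since $\xi \in \Lo$ is a Lie polynomial in the generators $\psi_{-1}, \psi_3, \psi_5, \ldots$ with respect to the Ihara bracket, and $\{f,g\} = f\circb g - g\circb f$, it suffices to understand how poles are created by the operation $\circb$, via the explicit formula $(\ref{circformulax})$. By hypothesis $\xi^{(1)}, \ldots, \xi^{(d-1)}$ are polynomial, and one wants to show the first genuinely rational component $\xi^{(d)}$ has its polar locus confined to the $D_{d+1}$-orbit (equivalently, the cyclic orbit, by the dihedral symmetry of $\overline{\p\p}$ from Theorem \ref{thmplsclosed}) of $x_1 = 0$.

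First I would reduce to the generators: write $\xi$ as a bracket $\{\eta, \zeta\}$ (or iterated brackets) and observe that the depth-$d$ component is a sum of terms $\eta^{(i)} \circb \zeta^{(j)}$ with $i + j = d$. Since all lower-depth components of $\xi$ are polynomial, any pole in $\xi^{(d)}$ must already be "visible" in terms where one factor is a low-depth polynomial and the only possible source of a pole is the single rational generator $\psi_{-1}$ appearing in the other factor — more precisely, by the inductive hypothesis applied within $\Lo$, the lowest-depth rational behavior is controlled by $\psi_{-1}$, whose poles (Remark \ref{rempsi1notunique}) lie only along $x_i = 0$ and $x_i = x_j$ with a unique double pole along $x_d = 0$, hence along a cyclic orbit. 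The combinatorial heart is then: in the formula $(\ref{circformulax})$ for $f \circb g$, when $f$ and $g$ have poles confined to cyclic orbits of $x_1 = 0$ in their respective variable sets, the substitutions $x_{i+k} - x_i$ and the "interleaving" of the two variable blocks can only produce poles along divisors of the form $x_a - x_b$ with $a, b$ consecutive in the concatenated/interleaved ordering, or $x_a = 0$, i.e., again on the main cyclic orbit of $x_1 = 0$ in $d$ variables. This is essentially the same bookkeeping as in the proof of Lemma \ref{lemppisclosed}, where it was shown that brackets of elements with consecutive-only poles again have consecutive-only poles; I would cite and adapt that argument. A potential new pole of the form $x_a = x_b$ with $a, b$ non-consecutive would have to arise from a product of two factors each singular along overlapping hyperplanes, but the block structure of $(\ref{circformulax})$ — one factor uses a contiguous window $x_{i+1} - x_i, \ldots, x_{i+r} - x_i$, the other uses the complement — prevents this.

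The second ingredient I would need is that the \emph{polynomial} lower-depth components really do force the bracket's depth-$d$ part to inherit only cyclic-orbit poles, i.e., that there is no cancellation-free mechanism generating, say, a pole along $x_1 = x_3$ in depth $3$. Here I would use Proposition \ref{propCancelofpoles} (double poles cancel in non-negative weight) together with the residue machinery of Lemma \ref{lempropertiesofRes} and Proposition \ref{propRisIharacompatible}: the residue-filtration compatibility $\{\RR_p, \RR_q\} \subset \RR_{p+q}$ controls iterated residues along $x_i = 0$, and a careful application of $(\ref{Resfcircg})$, $(\ref{Resfnabla})$ shows that any hypothetical residue of $\xi^{(d)}$ along a non-cyclic divisor would propagate to a nonzero residue of some lower-depth component, contradicting polynomiality of $\xi^{(1)}, \ldots, \xi^{(d-1)}$. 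Combined with the dihedral symmetry (which is available because $\xi \in \p\dmr \subset$ the translation-invariant dihedrally symmetric functions, cf. Lemma \ref{lemtransinv} and the symmetries of the linearized equations), confinement to the single orbit $x_1 = 0$ follows.

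\textbf{Main obstacle.} The hard part will be the bookkeeping in the reduction to generators: because $\Lo$ is generated by infinitely many $\psi_{2n+1}$ as well as the single polar element $\psi_{-1}$, an arbitrary element $\xi$ is a complicated iterated bracket, and one must show that \emph{every} way a pole could be produced by nested applications of $\circb$ either lands on the main cyclic orbit or cancels against another term. I expect one needs an inductive statement stronger than "poles only on the cyclic orbit", something like: for $\xi \in \Lo$, each component $\xi^{(e)}$ for $e \le d$ has poles only along cyclic orbits of $x_1 = 0$ (with double poles only along $x_e = 0$ and its cyclic images, cf. Proposition \ref{propCancelofpoles}), proved simultaneously for all $e$ by induction on the bracket-length and on $e$. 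Managing the interaction between the two filtrations — depth and the residue filtration $\RR$ — and keeping track of which divisors are "consecutive" after the interleaving in $(\ref{circformulax})$ is where the real work lies; the rest is an application of Lemma \ref{lemppisclosed} and Lemma \ref{lempropertiesofRes}.
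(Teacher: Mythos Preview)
There is a genuine gap. Your combinatorial plan assumes that the generators of $\Lo$ have poles only along consecutive divisors, so that Lemma \ref{lemppisclosed}-style bookkeeping on the formula for $\circb$ would confine all poles to the cyclic orbit. But this is false: for $d\geq 3$ the components $\psi_{2n+1}^{(d)}$ already have poles along many non-consecutive divisors (inspect the denominators $x_{\{0,\ldots,i-2\},\{i-1\}}\,x_{\{i+1,\ldots,d\},\{i\}}$ in Definition \ref{defnpsi2n+1}). So it is \emph{not} the case that ``the only possible source of a pole is $\psi_{-1}$'', and the argument that $\circb$ can only manufacture consecutive-type poles breaks down at the very first step. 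Likewise, the residue filtration $\RR$ and Lemma \ref{lempropertiesofRes} only control iterated residues at the \emph{last} variable; they say nothing directly about $\Res_{x_i=0}$ for intermediate $i$, which is what you need.

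The paper's proof rests on an ingredient you do not mention: a precise \emph{residue structure formula} (Theorem \ref{thmGenResstructure}) valid for every $\xi\in\Lo$,
\[
\Res_{x_i=0}\,\xi^{(d)} \;=\; (x_1\cdots x_{i-1})^{-1}\,\studot\,\Res_{x_1=0}\,\xi^{(d-i+1)}\qquad (1\leq i<d).
\]
This is first verified for the generators (Proposition \ref{propResstructure}, which uses their explicit form in an essential way) and then shown to be stable under $\{\,,\,\}$; the stability step is the nontrivial part and requires the identity $\xi\circb\alpha+\alpha\,\studot\,\widetilde{\xi}=0$ of Theorem \ref{thmxicircalpha}, equivalently the six-term relation of Proposition \ref{prop6term}, a genuine consequence of the double shuffle equations rather than pure combinatorics of $\circb$. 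Once this formula is in hand, the proof is short: if $\xi^{(1)},\ldots,\xi^{(d-1)}$ are polynomial then $\Res_{x_1=0}\,\xi^{(d-i+1)}=0$, so $\Res_{x_i=0}\,\xi^{(d)}=0$ for all intermediate $i$; since the lower-depth terms in the stuffle equation are polynomial, $\xi^{(d)}$ is dihedrally symmetric modulo polynomials, and the dihedral orbit of the divisors $x_i=0$ ($2\leq i\leq d-1$) covers every non-consecutive $x_i=x_j$. Your instinct to use dihedral symmetry at the end is right, but the mechanism feeding it is the residue structure formula, not a pole-tracking argument through $\circb$.
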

 The proof is postponed to \S\ref{sectResiduesFinal}. It depends in an essential way on special properties of the  generators
$\psi_{-1}$ and $\psi_{2n+1}$ (see remarks \ref{rempsinnotunique} and \ref{rempsi1notunique}).  As a consequence, 
 \begin{cor}
  If  $\xi \in \Lo$  such that $\xi^{(1)}, \ldots, \xi^{(d-1)}$ vanish, then 
  $\xi^{(d)} \in \p \ls_d$ is a solution to the polar linearized double shuffle equations in depth $d$.
\end{cor}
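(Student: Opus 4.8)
The plan is to check, one at a time, the conditions defining membership of $\xi^{(d)}$ in $\p\ls_d$: homogeneity, the prescribed pole structure $\xi^{(d)}\in c_d^{-1}\Q[y_0,\dots,y_d]$, and the linearized double shuffle equations $(\ref{lindoubleshuffle})$ in depth $d$. The only serious input will be Theorem \ref{thmcycpoles}; everything else is bookkeeping.

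Homogeneity is immediate once one notes that $\Lo$ is graded by weight — the generators $\psi_{-1},\psi_3,\psi_5,\dots$ are homogeneous and the Ihara bracket is additive for the weight — so we may take $\xi$ homogeneous, and then so is $\xi^{(d)}$. The case $d=1$ is elementary: the depth-one component of a bracket of two generators vanishes (each generator has trivial depth-zero part), so $\xi^{(1)}$ is a $\Q$-linear combination of $\psi_{2n+1}^{(1)}=x_1^{2n}$, $n\ge 1$, and $\psi_{-1}^{(1)}=x_1^{-2}$, all of which are even, have no constant term, and lie in $\overline{\p\ls}_1=\bigoplus_{n\ge 0}\Q x_1^{2n-2}$. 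So assume $d\ge 2$. Since $\xi^{(1)},\dots,\xi^{(d-1)}$ vanish they in particular have no poles, so Theorem \ref{thmcycpoles} applies and shows that $\xi^{(d)}$ has poles only along the main cyclic orbit $x_1=0,\dots,x_i=x_{i+1},\dots,x_d=0$; together with Proposition \ref{propCancelofpoles} (applicable when $\xi$ has non-negative weight), which forces these poles to be simple, this says exactly that $\overline{c_d}\,\xi^{(d)}\in\Q[x_1,\dots,x_d]$. As $\xi$ satisfies the shuffle equations modulo products it is translation invariant by Lemma \ref{lemtransinv}, so equivalently $\xi^{(d)}\in c_d^{-1}\Q[y_0,\dots,y_d]$, which is the required pole condition.

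It remains to show $\xi^{(d)}$ satisfies the linearized double shuffle equations $(\ref{lindoubleshuffle})$ in depth $d$. The shuffle part is immediate: the shuffle equations modulo products $(\ref{shuffleequation})$, which $\xi$ satisfies, are graded for the depth filtration and coincide verbatim with the shuffle equations appearing in $(\ref{lindoubleshuffle})$. For the stuffle part, $\xi$ satisfies the stuffle equations modulo products $(\ref{stuffleequation})$, i.e. $\xi(\xx_1\cdots\xx_p\stu\xx_{p+1}\cdots\xx_{p+q})=0$ for all $p,q$ with $p+q=d$; by the recursive structure $(\ref{Stuffequationsdefn})$ of these equations the $(p,q)$-th one involves only the components $\xi^{(d)},\xi^{(d-1)},\dots,\xi^{(\lceil d/2\rceil)}$, and all of these except $\xi^{(d)}$ vanish by hypothesis, so the equation degenerates to its top-depth part. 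Since the depth-$(p+q)$ component of the stuffle product $\xx_1\cdots\xx_p\stu\xx_{p+1}\cdots\xx_{p+q}$ is exactly the shuffle product $\xx_1\cdots\xx_p\sha\xx_{p+1}\cdots\xx_{p+q}$ on the alphabet $Y$, what survives is precisely the linearized stuffle relation $(\ref{linstuffpqequation})$ for $\xi^{(d)}$. Finally, the last line of $(\ref{lindoubleshuffle})$ constrains only the depth-one component and is vacuous here. Hence $\xi^{(d)}\in\p\ls_d$.

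The only genuinely hard ingredient is Theorem \ref{thmcycpoles}, whose proof is deferred to \S\ref{sectResiduesFinal}; granting it, the one delicate point is the stuffle degeneration — one must be sure that every lower-depth correction term in the full stuffle relations is supported on the components $\xi^{(1)},\dots,\xi^{(d-1)}$, which is exactly why the hypothesis is that these components \emph{vanish}, not merely that they are pole-free.
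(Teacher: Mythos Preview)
Your argument is correct and is exactly the reasoning the paper leaves implicit: pole control via Theorem~\ref{thmcycpoles} and Proposition~\ref{propCancelofpoles}, plus the observation that with $\xi^{(1)},\dots,\xi^{(d-1)}=0$ the lower-depth corrections in the stuffle recursion $(\ref{Stuffequationsdefn})$ disappear, so the full stuffle relation collapses to the linearized one.

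One small loose end you flag but do not close: Proposition~\ref{propCancelofpoles} as stated assumes non-negative weight, and you have not verified this for $\xi$. The fix is painless: in the proof of that proposition the non-negative-weight hypothesis is used only to start the induction, ensuring $\xi^{(1)}$ has no double pole; since here $\xi^{(1)},\dots,\xi^{(d-1)}$ vanish outright, the inductive step of that proof applies directly at level $d$ and gives that $\xi^{(d)}$ has at most simple poles, with no weight assumption needed. With that remark your proof is complete.
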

The depth filtration $\dd$  is  the decreasing filtration  on $\Lo$ 
such that  $\dd^r \Lo$ consists of elements $\xi\in \Lo$ whose components $\xi^{(1)}, \ldots, \xi^{(r-1)}$ vanish.  In particular, $\dd^1 \Lo = \Lo$. The previous corollary can be reformulated as the following theorem.
\begin{thm} \label{thmgrdLoispls}
$\gr_{\dd} \Lo \subset \p \ls$
\end{thm}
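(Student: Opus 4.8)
The plan is to deduce Theorem~\ref{thmgrdLoispls} directly from the corollary that immediately precedes it, using the fact that $\gr_{\dd}\Lo$ is spanned by classes of elements whose first non-vanishing component sits in a prescribed depth. Concretely, let $\xi\in\dd^r\Lo\setminus\dd^{r+1}\Lo$, so that $\xi^{(1)}=\cdots=\xi^{(r-1)}=0$ and $\xi^{(r)}\neq 0$. The class of $\xi$ in $\gr^r_{\dd}\Lo=\dd^r\Lo/\dd^{r+1}\Lo$ is determined by $\xi^{(r)}$, since two elements of $\dd^r\Lo$ agreeing in depth $r$ differ by an element of $\dd^{r+1}\Lo$. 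By the corollary, $\xi^{(r)}\in\p\ls_r$. Since every element of $\gr_{\dd}\Lo$ is a $\Q$-linear combination of such classes (the depth filtration being exhaustive and separated on $\Lo$, each graded piece being identified with a subspace of $\Or_r$ via $\xi\mapsto\xi^{(r)}$), we conclude $\gr^r_{\dd}\Lo\subset\p\ls_r$ for every $r\geq 1$, and hence $\gr_{\dd}\Lo\subset\p\ls$ as bigraded vector spaces.

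\textbf{Compatibility of brackets.} To make this an inclusion of Lie algebras rather than merely of graded vector spaces, I would check that the bracket induced on $\gr_{\dd}\Lo$ by the Ihara bracket on $\Lo$ agrees with the restriction of the Ihara bracket on $\p\ls$. This is essentially formal: the linearized Ihara action $\circb$ respects the $\dd$-grading (as noted in \S\ref{sectpropertiesofcircb} and again in \S\ref{sectLiealg}), so if $\xi\in\dd^r\Lo$ and $\eta\in\dd^s\Lo$ with leading components $\xi^{(r)},\eta^{(s)}$, then $\{\xi,\eta\}\in\dd^{r+s}\Lo$ and its depth-$(r+s)$ component is exactly $\xi^{(r)}\circb\eta^{(s)}-\eta^{(s)}\circb\xi^{(r)}=\{\xi^{(r)},\eta^{(s)}\}$ computed in $\Or_{r+s}$. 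Thus passing to the associated graded intertwines the two brackets, and the inclusion $\gr_{\dd}\Lo\hookrightarrow\p\ls$ is a morphism of bigraded Lie algebras.

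\textbf{The main obstacle} is not in this final deduction, which is bookkeeping, but in the result it rests on — Theorem~\ref{thmcycpoles}, that the first non-polynomial (equivalently, first non-vanishing, after shifting by lower-depth corrections) component of any element of $\Lo$ has poles only along the main cyclic orbit $x_1=0,\ldots,x_i=x_{i+1},\ldots,x_d=0$. Granting the corollary of that theorem, the present statement is immediate. I would therefore organize the write-up as: (1) recall the corollary; (2) observe that $\gr^r_{\dd}\Lo$ injects into $\Or_r$ via the leading-term map; (3) invoke the corollary to land this image inside $\p\ls_r$; (4) verify bracket compatibility using that $\circb$ is $\dd$-graded. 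No computation beyond what is already in \S\ref{sectpropertiesofcircb} is needed.
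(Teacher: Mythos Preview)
Your proposal is correct and follows exactly the paper's approach: the paper states explicitly that this theorem is merely a reformulation of the preceding corollary, and your argument spells out that reformulation in detail. Your additional paragraph on bracket compatibility is a welcome elaboration, though the paper treats it as implicit from the $\dd$-gradedness of $\circb$ already noted in \S\ref{sectpropertiesofcircb}.
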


Lemma \ref{lemweightzeroofpls} corresponds to the following statement about the weights of $\Lo$.

\begin{cor}  If $W$ is the increasing filtration associated to the weight grading,
$$ W_{-1} \Lo \cong \Q \psi_{-1} \quad \hbox{ and }  \quad  \gr^W_0 \Lo \cong 0\ .$$
In particular, the commutator subalgebra $\{\Lo,\Lo\}$ has positive weights.
\end{cor}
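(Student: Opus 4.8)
The plan is to transfer the statement $W_{-1}\p\ls = \Q x_1^{-2}$ and $\gr^W_0 \p\ls = 0$ of Lemma \ref{lemweightzeroofpls} to $\Lo$ via the depth-graded comparison of Theorem \ref{thmgrdLoispls}. First I would observe that $\Lo$ is generated as a Lie algebra by $\psi_{-1}$ (weight $-1$) and the $\psi_{2n+1}$ (weights $2n+1 \geq 3$), so since the Ihara bracket is additive for the weight grading (noted after Theorem \ref{thmPolarRacinet}), every iterated bracket of length $\ell\geq 2$ in these generators has weight $\geq -1-1+3 = 1 > 0$ if it involves at least one $\psi_{2n+1}$, and weight $\leq -2$ if it involves only copies of $\psi_{-1}$. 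Hence the only generators or brackets of weight in $\{-1,0\}$ are: the generator $\psi_{-1}$ itself (weight $-1$), and \emph{a priori} brackets of the $\psi_{-1}$'s of weight $-1$ — but a bracket of $k\geq 2$ copies of $\psi_{-1}$ has weight $-k \leq -2$. So $W_0\Lo$ is spanned, modulo $W_{-2}\Lo$, by $\psi_{-1}$ alone, which gives $\gr^W_{-1}\Lo = \Q\psi_{-1}$ and $\gr^W_0\Lo = 0$ directly from the grading bookkeeping, \emph{provided} $\psi_{-1}\notin\{\Lo,\Lo\}$ and $\psi_{-1}\neq 0$; the latter is clear from its explicit depth-$1$ component $x_1^{-2}\neq 0$.

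The substantive point to rule out is that no nontrivial bracket can land in weight $-1$ or $0$ and be linearly dependent with $\psi_{-1}$ in a way that collapses the associated graded — equivalently, that $\{\Lo,\Lo\}\cap W_0\Lo \subseteq W_{-2}\Lo$, i.e. $\{\Lo,\Lo\}$ has weights $\geq$ something $>0$ or $\leq -2$, with nothing in $\{-1,0\}$. This is again pure weight arithmetic: a bracket $\{\xi,\eta\}$ with $\xi$ of weight $w_1$ and $\eta$ of weight $w_2$ has weight $w_1+w_2$; the possible generator weights are $-1$ and odd integers $\geq 3$; the only way to get $w_1+w_2 \in\{-1,0\}$ from two such values is $(-1)+? $, and $-1 + (\text{odd}\geq 3)\geq 2$ while $-1 + (-1) = -2$. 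So indeed $\{\Lo,\Lo\}$ contains nothing of weight $-1$ or $0$; its weights lie in $\{\ldots,-4,-3,-2\}\cup\{2,3,4,\ldots\}$, so in particular $\{\Lo,\Lo\}$ has positive weights once we also discard the negative part — but wait, it does contain $\{\psi_{-1},\psi_{-1}\}=0$ trivially, and brackets like $\{\psi_{-1},\{\psi_{-1},\psi_3\}\}$ of weight $1$, so the honest statement I would record is that $\{\Lo,\Lo\}\cap W_0\Lo = \{\Lo,\Lo\}_{\leq -2}$, and this intersected piece together with $\Q\psi_{-1}$ exhausts $W_0\Lo$; combined with Theorem \ref{thmgrdLoispls} and Lemma \ref{lemweightzeroofpls}, which forces $\gr^W_0\p\ls = 0$ hence $\gr^W_0\Lo = 0$ and pins $W_{-1}\Lo$ to be one-dimensional, we conclude $W_{-1}\Lo\cong\Q\psi_{-1}$.

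The main obstacle, such as it is, is bookkeeping rather than depth: one must be careful that "weight" as defined on $\Or$ (degree of $f^{(d)}$ equals $\mathrm{weight}-d$) is genuinely additive under $\circb$ — this is stated just before \S\ref{sectPolardihedradLie} — and that the depth-graded containment $\gr_\dd\Lo\subset\p\ls$ is compatible with the weight grading, so that weight-$0$ information about $\p\ls$ (Lemma \ref{lemweightzeroofpls}) transfers back. One subtlety worth a sentence: $\psi_{-1}$ has weight $-1$ but its depth-$1$ component $x_1^{-2}$ has degree $-2$, matching $-1-1$; and $W_{-1}\p\ls = \Q x_1^{-2}$ already lives entirely in depth $1$, so the only candidate weight-$(-1)$ element of $\Lo$ in each depth is the depth-$d$ component of a scalar multiple of $\psi_{-1}$ plus something in $\dd^2\Lo$ of weight $-1$; but $\dd^2\Lo\cap W_{-1}\Lo$ maps into $\gr_\dd\p\ls$ in weight $-1$ and depth $\geq 2$, which Lemma \ref{lemweightzeroofpls} shows is zero since $W_{-1}\p\ls$ is concentrated in depth $1$. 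This closes the argument; no hard analysis is needed, only the grading compatibilities already established.
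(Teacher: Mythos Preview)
Your weight-arithmetic argument in the first two paragraphs has a genuine gap. The claim that every iterated bracket involving at least one $\psi_{2n+1}$ has weight $\geq 1$ is false: for instance $\ad(\psi_{-1})^3\psi_3 = \{\psi_{-1},\{\psi_{-1},\{\psi_{-1},\psi_3\}\}\}$ has weight $0$, and $\ad(\psi_{-1})^4\psi_3$ has weight $-1$; in general $\ad(\psi_{-1})^k\psi_{2n+1}$ has weight $2n+1-k$, which can be any integer. So weight bookkeeping alone cannot rule out elements of weight $\leq 0$ in $\{\Lo,\Lo\}$ --- the corollary is precisely the nontrivial assertion that all such brackets vanish in $\Lo$. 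The same error recurs in your second paragraph: you analyse $w_1+w_2$ only for $w_1,w_2$ among the \emph{generator} weights $\{-1,3,5,\ldots\}$, but $\{\Lo,\Lo\}$ is spanned by $\{a,b\}$ with $a,b\in\Lo$ arbitrary, not just generators.

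Your third paragraph is the correct argument and is essentially the paper's (one-line) proof. By Theorem~\ref{thmgrdLoispls} one has $\gr_\dd\Lo\subset\p\ls$, and by Lemma~\ref{lemweightzeroofpls} the only piece of $\p\ls$ in weight $\leq 0$ is $\Q x_1^{-2}$, sitting in depth $1$. Given $\xi\in\Lo$ homogeneous of weight $\leq 0$, its leading depth-graded piece lies in $\p\ls$ of the same weight, hence vanishes unless it equals a multiple of $x_1^{-2}$ in depth $1$; subtracting the corresponding multiple of $\psi_{-1}$ (using $\gr_\dd\psi_{-1}=x_1^{-2}$) and iterating through the depths forces $\xi\in\Q\psi_{-1}$. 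The ``in particular'' then follows because $\{\Lo,\Lo\}\subset\dd^2\Lo$ (the bracket adds depths) while $\psi_{-1}\notin\dd^2\Lo$, so $\{\Lo,\Lo\}\cap W_0\Lo=0$. You should discard the first two paragraphs and keep only this.
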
 
\begin{proof} It is enough to notice that $\gr_{\dd} \psi_{-1} = x_1^{-2}$.
\end{proof} 

Let us define $\widehat{\Lo}$ to be the completion of $\Lo$ with respect to the depth filtration.
If the conjecture  $\mathfrak{u}^{\varepsilon} = \p\ls$   is true  then the completion $\widehat{\Lo}$  cannot be free.

\section{Anatomy of the motivic Lie algebra} \label{sectAnatomy} 
We state the main conjecture and show how it implies the existence of anatomical decompositions
for the motivic Lie algebra. 

\subsection{The main conjectures} Since the double shuffle equations are known to be motivic,     $\gm \subset \dmr$.
A reformulation of a  conjecture  due to Zagier and Racinet \cite{R} is
\begin{conj}   \label{ConjZag} $Ê\gm = \dmr.$
\end{conj}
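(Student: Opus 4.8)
This conjecture is open; I outline the natural line of attack and the point at which it breaks down. The inclusion $\gm \subseteq \dmr$ is already known, so the content is the reverse inclusion $\dmr \subseteq \gm$. Since the dimensions $\dim_\Q \gm_n$ in each weight $n$ are fixed by the freeness $(\ref{introgmfree})$, the plan would be to establish the matching upper bound $\dim_\Q \dmr_n \le \dim_\Q \gm_n$ for every $n$; concretely, one must show that every $\Q$-rational solution of the double shuffle equations modulo products $(\ref{dbshfmodprod})$ lies in the image of $\mathrm{Gal}(\MT(\Z))$ acting on $\pi_1^{dR}(\Pro^1 \backslash \{0,1,\infty\})$.

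First I would pass to the depth filtration. Because the double shuffle equations form a torsor over the linearized ones, there is an injection of bigraded Lie algebras $\gr_{\dd}\dmr \hookrightarrow \ls$, so it suffices to bound $\ls$ in each bidegree from above. At this point I would invoke the Broadhurst--Kreimer--Zagier picture of Conjecture \ref{conjBKZls}: granting that $\ls$ is generated by the depth-one elements $x_1^{2n}$ (for $n \ge 1$) together with the exceptional elements $\e(\Sf) \subset \ls_4$, with $H_2(\ls) \cong \Sf$ and $H_i(\ls) = 0$ for $i \ge 3$, the bigraded Poincar\'e series of $\ls$ is pinned down in terms of the cusp-form series $\mathbb{S}(s)$ of $(\ref{cuspgf})$; comparing it against the Poincar\'e series of $\gr_{\dd}\gm$ --- routine but delicate generating-function bookkeeping --- would force the equality weight by weight and depth by depth. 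An alternative would be to route through $\mathfrak{grt}$: since $\gm \subseteq \mathfrak{grt} \subseteq \dmr$ with the second inclusion an equality by Furusho's theorem (see $(\ref{associnclusions})$), it would be enough to prove $\mathfrak{grt} = \dmr$ and combine this with the conjecture of Drinfel'd that $\gm = \mathfrak{grt}$.

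The hard part --- and the reason the conjecture is still open --- is that every step of either route calls for an \emph{upper} bound on a space cut out by the shuffle and stuffle relations, and no technique producing such bounds is known. Establishing $H_i(\ls) = 0$ for $i \ge 3$, or even $H_2(\ls) \cong \Sf$, amounts to proving that no unforeseen relations or generators appear in high depth; the constructions currently available --- period polynomials, the exceptional elements $\e_f$, and the explicit rational solutions $\psi_{2n+1}$ and $\psi_{-1}$ of the present notes --- furnish only \emph{lower} bounds on $\ls$ or \emph{explicit} members of it, and are silent about the absence of others. The only plausible source of a matching upper bound seems to be a genuinely motivic input: a direct proof that each rational solution of $(\ref{dbshfmodprod})$ arises from the motivic Galois action on $\pi_1^{dR}(\Pro^1 \backslash \{0,1,\infty\})$. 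No such argument is known --- which is why the conjecture, closely tied to Zagier's dimension conjecture, remains out of reach; the defining equations of $\dmr$ cannot be solved even in weight $30$ by present methods.
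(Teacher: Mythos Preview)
Your identification of this as an open conjecture is correct: the paper states it as Conjecture~\ref{ConjZag} without proof, attributing it to Zagier and Racinet. Your outline of the natural strategy --- reduce to the depth-graded problem and bound $\ls$ from above via the Broadhurst--Kreimer--Zagier picture --- matches the paper's point of view, and your diagnosis of the obstruction (no known mechanism for upper bounds on spaces of double shuffle solutions) is accurate.

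One slip to correct: you write that the inclusion $\mathfrak{grt} \subseteq \dmr$ is ``an equality by Furusho's theorem''. Furusho's theorem establishes only the \emph{inclusion} (that associator coefficients satisfy the regularised double shuffle relations); the equality $\mathfrak{grt} = \dmr$ is not known and is itself part of the conjectural picture. Your subsequent sentence, where you say one would still need to prove $\mathfrak{grt} = \dmr$, is consistent with this, so the error appears to be a local misstatement rather than a genuine misconception --- but it should be fixed.
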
 
Conjecture \ref{ConjZag} is implied by the stronger conjecture: $\gd= \ls$, which is a versio of a conjecture stated in \cite{IKZ}.
The next conjecture is, in a sense to be explained in \S\ref{sectEisder}, an elliptic version of conjecture \ref{ConjZag}.
\begin{conj} \label{MainConjVersion1}  $\p\ls$ is generated by $\p\ls_1$.
\end{conj}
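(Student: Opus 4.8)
\textbf{Proof proposal for Conjecture \ref{MainConjVersion1}.}

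The plan is to build every element of $\p\ls$ out of the depth-one generators $x_1^{2n}$ and $x_1^{-2}$ by descending induction on the residue filtration $\RR$ introduced in \S9.4, using the element $x_1^{-2} \in \p\ls_1$ (equivalently $\psi_{-1}$ at the depth-graded level) as the tool that raises residue depth. Concretely, I would argue that $\p\ls_r \subset \{\p\ls_1, \p\ls_{r-1}\}$ for every $r \geq 2$, and that moreover the filtered pieces $\RR_k \p\ls_r$ are all hit. The base of the induction is the already-recorded fact $\overline{\p\ls}_1 \cong \overline{\p\p}_1 = \bigoplus_{n\geq 0}\Q\, x_1^{2n-2}$, together with the observation (Lemma \ref{lemweightzeroofpls}) that $\RR_1\overline{\p\ls}_1 = \Q\, x_1^{-2}$ while $\RR_0\overline{\p\ls}_r$ is, by $(\ref{lsisR0pls})$, just the classical linearized double shuffle Lie algebra $\ls$ — so the $\RR_0$-slice of the statement is exactly the (conjectural) statement that $\ls$ is generated by $\ls_1$, which is governed by the Broadhurst--Kreimer--Zagier package of \S8. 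The new content is therefore the claim that the polar part is \emph{forced}: once one knows the polynomial part, the residues along the divisors $x_i = 0$ reduce everything to lower depth, and these residues can be ``undone'' by bracketing with $x_1^{-2}$.

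The engine for the inductive step is Lemma \ref{lempropertiesofRes}, in particular the identities $\Res_{x_{r+s}=0}(f\circb g) = f\circb(\Res_{x_s=0}g) - (\Res_{x_s=0}g)\studot f$ and $\Res_{x_{r+1}=0}(f\circb x_1^{-2}) = \nabla_r f$, combined with Proposition \ref{propRisIharacompatible} which shows $\{\RR_p,\RR_q\}\subset\RR_{p+q}$. Given $f\in\RR_k\p\ls_r$ with $k\geq 1$, I would form $g = R^{[1]}f = \Res_{x_r=0}f$, which by construction lies in $\RR_{k-1}\p\ls_{r-1}$ and is a solution to the linearized double shuffle equations in depth $r-1$ (this uses Theorem \ref{thmcycpoles}/Theorem \ref{thmgrdLoispls}-style control of poles — here at the $\p\ls$ level it is the statement that the top residue of a $\p\ls$ element is again in $\p\ls$, which should follow from the same functional-equation manipulations). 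By the induction hypothesis $g$ is a bracket-expression in depth-one generators; then $\{x_1^{-2}, g\}$ has residue along $x_r = 0$ computable by $(\ref{Resfcircg})$ and $(\ref{Resfnabla})$, and one adjusts $f$ by a suitable such bracket (possibly composed with an integration to invert $\nabla$, since $\Res_{x_{r+1}=0}(f\circb x_1^{-2}) = \nabla_r f$ produces a derivative rather than $f$ itself) so that the corrected element $f'$ lies in $\RR_{k-1}\p\ls_r$. Iterating down to $\RR_0\p\ls_r = \ls_r$ finishes the step modulo the input from \S8.

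The main obstacle is the interface between the two filtrations: bracketing with $x_1^{-2}$ lowers residue depth \emph{by one} at the cost of raising word-depth by one, and the residue operator $(\ref{Resfnabla})$ returns $\nabla_r f$ rather than $f$, so one must solve $\nabla_r f' = (\text{known})$ inside the correct symmetry-constrained space $\overline{\p\ls}_r$ and check that the antiderivative still satisfies the dihedral relations $(\ref{dihedsymmetries})$ and both shuffle and linearized-stuffle equations — i.e. that the obstruction to integrating lands in $\ls_r$, not merely in $\overline{\p\p}_r$. Controlling this obstruction is precisely where the conjectural homological input (Conjecture \ref{conjBKZls}: $H_i(\ls) = 0$ for $i\geq 3$, and the exceptional elements $\overline{\e}_f$ in depth $4$) has to be fed in, and it is the reason the statement is offered as a conjecture rather than a theorem. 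A secondary technical point is handling the single distinguished double pole (the $x_n^{-2}$ term, cf. Remark \ref{rempsi1notunique} and Lemma \ref{lemweightzeroofpls}): one must verify that the ``$\nabla$-inversion'' step never manufactures spurious double poles along the other divisors, which should follow from the cancellation mechanism already exhibited in the proof of Lemma \ref{lemppisclosed}.
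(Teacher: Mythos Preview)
The statement is a \emph{conjecture}; the paper does not prove it in general and only establishes it in depths $\leq 3$ (Theorem~\ref{thmdepth3}) and in a stable limit. Your proposal contains a genuine conceptual error at its base case. You write that the $\RR_0$-slice ``is exactly the (conjectural) statement that $\ls$ is generated by $\ls_1$'', but this is precisely backwards: the Broadhurst--Kreimer conjecture (Conjecture~\ref{conjBKZls}) predicts $H_1(\ls)\cong \ls_1\oplus \e(\Sf)$, so $\ls$ is \emph{not} generated by $\ls_1$ --- there are exceptional generators $\overline{\e}_f\in\ls_4$ coming from cusp forms. The entire difficulty of Conjecture~\ref{MainConjVersion1} (as the paper makes explicit in the remark following Theorem~\ref{thmdepth3}) is that these exceptional depth-four elements, which cannot be reached by brackets of $x_1^{2n}$ alone, should nonetheless be expressible once the polar generator $x_1^{-2}$ is allowed. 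Your descending induction on $\RR_k$ terminates at $\RR_0\p\ls=\ls$ and then appeals to exactly the false statement; but brackets involving $x_1^{-2}\in\RR_1\setminus\RR_0$ land, by Proposition~\ref{propRisIharacompatible}, only in $\RR_1$, so there is no mechanism within your scheme to hit the exceptional $\ls_4$ elements while staying in $\RR_0$. The depth-four case is not a residual detail absorbed by ``homological input'' --- it is the heart of the conjecture.

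There is also a technical gap earlier in the argument: you assert that $g=R^{[1]}f=\Res_{x_r=0}f$ lies in $\p\ls_{r-1}$, but the paper only shows (Lemma~\ref{lemRespreservesStuffle}) that the residue preserves the linearised \emph{stuffle} equations under a hypothesis on intermediate residues; preservation of the shuffle equations and of the restricted pole locus $c_{r-1}^{-1}\Q[y_0,\ldots,y_{r-1}]$ is not established in general. The paper's partial proof in depths $\leq 3$ proceeds, like yours, via the graded pieces $\gr^{\RR}_k\p\ls_d$, but handles each piece by explicit dimension counts: it identifies $\gr^{\RR}_{d-1}\p\ls_d$ with the span of $\ee^{d-1}(x_1^{2n})$, uses Goncharov's computation of $\ls_3$ for $\gr^{\RR}_0$, and for $\gr^{\RR}_1\p\ls_3$ embeds into a space $C_2$ of antisymmetric cyclic-sum-zero polynomials and matches dimensions against explicit highest-weight vectors $h_{a,b}$, with odd period polynomials $\So$ appearing as the kernel. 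No general inductive mechanism is offered beyond depth~3.
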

This  states that the set of solutions to the polar linearized double shuffle equations is 
generated by  $x_1^{2n-2}$, for all $n\geq 0$, under  the linearized Ihara bracket. The relations in the algebra $\p \ls$ are not  known explicitly at present.

We have verified  conjecture \ref{MainConjVersion1} in depths $\leq 3$, and in certain limits.

\begin{thm}\label{thmdepth3}   $\p \ls_d$ is generated by $\p\ls_1$ for $d\leq 3$.
\end{thm}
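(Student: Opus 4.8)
The plan is to verify the statement degree by degree in each depth $d\leq 3$, using the dihedral description of $\overline{\p\ls}_d$ and the residue filtration $\RR$ to reduce to finite linear-algebra computations. In depth $1$, there is nothing to prove: by definition $\overline{\p\ls}_1=\overline{\p\p}_1=\bigoplus_{n\geq 0}\Q\, x_1^{2n-2}$. In depth $2$, I would first recall from Proposition \ref{propdihedbracket} and the explicit formula for the bracket (see the computation following it) that $\{x_1^{2a-2},x_1^{2b-2}\}$ spans a subspace of $\overline{\p\ls}_2$, and then show every element of $\overline{\p\ls}_2$ is such a bracket. The key point is that the dihedral symmetries $(\ref{dihedsymmetries})$ in depth $2$ force $f$ to lie in $\overline{c_2}^{-1}\Q[x_1,x_2]$ with a pole of order $\leq 1$ along $x_1=0$, $x_2=0$, $x_1=x_2$; using the residue map $R_2$ one has an exact-sequence-type description $0\to \RR_1\overline{\p\ls}_2 \to \overline{\p\ls}_2 \xrightarrow{R_2} \RR_0\overline{\p\ls}_1 \cong \ls_1$, whose boundary behaviour is controlled by Lemma \ref{lempropertiesofRes}, in particular $\Res_{x_{r+1}=0}(f\circb x_1^{-2})=\nabla_r f$. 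Since $\RR_0\overline{\p\ls}_2 = \ls_2$ is generated by $\ls_1$ by the classical depth-$2$ result (the exact sequence $0\to \Sf\to \ls_1\wedge\ls_1\to\ls_2$), and the weight $-1$ piece is $\{x_1^{-2},\ \cdot\ \}$ applied to $\ls_1$ via $(\ref{Resfnabla})$, one gets surjectivity onto all of $\overline{\p\ls}_2$.

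In depth $3$, I would run the same residue-filtration induction: write $0 \to \RR_2\overline{\p\ls}_3 \to \overline{\p\ls}_3 \xrightarrow{R_3} \overline{\p\ls}_2$, so that, by Proposition \ref{propRisIharacompatible}, it suffices to check (i) that the image of the bracket-generated subalgebra under $R_3$ hits all of $\overline{\p\ls}_2$ (which follows from depth $2$ together with $\{x_1^{-2},\psi\}$-type elements and Lemma \ref{lempropertiesofRes}), and (ii) that $\RR_2\overline{\p\ls}_3$ — the elements with a double-residue, i.e.\ essentially the weight $-1$ slice $\gr^W_{-1}\overline{\p\p}_3$ intersected with $\p\ls$ — is also generated. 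For (ii) I would use the dihedral polygon picture: an element of $\overline{\p\ls}_3$ with the maximal pole structure is, up to cyclic and reflection symmetry, determined by a single rational function, and I would exhibit it explicitly as an iterated bracket of $x_1^{-2}$ with elements $x_1^{2n-2}$ (here $(\ref{Resfnabla})$ and $(\ref{ResnablaRes})$ let one match residues on both sides). Finally, since depth $3$ is odd, Proposition \ref{propparity} kills the odd-degree part and restricts attention to even degrees, which keeps each graded piece finite-dimensional.

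The main obstacle I anticipate is step (ii) in depth $3$: controlling the double-pole slice $\RR_2\overline{\p\ls}_3$. Unlike the top slice $\RR_0$, which is the classical $\ls$ and is handled by known results, the elements with genuine poles along two divisors of the cyclic orbit require delicate bookkeeping of residues under the Ihara bracket, and one must check that no obstruction survives — i.e.\ that the dihedrally-symmetric rational functions with prescribed double-pole data are \emph{all} realised, not merely that the obvious brackets land in the right space. This is where I expect the real work to lie, and it is presumably where the special properties of the generators (cf.\ remarks \ref{rempsinnotunique}, \ref{rempsi1notunique}) or a direct dimension count against the Eichler–Shimura-type bounds of \S\ref{sectQBK} must be invoked to close the argument.
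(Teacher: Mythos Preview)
Your exact sequences have the filtration backwards. In the paper's convention $\RR_0\p\ls_d$ consists of the \emph{polynomial} solutions (those with no residue at $x_d=0$), so the kernel of $R_d:\p\ls_d\to\Or_{d-1}$ is $\RR_0\p\ls_d=\ls_d$, not the top slice $\RR_{d-1}$. Thus in depth $3$ your sequence should read $0\to\ls_3\to\p\ls_3\xrightarrow{R_3}\cdots$, and the piece you describe as ``$\RR_2\overline{\p\ls}_3$, the elements with a double-residue'' is actually the \emph{easy} top graded piece $\gr^{\RR}_2\p\ls_3$, which by $(\ref{Req4})$ is simply spanned by $\ee^2(x_1^{2n})$. (There are no double poles in $\p\ls$ at all; elements lie in $c_r^{-1}\Q[y_0,\ldots,y_r]$.) Relatedly, the image of $R_3$ does not land in $\p\ls_2$: the paper shows only that $R_3:\gr^{\RR}_1\p\ls_3\to C_2$, the space of antisymmetric polynomials with vanishing cyclic sum, and this target is what makes the dimension count tractable.

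Once the filtration is straightened out, the two genuine difficulties are exactly the pieces you treat as routine. First, $\gr^{\RR}_0\p\ls_3=\ls_3$ is \emph{not} classical in the way $\ls_2$ is: the paper invokes Goncharov's computation of $\dim\ls_3$ via the Voronoi complex for $\mathrm{SL}_3(\Z)$ to get the exact sequence $0\to\Sf\otimes\ls_1\to\mathrm{Lie}_3(\ls_1)\to\ls_3\to 0$, and without this input you cannot conclude that $\ls_3$ is generated in depth one. Second, the middle slice $\gr^{\RR}_1\p\ls_3$ requires real work: the paper uses the $\sll_2$-action to split it as $\ee(\ls_2)\oplus H_{1,3}$ (highest-weight vectors for $\ff$), exhibits explicit elements $h_{a,b}=\tfrac{1}{2b}\{x_1^{2a},\{x_1^{-2},x_1^{2b}\}\}+\tfrac{1}{2a}\{x_1^{2b},\{x_1^{-2},x_1^{2a}\}\}$ in $H_{1,3}$, identifies the kernel of their span mapping to $C_2$ with the space $\So$ of \emph{odd} period polynomials, and closes the argument by a generating-series identity matching the dimension of $C_2^{od}$. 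None of these ingredients appears in your sketch.
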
 

\begin{rem} Conjecture \ref{MainConjVersion1} is most interesting in depth four, since  by $(\ref{lsisR0pls})$, we have 
$\ls \subset \p \ls$, which contains the exotic depth four generators $\ee_f$  of \S\ref{sectExotic}. Thus we expect that for each $f\in \Sf$, $\ee_f\in\p \ls$
is a linear combination of  Ihara brackets of $x_1^{2n-2}$.
\end{rem} 

In particular, since $\gd \subset \ls \subset \p \ls$,  conjecture \ref{MainConjVersion1} implies a non-trivial statement about the depth-graded motivic Lie algebra.

\subsection{Residues and truncation}
Since  an element $\sigma$   of the motivic Lie algebra $\gm$  of weight $N$ does \emph{not} satisfy the double shuffle equations in all depths (i.e. $\dmr$ is not contained in $\p \dmr$) but only in depths $<N$, 
we must `truncate' the algebra $\Lo$.
\begin{defn} Let $\Or_d^+ \subset \Or_d$  denote the graded subspace spanned by  homogeneous rational functions of  degree $\geq 1$, and write $\Or^+ = \prod_{d \geq 1} \Or_d^+$. Let 
us denote by $+$ the projection $\Or\rightarrow \Or^+$, and let $\Lo^+ \subset \Or^+$ be the image of $\Lo$ under $+$. \end{defn} 

If $\xi=(\xi^{(d)})_{d\geq 0}$ is an element of $\Lo$ which is homogeneous of weight $N$, then its image in $\Lo^+$ is the truncation
$\xi^+ = (\xi^{(1)}, \ldots, \xi^{(N-1)},0,0, \ldots )$. In particular, $\psi^+_{-1}=0$. 

Now
consider, for every $d\geq 1$, the map
\begin{eqnarray} \label{resdmap}
\Res^{(d)} : \Lo^+  & \To &  \Or_{d-1} \\
\xi & \mapsto & \Res_{x_d=0} (\xi^{(d)})  \ , \nonumber
\end{eqnarray} 
which to the depth $d$ component of an  element  in $\Lo^+$, written as a rational function of $x_1,\ldots, x_d$, associates its residue along $x_d=0$.
Define the total residue by
\begin{equation}
\Res= \sum_{d\geq 1} \Res_d :  \Lo^+  \To \Or\ .
\end{equation}
The sum is finite by the above remarks. 

\begin{defn}Let us define $\kappa= \ker( \Res: \Lo^+ \To \Or)$.
\end{defn}

\begin{lem}  \label{lemkerresispoly} $\kappa$ is the subspace of $\Lo^+$  of elements which have no poles at all.
\end{lem}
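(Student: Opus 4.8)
The plan is to prove the statement by a direct residue computation, exploiting the special pole structure of the generators established in the preceding results.

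First I would unwind the definitions. An element $\xi \in \Lo^+$ lies in $\kappa$ iff $\Res^{(d)}(\xi) = \Res_{x_d=0}(\xi^{(d)}) = 0$ for every $d\geq 1$. We want to show this is equivalent to $\xi^{(d)}$ having \emph{no} poles at all, for every $d$. One direction is trivial: if $\xi^{(d)}$ is a polynomial for all $d$, then all its residues vanish, so $\xi \in \kappa$. The content is the converse.

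For the converse, the key input is Theorem \ref{thmcycpoles}: if $\xi^{(1)}, \ldots, \xi^{(d-1)}$ have no poles, then $\xi^{(d)}$ can only have poles along the main cyclic orbit $x_1=0,\, x_2-x_1 = 0, \, \ldots,\, x_d-x_{d-1} = 0,\, x_d = 0$. Combined with Proposition \ref{propCancelofpoles} (for $\xi$ of non-negative weight, the poles are at most simple), and the dihedral/cyclic symmetry of $\gr_\dd \xi^{(d)} \in \p\ls_d$ coming from the corollary to Theorem \ref{thmcycpoles}, a simple pole along one divisor of the cyclic orbit forces a simple pole along all of them, in particular along $x_d = 0$. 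So I would argue by induction on $d$: suppose $\xi^{(1)},\ldots,\xi^{(d-1)}$ are polynomials (the base case $d=1$ being vacuous, as $\xi^{(1)}$ is always a polynomial of degree $\geq 1$ since $\xi \in \Lo^+$ and weight considerations forbid $x_1^{-2}$ in the truncation). Then $\xi^{(d)}$ has at worst simple poles along the cyclic orbit; if it had a genuine pole, then by cyclic symmetry it would have a nonzero residue along $x_d = 0$, i.e. $\Res^{(d)}(\xi) \neq 0$, contradicting $\xi \in \kappa$. Hence $\xi^{(d)}$ is a polynomial, completing the induction.

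The main obstacle, I expect, is justifying the step where "a pole along some divisor of the cyclic orbit $\Rightarrow$ nonzero residue along $x_d = 0$." This requires that the residue map $\Res_{x_d = 0}$ does not annihilate the depth-graded part $\gr_\dd \xi^{(d)}$ — that is, that a cyclically-symmetric rational function in $\p\ls_d$ with a simple pole along $x_d = 0$ genuinely has nonzero residue there. This should follow from the structure of $\p\ls$ (the residue filtration $\RR$ and Remark \ref{remarkonRRfiltration}: $\RR_0 \p\ls = \ls$ consists precisely of the polynomial solutions, so any $\gr^W$-nontrivial pole is detected by $R_d = \Res_{x_d=0}$), but one must be careful that passing from the full $\xi^{(d)}$ to its leading depth-graded part does not lose the pole — it does not, since by the inductive hypothesis $\xi^{(d)}$ \emph{is} its own leading depth-graded component once $\xi^{(1)},\ldots,\xi^{(d-1)}$ vanish; here we only assumed they are polynomials, not that they vanish, so a small additional argument (subtracting off a polynomial element of $\Lo^+$ matching $\xi$ in lower depths, which does not change any residue) reduces to the vanishing case. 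Modulo this bookkeeping, the proof is short.
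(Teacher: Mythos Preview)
Your approach is essentially the paper's: induction on depth, invoking Theorem~\ref{thmcycpoles} to restrict poles to the cyclic orbit, then using dihedral symmetry to conclude that vanishing of the residue along $x_d=0$ forces vanishing along the whole orbit.

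The one place where you diverge is the ``bookkeeping'' step. You worry that the corollary giving $\gr_\dd \xi^{(d)} \in \p\ls_d$ requires $\xi^{(1)},\ldots,\xi^{(d-1)}$ to \emph{vanish}, not merely to be polynomial, and you propose to fix this by subtracting an element of $\Lo^+$ matching $\xi$ in lower depths. That subtraction is problematic: it is not clear such an element of $\Lo^+$ exists (you would need to know that every polynomial solution to the double shuffle equations in depths $<d$ lifts to $\Lo^+$, which is close to what one is trying to prove in \S\ref{sectAnatomy}). The paper avoids this entirely: since $\xi^{(1)},\ldots,\xi^{(k)}$ are polynomials, the right-hand sides of the stuffle equations in depth $k+1$ are polynomial, so $\xi^{(k+1)}$ satisfies the linearised double shuffle equations \emph{modulo polynomials}, and is therefore dihedrally symmetric \emph{modulo polynomials}. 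That weaker symmetry is already enough: the poles of $\xi^{(k+1)}$ and of its dihedral translates agree, so if the residue along $x_{k+1}=0$ vanishes, the residues along all divisors in its dihedral orbit (namely the whole cyclic orbit $x_i=x_{i+1}$) vanish too. No subtraction is needed.
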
 

\begin{proof} Let $\xi \in \ker( \Res)$.   Since $\xi \in \Lo^+$, $\xi^{(1)}$ is a polynomial. Suppose by induction that 
$\xi^{(1)}, \ldots, \xi^{(k)}$ are polynomials. Then by theorem \ref{thmcycpoles}, $\xi^{(k+1)}$ has at most simple poles along the main cyclic orbit $x_i=x_{i+1}$.
Consider the double shuffle equations modulo products in depth $k+1$. The right-hand side of the stuffle equations is a polynomial, and so it follows that  $\xi^{(k+1)}$ satisfies the linearized double shuffle equations  modulo polynomials, and is therefore dihedrally symmetric modulo polynomials. Thus if  $\Res^{(k+1)} \xi^{(k+1)}=0$ it follows that $\xi^{(k+1)}$ is a polynomial, since 
the orbit of $x_{k+1}=0$ under the dihedral symmetry group contains all divisors of the form $x_i=x_{i+1}$.
\end{proof}

\subsection{Anatomy of the motivic Lie algebra}

\begin{thm} If conjecture \ref{MainConjVersion1} is true then 
\begin{equation} \label{gminkernel} \gm \subset \ker( \Res: \Lo^+ \To \Or)\ . 
\end{equation}
If Zagier's conjecture $\gm=\dmr$ is   true then 
\begin{equation} \label{gmcontainsnkernel} \ker( \Res: \Lo^+ \To \Or)  \subset \gm\ .
\end{equation}
If both conjectures hold, there is an exact sequence of bigraded vector spaces
\begin{equation} \label{gmiskernel}
0 \To \gm \To \Lo^+  \overset{\Res}{\To} \Or\ .
\end{equation}
 In other words, the motivic Lie algebra is precisely the kernel of the residue map.
\end{thm}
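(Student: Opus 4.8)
The plan is to establish the three statements separately, since the third is a formal consequence of the first two once we know that the displayed maps are maps of bigraded vector spaces. Throughout we regard $\gm$ as embedded in $\Or$ via its image in the tensor algebra and the isomorphism $(\ref{wordstopolys})$; an element $\sigma \in \gm$ of weight $N$ satisfies the double shuffle equations modulo products only in depths $1 \leq d \leq N-1$, so its image naturally lands in $\Lo^+$ once we know $\gm \subset \Lo$ (or, more precisely, once we know the truncation of every $\sigma \in \gm$ lies in $\Lo^+$). The key structural input is Lemma \ref{lemkerresispoly}: $\kappa = \ker(\Res \colon \Lo^+ \to \Or)$ is exactly the set of \emph{polynomial} elements of $\Lo^+$, i.e.\ those elements of $\Lo$ (truncated) with no poles at all.

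For the inclusion $(\ref{gminkernel})$, I would argue as follows. Assume Conjecture \ref{MainConjVersion1}, so $\p\ls$ is generated by $\p\ls_1 = \bigoplus_{n\geq 0}\Q x_1^{2n-2}$ under the linearized Ihara bracket. By Theorem \ref{thmgrdLoispls} we have $\gr_\dd \Lo \subset \p\ls$, and in fact the generators $\gr_\dd \psi_{2n+1} = x_1^{2n}$ and $\gr_\dd \psi_{-1} = x_1^{-2}$ show that $\gr_\dd \Lo$ contains all of $\p\ls_1$; hence, assuming the conjecture, $\gr_\dd \Lo = \p\ls$. Now take $\sigma \in \gm$ of weight $N$; it is a solution to the double shuffle equations modulo products in all depths $< N$, and its depth-graded pieces $\gr_\dd \sigma$ lie in $\ls = \RR_0 \p\ls = \gr_\dd \Lo \cap \{\text{polynomials}\}$. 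Arguing by induction on depth, one lifts $\sigma$ through $\Lo^+$: given that $\sigma$ agrees with some $\xi \in \Lo$ in depths $< d$, the difference $\sigma^{(d)} - \xi^{(d)}$ is a \emph{polynomial} solution of the linearized double shuffle equations in depth $d$ (using Theorem \ref{thmcycpoles} to control poles, exactly as in the proof of Lemma \ref{lemkerresispoly}, together with the torsor structure over $\ls$), hence lies in $\ls_d \subset \gr_\dd^{(d)}\Lo$, so it can be absorbed by modifying $\xi$ by an element of $\dd^d\Lo$ whose leading term is that polynomial. At the end $\sigma$ is realized as (the truncation of) an element of $\Lo$ with \emph{no poles}, hence $\sigma \in \kappa$ by Lemma \ref{lemkerresispoly}. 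For $(\ref{gmcontainsnkernel})$, assume Zagier's conjecture $\gm = \dmr$. An element $\xi \in \kappa$ is, by Lemma \ref{lemkerresispoly}, a polynomial element of $\Lo^+$; being in $\Lo \subset \p\dmr$ it satisfies the double shuffle equations modulo products, and having no poles it lies in the genuine (non-polar) solution space in every depth $< N$, so its extension by zero is a solution of the double shuffle equations modulo products in all depths, i.e.\ an element of $\dmr = \gm$.

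Finally, $(\ref{gmiskernel})$ follows immediately: under both conjectures $(\ref{gminkernel})$ and $(\ref{gmcontainsnkernel})$ give $\gm = \kappa = \ker(\Res)$, and since $\Res$ is a morphism of bigraded vector spaces (it preserves weight, lowering depth by one) the sequence $0 \to \gm \to \Lo^+ \overset{\Res}{\to} \Or$ is exact at $\gm$ and at $\Lo^+$ by definition of the kernel.

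\textbf{Main obstacle.} The delicate point is the induction in $(\ref{gminkernel})$: one must know that the "correction term" $\sigma^{(d)} - \xi^{(d)}$, a priori a rational function in $\Or_d$ with poles, is actually a \emph{polynomial} lying in $\ls_d$, and then that it can be hit by the leading term of some element of $\dd^d \Lo$. The first half is precisely where Theorem \ref{thmcycpoles} (poles only along the main cyclic orbit) plus the dihedral-symmetry argument of Lemma \ref{lemkerresispoly} is essential, together with the fact that the right-hand sides of the stuffle equations are polynomial. The second half is exactly the content of Conjecture \ref{MainConjVersion1} combined with Theorem \ref{thmgrdLoispls} (that $\gr_\dd\Lo$ surjects onto $\p\ls$, hence onto $\ls = \RR_0\p\ls$ in each depth) — this is where the hypothesis enters in an unavoidable way, and no part of the argument can dispense with it. A secondary technical issue is checking that the lifting can be done compatibly with the bigrading (weight and depth) so that the final exact sequence is an exact sequence of \emph{bigraded} vector spaces, but this is routine given that all the maps involved ($\circb$, $\Res$, the projection $+$) are visibly bigraded.
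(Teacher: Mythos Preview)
Your overall strategy matches the paper's, and the argument for $(\ref{gmcontainsnkernel})$ is correct. But there is a slip in the induction for $(\ref{gminkernel})$: the correction term $\sigma^{(d)} - \xi^{(d)}$ is \emph{not} in general a polynomial. The element $\xi$ is built from $\Lo$, and the generators $\psi_{2n+1}$ already acquire genuine poles starting in depth $3$; so $\xi^{(d)}$ will typically have poles along the main cyclic orbit. Theorem \ref{thmcycpoles} tells you exactly this, not that the poles vanish. The dihedral-symmetry argument from the proof of Lemma \ref{lemkerresispoly} does not apply here, because that argument uses the hypothesis $\Res^{(d)}\xi^{(d)}=0$ to kill the cyclic poles, and you have no such hypothesis at this stage of the induction.

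What one actually deduces is that $\sigma^{(d)} - \xi^{(d)} \in \p\ls_d$, the \emph{polar} solutions with cyclic poles, not $\ls_d$. This is precisely why the full strength of Conjecture \ref{MainConjVersion1} is needed: it asserts that $\p\ls$ (not merely $\ls$) is generated by $\p\ls_1$, so that together with Theorem \ref{thmgrdLoispls} one gets $\gr_\dd\Lo = \p\ls$ and the correction can be lifted to an element of $\dd^d\Lo$. Once you replace ``$\ls_d$'' by ``$\p\ls_d$'' throughout the induction step, your argument is the paper's argument verbatim.
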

\begin{proof} Suppose  that conjecture \ref{MainConjVersion1} is true.
Let  $\sigma \in \gm $ be of weight $N$. Since $\sigma$ satisfies the double shuffle equations in depths $<N$, we can suppose by induction on $1\leq d<N$ that we have found $\alpha_d \in \Lo$ with 
$\sigma^{(i)}=  \alpha^{(i)}_d $ for $1 \leq i < d$. Then $\alpha_d$  has no poles in depths $\leq d-1$,  since it coincides with  (the polynomial representation of) $\sigma$ in this range, and therefore by theorem  \ref{thmcycpoles}
the component $\alpha^{(d)}_d$ has poles of cyclic type only. Since $\sigma$ has no poles, we deduce that $ \sigma^{(d)} - \alpha^{(d)}_d \in \p\ls_d$. 
Conjecture \ref{MainConjVersion1} implies that $\p \ls$ is generated in depth $1$, and hence by choosing generators,  there is a surjective map $\Lo \rightarrow \p\ls_d$ (since this is  true for $d=1$). Therefore we can find an element $e_d \in \dd^d \Lo$, which is not unique, such that $e_d^{(d)}= \sigma^{(d)}- \alpha^{(d)}_d $. Setting $\alpha_{d+1} = \alpha_d + e_d$ completes the induction step. The induction stops when $d$ attains the weight $N$ of $\sigma$. 

For the second part, let $\sigma$ be a homogeneous element  in $\Lo^+$ of weight $N$ such that $\Res_d\,  \sigma^{(d)} =0$ for all $d$. We know by lemma \ref{lemkerresispoly} that $\sigma \in \Lo^+$ is a polynomial in all depths, and satisfies the double shuffle equations in depths $<N$. It  therefore defines an element in $\dmr$. By Zagier's conjecture  $\gm = \dmr$, we deduce that $\sigma \in \gm$.
\end{proof} 

Equation $(\ref{gminkernel})$ suggests that every zeta element $\sigma_{2n+1}$ of weight $2n+1$  has a decomposition into  Lie brackets of odd weight in  
$\psi_{-1}$ and $\psi_{2m+1},$ for $m\geq 1$.  Examples are given in \S \ref{ExamplesAnatomy1} below.
 
  Equation $(\ref{gmiskernel})$ suggests that,  conversely, every such linear combination of Lie brackets which has no poles should be an element of $\gm$. 
Thus one should be able to  construct 
generators $\sigma_{2n+1}$ of $\gm$ by taking linear combinations of Lie brackets of elements $\psi$ with no residues.
Since Lie brackets of elements
$x_1^{2n}$, for $n\geq 1$, have no residues at all, we can furthermore assume that all commutators in the anatomy contain at least one $\psi_{-1}$.
An unexpected consequence is that this significantly rigidifies the space of possible generators of $\gm$, see \S\ref{sectDualityandanatomy}.\footnote{An unconditional  `anatomical decomposition'  in  a geometric context was subsequently stated in \cite{Sigma},  remark 3.9, and proved in \cite{MMV}, theorem 20.4. It is not clear what the precise relationship between this `geometric' anatomy, and  the conjectural decomposition  of zeta elements in $\Lo^+$ is. }

\subsection{Examples in low weights}  \label{ExamplesAnatomy1}
We have the following complete decompositions of the generators of the motivic Lie algebra up to weight 7:
\begin{eqnarray} 
\sigma_3 & = & \Big(\psi_3\Big)^+ \nonumber \\
\sigma_5 & = & \Big(\psi_5 - {1 \over 60} \{ \psi_{-1},\{  \psi_{-1} , \psi_{7} \}\} -  {1 \over 5} \{ \psi_{3},\{  \psi_{3} , \psi_{-1} \}\}\Big)^+ \nonumber \\
 \sigma_7  & = & \Big(\psi_7 -  {\frac {1}{112}}\,\psi_{_{\{ -1,-1,9 \}}} - \,{ 1 \over 14} \psi _{\{ 5,3,
-1 \}} -{\frac {29}{224}}\,\psi _{\{ 3,5,-1 \}}  \nonumber \\
& & +{\frac {1}{
24192}}\,\psi_{_{\{ -1,-1,-1,-1,11 \}}} +{\frac {1}{50400}}\,\psi
 _{\{ -1,3,-1,-1,7 \}} -{\frac {1}{20160}}\,\psi _{\{ -1,7,-1,
3,-1 \}} \nonumber \\
& &   +{\frac {41}{16128}}\,\psi _{\{ 3,-1,-1,7,-1 \}} -{
\frac {143}{24192}}\,\psi _{\{ -1,5,-1,5,-1 \}} +{\frac {73}{
8064}}\,\psi _{\{ 5,-1,-1,5,-1 \}} \nonumber \\
&&  -{\frac {1187}{26880}}\,\psi
 _{\{ 3,3,-1,3,-1 \}} +{\frac {8951}{403200}}\,\psi _{\{ -1,3,
3,3,-1 \}}\Big)^+ \nonumber 
\end{eqnarray}
In the last example we use the shorthand notation
$$\psi_{\{ a_1,a_2,\ldots, a_n \}} \hbox{ to denote }  \{\psi_{a_1}, \{ \psi_{a_2}, \ldots, \{ \psi_{a_{n-1}}, \psi_{a_n}\}  \cdots     \}\ .$$

\subsection{Duality  and anatomy} \label{sectDualityandanatomy}
The projective line minus three points $\Pro^1\backslash \{0,1,\infty\}$ admits an involution
$z \mapsto 1-z$ which preserves the standard tangential basepoints at $0$ and $1$.
As a result, the motivic Lie algebra $\gm$ has an induced involution which is known as the duality relation. It follows from it  that for any element $\sigma \in \gm$ of weight $2n+1$, the components 
$\sigma^{(r)}$, for all $r$  are explicitly determined from those of depth $r\leq n$. Symbolically,
\begin{equation}
\sigma_{2n+1}^{(r)} \leftrightarrow \sigma_{2n+1}^{(2n-r)}
\end{equation}Ê
In general it is not clear how to deduce the duality relation from the double shuffle equations as presented earlier, and it is far from obvious in the examples for $\sigma_3,\sigma_5,\sigma_7$ above
that the duality is indeed satisfied.

In practice, this means that the anatomical decompositions of generators $\sigma_{2n+1}$ need only be determined up to half of their weight: the rest can be reconstructed by duality. Thus the canonical generators
$\sigma_3,\ldots, \sigma_{9}$ can be expressed more compactly as:
\begin{eqnarray}\label{anatomycompact}
\sigma_3 & \equiv & \psi_3 \nonumber   \\
\sigma_5 & \equiv &  \psi_5 \nonumber \\
\sigma_7 & \equiv & \psi_7 - {1 \over 112} \{ \psi_{-1},\{  \psi_{-1} , \psi_{9} \}\} -  {1 \over 14} \{ \psi_{5},\{  \psi_{3} , \psi_{-1} \}\} - {29 \over 224} \{ \psi_{3},\{  \psi_{5} , \psi_{-1} \}\} \nonumber \\
\sigma_9 & \equiv & \psi_9 - {1 \over 180} \{ \psi_{-1},\{  \psi_{-1} , \psi_{11} \}\} -  {7 \over 180} \{ \psi_{7},\{  \psi_{3} , \psi_{-1} \}\}  \nonumber \\
& &   \qquad \qquad- {113 \over 180} \{ \psi_{3},\{  \psi_{7} , \psi_{-1} \}\}  - {1 \over 16} \{ \psi_{5},\{  \psi_{5} , \psi_{-1} \}\}   
\end{eqnarray}
The symbol $\equiv$ means that the left and right-hand sides of the equation for $\sigma_{2n+1}$  coincide in depths $r \leq n$.
Only starting from  weights $11$ are quintuple brackets required. These formulae  reproduce all the coefficients of 
$\sigma_3,\ldots, \sigma_9$ in Drinfel'd's associator. This representation
is  extremely compact: it replaces 170 numbers with just 7.

An interesting new phenomenon occurs in weight $11$.
As we shall discuss later,  a consequence of the  Ihara-Takao relation $(\ref{IharaRel})$ is the identity
\begin{multline} 
 \{ \psi_{3},\{  \psi_{9} , \psi_{-1} \}\}- \{ \psi_{9},\{  \psi_{3} , \psi_{-1} \}\}    \equiv  3   \big( \{ \psi_{7},\{  \psi_{5} , \psi_{-1} \}\}-  \{ \psi_{5},\{  \psi_{7} , \psi_{-1} \}\}    \big) \\  \mod (\hbox{depth } \geq 5) 
 \end{multline}
A priori, the  element $\sigma_{11}$ is only defined up to multiples of $\{ \sigma_3,\{ \sigma_5,\sigma_3\}\}$, and is therefore ambiguous in depth $3$.  However, we can restrict ourselves in the 
anatomical decomposition to brackets $\{\psi_{a_1}, \{ \psi_{a_2}, \psi_{a_3}\}\}$, with at least one of $\{a_1,a_2,a_3\}$ equal to $-1$. This  uniquely fixes a generator $\sigma_{11}$, which we can write  
\begin{eqnarray}
 \sigma_{11} & = & \psi_{11} - {1 \over 264} \{ \psi_{-1},\{  \psi_{-1} , \psi_{13} \}\} -  {241 \over 2112} \{ \psi_{9},\{  \psi_{3} , \psi_{-1} \}\}  \nonumber \\
 &&   \qquad \qquad   + {479 \over 2112} \{ \psi_{7},\{  \psi_{5} , \psi_{-1} \}\} - {2053 \over 6336} \{ \psi_{5},\{  \psi_{7} , \psi_{-1} \}\}  +   \quad  \{\hbox{depth } \geq 5\}  \nonumber
 \end{eqnarray}
In conclusion, we obtain\footnote{This was subsequently extended to all weights in \cite{Sigma}} a \emph{canonical} zeta  element in weight 11, which is uniquely determined by the property
$\sigma_{11}Ê(\zetam(2,7,2)) = 25/3$.

\section{Derivations from Eisenstein series and equations} \label{sectEisder}

\subsection{Geometric derivations}
Let $T(a, b) =T(\Q a\oplus \Q b) $ denote the tensor algebra generated by elements $a,b$. It is the 
$\Q$-vector space spanned by words in $a$ and $b$, equipped with the concatenation product, and the coproduct for which $a$ and $b$ are primitive.
It is bigraded by weight and depth\footnote{called $B$-degree in \cite{Sigma}}, where 
 the weight  is the grading for which $a$ and $b$ have degree $1$, and the depth grading 
gives $a$ degree $0$, and $b$  degree $1$.

 We call a derivation on $T(a, b) $  a  linear map  satisfying
  \begin{eqnarray} 
  \delta: T(a, b)  & \rightarrow & T(a, b)  \nonumber \\ 
  \delta ( u \cdot v)  & =  & \delta(u) \cdot  v + u \cdot \delta(v)  \quad \hbox{ for all } u, v \in \Q \langle a,b\rangle \nonumber  \\
 \Delta  \delta   & = &  (\delta\otimes id + id \otimes \delta) \Delta \nonumber 
  \end{eqnarray}
  The set of derivations on $T(a, b) $ is a Lie algebra for the bracket $\{\delta_1, \delta_2\}= \delta_1 \delta_2 - \delta_2 \delta_1$.
      Let $\pi_a : T(a, b)  \rightarrow \Q a$ denote the projection onto the word $a$.  
  Consider the following subspace of the set of all derivations on $T(a, b) $
  \begin{equation}
  \Der = \{ \delta  \hbox{ derivation on }T(a, b)  \hbox{ such that }  \delta(ab)=\delta(ba) \hbox{ and } \pi_a \delta(b)=0\}
  \end{equation}
  It is easy to verify that $\Der$ is stable under $\{ , \}$, and is therefore  a Lie subalgebra of the space of derivations of $T(a, b) $. Furthermore, we have  an injective map:
\begin{eqnarray}
\Der & \To &T(a, b)  \nonumber \\
\delta & \mapsto & \delta(a) \nonumber 
\end{eqnarray}
In other words, any element $\delta \in \Der$ is uniquely determined by its action on the word $a$. This follows from the equation 
$[\delta(a), b] + [a, \delta(b)]=0$, which determines $\delta(b)$ up to a  commutator of $a$. Since $\delta(b)$ is primitive,  and since the commutator of $a$ in $T(a, b) $ is   $a^n \Q$, the element
$\delta(b)$ is well-defined up to a multiple of $a$, which is zero by  $\pi_a \delta(b)=0$. We deduce that there is an injective map
\begin{equation} \label{dinj}
\delta \mapsto \delta(a) : \Der {\To} \Lie_{\!\Q} ( a, b)
\end{equation}
where $\Lie_{\!\Q}(a, b)$ is the Lie algebra of primitive elements in $T(a, b) $.

The algebra  $\Der$ is bigraded by  weight  and depth in the natural way;  the weight of $\delta$ is the weight of $\delta(a)$ minus one, and the depth of $\delta$ is the depth of $\delta(a)$.
The following Lie subalgebra of $\Der$ was first introduced  by Tsunogai \cite{Tsunogai}.

\begin{defn}  For every $n\geq 0$ there exists a unique element  $\check{\varepsilon}_{2n} \in \Der$  defined by
\begin{equation}
\check{\varepsilon}_{2n} (a) = \ad(a)^{2n} (b)
\end{equation} 
Let $\eis$  denote the graded Lie subalgebra of $\Der$ generated by the $\check{\varepsilon}_{2n}$.
\end{defn}

The Lie algebra $\eis$ will be called the Lie algebra of geometric derivations. The elements $\check{\varepsilon}_{2n}$ satisfy many  relations which were 
 studied by Pollack in \cite{P}. In particular, one readily checks that the element $\check{\varepsilon}_2$ is central in $\Der$
 \begin{equation} \label{eps2central}
 \{ \check{\varepsilon}_2,  \delta \} =0 \quad \hbox{ for all } \delta \in \Der\ .
 \end{equation}
 In particular, we  have a decomposition
 $$\eis \cong \eiso \times  \Q \check{\varepsilon}_2$$
where $\eiso\subset \eis$ is the Lie subalgebra of $\eis$ spanned by $\check{\varepsilon}_0, \check{\varepsilon}_4, \check{\varepsilon}_6, \ldots $.
Note that it is sometimes convenient to include $\check{\varepsilon}_0$ in the definition of $\eis, \eiso$, sometimes not, depending on the context.

\subsection{Representation as rational functions} \label{SectRepDerasratfunct} The Hopf algebra
$T( a,b)$ is isomorphic to the associated graded for the weight filtration of the Hopf algebra considered in \S\ref{sectshuffleequations}.  By $(\ref{wordstopolys})$, we  can thus represent elements of  $T( a,b)$  as  
polynomials:
\begin{eqnarray} \label{abtopoly}
 \rho: \gr_{\dd}^r   T( a, b)& \To &  \Q[y_0,\ldots, y_r] \\
a^{k_0} b a^{k_1} \ldots b a^{k_r}  & \mapsto &  y_0^{k_0} \ldots y_r^{k_r} \nonumber 
\end{eqnarray}
The action of $\Der$ on the element $a\in T( a,b)$  defines an injective map
\begin{eqnarray} \label{rhoronder}
\rho^{(r)}: \Der & \To &   \Q[y_0,\ldots, y_r]  \\
\delta & \mapsto & \rho(\gr^{r}_{\dd} \delta(a))  \ .\nonumber
\end{eqnarray} 

\begin{lem}  \label{lemimageofDer} The image of   $\Der$ under $(\ref{rhoronder})$  is the set of  $f\in \Q[y_0,\ldots, y_r]$ such that

1). $f$ satisfies the shuffle equations modulo products,

2). $f$ satisfies the cyclic symmetry
\begin{equation} \label{fcyclicsym} 
f(y_0,y_1,\ldots, y_r)  = f(y_1,y_2,\ldots, y_r, y_0)\ . 
\end{equation}
\end{lem}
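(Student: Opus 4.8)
The plan is to characterize the image of $\Der$ under $(\ref{rhoronder})$ by unwinding the three defining conditions on a derivation $\delta$ and translating each into a property of the polynomial $f = \rho(\gr^r_{\dd}\delta(a))$.

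First I would recall from $(\ref{dinj})$ that $\delta$ is determined by $\delta(a)$, and that the map sends $\delta \mapsto \delta(a)$, a \emph{primitive} element of $T(a,b)$; primitivity is precisely the shuffle-modulo-products condition after applying $(\ref{abtopoly})$ (by the discussion in \S\ref{sectdualshuff} applied to the associated graded Hopf algebra), which accounts for property 1). So the content is to show that the remaining derivation constraints — namely $\delta(ab) = \delta(ba)$ and $\pi_a\delta(b) = 0$ — are equivalent to the cyclic symmetry $(\ref{fcyclicsym})$ on $f$, together with the automatic constraint that $f$ is a polynomial (no $y_0$-linear obstruction). The key computation is to expand $\delta(ab) - \delta(ba) = \delta(a)b + a\delta(b) - \delta(b)a - b\delta(a) = [\delta(a),b] - [\delta(b),a] \cdot(-1)$, more carefully $\delta(ab)-\delta(ba) = [\delta(a),b] + [a,\delta(b)]$, which already appeared in the excerpt as the relation determining $\delta(b)$. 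Setting this to zero says $[\delta(b),a] = [\delta(a),b]$, and combined with $\pi_a\delta(b) = 0$ this pins down $\delta(b)$ uniquely in terms of $\delta(a)$.

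The main step is then to show that this relation between $\delta(a)$ and $\delta(b)$, when pushed through $(\ref{abtopoly})$, forces exactly the cyclic symmetry of the polynomial attached to $\delta(a)$. Concretely, writing $f$ for the image of $\delta(a)$, I would compute the image $g$ of $\delta(b)$ and observe that $[a,\delta(b)] + [\delta(a),b] = 0$ expands, under the word-to-polynomial dictionary $e_0^{a_0}e_1\cdots e_1 e_0^{a_r} \mapsto y_0^{a_0}\cdots y_r^{a_r}$, into a telescoping identity; the cyclic shift $(y_0,\dots,y_r)\mapsto(y_1,\dots,y_r,y_0)$ arises precisely because prepending/appending the letter $a$ to a word shifts indices, while conjugating by $b$ in $[\cdot,b]$ realizes the rotation. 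One then checks conversely that any polynomial $f$ satisfying 1) and 2) is of the form $\rho(\delta(a))$ by \emph{defining} $\delta$ on $a$ via $f$, solving for $\delta(b)$ using the cyclic symmetry to guarantee consistency (the cyclic symmetry is exactly what makes the equation $[\delta(a),b]+[a,\delta(b)]=0$ solvable with $\pi_a\delta(b)=0$), and verifying $\delta$ extends to a derivation — which is automatic once it is defined on generators with $\delta(b)$ primitive, and $\delta(ab)=\delta(ba)$ holds by construction.

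The hard part will be bookkeeping the index shifts correctly: matching the algebraic operation ``$\delta(a)b$ versus $b\delta(a)$ versus $a\delta(b)$ versus $\delta(b)a$'' with the corresponding substitutions $y_i \mapsto y_{i+1}$, insertions of a $0$ variable, and the signs in the commutators, and confirming that the net effect of $\delta(ab)=\delta(ba)$ collapses to the single clean equation $(\ref{fcyclicsym})$ rather than something weaker or stronger. I would organize this by first treating the depth-graded piece in a fixed depth $r$, expanding both $\delta(ab)$ and $\delta(ba)$ as $\Q$-linear combinations of words $a^{k_0}ba^{k_1}\cdots ba^{k_{r}}$ (note $ab$ and $ba$ each raise the depth by one relative to working inside $\delta(a)$, with the $b$ from $ab$ landing to the right and from $ba$ to the left), and then reading off coefficients. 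Once the two translations are seen to agree iff $f$ is cyclic, both inclusions follow, and the polynomiality of the image (as opposed to a priori rational functions) is automatic since $\delta$ acts on the polynomial algebra $T(a,b)$ itself.
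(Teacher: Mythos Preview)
Your approach is correct and essentially the same as the paper's: both reduce to translating the relation $[a,\delta(b)]+[\delta(a),b]=0$ through the dictionary $(\ref{abtopoly})$ and observing that solvability for $\delta(b)$ as a polynomial is precisely the cyclic symmetry. The only remark is that the ``hard bookkeeping'' you anticipate is in fact a single line: under $(\ref{abtopoly})$, left/right multiplication by $a$ becomes multiplication by $y_0$ resp.\ $y_{r+1}$, while $[\delta(a),b]$ becomes $f(y_0,\ldots,y_r)-f(y_1,\ldots,y_{r+1})$, so the equation reads $(y_0-y_{r+1})g(y_0,\ldots,y_{r+1}) = f(y_1,\ldots,y_{r+1}) - f(y_0,\ldots,y_r)$, and setting $y_{r+1}=y_0$ gives $(\ref{fcyclicsym})$ directly---no telescoping is needed.
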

\begin{proof}  An element $\delta\in \Der$ is uniquely determined by $\delta(a)$.  Since $a$ is primitive for $\Delta_{\sha}$, so is $\delta(a)$. Equivalently,  $\delta(a)$ satisfies the shuffle equations modulo products,  which proves the first part. Now  write $f =  \rho(\gr^{r}_{\dd} \delta(a))$ and $g = \rho(\gr^{r+1}_{\dd} \delta(b))$. The equation
$$[a, \delta(b)] + [ \delta(a),b]=0$$
  is equivalent by $(\ref{abtopoly})$ to the equation
$$ (y_0-y_{r+1}) g(y_0,\ldots, y_{r+1}) =  f(y_1, y_2, \ldots, y_{r+1}) - f(y_0, y_1,\ldots, y_r)   \ .$$
  This can be solved for $g\in \Q[y_0,\ldots, y_{r+1}]$ if and only if the right-hand side vanishes at $y_{r+1}=y_0$, which precisely gives equation
  $(\ref{fcyclicsym})$.
\end{proof}

Recall that $c_r\in \Q[y_0,\ldots, y_r]$ is the cyclic polynomial of definition \ref{defncr}. 
\begin{lem} A polynomial $f\in \Q[y_0,\ldots, y_r]$ satisfies the shuffle equations modulo products in depth $r$ if and only if the rational function
$ c_r^{-1} f $ does.
\end{lem}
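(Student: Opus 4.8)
The plan is to translate the shuffle equations modulo products, as formulated in Corollary~\ref{corshuffleequations}, into the language of the $\sharp$ operation and then exploit the fact that multiplication by $c_r^{-1}$ commutes — up to the appropriate change of variables — with the operations involved. Recall that a translation-invariant series corresponding to $f\in\Q[y_0,\ldots,y_r]$ satisfies the shuffle equations modulo products in depth $r$ precisely when, for all $p+q=r$ with $p,q\geq 1$,
\begin{equation*}
(\overline{f})^{\sharp}(\xx_1\ldots\xx_p\sha\xx_{p+1}\ldots\xx_{p+q})=0\ ,
\end{equation*}
where $\overline{f}(x_1,\ldots,x_r)=f(0,x_1,\ldots,x_r)$ and $\sharp$ is the substitution of \eqref{sharpdef}. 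The first step is therefore to fix the reduced variables: I would set $\overline{c}_r=x_1(x_2-x_1)\cdots(x_r-x_{r-1})$ (the reduction of $c_r$ with $y_0=0$, $y_i=x_i$, noting that the factor $y_0-y_{r+1}\mapsto -x_r$ rather the cyclic factor $y_r-y_0\mapsto -x_r$ is already accounted for), and observe that it suffices to prove the statement for the reduced functions, since passage between $f$ and $\overline f$ is a bijection preserving both translation invariance and the shuffle relations.

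The key computation is to understand how $\sharp$ interacts with the cyclic denominator. Under $\sharp$, the variable $x_{i}$ is sent to $x_1+\cdots+x_i$, so the factor $x_{i}-x_{i-1}$ appearing in $\overline{c}_r$ becomes simply $x_i$ after applying $\sharp$, and $x_1$ stays $x_1$. Thus $\sharp$ carries $\overline{c}_r(x_1,\ldots,x_r)$ to the monomial $x_1 x_2\cdots x_r$ (up to sign), which is manifestly a \emph{shuffle-multiplicative} weight: more precisely, writing the shuffle $\xx_1\ldots\xx_p\sha\xx_{p+1}\ldots\xx_{p+q}$ as a sum of words $w$, for each such $w$ the monomial $x_1\cdots x_r$ factors consistently with the variable labelling induced by $w$. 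Concretely, if $g=\overline{c}_r^{-1}\overline{f}$, then for any word $w$ of length $r$ in the $\xx_i$ one has $g^{\sharp}(w)=\big(x_1\cdots x_r\big)^{-1}\overline{f}^{\sharp}(w)$, because the $\sharp$-image of $\overline{c}_r$ evaluated along the variable substitution dictated by $w$ is always $\pm x_1\cdots x_r$ regardless of $w$ — this is the crucial point and the one requiring care. Summing over the words appearing in the shuffle product $\xx_1\ldots\xx_p\sha\xx_{p+1}\ldots\xx_{p+q}$, the common prefactor $(x_1\cdots x_r)^{-1}$ pulls out, giving
\begin{equation*}
g^{\sharp}(\xx_1\ldots\xx_p\sha\xx_{p+1}\ldots\xx_{p+q})=\frac{(-1)^{r}}{x_1\cdots x_r}\,\overline{f}^{\sharp}(\xx_1\ldots\xx_p\sha\xx_{p+1}\ldots\xx_{p+q})\ ,
\end{equation*}
so the left side vanishes if and only if the right side does, which by Corollary~\ref{corshuffleequations} is exactly the assertion.

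The main obstacle I anticipate is verifying rigorously that $\overline{f}^{\sharp}$ evaluated along the word-substitution for each term $w$ of the shuffle really does acquire the \emph{same} denominator $x_1\cdots x_r$, independent of $w$ — i.e. that the denominator factor is "shuffle-invariant." This is plausible because $\sharp$ converts consecutive-difference factors into single variables, and the shuffle only reshuffles which original index lands in which slot, but one must check that the relabelling is compatible: the cyclic factor $y_r-y_0$ (equivalently $x_r$ in reduced form) and the chain $y_i-y_{i-1}$ together span exactly the edges of the "linear polygon," and under the word substitution these map to a full set of coordinate variables. I would make this precise by noting that $f^{\sharp}$ of a word $\xx_{i_1}\ldots\xx_{i_r}$ is $f(x_{i_1},x_{i_1}+x_{i_2},\ldots)$ and tracking the denominator factor-by-factor; once one sees that each factor $x_{i_j}-x_{i_{j-1}}$ of $\overline{c}_r$ (after $\sharp$) becomes a single $x$-variable and that these are distinct, the claim follows. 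The remaining direction (if $c_r^{-1}f$ satisfies the equations then so does $f$) is symmetric, since one simply multiplies back through. I would finish by noting that both $f$ and $c_r^{-1}f$ are homogeneous of the same total (weight-shifted) degree, so the depth-$r$ equations are the only ones at issue, completing the proof.
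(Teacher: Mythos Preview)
Your approach is essentially the same as the paper's: compute $\overline{c}_r^{\,\sharp}$, observe that it is invariant under permutations of $x_1,\ldots,x_r$, and conclude that it factors out of every shuffle equation $(\overline f)^{\sharp}(\xx_1\ldots\xx_p\sha\xx_{p+1}\ldots\xx_{p+q})=0$. The paper states this in one line.

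There is, however, a small computational slip. Your formula $\overline{c}_r=x_1(x_2-x_1)\cdots(x_r-x_{r-1})$ is missing the final cyclic factor: the correct reduction is $\overline{c}_r=x_1(x_2-x_1)\cdots(x_r-x_{r-1})\,x_r$, since $c_r$ has $r+1$ factors $(y_i-y_{i+1})$ for $i=0,\ldots,r$ with indices taken modulo $r+1$, and the last of these reduces to $x_r$. Consequently the $\sharp$-image is not $x_1\cdots x_r$ but
\[
\overline{c}_r^{\,\sharp}(\xx_1,\ldots,\xx_r)=x_1 x_2\cdots x_r\,(x_1+\cdots+x_r),
\]
which is still symmetric in $x_1,\ldots,x_r$, so your argument survives unchanged. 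Your parenthetical remark about ``$y_0-y_{r+1}$'' being ``already accounted for'' is garbled (there is no $y_{r+1}$); once you simply include the missing $x_r$ factor the computation is clean and the stray sign $(-1)^r$ you introduced disappears as well. The ``main obstacle'' you flag---that the denominator is the same for every word $w$ in the shuffle---is precisely the observation that $\overline{c}_r^{\,\sharp}$ is a symmetric function, and this is immediate from the displayed formula since every shuffle term is a permutation of $\{\xx_1,\ldots,\xx_r\}$.
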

\begin{proof} Using the notation $(\ref{fsharpnotation})$, we have
$$\overline{c}_r^{\sharp}(\x_1,\ldots, \x_r) = x_1 \ldots x_r ( x_1+\ldots + x_r)$$
which is invariant under permutations of the variables $x_1,\ldots x_r$. Because each shuffle equation  $(\ref{shuffleequation})$  is a  linear combination  of terms
involving precisely this  permutation group, the term $(\overline{c}_r^{-1})^{\sharp}$ factors out of the equations.
\end{proof} 
Since we are mainly interested in the subalgebra
$\eis \subset \Der$, which is concentrated in even weights, we shall restrict to  the subspace
 $\Der^{ev} \subset \Der$ of derivations of even weight, although much of what follows also holds for  $\Der$.
The shuffle equations imply translation invariance. 

\begin{defn}   The two lemmas above give a map of bigraded vector spaces:
\begin{eqnarray} \label{DertoRatmap}
\Der^{ev}  & \To &  \overline{\p\p}  \\
\delta & \mapsto &  c^{-1}_r   \rho^{(r)}(\delta) \nonumber 
\end{eqnarray} 
where $\overline{\p\p}$ is the polar dihedral Lie algebra defined in  \S\ref{sectPolardihedradLie}. In this way, we uniquely associate to every derivation  
$\delta$ of even weight a sequence of  \emph{rational functions}  in $y_0,\ldots, y_r$.
\end{defn}

The action of $\Der^{ev}$ on $T ( a, b)$ can also be encoded by rational functions as follows.
 Define   graded vector spaces $Q_0 = \Q[y_0]$ and 
$$Q_r =  \ell_r^{(-1)} \,  \Q[y_0, \ldots, y_r]  $$
where we set $\ell_0=1$ and 
$$\ell_r = (y_0-y_1) (y_1-y_2) \ldots (y_{r-1}-y_r)   \hbox{for } r \geq 1 \ .  $$
Consider the map obtained by dividing $(\ref{abtopoly})$ by $\ell_r$: 
\begin{equation}
\label{abtorat}
\ell_r^{-1} \rho^{(r)}:   \gr^{r}_{\dd}  T ( a, b)   \To  Q_r 
\end{equation}
 Recall that the (shuffle) concatenation product $(\ref{shuffony})$ is defined by 
$$f\cdot g (y_0,y_1, \ldots, y_{r+s}) = f(y_0,y_1, \ldots, y_r) g(y_r, y_{r+1}, \ldots, y_{r+s})\ .$$
Since $\ell_r \cdot \ell_s = \ell_{r+s}$, the concatenation product defines  a map $Q_r \times Q_s \rightarrow Q_{r+s}$, and it follows that the map
$$ \Q\langle a,b \rangle \overset{\sim}{\To}  \qq \qquad \hbox{ where } \quad  \qq := \bigoplus_{r\geq 0 } Q_r$$
is an isomorphism of bigraded algebras with respect to concatenation. 
\begin{defn} Define a left action $\eact$ of $\overline{\p\p}$ on $\qq$ as follows:
\begin{eqnarray} 
 \overline{\p\p} \times \qq &  \rightarrow &  \qq  \\
 f \eact g & = & f\circb g - f \cdot g \nonumber
  \end{eqnarray}
  where $\circb$ is the linearized Ihara action $(\ref{circformula})$, and $\cdot$ is the shuffle concatenation product $(\ref{shuffony})$.
 The algebra $\overline{\p\p}$ is equipped with the usual bracket  $\{f, g \}  =  f\circb g - g\circb f$.
  \end{defn} 

It follows from $(\ref{IharaderivShuffle})$ that the structures are compatible, i.e., 
\begin{equation} \label{eactcompat}
 f\eact (g \eact h) - g \eact (f \eact h) = \{ f,g \} \eact h \qquad \hbox{ for all } f,g\in \overline{\p\p}, h\in \qq\ .
 \end{equation} 
\begin{lem}  \label{lemDertoppismorphism} The action of $\Der^{ev}$ on $T(a,b)$ is given by $\eact$ on the level of rational functions. In other words, 
the following diagram commutes:
$$
\begin{array}{ccccc}
 \Der^{ev} &  \times   &  T(a,b)  & \To &  T(a,b)   \\
 \downarrow &   &  \downarrow & & \downarrow   \\
 \overline{\p\p} &   \times  & \qq    & \To  & \qq
\end{array}
$$
The map $\Der^{ev} \rightarrow \overline{\p\p}$  is a morphism of bigraded Lie algebras.
\end{lem}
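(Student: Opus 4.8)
\textbf{Proof proposal for Lemma \ref{lemDertoppismorphism}.}

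The plan is to verify the commutativity of the diagram directly on generators, by tracking how the action of a derivation $\delta\in\Der^{ev}$ on a word $a^{k_0}ba^{k_1}\cdots ba^{k_r}$ translates, under the representation $(\ref{abtorat})$, into the operation $\eact$ on the corresponding rational function in $Q_r$. First I would recall that $\delta$ is a derivation determined by $\delta(a)$ (and $\delta(b)=-[a,\delta(b)]/\ldots$ as in $(\ref{dinj})$), so its action on a general word is obtained by the Leibniz rule: $\delta$ inserts $\delta(a)$ (respectively $\delta(b)$) in place of each letter $a$ (respectively $b$). The key computation is to check that, after dividing the image by $\ell_r$ as in $(\ref{abtorat})$ and dividing $\delta(a)$ by the cyclic polynomial $c_r$ as in $(\ref{DertoRatmap})$, the Leibniz-rule expression for $\delta(w)$ becomes exactly $f\circb g - f\cdot g$ where $f=c_r^{-1}\rho^{(r)}(\delta)$ and $g=\ell_s^{-1}\rho^{(s)}(w)$. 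This is essentially a bookkeeping exercise comparing two explicit formulas: the formula $(\ref{circformula})$ for $\circb$ against the expansion of $\delta$ acting term-by-term, using the identity $\ell_r\cdot\ell_s=\ell_{r+s}$ to handle the concatenation and the relation $[a,e_1]=ae_1+e_1a^{*}$ (here phrased for $a,b$) to produce the two sums, with the appropriate signs, that appear in $\circb$. The subtraction of the concatenation term $f\cdot g$ is precisely what removes the extraneous contribution so that only the genuine substitution terms survive, which is the content of the definition of $\eact$.

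Next, granting commutativity of the square, I would deduce that $\Der^{ev}\to\overline{\p\p}$ is a Lie algebra morphism. For this I would use $(\ref{eactcompat})$: the action $\eact$ of $\overline{\p\p}$ on $\qq$ satisfies $f\eact(g\eact h)-g\eact(f\eact h)=\{f,g\}\eact h$, which follows from the pre-Lie identity $(\ref{IharaderivShuffle})$ for $\circb$ together with associativity of $\cdot$. On the other side, the action of $\Der^{ev}$ on $T(a,b)$ is by derivations, so $\delta_1\delta_2-\delta_2\delta_1$ acts as the bracket $\{\delta_1,\delta_2\}$ in $\Der^{ev}$. Since the diagram commutes, applying $\delta_1\delta_2-\delta_2\delta_1$ and then projecting to $\qq$ agrees with applying $f_1\eact(f_2\eact-)-f_2\eact(f_1\eact-)=\{f_1,f_2\}\eact-$; because the faithful representation of $\Der$ on $T(a,b)$ (indeed on the single element $a$, by $(\ref{dinj})$) is compatible with the injective map $\overline{\p\p}\hookrightarrow Q_{\bullet}$, this forces the image of $\{\delta_1,\delta_2\}$ to equal $\{f_1,f_2\}$ in $\overline{\p\p}$. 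Concretely: the image of $\delta$ in $\overline{\p\p}$ is recovered from how $\delta$ acts, so matching the two actions matches the two brackets.

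The main obstacle I anticipate is the first step: getting the signs and index ranges in the term-by-term Leibniz expansion of $\delta(w)$ to line up \emph{exactly} with the two sums in formula $(\ref{circformula})$ for $\circb$, while correctly accounting for the $\ell_r$-normalization. In particular one must be careful that $\delta(b)$, which is only determined up to a multiple of $a^n$ (killed by $\pi_a\delta(b)=0$), contributes correctly, and that the $(-1)^{\deg f + r}$ sign in $(\ref{circformula})$ is reproduced by the word-reversal in $[a,\delta(b)]=\ldots$ combined with the antipode-type relation $(a_1\cdots a_n)^{*}=(-1)^n a_n\cdots a_1$. A clean way to organize this is to reduce, as in the proof of Lemma \ref{lemimageofDer} and the pre-Lie lemma, to the cases $h=a$ and $h=b$ (i.e.\ $g$ equal to the rational functions $\rho$ of $a$ and $b$), where $\delta(a)$ and $\delta(b)$ are given by the defining relation of $\Der$, and then extend to arbitrary words by the derivation property and multiplicativity of both sides under concatenation; this localizes the sign check to a single elementary identity rather than a general word.
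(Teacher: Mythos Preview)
Your proposal is correct and, in its final paragraph, converges to essentially the paper's approach: reduce to generators using the derivation property, then deduce the Lie morphism statement from $(\ref{eactcompat})$ and faithfulness. The second part matches the paper exactly.

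The one refinement the paper makes, which you should adopt, concerns the choice of generators. You propose checking the action on $a$ and $b$; the paper instead checks on $a$ and $[a,b]$. This is a genuine simplification: since $\delta\in\Der$ satisfies $\delta([a,b])=0$ by definition, the verification on $[a,b]$ reduces to showing $f\eact h_{[a,b]}=0$, where $h_{[a,b]}=\ell_1^{-1}\rho^{(1)}([a,b])=1\in Q_1$. One computes $f\circb 1 = f$ directly from $(\ref{circformula})$ together with the dihedral symmetry of $f\in\overline{\p\p}_r$, and $f\cdot 1 = f$, so $f\eact 1=0$. The check on $a$ is equally short: $f\circb y_0 = y_0 f$ and $f\cdot y_0 = y_r f$, so $f\eact h_a=(y_0-y_r)f$, which equals $\ell_r^{-1}\rho^{(r)}(\delta(a))$ since $c_r\ell_r^{-1}=y_0-y_r$. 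Your route via $b$ would require solving for $\delta(b)$ from $[a,\delta(b)]+[\delta(a),b]=0$ and matching it against $f\eact h_b$ with $h_b=(y_0-y_1)^{-1}$, which is exactly the ``obstacle'' you anticipated; using $[a,b]$ sidesteps it entirely. Note that $a$ and $[a,b]$ do not generate $T(a,b)$ as an algebra, so one is implicitly using that any derivation in $\Der$ is determined by its value on $a$ once one knows it kills $[a,b]$ and has no $a^n$-component in its value on $b$; the latter is automatic here by depth considerations.
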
 

\begin{proof} It suffices to verify that $\eact$ is a derivation with respect to the shuffle concatenation, and gives the correct action on 
the elements $h_{a}= \ell_0^{-1} \rho^{(0)} (a)= y_0\in Q_0$ and $h_{[a,b]}= \ell_1^{-1} \rho^{(1)} ([a,b])=1\in Q_1$. The first statement follows from $(\ref{IharaderivShuffle})$. 
We have
\begin{eqnarray}
f(y_0,\ldots, y_r)  \circb  y_0  & = & y_0 f(y_0,\ldots, y_r) \nonumber \\ 
f(y_0,\ldots, y_r) \circb 1   &= & f(y_0,\ldots, y_r) \nonumber 
\end{eqnarray}
which follows directly from $(\ref{circformula})$. This gives
$$ f\eact  h_a = (y_0- y_r)f \qquad \hbox{ and } \qquad  f\eact h_{[a,b]} = 0$$
exactly as required (since $(\ref{DertoRatmap})$ and 
$(\ref{abtopoly})$ differ by a factor of $c_r \ell_r^{(-1)}= (y_0-y_r)$). 
 The last part follows  from  $(\ref{eactcompat})$ and the faithfullness of the action.
\end{proof}

\subsection{Double shuffle equations}
Since $\eis \subset \Der^{ev}$, the map $(\ref{DertoRatmap})$ gives
\begin{equation} \label{dtocbar} \eis   \To \overline{\p \p}
\end{equation}
 which by the previous lemma, is an injective morphism of bigraded Lie algebras.  In this section, we shall work with  reduced variables $x_i$.
 
\begin{lem} \label{lemimagesofepsilons} The map  $(\ref{dtocbar})$ maps $\check{\varepsilon}_{2n}$ to $x_1^{2n-2}$.
\end{lem}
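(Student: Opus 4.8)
The plan is to unwind the definitions of the map $(\ref{dtocbar})$ and of $\check{\varepsilon}_{2n}$, computing the relevant polynomial explicitly. Recall that $\check{\varepsilon}_{2n}$ is the derivation determined by $\check{\varepsilon}_{2n}(a) = \ad(a)^{2n}(b)$. First I would expand $\ad(a)^{2n}(b) = \sum_{k=0}^{2n} (-1)^k \binom{2n}{k} a^{2n-k} b a^{k}$ and apply the isomorphism $\rho$ of $(\ref{abtopoly})$, which sends $a^{k_0} b a^{k_1}$ to $y_0^{k_0} y_1^{k_1}$. This gives
\begin{equation}
\rho^{(1)}(\check{\varepsilon}_{2n}) = \sum_{k=0}^{2n} (-1)^k \binom{2n}{k} y_0^{2n-k} y_1^{k} = (y_0 - y_1)^{2n}\ .
\nonumber
\end{equation}

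Next, the map $(\ref{DertoRatmap})$ sends a derivation $\delta$ of depth $1$ to $c_1^{-1} \rho^{(1)}(\delta)$, and by definition \ref{defncr} we have $c_1 = (y_1-y_0)^2$. Therefore the image of $\check{\varepsilon}_{2n}$ in $\overline{\p\p}_1$ is $(y_0-y_1)^{2n}/(y_1-y_0)^2 = (y_0-y_1)^{2n-2}$. I would then pass to the reduced variable by setting $y_0 = 0$, $y_1 = x_1$ (as the section states we work with reduced variables $x_i$), obtaining $(-x_1)^{2n-2} = x_1^{2n-2}$ since $2n-2$ is even. This is exactly the claimed formula, and one sees in passing that this rational function indeed lies in $\overline{\p\p}_1$: it is a power of $x_1$, hence polynomial, hence trivially satisfies all the dihedral and evenness conditions of definition \ref{defncr} in depth one (where $D_2$ acts by $x_1 \mapsto \pm x_1$).

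There is essentially no obstacle here — the statement is a direct computation. The only point requiring a little care is bookkeeping the sign and degree conventions: one must track that $c_1 = (y_1 - y_0)^2$ carries a square (so no sign ambiguity when dividing), that $(y_0 - y_1)$ and $(y_1 - y_0)$ agree up to a sign which is irrelevant at even powers, and that reducing via $y_0 \mapsto 0$, $y_1 \mapsto x_1$ together with the parity of $2n-2$ eliminates the remaining sign. I would also remark, for consistency with Lemma \ref{lemimageofDer}, that $(y_0-y_1)^{2n}$ satisfies the shuffle equations modulo products in depth $1$ (vacuously, since the first shuffle equation is in depth $2$) and the cyclic symmetry $(\ref{fcyclicsym})$ for $r=1$ (which reads $f(y_0,y_1) = f(y_1,y_0)$, satisfied since $(y_0-y_1)^{2n}$ is even), so that $\check{\varepsilon}_{2n}$ does define an element of $\Der^{ev}$ and the formula is meaningful.
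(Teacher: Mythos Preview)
Your proof is correct and follows essentially the same direct computation as the paper. The only cosmetic difference is that the paper works in reduced variables from the start: setting $y_0=0$ kills every term of $\ad(a)^{2n}(b)$ beginning in $a$, leaving only $b a^{2n} \mapsto x_1^{2n}$, which is then divided by $\overline{c}_1 = x_1^2$; you instead compute the full polynomial $(y_0-y_1)^{2n}$, divide by $c_1$, and reduce at the end.
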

\begin{proof} The map is computed from $\check{\varepsilon}_{2n} (a) = \ad(a)^{2n} b$. This consists of the term $b a^{2n}$, which is mapped to $x_1^{2n}/x_1^2$,  plus terms beginning in $a$, which are mapped to zero. 
\end{proof}

\begin{prop} The image of $\eis$  in  $\overline{\p\p}$  is contained in the space of solutions to the polar linearized double shuffle equations $\p \ls$.
\end{prop}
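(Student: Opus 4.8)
The plan is to show that the image of $\eis$ in $\overline{\p\p}$ lands inside $\overline{\p\ls}$ by combining three facts already at our disposal: the image is a Lie algebra (Lemma \ref{lemDertoppismorphism}), it is generated by the elements $\check\varepsilon_{2n}$ (definition of $\eis$), and $\p\ls$ is closed under the Ihara bracket (Theorem \ref{thmplsclosed}). Since $\{,\}$ preserves $\p\ls$, it suffices to prove that each generator $\check\varepsilon_{2n}$ maps into $\p\ls$; by Lemma \ref{lemimagesofepsilons} its image is $x_1^{2n-2}\in\overline{\p\p}_1$. So the crux is: \emph{every $x_1^{2n-2}$ is a solution to the polar linearized double shuffle equations}, i.e. $x_1^{2n-2}\in\overline{\p\ls}_1$.

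First I would recall from the end of \S\ref{sectdihedihara} and the discussion following it that $\overline{\p\ls}_1\cong\overline{\p\p}_1=\bigoplus_{n\geq 0}\Q\,x_1^{2n-2}$, so in depth one the polar linearized double shuffle equations impose no constraint beyond membership in $\overline{\p\p}_1$; this is precisely the statement that in depth one the linearized equations reduce to translation invariance plus the dihedral/evenness conditions, all of which $x_1^{2n-2}$ satisfies trivially (it is even, and depth-one dihedral symmetry $D_2$ is generated by the sign involution, which acts by $+1$ on even polynomials). Hence each $\check\varepsilon_{2n}\mapsto x_1^{2n-2}\in\p\ls_1\subset\p\ls$, giving the generators. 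Then, since $\eis$ is generated as a Lie algebra by the $\check\varepsilon_{2n}$, and $(\ref{dtocbar})$ is a Lie algebra morphism, the image of $\eis$ is the Lie subalgebra of $\overline{\p\p}$ generated by $\{x_1^{2n-2}:n\geq 0\}$. By Theorem \ref{thmplsclosed}, $\p\ls$ is a Lie subalgebra of $\overline{\p\p}$ containing all these generators, so it contains the entire Lie subalgebra they generate. Therefore the image of $\eis$ is contained in $\p\ls$, as claimed.

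The one point requiring a little care — and the main (modest) obstacle — is the bookkeeping of \emph{which} linearized double shuffle conditions actually bite in depth one, since the definition of $\p\ls$ (via the last line of $(\ref{lindoubleshuffle})$, the evenness constraint added back by hand) singles out $x_1^{2n}$ rather than arbitrary degree-one polynomials. Here I would simply observe that $x_1^{2n-2}$ corresponds, under $(\ref{wordstopolys})$, to the word $e_1 e_0^{2n-2}$ up to the pole $x_1^{-2}$; for $n\geq 1$ the polynomial part $x_1^{2n-2}=x_1^{2(n-1)}$ is exactly of the form appearing in the depth-one solutions $\sigma_{2n-1}^{(1)}$, and for the exceptional case $n=0$ one has $x_1^{-2}$, which by Lemma \ref{lemweightzeroofpls} is the unique weight $-1$ element and indeed lies in $\p\ls_1$ (equivalently, $\gr^{\RR}_1\overline{\p\p}_1\cong\Q x_1^{-2}$ as in Remark \ref{remarkonRRfiltration}). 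Either way $x_1^{2n-2}\in\p\ls_1$. Everything else is formal: a Lie morphism sending generators into a Lie subalgebra sends the whole subalgebra into it. I expect the author's proof is exactly this two-line reduction, perhaps with the depth-one verification spelled out slightly more explicitly.
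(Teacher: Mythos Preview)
Your proof is correct and follows exactly the same approach as the paper's own proof: the generators $\check\varepsilon_{2n}$ map to $x_1^{2n-2}\in\p\ls_1$, the map is a Lie algebra morphism, and $\p\ls$ is closed under the Ihara bracket, so the image of the Lie algebra generated by the $\check\varepsilon_{2n}$ lands in $\p\ls$. The paper dispatches this in three sentences; your version is simply a more detailed unpacking of the same argument, including the depth-one verification that the paper leaves implicit via the earlier remark $\overline{\p\ls}_1\cong\overline{\p\p}_1$.
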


\begin{proof} The generators of $\eis$ map to elements of even degree, which are  solutions to the polar linearized double shuffle equations in depth $1$.
The latter are stable under the Ihara bracket. Since $\eis \rightarrow \overline{\p\p}$ is a morphism of Lie algebras, the result follows.
\end{proof} 
The derivations  $\check{\varepsilon}_{2n}$, for $n\neq 1$, can thus be thought of the associated depth-graded elements of the canonical generators of the Lie algebra  $\Lo$:
\begin{eqnarray}
\psi^{(1)}_{-1}  & \leftrightarrow & \check{\varepsilon}_{0} \\
\psi^{(1)}_{2n+1} & \leftrightarrow & \check{\varepsilon}_{2n} \quad \hbox{ for all } n \geq 1
\end{eqnarray} 
Equivalently, one can think of the elements $\psi_{2n+1}$ as a way to lift the geometric Lie algebra $\eiso$  to the set of  solutions to the full double shuffle equations modulo products.

\begin{rem} As an immediate consequence, we notice that  the derivations $\check{\varepsilon}_{2n+2}$  in weights $\geq 4$ coincide with the depth-graded generators of the motivic Lie algebra:
$$\check{\varepsilon}_{2n+2} =  \sigma^{(1)}_{2n+1}  \quad \hbox{ for all } n \geq 1\ .$$
In particular, we  deduce that the $\check{\varepsilon}_{2n+2}$, for $n\geq 1$, satisfy the same quadratic relations as the elements $\overline{\sigma}_{2n+1}$ given by the coefficients
of even period polynomials. This observation was found independently by Matsumoto \cite{Matsumoto}.
\end{rem}

\subsection{The residue filtration} Since $\eis$ is a Lie subalgebra of $\overline{\p\p}$, it inherits the residue filtration $\RR$. By lemma \ref{lemimagesofepsilons}, 
we have 
$$\gr^{\RR}_1 \eis_1 = \check{\varepsilon}_0 \Q  \qquad  \hbox{ and }  \qquad \RR_0 \eis_1 \cong \bigoplus_{n\geq 1} \check{\varepsilon}_{2n} \Q\ .$$
By definition, $\eis$ is generated in depth $1$, so it follows that $\RR_{r-1} \eis_r = \eis_r$ for all $r\geq 2$.
By proposition \ref{propRisIharacompatible},   an iterated Lie bracket
$$\{\check{\varepsilon}_{2n_1}, \{ \check{\varepsilon}_{2n_2},\{ \ldots, \{ \check{\varepsilon}_{2n_{r-1}}, \check{\varepsilon}_{2n_r}\} \cdots \} \quad \hbox{ is in }\quad \RR_k \eis_r $$
if at most $k$  indices $n_1,\ldots, n_r$ are equal to $0$. In other words, the residue filtration is compatible with the filtration which counts the number of $\check{\varepsilon}_0$'s. Since there
are many relations between the generators $\check{\varepsilon}_{2n}$, these two filtrations are not equal.

\subsection{$\sll_2$-action}
The Lie algebra
 $\sll_2 $ acts on the algebra $T(a, b)$  as derivations. 
We shall fix  generators $\ee,\ff$  such that:
$$\ee( a) = b \ , \quad  \ee(b) =0 \qquad \hbox{ and }  \qquad \ff( a) = 0 \ , \quad  \ff(b) =a$$ 
and $\hh= [\ee, \ff]$.  Since $\ee([a,b])=\ff([a,b])=0$, the  action of $\sll_2$ on derivations preserves $\Der$.
 It follows from the definition  that the action of  $\ee$ on $\Der$ is  $\ad ( \check{\varepsilon}_0)$.
 
 Recall  that elements $\delta \in \eis \subset \Der$ of depth $r$ can be represented as rational functions in 
 $x_1,\ldots, x_r$ via the injective map $\rr: \Der  \hookrightarrow  \overline{\p\p}_r$ defined in $(\ref{DertoRatmap})$.
The action of $\ee,\ff$ on the image of $\Der$ can be computed  explicitly by the following lemma.

\begin{lem}  Let $f\in \eis_r$ of depth $r$, and $\rr(f)$ its image in $\overline{\p\p}_r$. Then
\begin{eqnarray}
\rr(\ee(f))  & = &  \{ x_1^{-2}, \rr(f)\}  \label{eeformula}   \\
\rr(\ff(f))  & =  & \sum_{i=1}^{r-1} x_i \, \Res_{z=x_i}  \rr(f)(x_1,\ldots, x_i, z,x_{i+1},\ldots, x_r)  \label{ffformula} 
\end{eqnarray}
The second equation can alternatively be written, in the case $r\geq 2$:
\begin{equation} \label{sl2asCycRot}      \rr( \ff(f))   =   \sum_{i=1}^{r-1}  x_i (\tau \sigma)^i  R_r \rr(f)\ ,
\end{equation}
where $R_r$ is the residue  map defined in $(\ref{Rrdefn})$
\end{lem}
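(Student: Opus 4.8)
The plan is to treat the three identities $(\ref{eeformula})$, $(\ref{ffformula})$, $(\ref{sl2asCycRot})$ separately. For $(\ref{eeformula})$ I would argue purely formally: the derivation $\check\varepsilon_0$ \emph{is} the derivation $\ee$ (both send $a\mapsto b$ and $b\mapsto 0$), the $\sll_2$-action of $\ee$ on $\Der$ is $\ad(\check\varepsilon_0)$ by the remark preceding the lemma, and $\ee$ is weight-preserving, so it restricts to $\Der^{ev}\supset\eis$. Since $(\ref{DertoRatmap})$, $\delta\mapsto\rr(\delta)$, is a morphism of Lie algebras (Lemma \ref{lemDertoppismorphism}) which sends $\check\varepsilon_0$ to $x_1^{-2}$ (Lemma \ref{lemimagesofepsilons} with $n=0$), one gets
\[
\rr(\ee(f))=\rr\big(\{\check\varepsilon_0,f\}\big)=\{\rr(\check\varepsilon_0),\rr(f)\}=\{x_1^{-2},\rr(f)\},
\]
which is $(\ref{eeformula})$.

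For $(\ref{ffformula})$ I would pass to the word representation. Since $\ff(a)=0$, the derivation $\ff\cdot f=[\ff,f]$ satisfies $(\ff\cdot f)(a)=\ff(f(a))$, and $\ff(f(a))$ is homogeneous of depth $r-1$ since $\ff$ replaces each $b$ by $a$. Writing the depth‑$r$ exponential series as $\e(y_0,\ldots,y_r)=E(y_0)\,b\,E(y_1)\,b\cdots b\,E(y_r)$ with $E(y)=(1-ay)^{-1}=\sum_{k\ge0}a^ky^k$, I would first compute the adjoint $\ff^{\vee}$ of $\ff$ for the word pairing $(\ref{pairing})$ on $T(a,b)$; it replaces one occurrence of $a$ by $b$, summed over all occurrences, and the only input needed is the elementary identity $\ff^{\vee}(E(y))=y\,E(y)\,b\,E(y)$, obtained by summing over which $a$ in $E(y)$ is promoted and recollapsing the geometric series. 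This gives
\[
\ff^{\vee}\,\e(z_0,\ldots,z_{r-1})=\sum_{m=0}^{r-1}z_m\,\e(z_0,\ldots,z_m,z_m,z_{m+1},\ldots,z_{r-1}),
\]
and pairing with $f(a)$ turns it into a formula for $P':=\rho\big(\gr^{r-1}_{\dd}\ff(f(a))\big)$ in terms of $P:=\rho\big(\gr^r_{\dd}f(a)\big)$, namely $P'(z_0,\ldots,z_{r-1})=\sum_{m=0}^{r-1}z_m\,P(z_0,\ldots,z_m,z_m,z_{m+1},\ldots,z_{r-1})$. Finally, writing $P=c_r\,\rr(f)$ and using that $\rr(f)\in\overline{\p\p}_r$ has at most a simple pole along the diagonal $z_m=z_m$, I would read off the diagonal specialisation as a residue: comparing $c_r(z_0,\ldots,z_m,z,z_{m+1},\ldots)=(z_m-z)\,\tilde c$ with $\tilde c|_{z=z_m}=c_{r-1}(z_0,\ldots,z_m,z_{m+1},\ldots)$ yields $P(z_0,\ldots,z_m,z_m,z_{m+1},\ldots)=\pm\,c_{r-1}\cdot\Res_{z=z_m}\rr(f)(z_0,\ldots,z_m,z,z_{m+1},\ldots)$. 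Dividing by $c_{r-1}$ and passing to reduced variables $z_0=0$, $z_i=x_i$ — so that the $m=0$ summand disappears because of its prefactor $z_0=0$ — produces $(\ref{ffformula})$.

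For $(\ref{sl2asCycRot})$, stated for $r\ge2$, I would simply rewrite $(\ref{ffformula})$ using that $\rr(f)$ is cyclically and translationally invariant as an element of $\overline{\p\p}_r$: the residue $\Res_{z=x_i}\rr(f)(x_1,\ldots,x_i,z,x_{i+1},\ldots)$ is a residue along the vanishing of two consecutive coordinates, and the cyclic group $\langle\tau\sigma\rangle$ of order $r$ acting on $\overline{\p\p}_{r-1}$ transports it onto the residue along the last coordinate, i.e. onto $R_r\rr(f)$ with $R_r$ as in $(\ref{Rrdefn})$; the $i$-th term acquires $(\tau\sigma)^i$ and the prefactor $x_i$, and $i=0$ again drops, leaving the sum over $i=1,\ldots,r-1$. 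The main obstacle is the bookkeeping in the middle step of the second identity: matching the ``duplicate a variable and multiply by it'' operation coming from $\ff^{\vee}$ with an actual residue, keeping track of which coordinate is being duplicated under the relabellings, and fixing the global sign — which depends on the chosen normalisation of $c_r$ and of the $\sll_2$-action. Everything else is formal, given Lemmas \ref{lemimageofDer}, \ref{lemDertoppismorphism} and \ref{lemimagesofepsilons}.
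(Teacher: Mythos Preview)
Your proof is correct and follows essentially the same route as the paper: identical use of Lemmas~\ref{lemDertoppismorphism} and~\ref{lemimagesofepsilons} for $(\ref{eeformula})$, the same ``duplicate a variable and multiply by it'' formula for $\rho(\ff(f)(a))$ converted into a residue via $\rho=c_r\rr$ for $(\ref{ffformula})$, and the same use of cyclic symmetry for $(\ref{sl2asCycRot})$. The only cosmetic difference is that you obtain the duplication formula by passing to the adjoint $\ff^{\vee}$ on the generating series~$\e$, whereas the paper computes $\ff$ directly on a monomial $a^{k_0}b\cdots ba^{k_r}$; these are dual versions of the same one-line computation.
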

\begin{proof}  Equation $(\ref{eeformula})$ follows immediately from the fact that $\ee(f)= \{ \check{\varepsilon}_0, f\}$, and lemmas \ref{lemDertoppismorphism} and \ref{lemimagesofepsilons}. 
For the action of $\ff$ observe that 
$$\ff( a^{k_1} b a^{k_2} b \ldots b a^{k_r})= \sum_{i=1}^{r-1} a^{k_1} b a^{k_2} b  \ldots b a^{k_i+k_{i+1}+1} b \ldots  ba^{k_r}$$
This corresponds to the map
$$\rho(f)(y_0, \ldots, y_r) \mapsto \sum_{i=1}^r  y_i \rho(f)(y_0,\ldots, y_{i-1}, y_i, y_i, y_{i+1} \ldots, y_{r-1}) $$
Since $\rho(f) = c_r \rr(f)$, and since $\rr(f)$ has at most simple poles along $y_i = y_{i+1}$,  this gives precisely formula $(\ref{ffformula})$ after passing to the reduced representation.

The last part follows from the fact that  since $\rr(f)$ is cyclically symmetric $(\ref{fcyclicsym})$, its residue along $y_i = y_{i+1}$ is obtained from its residue 
along $y_r=0$ by applying an $i$-fold cyclic rotation of the coordinates  $(\tau \sigma)^i $. 
\end{proof}

\begin{lem} The action of $\sll_2$ on $\eis$  extends to an action on $\p\ls$. In other words, the formulae $(\ref{eeformula})$  and $(\ref{ffformula})$ define maps
$$\ee, \ff : \p\ls \To \p \ls\ .$$
\end{lem}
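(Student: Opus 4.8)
The plan is to handle the two generators $\ee$ and $\ff$ separately; the $\sll_2$-relations will then be automatic. For $\ee$ there is essentially nothing to do beyond invoking what has already been established: by $(\ref{eeformula})$ the operator $\ee$ acts on $\p\ls$ (viewed inside $\overline{\p\p}$ via Lemma \ref{lemDertoppismorphism}) by $f\mapsto\{x_1^{-2},f\}$, and $x_1^{-2}$ is the weight $-1$ element of $\p\ls_1$, since $\overline{\p\ls}_1\cong\overline{\p\p}_1=\bigoplus_{n\geq0}\Q x_1^{2n-2}$. As $\p\ls$ is a Lie subalgebra of $\overline{\p\p}$ for the Ihara bracket by Theorem \ref{thmplsclosed}, it follows that $\ee(f)=\{x_1^{-2},f\}\in\p\ls$ for every $f\in\p\ls$.

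The work is with $\ff$. For $f\in\p\ls_r$ with $r\geq2$ I want to show that $\ff(f)=\sum_{i=1}^{r-1}x_i(\tau\sigma)^iR_rf$, defined by $(\ref{ffformula})$ and $(\ref{sl2asCycRot})$, lies in $\p\ls_{r-1}$; for $r=1$ the sum is empty, so $\ff$ annihilates $\p\ls_1$, consistently with $\ff(\check{\varepsilon}_{2n})=\ad(a)^{2n}(a)=0$ for $n\geq1$. First I would verify the pole structure. Since $f\in\overline{c}_r^{-1}\Q[x_1,\ldots,x_r]$ has at most simple poles, occurring only along $x_1=0$, along $x_j=x_{j+1}$ for $1\leq j\leq r-1$, and along $x_r=0$, its residue $R_rf=\Res_{x_r=0}f$ is a homogeneous rational function of $x_1,\ldots,x_{r-1}$ whose only (simple) poles are along the consecutive divisors of the $(r-1)$-variable configuration; thus $R_rf\in\overline{c}_{r-1}^{-1}\Q[x_1,\ldots,x_{r-1}]$. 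The $(\tau\sigma)^i$ lie in the dihedral group and merely permute these consecutive divisors, while multiplication by the polynomial $x_i$ adds no poles, so $\ff(f)\in\overline{c}_{r-1}^{-1}\Q[x_1,\ldots,x_{r-1}]$; it is homogeneous of the same degree as $f$ (the residue lowers the degree by one and multiplication by $x_i$ restores it), hence an even function because $f$ is.

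It remains to check that $\ff(f)$ satisfies the shuffle equations modulo products and the linearized stuffle equations, after which $\ff(f)\in\p\ls_{r-1}$ (the depth-one evenness required in $(\ref{lindoubleshuffle})$ being automatic when $r=2$). My approach here would be to match the depth $(r-1)$ equations for $\ff(f)$ against the image under $\Res_{x_r=0}$ of the depth $r$ equations for $f$. The mechanism that makes this go through is that, because $f$ has poles only along consecutive divisors, the residue $\Res_{x_r=0}$ of $f$ evaluated at any permutation of its arguments vanishes unless the residued variable sits in the first or last argument slot; consequently, applying $\Res_{x_r=0}$ to the $(p,q)$-th equation $(\ref{linstuffpqequation})$ (respectively $(\ref{shuffleequation})$) for $f$ selects precisely those terms which, after using the dihedral symmetries $(\ref{dihedsymmetries})$ to convert ``first slot'' residues into cyclic rotations of $R_rf$, recombine into the corresponding lower-depth equation for $\ff(f)$. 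Concretely this is the bookkeeping of Lemma \ref{lempropertiesofRes} --- pushing $\Res_{x_r=0}$ past $\circb$, $\studot$, $\Delta_{\sha}$ and $\nabla$ by means of the identities (\ref{Resfcircg})--(\ref{ResnablaRes}) --- carried out in the same style as the proof of Proposition \ref{propRisIharacompatible}. Once $\ee$ and $\ff$ are known to preserve $\p\ls$, the $\sll_2$-relations $[\ee,\ff]=\hh$, $[\hh,\ee]=2\ee$, $[\hh,\ff]=-2\ff$ follow: $\hh:=[\ee,\ff]$ acts by a scalar on each bigraded component of $\p\ls$ (for instance $\hh(x_1^{2n-2})=(1-2n)x_1^{2n-2}$ in depth one, exactly as on $\eis$), and the identities themselves are inherited from the $\sll_2$-action on $\Der^{ev}$ through the embedding of Lemma \ref{lemDertoppismorphism}.

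The hard part will be the shuffle equations for $\ff(f)$. Unlike the linearized stuffle equations, which are ``local'' in the arguments, the shuffle equations involve the $\sharp$-substitution and the deconcatenation coproduct $\Delta_{\sha}$, and $\Res_{x_r=0}$ does not commute with either of these on the nose: pushing the residue past $\Delta_{\sha}^{p,q}$ produces correction terms. The crux is to show that each such correction term is itself the residue of a shuffle relation already satisfied by $f$, and hence vanishes --- the same phenomenon that underlies Lemma \ref{lempropertiesofRes} and Proposition \ref{propRisIharacompatible}, but it must be executed carefully for the weighted dihedral symmetrization $\sum_{i}x_i(\tau\sigma)^iR_r$ that defines $\ff$.
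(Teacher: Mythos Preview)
Your treatment of $\ee$ is correct and coincides with the paper's argument.

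For $\ff$, your plan is not wrong, but you have inverted the difficulty: the shuffle equations, which you flag as the ``hard part,'' are in fact automatic, while the stuffle equations are the only thing requiring an explicit check. The point you are missing is that $\p\ls$ sits inside the image of $\Der^{ev}\to\overline{\p\p}$: by Lemma \ref{lemimageofDer} this image is characterised by the shuffle equations together with cyclic symmetry, and by the lemma immediately following it, dividing by $c_r$ does not affect the shuffle equations. Since any $f\in\p\ls$ is dihedrally symmetric (Lemma \ref{lemfisdihed}), $c_rf$ is a polynomial in the image of $\Der^{ev}$. Now $\ff$ is an honest $\sll_2$-operator on $\Der^{ev}$ and preserves this image; hence $\ff(f)$ automatically satisfies the shuffle equations and has $\overline{c}_{r-1}\ff(f)$ polynomial, with no residue computation needed. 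Your direct pole check is correct but is subsumed by this.

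For the stuffle equations the paper's argument is also cleaner than your plan of pushing $R_r$ through the equations via the cyclic-rotation form $(\ref{sl2asCycRot})$. It works directly with formula $(\ref{ffformula})$: writing $f(\x_1\ldots\x_i^2\ldots\x_n)$ for $\Res_{z=x_i}f(x_1,\ldots,x_i,z,x_{i+1},\ldots,x_n)$, one has $\ff(f)=\sum_i x_i\,f(\x_1\ldots\x_i^2\ldots\x_n)$, and the $(p,q)$th linearized stuffle expression for $\ff(f)$ becomes a sum of terms of the form $x_i\,f(\x_1\ldots\x_i^2\ldots\x_p\sha\x_{p+1}\ldots\x_{p+q})$. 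Because $f$ has poles only along \emph{consecutive} divisors, one can pull the residue outside the shuffle: $f(\x_1\ldots\x_i^2\ldots\x_p\sha\ldots)=\Res_{z=x_i}f(\x_1\ldots\x_i\mathsf{z}\ldots\x_p\sha\ldots)$, which is the residue of a $(p+1,q)$th linearized stuffle relation for $f$ and therefore vanishes. This is the same ``consecutive-poles'' mechanism you allude to, but applied at the point where it is cheapest.
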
 
\begin{proof} We must show that $\sll_2$ preserves the linearized double shuffle equations.
This is clear for the operator $\ee$ by formula $(\ref{eeformula})$,  the fact that $x_1^{-2} \in \p \ls_1$,  and theorem \ref{thmplsclosed} stating that  $\p\ls$  is closed under the Ihara bracket. 
We know that the operator $\ff$ preserves the image of $\Der$ under $(\ref{rhoronder})$, which contains 
$\p \ls$ since the linearized double shuffle equations imply cyclic symmetry.  It follows from  lemma  \ref{lemimageofDer}, that 
$\ff(f)$ satisfies the shuffle equations for all $f\in \p\ls$, and furthermore, that $\overline{c}_n \ff(f)$  has no poles (this also follows easily from  $(\ref{ffformula})$).
Therefore, it is enough to show that if $f$ satisfies the linearized stuffle equations, the same is true of 
$\ff(f)$. For this, we shall use the notation
$$ f(\x_1 \ldots \x_{i-1} \x_i^2\x_{i+1} \ldots \x_n) \quad \hbox{ for }  \quad \Res_{z=x_i} f(x_1,\ldots, x_{i-1}, x_i, z, x_{i+1},\ldots, x_n)$$
With this convention, we have 
$$\ff(f)(x_1,\ldots, x_n) = \sum_{i=1}^n x_i f( \x_1 \ldots \x_i^2 \ldots x_n)$$
The $(p,q)^{\mathrm{th}}$ linearized stuffle equation (see $(\ref{linstuffpqequation})$) for $\ff(f)$ reads
\begin{multline}  \label{inproofofsl2}
\ff(f) (\x_1 \ldots \x_p \sha \x_{p+1} \ldots \x_{p+q}) = \sum_{i=1}^p x_i f(\x_1 \ldots \x_i^2 \ldots \x_p \sha \x_{p+1} \ldots \x_{p+q}) \\
+\sum_{i=p+1}^n x_i f(\x_1 \ldots  \x_p \sha \x_{p+1} \ldots \x_i^2 \ldots \x_{p+q})
\end{multline}
However, $f$ only has poles along divisors $x_i = x_j$ where $i$ and $j$ are consecutive. It follows that, with the obvious notation where the symbol $\mathsf{z}$  stands for the variable $z$, 
$$ f(\x_1 \ldots \x_i^2 \ldots \x_p \sha \x_{p+1} \ldots \x_{p+q})  = \Res_{z=x_i} f( \x_1 \ldots  \x_i \mathsf{z} \ldots \x_p \sha \x_{p+1} \ldots \x_{p+q})$$
The right-hand side is a $(p+1, q)^{\mathrm{th}}$-linearized stuffle equation and vanishes. The same holds for the second term in 
$(\ref{inproofofsl2})$, and hence $\ff(f) (\x_1 \ldots \x_p \sha \x_{p+1} \ldots \x_{p+q} ) = 0$.  
\end{proof}
The maps $\ee, \ff$ are compatible with the residue filtration in the following sense.
\begin{lem} We have $\ee: \RR_k \p \ls \rightarrow \RR_{k+1} \p \ls$  and $\ff : \RR_{k} \p \ls \rightarrow \RR_{k} \p \ls$.
\end{lem}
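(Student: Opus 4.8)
The plan is to handle the two operators separately; the statement for $\ee$ is essentially formal, while the one for $\ff$ requires an explicit commutation identity with the residue maps $R_m$ that define the filtration $\RR$. For $\ee$: by $(\ref{eeformula})$ we have $\ee(f)=\{x_1^{-2},f\}$, and $x_1^{-2}\in\RR_1\overline{\p\p}_1$ (this is Remark~\ref{remarkonRRfiltration}: since $R_0$ is the zero map, $\RR_1\overline{\p\p}_1=\overline{\p\p}_1$, whereas $R_1x_1^{-2}=1$ shows $x_1^{-2}\notin\RR_0$). As the filtration on $\p\ls$ is induced from $\overline{\p\p}$, $f\in\RR_k\p\ls_r$ lies in $\RR_k\overline{\p\p}_r$, so Proposition~\ref{propRisIharacompatible} gives $\{x_1^{-2},f\}\in\RR_{k+1}\overline{\p\p}_{r+1}$; intersecting with $\p\ls_{r+1}$ and using that the preceding lemma already places $\ee(f)$ in $\p\ls$ finishes this half.

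For $\ff$ the crucial step will be to prove that for any depth-$m$ rational function $g$ with $\overline{c}_m g\in\Q[x_1,\dots,x_m]$ and $m\ge 3$ — in particular for $g\in\p\ls_m$, which then has weight $\ge 0$ so that $R_m=\Res_{x_m=0}$ — one has, with $\ff$ given by the formula $(\ref{ffformula})$ (equivalently $(\ref{sl2asCycRot})$),
$$R_{m-1}\bigl(\ff(g)\bigr)=\ff\bigl(R_m g\bigr)\ .$$
I would prove this by a direct residue computation: writing $\ff(g)$ as $\sum_{i=1}^{m-1}x_i\,\Res_{z=x_i}g(x_1,\dots,x_i,z,x_{i+1},\dots,x_{m-1})$, the outer residue $R_{m-1}=\Res_{x_{m-1}=0}$ acts on the last slot of $g$ and commutes with the inner residue $\Res_{z=x_i}$ for every $i\le m-2$, so those terms reassemble exactly into $\ff(R_m g)$; and the $i=m-1$ term vanishes because the pole structure $\overline{c}_m g\in\Q[x_1,\dots,x_m]$ forces $\Res_{z=x_{m-1}}g(\dots,x_{m-1},z)$ to have denominator dividing $x_1(x_2-x_1)\cdots(x_{m-1}-x_{m-2})x_{m-1}$, hence at most a simple pole along $x_{m-1}=0$ (this is where $m\ge 3$ is used: then $x_{m-1}-x_{m-2}$ does not vanish at $x_{m-1}=0$), so multiplying by $x_{m-1}$ makes it regular there.

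Granting this identity, the conclusion for $\ff$ will follow by iteration: the hypothesis ``$\overline{c}_m g$ is a polynomial'' is visibly preserved by $g\mapsto R_m g$ (setting the last variable to $0$ removes precisely the factor $x_m$), so if $f\in\RR_k\p\ls_r$, applying the identity $k+1$ times turns $R_{r-1-k}\cdots R_{r-1}\ff(f)$ into $\ff$ of $R_{r-k}\cdots R_{r-1}R_r f$, which is $0$ by definition of $\RR_k$; since the preceding lemma already gives $\ff(f)\in\p\ls_{r-1}$, we get $\ff(f)\in\RR_k\p\ls_{r-1}$. There is one bookkeeping point: the identity needs depth $\ge 3$, but when $k$ is close to $r$ the target $\RR_k\p\ls_{r-1}$ is all of $\p\ls_{r-1}$ — using $\RR_{m-1}\p\ls_m=\RR_m\p\ls_m=\p\ls_m$ for $m\ge 2$, which follows from $\gr^W_{-1}\p\ls_m=0$ (Lemma~\ref{lemweightzeroofpls}) — so only the cases $k\le r-3$ (where every application of the identity is in depth $\ge 3$) need an argument, and the low-depth cases $r\le 2$ are immediate since $\ff$ kills $\p\ls_1$ and annihilates polynomials in two variables ($\ff(\RR_0\p\ls_2)=0$).

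The main obstacle will be the vanishing of the $i=m-1$ diagonal term in the commutation identity, i.e.\ checking that the ``pinched'' residue $\Res_{z=x_{m-1}}g(\dots,x_{m-1},z)$ acquires only a simple, not a double, pole along $x_{m-1}=0$. This is precisely the rational-function shadow of the fact that $\ff$ lowers rather than raises the residue filtration, and it is exactly where the hypothesis $m\ge 3$ enters: in depth $2$ the endpoint divisor $x_1=0$ is the divisor $x_1-x_0$ itself, the pinched residue genuinely becomes a double pole, and the identity fails — which is why that case must be, and can be, treated directly.
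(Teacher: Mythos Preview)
Your proof is correct and follows essentially the same approach as the paper. Both arguments handle $\ee$ via Proposition~\ref{propRisIharacompatible} and $x_1^{-2}\in\RR_1$, and both reduce the $\ff$-statement to the commutation identity $R_{m-1}\circ\ff=\ff\circ R_m$; the paper simply asserts this identity follows ``immediately from formula $(\ref{ffformula})$'', whereas you supply the explicit residue computation (including the vanishing of the diagonal $i=m-1$ term) and take extra care with the low-depth edge cases.
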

\begin{proof} Since $\ee$ is given by the adjoint action of $x_1^{-2}\in \RR_1 \p \ls_1$,  the first statement follows immediately from the fact that 
the residue filtration is stable under the Ihara bracket (proposition \ref{propRisIharacompatible}). For the second statement, one sees immediately from formula 
 $(\ref{ffformula})$ that $\ff \circ R_d = R_{d-1} \circ \ff$. In particular,  $\ff$ respects the filtration $\RR$.
\end{proof}

\section{Surjectivity conjecture} \label{sectSurj}

We study the elliptic analogue of Zagier's conjecture for the polar linearized double shuffle equations, and  prove it  in depths $\leq 3$ and in some stable limit.

\subsection{The surjectivity conjecture}
In \S\ref{sectEisder} we constructed  an injective  map
$$\eis \To \p\ls$$
of bigraded Lie algebras which is an isomorphism in depth one: $\eis_1 \cong \p \ls_1$.
Since $\eis$ is by definition generated by the elements $\check{\varepsilon}_{2n}$, conjecture \ref{MainConjVersion1} is equivalent to
\begin{conj}  \label{conjueisispls} The map $\eis \cong \p \ls$ is an isomorphism.
\end{conj}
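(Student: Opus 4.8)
The plan is to derive the conjecture from the \emph{surjectivity} of the map $\eis\to\p\ls$: injectivity and the depth-one isomorphism $\eis_1\cong\p\ls_1$ are already established in \S\ref{sectEisder}, so what remains is to show $\p\ls_r\subseteq\mathrm{im}(\eis_r)$ for all $r\geq 2$. This is exactly the reformulation of conjecture \ref{MainConjVersion1} that every solution to the polar linearized double shuffle equations is an iterated Ihara bracket of the elements $x_1^{2n-2}$, $n\geq 0$. I would attack it by a double induction on the weight and the depth, organised around the residue filtration $\RR$.

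\textbf{The inductive skeleton.} Fix $r\geq 2$ and assume surjectivity in all depths $<r$. Given $f\in\p\ls_r=\RR_r\,\overline{\p\p}_r\cap\p\ls_r$, one peels off its poles using $R_r$. By the corollary following theorem \ref{thmcycpoles}, $R_r f$ lies in $\p\ls_{r-1}$, hence by the inductive hypothesis in $\mathrm{im}(\eis_{r-1})$. The decisive step is a \emph{lifting lemma}: every element in the image of $\eis_{r-1}$ inside $\p\ls_{r-1}$ equals $R_r(\tilde g)$ for some $\tilde g$ in the image of $\eis_r$. Granting this, $f-\tilde g\in\RR_{r-1}\p\ls_r$, and since $\RR$ is compatible with the Ihara bracket (proposition \ref{propRisIharacompatible}) one iterates, lowering the residue level by one at each stage, until landing in $\RR_0\p\ls_r=\ls_r$.

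\textbf{The lifting lemma.} Here I would use the $\sll_2$-action of \S\ref{sectEisder}: the operator $\ee=\{x_1^{-2},-\}$ raises the residue level ($\ee\colon\RR_k\p\ls\to\RR_{k+1}\p\ls$), $\ff$ preserves it, and by $(\ref{sl2asCycRot})$ the action of $\ff$ on a depth-$r$ element is $\sum_{i=1}^{r-1}x_i(\tau\sigma)^iR_r$, i.e. an explicit operator built from the residue $R_r$ and cyclic rotations. Composing, $\ff\,\ee$ on $\tilde g\in\mathrm{im}(\eis_r)$ recovers $R_r\tilde g$ up to an invertible combination of cyclic rotations, which should allow one to invert "$R_r$ on $\mathrm{im}(\eis)$" after decomposing both $\eis$ and $\p\ls$ into irreducible $\sll_2$-modules and matching highest-weight vectors. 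In depths $\leq 3$ this suffices for the full argument (theorem \ref{thmdepth3}): there $\ls_r$ contains no exceptional generators, so the base case $\ls_r\subseteq\mathrm{im}(\eis_r)$ reduces to the classical fact that $\gr_{\dd}\gm$ is generated by the $\check{\varepsilon}_{2n}$ in low depth, checked directly.

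\textbf{The main obstacle.} The hard point is the base of the residue induction in depth $r\geq 4$, namely $\ls_r\subseteq\mathrm{im}(\eis_r)$. Already $\ls_4$ contains the exceptional elements $\overline{\e}_f$ attached to cusp forms $f\in\Sf$ (\S\ref{sectExotic}), which are by construction \emph{not} Ihara brackets of the $x_1^{2n}$ \emph{within $\ls$}; the conjecture predicts they become so once poles along $x_i=0$ are admitted. Establishing this requires either (a) an explicit expression for $\overline{\e}_f$ as an Ihara bracket involving $x_1^{-2}$ --- essentially a "divided" form of Pollack's relations among the $\check{\varepsilon}_{2n}$ --- or (b) an a priori upper bound on the dimensions of the weight-graded pieces of $\p\ls_r$ matching those of $\mathrm{im}(\eis_r)$, i.e. structural control of the solution space of the polar linearized equations that is not currently available. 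Absent either, the realistic output is an unconditional proof in depths $\leq 3$, together with a stable-range statement (fix $r$, let the weight grow, where the bracket map becomes surjective onto the relevant graded pieces for dimensional reasons); the full conjecture remains open and tied to Broadhurst--Kreimer, consistent with the fact that $\eis$ and $\p\ls$ carry the same conjectural homology ($H_1$ the $\check{\varepsilon}_{2n}$, $H_2$ the cusp forms, $H_{\geq 3}=0$).
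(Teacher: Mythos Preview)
This statement is a \emph{conjecture}, not a theorem: the paper does not prove it.  What the paper establishes is the partial result (theorem \ref{thmdepth3}) in depths $\leq 3$ and the stable-range statement $(\ref{stablelimit})$, and you correctly land on exactly this as ``the realistic output''.  So your overall assessment of what is provable and what the obstacle is---the base case $\ls_r\subset\mathrm{im}(\eis_r)$ in depth $\geq 4$, tied to the exceptional elements and hence to Broadhurst--Kreimer---matches the paper.

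There is, however, a concrete gap in your inductive skeleton.  You write that ``$R_r f$ lies in $\p\ls_{r-1}$'' and cite the corollary after theorem \ref{thmcycpoles}; but that corollary concerns elements of $\Lo$, not of $\p\ls$, and in fact the residue of a general element of $\p\ls_r$ does \emph{not} land in $\p\ls_{r-1}$.  Lemma \ref{lemRespreservesStuffle} shows the residue preserves the linearised stuffle equations, but the shuffle equations behave differently: the paper's depth-3 argument shows that $R_3$ on $\gr^{\RR}_1\p\ls_3$ lands in the space $C_2$ of antisymmetric polynomials with vanishing cyclic sum, which strictly contains $\ls_2$.  So your ``lifting lemma'' cannot be formulated as surjectivity of $R_r$ onto $\mathrm{im}(\eis_{r-1})$; one must instead control the image inside a larger target.

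Accordingly the paper's proof of the depth-$3$ case proceeds not by your inductive lifting scheme but by explicit dimension-counting on each graded piece $\gr^{\RR}_k\p\ls_3$: the bottom piece $\gr^{\RR}_0\p\ls_3=\ls_3$ is handled by Goncharov's theorem, the top piece $\gr^{\RR}_{d-1}$ by the explicit spanning set $\ee^{d-1}(x_1^{2n})$, and the middle piece $\gr^{\RR}_1\p\ls_3$ by bounding above via the injective residue map into $C_2$ and below by exhibiting enough highest-weight vectors $h_{a,b}$ (whose kernel is identified with \emph{odd} period polynomials).  Your $\sll_2$-highest-weight idea is used, but as one ingredient in a dimension count rather than as an abstract lifting mechanism.
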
 
In other words, the Lie algebra of geometric derivations is isomorphic to the polar solutions to the linearized double shuffle equations.
 Since $\gr^{\RR}_0 \p \ls \cong \ls$,  the previous conjecture, combined with Zagier's conjecture $\ls  \cong  \gd$  would imply that
\begin{equation} \label{conjzerothpieces} 
\gr^{\RR}_0 \eis \cong   \gd\ .
\end{equation}Ê
In other words, the depth-graded motivic Lie algebra should be  precisely the $0$-th degree subalgebra (with respect to $\RR$) of the Lie algebra of geometric derivations.

In this section, we present the following evidence for this conjecture.
\begin{thm} \label{thmdepth3} Conjecture $\ref{conjueisispls}$ is true in depths $\leq 3$: 
$$\eis_d \cong \p\ls_d  \quad \hbox{ for } d \leq 3\ .$$
 \end{thm}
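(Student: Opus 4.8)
The plan is to prove the isomorphism $\eis_d \cong \p\ls_d$ separately in each depth $d\leq 3$, using the fact (from \S\ref{sectEisder}) that $\eis\hookrightarrow \p\ls$ is an injective morphism of bigraded Lie algebras which is already an isomorphism in depth $1$, so that the content of the theorem is \emph{surjectivity}. Since both sides are bigraded by weight, it suffices to fix a weight $N$ and compare dimensions of the weight-$N$ pieces $\eis_{d,N}$ and $\p\ls_{d,N}$. The strategy is: (i) compute $\dim \p\ls_{d,N}$ directly from the defining functional equations of the polar linearized double shuffle equations; (ii) compute $\dim \eis_{d,N}$, i.e. the dimension of the degree-$(d,N)$ part of the Lie subalgebra of $\Der$ generated by the $\check\varepsilon_{2n}$, using the relations among the $\check\varepsilon_{2n}$ (for which one can invoke Pollack's results \cite{P}, or reprove what is needed in depths $\leq 3$); (iii) check the two dimensions agree. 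Because the embedding is already known, equality of dimensions forces the map to be onto.

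For depth $2$ I would proceed as follows. An element of $\p\ls_2$ of weight $N$ is a rational function $\overline f\in \overline{c}_2^{-1}\Q[x_1,x_2]$, with $\overline c_2 = x_1(x_2-x_1)x_2$, satisfying the depth-$2$ shuffle equation $\overline f(x_1,x_1+x_2)+\overline f(x_2,x_1+x_2)=0$, the linearized stuffle equation $\overline f(x_1,x_2)+\overline f(x_2,x_1)=0$, together with the dihedral/evenness constraints of Definition~\ref{defncr} and the depth-$1$ parity input $\Phi^{(1)}_{e_0^i e_1}=0$ for $i$ odd. By Lemma~\ref{lemfisdihed} such an $\overline f$ spans a sign representation of $D_3$; writing $\overline f = \overline c_2^{-1} P$ with $P$ a polynomial one reduces to a finite linear-algebra problem in each weight, and the classical computation (as in \cite{Depth}, \S for the non-polar case, plus the single extra pole class $x_1^{-2}$ from $\p\ls_1$) shows that $\p\ls_2$ is spanned by the brackets $\{x_1^{2a},x_1^{2b}\}$ modulo the even period polynomial relations such as $(\ref{IharaRel})$, together with $\{x_1^{-2},x_1^{2n}\}$. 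This is exactly the image of $\eis_2$: indeed Lemma~\ref{lemimagesofepsilons} identifies $\check\varepsilon_{2n}\mapsto x_1^{2n-2}$, and the relations among $\{\check\varepsilon_{2a},\check\varepsilon_{2b}\}$ are precisely the even period polynomial relations (the ``Ihara--Takao'' relations), so $\dim\eis_{2,N}=\dim\p\ls_{2,N}$.

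For depth $3$ the same scheme applies but the bookkeeping is heavier. One writes a general element of $\p\ls_3$ as $\overline c_3^{-1}P$ with $\overline c_3 = x_1(x_2-x_1)(x_3-x_2)x_3$ (together with the possible $x_1^{-2}$-type poles forced by the depth-filtration and the residue structure of Theorem~\ref{thmcycpoles}), imposes the two shuffle equations $(1,2)$ and $(2,1)$, the linearized stuffle equations, and the $D_4$ dihedral symmetries $(\ref{dihedsymmetries})$, and solves the resulting finite linear system weight by weight. On the $\eis$ side one uses that $\eis_3$ is spanned by triple brackets $\{\check\varepsilon_{2a},\{\check\varepsilon_{2b},\check\varepsilon_{2c}\}\}$ subject to Jacobi and to the relations induced in depth $3$ by the depth-$2$ period-polynomial relations (plus the centrality $(\ref{eps2central})$ of $\check\varepsilon_2$, which must be handled when $0\in\{2a,2b,2c\}$ versus $2\in\{2a,2b,2c\}$). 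Matching generating functions, one should find that both Poincar\'e series equal the same explicit rational function (involving $\mathbb{S}(s)$ of $(\ref{cuspgf})$ in the relevant range), which together with injectivity gives the isomorphism in depth $3$.

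The main obstacle is step (i)--(ii) in depth $3$: producing an honest, complete dimension count of $\p\ls_{3,N}$ from the functional equations, and matching it against the relations in $\eis_3$. The difficulty is that the polar constraint allows denominators $\overline c_3$ and (via the residue filtration $\RR$) a controlled amount of the $x_1^{-2}$ pole, so one must carefully separate the ``polynomial part'' (which reproduces the known depth-$3$ computation of $\ls_3$ in \cite{Depth}) from the genuinely polar part, and show the polar part is exactly accounted for by brackets containing $\check\varepsilon_0$; the residue identities of Lemma~\ref{lempropertiesofRes} and Proposition~\ref{propRisIharacompatible} are the tools that make this separation tractable. Once the two graded dimensions are shown to coincide in every weight, surjectivity of the already-injective map $\eis_d\to\p\ls_d$ is immediate for $d\leq 3$.
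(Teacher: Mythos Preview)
Your overall strategy---prove surjectivity by matching graded dimensions---is the same as the paper's, but the execution you outline is too vague at the crucial depth-$3$ step, and it misses the structural tools that actually make the count work.

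The paper does \emph{not} attempt to solve the polar linearised double shuffle system in depth $3$ ``weight by weight'' as a finite linear-algebra problem; that is not tractable as stated. Instead it stratifies $\p\ls_3$ by the residue filtration $\RR$ and proves $\gr^{\RR}_r \eis_3 \cong \gr^{\RR}_r \p\ls_3$ for $r=0,1,2$ separately. The top piece $r=2$ is already handled by the general fact $(\ref{Req4})$. The bottom piece $r=0$ is $\ls_3$ itself, and here the paper invokes Goncharov's exact computation of $\dim\ls_3$ (via the Voronoi complex for $SL_3(\Z)$) to conclude that $\ls_3$ is generated by $\ls_1$; this is a genuinely deep input that your proposal does not mention and cannot bypass. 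The middle piece $r=1$ is handled by showing that the residue $R_3$ embeds $\gr^{\RR}_1\p\ls_3$ into an explicit space $C_2$ of antisymmetric cyclic-sum-zero polynomials in two variables, whose Poincar\'e series is easy; then the $\sll_2$-action gives a splitting $\gr^{\RR}_1\p\ls_3 \cong \ee(\ls_2)\oplus H_{1,3}$, and explicit highest-weight vectors $h_{a,b}$ are constructed whose kernel under the residue map is exactly the space $\So$ of odd period polynomials. Matching the resulting generating series against that of $C_2$ forces equality.

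Your plan to compute $\dim\eis_{3,N}$ from Pollack's relations is also the wrong direction: the relations in $\eis$ are not known in closed form (Pollack gives some, not all), so one cannot get an \emph{upper} bound on $\dim\eis_3$ this way. The paper goes the other way: it exhibits enough explicit elements in $\eis_3$ (namely the $\ee(\ls_2)$ and the $h_{a,b}$) to saturate an independently-obtained upper bound on $\dim\p\ls_3$. Injectivity then finishes. To make your argument go through you would need to supply both Goncharov's theorem for the polynomial part and an analogue of the $C_2$-embedding for the polar part; your references to Lemma~\ref{lempropertiesofRes} and Proposition~\ref{propRisIharacompatible} are in the right spirit but do not by themselves give the required upper bound on $\gr^{\RR}_1\p\ls_3$.
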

The case of depth four, and in particular equation $(\ref{conjzerothpieces})$,  is not known and is  related in a subtle way with the  generalized Broadhurst-Kreimer conjecture \ref{conjBKZls}.
In a quite different direction, we show that $\gr^{\RR}_{d-k} \p \ls_d$  stabilizes as $d \rightarrow \infty$, for $k\leq 2$ and deduce that
conjecture $\ref{conjueisispls}$ is true in the stable limit: 
\begin{equation} \label{stablelimit}
\gr_{d-k}^{\RR} \,\eis_d \cong \gr_{d-k}^{\RR} \,\p\ls_d  \quad \hbox{ whenever } k\leq 2\ .
\end{equation}

\subsection{Some basic facts about $\gr^{\RR}_\bullet \p \ls$}
We begin with some general facts about the associated graded of $\p \ls$ for the residue filtration $\RR$:
\begin{eqnarray}
\gr^{\RR}_0 \p \ls_d &  = &  \ls_d  \label{Req1} \\
\gr^{\RR}_{k} \p \ls_d & = & 0 \hbox{ if } k \geq d  \label{Req2} \\  
 \gr^{\RR}_0 \p \ls_d &\overset{\ee}{\hookrightarrow}  &  \gr^{\RR}_1 \p \ls_d   \label{Req3}
\end{eqnarray} 
The first two equations are immediate from the definitions. 
For the third equation, recall that $\ff$ factors through the residue map by $(\ref{sl2asCycRot})$, and therefore $\ff$ vanishes on  $\gr^{\RR}_0 \p \ls_d$.
Since the commutator of $\ee$ and $\ff$ acts by multiplication by the weight, 
the operator $\ee$ is injective on $\gr^{\RR}_0 \p \ls_d$.

Let $d\geq k$ and denote the $(d-k)^{\mathrm{th}}$ iterated residue by 
$$ R^{[d-k]}  :  \p \ls_d \To c_k^{-1}\Q [x_1,\ldots, x_k]$$
where $R^{[d-k]} = R_{k+1} \circ \cdots \circ R_d$.   It follows from lemma \ref{lempropertiesofRes} that 
\begin{equation} \label{resEasNabla} 
 R^{[m]} f =0 \quad \Longrightarrow \quad R^{[m]} \ee(f) = - \nabla R^{[m-1]} f\ .
 \end{equation}
As a consequence we show that
\begin{equation} \gr^{\RR}_{d-1} \p \ls_d  \cong  \bigoplus_{2n+1 \geq d}  \ee^{d-1}(x_1^{2n})\, \Q\label{Req4}
\end{equation} 
For this, consider the iterated residue:
\begin{equation} \label{R2toRd} 
R^{[d-1]}: \gr^{\RR}_{d-1} \p \ls_d \To \Q[x_1]
\end{equation} 
which is injective by definition of $\RR$. For reasons of parity (proposition \ref{propparity}), the image is contained in $\Q[x_1^2]$ when $d$ is odd, 
and in $x_1 \Q[x_1^2]$ when $d$ is even. In particular, $ \gr^{\RR}_{d-1} \p \ls_d$ is at most one-dimensional in each weight. By applying $(\ref{resEasNabla})$ we obtain
$$R^{[d-1]} \big(  \ee^{d-1}(x_1^{2n})\big) = (- \nabla_1)^{d-1} x_1^{2n}$$
where $\nabla_1= \partial / \partial x_1$.  This
 proves the surjectivity of $(\ref{R2toRd})$, and in particular, that $\gr^{\RR}_{d-1} \p \ls_d $ is spanned by the classes $\ee^{d-1}(x_1^{2n})$.

\subsection{Proof of surjectivity in depth $\leq 3$}
By equation $(\ref{Req4})$, we have already shown  that  $\gr^{\RR}_{d-1} \p \ls_{d} \cong \gr^{\RR}_{d-1} \eis_{d}$. Therefore to prove theorem \ref{thmdepth3}, it remains to show that
 $\gr^{\RR}_{r} \p \ls_{d} \cong \gr^{\RR}_{r} \eis_{d}$ for $(r,d)=(0,2), (0,3), (1,2)$.
  \vspace{0.1in}
 
\emph{Proof that $\gr^{\RR}_0 \p \ls_d = \gr^{\RR}_0 \eis_d$  for $d=2,3$.} 
By $(\ref{Req1})$,  $\gr^{\RR}_0 \p \ls_d= \ls_d$ is just the space of solutions to the linearized double shuffle equations. 
Furthermore, we know that $\ls_1 = \RR_0 \eis_1$, since both are generated by $x_1^{2n}$, for $n\geq 0$. It therefore
 suffices to show that  $\ls_d$ is spanned by $\ls_1$ in depths 2 and 3. 
We have the following exact sequences which are taken from \S7.2 and \S7.3 in \cite{Depth}, where $\Lie_{\!3}(V)$ denotes the degree $3$ component (triple brackets)
of the free Lie algebra on some vector space $V$.
\begin{eqnarray} 
0 \To \Sf \To \ls_1 \wedge \ls_1 \To \ls_2 \To 0 \label{ls_2spanned} \\
0 \To \Sf \otimes_{\Q} \ls_1 \To \Lie_{\!3}(\ls_1) \To \ls_3 \To 0
\end{eqnarray}
Here, $\Sf$ is the space of linear relations between $\{x_1^{2m}, x_1^{2n}\}$, and is isomorphic to the space of cusp forms.
The first equation is relatively straightforward to prove, the second is a consequence of a theorem due to Goncharov who computed the precise dimension of $\ls_3$ in all weights 
using a variant of the Voronoi complex for  $SL_3(\Z)$ \cite{GG}.

\begin{cor}Let $d(s)$ denote the Poincar\'e series for  $\ls_2$.  Then
\begin{equation} \label{gsd}
d(s) =   {s^8  \over  (1-s^2)(1- s^6)} \end{equation}
\end{cor}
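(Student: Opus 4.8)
The plan is to read $d(s)$ directly off the short exact sequence $(\ref{ls_2spanned})$, which exhibits $\ls_2$ as the cokernel of the inclusion $\Sf\hookrightarrow \ls_1\wedge\ls_1$. Since the Poincar\'e series is additive on short exact sequences of graded vector spaces, $d(s)$ is the Poincar\'e series of $\ls_1\wedge\ls_1$ minus that of $\Sf$, both taken for the weight grading; the whole computation reduces to book-keeping with generating functions, the real input being the cited exact sequence.

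First I would record the Poincar\'e series of $\ls_1$. By the description of depth-one solutions together with the parity constraint (Proposition \ref{propparity}), $\ls_1=\bigoplus_{n\geq 1}\Q\,x_1^{2n}$, and an element of depth $1$ given by a polynomial of degree $2n$ has weight $2n+1$; hence $\ls_1$ has Poincar\'e series $P(s)=\sum_{n\geq 1}s^{2n+1}=s^{3}/(1-s^{2})$. For a graded vector space with Poincar\'e series $P(s)$ the second exterior power has Poincar\'e series $\tfrac12\bigl(P(s)^{2}-P(s^{2})\bigr)$; substituting $P(s)=s^{3}/(1-s^{2})$ and simplifying gives
\[
\tfrac12\Bigl(\frac{s^{6}}{(1-s^{2})^{2}}-\frac{s^{6}}{1-s^{4}}\Bigr)=\frac{s^{8}}{(1-s^{2})(1-s^{4})}\ .
\]

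Next I would pin down the weight in which a period-polynomial relation sits inside $\ls_1\wedge\ls_1$: a generator of $\Ss^{\sfe}_{2n}$ is an even homogeneous polynomial of degree $2n-2$, and the corresponding relation (as in $(\ref{IharaRel})$ for $s_{12}$) is a linear combination of brackets $\{x_1^{2a},x_1^{2b}\}$ with $2a+2b=2n-2$, each of weight $2a+2b+2=2n$. Hence $\Sf$, viewed as a graded subspace of $\ls_1\wedge\ls_1$, has Poincar\'e series $\sum_{n}(\dim\Ss^{\sfe}_{2n})\,s^{2n}=\mathbb{S}(s)=s^{12}/\bigl((1-s^{4})(1-s^{6})\bigr)$, using $(\ref{cuspgf})$ and the Eichler-Shimura equality $\dim\Ss^{\sfe}_{2n}=\dim S_{2n}$. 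Subtracting and clearing denominators over $(1-s^{2})(1-s^{4})(1-s^{6})$, the numerator collapses to $s^{8}(1-s^{6})-s^{12}(1-s^{2})=s^{8}-s^{12}=s^{8}(1-s^{4})$; cancelling the factor $(1-s^{4})$ leaves $d(s)=s^{8}/\bigl((1-s^{2})(1-s^{6})\bigr)$, as claimed.

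The derivation is mechanical; the one place demanding care is the weight accounting in the last step --- matching the degree of an even period polynomial to the weight of the associated depth-two bracket, so that $\Sf$ is placed in the correct graded piece. The genuine content lies one level down, in the exactness of $(\ref{ls_2spanned})$: the surjectivity of $\{\,,\,\}\colon\ls_1\wedge\ls_1\to\ls_2$ (equivalently, that $\ls_2$ is generated by depth-one elements) and the identification of its kernel with $\Sf$. Both are imported from \cite{Depth}, so no further obstacle remains in the argument given here.
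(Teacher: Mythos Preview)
Your proof is correct and follows essentially the same approach as the paper: both compute $d(s)$ from the exact sequence $(\ref{ls_2spanned})$ by subtracting $\mathbb{S}(s)$ from the Poincar\'e series of $\ls_1\wedge\ls_1$, using $d_1(s)=s^3/(1-s^2)$ for $\ls_1$. You supply more detail on the algebraic simplification and on the weight placement of $\Sf$ inside $\ls_1\wedge\ls_1$, but the argument is the same.
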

\begin{proof}
 The  Poincar\'e series for $\ls_1$ is $d_1(s)= {s^3 \over 1-s^2}$. It follows immediately from equation $(\ref{ls_2spanned})$ that
$d(s) ={1\over 2}\big( d_1(s^2) -d_1(s)\big) - \mathbb{S}(s)$, which gives $(\ref{gsd})$.
\end{proof}

\vspace{0.1in}
\emph{Proof that $\gr^{\RR}_1 \p \ls_3 = \gr^{\RR}_1 \eis_3$.}  We shall identify, in the usual manner, the space of translation-invariant polynomials 
in three variables $y_0,y_1,y_2$ with the space of polynomials in $x_1=y_1-y_0,x_2=y_2-y_0$. Let $C_{2}\subset \Q[x_1,x_2]$ denote the space of polynomials in two variables which are antisymmetric and whose cyclic sum is zero: i.e.,
\begin{equation}\label{fcycsumzero} 
f(y_0,y_1,y_2)+ f(y_1,y_2,y_0)+f(y_2,y_0,y_1)=0\ .
\end{equation}
Since the symmetries defining $C_2$ are contained in  the permutation group on $y_0,y_1,y_2$, there is a projection map
$$ \pi_2 :\Q[x_1,x_2] \To  C_2
$$ 
which satisfies $\pi_2^3=\pi_2$. It is given explicitly by
\begin{eqnarray}  \label{pi2def}
\pi_2(f)= f(x_1,x_2) -f(x_2,x_1)- f(x_2-x_1, -x_1)+f(x_1-x_2, -x_2)\ . \nonumber
\end{eqnarray}
\begin{lem} 
The residue defines an injective map
\begin{equation} 
R_3  :\gr^{\RR}_1 \p \ls_3 \To C_{2}\ .
\end{equation}Ê 
\end{lem}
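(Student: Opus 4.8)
The plan is to produce the map $R_3$ by applying the residue operator $R_3$ (equivalently $\Res_{x_3=0}$, since we may work with reduced functions of weight $\geq 0$) and then to check that its image lands in $C_2$ and that its kernel on $\gr^{\RR}_1 \p\ls_3$ is trivial. Injectivity is in fact immediate from the very definition of the residue filtration: an element of $\gr^{\RR}_1 \p\ls_3$ is represented by $f\in\RR_1\p\ls_3$, and $R_3 f\in\RR_0\p\ls_2$; the class of $f$ in $\gr^{\RR}_1$ is zero precisely when $f\in\RR_0\p\ls_3$, i.e. when $R_3 f=0$. So the only real content is the assertion that the image of $R_3$ lands inside $C_2\subset\Q[x_1,x_2]$, and — implicit in calling it a map to $C_2$ rather than to all of $\RR_0\p\ls_2$ or $c_2^{-1}\Q[x_1,x_2]$ — that $R_3 f$ is actually a polynomial in two variables satisfying the cyclic relation $(\ref{fcycsumzero})$ and antisymmetry.

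First I would check that $R_3 f$ is a polynomial. An element $f\in\p\ls_3$ lies in $c_3^{-1}\Q[y_0,y_1,y_2,y_3]$, so a priori $R_3 f = \Res_{x_3=0} f$ lies in $c_2^{-1}\Q[x_1,x_2]$. But $f$ satisfies the linearized double shuffle equations; in particular it satisfies the linearized stuffle equations, whose right-hand sides in this formalism are zero, so $f$ is dihedrally symmetric in the sense of Lemma \ref{lemfisdihed}. By the argument used in the proof of Lemma \ref{lemkerresispoly}, the $(1,1)$-type linearized stuffle relation in depth $2$ applied to $R_3 f$ (using $(\ref{Resfstug})$ to commute the residue past the stuffle product) forces $R_3 f$ to be a solution of the depth-$2$ linearized stuffle equation, hence dihedrally symmetric, hence — since its only possible poles are along the dihedral orbit of $x_2=0$ and the residue along $x_2=0$ is zero after applying $R_3$ once more — a genuine polynomial. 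Alternatively, and more directly, one sees from the residue formulas in Lemma \ref{lempropertiesofRes} together with $\RR_1\p\ls_3\ni f$ that $R_2 R_3 f = 0$, which combined with dihedral symmetry of $R_3 f$ and the fact that the $D_3$-orbit of $x_2=0$ sweeps out all divisors $x_i=x_{i+1}$ kills all poles.

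Next I would identify the symmetries: $R_3 f$ inherits antisymmetry and the cyclic relation. Writing $f$ in the unreduced variables $y_0,y_1,y_2,y_3$ with dihedral symmetry under $D_4$, the specialization $y_3\mapsto y_0$ (which is what $R_3$ amounts to on the polynomial $c_3 f$, up to the $(y_3-y_0)$ factor) sends the $D_4$-symmetries that fix the pair $\{y_3,y_0\}$ to symmetries of the three remaining variables $y_0,y_1,y_2$; these are precisely the antisymmetry and the cyclic relation $(\ref{fcycsumzero})$ defining $C_2$. This is the routine but slightly fiddly step — one must keep track of which elements of $D_4$ survive the specialization and with which signs, and confirm the sign of the residue matches the sign $\varepsilon(\mu)$ in Proposition \ref{propdihedbracket}. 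The main obstacle is exactly this bookkeeping of dihedral symmetries under specialization, together with making sure the pole-cancellation argument (that $R_3 f$ is polynomial, not merely in $c_2^{-1}\Q[x_1,x_2]$) is airtight; once those are in place, injectivity is formal and the lemma follows.
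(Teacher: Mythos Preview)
Your injectivity argument is correct, as is the observation that antisymmetry of $R_3 f$ follows from the residue of the stuffle equation. The gap is in the derivation of the cyclic relation and of polynomiality. The stabilizer of $\{y_0,y_3\}$ in $D_4$ consists only of the identity and the reflection $(y_0\,y_3)(y_1\,y_2)$; after specializing $y_3\to y_0$ this yields precisely the antisymmetry $R_3 f(x_1,x_2)=-R_3 f(x_2,x_1)$ and nothing more. The cyclic relation is a three-term constraint and cannot come from the $D_4$ symmetry of $f$ alone. For the same reason your polynomiality argument fails: the depth-$2$ linearized stuffle equation gives only antisymmetry, not the full $D_3$ symmetry you invoke, and antisymmetry together with $R_2R_3f=0$ kills the poles along $x_1=0$ and $x_2=0$ but not the one along $x_1=x_2$ (an antisymmetric function may have the form $h/(x_1-x_2)$ with $h$ symmetric and $h(x,x)\neq 0$). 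In fact $R_3 f$ does not satisfy the depth-$2$ shuffle equation, so it lies neither in $\p\ls_2$ nor in $\overline{\p\p}_2$.

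The missing ingredient, which the paper supplies, is the depth-$3$ shuffle equation. Since $\sharp$ sends $x_3$ to $x_1+x_2+x_3$, the pole of $f$ at $x_3=0$ appears in the equation $f^{\sharp}(\x_1 \sha \x_2 \x_3)=0$ along the divisor $x_1+x_2+x_3=0$; taking the residue there yields the additional relation $R_3 f(x_1,x_1+x_2)+R_3 f(x_2,x_1+x_2)+R_3 f(x_2,-x_1)=0$. Combined with antisymmetry and the fact that $R_3 f$ is homogeneous of odd degree (parity), this produces the cyclic relation and supplies what is needed to eliminate the remaining pole along $x_1=x_2$.
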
 
\begin{proof} The image of the residue map
$R_3  :\gr^{\RR}_1 \p \ls_3 \rightarrow \Or_2$
 is contained in the set of homogeneous rational functions of odd degree which satisfy
\begin{eqnarray}
f(x_1,x_2) + f(x_2,x_1) & = & 0 \label{R12equations} \\
f(x_1,x_1+x_2) + f(x_2,x_1+x_2) +f(x_2,-x_1) & = & 0 \ .\nonumber 
\end{eqnarray}
The first equation is  the residue of  the equation $f(\x_1 \sha \x_2 \x_3)=0$ along $x_3=0$ (see lemma \ref{lemRespreservesStuffle}  below); the second is the residue of  
$f^{\sharp}(\x_1 \sha \x_2 \x_3)=0$  (see $(\ref{depth3shuffle})$) along $x_1+x_2+x_3=0$. It is easy to deduce from  $(\ref{R12equations})$ and the oddness of $f$ 
that 
$$f(x_1,x_2) + f(x_2-x_1,-x_1) + f(-x_2,x_1-x_2)=0$$
which is precisely equation $(\ref{fcycsumzero})$.  We know from the definition of  the residue filtration $\RR$ that the residue of an element in $R_3  ( \gr^{\RR}_1 \p \ls_3)$ along $x_2=0$ vanishes. By 
$(\ref{R12equations})$, this implies that it has no residue along $x_1=x_2$ or $x_1=0$ either. 
\end{proof} 
Let $(C_2)_n\subset C_2$ denote the subspace  of homogeneous polynomials of degree $n$.
One easily verifies using the representation theory of the symmetric group $\Sigma_3$ that
$$
\dim_{\Q} (C_2)_n = \lfloor \textstyle{ {n+2 \over 3} }\rfloor\ .
$$If $c_2(s)$ denotes the corresponding generating series, this is equivalent to
\begin{equation}\label{gsc}
c(s) = \sum_n \dim_{\Q} (C_2)_n s^n= {s \over s^4-s^3-s+1} \end{equation}
 Next we compute  lower bounds for the image of $\gr^{\RR}_1 \p \ls_3$ in $C_2$.
 By $(\ref{Req1})$ and $(\ref{Req3})$, there is an  inclusion $\e: \ls_2 \hookrightarrow \gr^{\RR}_1 \p \ls_3$ which is split by $\ff$.  As a consequence there is a decomposition
 of $\gr^{\RR}_1 \p \ls$ as a direct sum:
 \begin{equation} \label{grR13asdirectsum} 
 \gr^{\RR}_1 \p \ls_3 \cong \e( \ls_2) \oplus H_{1,3}
 \end{equation}
 where $H_{1,3}$ is spanned by  highest weight vectors: elements $\alpha$ satisfying $\ff(\alpha)=0$.
 We know by $(\ref{ls_2spanned})$ that $\e(\ls_2) $ is  spanned by elements of the form
 \begin{equation}  \label{els2elements}
 \{ x_1^{-2}, \{ x_1^{2a}, x_1^{2b}\}\} \quad \hbox{  for  } a,b\geq 1.
 \end{equation}  It suffices to exhibit sufficiently many elements in $H_{1,3}$. For this, consider  elements
 \begin{equation} \label{habdef} 
  h_{a,b}= {1 \over 2b} \{x_1^{2a}, \{ x_1^{-2}, x_1^{2b}\}\} +  { 1 \over 2a} \{x_1^{2b}, \{ x_1^{-2}, x_1^{2a}\}\}\ , \quad \hbox{ for } a, b\geq 1\ .
  \end{equation}

     \begin{lem}  \label{lemHlowerbound}  We have $\dim_{\Q} \! \big(H_{1,3}\big)_{2k} \geq h_{2k},$ where $h_{2k}$ is defined by
      \begin{equation}
      \label{gsh}
      h(s) = \sum_{k\geq 0}  h_{2k} s^{2k} = {s \over  (1- s^2)(1 -s^6)} -s \ . \end{equation}
      \end{lem}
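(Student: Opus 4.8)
The statement to prove is a lower bound $\dim_{\Q}(H_{1,3})_{2k}\geq h_{2k}$ where $h(s)=\frac{s}{(1-s^2)(1-s^6)}-s$ and $H_{1,3}$ is the space of highest-weight vectors (killed by $\ff$) inside $\gr^{\RR}_1\p\ls_3$. The natural strategy is to produce an explicit spanning family inside $H_{1,3}$ and to bound the dimension of its span from below. The family is already given by the elements $h_{a,b}$ of $(\ref{habdef})$, so the plan breaks into three parts: (i) check that $h_{a,b}\in \gr^{\RR}_1\p\ls_3$, i.e. that $h_{a,b}$ really lands in residue-degree exactly $1$; (ii) check that $\ff(h_{a,b})=0$, so that $h_{a,b}\in H_{1,3}$; (iii) count the dimension of $\mathrm{span}_{\Q}\{h_{a,b}: a,b\geq 1\}$ in each weight and show it is at least $h_{2k}$.

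For part (i): since $x_1^{-2}\in\RR_1\p\ls_1$ and $x_1^{2a},x_1^{2b}\in\RR_0\p\ls_1$, proposition \ref{propRisIharacompatible} immediately gives $h_{a,b}\in\RR_1\p\ls_3$, so the only content is to check that $h_{a,b}\not\in\RR_0\p\ls_3=\ls_3$, i.e. that its iterated residue $R^{[1]}$ (or equivalently $R_3$) does not vanish. This is a short computation using $(\ref{Resfnabla})$ and $(\ref{ResnablaRes})$: applying $R_3$ to a bracket $\{x_1^{2a},\{x_1^{-2},x_1^{2b}\}\}$ produces $\pm\nabla$ applied to a bracket of polynomials in $\ls_2$, and the symmetrized combination defining $h_{a,b}$ is arranged precisely so that this residue is (generically) nonzero; in fact one can just exhibit its image in $C_2$ via the explicit residue formula of the previous lemma and note it is not identically zero for $a,b\geq 1$. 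For part (ii): by the last displayed lemma before the statement, $\ff$ respects $\RR$ and factors through $R_d$, and moreover $\ff$ is a splitting of $\ee:\ls_2\hookrightarrow\gr^{\RR}_1\p\ls_3$; the quantity $\ff(h_{a,b})$ can be computed directly from $(\ref{ffformula})$ or from $\ff(\{u,v\})$-type Leibniz rules using $\ff(x_1^{2n})=0$ and $\ff(x_1^{-2})=$ (a multiple of $1$, i.e. landing in depth $0$, hence killed). The symmetric combination in $(\ref{habdef})$ with the weights $1/2a$, $1/2b$ is exactly what is needed to cancel the two surviving terms; verifying this is a bookkeeping exercise with the $\sll_2$-action, using $\hh=[\ee,\ff]$ acting by the weight to pin down coefficients.

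For part (iii), the dimension count: the elements $h_{a,b}$ are symmetric in $a\leftrightarrow b$, so in weight $2k$ (where $2a+2b+3-2=2k$, i.e. $a+b=k+\tfrac12$... more precisely weight of $h_{a,b}$ is $2a+2b-1$, and we want $H_{1,3}$ in even weight $2k$, so $2a+2b-1$ must be matched appropriately — I would reconcile the indexing with $(\ref{gsh})$ carefully here) there are roughly $\lfloor$ (number of unordered pairs) $\rfloor$ of them. The generating-series bookkeeping is: pairs $\{a,b\}$ with $a,b\geq1$ contribute $\frac12(d_1(s^2)^2/s^{?}+d_1(s^4)/s^{?})$-type terms where $d_1(s)=s^3/(1-s^2)$ is the Poincaré series of $\ls_1$; after shifting by the weight of $x_1^{-2}$ and simplifying, this should collapse to $\frac{s}{(1-s^2)(1-s^6)}-s$. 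The main obstacle is not the existence of the $h_{a,b}$ but proving \emph{linear independence} of enough of them — a priori there could be relations among the $h_{a,b}$ coming from relations in $\ls_2$ (the cusp-form relations $\Sf$, e.g. the Ihara relation $(\ref{IharaRel})$). The cleanest route around this is to use the injection $R_3:\gr^{\RR}_1\p\ls_3\hookrightarrow C_2$ of the preceding lemma together with the decomposition $(\ref{grR13asdirectsum})$: one computes $R_3(h_{a,b})$ explicitly in $C_2$, exhibits that these images (for $a,b\geq1$, modulo the known image $R_3\e(\ls_2)$) span a subspace of $C_2$ of dimension at least $h_{2k}$ in each weight, using $(\ref{gsc})$ for $\dim(C_2)_n$ and $(\ref{gsd})$ for $\dim\ls_2$. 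Thus the hard step is the explicit computation of the residues $R_3(h_{a,b})\in C_2$ and checking the rank of the resulting family of polynomials — a finite linear-algebra-with-parameters computation, but the one requiring genuine care.
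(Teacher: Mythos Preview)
Your parts (i) and (ii) are essentially what the paper does, though (i) is unnecessary: you only need $h_{a,b}\in\RR_1\p\ls_3$, which is automatic from proposition~\ref{propRisIharacompatible}; the map $V\to\gr^{\RR}_1\p\ls_3$ then lands in $H_{1,3}$ by (ii), and for a \emph{lower} bound it does no harm if some $h_{a,b}$ happen to map to zero. For (ii) the paper uses exactly the ingredients you list: $\ff$ is a derivation, $\ff(x_1^{-2})=0$, and $\ff(\{x_1^{-2},x_1^{2a}\})=2a\,x_1^{2a}$; the coefficients $\tfrac{1}{2a},\tfrac{1}{2b}$ are chosen precisely so the two surviving terms cancel.

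Part (iii) has a genuine gap. You correctly locate the content in controlling the linear relations among the $h_{a,b}$, but your guess that these come from $\Sf$ is wrong, and working ``modulo $R_3\ee(\ls_2)$'' is beside the point: since the $h_{a,b}$ already lie in $H_{1,3}$, their residues are linearly independent from $R_3\ee(\ls_2)$ by the direct-sum decomposition $(\ref{grR13asdirectsum})$, so quotienting changes nothing. What you are missing is the explicit residue computation. Using lemma~\ref{lempropertiesofRes} one finds
\[
R_3\Big(\tfrac{1}{2b}\{x_1^{2a},\{x_1^{-2},x_1^{2b}\}\}\Big)
= x_1^{2a}\circb x_1^{2b-1}-x_1^{2b-1}x_2^{2a}
= \pi_2\big(x_1^{2a}x_2^{2b-1}\big).
\]
Encoding $\sum c_{a,b}h_{a,b}$ by the symmetric polynomial $g(x_1,x_2)=\sum c_{a,b}x_1^{2a-1}x_2^{2b-1}$, this gives $R_3 h=\pi_2(x_1 g)$. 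The kernel of $V\to C_2$ is therefore the space of odd symmetric $g$ with $\pi_2(x_1 g)=0$; expanding $\pi_2$ and using the symmetry of $g$, this is exactly the defining system for the space $\So$ of \emph{odd} period polynomials. One obtains an exact sequence $0\to\So\to V\to H_{1,3}$, and the bound $(\ref{gsh})$ drops out by subtracting $\mathbb{S}(s)$ from the generating series for symmetric pairs $(a,b)$. So the missing idea is this $\pi_2$ formula and the identification of the relations with $\So$, not $\Sf$; without it your ``finite linear-algebra-with-parameters'' step has no clear endpoint.
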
 
 \begin{proof} It is easy to check that the elements $h_{a,b}$ do indeed satisfy  $\ff(h_{a,b})=0$ using the fact that $\ff$ is a derivation,   satisfies $\ff(x_1^{-2})=0$,  and $\ff(\{x_1^{-2}, x_1^{2a}\})= x_1 \nabla x_1^{2a} =2a\, x_1^{2a}$. 
 
 Let $V\subset \RR_1 \p \ls_3$ be the $\Q$-vector 
  space spanned by the elements $h_{a,b}$ for $a,b\geq 1$, and consider the natural map
  $ \gr^{\RR}_1:  V \rightarrow H_{1,3} \hookrightarrow C_2.$ We compute its kernel as follows.
    A linear combination of highest weight vectors  $h= \sum_{a,b\geq 1} c_{a,b} h_{a,b}$ can be represented by a symmetric polynomial in two variables
 $$g(x_1,x_2) = \sum_{a,b\geq 1}  c_{a,b} \, x_1^{2a-1} x_2^{2b-1}\ .$$
One verifies using  the formulae in lemma \ref{lempropertiesofRes} that the residue along $x_3=0$ is
 $$R_3  \, {1 \over 2b} \{x_1^{2a}, \{ x_1^{-2}, x_1^{2b}\}\}  = x_1^{2a} \circb x_1^{2b-1} - x_1^{2b-1} x_2^{2a}\ .$$
 The right-hand side  is precisely $\pi_2(x_1^{2a}x_2^{2b-1})$ $(\ref{pi2def})$.
It follows that  $R_3 h = \pi_2(x_1 g)$, and therefore the kernel of the map $V \rightarrow C_2$ is isomorphic to the space of symmetric homogeneous polynomials $g(x_1,x_2)$ of odd degree, satisfying
 $\pi_2(x_1 g) =0$. Expanding out the latter equation gives
 $$(x_1-x_2) \big( g(x_1,x_2) +g(x_2-x_1, -x_1) + g(x_1-x_2,-x_2) \big)=0$$
 (this uses the fact that $g$ is symmetric). These equations are precisely the defining equations for the space of odd period polynomials, whence an exact sequence
 $$0\To \So \To V \To H_{1,3}\ .$$
The inequality  for the dimensions of $H_{1,3}$ follows from the generating function for the dimensions of the space of cusp forms.
 \end{proof} 

It follows from $(\ref{grR13asdirectsum})$ and the injectivity of the residue map, that the dimensions of $\gr^{\RR}_1 \p \ls_3$  are bounded below 
by coefficients of the generating series $s^{-3} d(s)+h(s),$ where $d$ and $h$ are defined in $(\ref{gsd})$, $(\ref{gsh})$. It is easy to check that this is precisely
${1\over 2} \big( c(s) - c(-s)\big)-s$, $(\ref{gsc}).$ It follows that there is an exact sequence
$$ 0 \To \gr^{\RR}_1 \p \ls_3 \To C^{od}_2 \To \Q \To 0$$
where $C_2^{od} \subset C_2$ is the subspace of elements of odd degree, and  the map $C^{od}_2 \rightarrow \Q$ is projection onto elements of degree $1$.
In particular, we deduce that $ \gr^{\RR}_1 \p \ls_3$ is spanned by the elements $(\ref{els2elements})$ and  $(\ref{habdef})$. Since these elements are obviously  in the image of $\eis$, we conclude that
  $$\gr^{\RR}_1 \p \ls_3 \cong \gr^{\RR}_1 \eis_3\ .$$
 
 \begin{rem}  The elements $(\ref{habdef})$ (and their generalizations) were considered by Pollack, who also showed  that they satisfy linear relations (in a certain quotient of 
 $\eis$) whose
 coefficients are given by odd period polynomials.  From our description of $\gr^{\RR}_0 \p \ls_3 = \ls_3$ above, we deduce that these do indeed lift to genuine relations in $\eis$.

 In particular, we obtain the existence of  a natural map
 $$\So \To \ls_3\ .$$
 In general, it seems that there is an abundance of maps from the space of period polynomials into the algebra $\ls$ which would warrant further investigation.
 \end{rem}
\begin{rem} The generating series for $H_{1,3}$ is very close to the generating series for $\ls_2$ after shifting the degree.
In fact, using this observation one can prove a more precise result: namely that the following  sequence is split exact
$$ 0 \To\ls_2[-5]\To C^{od, \geq 3}_2 \To \ls_2[-1] \To 0  $$
 where the first map is multiplication by  $(x_1-x_2) \{ x_1^2,x_1^4\}^{-1}$, and the second map 
 is $f(x_1,x_2) \mapsto x_1 f(x_2-x_1, -x_1)+x_2 f(-x_2, x_1-x_2)$, with section 
 $\nabla_2$. This in fact gives a much shorter but more ad hoc proof of the previous theorem.\footnote{This proof was later given in \cite{Sigma}}
\end{rem}
\subsection{Stable limits} We conclude with some remarks on the structure of $\gr^{\RR}_m \p \ls$ for large $m$ since it gives some insight
into the structure of $\p \ls$.
The first observation is that taking the residue preserves the stuffle equations.

\begin{lem} \label{lemRespreservesStuffle} Let $f \in \Or_d$ be a solution to the linearized stuffle equations such that
$$\Res_{x_i=0}\,  f = 0 \quad  \hbox{ for } \, 2 \leq i \leq d-1\ .$$
 Then the residue 
$R_d f\in \Or_{d-1}$ is also a solution to the linearized stuffle equations.
\end{lem}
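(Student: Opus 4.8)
The plan is to produce each linearized stuffle equation for $R_d f=\Res_{x_d=0}f$ by applying $\Res_{x_d=0}$ to a single, well-chosen linearized stuffle equation for $f$ itself, and then to check that the hypothesis $\Res_{x_i=0}f=0$ for $2\le i\le d-1$ annihilates exactly those terms which would otherwise spoil the conclusion. This is the linearized counterpart of the identity $(\ref{Resfstug})$, which says that the residue respects the stuffle concatenation, so no surprises are expected. Since there are no linearized stuffle equations in depth $1$, we may assume $d\ge 3$.

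Fix $p,q\ge 1$ with $p+q=d-1$ and consider the $(p,q+1)$-th linearized stuffle equation $(\ref{linstuffpqequation})$ for $f$, namely $f(\x_1\ldots\x_p\sha\x_{p+1}\ldots\x_d)=0$ (note that $p+q+1=d$, so the second factor is $\x_{p+1}\ldots\x_d$). In the notation of $(\ref{linstuffpqequation})$ (compare $(\ref{fsharpnotation})$), its left-hand side is the sum over the words $w$ occurring in the shuffle product $\x_1\ldots\x_p\sha\x_{p+1}\ldots\x_d$ of the terms $f(w)$, where $f(\x_{i_1}\ldots\x_{i_d})=f(x_{i_1},\dots,x_{i_d})$, extended by linearity. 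I would then apply $\Res_{x_d=0}$ term by term. If the letter $\x_d$ occupies the $k$-th slot of a word $w$, then $\Res_{x_d=0}f(w)$ is, up to relabelling the remaining (dummy) variables, the residue of $f$ in its $k$-th argument, and this vanishes for $2\le k\le d-1$ by hypothesis. On the other hand $\x_d$ is the last letter of the factor $\x_{p+1}\ldots\x_d$, so in every shuffle it is preceded by all of $\x_{p+1},\dots,\x_{d-1}$; as $q\ge 1$ this set is nonempty, hence $\x_d$ is never in first position. Consequently the only words that survive are those of the form $w=w'\x_d$, with $w'$ running over the shuffles of $\x_1\ldots\x_p$ with $\x_{p+1}\ldots\x_{d-1}$, and for such a word $\Res_{x_d=0}f(w'\x_d)=(R_d f)(w')$ directly from the definition of $R_d$ in $(\ref{Rrdefn})$ and the discussion following it. Hence the residue of the equation reads $(R_d f)(\x_1\ldots\x_p\sha\x_{p+1}\ldots\x_{d-1})=0$, and since the factor $\x_{p+1}\ldots\x_{d-1}$ has $q$ letters this is exactly the $(p,q)$-th linearized stuffle equation for $R_d f$. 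Letting $(p,q)$ range over all pairs with $p+q=d-1$ completes the argument.

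I do not anticipate any genuine obstacle: once the shuffle combinatorics is unwound everything is formal. The only points that repay care are the bookkeeping of which argument-slot of $f$ a given letter occupies after shuffling, the evident commutation of $\Res_{x_d=0}$ with relabelling of the remaining variables, and the routine observation that $R_d f$ indeed lies in $\Or_{d-1}$ — setting $x_d=0$ can at most turn a pole $x_i=x_d$ into a pole $x_i=0$ and creates no pole along any new divisor.
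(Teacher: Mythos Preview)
Your proof is correct and follows essentially the same approach as the paper: take the residue at $x_d=0$ of the $(p,d-p)$-th linearized stuffle equation for $f$, observe that the hypothesis kills all terms where $\x_d$ does not sit in the final slot, and identify what remains as the $(p,d-p-1)$-th equation for $R_d f$. The paper phrases the combinatorial step via the right-deconcatenation operator $\widetilde{\partial}_d$ satisfying $\widetilde{\partial}_d(\x_1\ldots\x_p\sha\x_{p+1}\ldots\x_d)=\x_1\ldots\x_p\sha\x_{p+1}\ldots\x_{d-1}$, but the content is identical, and your explicit check that $\x_d$ never lands in the first slot is a welcome clarification.
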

\begin{proof}  The $(p,d-p)^\mathrm{th}$ linearized stuffle equation is
$$f(\x_1 \ldots \x_p \sha \x_{p+1} \ldots \x_{d})=0\ .$$
Take the residue of this equation along $x_d=0$. By assumption on $f$, all terms in 
$\x_1\ldots \x_p \sha \x_{p+1} \ldots \x_{d}$ which do not end in $\x_{d}$ have vanishing residue.
Let $\widetilde{\partial}_{d}$ denote the   deconcatenation on the right by the letter $\x_{d}$. It is the map which  maps $w \x_{d}$ to $w$ and all words not ending in $\x_{d}$ to zero.
It is easy to check that
$$\widetilde{\partial}_{d}(\x_1\ldots \x_p \sha \x_{p+1} \ldots \x_{d}) =  \x_1\ldots \x_p \sha \x_{p+1} \ldots \x_{d-1}\ .$$
This implies that the residue satisfies the $(p,d-p-1)^\mathrm{th}$ linearized stuffle equation:
$$R_d f(\x_1 \ldots \x_p \sha \x_{p+1} \ldots \x_{d-1})=0\ .$$ 
\end{proof}
In particular, if $f\in \RR_m \p \ls$ then $R^{[m]} f$ satisfies the stuffle equations.

\begin{lem} Let $f \in \RR_{d-2} \p \ls_2$. Then $R^{[d-2]} f \in \Q[x_1,x_2]$ has no poles.
\end{lem}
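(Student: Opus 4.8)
The plan is to generalize the depth--$3$ computation that produced equations $(\ref{R12equations})$, by propagating the vanishing of residues around the dihedral orbit of the divisor $x_d=0$.

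Write $g=R^{[d-2]}f=R_3\circ R_4\circ\cdots\circ R_d(f)$. Since $f\in\p\ls_d\subset c_d^{-1}\Q[y_0,\ldots,y_d]$ and, by $(\ref{Rrdefn})$, each $R_k$ maps $c_k^{-1}\Q[y_0,\ldots,y_k]$ into $c_{k-1}^{-1}\Q[y_0,\ldots,y_{k-1}]$, the iterate $g$ lies in $c_2^{-1}\Q[x_1,x_2]$; hence \emph{a priori} $g$ has at most a simple pole along each of the three divisors $x_1=0$, $x_2=x_1$, $x_2=0$, and what has to be shown is that there are none. At every intermediate stage the partial residue $R_{k+1}\circ\cdots\circ R_d(f)$ is regular along $x_i=0$ for $2\le i\le k$ --- its only ``pure variable'' pole divisors being $x_1=0$ and $x_k=0$ --- so Lemma \ref{lemRespreservesStuffle} applies at each of the $d-2$ steps and $g$ satisfies the depth--$2$ linearized stuffle equation $g(x_1,x_2)+g(x_2,x_1)=0$; in particular $g$ is antisymmetric. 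Now the hypothesis $f\in\RR_{d-2}\p\ls_d$ unwinds to $R^{[d-1]}f=R_2\,g=0$, and $R_2\,g=\Res_{x_2=0}g$ since $g$ has depth $2$; as $g$ has at most a simple pole there, it follows that $g$ is regular along $x_2=0$, and then antisymmetry forces $g$ regular along $x_1=0$ as well. Consequently $g=h/(x_2-x_1)$ with $h\in\Q[x_1,x_2]$ symmetric, and everything reduces to proving $(x_2-x_1)\mid h$, i.e. that $g$ is regular along $x_2=x_1$ too.

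This last regularity is the crux, and where I expect the real work to lie. It cannot be deduced from the stuffle relation alone --- $x_1x_2/(x_2-x_1)$, for instance, is antisymmetric with vanishing residue along $x_2=0$ yet has a pole --- so the shuffle equations must be brought in. Mimicking the depth--$3$ argument, I would take the residue of the depth--$d$, type $(1,d-1)$ shuffle equation $f^{\sharp}(\x_1\sha\x_2\cdots\x_d)=0$ along the divisor $x_1+\cdots+x_d=0$. This is legitimate because $\x_d$ occurs last in every word of $\x_1\sha\x_2\cdots\x_d$, so $x_d$ enters each summand only through the final $\sharp$--slot $x_1+\cdots+x_d$; the operation is thus $R_d$ composed with the substitution $x_d\mapsto-(x_1+\cdots+x_{d-1})$. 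Applying the remaining residues $R_{d-1},\ldots,R_3$ then yields, exactly as the passage from $(\ref{depth3shuffle})$ to the second line of $(\ref{R12equations})$ does for $d=3$, a relation for $g$ of the shape
\[
g(x_1,x_1+x_2)+g(x_2,x_1+x_2)+g(x_2,-x_1)=0\,.
\]
Taking the residue of this relation along $x_1+x_2=0$, and using that $g$ is already regular along $x_1=0$ and along $x_2=0$, every term except $\Res\,g(x_2,-x_1)$ drops out, and one reads off that $g$ has no residue along $x_2=x_1$ either. Since all poles of $g$ are simple, $g$ then has no poles: $g\in\Q[x_1,x_2]$. The genuinely delicate point is the phrase ``applying the remaining residues $R_{d-1},\ldots,R_3$'': unlike the stuffle equations, the $\sharp$--twisted shuffle equations are not preserved by the operators $R_k$ taken at $x_k=0$, so one must check that at each depth $k$ the residue of the depth--$k$ shuffle equation along the appropriate twisted diagonal descends compatibly under the next residue --- an intricate but essentially routine piece of bookkeeping generalizing the explicit $d=3$ calculation, and I expect it to be the only real obstacle. (The sporadic low weights, where the conclusion reads $g=0$, are covered by the same argument or else are empty by Lemma \ref{lemweightzeroofpls} and the parity constraint of Proposition \ref{propparity}.)
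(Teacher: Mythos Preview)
Your treatment of the poles along $x_2=0$ and $x_1=0$ is correct and coincides with the paper's: the $\RR_{d-2}$ hypothesis kills the pole at $x_2=0$, and Lemma~\ref{lemRespreservesStuffle} (applied $d-2$ times, which is legitimate precisely because of the intermediate regularity you note) gives the antisymmetry that kills the pole at $x_1=0$.

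For the pole along $x_2=x_1$ your route diverges from the paper's, and this is where the real gap lies. The paper takes the residue of the $(1,d-1)$ shuffle equation at $x_1=0$, \emph{not} at $x_1+\cdots+x_d=0$, and then applies $R^{[d-1]}$. The point of choosing $x_1=0$ is that $\x_1$ is the distinguished letter being shuffled in: it occurs exactly once in each summand, so the residue at $x_1=0$ picks off, term by term, a residue of $f$ along one of the consecutive divisors $a_{k+1}=a_k$. By the dihedral symmetry of $f\in\p\ls_d$, all of these are cyclic shifts of $R_d f$, so one obtains an equation in $R_d f$ alone, evaluated at the $\sharp$--partial sums of $x_2,\ldots,x_d$; iterating the residues is then much more uniform than in your scheme.

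By contrast, your proposed iteration has a genuine difficulty that you underestimate. First, a small slip: it is not true that ``$\x_d$ occurs last in every word of $\x_1\sha\x_2\cdots\x_d$'' --- the word $\x_2\cdots\x_d\x_1$ ends in $\x_1$ --- though your weaker claim, that the last $\sharp$--slot is always $x_1+\cdots+x_d$, is correct. More seriously, after the residue at $\sum x_i=0$ the equation splits into the $d-1$ ``good'' terms giving $(R_d f)^{\sharp}(\x_1\sha\x_2\cdots\x_{d-1})$ plus one ``bad'' term $(R_d f)(x_2,x_2+x_3,\ldots,x_2+\cdots+x_{d-1},-x_1)$ in which $x_d$ has been substituted out. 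When you now try to take the next residue, this bad term does \emph{not} contribute an $R_{d-1}R_d f$ but rather a residue of $R_d f$ along an interior consecutive divisor, and $R_d f$ is no longer dihedrally symmetric. So at each step you accumulate a new kind of term; the ``routine bookkeeping'' is a genuine inductive argument that you have not supplied, and it is not clear that it terminates in exactly the depth--$3$ three--term relation you write down. This is the missing idea, and it is why the paper instead takes the residue at $x_1=0$.
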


\begin{proof} By definition, $R^{[d-2]} f$ has no residue along $x_2=0$. By the previous lemma,  it satisfies the stuffle equations, which implies that  it has no pole along $x_1=0$ either.
By taking the residue at $x_1=0$ in  the first shuffle equation
$f^{\sharp}(x_1 \sha x_2 \ldots x_n) =0$
and then applying $R^{[d-1]}$, we deduce that $R^{[d-2]} f$ has no pole along $x_1=x_2$ either.
\end{proof}

\begin{lem} Let $p,q \geq 1$, and let $X \in \RR_p \,\p \ls_m $, $Y \in \RR_q \, \p \ls_n $.
Then
\begin{equation}
R^{[p+q]} \{ X, Y\} = \big[ R^{[p]}(X) , R^{[q]}(Y)\big]_{\star}
\end{equation}  
where the square bracket on the right is the anticommutator with respect to $\studot$.
\end{lem}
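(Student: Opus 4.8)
The statement is an iterated-residue version of Proposition~\ref{propRisIharacompatible}; the natural approach is to run the same computation that produced $(\ref{restoshow})$ in that proof, but now keeping track of the limit object rather than merely its vanishing. The main tool is the commutation identity
\begin{equation} \label{inplanRbracket}
R^{[m]}( f \circb g) = f \circb R^{[m]} g - \sum_{i+j=m,\ i \geq 1} R^{[i]} g \,\studot\, R^{[j]} f
\end{equation}
which was already established (for weight $\geq 0$ factors) in the proof of Proposition~\ref{propRisIharacompatible} by repeated application of $(\ref{Resfcircg})$ and $(\ref{Resfstug})$, together with the weight$-(-1)$ variants using $(\ref{Resfnabla})$ and $(\ref{ResnablaRes})$. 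First I would record $(\ref{inplanRbracket})$ as the key lemma, being careful about the two degenerate cases ($f$ or $g$ of weight $-1$) exactly as in that proof. Since $X \in \RR_p\,\p\ls_m$ and $Y \in \RR_q\,\p\ls_n$, we have $R^{[i]}X = 0$ for $i > p$ and $R^{[j]}Y = 0$ for $j > q$, so in $R^{[p+q]}(X \circb Y)$ only the single term with $i = q$, $j = p$ survives from the sum, and the leading term $X \circb R^{[p+q]} Y$ vanishes because $R^{[p+q]}Y = R^{[p+q-q]}R^{[q]}Y$ with $p+q-q = p \geq 1$ applied to something that is already killed past depth $q$. Hence $R^{[p+q]}(X \circb Y) = -\,R^{[q]}(Y) \studot R^{[p]}(X)$.

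Applying the same computation with the roles of $X$ and $Y$ reversed gives $R^{[p+q]}(Y \circb X) = -\,R^{[p]}(X) \studot R^{[q]}(Y)$. Subtracting, and using $\{X,Y\} = X \circb Y - Y \circb X$ together with the $\Q$-linearity of $R^{[p+q]}$,
\begin{equation}
R^{[p+q]}\{X,Y\} = R^{[p]}(X) \studot R^{[q]}(Y) - R^{[q]}(Y) \studot R^{[p]}(X) = \big[R^{[p]}(X), R^{[q]}(Y)\big]_{\star}\,,
\end{equation}
which is exactly the claimed formula (recall $[\,\cdot\,,\,\cdot\,]_\star$ is the commutator for $\studot$). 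One should note for cleanliness that the right-hand side is a priori an element of $c_{m+n-p-q}^{-1}\Q[x_1,\ldots,x_{m+n-p-q}]$ rather than of $\p\ls$; the point of the lemma is only the equality of these two explicitly-computed expressions, so no additional structure theorem is needed.

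The only genuine obstacle is bookkeeping in the degenerate weight cases. When $X$ or $Y$ (or one of their iterated residues along the way) has weight $-1$ — which can only happen in $\p\ls$ for the element $x_1^{-2}$ by Lemma~\ref{lemweightzeroofpls} — one cannot blindly identify $R$ with the residue, and one must insert $(\ref{Resfnabla})$ and $(\ref{ResnablaRes})$ at the correct stage, as was done in the proof of Proposition~\ref{propRisIharacompatible}. I expect this to be routine but slightly delicate: one checks that $(\ref{inplanRbracket})$ still holds with the $\nabla$-corrections appearing symmetrically on both sides of the $X \leftrightarrow Y$ subtraction, so that they cancel and the final identity is unaffected. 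Everything else is a direct unwinding of $(\ref{Resfcircg})$, $(\ref{Resfstug})$, and the definition of $\studot$.
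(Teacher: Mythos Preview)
Your proposal is correct and follows essentially the same route as the paper, which simply says ``Follows from lemma~\ref{lempropertiesofRes}.'' You have made explicit the iterated-residue identity $R^{[m]}(f\circb g)=f\circb R^{[m]}g-\sum_{i+j=m,\,i\geq 1}R^{[i]}g\,\studot\,R^{[j]}f$ (already derived in the proof of Proposition~\ref{propRisIharacompatible}) and then specialized to $m=p+q$, where the vanishing conditions $R^{[>p]}X=0$, $R^{[>q]}Y=0$ force the single surviving term $i=q$, $j=p$; antisymmetrizing gives the commutator, and the weight~$-1$ degenerate case is handled exactly as you indicate, via $(\ref{Resfnabla})$ and $(\ref{ResnablaRes})$.
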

\begin{proof} Follows from lemma \ref{lempropertiesofRes}.
\end{proof}
Finally, the statement $(\ref{stablelimit})$ follows from the previous two lemmas. By $(\ref{Req4})$ we can find $X$ and $Y$ whose iterated residues are $x_1^{2r}$ and $x_1^{2r+1}$.
These generate the set of solutions of the stuffle equations under the the bracket $[ , ]_{\star}$. This proves  $(\ref{stablelimit})$ for sufficiently large $d$ (precisely, $d\geq 5$). The case $d=4$ is easily
checked by hand.
\subsection{The spaghetti junction}
Several diverse phenomena come together in the depth four component of $\p\ls$.
We give some  formulae for these phenomena in  the simplest case of weight $12$ to illustrate the complexity of the situation.

Consider the following highest-weight vectors in $\p\ls_4$:
\begin{eqnarray}
h^4_{a,b} & =  & { 1\over b(b-1)}\{ x_1^{a}, \eee^2 (x_1^b) \} - {2 \over ab} \{\eee(x_1^{a}), \eee(x_1^b)\}  + { 1\over a(a-1)}\{ x_1^{b}, \eee^2( x_1^a) \}  \nonumber \\
h^3_{a,b,c} & =  &{1\over c} \{ x_1^a, \{ x_1^b, \eee(x_1^c)\}\} + {1\over b} \{ x_1^a, \{ x_1^c, \eee(x_1^b)\}\}  \nonumber 
\end{eqnarray}
where $a,b,c>0$ are even and $\eee = \ad( x_1^{-2})$.  One easily  shows that $h^4_{2,2n} \in \RR_1 \p \ls_4$ for all $n\geq 1$. These elements are the images  in $\p \ls$ of elements in $\eis$ which were called `Eisenstein elements'
in \cite{P}.
\vspace{0.1in}

\begin{itemize}
\item  (Spectral sequence $H(\gd) \Rightarrow \gr^{\dd} H(\gm)$). The second differential gives
\begin{equation} \nonumber 
\{\sigma_3 , \sigma_9\} - 3\,  \{\sigma_5 ,  \sigma_7 \} \equiv {691 \over 144}\,  \overline{\e}_{12} \mod \hbox{ depth } \geq 5
\end{equation} 
where $\overline{\e}_{12}$ is the cusp form generator defined in \cite{Depth}. This is closely related to an observation by Ihara and Matsumoto and is discussed at
length in \cite{Depth}.
\vspace{0.1in}

\item (The map $\ls \subset \p \ls$). If conjecture  \ref{MainConjVersion1} holds, then in particular, the  exceptional generators of $\ls$ map to  $\eis$. The first instance of this is
\begin{equation}  \nonumber 
{2275 \over 12} \overline{\e}_{12} =   h^4_{2,10} +{13 \over 90} \Big( 84 h^3_{2,2,6} + 189 h^3_{6,2,2} +25 h^3_{2,4,4} -225 h^3_{4,2,4}\Big) 
\end{equation} 
Note that the left-hand side of the equation comes from the cusp form of weight 12.  The right-hand side relates to elements which in \cite{P} are called  Eisenstein elements of weight 10.

\vspace{0.1in}
\item (Action of $z_3$). The element $z_3$ is defined in \S \ref{sectMisc}. Its action on $x_1^{8}$ was computed in \cite{P} and gives
\begin{equation}  \nonumber 
\{z_3, x_1^8\} = {691 \over 27300} h^4_{2,10}  - {11 \over 95400} \Big( 252 h^3_{2,2,6} +567 h^3_{6,2,2} +450 h_{4,2,4} - 50 h^3_{2,4,4}\Big)
\end{equation}

\vspace{0.1in}
\item (Spectral sequence on $\p \ls$ coming from $\RR$). This plays a trivial role in weight 12 (except that it provides the element $h^4_{2,10}$ which plays a role in all the above), but 
will complicate the above set-up in higher weights (it gives rise to the relations coming from even period polynomials studied in \cite{P}).

\end{itemize}

In short, we have at least four interesting maps
\begin{eqnarray}
H_1(\gd_2)  & \To & ( \ls_4)^{ab} \nonumber \\
\overline{\e}:\Sf  & \To &   \ls_4 \nonumber \\
W^e  & \To & \RR_1 \p \ls_4 \nonumber \\
\ad(z_3): \p\ls_1  & \To & \p \ls_4 \nonumber 
\end{eqnarray}
The first is the non-trivial differential in the spectral sequence from the depth filtration on the motivic Lie algebra. The second is the map given by the exceptional solutions defined in \cite{Depth}.
The third is related to the non-trivial differential in the spectral sequence coming from the $\RR$ filtration on 
$\p \ls$ (it encodes the relations discovered by Pollack in depth 4 which correspond to the space $W^e$ of even period polynomials). The fourth is the action of the  element representing the infinitesimal Galois action $z_3$ defined in \S \ref{sectMisc}.
One easily checks that $\overline{\e}(x_1^{2n}-x_2^{2n}) + \{z_3, x_1^{2n-2}\}=0$.

\section{Depth-splitting and the Witt algebra} \label{sectDepthSplit}

We construct a canonical solution to the polar  double shuffle equations in weight zero with some remarkable properties. Twisting with this element gives a mechanism  to lift 
solutions to the linearized double shuffle equations to $\p \dmr$, i.e., to all depths. It gives a splitting of the depth filtration on $\p \dmr$.  We  deduce an alternative and unconditional 
`anatomical' decomposition for elements of the motivic Lie algebra.
\subsection{A polar solution of weight zero} Recall from lemma \ref{lemevenindepth1} that any power series solution to the double shuffle equations is necessarily even in depth $1$,
corresponding to the odd Riemann zeta values $\zeta(2n+1)$. The proof of the lemma allowed for the possibility of a unique odd solution to $(\ref{inproofdepth2equations})$  if one allows poles:
$$\Phi^{(1)}(x_1)= {1 \over x_1}  \quad \hbox{ and } \quad \Phi^{(2)}(x_1,x_2) = {1 \over 3} \Big({ 2  \over x_1x_2 } + {1 \over x_1 (x_1-x_2)}\Big)\ .$$
Remarkably, this solution  extends to all higher depths. The corresponding object corresponds in some sense to $\zeta(0)$.
\begin{defn} 
Define rational functions  $s_d$ for all $d\geq 1$ by the formula
\begin{equation}Ê\label{sddef}  s_d =  \sum_{k=0}^{d-1} { (d- k) \over x_{\{0,1, \ldots,  \widehat{k}, \ldots, n\} , \{k\}}} \ .
\end{equation} 
\end{defn}
We define  an element $\psi_0 \in \Or$ which is homogeneous  of weight zero by setting
\begin{equation}  \label{psi0def}
\psi^{(d)}_0  = \binom{d+1}{2}^{-1}  s_d \qquad \hbox{ for } d\geq 1\ .
\end{equation}

We claim the following is true. The proof is omitted. 
\begin{thm} The element $\psi_0$ is in  $\p \dmr$, i.e.,  satisfies the  double shuffle equations modulo products.
\end{thm}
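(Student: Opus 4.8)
The plan is to verify directly that the sequence of rational functions $\psi_0^{(d)} = \binom{d+1}{2}^{-1} s_d$ satisfies the two families of functional equations characterizing $\p\dmr$: the shuffle equations modulo products $(\overline{\Phi}^{(p+q)})^{\sharp}(\xx_1\ldots\xx_p \sha \xx_{p+1}\ldots\xx_{p+q})=0$ of Corollary \ref{corshuffleequations}, and the stuffle equations modulo products $\Phi(\xx_1\ldots\xx_p \stu \xx_{p+1}\ldots\xx_{p+q})=0$ of Corollary \ref{corstuffleequations}, together with translation invariance. Translation invariance is the place to begin: one checks that $s_d$, written in the unreduced variables $y_0,\ldots,y_d$ (so that $x_{A,B}$ becomes a product of differences $y_a-y_b$), is invariant under the simultaneous shift $y_i \mapsto y_i+\lambda$, since each summand is a ratio of homogeneous products of differences of the $y$'s, hence already translation invariant as a function on $\A^d$ once one recalls $x_0=0$ is really $y_0$. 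This lets us pass freely between $s_d$ as a function of $x_1,\ldots,x_d$ and as a translation-invariant function of $y_0,\ldots,y_d$.

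The heart of the argument is a clean closed-form description of $s_d$ in the $y$-variables. I expect that
$$
s_d(y_0,\ldots,y_d) = \sum_{k=0}^{d}\frac{c_k}{\prod_{j\neq k}(y_k-y_j)}
$$
for suitable linear coefficients $c_k$ (the shift from $k=0,\ldots,d-1$ to $k=0,\ldots,d$ being absorbed by translation invariance and the convention $y_0=0$), so that $s_d$ is — up to normalization — a \emph{divided difference}, i.e. the top coefficient in the partial-fraction expansion of a rational function with simple poles at the $y_k$. Divided differences satisfy beautiful recursions and symmetry properties, and crucially they interact well with both the shuffle operation (which on the polynomial side is governed by the permutation group, cf. the lemma that $\overline{c}_r^\sharp$ is symmetric) and the stuffle operation (whose right-hand side involves the difference quotient $(\alpha(x_1)-\alpha(x_{p+1}))/(x_1-x_{p+1})$, exactly the building block of divided differences, cf. Lemma \ref{lemalphastuffequation}). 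I would first prove this partial-fraction identity for $s_d$ by induction on $d$, extracting the residue $R_d$ (residue along $x_d=0$, i.e. the map of $(\ref{Rrdefn})$) and showing $\Res_{x_d=0} \psi_0^{(d)}$ is proportional to $\psi_0^{(d-1)}$ plus controlled lower-order terms — this residue computation, using Lemma \ref{lempropertiesofRes}, is the natural inductive handle, and it mirrors the fact that $\psi_0$ should be the `$\zeta(0)$' companion of the $\zeta(2n+1)$ family whose depth-$1$ part is $x_1^{-2}$ rather than $x_1^{2n}$.

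With the divided-difference description in hand, the shuffle equations should follow from the symmetry of $s_d^\sharp$ under the relevant permutation subgroup: since each shuffle equation $(\overline{\Phi}^{(p+q)})^\sharp(\xx_1\ldots\xx_p \sha \xx_{p+1}\ldots\xx_{p+q})=0$ is a signed sum over $(p,q)$-shuffles of a single function, and $s_d^\sharp$ (like $\overline{c}_d^{\,\sharp}$) turns out to be invariant or skew under $\Sigma_{p+q}$ in the appropriate way, the alternating sum collapses. The stuffle equations I would verify using the recursion of Lemma \ref{lemalphastuffequation}: the claim is that the partial-fraction form of $s_d$ is exactly engineered so that the three-term stuffle relation $(\ref{Stuffequationsdefn})$ becomes a telescoping identity among divided differences — this is the same mechanism by which the depth-$2$ solution $\Phi^{(2)}=\tfrac13(2/x_1x_2 + 1/x_1(x_1-x_2))$ works, and I would check the general case by induction on depth, peeling off the first variable via the shift operators $s_i$ of $(\ref{shiftoperatorforstuffle})$. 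Finally, one must record that $\psi_0$ has the stated pole structure (simple poles along all $x_i=x_j$ and $x_i=0$, with a single double pole along $x_d=0$), so that $\psi_0 \in \Or$ in the first place; this is immediate from $(\ref{sddef})$ since the only term of $s_d$ with a factor $(x_d-x_0)^{-1}=x_d^{-1}$ appearing together with another $x_d$-pole is the $k=0$ term paired against the leading denominator.

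\textbf{Main obstacle.} The hard part will be establishing the partial-fraction / divided-difference normal form for $s_d$ and then pushing the stuffle verification through all depths: the stuffle equations are not graded for the depth filtration, so the $(p,q)$-th equation mixes components $\psi_0^{(p+q)}, \psi_0^{(p+q-1)}, \psi_0^{(p+q-2)}$, and one must track how the difference-quotient term in Lemma \ref{lemalphastuffequation} redistributes the poles of $s_{p+q}$ among those of $s_{p+q-1}$ and $s_{p+q-2}$ with exactly the right combinatorial coefficients $\binom{d+1}{2}^{-1}$. I expect this to reduce, after the divided-difference rewriting, to a polynomial identity in the $y_i$ that can be checked by comparing residues at each hyperplane $y_i=y_j$, but organizing that bookkeeping cleanly — rather than by brute expansion — is where the real work lies, and it is presumably why the authors deferred the proof. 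The shuffle equations and translation invariance, by contrast, I expect to be routine once the normal form is available.
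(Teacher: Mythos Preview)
The paper explicitly omits the proof of this theorem (the sentence immediately preceding the statement reads ``We claim the following is true. The proof is omitted.''), so there is no argument in the paper to compare your proposal against. What the paper \emph{does} provide in the surrounding section, however, is machinery you are not using and which would dispatch the shuffle half of the problem much more cleanly than your divided-difference plan. Namely, the paper shows (in the lemma following the theorem) that $s_d = p(\sss_d)$ where $\sss_d \in \V$ is defined by $G\cdot S = xG'$ for $G = \sum g_n$; since $G$ is group-like in $\V$, the element $\sss_n$ is $n$ times the $n$-th graded piece of $\log G$ and hence primitive (this is exactly the mechanism of Lemma~\ref{lembadlift1}). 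Proposition~\ref{propvinestoosha} then says $p:\V\to\Osha$ is a Hopf-algebra morphism, so $s_d = p(\sss_d)$ is primitive in $\Osha$, i.e.\ satisfies the shuffle equations modulo products. The scalar $\binom{d+1}{2}^{-1}$ is harmless since the shuffle equations live in a fixed depth. You should replace your symmetry/permutation argument for shuffle by this.

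Two further issues with your proposal. First, the ``partial-fraction/divided-difference normal form'' you plan to establish by induction \emph{is} the definition $(\ref{sddef})$: $s_d$ is already written as $\sum_{k=0}^{d-1}(d-k)/\prod_{j\neq k}(x_j-x_k)$, so there is nothing to prove there. Second, your description of the pole structure is wrong: the double pole along $x_d=0$ is a feature of $\psi_{-1}$ (Remark~\ref{rempsi1notunique}), not of $\psi_0$. Each summand of $s_d$ has only simple poles, so $\psi_0^{(d)}\in\Or_d$ has only simple poles.

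The stuffle equations remain the genuine obstacle. Your telescoping/divided-difference idea is plausible in spirit --- the structure of $s_d$ as a weighted sum of $\prod_{j\neq k}(x_j-x_k)^{-1}$ does interact naturally with the difference quotients in $(\ref{Stuffequationsdefn})$ --- but you have not indicated how the normalizations $\binom{d+1}{2}^{-1}$, which vary with $d$, conspire to make the mixed-depth stuffle recursion close up. This is where the real work lies, and neither you nor the paper supplies it.
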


\subsection{Twisting by the element $\psi_0$} 
\begin{defn} \label{defnTwist} Let  $0 \neq \alpha \in \Or_d$ be a solution to the linearized double shuffle equations in depth $d$.
Let  $\widetilde{\alpha}^{(i)} = 0 $ for $i<d$, and $\widetilde{\alpha}^{(d)} = \alpha$.   Recursively define
\begin{equation} 
\widetilde{\alpha}^{(d+k)} = {1 \over 2k}  \sum_{i=1}^{k} \{\psi_0^{(i)} ,\widetilde{\alpha}^{(d+k-i)}\}
\end{equation}
for $k\geq 1$. This defines an element $\widetilde{\alpha} \in \Or$ whose first non-zero component is $\alpha$. Note that there is no restriction on the pole structure of $\alpha$. 
\end{defn} 

We claim that the following theorem is true, but again omit the proof.\footnote{In \cite{Sigma}, we gave a geometric interpretation of the operation of twisting by an element $s$  in depths $\leq 3$. The element  $s$ in that paper is  equal to  ${1\over 2} \psi_0$.}
\begin{thm}  \label{thmtwistissoln} The element $\widetilde{\alpha}$ is a solution to the double shuffle equations modulo products.
\end{thm}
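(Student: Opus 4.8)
I would prove Theorem \ref{thmtwistissoln} by induction on the depth $d+k$, showing that $\widetilde{\alpha}$ satisfies the shuffle equations modulo products (Corollary \ref{corshuffleequations}) and the stuffle equations modulo products (Corollary \ref{corstuffleequations}) in every depth. The key structural input is that the twisting recursion $\widetilde{\alpha}^{(d+k)} = \tfrac{1}{2k}\sum_{i=1}^k \{\psi_0^{(i)},\widetilde{\alpha}^{(d+k-i)}\}$ is precisely the ``integrated'' form of the statement that $\widetilde\alpha$ behaves like an infinitesimal flow generated by $\psi_0$ acting via the Ihara bracket. Concretely, I expect the clean formulation to be: if one assembles $\Psi_0 = 1 + \psi_0 + (\text{higher corrections})$ (the group-like exponential of $\psi_0$ with respect to the Ihara action $\circ$, living in the prounipotent group with Lie algebra $\p\dmr$), then $\widetilde\alpha = \Psi_0 \circ \alpha \circ \Psi_0^{-1}$ in an appropriate sense, or rather $\widetilde{\alpha}$ is the image of $\alpha$ under the adjoint action of $\exp_\circ(\tfrac12\psi_0)$ suitably normalized so that the depth-$d$ leading term is preserved. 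Since $\psi_0 \in \p\dmr$ by the preceding (unproved, but assumable) theorem, and since $\p\dmr$ is a Lie algebra under the Ihara bracket (Theorem \ref{thmPolarRacinet}), conjugation by a group element of the corresponding group preserves the set of solutions — \emph{provided} $\alpha$ itself satisfies the full (not merely linearized) double shuffle equations.

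**The main obstacle.** The difficulty is that $\alpha$ is only assumed to satisfy the \emph{linearized} double shuffle equations, not the full ones, so $\alpha$ need not lie in $\p\dmr$ and one cannot simply invoke closure of $\p\dmr$ under the group action. The resolution must exploit that $\psi_0$ is homogeneous of weight zero and that $\widetilde\alpha^{(d)} = \alpha$ is the \emph{bottom} depth component. The mechanism is: the obstruction to $\widetilde\alpha$ satisfying the double shuffle equations modulo products in depth $d+k$ is controlled, via Propositions \ref{propshuffleequations} and \ref{propstuffleequations}, by the $\Delta^{p,q}$ of $\widetilde\alpha^{(d+k)}$ for $p+q = d+k$ (shuffle) together with the lower-depth stuffle coproduct pieces. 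Using the Leibniz-type identities \eqref{IharaderivShuffle}, \eqref{IharaderivStuffle} for $\circb$ and the fact that $\psi_0$ satisfies the double shuffle equations — hence $\Delta^r_{\sha}(\psi_0\circb f) = \psi_0\otimes f + f\otimes \psi_0$ when $f$ does (Lemma before Theorem \ref{thmRacinet}), and the analogous stuffle identity from Theorem \ref{thmRacinet} — one shows that $\{\psi_0^{(i)}, \widetilde\alpha^{(d+k-i)}\}$ is primitive modulo products for the shuffle coproduct \emph{as soon as} $\widetilde\alpha^{(d+k-i)}$ is, and the factor $\tfrac1{2k}$ is exactly what makes the telescoping sum over $i$ assemble the derivative of the flow correctly. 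The subtle point, and where I expect real work, is the stuffle side: one needs the analogue of Racinet's Theorem \ref{thmRacinet} applied to $\psi_0$ (a solution to the double shuffle equations modulo products) acting on $\widetilde\alpha^{(d+k-i)}$, which gives $\Delta^r_\stu(\psi_0 \circb \widetilde\alpha^{(d+k-i)}) = \psi_0 \otimes \widetilde\alpha^{(d+k-i)} + \widetilde\alpha^{(d+k-i)} \otimes \psi_0$ — but this requires $\widetilde\alpha^{(d+k-i)}$ to already satisfy the stuffle equations modulo products, i.e.\ the induction hypothesis must carry both halves simultaneously.

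**Carrying it out.** The steps, in order: (1) Set up the induction on $k \geq 0$ with hypothesis ``$\widetilde\alpha^{(d)},\dots,\widetilde\alpha^{(d+k-1)}$ satisfy the shuffle and stuffle equations modulo products in every depth $\leq d+k-1$'' — the base case $k=0$ is the linearized double shuffle equations for $\alpha$, which in the lowest depth $d$ coincide with the genuine double shuffle equations modulo products since lower-depth coproduct pieces are automatically zero (there is nothing below depth $d$). (2) For the inductive step, expand $\Delta_\sha \widetilde\alpha^{(d+k)}$ and $\Delta_\stu \widetilde\alpha^{(d+k)}$ using the recursion and the derivation properties \eqref{IharaderivShuffle}, \eqref{IharaderivStuffle}; apply the two primitivity lemmas (the one preceding Theorem \ref{thmRacinet} for shuffle, Theorem \ref{thmRacinet} itself for stuffle) to each bracket $\{\psi_0^{(i)}, \widetilde\alpha^{(d+k-i)}\}$, using that $\psi_0 \in \p\dmr$ and the induction hypothesis on the lower components. (3) Observe that the resulting ``error terms'' (the parts of $\Delta^r(\{\psi_0,\widetilde\alpha\})$ of the form $\psi_0\otimes\widetilde\alpha + \widetilde\alpha\otimes\psi_0$ summed against the weights) assemble, thanks to the normalization $\tfrac1{2k}$ and weight-zero homogeneity of $\psi_0$, into exactly $1\otimes\widetilde\alpha^{(d+k)} + \widetilde\alpha^{(d+k)}\otimes 1$ — i.e.\ $\widetilde\alpha^{(d+k)}$ is primitive. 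Finally (4), check the depth-$0$ vanishing conditions $\widetilde\alpha_{e_0}=0$, which is automatic since every $\widetilde\alpha^{(i)}$ has $\dd$-degree $\geq d \geq 1$. I anticipate that step (3) — verifying the combinatorial bookkeeping that the factor $\tfrac1{2k}$ and the sum over $i$ produce precisely the primitive coproduct with no leftover cross terms — is the genuine crux, and it is essentially the assertion that twisting is conjugation by a one-parameter subgroup, differentiated; the factor of $2$ reflecting that $s = \tfrac12\psi_0$ is the ``correct'' generator as noted in the footnote referencing \cite{Sigma}.
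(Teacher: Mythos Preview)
The paper explicitly omits the proof of this theorem (``We claim that the following theorem is true, but again omit the proof''), so there is no argument in the text to compare your proposal against. I can only assess your sketch on its own merits.

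Your strategy is sound for the \emph{shuffle} half. Since $\Delta_{\sha}$ is graded for the depth filtration, the $(p,q)$-shuffle equation in depth $p+q=n$ involves only the top component. Knowing that $\{\psi_0,\widetilde\alpha\}$ is $\Delta_{\sha}$-primitive in depth $n=d+k$ (via the lemma preceding Theorem~\ref{thmRacinet}, plus the induction hypothesis) together with $\{\psi_0,\widetilde\alpha\}^{(n)}=2k\,\widetilde\alpha^{(n)}$ immediately yields $\Delta^{p,q}_{\sha}\widetilde\alpha^{(n)}=0$. The induction closes cleanly here.

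The \emph{stuffle} half has a genuine gap that your step~(3) does not resolve. The coproduct $\Delta_{\stu}$ is \emph{not} depth-graded: the $(p,q)$-stuffle equation with $p+q=n$ involves every component $\widetilde\alpha^{(r)}$ for $\max(p,q)\le r\le n$. Even granting a depth-local form of Racinet's theorem (which itself needs justification, since Theorem~\ref{thmRacinet} as stated assumes $f$ is a full solution), what you obtain is that $\{\psi_0,\widetilde\alpha\}$ satisfies the $(p,q)$-stuffle equation in depth $n$. But because $\{\psi_0,\widetilde\alpha\}^{(r)}=2(r-d)\,\widetilde\alpha^{(r)}$ carries an $r$-dependent scalar, this is a \emph{different} linear combination of the quantities $[\mathrm{op}^{p,q}_r](\widetilde\alpha^{(r)})$ from the one defining the stuffle equation for $\widetilde\alpha$. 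The discrepancy between the two is
\[
\sum_{\max(p,q)\le r<n}(n-r)\,[\mathrm{op}^{p,q}_r]\bigl(\widetilde\alpha^{(r)}\bigr),
\]
and I do not see how the inductive hypothesis (the stuffle equations for $p'+q'<n$) annihilates this: those equations are sums over different ranges with different operators. Your phrasing that the error terms ``assemble into $1\otimes\widetilde\alpha^{(d+k)}+\widetilde\alpha^{(d+k)}\otimes 1$'' is also not right as written --- for the bracket one has $\Delta^r(\{\psi_0,\widetilde\alpha\})=0$, and in any case that is a statement about $\{\psi_0,\widetilde\alpha\}$, not about $\widetilde\alpha$. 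Closing this gap likely requires either a direct computation exploiting the specific form of $\psi_0$ (possibly its Witt-algebra structure \eqref{Wittalg}), or a sharper inductive statement that tracks the failure of $\Delta_{\stu}$-primitivity as an object on which $\ad(\psi_0)$ acts in a controlled way.
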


In this way we could have defined solutions  to the double shuffle equations modulo products in all weights by lifting the elements $x_1^{2n}$ for $n\geq -1$. 
Denote them by
\begin{eqnarray}\label{chidef}
\chi_{-1}  &= & \widetilde{x_1^{-2}}  \\ 
\chi_{2n+1}  &= & \widetilde{x_1^{2n}} \quad \hbox{ for } n \geq 1 \nonumber 
\end{eqnarray}
One can verify that $\chi_{2n+1}$ differs from $\psi_{2n+1}$ starting from depth $3$, and $\chi_{-1}$ differs from $\psi_{-1}$ starting from depth 5. The  elements 
$\chi_{2n+1}$ have a more complicated pole structure than $\psi_{2n+1}$ in general, see \S\ref{sectNewanatomy}. 

We obtain in this way a homomorphism of Lie algebras
\begin{eqnarray}
\chi : \eis  &\To & \p \dmr \\
\check{\varepsilon}_{2n} & \mapsto & \chi_{2n-1}\ .
\end{eqnarray} 
In particular, the elements $\chi$ are not linearly independent and satisfy infinitely many  relations coming from  period polynomials whose quadratic parts were written down in \cite{P}.

\subsection{Action on the algebra of vines}
An equivalent way to write the elements $s_d$ is as the rational realization of a vineyard.
Recall that the (graded) Hopf algebra of vines $\V$  is generated by elements $g_n$ for $n\geq 1$, and equipped with the coproduct for which 
$\Delta g_n =\sum_{i=0}^n g_i \otimes g_{n-i}$,
 where here, and afterwards, we set $g_0=1$.

 \begin{defn} Define elements $\sss_n \in \V$ by the recursive formula
 \begin{equation} \label{sssrecursion}
\sum_{i=0}^{n-1} g_{i}\cdot  \sss_{n-i} = n g_n  \ . 
\end{equation}Ê
 Since $\sss_n = n g_n  +  ($products of $g_i$ with $i<n)$, it follows that 
the algebra of vines $\V$  is also generated by the elements $\sss_n$ for $n\geq 1$. \end{defn} 
  
For example, we have
\begin{equation}  
\sss_1  =   g_1 \quad , \quad 
 \sss_2  =   2 g_2 -g_1g_1 \quad , \quad \sss_3= 3 g_3 -2g_1g_2-g_2g_1+g_1g_1g_1\ .
\end{equation}
Let us define, for any vine $v$, 
$$p_v = x_v^{-1}$$
where $x_v$ is the polynomial associated to $v$ in definition \ref{xofvine}.

 \begin{lem} The elements  $s_d$ satisfy $s_d = p(\sss_d)$.
 \end{lem}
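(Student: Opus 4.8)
The plan is to unwind both sides of the claimed identity $s_d = p(\sss_d)$ through the defining recursions, using the fact that $p$ is an algebra homomorphism from the concatenation algebra $\widehat{\V} = \Q\langle\langle g_1, g_2, \ldots\rangle\rangle$ to the ring of rational functions $\Or$. Concretely, from definition \ref{xofvine} and the formula $x_T = \prod_{(i,j)\in E(T)}(x_j-x_i)$, one checks that for the bunch of grapes $g_n$ (whose stalk is labelled $0$ and whose grapes are $1,\ldots,n$), we have $x_{g_n} = \prod_{k=1}^n (x_k - x_0) = x_1 x_2 \cdots x_n$ after setting $x_0=0$; more generally, since a vine $v = g_{i_1}\cdots g_{i_k}$ is built by grafting, $x_v$ is the product of the $x$-polynomials of the successive bunches shifted to their grafting vertices, so that the map $v\mapsto x_v$ is multiplicative for concatenation, hence $p_v = x_v^{-1}$ defines an algebra homomorphism $p:\widehat{\V}\to \Or$ sending concatenation to the product on $\Or$ coming from $(\ref{shuffony})$-type concatenation. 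This is the structural observation that makes the statement tractable.

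Granting this, I would first compute $p(g_n)$ explicitly. Since $x_{g_n} = x_1\cdots x_n$, we get $p(g_n) = 1/(x_1\cdots x_n)$, which should be re-expressed via partial fractions as a sum over which grape plays the distinguished role. Then, because $p$ is a homomorphism, applying $p$ to the recursion $(\ref{sssrecursion})$, namely $\sum_{i=0}^{n-1} g_i \cdot \sss_{n-i} = n\, g_n$, yields $\sum_{i=0}^{n-1} p(g_i)\cdot p(\sss_{n-i}) = n\, p(g_n)$, where $p(g_0)=1$. Comparing this with the analogous recursion satisfied by the $s_d$ — which I would derive directly from the closed formula $(\ref{sddef})$, $s_d = \sum_{k=0}^{d-1} (d-k)/x_{\{0,1,\ldots,\widehat{k},\ldots,n\},\{k\}}$ — and then invoking the uniqueness of solutions to such a triangular recursion (each $p(\sss_n)$, resp. $s_n$, is determined by the previous ones together with $p(g_n)$), gives the result by induction on $d$.

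The technical heart, and the step I expect to be the main obstacle, is verifying that the closed-form expression $(\ref{sddef})$ for $s_d$ satisfies exactly the recursion $\sum_{i=0}^{d-1} p(g_i)\cdot s_{d-i} = d\, p(g_d)$ obtained by pushing $(\ref{sssrecursion})$ through $p$. This requires a genuine partial-fraction / symmetry computation: one must expand the concatenation products $p(g_i)\cdot s_{d-i}$ — which, under the concatenation rule, involve functions in disjoint blocks of variables $x_1,\ldots,x_i$ and then $x_i,\ldots,x_d$ (or the appropriate shifted variables) — collect terms according to which variable $x_k$ carries the simple pole, and check that the coefficient of $1/x_{\{0,\ldots,\widehat k,\ldots,d\},\{k\}}$ telescopes to $(d-k)$ on the left and matches the claimed formula. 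The combinatorial bookkeeping of the grafting (which grape of $g_{i_1}\cdots g_{i_\ell}$ the next bunch attaches to) is where care is needed; I would organize it by induction on the height $h(v)$, exactly as the sign factor $(-1)^{h(v)+1}/h(v)$ in definition \ref{Psi-1def} suggests is natural, and reduce everything to the single-variable partial-fraction identity $\frac{1}{x_1\cdots x_d} = \sum_{k=1}^{d} \frac{1}{x_k \prod_{j\neq k}(x_k - x_j)}\cdot(\text{something})$, carefully matched against $(\ref{sddef})$. Once the recursion is confirmed, the rest is formal.
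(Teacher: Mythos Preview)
Your overall strategy is sound: $p$ is indeed a homomorphism for shuffle concatenation (this is proposition \ref{propvinestoosha} in the paper), so pushing the recursion $(\ref{sssrecursion})$ through $p$ and then checking that the explicit $s_d$ of $(\ref{sddef})$ satisfies the same triangular recursion gives the result by uniqueness. That is correct.

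Where your route diverges from the paper is in how you handle the ``technical heart''. You propose to verify $\sum_{i=0}^{d-1} p(g_i)\cdot s_{d-i} = d\,p(g_d)$ by a direct partial-fraction computation, organized by residues at $x_k$. The paper instead \emph{inverts} the recursion first: writing $h_n = S(g_n) = \sum_{i_1+\cdots+i_k=n}(-1)^k g_{i_1}\cdots g_{i_k}$ for the antipode, one solves $(\ref{sssrecursion})$ formally to get $\sss_d$ as a sum of products $h_{d-k}\cdot k\,g_k$. The point is that $p(h_n) = \sigma(p(g_n)) = x_{\{0,\ldots,n-1\},\{n\}}^{-1}$ has a closed form, so $p(\sss_d)$ is immediately recognised, term by term, as the definition $(\ref{sddef})$ of $s_d$. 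This sidesteps the partial-fraction manipulation you anticipate as the main obstacle: the identity you would have to prove by hand is precisely the antipode relation $(\sum h_i)\cdot(\sum g_j) = 1$, which is automatic.

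Two small corrections to your write-up: the shuffle concatenation in reduced variables is $\overline{\Phi}_1\cdot\overline{\Phi}_2(x_1,\ldots,x_{r+s}) = \overline{\Phi}_1(x_1,\ldots,x_r)\,\overline{\Phi}_2(x_{r+1}-x_r,\ldots,x_{r+s}-x_r)$, so the blocks are not ``disjoint'' but shifted by $x_r$ --- you hedge this, but it matters for the computation. And the remark about organising by height $h(v)$ and the factor $(-1)^{h(v)+1}/h(v)$ from definition \ref{Psi-1def} is a red herring here; that structure belongs to $\psi_{-1}$, not to the $\sss_d$. The relevant combinatorics for $\sss_d$ is the antipode expansion, which has sign $(-1)^k$ with no $1/k$.
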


\begin{proof} 
Define elements:
\begin{equation} \label{hantipodedef}
h_n = \sum_{i_1+\ldots + i_k =n} (-1)^k g_{i_1} \ldots g_{i_k} \in \V\ ,
\end{equation} 
for all $n\geq 1$. Then $h_n = S(g_n)$, where $S$ is the antipode in $\V$.
The map $p$   is a homomorphism with respect to the shuffle concatenation product (proposition \ref{propvinestoosha}). Furthermore, 
we show in the final chapter that the $(i,j)^{\mathrm{th}}$ shuffle equation factorizes through the $(i,j)^{\mathrm{th}}$ part of the coproduct.
It follows that  since the element $h_n$ is the antipode of $g_n$,  its associated rational function is obtained by applying
the shuffle involution $\sigma$ $(\ref{sigmaonx})$ to $p_{g_n}$. We have
$$ p_{h_n} = \sigma( p_{g_n}) = \sigma  (x_{\{1,\ldots, n\}, 0}^{-1}) = x_{\{0,1,\ldots, n-1\}, n}^{-1} $$ 
It follows from the definition of $s_d$ and shuffle concatenation $(\ref{shuffprod})$ that
\begin{equation}
s_d = -\sum_{k=1}^d     p_{h_{d-k}}  \cdot  k  p_{g_k}     \in \V
\end{equation}
where we set ${h_0}=-1$.
Equation $(\ref{sssrecursion})$ follows easily from  formula $(\ref{hantipodedef})$.
\end{proof}
 The stuffle concatenation product does not act in a particularly nice way on the algebra $\V$. It is however convenient to introduce operators, defined for all $nÊ\geq 1$, 
 by
 \begin{eqnarray}
g_n \studot : \V  &\To & \V \\
g_n \studot g_{a_1}g_{a_2}  \ldots g_{a_m} & = & g_{a_1+n} g_{a_2} \ldots g_{a_m} \nonumber 
\end{eqnarray}  
It is compatible with the realization map $p$ in the sense that $p_{g_n} \studot p_{v} = p_{g_n \studot v}$. Now it follows  from the 
expansion of $\sss_d$ as a vineyard that
\begin{equation} \label{sssdasstudot}
\sss_{n+1} = g_1 \studot \sss_{n} - g_1 \sss_n + g_{n+1}\ .
\end{equation} 
Note that $g_n\studot v =  g_{n-1} \studot (g_1 \studot v)$, so $g_n \studot$ is the $n$-fold iteration of $g_1 \studot$ $n$.

Now it is not true in general that the rational functions associated to the algebra of vines $\V$  are closed under the linearized Ihara action, since they span a strict subspace of the space of rational functions.
However, the following lemma shows that they are preserved by   action on the left  of the elements $s_d$.

\begin{lem} The elements $s_d$ satisfy the following identities:
\begin{eqnarray}
 \label{xdotsd} 
s_{d+1} & = & p_{g_1} \studot s_d - p_{g_1} \cdot s_d +p_{g_{d+1}}\ , \\ 
 \label{stuffinvonsd}
s_d + \tau(s_d)  & = & (d+1) \, p_{g_d} \ , \\
s_d \circb p_{g_n} & = &p_{g_n}  \studot s_d +  n\, p_{g_{n+d}}   \ .\label{soactsonxgn} 
\end{eqnarray}
\end{lem}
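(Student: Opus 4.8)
The plan is to derive all three identities $(\ref{xdotsd})$, $(\ref{stuffinvonsd})$, $(\ref{soactsonxgn})$ from the single master identity $s_d = p(\sss_d)$ proved in the previous lemma, by translating the corresponding vine-algebra identities through the realization map $p$. The key point is that $p$ behaves well with respect to all three operations appearing on the right-hand sides: by proposition \ref{propvinestoosha} it is a homomorphism for shuffle concatenation $\cdot$, it intertwines $g_n\studot(-)$ with $p_{g_n}\studot(-)$ (stated just before the lemma), and — the part requiring genuine work — it intertwines the left action of $\sss_d$ (equivalently of $h_n$ and $g_n$) with the linearized Ihara action $\circb$.

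First I would prove $(\ref{xdotsd})$: this is just the image under $p$ of equation $(\ref{sssdasstudot})$, namely $\sss_{d+1} = g_1 \studot \sss_d - g_1 \sss_d + g_{d+1}$, using $p_{\sss_d}=s_d$, the compatibility $p_{g_1}\studot s_d = p_{g_1 \studot \sss_d}$, the homomorphism property $p_{g_1}\cdot s_d = p_{g_1 \sss_d}$, and $p_{g_{d+1}}$ coming from the last term. Next, $(\ref{stuffinvonsd})$: here I would combine the formula $s_d = -\sum_{k=1}^d p_{h_{d-k}}\cdot k\, p_{g_k}$ from the proof of the previous lemma with the observation that $\tau$ acting on a shuffle-concatenation product reverses the order of the factors and swaps $p_{g_k}$ with $p_{h_k}$-type terms (since $\tau$ realizes the antipode/reversal symmetry used earlier); summing $s_d+\tau(s_d)$ should telescope using the antipode relation $\sum_{i+j=d} h_i g_j = 0$ for $d\geq 1$ (i.e. $\sum g_{i_1}\cdots$ with alternating signs), leaving only the single diagonal term $(d+1)p_{g_d}$. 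Alternatively one can verify this directly from the explicit formula $(\ref{sddef})$ for $s_d$ by applying $\tau$ coordinate-wise: $\tau$ sends the term indexed by $k$ to the term indexed by $d-k$ up to the cyclic polynomial factor, and the count $(d-k)+(d-(d-k)) = d$ plus the boundary contribution gives $d+1$.

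Finally, $(\ref{soactsonxgn})$ is the main obstacle. I would start from the formula $(\ref{circformulax})$ for $f\circb g$ in reduced variables, with $f = s_d$ and $g = p_{g_n} = x_{\{1,\ldots,n\},\{0\}}^{-1}$, and expand. The term-by-term analysis should split into: (i) the first sum in $(\ref{circformulax})$ with $i=0$, which contributes $s_d \cdot p_{g_n}$-type terms that must combine with (ii) the reflected sum using the symmetry $f(x_1,\ldots,x_r) = (-1)^{r+1}f(-x_r,\ldots,-x_1)$ that $s_d$ enjoys as an element of $\overline{\pp\pp}$; the cross-terms should reorganize into $p_{g_n}\studot s_d$ (shifting the first grape-index by $d$) plus a residual $n\, p_{g_{n+d}}$ coming from the single position where the grafting of $s_d$'s tree onto $g_n$'s stalk produces a new bunch $g_{n+d}$, with the coefficient $n$ tracking the $n$ grapes available for attachment. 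The cleanest route may actually be to prove the lifted statement at the level of vines first — namely $\sss_d$ acts on $g_n$ under a suitable ``vine Ihara action'' by $g_n\studot \sss_d + n\, g_{n+d}$, using the recursion $(\ref{sssrecursion})$ and $(\ref{sssdasstudot})$ to induct on $d$ — and then apply $p$; this isolates the combinatorics of grafting bunches of grapes from the rational-function bookkeeping. The induction base $d=1$ is $s_1\circb p_{g_n} = p_{g_1}\studot s_n$-type and is checked directly from $(\ref{circformulax})$ and $s_1 = p_{g_1} = x_1^{-2}$ via $(\ref{Resfnabla})$-style manipulations, and the inductive step uses $(\ref{IharaderivStuffle})$ together with $(\ref{xdotsd})$ to push the $\circb$ past the decomposition $s_{d+1} = p_{g_1}\studot s_d - p_{g_1}\cdot s_d + p_{g_{d+1}}$, the derivation property $(\ref{IharaderivShuffle})$ handling the $\cdot$ term.
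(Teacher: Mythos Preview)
Your treatment of $(\ref{xdotsd})$ is correct and identical to the paper's: it is simply the image of $(\ref{sssdasstudot})$ under $p$. Your proposed approaches to $(\ref{stuffinvonsd})$ are different from the paper's---the paper instead writes down the residue formula
\[
\Res_{x_i=x_j} s_d = \frac{i-j}{x_{\{j\},\{0,\ldots,\widehat{j},\ldots,d\}}}
\]
and verifies the identity by comparing residues on both sides---but your telescoping/direct-verification route is plausible and could be made to work.

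The real gap is in your argument for $(\ref{soactsonxgn})$. You propose to induct on $d$, using $(\ref{xdotsd})$ to write $s_{d+1} = p_{g_1}\studot s_d - p_{g_1}\cdot s_d + p_{g_{d+1}}$ and then ``push $\circb$ past'' this decomposition via $(\ref{IharaderivShuffle})$ and $(\ref{IharaderivStuffle})$. But those identities decompose the \emph{right} argument of $\circb$, not the left: they tell you about $f\circb(g\cdot h)$ and $f\circb(g\studot h)$, whereas your inductive step needs $(p_{g_1}\studot s_d)\circb p_{g_n}$ and $(p_{g_1}\cdot s_d)\circb p_{g_n}$. There is no analogous product formula for the left slot, so the induction does not go through as stated. (Incidentally, $s_1 = p_{g_1} = x_1^{-1}$, not $x_1^{-2}$.)

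The paper avoids this by inducting on $n$ instead. Writing $p_{g_{n+1}} = p_{g_1}\studot p_{g_n}$, one can legitimately apply $(\ref{IharaderivStuffle})$ to $s_d\circb(p_{g_1}\studot p_{g_n})$ and reduce to the case $n$. The base case $n=1$ is handled by writing out the three-term formula for $f\circb x_1^{-1}$ explicitly, which yields
\[
f\circb p_{g_1} + (1+\tau)\big(p_{g_1}\studot f - p_{g_1}\cdot f\big) = p_{g_1}\studot f + (f+\tau f)\studot p_{g_1}
\]
for $f$ of even weight; setting $f=s_d$ and invoking both $(\ref{xdotsd})$ and $(\ref{stuffinvonsd})$ gives exactly $(\ref{soactsonxgn})$ at $n=1$. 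So $(\ref{stuffinvonsd})$ is not merely a parallel identity but an essential input to the base case of $(\ref{soactsonxgn})$.
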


\begin{proof} 
Equation $(\ref{xdotsd})$ follows immediately by applying the realization map $p$ to $(\ref{sssdasstudot}).$  
A simple consequence of the definition of $s_d$ is the following formula
\begin{eqnarray} \label{Resofsd}
\Res_{x_i = x_j} s_d  = { i-j \over x_{j, \{ 0,\ldots, \widehat{j}, \ldots, d\}}}\ ,
\end{eqnarray}
which is valid for $0 \leq j <i$, with the proviso that  $x_0=0$ as usual. Equation  $(\ref{stuffinvonsd})$ follows  by computing residues on both sides.

 Next, we prove equation $(\ref{soactsonxgn})$ by induction on $n$.
For the case $n=1$, we verify using the explicit formula for $f \circb x_1^{-1}$ (which  only has three terms) that
$$ f \circb { p_{g_1}} + (1 + \tau) \big(p_{g_1} \studot f - p_{g_1}\cdot f\big) =p_{g_1}\studot f + (f+  \tau   f) \studot p_{g_1}\ ,$$
whenever $f$ has even weight. Setting $f= s_d$ and plugging in equations $(\ref{xdotsd})$ and $(\ref{stuffinvonsd})$ gives  precisely $(\ref{soactsonxgn})$ in the case $n=1$.
Now suppose that $(\ref{soactsonxgn})$ is true for $n\geq 1$. Then, by writing
$p_{g_{n+1}} = p_{g_1} \studot p_{g_n},$
it is a simple calculation to verify that  $(\ref{soactsonxgn})$ holds for $n+1$,   using the second equation in  $(\ref{IharaderivShuffle})$ which expresses the compatibility of the linearized Ihara action with the stuffle concatenation product. 
\end{proof} 

Equation $(\ref{soactsonxgn})$ can be expressed as an action
\begin{eqnarray}
    \V  & \To &  \V \\ 
v  & \mapsto  & \sss_d\circb v \nonumber 
\end{eqnarray}
which is defined on generators $g_n$  by:
\begin{equation} \label{sssdactionongn} 
\sss_d \circb g_n = g_n \studot \sss_d + n g_{n+d}
\end{equation}
and extends to $\V$ by  the first equation of $(\ref{IharaderivShuffle})$
\begin{equation} \label{sssdactiononproduct}
\sss_d \circb ( a \cdot b)  =   (\sss_d \circb a) \cdot b + a \cdot (\sss_d \circb b) - a \cdot \sss_d \cdot b\ .
\end{equation}
In particular, we have
\begin{equation} \label{s1s2actions}
s_1 \circb g_n = (n+1) g_{n+1} \qquad  , \qquad s_2 \circb g_n = (n+2) g_{n+2} - g_{n+1} g_1
\end{equation}
If we compute the action of the $\sss_d$ on themselves, we find that they generate a copy of  (the positive degree part of) the Witt Lie algebra  under the Ihara bracket.
\begin{prop} For all $m,n\geq 1$, 
\begin{equation} \label{Wittalg}
\{ s_m, s_n\} = (m-n) s_{m+n}\ .
\end{equation}
\end{prop}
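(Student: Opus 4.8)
The plan is to transport the computation to the algebra of vines $\V \cong \Q\langle\langle g_1,g_2,\ldots\rangle\rangle$. By the preceding lemma, in particular $(\ref{soactsonxgn})$ and its vineyard form $(\ref{sssdactionongn})$, the realization map $p$ identifies the operator $s_d \circb (-)$ acting on $p(\V)$ with the operator $\sss_d \circb (-)$ on $\V$ which is determined on generators by $(\ref{sssdactionongn})$ and extended to concatenation products by the twisted Leibniz rule $(\ref{sssdactiononproduct})$. Since $p$ is a linear bijection onto $p(\V)$, the identity $(\ref{Wittalg})$ is equivalent to the corresponding bracket identity among the elements $\sss_m$, and it is this identity inside $\V$ that I would prove.

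To prove it I would induct on the total weight $m+n$. The base cases, where one of $m,n$ equals $1$, reduce to a direct check: writing $\sss_1 = g_1$ and expanding $\sss_1 \circb \sss_n$ by $(\ref{sssdactionongn})$ and $(\ref{sssdactiononproduct})$, then using $(\ref{sssdasstudot})$ to rewrite the stuffle-shifted term $g_1 \studot \sss_n$, one recognises the expected multiple of $\sss_{n+1}$ (the formulas $(\ref{s1s2actions})$ are the first instances). For the inductive step I would substitute the recursion $(\ref{sssrecursion})$, in the form $\sss_n = n g_n - \sum_{i=1}^{n-1} g_i \cdot \sss_{n-i}$, into $\sss_m \circb \sss_n$, distribute $\sss_m \circb (-)$ over each product $g_i \cdot \sss_{n-i}$ via $(\ref{sssdactiononproduct})$, replace $\sss_m \circb g_i$ using $(\ref{sssdactionongn})$ and $\sss_m \circb \sss_{n-i}$ using the inductive hypothesis, then antisymmetrise in $m \leftrightarrow n$ and collect terms. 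The stuffle-concatenation operators, together with $(\ref{sssdasstudot})$ and the relation $g_k \studot v = (g_1\studot)^k v$, are what allow the many resulting terms to be brought to a common normal form.

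The step I expect to be the main obstacle is precisely this last reorganisation: the correction term $-\,g_i\cdot \sss_m\cdot \sss_{n-i}$ in $(\ref{sssdactiononproduct})$ produces ``cross terms'' with $\sss_m$ inserted in the middle of a concatenation, and one must show that, once everything is written through the $g$'s and the lower $\sss$'s, all contributions other than a single multiple of $\sss_{m+n}$ cancel. A cleaner packaging, which I would try first, is to pass to the non-commutative generating series $G(t) = \sum_{n\ge 1} g_n t^n$ and $S(t) = \sum_{d\ge 1}\sss_d t^d$, for which $(\ref{sssrecursion})$ reads $(1+G(t))\,S(t) = t\,\partial_t G(t)$, so that $S(t)$ is the logarithmic derivative $t\,\partial_t\log(1+G(t))$; under this description the Witt relation should fall out of the elementary commutator identity for the operators $t\,\partial_t$, with only the finitely many low-order ``not enough grapes'' corrections left to track by hand. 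As a consistency check throughout I would use the pre-Lie identity $(\ref{Aabc})$, equivalently the fact that $\sss_m\circb(\sss_n\circb v) - \sss_n\circb(\sss_m\circb v) = \{\sss_m,\sss_n\}\circb v$, evaluated on the generators $v = g_k$.
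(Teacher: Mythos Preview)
Your transport of the computation to $\V$ via the realization $p$ is correct and is what the paper does. The gap is in the induction scheme. The hypothesis you invoke is the Witt relation for smaller total weight, i.e.\ the \emph{antisymmetrised} bracket $\{\sss_{m'},\sss_{n'}\}$; but after expanding $\sss_n$ by the recursion $(\ref{sssrecursion})$ and distributing via $(\ref{sssdactiononproduct})$, the terms you actually need are the \emph{one-sided} products $\sss_m\circb\sss_{n-i}$, and these are not determined by the brackets alone (for instance $\sss_2\circb\sss_2\neq 0$ while $\{\sss_2,\sss_2\}=0$). When you then ``antisymmetrise in $m\leftrightarrow n$'', the two expansions run over different index ranges---one sums over $1\le i<n$, the other over $1\le j<m$---so the unknown one-sided pieces do not pair off and cancel. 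The induction on $m+n$ therefore does not close as written; to make it work you would need to strengthen the hypothesis to an explicit formula for $\sss_m\circb\sss_n$ itself, which you do not propose.

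The paper sidesteps this with a different reduction: by the Jacobi identity and induction on $m$, it suffices to establish the relation for $m\in\{1,2\}$ against arbitrary $n$. (For $m\geq 3$ one writes $s_m$ as a scalar multiple of $\{s_1,s_{m-1}\}$ and applies Jacobi; every bracket that appears then has first argument equal to $1$ or at most $m-1$, so the induction closes.) The two base cases are handled precisely with the generating-series packaging you suggest: from $GS=xG'$ and $\sss_1\circb G=G'$ (which is $(\ref{s1s2actions})$) one applies the twisted Leibniz rule $(\ref{sssdactiononproduct})$ to $GS$, divides by the invertible $G$, and reads off a closed formula for $\sss_1\circb S$; the $m=2$ case is analogous. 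So your instinct to pass to generating series is right, but the mechanism is the Leibniz rule applied to the identity $GS=xG'$, not a commutator identity for $t\partial_t$, and the crucial Jacobi reduction to $m\in\{1,2\}$ is what makes that single computation sufficient.
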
 
\begin{proof} It suffices to prove the formula for $m=1,2$. The general formula then follows from the Jacobi identity and induction on $m$.
We prove that 
$$ \sss_1 \circb \sss_n = n\,  \sss_{n+1} + \sss_1 \sss_n \qquad \hbox{ and } \qquad \sss_n \circb \sss_1 = \sss_{n+1} + \sss_1 \sss_n\ .$$
The second equation follows from the equations  $\sss_{n+1} = g_1 \studot \sss_n +g_{n+1}- g_1 \sss_n$ $(\ref{sssdasstudot})$
$\sss_n \circb g_1 = g_1 \studot \sss_n + g_{n+1}$ and $\sss_1=g_1$.
For the first equation, write
\begin{equation}
S(x) = \sum_{n\geq 1} \sss_n x^n \qquad  ,\qquad G(x) = \sum_{n\geq 0} g_n x^n
\end{equation} 
where $g_0=1$.  Equation $(\ref{sssrecursion})$ is simply the formula
\begin{equation}
G(x) S(x) = x G'(x)\ .
 \end{equation} 
Equation $(\ref{s1s2actions})$ implies that $\sss_1 \circb G = G'$, and hence $\sss_1 \circb x G'=xG''$. We have
$$(\sss_1 \circb G) S + G (\sss_1 \circb S) - G \sss_1 S = xG'' \ ,$$
by   $(\ref{sssdactiononproduct})$. Since $G'S+GS'=G'+xG''$,  and $G' = G x^{-1} S$, we deduce that
$$ G \big(  \sss_1 \circb S - s_1 S -S' -S x^{-1} \big)=0 \ .$$
Because $G$ is invertible, we can divide by $G$ to  deduce the required formula for $\sss_1 \circb S$.
It follows that $\{\sss_1, \sss_{n+1}\} =n \,\sss_{n+2}$, and taking the rational realisation gives the corresponding 
identity for rational functions $s_n$.

The corresponding calculations for $\sss_2 \circb \sss_n$ and $\sss_n \circb \sss_2$ can be carried out using similar
ideas, and are omitted.
\end{proof} 
\subsection{Summary: an algebraic structure}
We have the following structure.
\begin{itemize}
\item A graded Hopf algebra $\V$ spanned by words in elements $g_n$ in degree $n$ and equipped with the concatenation product. The coproduct satisfies
$$\Delta g_n =  \sum_{i=0}^{n} g_i \otimes g_{n-i} \qquad ( g_0=1)\ .$$
\item  A distinguished family of primitive elements $\sss_n \in \V$ in degree $n$, for all $n\geq 1$. The vector space they span $W= \bigoplus_{n\geq 1} \Q\, \sss_n$
acts on $\V$ on the left
 $$\circb : W \otimes_{\Q} \V \To \V\ ,$$
in such a way that  $w \circb 1 = w$ and 
$$w \circb ( a\cdot b) = (w \circb  a) \cdot b + a \cdot (w \circb b) - a \cdot w \cdot b\ .$$
\item  The subspace $W\subset \V$ is closed under the bracket $\{\sss_m, \sss_n \} = \sss_m \circb \sss_n - \sss_n \circb \sss_m$ and 
is isomorphic to the Witt Lie algebra
$$\{\sss_m, \sss_n\} = (m-n) \sss_{m+n}$$
The action of $W$ is  completely determined by the identities  $(n\geq 0)$
$$\sss_1 \circb g_n =(n+1) \, g_{n+1} \quad \hbox{ and } \quad \sss_2 \circb g_n = (n+2) g_{n+2} - g_{n+1} g_1 \ , $$
since $W$ is generated as a Lie algebra by $\sss_1=g_1$ and $\sss_2=2g_2 -g_1g_1$.
\end{itemize}

\subsection{An unconditional anatomical decomposition} \label{sectNewanatomy}
The following theorem requires the theorem \ref{thmtwistissoln}  whose proof was omitted. 

\begin{thm}  Let $\sigma \in \Or$ be any  homogeneous solution to the  double shuffle equations modulo products of  non-negative weight.
Then there is a unique sequence $\alpha_1,\alpha_2,\ldots$ of solutions to the linearized double shuffle equations such that
$$\sigma = \sum_{dÊ\geq 1} \widetilde{\alpha}_d\ .$$
\end{thm}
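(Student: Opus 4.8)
The plan is to construct the sequence $\alpha_1, \alpha_2, \ldots$ recursively by peeling off the lowest nonvanishing depth component, show at each stage that the leftover solves the linearized double shuffle equations in that depth, and then invoke Theorem \ref{thmtwistissoln} to subtract a twisted lift. Concretely: write $\sigma = \sum_{r \geq 1} \sigma^{(r)}$. If $\sigma^{(r)} = 0$ for $r < d$ and $\sigma^{(d)} \neq 0$, I claim $\sigma^{(d)}$ is a solution to the linearized double shuffle equations. Indeed, the double shuffle equations modulo products for $\sigma$, restricted to the lowest depth $d$ where $\sigma$ is supported, collapse to their linearized form: the shuffle equations $(\overline{\sigma}^{(p+q)})^{\sharp}(\x_1\cdots\x_p \sha \x_{p+1}\cdots\x_{p+q}) = 0$ are already depth-graded, and the stuffle equations $\sigma(\x_1\cdots\x_p \stu \x_{p+1}\cdots\x_{p+q}) = 0$, when the lower-depth components vanish, lose exactly the lower-depth terms on the right-hand side, leaving the linearized stuffle equation $\sigma^{(d)}(\x_1\cdots\x_p \sha \x_{p+1}\cdots\x_{p+q}) = 0$. (One must also check the depth-one parity constraint of Lemma \ref{lemevenindepth1}, which holds since $\sigma^{(1)}$, if nonzero, is even.) Set $\alpha_d = \sigma^{(d)}$ and form $\widetilde{\alpha}_d \in \Or$ via Definition \ref{defnTwist}; by Theorem \ref{thmtwistissoln} this is a solution to the double shuffle equations modulo products whose first nonzero component is $\alpha_d$ in depth $d$. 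Then $\sigma - \widetilde{\alpha}_d$ is again a solution (the solutions form a vector space — this is clear from the linearity of the defining equations $(\ref{dbshfmodprod})$, though one can also phrase it via the Lie algebra structure of Theorem \ref{thmPolarRacinet}), it is homogeneous of the same weight, and its lowest nonvanishing component now sits in depth $> d$. Iterate.

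For existence, I would argue that this recursion terminates in each weight in the following sense: working in fixed weight $N$, the depth filtration on $\Or$ (restricted to weight-$N$ elements) is separated, and each step strictly increases the depth of the lowest nonzero component, so the partial sums $\sum_{d \leq D} \widetilde{\alpha}_d$ converge to $\sigma$ in the depth-adic topology. More precisely, after $D$ steps the residual $\sigma - \sum_{d \leq D}\widetilde{\alpha}_d$ lies in $\dd^{D+1}\Or$, and since $\sigma \in \prod_{d\geq 1}\Or_d$ with each $\Or_d$ separate, the equation $\sigma = \sum_{d \geq 1}\widetilde{\alpha}_d$ holds componentwise: the depth-$e$ component of the right-hand side is the finite sum $\sum_{d \leq e}\widetilde{\alpha}_d^{(e)}$ and equals $\sigma^{(e)}$ by construction. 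Note there is no finiteness claim on the number of nonzero $\alpha_d$ — in general infinitely many are nonzero, and the statement is only an equality of formal sums in $\Or = \prod_d \Or_d$; this should be stated carefully.

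For uniqueness, suppose $\sum_{d\geq 1}\widetilde{\alpha}_d = \sum_{d \geq 1}\widetilde{\beta}_d$ with all $\alpha_d, \beta_d$ solutions to the linearized equations. Take the smallest $d$ with $\alpha_d \neq \beta_d$. Since $\widetilde{\alpha}_e = \widetilde{\beta}_e$ for $e < d$ (because $\widetilde{\gamma}$ depends only on $\gamma$, deterministically, via Definition \ref{defnTwist}), comparing depth-$d$ components gives $\widetilde{\alpha}_d^{(d)} = \widetilde{\beta}_d^{(d)}$, i.e. $\alpha_d = \beta_d$, a contradiction. So the sequence is unique. The one point requiring genuine care — and what I expect to be the main obstacle — is the claim that the lowest nonvanishing component of a solution $\sigma$ to the \emph{full} double shuffle equations modulo products automatically solves the \emph{linearized} equations, including the subtle last condition $\Phi_{e_0^i e_1} = 0$ for $i$ odd or zero in $(\ref{lindoubleshuffle})$; this needs the depth-one evenness (Lemma \ref{lemevenindepth1}) to be tracked through the recursion, and one must confirm that after subtracting $\widetilde{\alpha}_d$ the new residual still has an even (or zero) depth-one part, which is automatic here since the depth-one part is unchanged for $d \geq 2$ and, for $d = 1$, $\alpha_1 = \sigma^{(1)}$ is exactly the depth-one part of $\sigma$. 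The remaining verifications — that $\sigma - \widetilde{\alpha}_d$ is again a solution, and convergence in the depth topology — are routine given the linearity of $(\ref{dbshfmodprod})$ and Theorem \ref{thmtwistissoln}.
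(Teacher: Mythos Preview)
Your proposal is correct and follows essentially the same approach as the paper: recursively set $\alpha_{r+1}$ to be the depth-$(r+1)$ component of $\sigma - \sum_{i \leq r} \widetilde{\alpha}_i$, observe that this residual is a solution to the double shuffle equations modulo products vanishing in depths $\leq r$ (so its lowest component satisfies the linearized equations), and invoke Theorem~\ref{thmtwistissoln} to continue. The paper's proof is a terse three-line version of this; your treatment of uniqueness and of the depth-adic convergence of the sum is more explicit than the paper's, but not a different argument.
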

\begin{proof} Since $\sigma$ has non-negative weight, its depth one component $\sigma^{(1)}\in \Q[x_1]$ is of even degree. Let $\alpha_1 = \sigma^{(1)}$.
Define $\alpha_i$ recursively by
$$ \alpha_{r+1}  =\Big( \sigma - \sum_{i=1}^r \widetilde{\alpha_i} \Big)^{(r+1)}\ .$$
Since, by induction hypothesis, $\sigma \equiv  \sum_{i=1}^r \widetilde{\alpha_i}$ modulo terms of  depth $\geq r+1$,  it follows from 
theorem \ref{thmtwistissoln} that  the element $\alpha_{r+1}$ is a solution to the linearized double shuffle equations, 
and we can define $\widetilde{\alpha}_{r+1}$ by twisting with $\psi_0$ as in $(\ref{defnTwist})$.
\end{proof} 

\begin{rem}  \label{remQ4} The elements $\alpha_d$ in the previous theorem can have poles along all divisors of the form $x_i=x_j$. These are not elements of $\p \ls_d$ in general, and therefore
do not have an interpretation as geometric derivations. It is an interesting question to ask what the generators of the Lie algebra of solutions to the linearized double shuffle equations
in this more general sense are. One can verify that the element
$$Q_4= \sum_{\Z/5\Z} { 1\over x_1 (x_3-x_2) x_3 x_4}$$
is a solution to the linearized double shuffle equations which clearly does not lie in $\p \ls_4$, since it satisfies $\Res_{x_3=0} (Q_4) = { 1\over x_1 x_2 x_4} \neq 0$.
One also checks that  
$$\psi_{-1}^{(5)} -\chi_{-1}^{(5)}   =  { 1 \over 240} \{  Q_4, x_1^{-2}\}\ .$$
\end{rem} 

\begin{ex} We can compute anatomical decompositions of the elements $\sigma$ in low weights using the elements $\chi$ and $Q_4$. This gives
\begin{eqnarray}
\qquad \sigma_5 &  \equiv & \chi_5 - {5 \over 24} \{ \chi_3, \{ \chi_{3}, \chi_{-1} \}\}  \\
\sigma_7 &\equiv  & \chi_7 -  {7 \over 48} \{ \chi_5, \{ \chi_{3}, \chi_{-1} \}\} -{7 \over 96}  \{ \chi_3, \{ \chi_{5}, \chi_{-1} \}\} + {1\over 240} \{x_1^6, Q_4\} + \ldots \nonumber \\
\sigma_9 &\equiv  & \chi_9 -  {5 \over 36} \{ \chi_7, \{ \chi_{3}, \chi_{-1} \}\} -{5 \over 108}  \{ \chi_3, \{ \chi_{7}, \chi_{-1} \}\}  \nonumber \\
 & &  \qquad - {7\over 144}   \{ \chi_5, \{ \chi_{5}, \chi_{-1} \}\}  +{1 \over 240}    \{x_1^8, Q_4\} + \ldots \nonumber 
\end{eqnarray}
where the $\ldots$ denotes higher order brackets in depth $\geq 5$. Note the presence of a term involving $Q_4$ which is absent in the anatomical decompositions involving $\psi$'s. 
One can check that  we always obtain the combination
$$\chi_{2n+1} + { 1\over 240} \{x_1^{2n}, Q_4\}$$
whose depth $5$ component has no residue at $x_4=0$. Thus  the purpose of the exceptional element $Q_4$ is to provide a counterterm to kill the extra poles in $\chi_{2n+1}$. This suggests that a possible modification of  the element $\psi_0$ may be more appropriate.
\end{ex}

Note that there are obvious patterns in the coefficients occurring in the right-hand side. These come from the fact that $\chi_{2n+1}$ is a highest-weight vector for the operator 
$\ff$ (which acts on elements of $\Or$ by exactly the formula $(\ref{ffformula})$), and therefore the depth 3 components map to highest-weight vectors in $\p \ls_3$.
More precisely, the combinations occur in pairs
$$ { 1 \over 2b} \{ \chi_{2a+1}, \{\chi_{2b+1}, \chi_{-1}\}\} +  { 1 \over 2a} \{ \chi_{2b+1}, \{\chi_{2a+1}, \chi_{-1}\}\} $$
which is to be compared with $(\ref{habdef}).$ By this remark, the element $\sigma_9$ is entirely determined by just two coefficients! 

\section{Miscellaneous remarks} \label{sectMisc}

\subsection{Denominators and congruences}
The  linearized double shuffle equations are defined over the integers. We easily  verify that 
$$\Res_{x_{2}=0} \{ x_1^{-2}, x_1^{2n} \}  = - 2 n \, x_1^{2n}\ .$$
From this it follows that the total reside vanishes modulo $2n$, and therefore
$$ \{ x_1^{-2}, x_1^{2n} \}  \in \ls_2(\F_p) \quad \hbox{ for all primes } p| 2n \ .$$ 
Whenever $\ls_2(\F_p)$ is spanned by Ihara brackets of elements in $\ls_1$,   we should expect  congruences modulo such primes.
For example, we have
\begin{eqnarray} 
 \{ x_1^{-2}, x_1^4\}    & \equiv &  0 \mod 2 \nonumber \\
 \{ x_1^{-2}, x_1^{12}\} + 2  \{ x_1^{2}, x_1^{8}\}+\{ x_1^{4}, x_1^{6}\}   & \equiv &  0 \mod 3 \nonumber \\
  \{ x_1^{-2}, x_1^{10} \}  + 2 \,  \{ x_1^{2}, x_1^{6}\}   & \equiv &  0 \mod 5 \nonumber \\
 \{ x_1^{-2}, x_1^{14} \}  +4 \,  \{ x_1^{2}, x_1^{10}\} +5 \,  \{ x_1^{4}, x_1^{8}\}   & \equiv &  0 \mod 7 \nonumber 
 \end{eqnarray}
which in turn generate many more congruences for iterated brackets.
The first and third of the equations above imply that, modulo depth $\geq 4$, 
$$ \{ \psi_{-1}, \psi_{5}\}   \equiv   0 \mod 2  \qquad , \qquad 
 \{ \psi_{-1}, \psi_{11}\}  + 2 \,  \{ \psi_{3}, \psi_{7}\}   \equiv   0 \mod 5\ . $$
Combining this with the  Jacobi identity and    the anatomical decomposition of $\sigma_9$:
$$\sigma_9  \equiv \psi_9 - {1 \over 180} \{ \psi_{-1},\{  \psi_{-1} , \psi_{11} \}\} -  {7 \over 180} \{ \psi_{7},\{  \psi_{3} , \psi_{-1} \}\} - {113 \over 180} \{ \psi_{3},\{  \psi_{7} , \psi_{-1} \}\} $$
$$   \qquad \qquad  - {1 \over 16} \{ \psi_{5},\{  \psi_{5} , \psi_{-1} \}\}  + \quad \{\hbox{depth } \geq 5\} \nonumber \\$$
we immediately deduce that some prime factors in the denominators drop out and
$$\sigma_9^{(3)} \in { 1\over 72}\,  \Z[x_1,x_2,x_3]$$
It is easy to deduce from parity relations and duality that $72  \sigma_9$ is integral in all depths. Computing a single coefficient 
in $\sigma_9^{(3)}$, we find that 
$$\sigma_9^{(3)} (\zetam(5,2,2) ) = { -3319 \over 72}\,  $$
which proves that the denominator $72$ is optimal.  In this way, we see that congruences in depth two and weight 9 in  fact determine the integrality properties of the motivic generator  $\sigma_9$ 
in all higher depths. These examples, and the well-known pathologies of the  motivic Lie algebra modulo exceptional primes  merit  further study.

\subsection{Anatomy of associators} \label{secttau} The contents of this paragraph was  essentially reproduced, with slightly different normalisations, in \cite{Sigma} \S7, and therefore is omitted.

\subsection{Algorithm for  decomposing  MZV's}
The anatomical  \mbox{decompositions} described in this paper should have practical applications.
For example, the algorithm of \cite{BrDec} can be used in combination with the methods of the present paper to give a highly efficient algorithm to decompose any motivic multiple zeta value of weight $\leq N$ and depth $\leq D$ into some chosen basis. It is, of course, conditional on conjectures \ref{conjBKZls} and \ref{conjueisispls} being true in the given range. It goes as follows:
\begin{enumerate}
\item Compute (some choice of) motivic zeta elements
$\sigma_{2n+1}$  for   $2n+1 \leq N.$
By the duality relation, it is enough to compute 
$\sigma_{2n+1}^{(d)}$ for $d\leq \min\{n,D\}$ only.  In practice this simply means computing the residues of counterterms in the canonical Lie algebra $\Lo$ to kill poles, starting from  $\psi_{2n+1}$.

\item Compute (some choice of) motivic associator $\tau$, as in \S\ref{secttau}, up to weight $N$ and depth $d\leq \min\{n,D\}$ (again, by duality).
\item  Follow the algorithm described in \cite{BrDec}. The choice of elements $\sigma_{2n+1}$ define derivation operators
which  act on motivic multiple zeta values. These can be used to decompose motivic multiple zeta values into the chosen basis.
\end{enumerate}

In \cite{BrDec}, the final step of the algorithm involved a transcendental argument: one must numerically evaluate a certain rational combination of multiple zeta values 
which is a rational multiple of $\zeta(m)$. This final step is now replaced with: compute the corresponding coefficients of $\sigma_{2n+1}$ if $m=2n+1$ is odd,
or $\tau$ if $m$ is even. Each step of the algorithm reduces the weight by at least 3 and the depth by 1.

Such an algorithm should replace the vast   tables of multiple zeta values which have  been extensively used in the literature and especially in high-energy physics. 
A list of every multiple zeta value up to given weight and depth is written as a linear combination of elements in some chosen basis.
By the above approach, one only needs to specify the coefficients in the anatomical decompositions of $\sigma_{2n+1}$ and $\tau$. 
In principle, this represents an enormous factor of compression:  a glance at  $(\ref{anatomycompact})$ shows that we have reduced the 128 coefficients in $\sigma_9$ down to just 4 numbers. Likewise,
it should be possible to reconstruct all thirty thousand structure coefficients of MZV's up to weight 13 with a few dozen numbers.

\subsection{Derivations on $\p \ls$}
The  polar linearized double shuffle equations in depth $d$  are equations of the form, for all $p+q=d$:
\begin{eqnarray}
f(\x_1\ldots \x_p \sha \x_{p+1} \ldots \x_{p+q}) & = & 0 \nonumber \\
f^{\sharp}(\x_1\ldots \x_p \sha \x_{p+1} \ldots \x_{p+q}) & = & 0 \nonumber 
 \end{eqnarray} 
where $f$ is an element of $\Or_d$ with poles along $x_i =x_j$ for $i, j$ consecutive only. It turns out that 
$\pÊ\ls$ has interesting non-trivial outer derivations.  Define
$$N^0( \p \ls)_d = \{ f \in \Or_d \hbox{ homogeneous of degree } 0 \hbox{ such that } \ad_f : \p \ls \rightarrow \p \ls\} \ .$$
There is an obvious  map $\p \ls_d \rightarrow N^0( \p \ls)_d$. We are interested in the cokernel
$$  N^0( \p \ls)_d / \pÊ\ls_d\ .$$
Examples can be constructed by considering the equations
\begin{eqnarray}
f(\x_1\ldots \x_p \sha \x_{p+1} \ldots \x_{p+q}) & = & c_{p,q} \nonumber \\
f^{\sharp}(\x_1\ldots \x_p \sha \x_{p+1} \ldots \x_{p+q}) & = & c'_{p,q} \nonumber 
 \end{eqnarray} 
where $c_{p,q}, c'_{p,q}\in \Q$  are constants. This can be verified using theorem \ref{thmRacinet} and the explicit formulae for the Ihara bracket.

\begin{ex} The first non-trivial example is the element in depth 3 
\begin{equation}
z_3={1 \over 4} \sum_{ \Z / 3 \Z} {x_2-x_1 \over x_3} \in \Or_3
\end{equation}
It satisfies $z_3(\x_1 \sha \x_2\x_3)= 0$,   $z^{\sharp}_3(\x_1 \sha \x_2\x_3)= 1$ and also $\ff(z_3)=0$.
One checks that the elements $z_3$ and $1$  span $N^0( \p \ls)_3 / \pÊ\ls_3$.
\end{ex}
The element $z_3$  defined  above
is equivalent to  the element denoted $\tilde{z}_3$ in \cite{P}, possibly with a different normalization. It corresponds to  the `arithmetic' part of the action of  $\sigma_3 \in \gm$ on the Lie algebra $\eis$.  For further detail see \cite{MMV}, theorem 20.4. We therefore expect to find non-trivial derivations $z_{2n+1}$ in $ N^0( \p \ls)_{2n+1} / \pÊ\ls_{2n+1}$ for  every $n\geq 1$.
\vspace{0.2 in}

\newpage
\begin{center} \bf{III. Proofs}
\end{center} 

In this final, technical,  section we give proofs of the theorems concerning the Lie algebra $\Lo$. They  are mostly  functional identities  between infinite series of rational functions. Our general strategy is to  conceptualise these statements as much as possible by breaking them  up  into simpler statements  which  can  be interpreted as general Hopf-algebra theoretic identities.  But  this is not  possible in every case, and some proofs 
 reduce to  technical but elementary manipulations on rational functions, which are not very enlightening.  On the other hand, we derive a number of possibly new identities which follow from the  double shuffle equations which may be of independent interest. 
\section{Preliminary remarks}

\subsection{} \label{trivialdensity} The following  remark enables us
to pass from power series to rational functions.  Call a linear functional relation of finite type, any equation of the form
$$ \sum_{i,j} r_{i,j} f^{(j)}(a_{i,j}^{(1)}, \ldots, a_{i,j}^{(j)})=0\ ,$$
where the sum is finite, and $r_{i,j}, a^{(k)}_{i,j}$ are rational functions in $x_1,\ldots, x_m$.  Clearly if this equation is satisfied for all $f^{(j)}$ polynomials with rational coefficients,
then it is identically zero, and holds for all $f^{(j)}$ rational functions.

We now modify  the shuffle and stuffle Hopf algebra structures previously defined for   formal power series to incorporate rational functions in commuting variables.

\subsection{The shuffle Hopf algebra with poles} Let us write $\Or_0^{\sharp}=\Q$ and 
\begin{equation}
\Or_d^{\sharp} = \Q [x_1,\ldots, x_d, (x_i+\ldots +x_j)^{-1}_{ 1 \leq i < j \leq d}] \quad  \hbox{ for }  \quad d \geq 1\  , 
\end{equation}  
and put $\Or^{\sharp} = \prod_{d \geq 0 }Ê\Or_d^{\sharp}$. We define a linear map $\sharp: \Or \rightarrow \Or^{\sharp}$  as in $(\ref{sharpdef})$.  Consider the homomorphism
\begin{equation}
m_{p,q} : \Or^{\sharp}_p \otimes_{\Q} \Or^{\sharp}_q \To \Or^{\sharp}_{p+q}   \nonumber
\end{equation}
defined by sending 
$x_i \otimes 1 $ to $x_i$ and $1\otimes x_j$ to $x_{j+p}$ for all $1\leq i \leq p, 1\leq j \leq q$. The $(p,q)$-th shuffle equation defines a map that we shall denote by 
\begin{equation}
Sh_{p,q} : \Or_{p+q} \To \Or^{\sharp}_{p+q}\ .
\end{equation} 
\begin{defn} Let $\Osha \subset \Or$  denote the subspace defined (recursively)  by 
$$\Osha_{n}= \{ f\in \Or_{n}:   Sh_{p,q}(f) \in m_{p,q} (\sharp \otimes \sharp) ( \Osha_p \otimes_{\Q} \Osha_q) \hbox{ for all }Êp+q=n\}\ ,$$
for all $n\geq 2$, and   $\Osha_1 = \Or_1$.
\end{defn} 
Since the maps $m_{p,q}$ and $\sharp$ are injective, we obtain uniquely defined   morphisms $\Delta_{p,q} : \Osha_{p+q} \rightarrow \Osha_p \otimes_{\Q} \Osha_q$ such that the following diagram
commutes
$$
\begin{array}{ccc} 
\Delta_{p,q}: \Osha_{p+q}    & \To     &  \Osha_p \otimes_{\Q} \Osha_q  \\
\quad \qquad   \downarrow &   & \downarrow   \\
Sh_{p,q}:  \Osha_{p+q} &  \To   &   \Or^{\sharp}_{p+q}\ , 
\end{array}
$$
where the vertical map on the right is $m_{p,q} \circ (\sharp \otimes \sharp)$, the one on the left is the identity.
\begin{prop} \label{propOshaisHopf}
 The space $\Osha$, equipped with the shuffle concatenation map $(\ref{shuffleconcat})$ and the coproduct $\Delta = \sum_{p,q} \Delta_{p,q}$, is a   cocommutative  graded  Hopf algebra. 

Its Lie algebra of primitive elements consists of the set of solutions in $\Or$ to the shuffle equations modulo products.
\end{prop}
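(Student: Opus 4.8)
\textbf{Proof plan for Proposition \ref{propOshaisHopf}.}
The statement has two parts: first, that $\Osha$ (with the shuffle concatenation product and the coproduct $\Delta = \sum_{p,q}\Delta_{p,q}$) is a cocommutative graded Hopf algebra; second, that its primitives are exactly the solutions in $\Or$ to the shuffle equations modulo products. I would organise the proof around the principle of \S\ref{trivialdensity}: every identity to be checked is a linear functional relation of finite type among rational functions, so it suffices to verify it on \emph{polynomial} inputs, where it reduces to the corresponding identity in the power-series Hopf algebra $\Q\langle\langle e_0,e_1\rangle\rangle$ of \S\ref{SECTshuff}. Concretely, I would first observe that $\Or^{tr}:=\prod_d \Or_d$ carries the operations $\cdot$ (shuffle concatenation $(\ref{shuffleconcat})$), $\sigma$ (involution $(\ref{sigmaonx})$), and the partial coproduct pieces $\Delta_{\sha}^{p,q}$ defined by Proposition \ref{propshuffleequations}, all of which restrict from the honest Hopf algebra structure on translation-invariant power series, so that the Hopf axioms (associativity, coassociativity, bialgebra compatibility, existence of antipode) hold as formal identities of rational functions.

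The key technical point is \textbf{well-definedness}: one must show that $\Osha$ is closed under $\cdot$ and that $\Delta$ (built from the maps $\Delta_{p,q}$ produced by the commuting square just before the proposition) actually lands in $\Osha\otimes_{\Q}\Osha$, not just in $\Or\otimes\Or$. For closure under $\cdot$, I would argue by induction on depth: given $f\in\Osha_m$, $g\in\Osha_n$, the $(p,q)$-th shuffle equation applied to $f\cdot g$ decomposes, via the recursive definition $(\ref{shuffprod})$ of $\sha$ and the concatenation formula $(\ref{shuffleconcat})$, into terms each of which is either $(\text{shuffle of a factor of }f)\cdot g$ or $f\cdot(\text{shuffle of a factor of }g)$ in the appropriate bidegrees, hence lies in the image of $m_{\bullet,\bullet}(\sharp\otimes\sharp)$ by the inductive hypothesis on $\Osha$. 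For the coproduct, the commuting square defines $\Delta_{p,q}f$ as the unique preimage under the injective map $m_{p,q}\circ(\sharp\otimes\sharp)$ of $Sh_{p,q}(f)$; the content is that this preimage, which a priori lies in $\Or_p\otimes\Or_q$, in fact lies in $\Osha_p\otimes_{\Q}\Osha_q$. This I would again reduce to the polynomial case by \S\ref{trivialdensity}: for $f$ polynomial, $\Delta_{p,q}f = \Delta_{\sha}^{p,q}\Phi^{(p+q)}$ translated to rational-function language via Proposition \ref{propshuffleequations}, and coassociativity of $\Delta_{\sha}$ together with the fact that each iterated coproduct component of a word again satisfies the relevant shuffle equations (being the image of a genuine element of the dual shuffle Hopf algebra $\Q\langle e_0,e_1\rangle$ under deconcatenation) forces the image into $\Osha\otimes\Osha$.

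Cocommutativity is inherited directly: $\Delta_{\sha}$ is cocommutative (\S\ref{sectshuffleequations}), so $\Delta_{p,q}$ and $\Delta_{q,p}$ are exchanged by the flip, and this is a polynomial identity extended by \S\ref{trivialdensity}. The grading is the depth grading, under which $\cdot$ adds depths and $\Delta$ respects the bigrading. The antipode is induced by $\sigma$ of $(\ref{sigmaonx})$ (equivalently $(\ref{sigmaantipodedef})$ before reduction), and the antipode axiom $m(\sigma\otimes\mathrm{id})\Delta = \eta\epsilon = m(\mathrm{id}\otimes\sigma)\Delta$ is checked on polynomials and extended. For the second part, I would note that an element $f=(f^{(r)})$ is primitive for $\Delta$ iff $\Delta_{p,q}f^{(p+q)}=0$ for all $p,q\geq1$, which by the defining commuting square and the injectivity of $m_{p,q}\circ(\sharp\otimes\sharp)$ is equivalent to $(\overline{f}^{(p+q)})^{\sharp}(\xx_1\ldots\xx_p\sha\xx_{p+1}\ldots\xx_{p+q})=0$ for all $p,q$; by Corollary \ref{corshuffleequations} (transported to the polar setting exactly as in \S\ref{sectLiealg}, all the functional equations being defined over $\Or$) this is precisely the condition that the associated series $\Phi$ satisfies the shuffle equations modulo products. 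The main obstacle I anticipate is the well-definedness of $\Delta$ on $\Osha$ — i.e.\ confirming the coproduct preserves the recursively-defined subspace in every bidegree — since one must simultaneously track the pole structure (the $\sharp$-variables $(x_i+\cdots+x_j)^{-1}$ versus the original $(x_i-x_j)^{-1}$) through the change of variables and the deconcatenation; everything else is a mechanical transcription of the Hopf-algebra identities of \S\ref{SECTshuff} via the density principle of \S\ref{trivialdensity}.
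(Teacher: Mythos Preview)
Your proposal is correct and follows essentially the same route as the paper: reduce each Hopf-algebra axiom to a linear functional identity of finite type, verify it on polynomial inputs where it is the known structure on $\Q\langle\langle e_0,e_1\rangle\rangle^{tr}$ via Proposition \ref{propshuffleequations}, and then extend to all of $\Osha$ by the density principle of \S\ref{trivialdensity}. One minor simplification: your worry that $\Delta_{p,q}f$ might a priori land only in $\Or_p\otimes_{\Q}\Or_q$ is unnecessary, since the recursive definition of $\Osha_n$ already requires $Sh_{p,q}(f)\in m_{p,q}(\sharp\otimes\sharp)(\Osha_p\otimes_{\Q}\Osha_q)$, so the preimage lies in $\Osha_p\otimes_{\Q}\Osha_q$ by construction; the only genuine closure check is for the product $\cdot$, which both you and the paper handle by the same density argument (cf.\ the remark following the proposition).
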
 

\begin{proof}
One has to check that the map $\Delta$ defined above satisfies the usual axioms for Hopf algebras. 
For example, the fact that it is a homomorphism for the multiplication translates into some functional identities for the maps $Sh_{p,q}$ (see remark below). 
That these are satisfied follows from proposition \ref{propshuffleequations}:  the maps $Sh_{p,q}$, restricted to the subspace of elements with no poles $\Q[x_1,\ldots x_{p+q}] \subset \Osha_{p+q}$
are the images  of the coproduct 
$$\Delta_{\sha}^{p,q}: \Q[x_1,\ldots, x_{p+q}] \To  \Q[x_1,\ldots, x_{p}]  \otimes  \Q[x_1,\ldots, x_{q}] \ .$$
Therefore $\Delta$, restricted to the subspace $ \Q[x_1,\ldots, x_n]  \subset \Osha_n$, coincides with $\Delta_{\!\sha}$ for all $n$,  and satisfies 
 all the requisite axioms of a Hopf algebra on this  subspace (the Hopf algebra in question is the image in  $\Or$ of the subspace of  $\Q\langle \langle e_0, e_1\rangle \rangle^{tr}$ consisting of finite power series). 
By \S\ref{trivialdensity}, it does so on the whole of $\Osha$.
\end{proof}
In particular, the map $(\ref{wordstopolys})$ provides an injective  morphism  of Hopf algebras
$$(\Q\langle  e_0, e_1 \rangle^{tr}, \cdot, \Delta_{\!\sha})  \To (\Osha, \cdot, \Delta)\ ,$$
where $\Q\langle e_0, e_1 \rangle^{tr}$ is the weight-graded version of $\Q\langle \langle e_0, e_1 \rangle \rangle^{tr}$, defined to be the subspace of finite power series (which is a Hopf algebra by  corollary \ref{cortransinv}).

\begin{rem} 
Let $f_1\in \Or_m$, and $f_2 \in \Or_n$. The proposition implies, in particular, the following functional equations for the shuffle product:
\vspace{-0.1in}
\begin{multline} \label{Sijonprod}
Sh_{k,m+n-k}( f_1 \cdot f_2) = \sum_{\ell=0}^k  (Sh_{\ell,m-\ell} f_1) (x_1,\ldots, x_{\ell}, x_{m+1},\ldots, x_{m+n-\ell}) \nonumber \\ 
\times   (Sh_{k-\ell,n-k+\ell} f_2) (x_{\ell+1},\ldots, x_{m}, x_{m+n-\ell+1},\ldots, x_{m+n})\ .
\end{multline}
where $Sh_{0,n}= Sh_{n,0}$ is defined to be the map $\sharp$.
\end{rem}

\subsection{The stuffle Hopf algebra with poles}
Let us write $\Or_{\leq n} $ for $\Or / \dd^{n+1} \Or$. 
Consider the homomorphism
\begin{equation}
n_{p,q} : \Or_p \otimes_{\Q} \Or_q \To \Or_{p+q}   \nonumber
\end{equation}
defined by sending 
$x_i  \otimes 1$ to $x_i$ and $1\otimes x_j $ to  $x_{j+p}$ for all $1\leq i \leq p, 1\leq j \leq q$.  We obtain a map which we denote by $\overline{n}_{p,q}:   \Or_{\leq p} \otimes_{\Q} \Or_{\leq q}\rightarrow\Or_{\leq p+q}  $
by summing over $n_{i,j}$ for $i\leq p, j \leq q$.
The $(p,q)$-th stuffle equation defines a map that we shall denote by 
\begin{equation}
St_{p,q} :  \Or_{\leq p+q} \To \Or_{p+q}\ .
\end{equation} 
\begin{defn} Let $\Ostu \subset \Or$  denote the subspace defined  by 
$$\Ostu= \{ f\in \Or:   \sum_{i \leq p, j \leq q} St_{i,j}(f) \in \overline{n}_{p,q}( \Ostu_{\leq p} \otimes_{\Q} \Ostu_{\leq q}) \hbox{ for all }Êp,q \geq 1\}\ .$$
There are finitely many equations defining $\Ostu_{\leq n}$,  since 
$St_{p,q}$ only involves the components $\Or_n$ for $\max\{p,q\} \leq n \leq p+q$.
\end{defn} 
Since the map $\overline{n}_{p,q}$ is injective, we obtain in this way  uniquely defined   morphisms $\Delta_{p,  q} : \Ostu_{\leq p+q} \rightarrow \Ostu_{\leq p} \otimes_{\Q} \Ostu_{\leq q}$ such that the following diagram
commutes
$$
\begin{array}{ccc} 
 \quad  \Delta_{\leq p, \leq q}: \Ostu_{\leq p+q}    & \To     &  \Ostu_{\leq p} \otimes_{\Q} \Ostu_{\leq q}  \\
\qquad \qquad \qquad   \downarrow &   & \downarrow   \\
\sum_{i \leq p, j \leq q} St_{i,j}:  \Ostu_{\leq p+q}\quad  &  \To   &   \Or_{\leq p+q}\ , 
\end{array}
$$
where the vertical map on the right is $\overline{n}_{p,q}$, and the one on the left is the identity.
\begin{prop} The space $\Ostu$, equipped with the stuffle concatenation map $(\ref{stuffleconcat})$ and the coproduct $\Delta = \sum_{p,q} \Delta_{p,q}$, is a   cocommutative  graded Hopf algebra. 

Its Lie algebra of primitive elements consists of the set of solutions in $\Or$ to the stuffle equations modulo products.
\end{prop}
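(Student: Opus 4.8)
The statement is the exact stuffle analogue of Proposition~\ref{propOshaisHopf}, and the strategy is to run the argument of that proof verbatim, replacing the shuffle structures by the stuffle ones. The key point, as before, is \emph{density}: by \S\ref{trivialdensity}, a linear functional relation of finite type between the maps $St_{i,j}$ which holds for all polynomial inputs holds for all inputs in $\Or$. So one first establishes the Hopf-algebra axioms for $\Delta = \sum_{p,q}\Delta_{p,q}$ on the polynomial subspace $\bigoplus_n \Q[x_1,\ldots,x_n]\subset\Ostu$, and then invokes \S\ref{trivialdensity} to transport them to all of $\Ostu$.

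\textbf{Key steps, in order.} First I would record that the maps $St_{p,q}$, restricted to $\Q[x_1,\ldots,x_{p+q}]$, are precisely the images under $(\ref{stufftopowerseries})$ of the depth-graded pieces $\Delta^{p,q}_\stu$ of the stuffle coproduct on $R\langle\langle Y\rangle\rangle$; this is exactly the content of Proposition~\ref{propstuffleequations}, and the careful bookkeeping about which components $\Or_n$ (for $\max\{p,q\}\le n\le p+q$) appear is already handled there. Consequently the composite $\overline{n}_{p,q}\circ\Delta_{\le p,\le q}$ agrees with $\sum_{i\le p,j\le q}St_{i,j}$, and on the polynomial subspace this is identified with $\Delta_\stu^{p,q}$; since $\Delta_\stu$ is a genuine Hopf-algebra coproduct (with the stuffle concatenation $\studot$ as product), \emph{all} the Hopf axioms — coassociativity, counit, compatibility of $\Delta$ with $\studot$, and existence of an antipode (the word-reversal-with-signs map, cf.\ \S\ref{sectStuffdual}) — hold automatically on the polynomial sub-Hopf-algebra. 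Each such axiom, when written out, becomes a finite-type linear functional identity among the $St_{i,j}$ and the multiplication maps $n_{p,q}$, of exactly the shape to which \S\ref{trivialdensity} applies; so each extends to all of $\Ostu$. Cocommutativity follows because $\Delta_\stu$ is cocommutative (equivalently, the $(p,q)$-th stuffle equation is equivalent to the $(q,p)$-th). Finally, $\Ostu$ is a graded algebra under $\studot$ by the very definition of the subspace and of $(\ref{stuffleconcat})$, and an element $\Phi\in\Or$ is primitive for $\Delta$ iff $\Delta_{\le p,\le q}\Phi=0$ for all $p,q\ge1$, iff (by injectivity of $\overline{n}_{p,q}$ and Corollary~\ref{corstuffleequations}) $\Phi(\xx_1\ldots\xx_p\stu\xx_{p+1}\ldots\xx_{p+q})=0$ for all $p,q$, which is exactly the stuffle equations modulo products.

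\textbf{Main obstacle.} The genuinely delicate point, as in the shuffle case, is the verification that $\Delta$ is well-defined on $\Ostu$ — i.e.\ that $\sum_{i\le p,j\le q}St_{i,j}(f)$ actually lies in the image of the injective map $\overline{n}_{p,q}$ restricted to $\Ostu_{\le p}\otimes\Ostu_{\le q}$, rather than merely in $\overline{n}_{p,q}(\Or_{\le p}\otimes\Or_{\le q})$. This is forced by the recursive definition of $\Ostu$ together with coassociativity: one checks that applying $\Delta_{p,q}$ and then $\Delta_{p',q'}$ to one tensor factor lands, after reassociation, in the image of a further iterated coproduct, so the components are themselves solutions of the relevant stuffle equations — again a finite-type identity verified on polynomials and then extended by \S\ref{trivialdensity}. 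The extra complication relative to the shuffle case is purely notational: the stuffle coproduct is not depth-graded, so one must everywhere carry the filtration indices $\le p,\le q$ and the sums $\sum_{i\le p,j\le q}$, but no new idea beyond Proposition~\ref{propstuffleequations} and the density principle is required.
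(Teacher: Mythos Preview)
Your proposal is correct and is exactly the approach the paper takes: its proof is the single sentence ``The proof is similar to the proof of proposition~\ref{propOshaisHopf},'' and you have simply written out what that similarity entails (density via \S\ref{trivialdensity}, identification of $St_{p,q}$ with $\Delta_\stu^{p,q}$ on polynomials via Proposition~\ref{propstuffleequations}, and the characterisation of primitives via Corollary~\ref{corstuffleequations}). One small slip: signed word-reversal is \emph{not} the antipode for the full stuffle Hopf algebra (the paper notes this explicitly just before $(\ref{stuffinvol})$) --- it is only the antipode after passing to the depth-graded version --- but this is harmless, since in a graded connected bialgebra the antipode exists automatically by the usual recursion and need not be named.
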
 

\begin{proof} The proof is similar to the proof of proposition   \ref{propOshaisHopf}.
\end{proof}
In particular, the map $(\ref{wordstopolys})$ provides an injective  morphism  of Hopf algebras
$$(\Q\langle  Y \rangle , \studot, \Delta_{\stu})  \To (\Ostu, \studot, \Delta)\ ,$$
where $\Q\langle Y \rangle$ is the weight-graded version of $\Q\langle \langle Y \rangle \rangle$.

\subsection{Remarks on free Lie algebras}ÊWe can replace the set of all shuffle equations (or linearized stuffle equations) in a given depth 
with a single functional equation. 

\subsubsection{Lie projectors}
Let $k$ be a field of characteristic zero, and let $X$ be a finite set. Let $V$ denote the $k$-vector space with basis  $X$.  Let  
$\Lo(X)$ be the free graded Lie algebra generated by $X$, where elements in $X$ have degree $1$. Its universal enveloping $\U \Lo(X)$ is isomorphic to the tensor algebra $T(V)=\bigoplus_{ n \geq 0 } V^{\otimes n}$.  
It is equipped with the coproduct for which $x$ is primitive for all $x\in X$.  Let
$$i : \Lo(X) \To T(V)_{>0}$$
denote the inclusion of $\Lo(X)$ in $T(V)$. Its image consists of the space of primitive elements in $T(V)$.  The Lie projector is defined in degree $n\geq 2$ by
\begin{eqnarray}
p_{\Lo}: T(V)_{>0}   &\To & T(V)_{>0}   \\
x_1 \otimes \ldots \otimes x_n & \mapsto & [x_1, [ x_2,  \ldots [x_{n-1}, x_n]  \cdots ] \nonumber
\end{eqnarray} 
and is the identity in degree $n=1$. 
A  well-known result states that the image of $p_{\Lo}$ is exactly $i ( \Lo(X))$ and that 
$ f \in i (\Lo_n(X))$ if and only if $p_{\Lo}(f) = n f,$
where $\Lo_n(X)$ is the subspace of  $\Lo(X)$ of degree $n\geq 1$.  Thus for all $n\geq 1$, the normalized Lie projector defines  an idempotent projector on to the image of $\Lo(X)$:
\begin{equation}  \label{ninvpidd}
 { n^{-1}}Êp_{\Lo} :   T(V)_{n} \To  i ( \Lo_n(X)) \ . 
 \end{equation}Ê
 \vspace{-0.2in}
 \subsubsection{Dual projectors}  Denote by  
$\Q\langle X\rangle$ the commutative graded Hopf algebra whose underlying vector space is spanned by words in $X$,  equipped with the shuffle product, and deconcatenation coproduct.
Consider the map  $\lambda$ which is the identity in degree $1$, and is defined recursively in degree $n\geq 2$ by the formula 
\begin{eqnarray}
\lambda: \Q\langle X \rangle_{>0}  &\To & \Q \langle X \rangle_{>0} \\
x_1 \ldots x_n &  \mapsto  & x_1 \lambda (x_2\ldots x_n) - x_n \lambda(x_1\ldots x_{n-1})\ . \nonumber 
\end{eqnarray}
 Denote the Lie coalgebra of  indecomposable elements of $  \Q\langle X \rangle$ by 
 $$I(X) = {\Q\langle X \rangle_{>0} \over \Q\langle X \rangle_{>0}\Q\langle X \rangle_{>0}}\ .$$
It  is the graded dual to $\Lo(X)$.  
One shows that an element $ f \in \Q\langle X \rangle$ of degree $n\geq 1 $ is in the kernel of $\lambda$ if and only if it is decomposable (shuffle product), i.e.
$$ \lambda (f) =0 \qquad  \Longleftrightarrow \qquad  f \in   \Q\langle X \rangle_{>0}\Q\langle X \rangle_{>0}\ .$$
Therefore $\lambda$ induces an injective map
 $$ \overline{\lambda}: I(X)  \To \Q\langle X \rangle_{>0} \ ,$$
 which splits the natural map $\Q\langle X\rangle _{>0} \rightarrow I(X)$. 
Furthermore, an element   $f\in \Q\langle X \rangle_n $ satisfies 
 $  f \in \overline{\lambda} (I(X))$ if and only if $\lambda(f) = n f.$
 
 \subsection{Linearized double shuffle as two equations}Ê
Let $n\geq 1$, and define a map  $\epsilon_n: \Sigma_n\rightarrow \{0, \pm 1\}$, where $\Sigma_n$ is the set of  permutations  on $n$ letters, by
\begin{equation}Ê \label{lambdaprojector}
\lambda(x_1\ldots x_n) = \sum_{\sigma \in \Sigma_n}  \epsilon_n( \sigma)  x_{\sigma(1)} \ldots x_{\sigma(n)}\ .
\end{equation}
If $f \in \Q[x_1,\ldots, x_n]$, let us write 
\begin{equation} 
(\lambda_n f) (x_1,\ldots, x_n) =  \sum_{\sigma \in \Sigma_n}  \epsilon_n( \sigma)  f(x_{\sigma(1)} ,\ldots, x_{\sigma(n)}) \ .
\end{equation}
For example, $\lambda_2 f(x_1,x_2) = f(x_1,x_2) - f(x_2,x_1)$,  and
$$\lambda_3 f(x_1,x_2,x_3) = f(x_1,x_2,x_3) - f(x_1,x_3,x_2) - f(x_3,x_1,x_2) + f(x_3,x_2,x_1)$$
The following corollary follows immediately from the previous discussion, and  reduces the linearized double shuffle equations (\S\ref{sectLinDS}) in depth $n$ to just two equations.
\begin{cor} \label{corDSto2eq} An element $f\in \Q[x_1,\ldots, x_n]$, where $n\geq 2$,  satisfies the linearized stuffle equations if and only if 
\begin{equation} \label{pinfstuff}
\lambda_nf  = n f  \ .
\end{equation} 
It satisfies the shuffle equations if and only if
\begin{equation} \label{pinshuff} 
(\lambda_n f)^{\sharp}  =n f^{\sharp} \ .
\end{equation}
By \S\ref{trivialdensity}, the same statements are true for rational functions.
 \end{cor}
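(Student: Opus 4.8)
The final statement to prove is Corollary \ref{corDSto2eq}: that an element $f\in \Q[x_1,\dots,x_n]$ satisfies the linearized stuffle equations iff $\lambda_n f = nf$, and satisfies the shuffle equations iff $(\lambda_n f)^\sharp = n f^\sharp$.

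\medskip

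The plan is to deduce this directly from the free-Lie-algebra machinery recalled in the preceding two subsections, transported through the duality pairing already set up in \S\ref{sectdualshuff}. The key conceptual point is that the linearized stuffle equations $f(\x_1\ldots \x_p \sha \x_{p+1}\ldots\x_{p+q})=0$ for all $p+q=n$ are exactly the statement that, viewing $f$ as a linear functional on $\Q\langle X_n\rangle$ (the shuffle Hopf algebra on the letters $\x_1,\dots,\x_n$ of fixed depth $n$, via $\x_{i_1}\cdots \x_{i_n}\mapsto f(x_{i_1},\dots,x_{i_n})$), the functional $f$ annihilates the subspace $\Q\langle X_n\rangle_{>0}\Q\langle X_n\rangle_{>0}$ of products. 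Indeed the degree-$n$ part of that subspace is spanned precisely by the elements $\x_{i_1}\ldots\x_{i_p}\sha \x_{i_{p+1}}\ldots\x_{i_n}$ with $p,q\geq 1$, and since $f$ is a polynomial expression in the variables $x_1,\dots,x_n$ it is determined by its values on the multilinear words $\x_{\sigma(1)}\cdots\x_{\sigma(n)}$, $\sigma\in\Sigma_n$; so ``$f$ vanishes on all shuffle products'' is equivalent to ``$f$ is in the image of $\overline\lambda: I(X_n)\to \Q\langle X_n\rangle_{>0}$'', and by the quoted characterization this holds iff $\lambda_n(f)=nf$, where $\lambda_n$ is the operator obtained by transposing the dual projector $\lambda$ of \eqref{lambdaprojector} to the functional side. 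This gives \eqref{pinfstuff}.

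\medskip

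For the shuffle equations, the only extra ingredient is the change of variables $\sharp$. By Corollary \ref{corshuffleequations}, $f$ satisfies the shuffle equations modulo products iff $(f^{(p+q)})^\sharp(\x_1\ldots\x_p \sha \x_{p+1}\ldots\x_{p+q})=0$ for all $p,q\geq 1$; but this is literally the previous condition applied to the transformed function $f^\sharp$ rather than $f$. The map $\sharp$ is a linear bijection of $\Q[x_1,\dots,x_n]$ (with inverse given by $g(x_1,x_2-x_1,\dots,x_n-x_{n-1})$), and it intertwines the $\Sigma_n$-action on functions appropriately — more precisely, the system of $\lfloor n/2\rfloor$ shuffle equations is, by the discussion in \S\ref{sectshuffleequations}, a linear combination of the permuted terms $f^\sharp(x_{\sigma(1)},\dots,x_{\sigma(n)})$ with the same coefficient pattern $\epsilon_n$ that defines $\lambda_n$. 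Hence applying the stuffle argument to $g=f^\sharp$ yields $\lambda_n(f^\sharp)=nf^\sharp$, i.e. $(\lambda_n f)^\sharp = n f^\sharp$ once one checks $\lambda_n$ commutes with $\sharp$ in the required sense — which it does because both operate only by permuting the arguments/indices before the $\sharp$-substitution. The passage from polynomials to rational functions is handled verbatim by the finite-type density argument of \S\ref{trivialdensity}.

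\medskip

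The only genuinely delicate point — the ``main obstacle'' — is making the bookkeeping around $\sharp$ precise: one must verify that the operator $\lambda_n$ (acting on functions of $x_1,\dots,x_n$ by the signed permutation sum with weights $\epsilon_n$) is conjugate under $\sharp$ to the corresponding operator acting after the substitution, so that ``$f$ lies in $\overline\lambda(I(X_n))$ after applying $\sharp$'' really does translate into $(\lambda_n f)^\sharp = n f^\sharp$ and not some twisted variant. This is purely combinatorial and amounts to unwinding Proposition \ref{propshuffleequations} together with the recursive definition of $\lambda$, but it is where all the care is needed. Everything else is a direct quotation of the standard facts about $p_{\Lo}$, $\lambda$, and the graded duality $\Q\langle X_n\rangle \leftrightarrow \Lo(X_n)$ recalled just above, combined with Corollaries \ref{corshuffleequations} and \ref{corstuffleequations}.
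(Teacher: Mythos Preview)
Your overall strategy matches the paper's: the corollary is meant to follow directly from the free Lie algebra facts recalled just above. But your execution of the stuffle half has a genuine gap.

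You write: ``$f$ vanishes on all shuffle products'' is equivalent to ``$f$ is in the image of $\overline\lambda$''. This is not one of the facts the paper states, and as written it conflates two different objects. The stated fact about $\lambda$ is that $\ker\lambda$ equals the decomposables and that the $n$-eigenspace of $\lambda$ equals $\mathrm{im}\,\overline\lambda$; both of these are statements about elements of $\Q\langle X\rangle$. The condition ``$f$ annihilates decomposables'' is a statement about a \emph{functional}, i.e.\ about an element of the dual. Under the word-basis identification these are different subspaces: already for $n=3$ one checks that $\lambda(x_1x_2x_3)$ is \emph{not} a Lie element, so $\mathrm{im}\,\lambda\neq\{\text{Lie elements}\}=\{\text{functionals annihilating decomposables}\}$. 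So the implication you invoke is not the one on offer.

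The clean route is through the \emph{Lie} projector, not the dual one. The linearised stuffle equations for $f$ say exactly that the corresponding word $\Phi_f\in\Q\langle Y\rangle$ (of length $n$) is primitive for the shuffle coproduct, hence a Lie element, hence $p_{\Lo}(\Phi_f)=n\Phi_f$ by the Lie-projector fact. The remaining point is that $p_{\Lo}$ on words translates to $\lambda_n$ on polynomials: writing $p_{\Lo}(z_1\cdots z_n)=\sum_\sigma d_n(\sigma)z_{\sigma(1)}\cdots z_{\sigma(n)}$, adjointness of $p_{\Lo}$ and $\lambda$ gives $d_n(\sigma)=\epsilon_n(\sigma^{-1})$, and the passage $\Phi_f\leftrightarrow f$ introduces another $\sigma\mapsto\sigma^{-1}$, so the two cancel and $p_{\Lo}\Phi_f=n\Phi_f$ becomes $\lambda_n f=nf$. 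This is presumably what the paper's ``follows immediately'' is gesturing at, and it is also what your phrase ``transposing $\lambda$ to the functional side'' is groping towards, but you should say it via $p_{\Lo}$ rather than via $\overline\lambda$.

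On the shuffle half: your claim that ``$\lambda_n$ commutes with $\sharp$'' is false if read literally (for $n=2$ and $\sigma=(12)$ one has $(\sigma\cdot f)^\sharp\neq\sigma\cdot(f^\sharp)$). What is true, and what you essentially need, is the tautology that the paper's notation $f^{\sharp}(\xx_{\sigma(1)}\ldots\xx_{\sigma(n)})$ already means $(\sigma\cdot f^\sharp)(x_1,\ldots,x_n)$; hence the shuffle equations are precisely the linearised stuffle equations for $f^\sharp$, and the first part gives $\lambda_n(f^\sharp)=nf^\sharp$. There is no commutation to check; just unpack the notation.
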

It is important to note that for applications, it is often more useful to describe the solutions to the double shuffle equations using more, rather than fewer,  equations.

\section{Proofs of stuffle identities}
In this section,  we prove that the elements $\Psi_{2n+1}$  and $\Psi_{-1}$ satisfy the stuffle equations modulo products by generating a large supply of solutions
to these equations, and by showing that $\Psi_{2n+1}$  and $\Psi_{-1}$ are in the Lie algebra that they generate.

\subsection{Elementary solutions} A natural way to construct solutions to the  stuffle equations modulo products is as follows. 
\begin{lem}\label{lembadlift1}
For all $n\geq 1$, the elements  $\rho_n \in \Q\langle \langle Y \rangle \rangle$ defined by 
\begin{equation} \label{rhondef}
\rho_n= \sum_{i_1+\ldots +i_k =n , \,  i_j \geq 1} { ( -1)^{k-1} \over k} \, \y_{i_1} \ldots \y_{i_k}\ ,
\end{equation}
are primitive, i.e., $\Deltat \rho_n = 1 \otimes \rho_n + \rho_n \otimes 1$. 
\end{lem}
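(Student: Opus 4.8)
The plan is to recognize $\rho_n$ as the degree-$n$ part of the logarithm of a grouplike element and to use the general fact that the logarithm of a grouplike element in a complete Hopf algebra is primitive. Concretely, the algebra $R\langle\langle Y\rangle\rangle$ is complete with respect to the depth filtration, its coproduct $\Deltat$ is continuous, and the element $G = 1 + \sum_{m\geq 1}\y_m$ is grouplike: indeed $\Deltat G = \sum_{m\geq 0}\sum_{i=0}^m \y_i\otimes\y_{m-i} = G\widehat\otimes G$ after setting $\y_0=1$, which is exactly the defining relation $(\ref{Deltastuff})$. Therefore $\log G = \sum_{k\geq 1}\frac{(-1)^{k-1}}{k}\big(\sum_{m\geq 1}\y_m\big)^k$ is a well-defined primitive element. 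Expanding the $k$-th power gives $\big(\sum_{m\geq 1}\y_m\big)^k = \sum_{i_1,\ldots,i_k\geq 1}\y_{i_1}\cdots\y_{i_k}$, and collecting terms of total depth-weight $n$ (i.e. $i_1+\cdots+i_k=n$) yields precisely $\rho_n$ as in $(\ref{rhondef})$. Since the set of primitive elements is a graded subspace (primitivity is a homogeneous condition for the weight grading, because $\Deltat$ preserves weight), each homogeneous component $\rho_n$ of $\log G$ is itself primitive.

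First I would record the completeness and continuity setup already available in \S\ref{SECTstuff}, then verify the grouplike identity $\Deltat G = G\widehat\otimes G$ directly from $(\ref{Deltastuff})$ — this is a one-line check. Next I would invoke the standard Hopf-algebra lemma that in a complete Hopf algebra $\log$ maps grouplike elements to primitive elements (or, if one prefers to stay self-contained, prove it by the usual argument: if $\Delta g = g\otimes g$ then in the completed tensor product $\Delta(\log g) = \log(g\otimes g) = \log((g\otimes 1)(1\otimes g)) = \log(g\otimes 1) + \log(1\otimes g) = \log g\otimes 1 + 1\otimes\log g$, using that $g\otimes 1$ and $1\otimes g$ commute). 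Finally I would extract the weight-$n$ homogeneous piece. Alternatively, one could give a purely combinatorial proof: apply $\Deltat$ termwise to $(\ref{rhondef})$, use $\Deltat(\y_{i_1}\cdots\y_{i_k}) = \sum_{\text{cuts}}(\y_{i_1}\cdots\y_{i_j}\text{ with last letter split})\otimes(\cdots)$ coming from $(\ref{Deltastuff})$, and check that all the "mixed" terms cancel by the identity $\sum_{k}\frac{(-1)^{k-1}}{k}\binom{\text{something}}{\ldots}=0$; but this is messier and less illuminating.

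The main obstacle, such as it is, is mostly bookkeeping rather than conceptual: one must be careful that $\Deltat$ as defined on $\y_n$ via $(\ref{Deltastuff})$ really does extend to an algebra homomorphism $R\langle\langle Y\rangle\rangle\to R\langle\langle Y\rangle\rangle\widehat\otimes_R R\langle\langle Y\rangle\rangle$ and that it is continuous for the depth filtration — but this is exactly the content of the Hopf-algebra structure asserted in \S\ref{SECTstuff}, so it may be quoted. The only genuine subtlety is matching the combinatorial normalization: one must confirm that the coefficient of a composition $(i_1,\ldots,i_k)$ in $\log G$ is indeed $\frac{(-1)^{k-1}}{k}$ with no further symmetry factor, which holds because we are expanding a noncommutative power, so distinct orderings of the same multiset of indices are counted separately. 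I expect the conceptual $\log$ of a grouplike argument to be the cleanest route and would present that as the proof, relegating the combinatorial verification to a remark if desired.
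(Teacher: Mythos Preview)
Your proposal is correct and is essentially the same argument as the paper's: both recognize $\rho_n$ as the weight-$n$ piece of $\log G$ for the grouplike element $G=1+\sum_{m\ge1}\y_m$ and use that the logarithm of a grouplike element is primitive. The only cosmetic difference is that the paper introduces an auxiliary variable $t$ (writing $G(t)=1+\sum_{n\ge1}t^n\y_n$) and reads off $\rho_n$ as the coefficient of $t^n$, whereas you extract the weight-homogeneous component directly; both amount to the same thing.
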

\begin{proof} By definition,  we have $\Deltat \y_n = \sum_{i+j=n} \y_i \otimes \y_j$ for all $n$.  This is equivalent to the fact that the element
$$G(t)=1+\sum_{n\geq 1 } t^n \y_n  \in \Q[t]\langle \langle Y \rangle \rangle$$ 
is group-like. It follows that its logarithm 
$$\log (G(t))=- \sum_{k\geq 1} {1\over k} \big(1-G(t)\big)^k = \sum_{k\geq 1} \rho_k t^k  $$ is primitive, and therefore $\rho_k$ is primitive for all $k\geq 1$.
\end{proof}
By $(\ref{rhondef})$, the depth-$r$ component of $\sum_{k\geq 1} \rho_k t^k $ corresponds via $( \ref{stufftopowerseries})$  to 
\begin{equation} \label{oneminustxis}
{ (-1)^{r-1} \over r}{t^r \over (1-tx_1) \ldots (1-tx_r)} \ .
\end{equation}

\begin{cor} \label{cordepth1badlift} Let  $f \in \Q[x_1^{-1}, x_1]]$. Then the element $\ell(f)$ whose $d^{th}$ component is
\begin{equation} \label{lfddef} 
\ell(f)^{(d)}  ={1 \over d} \sum_{i=1}^d   {f(x_i) \over \prod_{j \neq i} (x_j-x_i) }  
\end{equation} 
defines a solution to the stuffle equations modulo products. 
\end{cor}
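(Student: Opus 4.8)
The goal is to prove Corollary \ref{cordepth1badlift}: that for $f \in \Q[x_1^{-1},x_1]]$, the sequence $\ell(f)^{(d)} = \tfrac{1}{d}\sum_{i=1}^d f(x_i)\prod_{j\neq i}(x_j-x_i)^{-1}$ is a solution to the stuffle equations modulo products, i.e.\ lies in the Lie algebra of primitive elements for $\Deltat$.

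The plan is to deduce this from Lemma \ref{lembadlift1} by a linearity-and-continuity argument. First I would observe that the set of solutions to the stuffle equations modulo products is a $\Q$-vector space (indeed a Lie algebra, by the discussion following equation \eqref{stuffprim}), closed under limits in the sense of \S\ref{trivialdensity}, so it suffices to treat $f(x_1) = x_1^{m}$ for each $m \in \Z$ (including negative $m$, since $f$ is allowed a pole at $x_1=0$) and then extend by linearity and continuity. Next I would compute the depth-$d$ component of $\ell(x_1^m)$ and match it against a primitive element built from the $\rho_n$ of Lemma \ref{lembadlift1}. The key computation is the partial-fraction identity
$$
\sum_{i=1}^d \frac{x_i^{m}}{\prod_{j\neq i}(x_j - x_i)} = (-1)^{d-1} h_{m-d+1}(x_1,\ldots,x_d),
$$
where $h_k$ denotes the complete homogeneous symmetric polynomial of degree $k$ (with the convention $h_k = 0$ for $k<0$, and for $m<0$ the right-hand side is interpreted via the generating series as the appropriate coefficient, allowing poles). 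This is exactly the shape that appears in \eqref{oneminustxis}: the depth-$r$ component of $\sum_k \rho_k t^k$ under \eqref{stufftopowerseries} is $\tfrac{(-1)^{r-1}}{r} t^r \prod_{j=1}^r (1-tx_j)^{-1}$, and expanding $\prod_j (1-tx_j)^{-1} = \sum_k h_k(x_1,\ldots,x_r) t^k$ shows that the coefficient of $t^{m}$ in this series is precisely $\tfrac{(-1)^{r-1}}{r} h_{m-r}(x_1,\ldots,x_r)$. Comparing, $\ell(x_1^m)^{(d)}$ equals the depth-$d$ component of the coefficient of $t^{m}$ in $\sum_{k\ge 1}\rho_k t^k$ (up to a harmless index shift $m \leftrightarrow m+1$ coming from the convention $\y_{i_j}\mapsto x_j^{i_j-1}$ in \eqref{stufftopowerseries}, which I would track carefully).

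Having identified $\ell(x_1^m)$, across all depths $d\ge 1$, with the coefficient of $t^{m+1}$ in the power series $\sum_{k\ge 1}\rho_k t^k \in \Q[[t,t^{-1}]]\langle\langle Y\rangle\rangle$ (or rather, for $m<0$, with the same object interpreted after clearing denominators — here the formalism of \S\ref{trivialdensity} on linear functional relations of finite type is what licenses working with rational functions rather than honest power series), I would conclude: each $\rho_k$ is primitive for $\Deltat$ by Lemma \ref{lembadlift1}, primitivity is preserved under taking $\Q$-linear combinations and under the coefficient-extraction map in the auxiliary variable $t$, so $\ell(x_1^m)$ is primitive, hence $\ell(f)$ is primitive for every $f\in\Q[x_1^{-1},x_1]]$ by linearity and continuity of $\Deltat$. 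Finally I would note, via Corollary \ref{corstuffleequations}, that primitivity for $\Deltat$ is exactly the statement that the components satisfy the stuffle equations $\eqref{stuffleequation}$, which is the assertion.

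The main obstacle I anticipate is bookkeeping rather than conceptual: getting the index shifts and sign conventions exactly right between the three descriptions in play — the $\y$-word expansion \eqref{rhondef} of $\rho_n$, the monomial-to-power-series dictionary \eqref{stufftopowerseries} (which shifts degrees by one), and the partial-fraction form \eqref{lfddef} — and in particular justifying the case $m<0$ (poles at $x_i=0$) where one cannot literally expand $\prod(1-tx_j)^{-1}$ as a polynomial series in $t$ but must instead argue that the functional identity, being a linear relation of finite type in the sense of \S\ref{trivialdensity}, continues to hold for rational functions once it is known for polynomials. None of this is deep, but it needs to be done carefully; I would state the symmetric-function identity as a separate sublemma and prove it by the standard residue/partial-fraction argument ($\sum_i x_i^m \big/ \prod_{j\neq i}(x_j-x_i)$ is the sum of residues of $z^m \big/ \prod_j (z-x_j)$, which vanishes at $z=\infty$ iff $m\le d-2$ and otherwise picks out the relevant $h$), so that the rest of the argument is a clean transfer of Lemma \ref{lembadlift1}.
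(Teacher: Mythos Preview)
Your proposal is correct and is essentially the same argument as the paper's: both decompose $\prod_i(1-tx_i)^{-1}$ into partial fractions to identify $\ell(x_1^{n-1})^{(d)}$ with the depth-$d$ component of $\rho_n$ (the paper does this via the generating series directly, you via the equivalent symmetric-function identity $\sum_i x_i^m/\prod_{j\neq i}(x_j-x_i)=(-1)^{d-1}h_{m-d+1}$), then invoke Lemma~\ref{lembadlift1} for primitivity and \S\ref{trivialdensity} to pass from sufficiently large $n$ to arbitrary Laurent series. Your anticipated bookkeeping about the index shift $m\leftrightarrow m+1$ is exactly right and matches the paper's ``coefficient of $t^n$ gives $\ell(x_1^{n-1})^{(d)}$''.
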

\begin{proof}    By the previous lemma, the element whose $r^{\mathrm{th}}$ component is given by $(\ref{oneminustxis})$ satisfies the stuffle equations modulo products.
By decomposing   $\prod_{i=1}^d (1-x_i t)^{-1}$ into partial fractions and expanding in $t$, we obtain
 $$ {(-1)^{d-1}\over d}    {t^d \over (1-tx_1) \ldots (1-tx_d)}= { 1 \over d} \sum_{n\geq d}  \sum_{i=1}^d   {x_i^{n-1} t^n \over \prod_{j \neq i} (x_i-x_j) }  $$
Taking the coefficient of $t^n$ in the right-hand side gives  $\ell(x_1^{n-1})^{(d)}$, when $n\geq d$.
Thus   $\ell(f)$    is a solution to the stuffle equations modulo products  up to depth $d$ when $f=x_1^{n-1}$, for all $n\geq d$.  By linearity and 
\S\ref{trivialdensity}, $\ell(f)$ is a solution to the stuffle equations modulo products for any Laurent series in $x_1$. 
 \end{proof}

\subsection{Conjugation operator} By taking $t$ to infinity in $(\ref{oneminustxis})$,  we obtain a solution  $\nu= \ell(x^{-1})$   to the inhomogeneous stuffle equations which  has components 
\begin{equation} \label{nurdef}
\nu^{(r)} = (r x_1\ldots x_r)^{-1}\ ,
\end{equation}
in depth $r\geq 1$. 
Define elements $\mu_+, \mu_- \in \Ostu$ by 
\begin{equation}
\label{muidef} 
1+ \mu_+ = \exp_{\stu}( \nu) \qquad \hbox{ and } \qquad  1- \mu_- = \exp_{\stu} (-\nu)
\end{equation}
where the multiplication in $\exp_{\stu}$ is taken with respect to the stuffle multiplication $(\ref{defstuprod})$.
One  verifies that they are given explicitly by
\begin{eqnarray} 
\mu_-& = & (x_1^{-1},0,0, \ldots )  \nonumber \\
\mu_+ & = & (x_1^{-1},(x_1x_2)^{-1},(x_1x_2x_3)^{-1},  \ldots ) \nonumber 
\end{eqnarray}
Given any element $\rho\in \Or$, define the conjugate element
\begin{equation} \label{defrhostartwist}
\rho^{\star}= (1 + \mu_{+} ) \studot \, \rho\, \studot (1 - \mu_{-}) \ . 
\end{equation}
Since the space of solutions to the inhomogeneous stuffle equations modulo products is a Lie algebra with respect to the bracket $[f,g]_{\stu} = f \studot g - g \studot f$, and since $\nu$ is such a solution,  it follows
from the definition $(\ref{defrhostartwist})$ that
$\rho\in \Or$ is a solution to the  stuffle equations modulo products if and only if 
$\rho^{\star}\in \Or$ is.

\subsection{Lifting  solutions to the linearized stuffle equations} 
The lift $\ell(f)$ involves non-trivial denominators, whereas the elements $\Psi_{2n+1}$ are defined over $\Z$. For this reason, 
the following elements are much more useful for our present purposes.

\begin{lem} \label{propD1isstuff}  Let $f \in R[x_1^{-1}, x_1]]$. Then the  element $\widetilde{f}$ defined by 
\begin{equation} \label{tilde1def}
 \widetilde{f}^{(r)} (x_1,\ldots, x_r) = {f(x_1) \over (x_2-x_1) \ldots (x_r-x_1)} 
 \end{equation}
for  $r\geq 1$, is a solution to the stuffle equations modulo products. 
\end{lem}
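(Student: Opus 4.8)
The statement to be proved is that, for any Laurent series $f \in R[x_1^{-1}, x_1]]$, the sequence $\widetilde{f}$ with depth-$r$ component
$$
\widetilde{f}^{(r)}(x_1,\ldots,x_r) = \frac{f(x_1)}{(x_2 - x_1)\cdots(x_r - x_1)}
$$
solves the stuffle equations modulo products, i.e.\ satisfies $\widetilde{f}(\x_1\ldots\x_p \stu \x_{p+1}\ldots\x_{p+q}) = 0$ for all $p,q \geq 1$ (corollary \ref{corstuffleequations}). The first reduction I would make is the density argument of \S\ref{trivialdensity}: since the stuffle equations are linear functional relations of finite type in $f$, and since every Laurent series is a limit of (Laurent) polynomials, it suffices to prove the claim when $f(x_1) = x_1^{m}$ for $m \in \Z$; and by the same argument, working in depths $\leq d$ at a time, one only needs $m \geq 1-d$ or so. Then I would show directly that the resulting sequence lies in the Lie algebra generated by the $\rho_n$ of lemma \ref{lembadlift1} (equivalently, by $\nu$ of $(\ref{nurdef})$), or else give a generating-function identity that exhibits $\widetilde{x_1^m}$ as an explicit stuffle-primitive element.

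The cleanest route, I expect, is the generating-function one, paralleling the proof of corollary \ref{cordepth1badlift}. Fix $t$ and consider the series $F(t)$ whose depth-$r$ component is
$$
F(t)^{(r)} = \frac{1}{(1 - t x_1)(x_2 - x_1)\cdots(x_r - x_1)}.
$$
Summing $(\ref{tilde1def})$ against $\sum_{m} x_1^m t^m$ gives exactly this, so it is enough to show $F(t)$ is a solution to the stuffle equations modulo products. I would try to recognise $F(t)$ as a stuffle product or stuffle exponential of known primitive elements: the partial-fraction expansion of $\prod_{i=1}^{r}(x_i - x_1)^{-1}$ already appears (up to relabelling) in $(\ref{oneminustxis})$ and in the proof of corollary \ref{cordepth1badlift}, and the extra factor $(1-tx_1)^{-1}$ should correspond to stuffle-multiplying on the left by a depth-one group-like element built from $\y_1$ (whose generating series $G(t) = 1 + \sum t^n \y_n$ is group-like by construction). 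Concretely, I expect $F(t)$ to equal, up to normalisation, a stuffle product of the form $(\text{something depending only on }x_1)\,\studot\,(\text{the }\ell\text{-type lift})$, and since the stuffle-primitive elements form a Lie algebra closed under the relevant operations, one reads off primitivity. Alternatively, one directly computes $\Deltat \widetilde{f}$ using $(\ref{Deltastuff})$ and the explicit shape of $\widetilde{f}^{(r)}$ — the coproduct only sees the $x_1$-dependence, and the denominator $(x_2-x_1)\cdots(x_r-x_1)$ behaves multiplicatively under deconcatenation in a way that forces all mixed terms to cancel.

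The main obstacle, I anticipate, is the bookkeeping in verifying the stuffle relation $\widetilde{f}(\x_1\ldots\x_p \stu \x_{p+1}\ldots\x_{p+q}) = 0$ term by term: the stuffle product $(\ref{stuffprod})$ produces, besides the shuffle-type terms, the contracted terms $\y_{i+j}(\cdots)$, and one has to check that after substituting the explicit rational functions $\widetilde{f}^{(r)}$ — which have the asymmetric denominator singling out the variable $x_1$ — all of these combine to zero. The variable $x_1$ plays a distinguished role on the left but the stuffle product symmetrises over the two factors, so the cancellation is not term-by-term obvious and will likely need either the generating-function packaging above or a careful induction on $p+q$ using the recursive definition $(\ref{Stuffequationsdefn})$ of the stuffle equations together with the evident recursion $\widetilde{f}^{(r)} = \widetilde{f}^{(r-1)} \cdot (x_r - x_1)^{-1}$ in the last variable. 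I would lean on the generating-function approach precisely to avoid this combinatorial grind, reducing everything to the already-established primitivity of $G(t)$ and of the $\ell$-type lifts.
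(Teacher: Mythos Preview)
Your generating-function setup is essentially the paper's: it works with
\[
p^{(r)} = \frac{1}{(t-x_1)(x_2-x_1)\cdots(x_r-x_1)},
\]
expands in $t^{-1}$ to recover $\widetilde{x_1^n}$, and finishes by the density argument of \S\ref{trivialdensity}, just as you outline. The divergence is in how primitivity of $p$ is actually established.

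Your primary route --- recognising $F(t)$ as a stuffle product of a depth-one piece with an $\ell$-type lift --- does not work as stated. The stuffle concatenation $\studot$ shifts variables: $(a\studot b)^{(r)}(x_1,\ldots,x_r)$ involves $a^{(i)}(x_1,\ldots,x_i)\,b^{(r-i)}(x_{i+1},\ldots,x_r)$, whereas in $\widetilde{f}^{(r)}$ the distinguished variable $x_1$ appears in \emph{every} factor of the denominator. There is no way to peel off a factor depending only on $x_1$ by stuffle-concatenation and be left with an $\ell$-lift in the remaining variables. Likewise, the remark that ``the denominator behaves multiplicatively under deconcatenation'' is not right for the same reason: deconcatenation separates $\{x_1,\ldots,x_i\}$ from $\{x_{i+1},\ldots,x_r\}$, but the factor $(x_{i+1}-x_1)^{-1}$ straddles both.

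The paper instead takes exactly your fallback route --- induction on the recursive stuffle equation $(\ref{Stuffequationsdefn})$ --- but with a specific trick you have not identified. In the recursion for $p(\x_1\ldots\x_i\stu\x_{i+1}\ldots\x_r)$, the $s_1$-terms are the only ones with a pole at $t=x_1$ (and symmetrically $s_{i+1}$ at $t=x_{i+1}$). Since each term has a \emph{simple} pole there and is translation-invariant, one may take $\Res_{x_1=t}$ and set $t=0$; this collapses the identity to
\[
\mu_+^{(r-1)}(\x_2\ldots\x_i\stu\x_{i+1}\ldots\x_r) \;-\; \tfrac{1}{x_{i+1}}\,\mu_+^{(r-2)}(\x_2\ldots\x_i\stu\x_{i+2}\ldots\x_r)\;=\;0,
\]
where $\mu_+^{(k)}=(x_1\cdots x_k)^{-1}$. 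This is now immediate from the fact, established just before via $(\ref{muidef})$, that $1+\mu_+$ is \emph{group-like} for $\Deltat$. So the missing idea is: the residue at the distinguished pole converts the awkward $(x_j-x_1)^{-1}$ denominators into the symmetric $(x_1\cdots x_k)^{-1}$, and then group-likeness of $\mu_+$ does all the work.
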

\begin{proof} It suffices to prove that the element $p$ whose depth-$r$ component is
$$p^{(r)} = { 1 \over (t- x_1) (x_2-x_1) \ldots (x_r-x_1)}$$
is a solution to the stuffle equations modulo products. This can be proved by induction using the recursive definition of the equations $(\ref{Stuffequationsdefn})$. For this, note that
in the second line of  the equation $(\ref{Stuffequationsdefn})$, the terms corresponding to $s_1$ in the right-hand side have poles at $t=x_1$ and those corresponding to $s_{i+1}$ at $t=x_{i+1}$. It is enough to show that 
$$ s_1 p^{(r-1)}  (\x_{2} \ldots \x_i \stu \x_{i+1} \ldots \x_r )  +   { 1  \over x_1 -x_{i+1} }s_1  p^{(r-2)} ( \x_{2} \ldots \x_i \stu \x_{i+2} \ldots \x_r ) =0 $$
and a similar equation with $s_{i+1}$ holds by symmetry. Since each term in the previous equation has a simple pole at $t=x_1$, and is translation-invariant in $x_1,\ldots, x_r$, 
 we can  take the residue at $x_1=t$, and put $t=0$. Thus the previous equation is equivalent to
 \begin{equation} \label{propD1isstuffinproof}
 \mu_1^{(r-1)}(x_2\ldots x_i \star x_{i+1} \ldots x_r) - { 1 \over x_{i+1}} \mu_1^{(r-2)}(x_2\ldots x_i \star x_{i+2} \ldots x_r)=0
 \end{equation}
 where  $\mu_1^{(k)} =  (x_1 \ldots x_k)^{-1}$ was defined in $( \ref{muidef})$. Since, by the previous section, $\mu_1$ is group-like, it satisfies
   $\mu_1^{(r-1)}(x_1\ldots x_i \star x_{i+1} \ldots x_r) = \mu_1(x_1,\ldots, x_r),$
   and therefore $(\ref{propD1isstuffinproof})$ is trivially verified. This completes the proof that $p$ is a solution to the stuffle equations modulo products.
  
  After  expanding $p$ as a power series in $t^{-1}$,we obtain the components of $\widetilde{f}^{(r)}$ where $f= x_1^{n}$. 
  We deduce using  \S\ref{trivialdensity} that $\widetilde{f}$ is a solution to the stuffle equations modulo products for all Laurent series in $x_1$.
\end{proof}

\begin{defn} Let $a = (\ell_1,\ldots, \ell_r)$ be a  partition of $n$ into $r$ subsequences, where
$\ell_i$ are disjoint sequences of consecutive integers such that $\{1,\ldots, n\} = \ell_1 \cup \ldots \cup \ell_r$. 
Define the set of initial terms $it(a) = (i_1,\ldots, i_r)$ to be the set of elements $i_k = \min_{j} \{j \in \ell_k\}$. We shall only consider partitions which are increasing, i.e., with 
$i_1 <i_2<\ldots <i_r$. 
\end{defn} 

Let $f(x_1,\ldots, x_s)$ be a rational function of $s$ variables. Define
\begin{equation} \label{tildefdef} 
\widetilde{f}^{(d)} = \sum_{a} d_{a} f(x_{i_1},\ldots, x_{i_s})
\end{equation} 
where the sum is over partitions of $d$ into $s$  subsequences  and $(i_1,\ldots, i_r) = it(a)$, and
$$d_{a} = \prod_{k=1}^r  { 1 \over x_{\ell_k \backslash i_k, i_k}}\ .$$
When $s=1$, the definition $(\ref{tildefdef})$ reduces to  $(\ref{tilde1def}).$ 

The following proposition gives a mechanism for splitting the depth filtration, i.e., for lifting solutions to the linearized stuffle equations in depth $s$ to solutions to the full stuffle equations modulo products in all higher depths.
\begin{prop} \label{proplift2}  Let $f \in \Or_s$ be a solution to the linearized stuffle equations in $s$ variables. 
Then $\widetilde{f}$ satisfies the stuffle equations modulo products.
\end{prop}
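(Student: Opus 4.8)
The plan is to reduce Proposition \ref{proplift2} to the depth-one case already handled in Lemma \ref{propD1isstuff} by exhibiting $\widetilde{f}$ as a stuffle product (in the Hopf algebra $\Ostu$) of the one-variable lifts $\widetilde{g}$ of Lemma \ref{propD1isstuff}, together with a homogeneity/primitivity argument. First I would observe that the operation $f \mapsto \widetilde{f}$ defined by $(\ref{tildefdef})$ is, up to the combinatorics of increasing partitions into consecutive blocks, precisely the stuffle-concatenation structure: if $f = f_1 \studot f_2 \studot \cdots \studot f_s$ is a product of one-variable functions $f_j(x_j)$ in $\Or_s$, then the sum over partitions $a = (\ell_1,\ldots,\ell_s)$ with initial terms $(i_1,\ldots,i_s)$ and weights $d_a = \prod_k x_{\ell_k\setminus i_k, i_k}^{-1}$ reassembles exactly $\widetilde{f_1} \studot \widetilde{f_2} \studot \cdots \studot \widetilde{f_s}$, because the stuffle-concatenation formula $(\ref{stuffleconcat})$ together with the one-variable lifts $\widetilde{f_j}^{(\ell_j)}(x_{i_j}, \ldots) = f_j(x_{i_j})/x_{\ell_j\setminus i_j, i_j}$ produces precisely the same terms indexed by the same partitions. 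Thus $\widetilde{(\cdot)}$ intertwines stuffle-concatenation on one-variable functions with stuffle-concatenation in $\Ostu$.

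\textbf{Key steps.} (i) Prove the multiplicativity identity $\widetilde{f \studot g} = \widetilde{f} \studot \widetilde{g}$ for $f \in \Or_r$, $g \in \Or_s$ arbitrary; this is a purely combinatorial statement about refining an increasing partition of $\{1,\ldots,d\}$ into $r+s$ consecutive blocks as a shuffle of a partition into $r$ blocks and one into $s$ blocks, and it follows from the definition $(\ref{stuffleconcat})$ of stuffle concatenation on power series. (ii) Conclude that $\widetilde{(\cdot)}: \Or_\bullet^{\text{concat}} \to \Ostu$ is an algebra homomorphism for the stuffle-concatenation product, where the source is the free associative algebra on one-variable Laurent series $x_1^n$ ($n \in \Z$) with respect to $\studot$. (iii) Now take $f \in \Or_s$ a solution to the \emph{linearized} stuffle equations. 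By the structure of $\gr^{\dd}$ of the stuffle Hopf algebra (\S\ref{sectLinDS}), $\gr^{\dd}(\Q\langle Y\rangle, \studot) \cong (\Q\langle Y\rangle, \sha)$ is the shuffle Hopf algebra on $Y$, and $f$ being a solution to the linearized stuffle equations means $f$ is primitive for the \emph{shuffle} coproduct on $\Q\langle Y\rangle$, i.e. $f$ lies in the image of the free Lie algebra $\Lie(Y)$; equivalently, by Corollary \ref{corDSto2eq} and the Lie-projector discussion, $f$ is a $\Q$-linear combination of iterated \emph{concatenation-commutators} $[x_{i_1}^{n_1}, [x_{i_2}^{n_2}, \ldots]]_{\studot}$ of single letters. (iv) Apply the homomorphism $\widetilde{(\cdot)}$: since it sends $\studot$ to $\studot$, it sends such commutators to iterated $\studot$-commutators of the $\widetilde{x_1^{n_j}}$, each of which is a solution to the stuffle equations modulo products by Lemma \ref{propD1isstuff}; and the set of such solutions is a Lie algebra for $[\,,\,]_{\studot}$ (as recalled after $(\ref{stuffprim})$). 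Hence $\widetilde{f}$ is a solution to the stuffle equations modulo products. Finally invoke \S\ref{trivialdensity} to pass from the Laurent-polynomial case to arbitrary $f$.

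\textbf{Main obstacle.} The delicate point is step (i): verifying that $\widetilde{f\studot g} = \widetilde{f}\studot\widetilde{g}$ requires carefully matching the combinatorics of ``increasing partitions of $\{1,\ldots,d\}$ into consecutive blocks'' against the recursive stuffle-concatenation rule. The subtlety is that stuffle concatenation in $\Ostu$ (via $(\ref{stuffleconcat})$) simply juxtaposes variable ranges $x_1,\ldots,x_p$ and $x_{p+1},\ldots,x_{p+q}$, whereas $\widetilde{f}$ distributes the $d$ variables among blocks in \emph{all} increasing ways; one must check that the weight factors $d_a$ factor correctly across the concatenation and that no cross-block denominators of the form $x_{a,b}$ with $a$ in one factor's range and $b$ in the other's appear — which is exactly why the one-variable lift $(\ref{tilde1def})$ only involves $x_j - x_1$ and not $x_j - x_i$ for $i>1$, so the factorization is ``clean.'' Once this bookkeeping lemma is in place, the rest is formal. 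An alternative route that sidesteps the explicit combinatorics: prove directly that the generating function $p^{(r)} = 1/\big((t-x_1)(x_2-x_1)\cdots(x_r-x_1)\big)$ and its shifts behave well under the stuffle-concatenation coproduct — essentially redoing the computation $(\ref{propD1isstuffinproof})$ but keeping track of several $t$-parameters $t_1,\ldots,t_s$ simultaneously — and then extract $\widetilde{f}$ by residues/expansion in the $t_j$; this is more computational but avoids the partition combinatorics. I would attempt the homomorphism argument first and fall back to the generating-function computation if the combinatorial lemma proves unwieldy.
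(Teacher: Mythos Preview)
Your proposal is correct and follows essentially the same approach as the paper's proof: the paper forms the iterated $\studot$-bracket $g=[\widetilde f_1,[\widetilde f_2,\ldots,[\widetilde f_{s-1},\widetilde f_s]_\star\cdots]_\star$ of one-variable lifts (primitive by Lemma~\ref{propD1isstuff}), observes that ``by unravelling'' $g=\widetilde f$ for $f=\lambda_s(f_1\cdots f_s)$, and then invokes \S\ref{trivialdensity} and Corollary~\ref{corDSto2eq}. Your step~(i), the multiplicativity $\widetilde{f\studot g}=\widetilde f\studot\widetilde g$, is precisely the content of that ``unravelling'' (and is indeed clean: an increasing partition of $\{1,\ldots,d\}$ into $r+s$ consecutive blocks splits uniquely at the $r$-th block boundary, and the weight $d_a$ factors block-by-block with no cross terms), so the obstacle you flag is real but mild.
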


\begin{proof} 

Consider $s$ power series  in one variable $f_1,\ldots, f_s \in \Q[[x_1]]$. By lemma  \ref{propD1isstuff}, the elements $\widetilde{f_i}$ are all solutions to the  stuffle 
equations modulo products. These  correspond    to primitive elements in $\Ostu$. Since the latter
form a Lie algebra with respect to concatenation, it follows that the bracket
\begin{equation}\label{gasbracket}
g=[\widetilde{f}_1, [Ê\widetilde{f}_2, \ldots  [\widetilde{f}_{s-1}, \widetilde{f}_s]_{\star} \cdots ]_{\star} 
\end{equation}
is also primitive and hence a solution to the stuffle equations modulo products. Here the bracket is defined by $[f,g]_{\star} = f \studot g - g \studot f$, where $\studot$ was 
defined in $(\ref{defstuprod})$. By definition of the projector $(\ref{lambdaprojector})$, the first non-zero component $g^{(s)}$ of $g$ is 
\begin{equation}  \label{gs}
f(x_1,\ldots, x_s)= \lambda_s  \, (f_1(x_1) f_2(x_2) \ldots f_s(x_s))\ ,
\end{equation} 
and satisfies the linearized stuffle equations.  By unravelling $(\ref{gasbracket})$, we see that 
$$g  = \widetilde{h}$$
and its components in depth $d$ are linear combinations of $f(x_{i_1},\ldots, x_{i_s})$, where $1\leq i_1,\ldots, i_s \leq d$,  with rational coefficients. 
Thus the proposition is true whenever $f$ is of the form $(\ref{gs})$.  But by the argument of \S\ref{trivialdensity}, the proposition is true for any
$f= \lambda_s (h)$, where $h\in \Or_s$ is a rational function. By  corollary  \ref{corDSto2eq}, any solution  to the linearized stuffle equations is of this form, which completes the proof.
\end{proof}

\begin{rem} Although we shall not require it, we can use the elements $\ell(f)$ in a completely analogous way to split the depth filtration in the stuffle Hopf algebra $\Ostu$.
For this, define  a marked partition of $n$ into $r$ sequences to be  a  pair 
$$a = ((\ell_1,\ldots, \ell_r), (i_1,\ldots, i_r))$$ where
$\ell_i$ are disjoint sequences of consecutive integers such that $\{1,\ldots, n\} = \ell_1 \cup \ldots \cup \ell_r$,   $i_k \in \ell_k$ for all $1 \leq k \leq r$, and $i_1<i_2 < \ldots <i_r$.  Let $mp(n,r)$ denote the set
of marked partitions of $n$ into $r$ subsequences. It is empty if $r>n$.

Now let $f(x_1,\ldots, x_s)$ be a rational function of $s$ variables. Define
$$
\ell(f)^{(d)} = \sum_{\alpha\in mp(d,s)} c_{\alpha} f(x_{i_1},\ldots, x_{i_s})
$$
where $\alpha = (( \ell_1,\ldots, \ell_s), (i_1,\ldots, i_s))$  and $c_{\alpha}$ is defined by
$$c_{\alpha} = \prod_{k=1}^s  { 1 \over |\ell_k| x_{\ell_k \backslash i_k, i_k}}\ .$$
When $s=1$, this definition  reduces to  $(\ref{lfddef}).$ Then we show, in an identical way to proposition \ref{proplift2} that if $f(x_1,\ldots, x_s)$ satisfies the linearized stuffle equations, then 
$\ell(f)$ is a solution to the stuffle equations modulo products in all depths.
\end{rem}

\subsection{Internal structure of the elements $\Psi_{2n+1}$} \label{sectABCdef} The elements $\Psi_{2n+1}$ break up into three distinct pieces, each of which individually satisfies the stuffle equations modulo products. 
Let us denote this decomposition by
$$\Psi_{2n+1} ={1\over 2} \big( A_{2n+1} + B_{2n+1} + C_{2n+1}\big)$$
where
\begin{eqnarray}
A_{2n+1}^{(d)} & = & {x_1^{2n} \over (x_2-x_1) \ldots (x_d-x_1) } \nonumber \\
B_{2n+1}^{(d)}  & = &  \Big(\sum_{i=1}^d  { x_d^{2n} \over x_{\{1,\ldots, i-1\},\{0\}}\, x_{\{i,\ldots,d-1\},\{d\}}}Ê\Big) -\Big(\sum_{i=1}^{d-1} { x_{d-1}^{2n} \over x_{\{d, 1,\ldots, i-1\}, \{0\}} \, x_{\{i,\ldots, d-2\}, \{d-1\}}} \Big)  \nonumber \\
C_{2n+1}^{(d)}  & = & \Big(  \sum_{i=2}^d { (x_i - x_{i-1})^{2n} \over x_{\{0,\ldots, i-2\},\{i-1\}}\, x_{\{i+1,\ldots,d\},\{i\}}} \Big) +  \Big(\sum_{i=1}^{d-1}  { (x_1-x_d)^{2n} \over x_{\{2,\ldots, i\},\{ 1\} } \, x_{\{i+1,\ldots, d-1,0\}, \{d\} }}   \Big) \nonumber
\nonumber 
\end{eqnarray}
We immediately see that  $A_{2n+1}$ is the lift $\widetilde{f}$, where $f(x_1) = x_1^{2n}$.   Let   $\tau(A_{2n+1})$ denote the  involution   $(\ref{stuffinvol})$ applied to $A_{2n+1}$.
It is given by 
$$\tau(A_{2n+1})^{(d)}  =  {x_d^{2n} \over (x_1-x_d) \ldots (x_{d-1}-x_d) }$$
and satisfies the stuffle equations modulo products.
It is easily verified that
$$B_{2n+1} =(\tau( A_{2n+1}))^{\star}\ ,$$
since the term on the left in the above formula for $B^{(d)}_{2n+1}$ is $(1+ \mu_+) \studot \tau( A_{2n+1})$, and the term on the right is given by $(1+ \mu_+) \studot \tau( A_{2n+1})\studot \mu_-$.

\begin{prop} The elements $C_{2n+1}$ satisfy the stuffle equations modulo products.
\end{prop}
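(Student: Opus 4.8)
The plan is to show that $C_{2n+1}$, like $A_{2n+1}$ and $B_{2n+1}$, is obtained from a depth-one building block by the lifting construction of \S\ref{sectABCdef}, so that Lemma \ref{propD1isstuff} and Proposition \ref{proplift2} apply. Looking at the formula for $C^{(d)}_{2n+1}$, the two sums are built out of the factors $(x_i - x_{i-1})^{2n}$ (an "interior consecutive difference") and $(x_1 - x_d)^{2n}$ (a "wrap-around" difference). The first observation is that the first sum,
$$\sum_{i=2}^d \frac{(x_i - x_{i-1})^{2n}}{x_{\{0,\ldots,i-2\},\{i-1\}}\, x_{\{i+1,\ldots,d\},\{i\}}}\ ,$$
is exactly the lift $\widetilde{g}$ of the depth-two function $g(x_1,x_2) = (x_2-x_1)^{2n}$: each term corresponds to choosing a partition of $\{1,\ldots,d\}$ into two blocks whose initial terms are $i-1$ and $i$, with the denominator being precisely $d_a$ for that partition. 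Here I would carefully match the index conventions in Definition near $(\ref{tildefdef})$: the block containing $i-1$ has initial term $i-1$ and uses up $\{0,1,\ldots,i-2\}$ in its denominator (recalling $x_0=0$), the block containing $i$ has initial term $i$ and uses $\{i+1,\ldots,d\}$. Since $g$ trivially satisfies the linearized stuffle equations in depth two modulo a product (indeed $g(x_1,x_2) + g(x_2,x_1) = (x_2-x_1)^{2n} + (x_1-x_2)^{2n} = 2(x_2-x_1)^{2n}$, and one checks directly against $(\ref{linstuffpqequation})$... actually, one should verify $g$ lies in the space to which Proposition \ref{proplift2} applies — the cleanest route is to check $g$ is $\lambda_2$ of something, which it is: $g = \lambda_2(h)$ for a suitable $h$), Proposition \ref{proplift2} gives that the first sum satisfies the stuffle equations modulo products.

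Next I would treat the second sum. My expectation is that it equals $\tau$ applied to (or a star-conjugate of) the first sum, or else is itself the lift of the $\tau$-image of $g$ — the "wrap-around" term $(x_1-x_d)^{2n}$ is the signal that we are looking at the reversal involution $\upsilon$ of $(\ref{stuffinvol})$ combined with a cyclic shift. Concretely, I would conjecture
$$C_{2n+1} = \widetilde{g} + \big(\upsilon(\widetilde{g})\big)\ ,\quad\text{or}\quad C_{2n+1} = \widetilde{g} + (\widetilde{g})^{\star}\ ,$$
in analogy with the identity $B_{2n+1} = (\tau(A_{2n+1}))^{\star}$ already established for the $B$-piece. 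Since $\upsilon$ commutes with $\Delta_{\star}$ (as noted in \S\ref{SECTstuff}, the word-reversal map commutes with $\Deltat$), and since the $\star$-conjugation operation $(\ref{defrhostartwist})$ preserves the property of satisfying the stuffle equations modulo products, both candidate forms would immediately give the result once the identity of rational functions is verified. So the strategy reduces to: (1) identify the first sum as $\widetilde{g}$; (2) identify the second sum with $\upsilon$ or $\star$-conjugate of the first; (3) invoke closure of the space of stuffle solutions under $\upsilon$, $\star$, and (if needed) the Lie bracket.

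The main obstacle I anticipate is step (2): pinning down the exact combinatorial identity relating the second sum to a transform of the first. The denominators in the second sum, $x_{\{2,\ldots,i\},\{1\}}\, x_{\{i+1,\ldots,d-1,0\},\{d\}}$, have the wrap-around feature that the block ending the cyclic order contains both $0$ and $d$, which does not fit the "increasing partition" framework of Definition near $(\ref{tildefdef})$ directly — one has to first apply a cyclic relabeling to bring it into standard form, and track the sign and the shift carefully. An alternative, more robust approach if the clean identity resists: decompose $C_{2n+1}$ further into $\lfloor$terms indexed by where the "marked" interior difference sits$\rfloor$ and show each such family is the lift of an explicit depth-two function, then sum. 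A third fallback is to bypass the lifting machinery entirely and verify the stuffle equations for $C_{2n+1}$ directly by the recursive definition $(\ref{Stuffequationsdefn})$, taking residues along the poles $x_i = x_{i\pm1}$ exactly as in the proof of Lemma \ref{propD1isstuff} — this is guaranteed to work but is the least illuminating. I would attempt the clean "$C_{2n+1} = \widetilde g + \upsilon(\widetilde g)$"-style identification first, as it is both the most conceptual and the most likely to generalize to the analysis of the shuffle equations for $C_{2n+1}$ later.
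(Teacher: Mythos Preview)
Your identification of the first sum in $C_{2n+1}^{(d)}$ as $\widetilde{g}$ for $g(x_1,x_2)=(x_2-x_1)^{2n}$ is incorrect, and this is where the proposal breaks down. In the lift $\widetilde{g}^{(d)}$ of a depth-two function (Definition near $(\ref{tildefdef})$), the two blocks are $\{1,\ldots,k\}$ and $\{k+1,\ldots,d\}$ with \emph{initial} terms $1$ and $k+1$, so the numerator is $g(x_1,x_{k+1})$ and the first denominator factor is $x_{\{2,\ldots,k\},\{1\}}$. By contrast, the $i$-th term in the first sum of $C_{2n+1}^{(d)}$ has numerator $(x_i-x_{i-1})^{2n}$ and first denominator factor $x_{\{0,\ldots,i-2\},\{i-1\}}$: the marked point in the first block is its \emph{last} element $x_{i-1}$, not its first element $x_1$. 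This is the signature of $\tau(\widetilde{f})$ rather than $\widetilde{f}$. Moreover, your candidate $g(x_1,x_2)=(x_2-x_1)^{2n}$ is symmetric, so $g(x_1,x_2)+g(x_2,x_1)=2g\neq 0$: it fails the linearised stuffle equation, and since $\lambda_2(h)$ is always antisymmetric, $g$ cannot be $\lambda_2$ of anything nonzero. Thus Proposition~\ref{proplift2} does not apply even in principle.

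The paper's argument is close in spirit to what you intend but uses a different decomposition. One takes two depth-one functions $f_1,f_2$ and forms the stuffle commutator $p=[\widetilde{f}_1,\tau(\widetilde{f}_2)]_{\star}$, which is primitive since both factors are (Lemma~\ref{propD1isstuff} and the fact that $\tau$ commutes with $\Delta_{\star}$). Writing $p=p_+-p_-$ with $p_+=\widetilde{f}_1\studot\tau(\widetilde{f}_2)$ and $p_-=\tau(\widetilde{f}_2)\studot\widetilde{f}_1$, one finds that $p_-^{(d)}$ has exactly the denominator structure of the first sum in $C_{2n+1}^{(d)}$ and $p_+^{(d)}$ that of the second, with numerators $f_2(x_{i-1})f_1(x_i)$ and $f_1(x_1)f_2(x_d)$ respectively. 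The extra factors $x_{i-1}^{-1}$ and $x_d^{-1}$ appearing in $C$ (coming from $x_0=0$ in the sets $\{0,\ldots,i-2\}$ and $\{i+1,\ldots,d-1,0\}$) are then absorbed by replacing the bilinear expression $f_1(x_a)f_2(x_b)$ by $x_b^{-1}(x_a-x_b)^{2n}$ via linearity and the density argument of \S\ref{trivialdensity}. The key structural point you missed is the $\tau$ on exactly one of the two factors; once you see this, the second sum falls out automatically as the other half of the commutator, and no separate $\upsilon$- or $\star$-conjugation step is needed.
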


\begin{proof} 
Let $f_1,f_2 \in \Q[x_1]$ be any two polynomials and consider the element
$$p= [\widetilde{f}_1, \tau(\widetilde{f_2})]_{\stu} $$
where $\tau$ is the  involution $(\ref{stuffinvol})$, and $[\,,\, ]_{\stu}$ is the Lie bracket with respect to $\studot$.  The element $p$ is a solution to the stuffle equations modulo products by lemma \ref{propD1isstuff}.
Let  $p=p_+-p_-$, where   $p_+= \widetilde{f}_1 \studot  \tau(\widetilde{f_2}) $ and $p_-=\tau( \widetilde{f}_2 ) \studot  \widetilde{f_1} $, 
giving rise to  the formulae
$$ p_-{(d)}=  \sum_{i=2}^d { f_2(x_{i-1})f_1(x_i)\over x_{\{1,\ldots, i-2\},\{i-1\}}\, x_{\{i+1,\ldots,d\},\{i\}}}  \quad ,  \quad   p_+^{(d)}= \sum_{i=1}^{d-1}  { f_1(x_1)f_2(x_d) \over x_{\{2,\ldots, i\},\{ 1\} } \, x_{\{i+1,\ldots, d-1\}, \{d\} }}   \ .$$
By taking linear combinations of such elements, we can replace $f_1(x_a) f_2(x_b)$ in the above expressions for $p_{\pm}$ by ${1 \over x_b} (x_a-x_b)^{2n}$. This gives precisely 
  $ C^{(d)}_{2n+1}$. 
\end{proof} 

\begin{cor} The elements $\Psi_{2n+1}$ for all $n\geq 1$, satisfy the stuffle equations modulo products.
\end{cor}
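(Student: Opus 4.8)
The plan is to obtain the corollary directly from the three statements just established for the pieces $A_{2n+1}$, $B_{2n+1}$, $C_{2n+1}$, together with the observation that the defining condition is linear. Recall from $\S\ref{sectABCdef}$ that $\Psi_{2n+1} = \tfrac{1}{2}\big(A_{2n+1}+B_{2n+1}+C_{2n+1}\big)$. An element of $\Or$ satisfies the stuffle equations modulo products exactly when it is primitive for the coproduct $\Delta$ on $\Ostu$, and primitivity ($\Delta^r\Phi=0$) is a $\Q$-linear condition; hence the set of solutions is a $\Q$-vector subspace of $\Or$, in particular stable under sums and under multiplication by $\tfrac{1}{2}$. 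It therefore suffices to verify that each of $A_{2n+1}$, $B_{2n+1}$, $C_{2n+1}$ is a solution.

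For $A_{2n+1}$: by inspection of the formula in $\S\ref{sectABCdef}$, $A_{2n+1}$ is the element $\widetilde{f}$ of Lemma \ref{propD1isstuff} associated to $f(x_1)=x_1^{2n}$, which is therefore a solution to the stuffle equations modulo products. For $B_{2n+1}$: the reversal involution $\tau$ of $(\ref{stuffinvol})$ commutes with $\Deltat$, so it maps primitive elements to primitive elements; thus $\tau(A_{2n+1})$ is again a solution. Since $\nu$ of $(\ref{nurdef})$ is a solution to the inhomogeneous stuffle equations, the $\star$-conjugation $(\ref{defrhostartwist})$ preserves the set of solutions to the stuffle equations modulo products, and hence $B_{2n+1}=(\tau(A_{2n+1}))^{\star}$ is a solution. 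For $C_{2n+1}$: this is the content of the preceding proposition. Adding and dividing by $2$ finishes the argument.

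The real work thus lies entirely in the three component results; the corollary itself is a formality. The only point inside its scope that deserves a line of care is the bookkeeping that the explicit sums defining $A_{2n+1}$, $B_{2n+1}$, $C_{2n+1}$ do recombine, with the stated coefficients, into the closed formula for $\psi^{(d)}_{2n+1}$ of Definition \ref{defnpsi2n+1} --- a direct comparison of the rational functions $x_{A,B}$ occurring in each summand, together with the identifications $A_{2n+1}=\widetilde{x_1^{2n}}$ and $B_{2n+1}=(\tau(A_{2n+1}))^{\star}$ recorded in $\S\ref{sectABCdef}$. I do not expect any genuine obstacle here.
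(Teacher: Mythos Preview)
Your proposal is correct and follows exactly the paper's intended argument: the decomposition $\Psi_{2n+1}=\tfrac{1}{2}(A_{2n+1}+B_{2n+1}+C_{2n+1})$ together with the three component results and linearity of the primitivity condition. The paper in fact states the corollary without proof, treating it as immediate from the preceding discussion.
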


\subsection{Internal structure of the  element $\Psi_{-1}$}
 The element $\Psi_{-1}$ splits into infinitely many components, each of which individually satisfies the stuffle equations modulo products. 
Let $\mathcal{V}^n_d$ denote the set of vines  $v \in \mathcal{V}_d$  of height $n$.    Define
 \begin{equation} \label{Pdef}
 P^{(d)}_n = \sum_{v\in \mathcal{V}^n_d }    { 1\over x_v x_d}\ .
 \end{equation}Ê
Then we have $$
 \Psi_{-1} = 2 \sum_{n\geq 1} (-1)^n {P_n \over n}\ .
 $$
 and it suffices to show that each $P_n$ satisfies the stuffle equations modulo products.
 \begin{lem} \label{lemP1isstuff}The element $P_1$ satisfies the stuffle equations modulo products.
 \end{lem}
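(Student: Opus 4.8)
The plan is to recognise $P_1$ as a very simple element of the stuffle Hopf algebra $\Ostu$ of \S\ref{trivialdensity}, and to read off its primitivity from its coproduct. To begin, I would compute $P_1$ explicitly: the only vine in $\V_d$ of height one is the single bunch of grapes $g_d$, and $x_{g_d}=\prod_{j=1}^{d}(x_j-x_0)=x_1x_2\cdots x_d$, so $P_1^{(d)}=(x_1\cdots x_{d-1}x_d^{2})^{-1}$. Writing $h\in\Or$ for the element concentrated in depth one with $h^{(1)}(x_1)=x_1^{-2}$, and recalling from $(\ref{muidef})$ that $\mu_+^{(d)}=(x_1\cdots x_d)^{-1}$, the definition $(\ref{stuffleconcat})$ of stuffle concatenation yields at once the factorisation $P_1=(1+\mu_+)\studot h$ in $\Ostu$.

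Next I would determine the stuffle coproduct of $h$. Since $h$ is concentrated in depth one, the only graded piece $\Delta_\star^{p,q}$ of $\Deltat h$ with $p,q\ge1$ that can be non-zero is the one with $p=q=1$; by Proposition \ref{propstuffleequations} (equivalently, directly from the depth-two identity $(\ref{exdepth2stuffle})$) its image under the injection $n_{1,1}$ equals
\begin{equation*}
\frac{h^{(1)}(x_1)-h^{(1)}(x_2)}{x_1-x_2}=\frac{x_1^{-2}-x_2^{-2}}{x_1-x_2}=-\frac{1}{x_1x_2^{2}}-\frac{1}{x_1^{2}x_2}=n_{1,1}\!\big(-\mu_-\otimes h-h\otimes\mu_-\big),
\end{equation*}
where $\mu_-$ is the depth-one element $x_1^{-1}$ of $(\ref{muidef})$. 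Hence $\Deltat h=h\otimes(1-\mu_-)+(1-\mu_-)\otimes h$. (More conceptually, $h$ is the $t^{1}$-coefficient of $1-g_t$, where $g_t$ is the series concentrated in depths $0,1$ with $g_t^{(1)}(x_1)=-(x_1-t)^{-1}$; this $g_t$ is group-like because it vanishes in depths $\ge2$, so that every contraction in the stuffle products $\x_1\ldots\x_p\stu\x_{p+1}\ldots\x_{p+q}$ lands in depth $\ge2$ where $g_t$ is zero, the sole $(1,1)$-contraction reproducing $g_t^{(1)}(x_1)g_t^{(1)}(x_2)$; extracting the $t^1$-coefficient of $\Deltat g_t=g_t\otimes g_t$ and using $g_0=1-\mu_-$ gives the same formula.)

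Finally I would combine the two facts. Since $\Deltat$ is an algebra homomorphism for $\studot$ on $\Ostu$, and $1+\mu_+=\exp_{\stu}(\nu)$ is group-like (Corollary \ref{cordepth1badlift}) with $(1+\mu_+)\studot(1-\mu_-)=1$, one gets
\begin{equation*}
\Deltat P_1=\big((1+\mu_+)\otimes(1+\mu_+)\big)\studot\big(h\otimes(1-\mu_-)+(1-\mu_-)\otimes h\big)=P_1\otimes 1+1\otimes P_1,
\end{equation*}
so $P_1$ is primitive, i.e.\ it satisfies the stuffle equations modulo products.

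The step I expect to cause the most trouble is the determination of $\Deltat h$: one has to keep careful track of the depth-two contraction term — the only surviving part of the reduced coproduct, precisely because $h$ lives in depth one — and, if the $g_t$-argument is used, to justify manipulating $g_t$ as a formal series in the auxiliary parameter $t$ over $\Ostu$. Once this is in hand, the remainder is a one-line formal computation. An alternative, parallel to the treatment of $C_{2n+1}$ in \S\ref{sectABCdef}, is to note that $P_1=\rho^{\star}$ is the $\star$-conjugate $(\ref{defrhostartwist})$ of the element $\rho=\tau(P_1)$ with $\rho^{(d)}=(x_1^{2}x_2\cdots x_d)^{-1}$, and — since $\star$-conjugation preserves solutions to the stuffle equations modulo products — to prove $\rho$ primitive directly by induction on the depth using the recursion $(\ref{Stuffequationsdefn})$; but the Hopf-algebraic argument above is cleaner.
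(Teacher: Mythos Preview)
Your proof is correct and essentially identical to the paper's: your element $h$ is exactly the paper's $\kappa=(x_1^{-2},0,0,\ldots)$, you establish the same factorisation $P_1=(1+\mu_+)\studot\kappa$, compute $\Deltat\kappa=(1-\mu_-)\otimes\kappa+\kappa\otimes(1-\mu_-)$ from the depth-two stuffle identity, and conclude primitivity from the group-likeness of $1+\mu_+$ with inverse $1-\mu_-$. Your parenthetical $g_t$-argument and the $\rho^{\star}$ alternative are nice additional remarks not present in the paper, but the main line is the same.
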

 \begin{proof}
 We work in the Hopf algebra $\Ostu$. Consider the element $\kappa \in \Ostu$ defined by
  $$\kappa=(x_1^{-2}, 0, 0, \ldots )\ .$$
 It is almost immediate from the formula
  $$P_1^{(d)} = {1 \over x_1  x_2 \ldots x_{d-1} x_d^2}\quad  \ ,\quad d \geq 1$$
 that $P_1 = (1+ \mu_+) \studot \kappa$. One easily verifies the depth two stuffle equation:
 $$\kappa(x_1 \star x_2) = -{ 1\over x_1^2 x_2} - { 1 \over x_1 x_2^2} = (- \mu_- \studot \kappa - \kappa \studot \mu_-)^{(2)} \ .$$
Clearly, all higher stuffle equations for $\kappa$ vanish. Thus $\kappa$ corresponds to an element satisfying
$\Delta(\kappa) = (1- \mu_-)\otimes \kappa + \kappa \otimes ( 1- \mu_-)$. Since $1+ \mu_+$ is group-like and its inverse is $1- \mu_-$, 
we immediately deduce that $(1+\mu_+) \studot \kappa$ is primitive (in any Hopf algebra, given an element 
$b$ such that $\Delta(b) = g^{-1} \otimes b + b \otimes g^{-1}$, where $g$ is group-like, the element $gb$ is primitive). Therefore $P_1$ satisfies the stuffle equations modulo products.
  \end{proof}

 Observe that since there are no vines of height $n$ with fewer than $n$ grapes,  $P^{(d)}_n=0$ for $d<n$. Since the unique vine with $n$ grapes and height $n$ is given by 
 $g_1\ldots g_1$, it follows that the first non-zero component $P^{(n)}_n$ is the  following element  $c_n$.  

\begin{lem}  \label{lemcnisstuff} For all $n\geq 1$, the cyclically invariant  elements
\begin{equation} \label{cndef}
c_n (x_1,\ldots, x_n) =  { 1 \over x_1 (x_2-x_1) \ldots (x_n-x_{n-1})x_n}\  
\end{equation}
are solutions to the linearized stuffle equations. 
\end{lem}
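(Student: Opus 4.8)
The plan is to reduce the lemma to a classical partial‑fraction identity for two‑point simplex kernels. By the discussion of \S\ref{sectLinDS}, in depth $n$ the linearized stuffle equations for a function $f\in\Or_n$ are exactly
$$\sum_{w\in(\x_1\cdots\x_p)\,\sha\,(\x_{p+1}\cdots\x_n)} f(x_{i_1},\ldots,x_{i_n})=0\qquad (p+q=n,\ p,q\geq 1),$$
where for each word $w=\x_{i_1}\cdots\x_{i_n}$ occurring in the shuffle one plugs its indices into $f$. So I must establish this for $f=c_n$.

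First I would introduce an auxiliary variable $t$ and set, for any sequence $\mathbf{i}=(i_1,\ldots,i_n)$ of indices,
$$\Pi_t(\mathbf{i})=\frac{1}{x_{i_1}(x_{i_2}-x_{i_1})\cdots(x_{i_n}-x_{i_{n-1}})(t-x_{i_n})}\ .$$
Substituting $t=0$ gives $c_n(x_{i_1},\ldots,x_{i_n})=-\Pi_t(\mathbf{i})\big|_{t=0}$, a legitimate specialization since $\Pi_t(\mathbf{i})$ has no pole along $t=0$. The key input is the shuffle identity: for non‑empty sequences $\mathbf{u},\mathbf{v}$,
$$\sum_{w\in\mathbf{u}\,\sha\,\mathbf{v}}\Pi_t(w)\;=\;t\cdot\Pi_t(\mathbf{u})\,\Pi_t(\mathbf{v})\ ,$$
i.e. $\mathbf{i}\mapsto t\,\Pi_t(\mathbf{i})$ is a homomorphism from the shuffle algebra on the letters $\x_1,\x_2,\ldots$ to rational functions. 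This is the two‑point version of the standard shuffle relation for iterated‑integral kernels; I would prove it by induction on $|\mathbf{u}|+|\mathbf{v}|$, splitting the sum according to whether the last letter of $w$ is the last letter of $\mathbf{u}$ or of $\mathbf{v}$ and using $\frac{1}{(a-x)(b-x)}=\frac{1}{b-a}\big(\frac{1}{a-x}-\frac{1}{b-x}\big)$.

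Granting this, I would take $\mathbf{u}=(1,\ldots,p)$ and $\mathbf{v}=(p+1,\ldots,n)$ and evaluate the identity at $t=0$: the right‑hand side carries an explicit factor $t$, while $\Pi_t(\mathbf{u})$ and $\Pi_t(\mathbf{v})$ are regular at $t=0$, so the right‑hand side vanishes there. Hence $\sum_{w\in\mathbf{u}\,\sha\,\mathbf{v}}\Pi_t(w)\big|_{t=0}=0$, i.e. $\sum_w c_n(x_{i_1},\ldots,x_{i_n})=0$, which is precisely the $(p,q)$‑th linearized stuffle equation; since $p,q\geq1$ with $p+q=n$ were arbitrary, $c_n$ satisfies all of them. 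The cyclic invariance referred to in the statement is immediate from the definition of $c_n$ (cyclic permutation of $x_1,\ldots,x_n$ together with the fixed value $x_0=0$), so nothing further is needed.

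The main obstacle is just the verification of the two‑point shuffle identity: it is classical and elementary, but one has to set up the induction carefully and, above all, make sure that the substitution $t=0$ neither creates nor cancels poles — that every term used genuinely lies in $\Or$ after specialization. A more Hopf‑theoretic alternative, closer in spirit to the preceding lemmas, would be to realize $c_n$ as the lowest‑depth (highest $\dd$‑degree) component of a primitive element of the stuffle Hopf algebra $\Ostu$ assembled from the primitives $\widetilde f$ and $\tau(\widetilde f)$ of Lemma \ref{propD1isstuff}, and then invoke the principle that the leading component of such a primitive element solves the linearized stuffle equations; but pinning down the exact bracket combination whose leading term is $c_n$ looks more delicate than the direct route above, so I would only fall back on it if the partial‑fraction bookkeeping became unwieldy.
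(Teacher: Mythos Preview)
Your argument is correct and genuinely different from the paper's. The paper invokes Corollary~\ref{corDSto2eq} and checks the single equation $\lambda_n c_n = n c_n$ by chasing residues of $c_n$ along the divisors $x_i=x_j$. You instead verify each $(p,q)$ equation directly via the auxiliary kernel $\Pi_t$, using the classical shuffle identity $\sum_{w\in\mathbf u\,\sha\,\mathbf v}\Pi_t(w)=t\,\Pi_t(\mathbf u)\Pi_t(\mathbf v)$; the induction you sketch does go through (with the convention $\Pi_t(\emptyset)=1/t$, so that the recursion $\Pi_t(w'a)=\Pi_{x_a}(w')/(t-x_a)$ is uniform), and the specialisation $t=0$ is harmless exactly for the reason you give. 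Your route is more elementary and self-contained: it avoids the Lie-projector machinery and makes transparent that $c_n$ is nothing but the iterated-integral simplex kernel with both endpoints at $0$, whose shuffle behaviour is inherited from the group-like series $t\Pi_t$. The paper's route is terser once the projector criterion is in hand, and its residue computation is reusable elsewhere in the text; your identity, on the other hand, immediately gives the stronger statement that $\mathbf i\mapsto t\Pi_t(\mathbf i)$ is a character of the shuffle algebra, of which the lemma is the degeneration at $t=0$.
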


\begin{proof} 
By corollary \ref{corDSto2eq},   it suffices to verify
 $(\ref{pinfstuff})$. 
The equation  $\lambda_n c_n = n c_n$ can be verified by taking residues and using the formula 
\begin{equation}   \Res_{x_i =x_{j}} \, c_n    =\left\{
                           \begin{array}{ll}
                            \pm c_{n- 1} (x_1,\ldots ,\widehat{x_i}, \ldots, x_n)\  & \hbox{if } |i-j|=1  \ ,  \\
                                                         0\   & \hbox{otherwise }  
                           \end{array}
                         \right.
                     \end{equation}
  in which  we use the convention $x_0=0, x_{n+1}=0$.
\end{proof}
 It follows from  lemma  $\ref{lemcnisstuff}$ and proposition \ref{proplift2} that the elements $\widetilde{c}_n$ are solutions to the stuffle equations modulo products for all $n$.
 Now define
 $$Q_n = ( 1- \mu_-) \studot P_n\ .$$
 It is straightforward to verify that 
 \begin{equation}
Q_n^{(d)} = \sum_{v\in {}_1\!\mathcal{V}^n_{d} }    { 1\over x_v x_d}\ .
 \end{equation} 
 where ${}_1\!\mathcal{V}^n_{d} $ denotes the set of all vines $v\in \mathcal{V}^n_{d} $  which are of the form $v=g_1 w$.
 \begin{prop} The following equation holds for all $n\geq 1$:
 \begin{equation} \label{cnasdifferenceofQs}
 \widetilde{c}_n \studot ( 1 - \mu_-) = Q_n + Q_{n+1} \ .
 \end{equation}
 \end{prop}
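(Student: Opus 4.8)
The strategy is to verify the identity $(\ref{cnasdifferenceofQs})$ purely combinatorially, by translating both sides into statements about vines and their generating series. Recall that $Q_n^{(d)}=\sum_{v\in {}_1\!\mathcal{V}^n_d} (x_v x_d)^{-1}$, the sum over vines $v=g_1 w$ of height $n$ with $d$ grapes, and that by definition $\widetilde{c}_n$ is the lift of the cyclically invariant element $c_n$ of $(\ref{cndef})$, which is $P_n^{(n)}$, the rational function attached to the unique vine $g_1\ldots g_1$ ($n$ copies). The key observation is that the ``tilde'' lift $\widetilde{c}_n$ has a clean description in terms of vines: a partition $a=(\ell_1,\ldots,\ell_n)$ of $\{1,\ldots,d\}$ into $n$ consecutive subsequences, with initial terms $it(a)=(i_1,\ldots,i_n)$, contributes $d_a\,c_n(x_{i_1},\ldots,x_{i_n})$, and when $d_a$ is multiplied into $c_n$ the product is exactly $(x_v x_d)^{-1}$ for a specific vine $v\in\mathcal{V}_d$ whose grafting structure is encoded by $a$. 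One should first make this dictionary precise: the map sending a partition $a$ into $n$ consecutive blocks together with the ``grape'' data to a vine of height $n$ is a bijection onto $\mathcal{V}^n_d$, compatible with the rational-function realizations in the sense that $d_a\, c_n(x_{i_1},\ldots,x_{i_n}) = (x_v x_d)^{-1}$.

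\textbf{Key steps.} First I would establish the above dictionary carefully, i.e.\ show $\widetilde{c}_n = P_n$ at the level of rational functions, or more precisely identify each term of $\widetilde{c}_n^{(d)}$ with the term of $P_n^{(d)}$ indexed by the corresponding vine; the matching of denominators $d_a$ against the edge products $x_v$ is the crux and comes down to observing that grafting a bunch of grapes onto the highest-labelled grape of the previous vine corresponds exactly to appending a consecutive block. Second, I would compute $\widetilde{c}_n\studot(1-\mu_-)$. Since $\studot$ is the stuffle concatenation $(\ref{defstuprod})$ and $1-\mu_-=(1,-x_1^{-1},0,0,\ldots)$ has only a depth-zero and depth-one part, the product $\widetilde{c}_n \studot (1-\mu_-)$ has, in depth $d$, two contributions: the depth-$d$ component of $\widetilde{c}_n$ itself, and $-\widetilde{c}_n^{(d-1)}(x_1,\ldots,x_{d-1})\cdot x_d^{-1}$. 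On the vine side, $\widetilde{c}_n^{(d)}=P_n^{(d)}$ is the sum over \emph{all} height-$n$ vines on $d$ grapes, while multiplying $\widetilde{c}_n^{(d-1)}$ by $-x_d^{-1}$ and concatenating produces, after the identification, exactly $-\sum$ over those height-$n$ vines $v$ on $d$ grapes whose final grape $d$ is a lone bunch $g_1$ grafted at the end (so that $v=w g_1$). Third, I would compare this with $Q_n+Q_{n+1}$: a vine $v\in\mathcal{V}^m_d$ with $v=g_1 w$ is a vine of height $m$ starting with the bunch $g_1$; one checks that every height-$n$ vine on $d$ grapes either starts with $g_1$ (contributing to $Q_n$) or does not, and similarly sorting the height-$(n+1)$ vines that start with $g_1$, the telescoping matches the two terms extracted in step two. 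Concretely, the relation $(\ref{sssdasstudot})$, $\sss_{n+1}=g_1\studot\sss_n - g_1\sss_n + g_{n+1}$, and the operator identity $p_{g_1}\studot p_v = p_{g_1\studot v}$, suggest that the cleanest route is to prove $(\ref{cnasdifferenceofQs})$ inside the algebra $\widehat{\V}$ of vineyards first and then apply the realization map $p$.

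\textbf{Main obstacle.} I expect the bijective dictionary of step one — matching partitions-with-grape-data against vines, and the denominator $d_a$ against $x_v$ — to be the genuinely delicate part; it requires keeping careful track of which labels become ``stalks'' versus ``grapes'' and verifying that the consecutive-block structure of the partition is exactly the grafting recursion $v=g_{i_1}\cdots g_{i_k}$ of a vine. Once that is pinned down, the computation of $\widetilde{c}_n\studot(1-\mu_-)$ is a short manipulation using that $\mu_-$ is supported in depths $\le 1$, and the final comparison with $Q_n+Q_{n+1}$ is a bookkeeping argument about whether the first bunch of a vine is $g_1$ or a larger bunch $g_m$ with $m\ge 2$. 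A clean way to organize the last step is: decompose $\mathcal{V}^n_d = \{v : v=g_1 w\} \sqcup \{v : v=g_m w, m\ge 2\}$ and observe that deleting the initial $g_1$ from a vine in the first set, or ``splitting off'' a $g_1$ from the end, realizes the shift between heights $n$ and $n+1$ and between depths $d$ and $d-1$ that the two summands $Q_n$ and $Q_{n+1}$ encode. Throughout, the identity $(\ref{soactsonxgn})$ and the group-likeness of $1+\mu_+$ (equivalently $\exp_{\stu}(\nu)$) guarantee that all these manipulations stay within the Hopf algebra $\Ostu$, so no convergence or well-definedness issues arise.
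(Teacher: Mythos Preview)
Your central claim, that $\widetilde{c}_n = P_n$ as rational functions, is false, and the whole plan rests on it. Take $n=2$, $d=3$: the two partitions $\{1\}\cup\{2,3\}$ and $\{1,2\}\cup\{3\}$ give
\[
\widetilde{c}_2^{(3)}=\frac{1}{x_1(x_2-x_1)x_2(x_3-x_2)}+\frac{1}{x_1(x_2-x_1)(x_3-x_1)x_3},
\]
whereas the two height-$2$ vines $g_1g_2$ and $g_2g_1$ give
\[
P_2^{(3)}=\frac{1}{x_1(x_2-x_1)(x_3-x_1)x_3}+\frac{1}{x_1 x_2(x_3-x_2)x_3},
\]
and these differ. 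The mismatch is structural: in the tilde lift, the denominators $d_a$ and the factors of $c_n$ are anchored at the \emph{initial} terms $i_k$ of each block, whereas the vine edges attach each bunch to the \emph{final} grape of the preceding bunch; moreover the trailing factor $x_{i_n}$ from $c_n$ equals $x_d$ only when the last block has size $1$. So the term-by-term dictionary you propose cannot produce height-$n$ vines in general.

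What actually happens (and what the paper does) is that each term of $\widetilde{c}_n^{(d)}$ corresponds to a vine \emph{starting with $g_1$}, of height $n$ when $i_n=d$ and of height $n+1$ otherwise; in the latter case the extra factor is $x_{f(v)}^{-1}$ with $f(wg_k)=d-k$, not $x_d^{-1}$. This gives $\widetilde{c}_n = Q_n + R_n$ with $R_n^{(d)}=\sum_{v\in{}_1\!\mathcal{V}^{n+1}_d} x_v^{-1}x_{f(v)}^{-1}$, and the identity $(\ref{cnasdifferenceofQs})$ then reduces to $R_n\studot(1-\mu_-)=Q_n\studot\mu_-+Q_{n+1}$, which is a vine-by-vine partial-fraction identity of the shape $(x_d-x_{d-m})^{-1}x_{d-m}^{-1}-(x_{d-m}x_d)^{-1}=(x_d-x_{d-m})^{-1}x_d^{-1}$. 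Your step two (computing the effect of $\studot(1-\mu_-)$) is fine, but steps one and three need to be redone with this corrected dictionary.
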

  \begin{proof}
  First of all, observe that the elements  $\widetilde{c}_n$ are given explicitly by 
 $$\widetilde{c}^{(d)}_n = Q_n^{(d)} +  R_n^{(d)}\ ,$$
where we define $R_n$ by the formula
$$R_n^{(d)} =  \sum_{v\in {}_1\!\mathcal{V}^{n+1}_{d}} x_v^{-1}  x^{-1}_{f(v)}\ .$$
Here,  we define $f(w g_k)=d-k\in \{n,\ldots,d-1\}$ for any word $w$ in $\{g_1,g_2,\ldots, \}$.
The previous formula follows easily from the definition of $x_v,$ formula $(\ref{tildefdef})$, and the fact that $c^{(d)}_n= x_{g_1\ldots g_1} x^{-1}_d$. 
The equation $(\ref{cnasdifferenceofQs})$  is then equivalent to 
$$
R_n \studot ( 1 - \mu_- ) = Q_n \studot \mu_-  +  Q_{n+1}
$$
Each term in this equation is a sum over certain vines $v$.  It decomposes into the following pair of identities, valid for every vine $v$:
\begin{eqnarray}
(x_{v g_m} x_{d-m})^{-1} - (x_{v g_{m-1}} x_{d-m}x_d)^{-1}  & = &  (x_{vg_m}x_d)^{-1}  \qquad \qquad  \hbox{ if } m\geq 2 \nonumber \\
(x_{v g_1} x_{d-1})^{-1} & = &  (x_{v}x_{d-1}x_d)^{-1}  + (x_{vg_1}x_d)^{-1}  \nonumber 
\end{eqnarray} 
For example, the first equation is equivalent,  after multiplying  through by $x_{vg_{m-1}}$,  to  the obvious identity
$(x_{d}-x_{d-m})^{-1}x_{d-m}^{-1}- (x_{d-m}x_d)^{-1} =  (x_{d}-x_{d-m})^{-1} x_d^{-1}$.
  \end{proof} 
  
 \begin{prop} Each  $P_n$   is a solution to the stuffle equations modulo products.
  \end{prop}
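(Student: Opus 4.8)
The plan is to run an induction on $n$, using the recursion
$$
\widetilde{c}_n \studot (1-\mu_-) = Q_n + Q_{n+1}
$$
established in the previous proposition, together with the relation $Q_n = (1-\mu_-)\studot P_n$ which defines $Q_n$, and the group-likeness of $1+\mu_+$ (with inverse $1-\mu_-$). Recall that we want to show each $P_n$ is primitive for $\Deltat$. First I would rewrite everything in terms of $P_n$: since $1\pm\mu_\pm$ are mutually inverse group-like elements of the stuffle Hopf algebra $\Ostu$, conjugation $\rho \mapsto (1+\mu_+)\studot \rho \studot(1-\mu_-)$ preserves the property of being a solution to the stuffle equations modulo products (equivalently, of being primitive), as was noted around equation $(\ref{defrhostartwist})$. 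In particular $Q_n = (1-\mu_-)\studot P_n$ is primitive if and only if $P_n$ is, so it suffices to prove each $Q_n$ is primitive.

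The base case is $n=1$: by Lemma \ref{lemP1isstuff}, $P_1$ satisfies the stuffle equations modulo products, hence so does $Q_1 = (1-\mu_-)\studot P_1$. For the inductive step, rearrange the recursion $(\ref{cnasdifferenceofQs})$ as
$$
Q_{n+1} = \widetilde{c}_n \studot (1-\mu_-) - Q_n.
$$
By Lemma \ref{lemcnisstuff} the element $c_n$ satisfies the linearized stuffle equations, so by Proposition \ref{proplift2} the lift $\widetilde{c}_n$ satisfies the full stuffle equations modulo products, i.e. it is primitive in $\Ostu$. Then $\widetilde{c}_n \studot (1-\mu_-)$ is primitive (again conjugating by a group-like, or more simply: left multiplication of a primitive by a group-like whose inverse appears symmetrically — but here one should instead argue that $\widetilde{c}_n\studot(1-\mu_-)$ is primitive because $\widetilde{c}_n$ is primitive and $1-\mu_-$ is group-like with inverse $1+\mu_+$, so $\widetilde{c}_n \studot(1-\mu_-) = (1+\mu_+)\studot\big((1-\mu_-)\studot\widetilde{c}_n\studot(1-\mu_-)\big)$ up to the conjugation identity — in any case primitivity of $\widetilde{c}_n\studot(1-\mu_-)$ follows from the standard fact that if $b$ is primitive and $g$ group-like then $bg$ satisfies $\Delta(bg) = g\otimes bg + bg\otimes g$, and one conjugates by $g^{-1}=1+\mu_+$ exactly as in Lemma \ref{lemP1isstuff}). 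By the induction hypothesis $Q_n$ is primitive. Since the primitive elements form a vector space (indeed a Lie algebra) under $\studot$, the difference $Q_{n+1} = \widetilde{c}_n \studot(1-\mu_-) - Q_n$ is primitive. This closes the induction, and then $P_n = (1+\mu_+)\studot Q_n$ is primitive for all $n\geq 1$.

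The only genuinely nontrivial input is the recursion $(\ref{cnasdifferenceofQs})$, which has already been proved, and Propositions \ref{propD1isstuff}, \ref{proplift2} and Lemmas \ref{lemP1isstuff}, \ref{lemcnisstuff}, all available. The main subtlety — and the step I would be most careful about — is keeping straight which element is group-like and which is primitive when manipulating products with $1\pm\mu_\pm$, so that each application of "primitive $\studot$ group-like $=$ conjugate of a primitive" is justified; this is purely formal Hopf-algebra bookkeeping in $\Ostu$, with no computation beyond what is done in Lemma \ref{lemP1isstuff}. Once that is handled, the conclusion $\Psi_{-1} = 2\sum_{n\geq 1} (-1)^n P_n/n \in \Or$ satisfies the stuffle equations modulo products follows by linearity and continuity (the sum being locally finite in each depth, since $P_n^{(d)}=0$ for $d<n$).
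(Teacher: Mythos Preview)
Your overall strategy is right, but there is a real gap in the Hopf-algebra bookkeeping. You assert that ``$Q_n=(1-\mu_-)\studot P_n$ is primitive if and only if $P_n$ is'', and likewise that $\widetilde{c}_n\studot(1-\mu_-)$ is primitive. Neither is true: left (or right) multiplication of a primitive $b$ by a group-like $g$ gives $\Delta(bg)=g\otimes bg+bg\otimes g$, not a primitive. So none of $Q_n$, $\widetilde{c}_n\studot(1-\mu_-)$ is primitive, and your induction as written does not run. You half-notice this (``the step I would be most careful about''), but the attempted fix $\widetilde{c}_n\studot(1-\mu_-)=(1+\mu_+)\studot\big((1-\mu_-)\studot\widetilde{c}_n\studot(1-\mu_-)\big)$ does not help: the inner factor is $g^{-1}bg^{-1}$, not a conjugate, so still not primitive.

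There are two clean repairs. The first is to carry the correct ``twisted'' condition through the induction: if $P_n$ is primitive then $Q_n$ satisfies $\Delta(Q_n)=(1-\mu_-)\otimes Q_n+Q_n\otimes(1-\mu_-)$, and $\widetilde{c}_n\studot(1-\mu_-)$ satisfies the same identity since $\widetilde{c}_n$ is primitive; hence so does $Q_{n+1}$, and then $P_{n+1}=(1+\mu_+)\studot Q_{n+1}$ is genuinely primitive. The second, which is what the paper does and is tidier, is to multiply the recursion $(\ref{cnasdifferenceofQs})$ on the left by $1+\mu_+$: since $(1+\mu_+)\studot Q_k=P_k$, this gives
\[
(\widetilde{c}_n)^{\star}=(1+\mu_+)\studot\widetilde{c}_n\studot(1-\mu_-)=P_n+P_{n+1},
\]
and now the left-hand side \emph{is} a conjugate of a primitive, hence primitive. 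The induction on $n$ (base case Lemma~\ref{lemP1isstuff}) then runs directly among primitives with no twisting.
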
 

\begin{proof}
Multiply   equation $(\ref{cnasdifferenceofQs})$  on the left by $(1+ \mu_+)$. This gives
$$(\widetilde{c}_n )^{\star}= (1+ \mu_+) \studot (Q_n + Q_{n+1})  =P_n +P_{n+1} \ .$$
The left-hand side is a solution to the stuffle equations modulo products, by lemma \ref{lemcnisstuff} and  proposition \ref{proplift2}, and hence by induction on $n$ and lemma  \ref{lemP1isstuff},  $P_n$ is a solution to the stuffle equations modulo products.
 \end{proof}

 \begin{cor}  $\Psi_{-1}$   is a solution to the stuffle equations modulo products.
 \end{cor}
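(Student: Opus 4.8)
The plan is to deduce the corollary immediately from the preceding proposition together with the explicit decomposition
$$\Psi_{-1} = 2\sum_{n\geq 1}(-1)^n\frac{P_n}{n}$$
recorded just after $(\ref{Pdef})$ (equivalently, this is how $\psi_{-1}$ of Definition~\ref{Psi-1def} arises once the sum over vines is grouped according to height). The first point to check is that this series is genuinely well-defined as an element of $\Or$: since there is no vine of height $n$ with fewer than $n$ grapes, one has $P_n^{(d)}=0$ whenever $d<n$, so in each fixed depth $d$ only the finitely many terms with $n\leq d$ contribute, and $\Psi_{-1}^{(d)} = 2\sum_{n=1}^{d}(-1)^n P_n^{(d)}/n$ is a finite sum in $\Or_d$.

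Next I would invoke the fact that satisfying the stuffle equations modulo products is a \emph{linear} condition. By Corollary~\ref{corstuffleequations}, transported to the rational setting of \S\ref{sectLiealg} via the density argument of \S\ref{trivialdensity}, an element of $\Or$ satisfies these equations precisely when each of its depth components lies in the kernel of the reduced stuffle coproduct $\Delta^r_{\stu}$; equivalently, it is a primitive element of the stuffle Hopf algebra $\Ostu$. The primitive elements of any Hopf algebra form a linear subspace, stable under finite linear combinations. Since every $P_n$ is primitive in $\Ostu$ by the previous proposition, the finite combination $\Psi_{-1}^{(d)} = 2\sum_{n=1}^{d}(-1)^n P_n^{(d)}/n$ is primitive in each depth $d$; hence $\Psi_{-1}$ is primitive, i.e.\ it satisfies the stuffle equations modulo products.

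There is essentially no obstacle here: once the previous proposition is in hand, the corollary is a formal consequence of it and of the depth-graded structure of $\Or$. The only thing requiring any care is the bookkeeping — confirming that $\Psi_{-1}$ is exactly the stated linear combination of the $P_n$ and that the rational coefficients $2(-1)^n/n$ present no difficulty (they do not, since we work over $\Q$ and the sum terminates in each depth).
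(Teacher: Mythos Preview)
Your argument is correct and matches the paper's approach: the corollary is immediate from the decomposition $\Psi_{-1} = 2\sum_{n\geq 1}(-1)^n P_n/n$, the preceding proposition that each $P_n$ is primitive, and the linearity of the stuffle equations modulo products, together with the observation that the sum is finite in each fixed depth. The paper states the corollary without proof for precisely this reason.
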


\section{Proofs of shuffle equations}

\subsection{Some properties of the shuffle equations}
The following lemma follows immediately from the definitions of the shuffle equations.
\begin{lem} \label{orbitlemma} Let $g \in \Or_n$, and set $f= g  x_n^m$ for some $m\in \Z$. Then
$$f^{\sharp}(x_1 \ldots x_k \sha x_{k+1} \ldots x_n) = g^{\sharp}(x_1 \ldots x_k \sha x_{k+1} \ldots x_n) (x_1+\ldots +x_n)^m$$
In particular, $f$ is a solution to the shuffle equations modulo products if and only if $g$ is.
\end{lem}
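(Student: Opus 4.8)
\textbf{Proof proposal for Lemma \ref{orbitlemma}.}

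The plan is to unwind the definition of the operation $f \mapsto f^{\sharp}(x_1\ldots x_k \sha x_{k+1}\ldots x_n)$ and track what happens when $f = g\, x_n^m$. Recall from $(\ref{fsharpnotation})$ that for a word $w = \x_{i_1}\ldots \x_{i_n}$ of length $n$ one sets $f^{\sharp}(w) = f(x_{i_1}, x_{i_1}+x_{i_2}, \ldots, x_{i_1}+\cdots+x_{i_n})$, extended linearly. The key observation is that the last slot of $f^{\sharp}(w)$ is always $x_{i_1}+\cdots+x_{i_n} = x_1+\cdots+x_n$, independently of which permutation $w$ of $\x_1\ldots\x_n$ we plug in: every word appearing in $\x_1\ldots\x_k \sha \x_{k+1}\ldots\x_n$ uses each letter $\x_1,\ldots,\x_n$ exactly once (the shuffle of two words just interleaves them without repetition). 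Therefore, for each such word $w$, writing $f = g\cdot x_n^m$ as a rational function in $n$ variables with the $n$-th variable raised to the power $m$, we get $f^{\sharp}(w) = g^{\sharp}(w)\cdot (x_1+\cdots+x_n)^m$, since the substitution into the final argument is the same common quantity $x_1+\cdots+x_n$ for every term. Summing over all words in the shuffle $\x_1\ldots\x_k \sha \x_{k+1}\ldots\x_n$ and using linearity of $\sharp$ gives exactly the claimed identity
$$f^{\sharp}(x_1 \ldots x_k \sha x_{k+1} \ldots x_n) = g^{\sharp}(x_1 \ldots x_k \sha x_{k+1} \ldots x_n)\,(x_1+\cdots +x_n)^m\ .$$

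For the final sentence, I would argue as follows. By Corollary \ref{corshuffleequations} (or rather its polar analogue, via \S\ref{trivialdensity}), $f \in \Or_n$ is a solution to the shuffle equations modulo products in depth $n$ if and only if $(\overline{f})^{\sharp}(\x_1\ldots\x_k \sha \x_{k+1}\ldots\x_n) = 0$ for all $1 \le k \le n-1$ — but one must be a little careful because $g$ and $f = g x_n^m$ live in a single fixed depth $n$ here, so the relevant statement is just the depth-$n$ piece of the shuffle equations, i.e.\ the $(k,n-k)$ equations. Since $(x_1+\cdots+x_n)^m$ is a nonzero rational function (a unit in the appropriate localization when $m<0$, a nonzero polynomial when $m\ge 0$), the identity just proved shows that the $(k,n-k)$-th shuffle equation for $f$ vanishes identically if and only if the $(k,n-k)$-th shuffle equation for $g$ does. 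This holds for every $k$, whence $f$ satisfies all the (depth-$n$) shuffle equations modulo products precisely when $g$ does.

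I do not expect any genuine obstacle here: the lemma is essentially bookkeeping about the $\sharp$ operation and the fact that the shuffle product of two words of total length $n$ produces only permutations of $\x_1\ldots\x_n$. The one point requiring minor care is the quantifier structure — the statement is about a single depth $n$, so one should phrase the ``solution to the shuffle equations modulo products'' clause as referring to the depth-$n$ constraints only (equivalently, view $g$ and $f$ as concentrated in depth $n$), and invoke \S\ref{trivialdensity} to pass freely between polynomial and rational $f$ in applying Corollary \ref{corshuffleequations}. A second minor point is to note that multiplication by $x_n^m$ is defined on $\Or_n$ for all $m \in \Z$ (it lands in $\Or_n$ since $x_n$ is a unit there), so $f = g x_n^m$ indeed makes sense as an element of $\Or_n$.
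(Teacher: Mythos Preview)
Your proof is correct and is exactly the unwinding of definitions that the paper has in mind: the paper's own proof consists solely of the remark that the lemma ``follows immediately from the definitions of the shuffle equations.'' You have simply spelled out why, namely that every word in $\x_1\ldots\x_k \sha \x_{k+1}\ldots\x_n$ is a permutation of $\x_1,\ldots,\x_n$, so the last argument under $\sharp$ is always $x_1+\cdots+x_n$.
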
 
We first compute the shuffle equations on $x_{g_n}$, for a single grape.

\begin{lem}  The elements $g_n = (x_1\ldots x_n)^{-1}$ satisfy the following identity:
\begin{equation}\label{gnshuffle} 
g_n^{\sharp} (x_1\ldots x_k \sha x_{k+1} \ldots x_n) = g^{\sharp} _k(x_1,\ldots, x_k) g^{\sharp} _{n-k}(x_{k+1},\ldots, x_n)\ .
\end{equation} 
\end{lem}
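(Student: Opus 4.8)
The plan is to prove the identity $g_n^{\sharp}(x_1\ldots x_k \sha x_{k+1}\ldots x_n) = g_k^{\sharp}(x_1,\ldots,x_k)\, g_{n-k}^{\sharp}(x_{k+1},\ldots,x_n)$ by a direct partial-fraction computation, exploiting the fact that the $\sharp$ operation turns products $x_1\cdots x_k$ into telescoping-style products. Recall that $g_k^{\sharp}$, with $x_0=0$, is obtained from $g_k=(x_1\cdots x_k)^{-1}$ by the substitution $x_i\mapsto x_1+\cdots+x_i$, so if we set $t_i = x_1+\cdots+x_i$ then $g_k^{\sharp} = (t_1 t_2\cdots t_k)^{-1}$. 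The first step is therefore to rewrite both sides of the claimed equation in the partial-sum variables $t_i$; on the left one must be slightly careful because the shuffle $x_1\ldots x_k \sha x_{k+1}\ldots x_n$ reorders the letters, so the partial sums appearing inside each shuffle term are sums over an \emph{interleaved} subset of indices, not a contiguous block.

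The key computational step is the following one-variable identity for interleaving partial sums: I would prove by induction on $n$ (keeping $k$ fixed, or symmetrically) that
\begin{equation}
\sum_{w \in x_1\ldots x_k \,\sha\, x_{k+1}\ldots x_n} \prod_{j=1}^{n} \frac{1}{S_j(w)} = \Big(\prod_{i=1}^{k}\frac{1}{t_i}\Big)\Big(\prod_{i=k+1}^{n}\frac{1}{t_i - t_k}\Big),
\end{equation}
where for a word $w = x_{\sigma(1)}\cdots x_{\sigma(n)}$ in the shuffle, $S_j(w) = x_{\sigma(1)}+\cdots+x_{\sigma(j)}$ is the $j$-th partial sum of that word, and where the right-hand side is exactly $g_k^{\sharp}(x_1,\ldots,x_k)$ times $g_{n-k}^{\sharp}(x_{k+1},\ldots,x_n)$ once one observes that $g_{n-k}^{\sharp}(x_{k+1},\ldots,x_n) = \prod_{i=k+1}^{n}(x_{k+1}+\cdots+x_i)^{-1} = \prod_{i=k+1}^{n}(t_i-t_k)^{-1}$. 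For the induction, I would split the shuffle sum according to whether the last letter of $w$ is $x_k$ or $x_n$ (the two maximal letters of the two blocks), using the recursive definition (\ref{shuffprod}) of $\sha$; in the first case $S_n(w) = t_n = x_1+\cdots+x_n$ is a common factor and the inner sum is a shuffle on $n-1$ letters with blocks of sizes $k-1$ and $n-k$, in the second case likewise with blocks of sizes $k$ and $n-k-1$. This reduces the claim to a rational-function identity of the shape $\frac{1}{t_n}\big(\frac{1}{t_k-t_n}\cdots\big) + \frac{1}{t_n}\big(\cdots\big) = \cdots$, which telescopes; the base cases $k=0$ or $n=k$ are immediate since the shuffle is then trivial.

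The main obstacle I anticipate is purely bookkeeping: the partial sums $S_j(w)$ of an interleaved word do not telescope as cleanly as the block partial sums $t_i$, so one must track which indices have been "used up" on each side. The cleanest way around this is to note that if the word $w$ has consumed the first $a$ letters of the first block and the first $b$ of the second (so $a+b=j$), then $S_j(w) = t_a + (t_{k+b}-t_k)$ depends only on $(a,b)$, not on the interleaving order; this collapses the sum over $w$ into a sum over lattice paths and makes the induction on (\ref{shuffprod}) bookkeeping-free. Once (\ref{gnshuffle}) in the $t_i$-variables is established, one translates back via $t_i \leftrightarrow x_1+\cdots+x_i$ to recover the stated form, and by Proposition~\ref{propshuffleequations} (together with Lemma~\ref{orbitlemma} for the general grape $x_{g_n}x_n^m$ if needed later) this exhibits $g_n$ as satisfying the shuffle equations in the multiplicative/group-like sense, which is what the lemma asserts.
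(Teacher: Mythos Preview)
Your proposal is correct and is essentially the same argument as the paper's proof: both proceed by induction on $n$, split the shuffle sum according to whether the last letter of a word is $x_k$ or $x_n$ (the right-recursion for $\sha$), observe that the final partial sum is $x_1+\cdots+x_n$ in either case, and reduce to the two-term partial-fraction identity $\tfrac{1}{t_n}\big(\tfrac{1}{t_k\cdots} + \tfrac{1}{(t_n-t_k)\cdots}\big) = \tfrac{1}{t_k(t_n-t_k)\cdots}$. Your lattice-path remark that $S_j(w)$ depends only on the pair $(a,b)$ is a pleasant extra observation but is not needed for the induction, and the closing sentence about Proposition~\ref{propshuffleequations} and group-likeness belongs to the \emph{application} of the lemma (Proposition~\ref{propvinestoosha}) rather than to the lemma itself.
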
 
\begin{proof}
Let $f_{n-1} \in \Or_{n-1}$, and define $f_n= f_{n-1} x_n^m$, for any $m\in \Z$. Then 
\begin{multline}  \nonumber 
f_n^{\sharp} (x_1\ldots x_k \sha x_{k+1} \ldots x_n) = \big( f_{n-1}^{\sharp} (x_1\ldots x_{k-1} \sha x_{k+1} \ldots x_n) \\
 +   f_{n-1}^{\sharp} (x_1\ldots x_{k} \sha x_{k+1} \ldots x_{n-1})  \big)(x_1+\ldots + x_n)^{m}
\end{multline} 
Equation $(\ref{gnshuffle})$ is proved by induction on $n$.  Applying the previous identity with $f_{n-1} =g_{n-1}$, and $m=-1$. Using the notation
$x_{[i,j]}= x_i + x_{i+1}+ \ldots +x_j$ for $i< j$, 
\begin{multline}  \nonumber 
g_n^{\sharp} (x_1\ldots x_k \sha x_{k+1} \ldots x_n) = \Big( {1 \over  x_1 x_{[1,2]}\ldots x_{[1,k-1]}  x_{k+1} x_{[k+1, k+2]} \ldots x_{[k+1,  n]})} \\
 +  {1 \over  x_1 x_{[1,2]}\ldots x_{[1,k]}  x_{k+1} x_{[k+1, k+2]} \ldots x_{[k+1,  n-1]})} \Big) { 1 \over x_{[1,n]}}
\end{multline} 
which immediately reduces to $(\ref{gnshuffle})$ on noting that $x_{[1,n]}= x_{[1,k]}+x_{[k+1,n]}$.
\end{proof}

\subsection{Shuffles and vines}
\begin{prop} \label{propvinestoosha}  The rational realisation map
\begin{eqnarray}
p: \mathcal{V}  &\To & \Osha \\
v & \mapsto & x^{-1}_v \nonumber 
\end{eqnarray} 
is a morphism of Hopf algebras.
\end{prop}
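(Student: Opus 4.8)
The statement to prove is that the rational realisation map $p : \mathcal{V} \to \Osha$, $v \mapsto x_v^{-1}$, is a morphism of Hopf algebras. Recall that $\mathcal{V} = \widehat{\V} = \Q\langle\langle g_1,g_2,\ldots\rangle\rangle$ is (by the identification of \S\ref{SECTstuff}) the stuffle Hopf algebra, with concatenation product and coproduct $\Delta g_n = \sum_{i+j=n} g_i\otimes g_j$, and that $\Osha$ is the Hopf algebra of proposition \ref{propOshaisHopf} with the shuffle concatenation product $(\ref{shuffleconcat})$ and the coproduct $\Delta = \sum_{p,q}\Delta_{p,q}$ built from the shuffle equations. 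Since both sides are graded by weight and $p$ is weight-preserving (a vine with $n$ grapes maps to a rational function of weight $n$, i.e. homogeneous of degree $-n$), it suffices to check three things: (i) $p$ lands in $\Osha$, i.e. $x_v^{-1}\in\Osha$ for every vine $v$; (ii) $p$ respects the products; (iii) $p$ respects the coproducts (compatibility with counit and antipode is then automatic, the antipode being determined by the bialgebra structure in a connected graded Hopf algebra).

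\textbf{Key steps.} First I would reduce (ii), compatibility with multiplication, to the single-grape case. A general vine is $v = g_{i_1}\cdots g_{i_k}$, and by definition $x_v = \prod_{(i,j)\in E(v)}(x_j-x_i)$; one checks directly from the grafting description that $x_{v\cdot w} = x_v\cdot x_w$ in the sense of the shuffle concatenation $(\ref{shuffleconcat})$, because grafting $w$ onto the highest-labelled grape of $v$ translates, after the shift of indices, exactly into the substitution $x_j \mapsto x_j - x_{r}$ appearing in $(\ref{shuffleconcat})$. This is essentially bookkeeping on labels and is the easy part. For (iii), the coproduct, the crucial input is the lemma just proved in the excerpt, equation $(\ref{gnshuffle})$: $g_n^\sharp(x_1\ldots x_k \sha x_{k+1}\ldots x_n) = g_k^\sharp(x_1,\ldots,x_k)\,g_{n-k}^\sharp(x_{k+1},\ldots,x_n)$. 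Translating through the definition of $\Delta_{p,q}$ on $\Osha$ (which is characterised by $m_{p,q}(\sharp\otimes\sharp)\Delta_{p,q}f = Sh_{p,q}(f)$, using injectivity of $m_{p,q}$ and $\sharp$), this says precisely that $\Delta_{k,n-k}\bigl(p(g_n)\bigr) = p(g_k)\otimes p(g_{n-k})$, i.e. $\Delta\bigl(p(g_n)\bigr) = \sum_{i+j=n} p(g_i)\otimes p(g_j) = (p\otimes p)\Delta g_n$. Hence $p$ is a coalgebra map on the generators $g_n$.

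\textbf{Bootstrapping and the main obstacle.} Once $p$ is shown to be an algebra map (step (ii)) and a coalgebra map on generators (from $(\ref{gnshuffle})$), compatibility of $\Delta$ with $p$ on all of $\mathcal{V}$ follows by multiplicativity: $\Delta$ is an algebra homomorphism on $\mathcal{V}$ by construction, $\Delta$ on $\Osha$ is an algebra homomorphism by proposition \ref{propOshaisHopf}, and the $g_n$ generate $\mathcal{V}$, so the equality $\Delta\circ p = (p\otimes p)\circ\Delta$ on generators propagates to products. This simultaneously takes care of (i): writing any vine as a product of single grapes, $x_v^{-1}$ is a shuffle-product of the $x_{g_n}^{-1}$, each of which lies in $\Osha$ by the lemma computing $Sh_{p,q}$ on $g_n$, and $\Osha$ is closed under the shuffle concatenation product (it is a Hopf algebra), so $x_v^{-1}\in\Osha$. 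The step I expect to require the most care is the precise matching of index shifts in step (ii): one must verify that grafting a bunch of grapes onto the last grape of a vine corresponds, after relabelling $\{0,1,\ldots,n\}$, exactly to the variable translation $x_i\mapsto x_i - x_r$ built into $(\ref{shuffleconcat})$ rather than to some other normalisation — in other words that the map $\Q\langle g_1,g_2,\ldots\rangle \to \qq$-style concatenation is strictly compatible with $\cdot$ on $\Osha$ and not merely up to a reindexing. This is elementary but is where a sign or an off-by-one error would creep in; I would handle it by an explicit induction on the number of bunches $k$, checking the base case $k=1$ against $(\ref{gnshuffle})$ and the inductive step against $(\ref{shuffleconcat})$.
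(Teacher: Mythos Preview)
Your proposal is correct and follows essentially the same approach as the paper: the multiplication compatibility is ``by definition'' via the grafting description matching the shuffle concatenation $(\ref{shuffleconcat})$, and the coproduct compatibility reduces to checking on the generators $g_n$, which is exactly the content of $(\ref{gnshuffle})$. You supply more detail than the paper (in particular spelling out why it suffices to check on generators and why the image lands in $\Osha$), but the strategy and the key input are identical.
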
 
\begin{proof} This map  is a homomorphism for the multiplication by definition.  To check that it respects the coproducts, it suffices to compute the coproduct on 
the generators $g_n$.  Since $\Delta(g_n) = \sum_{i+j= n} g_i \otimes g_j$, this precisely reduces  to  $(\ref{gnshuffle})$ via the definition of the Hopf algebra structure on $\Osha$. 
\end{proof}Ê
In particular, primitive elements in $\mathcal{V}$ map to solutions of the shuffle equations modulo products.
\begin{prop} For all $d\geq 1$, the depth $d$ component of  $\Psi_{-1}$ is a solution to the shuffle equations modulo products.
\end{prop}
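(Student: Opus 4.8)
The plan is to reduce the statement to the primitivity of a single vineyard in $\V$, and then to quote Proposition~\ref{propvinestoosha} together with the observation following it, namely that the rational realisation map $p$ sends primitive vineyards to solutions of the shuffle equations modulo products.

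First I would rewrite the depth-$d$ component in factored form. Using $\Psi_{-1} = 2\sum_{n\geq 1}(-1)^n P_n/n$ and $P^{(d)}_n = \sum_{v\in\V^n_d} x_v^{-1}x_d^{-1}$, one has the identity of rational functions
\[
\Psi^{(d)}_{-1} = F_d\cdot x_d^{-1}, \qquad F_d := 2\sum_{n\geq 1}\frac{(-1)^n}{n}\sum_{v\in\V^n_d} x_v^{-1}\ \in\ \Or_d .
\]
By Lemma~\ref{orbitlemma}, applied with $F_d$ in the role of $g$ and with $m=-1$, the function $\Psi^{(d)}_{-1}$ satisfies the shuffle equations modulo products if and only if $F_d$ does, so it suffices to treat $F_d$. (For some vines, e.g.\ $v=g_d$ with $x_v = x_1\cdots x_d$, the factor $x_v^{-1}$ already has a pole along $x_d=0$, accounting for the double pole of $\Psi^{(d)}_{-1}$ there; this is harmless, since Lemma~\ref{orbitlemma} allows an arbitrary integer power of $x_d$.)

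Next I would recognise $F_d = p(V_d)$, where $p\colon\V\to\Osha$ is the map of Proposition~\ref{propvinestoosha} and $V_d := 2\sum_{n\geq 1}\frac{(-1)^n}{n}\sum_{v\in\V^n_d} v$ is the degree-$d$ component of a vineyard $V\in\V$, and then show that $V_d$ is primitive in $\V$. For this I would mimic the proof of Lemma~\ref{lembadlift1}: the generating series $G(t) = 1 + \sum_{m\geq 1} g_m t^m$ is group-like, because $\Delta g_m = \sum_{i+j=m} g_i\otimes g_j$ with $g_0=1$, hence $\log G(t)$ is primitive, and reading off the $t^d$-coefficient (which equals $-\tfrac{1}{2} V_d$) shows $V_d$ is primitive. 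The only combinatorial input needed is that the products $g_{i_1}\cdots g_{i_k}$ with $i_1+\cdots+i_k=d$ are in bijection with $\V_d$ and that such a product is a vine of height $k$. Since $p$ is a morphism of Hopf algebras, $p(V_d) = F_d$ is then primitive in $\Osha$, and by Proposition~\ref{propOshaisHopf} the primitive elements of $\Osha$ are exactly the solutions in $\Or$ to the shuffle equations modulo products; combined with the first step this proves the theorem.

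There is no deep obstacle, but one point genuinely deserves care: in contrast to the stuffle case, one cannot argue piece by piece. The height-$n$ summand $\sum_{v\in\V^n_d} v$ of $V_d$ is not primitive in $\V$ — for instance $\sum_{v\in\V^1_d}v = g_d$ is not primitive for $d\geq 2$, and correspondingly, by \eqref{gnshuffle}, the realisation $p(g_d) = (x_1\cdots x_d)^{-1}$ is group-like rather than primitive, so that $P_1$ (unlike in Lemma~\ref{lemP1isstuff}) fails the shuffle equations modulo products. It is only the particular alternating combination of the $\sum_{v\in\V^n_d}v$ coming from $\log G(t)$ that is primitive, which is exactly why the statement must be made for $\Psi_{-1}$ as a whole rather than for its individual components.
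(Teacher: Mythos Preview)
Your proposal is correct and follows essentially the same route as the paper's own proof: factor out $x_d^{-1}$ via Lemma~\ref{orbitlemma}, recognise the remaining factor as the rational realisation $p(\beta)$ of a vineyard $\beta$, and deduce primitivity of $\beta$ from the $\log G(t)$ argument of Lemma~\ref{lembadlift1}, concluding via Proposition~\ref{propvinestoosha}. Your closing remark, that the height-$n$ summands individually fail the shuffle equations (so one cannot argue piecewise as in the stuffle case), is a worthwhile observation that the paper does not make explicit.
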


\begin{proof} By the proof of lemma $(\ref{lembadlift1})$, the element
$$\beta= \sum_{i_1+\ldots +i_k=d} {(-1)^k \over k} g_{i_1}\ldots g_{i_k}$$
is a primitive vineyard. Therefore $p(\beta)$ is a solution to the shuffle equations modulo products.    By  lemma \ref{orbitlemma},  so too is the element
$$\Psi_{-1}^{(d)}= {p(\beta)\over x_d} \ .$$
\end{proof}

\subsection{Conjugation by grapes $g_n$}
Let $S$ denote the antipode in the Hopf algebra of vines. 
It corresponds, in the Hopf algebra of rational functions, to  the antipode $\sigma$ defined in $(\ref{sigmaantipodedef})$, by proposition $\ref{propvinestoosha}$.
 We therefore have
$$p_{S(g_d)}= \sigma(p_{g_d}) = {(-1)^d \over x_d (x_d-x_1) \ldots (x_d-x_{d-1})}\ .$$
Consider the two generating series
$$G= \sum_{n\geq 1} g_n\quad \hbox{ and } \quad S(G)= \sum_{n\geq 1} S(g_n)\ .$$
Since $G$  is group-like, $\Delta G = G \otimes G$, we have  $S(G) = G^{-1}$.  It follows that if $\alpha$ is any primitive element in the Hopf algebra of vines, then 
$S(G) \cdot \alpha \cdot G$ is also  primitive.

\subsection{Internal structure of  $\Psi_{2n+1}$}
The elements $\Psi_{2n+1}$ again break up into three distinct pieces, each of which individually satisfies the shuffle equations:
$$\Psi_{2n+1} ={1\over 2} \big( D_{2n+1} + E_{2n+1} + F_{2n+1}\big)\ ,$$
where (with the convention $x_0=0$),
\begin{eqnarray}
D_{2n+1}^{(d)} & = &  \sum_{i=1}^d  { x_d^{2n} \over x_{\{1,\ldots, i-1\},\{0\}}\, x_{\{i,\ldots,d-1\},\{d\}}}  \nonumber \\
E_{2n+1}^{(d)}  & = &  \sum_{i=1}^d { (x_i - x_{i-1})^{2n} \over x_{\{0,\ldots, i-2\},\{i-1\}}\, x_{\{i+1,\ldots,d\},\{i\}}}Ê   \nonumber \\
F_{2n+1}^{(d)}  & = &  \sum_{i=1}^{d-1}   \Big( { (x_1-x_d)^{2n} \over x_{\{2,\ldots, i\},\{ 1\} } \, x_{\{i+1,\ldots, d-1,0\}, \{d\} }}   -  { x_{d-1}^{2n} \over x_{\{d, 1,\ldots, i-1\}, \{0\}} \, x_{\{i,\ldots, d-2\}, \{d-1\}}} \Big) \nonumber
\nonumber 
\end{eqnarray} 
We immediately notice that  $E_{2n+1} = S(G) \cdot \alpha_n \cdot G$,  where $\alpha_n \in \Or$ is $x_1^{2n}$ in depth $1$, and zero in all other depths.
By the above remarks, this element is  primitive, and therefore $E_{2n+1}^{(d)}$ satisfies the shuffle equations for all $d$. Next, we see that 
$$D^{(d)}_{2n+1} =   (G\cdot \alpha_0 \cdot S(G))  \, x_d^{2n} $$
which, by  lemma \ref{orbitlemma} satisfies the shuffle equations since $G\cdot \alpha_0 \cdot S(G) = G\cdot \alpha_0 \cdot G^{-1}$ is  primitive in $\Osha$. 
To construct $F$ from $D$, we need the following lemma.
\begin{lem} If $f\in \Or^d$ satisfies the shuffle equations,  then so does
\begin{equation}  \label{fdflipped}
f(x_1,\ldots, x_d)  + (-1)^d f(x_{d+1} -x_d, \ldots, x_{d+1}- x_1)
\end{equation}
\end{lem}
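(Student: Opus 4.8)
The plan is to recognise the operation $f(x_1,\ldots,x_d)\mapsto (-1)^d f(x_{d+1}-x_d,\ldots,x_{d+1}-x_1)$ as (essentially) the antipodal symmetry $\sigma$ of the shuffle Hopf algebra, and then exploit the fact that $\sigma$ preserves solutions to the shuffle equations. Concretely, recall from $(\ref{sigmaonx})$ that the involution $\sigma$ sends a reduced series $\overline{\Phi}(x_1,\ldots,x_r)$ to $(-1)^r\overline{\Phi}(x_r-x_{r-1},\ldots,x_r-x_1,x_r)$, and that $\sigma$ is induced by the antipode on $\Q\langle e_0,e_1\rangle$, hence preserves primitivity for $\Delta_{\sha}$ — that is, it maps solutions of the shuffle equations modulo products to solutions. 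The subtlety is that the expression $(\ref{fdflipped})$ involves $d+1$ variables $x_1,\ldots,x_{d+1}$, not $d$, so it is not literally $\sigma(f)$; it is $f$ together with a ``shifted antipode'' of $f$ in one more variable. I would handle this by passing to the unreduced ($y$-variable) picture, where translation invariance makes the geometry transparent.

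First I would write $f$ as a translation-invariant function $g(y_0,\ldots,y_d)$ with $g(y_0,\ldots,y_d)=f(y_1-y_0,\ldots,y_d-y_0)$, solving the shuffle equations modulo products; by Lemma~\ref{lemtransinv} and Corollary~\ref{corshuffleequations} this is equivalent to the hypothesis on $f$. Then I would introduce a new variable $y_{d+1}$ and consider the two unreduced functions $g(y_0,\ldots,y_d)$ and $(-1)^d g(y_{d+1},y_d,\ldots,y_1)$ — both viewed as elements of the depth-$(d+1)$ piece in variables $y_0,\ldots,y_{d+1}$ after the appropriate identification. The key observation is that reducing $g(y_{d+1},y_d,\ldots,y_1)$ at $y_0=0$, $y_i=x_i$ gives exactly $f(x_{d+1}-x_d,\ldots,x_{d+1}-x_1)$ with $x_{d+1}$ playing the role of the reducing shift, so the displayed expression $(\ref{fdflipped})$ is the reduction of $g(y_0,\ldots,y_d)+(-1)^d g(y_{d+1},\ldots,y_1)$. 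Now $g(y_{d+1},y_d,\ldots,y_1)$ is obtained from $g$ by a reversal of coordinates, which is precisely the action dual to the antipode reversing words in $e_0,e_1$ up to sign; so $(-1)^d g(y_{d+1},\ldots,y_1)$ is (the unreduced form of) $\sigma(g)$, and since $\sigma$ preserves solutions of the shuffle equations, so does the sum $g+\sigma(g)$. Translating back via $(\ref{shuffleequation})$ gives that $(\ref{fdflipped})$ is a solution to the shuffle equations modulo products.

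An alternative, more hands-on route avoids naming $\sigma$ explicitly: write out each $(p,q)$-th shuffle equation $(\ref{shuffleequation})$ for $(\ref{fdflipped})$, split it into the $f$-part and the flipped-$f$-part, and check directly that in the flipped part the shuffle set $\x_1\ldots\x_p\sha\x_{p+1}\ldots\x_q$ is carried, under the change of variables $x_i\mapsto x_{d+1}-x_{d+1-i}$ together with an order reversal, to the $(q,p)$-th shuffle set, which vanishes by hypothesis (using cocommutativity, $(p,q)\leftrightarrow(q,p)$). I expect the main obstacle to be purely bookkeeping: getting the index reversal, the sign $(-1)^d$, and the role of the extra variable $x_{d+1}$ consistent between the reduced and unreduced conventions, and verifying that the ``shift by $x_{d+1}$'' in the flipped term matches the translation-invariance normalisation exactly. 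This is why I would prefer the conceptual route through $\sigma$ and the Hopf-algebra statement already proved in Corollary~\ref{corshuffleequations}, reducing the entire lemma to the single fact that $\sigma$ is an involution preserving primitive elements, plus a change-of-variables identification that can be checked on a generic term.
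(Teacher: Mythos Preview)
Your intuition that the antipode $\sigma$ is involved is correct, but the argument as written has a genuine gap. The expression $(\ref{fdflipped})$ lives in depth $d+1$, and you need a \emph{mechanism} to pass from a primitive element in depth $d$ to one in depth $d+1$. Your phrase ``both viewed as elements of the depth-$(d+1)$ piece'' hides precisely this step: a depth-$d$ primitive $g$ does not become a depth-$(d+1)$ primitive just by adjoining a dummy variable. Moreover, your claim that $(-1)^d g(y_{d+1},\ldots,y_1)$ is the unreduced form of $\sigma(g)$ is not right: $\sigma(g)$ is $(-1)^d g(-y_d,\ldots,-y_0)$, still in depth $d$, and your expression actually reduces to $(-1)^d f(x_d-x_{d+1},\ldots,x_1-x_{d+1})$, which has the wrong sign in each argument compared with $(\ref{fdflipped})$. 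Finally, even granting the identification, the conclusion ``$g+\sigma(g)$ is primitive because $\sigma$ preserves primitives'' is vacuous: if $g$ is primitive then $\sigma(g)=-g$, so $g+\sigma(g)=0$.

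The paper supplies the missing idea in one line: the constant $x_1^0=1\in\Or_1$ is itself primitive in $\Osha$ (it corresponds to $e_1$), so the Lie bracket $f\cdot 1 - 1\cdot f \in \Osha_{d+1}$ of two primitives is primitive. Now use $\sigma(f)=-f$ to rewrite this as $f\cdot 1 + 1\cdot\sigma(f)$, and evaluate with the shuffle-concatenation formula $(\ref{shuffleconcat})$: one gets $f\cdot 1 = f(x_1,\ldots,x_d)$ and $1\cdot\sigma(f) = \sigma(f)(x_2-x_1,\ldots,x_{d+1}-x_1) = (-1)^d f(x_{d+1}-x_d,\ldots,x_{d+1}-x_1)$, which is exactly $(\ref{fdflipped})$. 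So $\sigma$ does not act on the depth-$(d+1)$ object; its only role is to convert $-\,1\cdot f$ into $1\cdot\sigma(f)$. The increase in depth comes from concatenating with the primitive $1\in\Or_1$, and that is the step your proposal is missing.
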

\begin{proof}  Since $x_1^0$ (the element $1$ viewed in $\Or_1$) and $f$ are primitive elements of $\Osha$,  so  too is the element $ f \cdot x_1^0 -  x_1^0 \cdot f \in \Osha_{d+1}$.
Because $f$ is primitive, this is again equal to  $f \cdot x_1^0+ x_1^0 \cdot \sigma(f) $, where $\sigma$ is given by  $(\ref{sigmaantipodedef})$. This is precisely 
$(\ref{fdflipped})$.
\end{proof} 
Finally, it follows easily from the explicit formulae above that 
$$F_{2n+1}^{(d)} =  - \big(D_{2n+1}^{(d-1)}(x_1,\ldots, x_{d-1}) + (-1)^{d-1} D_{2n+1}^{(d-1)}(x_{d+1}- x_d,\ldots,x_{d+1}- x_1)  \big) { 1\over x_d}$$
This is the product of $x_d^{-1}$ with an element which satisfies the shuffle equations, by the previous lemma. Therefore, by  lemma \ref{orbitlemma}, $F_{2n+1}$ is also a solution to the shuffle equations.
We conclude 
\begin{cor} The elements $\Psi_{2n+1}$ satisfy the shuffle equations.
\end{cor}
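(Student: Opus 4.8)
The plan is to reduce the statement to the three pieces introduced just above. Writing the closed formula of definition~\ref{defnpsi2n+1} in the form
$$\Psi_{2n+1}=\tfrac12\bigl(D_{2n+1}+E_{2n+1}+F_{2n+1}\bigr),$$
and recalling that the solutions in $\Or$ to the shuffle equations modulo products are exactly the primitive elements of the Hopf algebra $\Osha$ (proposition~\ref{propOshaisHopf}), hence form a $\Q$-linear subspace, it suffices to show that each of $D_{2n+1}$, $E_{2n+1}$, $F_{2n+1}$ solves the shuffle part of the equations; the corollary then follows by linearity. So the real content is three short, essentially independent verifications, each of which I would phrase inside $\Osha$ rather than by manipulating rational functions directly.

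First I would treat $E_{2n+1}$: one checks that $E_{2n+1}=S(G)\cdot\alpha_n\cdot G$, where $G=\sum_{m\ge 1}g_m$ is the generating series of bunches of grapes and $\alpha_n\in\Or$ equals $x_1^{2n}$ in depth one and zero in all other depths. The series $G$ is group-like in $\Osha$ --- this is exactly the identity $(\ref{gnshuffle})$ together with the morphism property of $p$ in proposition~\ref{propvinestoosha} --- so $S(G)=G^{-1}$, and since $\alpha_n$ is manifestly primitive the two-sided conjugate $S(G)\cdot\alpha_n\cdot G$ is primitive; hence $E_{2n+1}^{(d)}$ solves the shuffle equations for every $d$. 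For $D_{2n+1}$ I would rewrite $D^{(d)}_{2n+1}=\bigl(G\cdot\alpha_0\cdot S(G)\bigr)\,x_d^{2n}$, with $\alpha_0$ the trivial primitive $e_1$; the bracketed factor is again a conjugate of a primitive element, hence primitive, and lemma~\ref{orbitlemma} says that multiplication by the monomial $x_d^{2n}$ preserves the shuffle property. Finally, for $F_{2n+1}$ I would use the explicit relation
$$F^{(d)}_{2n+1}=-\frac1{x_d}\Bigl(D^{(d-1)}_{2n+1}(x_1,\dots,x_{d-1})+(-1)^{d-1}D^{(d-1)}_{2n+1}(x_{d+1}-x_d,\dots,x_{d+1}-x_1)\Bigr);$$
the inner combination is of the form $f\cdot x_1^0+x_1^0\cdot\sigma(f)$ for the primitive $f=D^{(d-1)}_{2n+1}$, with $\sigma$ the antipode involution $(\ref{sigmaantipodedef})$, so it solves the shuffle equations by the lemma on $(\ref{fdflipped})$, and one further application of lemma~\ref{orbitlemma} (multiplication by $x_d^{-1}$) finishes this case.

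The conceptual work is thus carried entirely by the Hopf-algebra statements already in place (proposition~\ref{propvinestoosha}, proposition~\ref{propOshaisHopf}, and the general fact that conjugating a primitive element by a group-like element is primitive). The only thing to be careful about --- the ``hard part'', such as it is --- is the purely combinatorial bookkeeping: verifying that definition~\ref{defnpsi2n+1} genuinely rearranges into $\tfrac12(D_{2n+1}+E_{2n+1}+F_{2n+1})$ with exactly the index ranges displayed, and confirming that $D_{2n+1}$ really is the two-sided $G$-translate of $\alpha_0=e_1$ rather than of some other depth-one element. These are term-by-term comparisons of rational functions, so no new idea is required.
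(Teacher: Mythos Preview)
Your proposal is correct and follows the paper's own argument essentially verbatim: the same decomposition $\Psi_{2n+1}=\tfrac12(D_{2n+1}+E_{2n+1}+F_{2n+1})$, the same identifications $E_{2n+1}=S(G)\cdot\alpha_n\cdot G$ and $D^{(d)}_{2n+1}=(G\cdot\alpha_0\cdot S(G))\,x_d^{2n}$, and the same reduction of $F_{2n+1}$ to the flipped combination of $D^{(d-1)}_{2n+1}$ via the lemma on $(\ref{fdflipped})$ followed by lemma~\ref{orbitlemma}. Your assessment that the only remaining work is the term-by-term bookkeeping matches the paper's treatment exactly.
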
 

\subsection{Proof of lemma \ref{lemweightzeroofpls}} \label{sectproofoflemweightzero}
By definition, $W_{-1} \p \ls \subset \bigoplus_{n\geq 1} c_n(x_1,\ldots, x_n) \Q$, where $c_n$ is defined by $(\ref{cndef})$. It is equal to 
$(x_{v} x_n)^{-1}$, where $v$ is the vine $g_1^n$. By lemma \ref{orbitlemma} and proposition \ref{propvinestoosha}, it satisfies 
the shuffle equations if and only if $v$ is primitive in the Hopf algebra of vines. This occurs only if $n=1$. Now consider an element  $\xi\in\p \ls$ of weight
zero and depth $n$. It is of the form $\xi=(\sum_{i=1}^n \alpha_i x_i) c_n(x_1,\ldots, x_n)$, where $\alpha_i \in \Q$. Since $c_n$ is
cyclically-invariant, we deduce that the numerator $\sum_{i=1}^n \alpha_i x_i$
must also be  cyclically-invariant for $\xi$ to be in $\p\ls$. This forces all $\alpha_i =0$.

\section{Structure of residues} \label{sectResiduesFinal}
\subsection{Cancellation of double poles}

\begin{prop}  \label{propCancelofpoles} Let $\xi \in \Lo$ be a homogeneous element of  non-negative weight. Then the depth $d$ component  $\xi^{(d)}\in \Or_d$  has at most simple poles
for all $d$. 
\end{prop}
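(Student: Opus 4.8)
The goal is to show that double poles cannot appear in any non-negative weight element of $\Lo$. Since $\Lo$ is generated over $\Q$ by $\psi_{-1}$ and the $\psi_{2n+1}$ ($n\geq 1$) under the Ihara bracket, and since every element of $\Lo$ is a $\Q$-linear combination of iterated brackets, it is enough to control the pole structure of iterated brackets. The only source of double poles is $\psi_{-1}$: by definition $\psi_{-1}^{(d)}$ has a unique double pole along $x_d=0$ in every depth $d$, and simple poles elsewhere (remark \ref{rempsi1notunique}), whereas every $\psi_{2n+1}^{(d)}$, for $n\geq 1$, has only simple poles. The key structural fact I would isolate is: whenever one forms $\{f,g\}=f\circb g-g\circb f$ and $g$ carries a double pole along $x_s=0$ (its top variable) while $f$ has only simple poles, the terms of the linearized Ihara action $(\ref{circformulax})$ that could create a double pole in $f\circb g$ or $g\circb f$ are exactly the two terms where $g$'s argument block ends in a variable that gets set to $0$, and those combine, by the cyclic/dihedral symmetry of $f$, into a difference of the form $x^{-2}\big(f(\ldots,0,\ldots)-f(\ldots,x,\ldots)\big)$ in which the $x^{-2}$ double pole visibly cancels — this is precisely the mechanism already used in the proof of lemma \ref{lemppisclosed} for $c_1^{-1}$, and I would reuse it almost verbatim.

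\textbf{The induction.} More precisely, I would argue by induction on the depth $d$ and, within each depth, on the number of bracketings, proving the stronger statement: \emph{for $\xi\in\Lo$ of non-negative weight, $\xi^{(d)}$ has at most simple poles along every divisor $x_i=0$ and $x_i=x_j$.} The base cases $d=1,2$ are immediate from the explicit formulas: $\psi_{2n+1}^{(1)},\psi_{2n+1}^{(2)}$ are polynomials and $\psi_{-1}$ appears in a non-negative weight element only through a bracket, so any weight-$\geq 0$ element of depth $\leq 2$ built from $\psi_{-1}$ must already have had its double pole counter-balanced. For the inductive step, write $\xi$ as a linear combination of $\{f,g\}$ with $f,g\in\Lo$ homogeneous; by the weight grading, at least one of $f,g$ has non-negative weight, and by induction its lower-depth components have only simple poles. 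The cases split according to whether the double-pole-carrying factor is $f$ or $g$ and along which orbit the prospective double pole lies. Using $(\ref{circformulax})$ and the dihedral symmetry $f(x_1,\ldots,x_r)=(-1)^{r+1}f(-x_r,\ldots,-x_1)$ together with the cyclic symmetry, one checks that along each divisor the two potentially-dangerous terms pair up and the $(\cdot)^{-2}$ cancels, leaving at most a simple pole. One also needs the complementary observation that brackets of the $\psi_{2n+1}$ alone never introduce double poles, which is clear since all their components have only simple poles and $\circb$ does not raise pole order when neither factor has a double pole.

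\textbf{Bookkeeping.} The somewhat delicate point is that $\psi_{-1}$ is weight $-1$, so a bracket $\{\psi_{-1},\eta\}$ has weight $\mathrm{wt}(\eta)-1$, which can still be non-negative, and iterating can in principle stack several copies of $\psi_{-1}$; I need that each successive bracketing with a simple-pole element (or with something already known to have only simple poles) cannot upgrade a surviving simple pole into a double pole, only that a freshly-introduced double pole from a new $\psi_{-1}$ gets immediately killed. This follows because at the stage where a new $\psi_{-1}$ enters, the other bracket argument is, by the weight condition and induction, of weight $\geq 0$ and hence already simple-poled in all lower depths, so the cancellation mechanism applies. Keeping track of which argument has the double pole, which has non-negative weight, and along which of the orbits $x_i=0$ versus $x_i=x_j$ the analysis is being run is the main chore; the identities $(\ref{Resfcircg})$, $(\ref{Resfnabla})$ of lemma \ref{lempropertiesofRes} are the computational engine for handling the residues cleanly.

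\textbf{Main obstacle.} The hard part is not any single cancellation — each is a two-line computation of the type already in lemma \ref{lemppisclosed} — but organizing the induction so that the hypothesis ``components of non-negative weight have only simple poles'' is genuinely available at every inductive call, despite the fact that $\psi_{-1}$ has negative weight and can be bracketed in repeatedly. I expect the cleanest route is to prove simultaneously, by a single induction on depth, both Proposition \ref{propCancelofpoles} and the auxiliary statement that the top-variable residue $\Res_{x_d=0}\xi^{(d)}$ of a weight-$\geq 0$ element is again a simple-poled element, so that the pole-order control propagates through brackets via $(\ref{Resfcircg})$.
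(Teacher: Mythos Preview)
Your proposal has the right ingredients but misapplies the dihedral symmetry and, as a result, the induction does not close. The identity $f(x_1,\ldots,x_r)=(-1)^{r+1}f(-x_r,\ldots,-x_1)$ that you invoke is a property of $\overline{\p\p}$ (equivalently of $\p\ls$): it combines evenness with the antipodal symmetry $f+\overline{\tau}f=0$ coming from the \emph{linearized} stuffle equations. An element $\xi\in\Lo$ satisfies the full double shuffle equations modulo products, not the linearized ones, so its component $\xi^{(d)}$ does \emph{not} enjoy this symmetry, and the pairing-off of ``potentially dangerous'' terms you describe does not go through. In particular, the bracket $\{f,g\}$ of two elements each carrying a double pole at its top variable will in general retain a double pole at $x_{r+s}=0$; this is exactly why your induction has trouble with iterated occurrences of $\psi_{-1}$.

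The paper's argument sidesteps this by separating the problem into two independent steps. First, one proves the purely structural fact that the condition ``at most simple poles along $x_i=0$ for $i<d$, at most a double pole along $x_d=0$'' is stable under $\{\,,\,\}$ on all of $\Lo$, with no weight hypothesis and no symmetry of the factors: the relevant cancellation at an intermediate $x_p=0$ comes from two terms in $f\circb g-g\circb f$ whose difference is $f_1(x_1,\ldots,x_p)\big(f_2(x_{p+1},\ldots,x_{p+q})-f_2(x_{p+1}-x_p,\ldots,x_{p+q}-x_p)\big)$, which visibly vanishes to one extra order at $x_p=0$. Second, for $\xi$ of non-negative weight one inducts on depth: if $\xi^{(1)},\ldots,\xi^{(d-1)}$ have no double poles, then the lower-depth terms in the stuffle equations for $\xi^{(d)}$ are double-pole free, so the \emph{double-pole part} of $\xi^{(d)}$ satisfies the linearized double shuffle equations and hence is dihedrally symmetric; since every divisor $x_i=x_j$ lies in the $D_{d+1}$-orbit of some $x_k=0$ with $1\le k\le d-1$, the first step forces the double-pole part to vanish. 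Your ``main obstacle'' evaporates in this organization precisely because the first step requires no inductive hypothesis on weights or simple-poledness of the bracket arguments.
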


\begin{proof} Consider  rational functions $f\in \Or_d$ such that:
\begin{enumerate}
\item $f$ has at most simple poles along $x_i=0$ for $i=1,\ldots, d-1,$
\item  $f$ has at most a double pole along $x_d=0$.
\end{enumerate} 
We first show that  these properties are stable under $\{, \}$. For this, let $f_1 \in \Or_p, f_2 \in \Or_q$ satisfying $(1)$ and $(2)$. In the formula $(\ref{circformula})$
we see  that $f_1 \circb f_2$ has possible double poles at $x_p=0, x_q=0$, coming from a single term each, and possible double poles along $x_{p+q}=0$. All other $x_i=0$ have at most simple poles.   
The total contribution from the two terms in $\{f_1,f_2\}$ which give possible double poles at $x_p=0$ are
$$f_1(x_1,\ldots, x_p) \big(f_2(x_{p+1},\ldots, x_{p+q}) -f_2(x_{p+1}-x_p,\ldots, x_{p+q}-x_p) \big)\ . $$
Since the right-hand factor vanishes at $x_p=0$, the  double pole in fact cancels.

Now it follows from their definitions that the elements $\psi_{2n+1}^{(d)}$ and $\psi_{-1}^{(d)}$ satisfy $(1)$ and $(2)$ for all $d$. Therefore every depth $d$ component 
$\xi^{(d)}$ of  any element $\xi \in \Lo$  satisfies $(1)$ and $(2)$.  Now suppose that $\xi$ has non-negative  weight. In particular, $\xi^{(1)}= x_1^n$ for some $n\geq 0$ and has no poles.
We prove by induction on $d$ that $\xi^{(d)}$ has no  double poles. For this, notice that if $\xi^{(1)}, \ldots, \xi^{(d-1)}$ are in $\Or_1,\ldots, \Or_d$ and have no double poles, all the terms of depth $\leq d-1$ in the stuffle equations modulo products  for $\xi^{(d)}$ have no double poles either. The double poles of $\xi^{(d)}$ therefore satisfy the linearized double stuffle equations and are  stable under the dihedral group of  lemma \ref{lemfisdihed}. Since all divisors $x_i=x_j$ are in the orbit of some  divisor of the form $x_i=0$ for $1\leq i \leq d-1$,
it follows from property $(1)$  that  $\xi^{(d)}$ has no double poles at all.
\end{proof}

\subsection{Formulae for the residues of $\Psi_{\bullet}$}  A crucial property of our chosen generators $\Psi_{2n+1}$ is that their residues have the following, very particular,  structure.

\begin{prop} \label{propResstructure} Let $n=-1$ or $n\geq 1$.  Then  for all $i<d$, we have 
\begin{equation}Ê\label{Resstructure}
\Res_{x_i=0} \Psi_{2n+1}^{(d)} = { 1 \over x_1 \ldots x_{i-1} }\,   \studot \,    \Res_{x_1=0} \Psi_{2n+1}^{(d-i+1)} \ .
\end{equation}

\end{prop}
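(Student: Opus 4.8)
The strategy is to exploit the decomposition of $\Psi_{2n+1}$ into its three pieces $A,B,C$ (respectively $D,E,F$ in the shuffle section; I will use the stuffle-adapted labels $A_{2n+1},B_{2n+1},C_{2n+1}$ from \S\ref{sectABCdef}) and the analogous decomposition $\Psi_{-1}=2\sum_{n\geq 1}(-1)^n P_n/n$, and to verify the residue formula $(\ref{Resstructure})$ on each constituent separately. The key organizing principle is that each piece is obtained from an elementary lift $\widetilde{f}$ (or $\tau(\widetilde{f})$, or a conjugate thereof by $1\pm\mu_\pm$) via the stuffle concatenation product, and the operation $\Res_{x_i=0}$ interacts with $\studot$ in a controlled way governed by Lemma \ref{lempropertiesofRes}, in particular equations $(\ref{Resfstug})$ and $(\ref{ResnablaRes})$. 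The right-hand side of $(\ref{Resstructure})$ itself has the shape $(x_1\cdots x_{i-1})^{-1}\studot(-)$, i.e. it is $\mu_+^{(i-1)}\studot\Res_{x_1=0}\Psi^{(d-i+1)}_{2n+1}$, so the whole identity is really a statement about how the stuffle factorizations $A=\widetilde{f}$, $B=(\tau A)^\star$, $C=[\widetilde{f_1},\tau(\widetilde{f_2})]_\star$ behave under deleting the variable $x_i$ and setting it to zero.

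\textbf{First steps.} I would begin with $A_{2n+1}$: since $A_{2n+1}^{(d)}=x_1^{2n}/\prod_{j=2}^d(x_j-x_1)$ has no pole along $x_i=0$ for $i\geq 2$, the residue there is literally $0$, while $\Res_{x_1=0}A^{(d)}_{2n+1}$ is a computable rational function; and $(x_1\cdots x_{i-1})^{-1}\studot\Res_{x_1=0}A^{(d-i+1)}$, unravelled via the concatenation rule $(\ref{stuffleconcat})$, must match. In fact the cleanest route is to observe that $A_{2n+1}=\widetilde{g}$ with $g(x_1)=x_1^{2n}$, that $\widetilde g$ corresponds to a primitive element of $\Ostu$ built from the group-like $\mu_+$ as in Lemma \ref{propD1isstuff}, and that $\Res_{x_i=0}$ applied to $\widetilde g^{(d)}$ either kills it (for $i\geq 2$, no pole) or, for $i=1$, produces exactly $\widetilde g^{(d-1)}$ up to sign; the $i=1$ case of $(\ref{Resstructure})$ is then trivial. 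For $2\leq i<d$, both sides vanish (the left because $A$ has no pole at $x_i=0$; the right because $\Res_{x_1=0}A^{(d-i+1)}$ is a nonzero function but it gets $\studot$-concatenated against a function of $x_1,\dots,x_{i-1}$ and one must check the combined expression is... no — one should instead check directly that $\Res_{x_i=0}A^{(d)}=0$ forces the statement, meaning the right side must also vanish, which it does \emph{not} in general, so in fact $(\ref{Resstructure})$ must be proved for the \emph{sum} $A+B+C$ and not piece by piece in that naive sense). This is the subtlety I flag below.

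\textbf{Main line of argument.} The correct approach, therefore, is: (i) compute $\Res_{x_i=0}$ of each of $A^{(d)}_{2n+1}$, $B^{(d)}_{2n+1}$, $C^{(d)}_{2n+1}$ explicitly as an element of $\Or_{d-1}$, using the partial-fraction structure of the $x_{A,B}$ notation $(\ref{Xnotation})$ — each residue is a sum over indices of products $x_{A',B'}^{-1}$ where one block has been deleted; (ii) recognize each such residue as a stuffle concatenation $\mu_+^{(i-1)}\studot(\text{shorter piece})$, where the "shorter piece" is precisely the corresponding constituent of $\Psi^{(d-i+1)}_{2n+1}$ evaluated with its first variable being $x_i$ relabelled to $x_1$ — this uses that, by the very definitions of $A,B,C$, the denominators split at the index $i-1$ in exactly the way $\studot$ prescribes; (iii) sum the three contributions. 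For $\Psi_{-1}$, run the same argument using $P_n=(1+\mu_+)\studot\kappa$-type factorizations from Lemmas \ref{lemP1isstuff} and the formula $Q_n^{(d)}=\sum_{v\in{}_1\mathcal V^n_d}(x_vx_d)^{-1}$, together with the observation $\Res_{x_i=x_j}s_d$ in $(\ref{Resofsd})$ and the vine combinatorics: deleting $x_i$ and setting it to $0$ in $x_v^{-1}$ amounts to contracting an edge of the vine, which decomposes the vine into a "front part" (a chain of $g_1$'s contributing the $\mu_+^{(i-1)}$ factor) and a "back part" (a shorter vine). The cleanest packaging is to prove the vine-level identity $\Res_{x_i=0}\,p_v = p_{g_1^{\,i-1}}\studot p_{v'}$ summed appropriately, then apply the realization map $p$.

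\textbf{Expected main obstacle.} The hard part will be step (ii): matching the explicit residue — a sum indexed by the position $j$ at which the pole at $x_i=0$ was "created" inside the nested products $x_{\{0,\dots,i-2\},\{i-1\}}$ etc. — against the stuffle-concatenation $\mu_+^{(i-1)}\studot\Psi^{(d-i+1)}$, whose expansion is itself a sum over shuffles of the two factor index sets. One has to see that the $i-1$ "front" variables $x_1,\dots,x_{i-1}$ arrange themselves into the pure-pole factor $(x_1(x_2-x_1)\cdots)$ or $(x_1x_2\cdots x_{i-1})^{-1}$ exactly as $\mu_+$ demands, with no cross-terms surviving. I expect this to reduce, after the dust settles, to a finite collection of elementary partial-fraction identities of the type $(x_d-x_{d-m})^{-1}x_{d-m}^{-1}-(x_{d-m}x_d)^{-1}=(x_d-x_{d-m})^{-1}x_d^{-1}$ already used in the proof of $(\ref{cnasdifferenceofQs})$, so the difficulty is bookkeeping rather than conceptual. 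An alternative, possibly slicker route that I would try in parallel: prove $(\ref{Resstructure})$ first for $i=1$ (where it is nearly tautological from the lift structure), then deduce general $i$ by an inductive "shift" argument using the shift operators $s_j$ of $(\ref{shiftoperatorforstuffle})$ and the fact that $\Res_{x_i=0}$ commutes with $\partial/\partial x_k$ for $k<i$ — essentially transporting the $i=1$ case along the $\studot$-module structure — which would avoid the explicit index juggling entirely.
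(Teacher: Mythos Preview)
Your plan is essentially the paper's own approach: use the $A,B,C$ decomposition for $n\geq 1$ and the vine expansion for $n=-1$, compute the residue of each piece, and sum. But you are over-anticipating the difficulty, and your ``First steps'' paragraph contains a genuine misstep that leads you to believe the identity cannot be checked piecewise.

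The error is your claim that $\Res_{x_1=0}A^{(d-i+1)}_{2n+1}$ is a \emph{nonzero} function. For $n\geq 1$ one has $A^{(d)}_{2n+1}=x_1^{2n}/\prod_{j\geq 2}(x_j-x_1)$, whose numerator vanishes at $x_1=0$; hence $A_{2n+1}$ has no pole at any $x_i=0$ and both sides of the putative piecewise identity for $A$ are zero. So $(\ref{Resstructure})$ \emph{does} hold for $A$, $B$, $C$ separately. Once you see this, the ``expected main obstacle'' disappears. The paper's argument is correspondingly short: for $B$, write $B_{2n+1}=(1+\mu_+)\studot b_{2n+1}$ with $b_{2n+1}=\tau(A_{2n+1})\studot(1-\mu_-)$; one checks directly that $b_{2n+1}$ has no residue at $x_i=0$ for $i<d$, so the only source of poles is the prefactor $\mu_+$, and $\Res_{x_i=0}\big((1+\mu_+)\studot b\big)^{(d)}=\mu_+^{(i-1)}\studot 1\studot b^{(d-i)}$ immediately. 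For $C$ one reads off from the explicit formula that $\Res_{x_i=0}C^{(d)}_{2n+1}=\mu_+^{(i-1)}\studot 1\studot A^{(d-i)}_{2n+1}$. No index-juggling or partial-fraction identities beyond these direct observations are needed.

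For $\Psi_{-1}$ the paper is even more direct than your proposed vine-contraction argument: one simply observes that for a vine $g_m v$,
\[
\Res_{x_i=0}\,x_{g_m v}^{-1}=\begin{cases}0,& m<i,\\ (x_1\cdots x_{i-1})^{-1}\studot 1\studot x_{g_{m-i}v}^{-1},& m\geq i,\end{cases}
\]
which follows at once from $(\ref{xofvine})$, and then sums over vines. Your alternative inductive route (prove $i=1$ first, then shift) would also work but is unnecessary once you realise each piece already has the correct factorised residue structure by construction.
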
 

\begin{proof} Consider first the case $n=-1$.  Let $v$ be a vine, and let $m\geq 1$. It follows immediately from the definition of the rational realization $(\ref{xofvine})$  of a vine that
$$\Res_{x_i=0}  \, x^{-1}_{g_mv}=  \begin{cases}
     0 & \text{if } \quad m< i \ , \\
  { 1\over x_1\ldots x_{i-1}}  \, \studot  \,  1 \studot \,  x^{-1}_{g_{m-i} v} & \text{if } \quad m\geq i\ ,
  \end{cases}
$$
where we write $g_0 v=v$.  Equation $(\ref{Resstructure})$ follows easily from  definition \ref{Psi-1def}. 

For the case $n\geq 1$,  recall the decomposition of $\Psi_{2n+1}$ into pieces  $A,B,C$ defined in \S \ref{sectABCdef}.
The components of $A$  have no poles along $x_i=0$. From its definition,  
\begin{equation}Ê \label{Cresi}
\Res_{x_i=0}\,  C_{2n+1}^{(d)} = { 1\over x_1 \ldots x_{i-1}} \, \studot  \, 1 \, \studot \,   A^{(d-i)}_{2n+1}
\end{equation} 
whenever $i<d$. The main contribution comes from $$B_{2n+1}= (1+ \mu_+) \, \studot \,  b_{2n+1}\ ,$$
 where  we set $b_{2n+1}=\tau(A_{2n+1}) \studot (1- \mu_-)$. It satisfies $\Res_{x_i=0} b^{(d)}_{2n+1}=0$ for all $i< d$. 
 It follows immediately from the definition of $\mu_+$ that 
 \begin{equation} \label{Bresi}
 \Res_{x_i=0} B^{(d)}_{2n+1} = { 1 \over x_1 \ldots x_{i-1}} \, \studot \, 1 \, \studot \,  b^{(d-i)}_{2n+1}
 \end{equation}
 for all $i<d$. Therefore $(\ref{Resstructure})$ follows from $(\ref{Cresi})$ and $(\ref{Bresi})$.
\end{proof}

A more detailed analysis in the proof of proposition \ref{propResstructure} leads to the following formulae for the residues. We shall omit the proofs, which are straightforward.
\begin{prop}  Let $n\geq 1$. Then 
\begin{eqnarray} \Res_{x_1=0} \Psi_{2n+1}^{(d)} & =  & { 1 \over 2}  \, \studot \, (B_{2n+1}^{(d-1)} - A_{2n+1}^{(d-1)}) \ ,\\
 \Res_{x_d=0} \Psi_{2n+1}^{(d)} & =  & { 1 \over 2}  \, \big[ A_{2n+1} \, \studot \, \mu_+ - \mu_+  \, \studot \,  \tau(A_{2n+1})\big] \ , \nonumber \\
 \Res_{x_d=0} (\Psi_{-1}^{(d)}x_d ) & =  & ( x_1 \ldots x_{d-1})^{-1}\ .\nonumber 
 \end{eqnarray} 
 where $A_{2n+1},B_{2n+1}$ were defined in \S \ref{sectABCdef}.
\end{prop}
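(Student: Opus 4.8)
The statement to be proved consists of three residue formulae for the components of $\Psi_{2n+1}$ and $\Psi_{-1}$: an expression for $\Res_{x_1=0}\Psi_{2n+1}^{(d)}$ in terms of $\studot$ of a $\mathsf{s}$-shift with $B_{2n+1}-A_{2n+1}$, an expression for $\Res_{x_d=0}\Psi_{2n+1}^{(d)}$ as $\tfrac12[A_{2n+1}\studot\mu_+ - \mu_+\studot\tau(A_{2n+1})]$, and the simple formula $\Res_{x_d=0}(\Psi_{-1}^{(d)}x_d)=(x_1\cdots x_{d-1})^{-1}$. The plan is to exploit the decomposition $\Psi_{2n+1}=\tfrac12(A_{2n+1}+B_{2n+1}+C_{2n+1})$ of \S\ref{sectABCdef} and the vineyard description of $\Psi_{-1}$, and to compute each residue piece by piece, just as in the proof of Proposition \ref{propResstructure}. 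So the first step is to recall which of the pieces $A,B,C$ (respectively which vines) actually contribute a pole along the relevant divisor.

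For the first formula, I would use that $A_{2n+1}^{(d)}=x_1^{2n}/((x_2-x_1)\cdots(x_d-x_1))$ does have a pole at $x_1=0$ only through the explicit prefactor-free part; more precisely $A_{2n+1}$ contributes nothing at $x_1=0$ once we note $A^{(d)}_{2n+1}$ has no $1/x_1$ factor, while $C_{2n+1}$ contributes a term proportional to $A_{2n+1}^{(d-1)}$ and $B_{2n+1}$ contributes a term proportional to $B_{2n+1}^{(d-1)}$; assembling these with the factor $\tfrac12$ and the degree-one shift operator $\mathsf{s}_1$ (which is exactly concatenation on the left by the empty depth-one slice, i.e.\ $\tfrac12\studot(\cdots)$ in the notation of the statement) gives the claimed identity. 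Concretely I would take $\Res_{x_1=0}$ of each of $A^{(d)}_{2n+1},B^{(d)}_{2n+1},C^{(d)}_{2n+1}$ using the definitions in \S\ref{sectABCdef}, the identity $B_{2n+1}=(1+\mu_+)\studot b_{2n+1}$ with $\Res_{x_i=0}b^{(d)}_{2n+1}=0$ for $i<d$, and $C_{2n+1}=[\widetilde f_1,\tau(\widetilde f_2)]_\stu$-type structure, and then simplify. For the second formula I would instead take $\Res_{x_d=0}$: now $B_{2n+1}$, whose $d$-th slice carries an $x_d^{2n}$ in one summand and an $x_{d-1}^{2n}$ in the other, contributes the bulk; using $B_{2n+1}=(1+\mu_+)\studot\tau(A_{2n+1})\studot(1-\mu_-)$ and $\tau(A_{2n+1})^{(d)}=x_d^{2n}/((x_1-x_d)\cdots(x_{d-1}-x_d))$, the residue at $x_d=0$ of $\tau(A_{2n+1})$ produces $-\mu_+\studot\tau(A_{2n+1})$-type tails, while the $A$ and $C$ pieces produce the $A_{2n+1}\studot\mu_+$ term; collecting with the $\tfrac12$ yields the antisymmetrized form. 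For the $\Psi_{-1}$ formula the computation is cleanest: by Definition \ref{Psi-1def}, $\Psi_{-1}^{(d)}x_d=\sum_{v\in\V_d}\frac{(-1)^{h(v)+1}}{h(v)}x_v^{-1}$, and $\Res_{x_d=0}x_v^{-1}$ is nonzero exactly for the vines $v$ whose final grape has the unique label $d$ contributing a single $x_d$ factor — one checks that the surviving sum telescopes against the alternating weights $(-1)^{h(v)+1}/h(v)$ to leave exactly $(x_1\cdots x_{d-1})^{-1}$, which is the rational realization of the single vine $g_1^{\,d-1}$ after removing the $x_d$.

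The main obstacle I anticipate is bookkeeping in the second formula: tracking how the group-like factors $1\pm\mu_\pm$ interact with $\Res_{x_d=0}$, since $\mu_+$ itself has a nontrivial $d$-th slice $(x_1\cdots x_d)^{-1}$ and hence picks up residues, so one must be careful to isolate exactly which products of slices survive at $x_d=0$. The clean way to organize this is to work inside the stuffle Hopf algebra $\Ostu$ (\S on the stuffle Hopf algebra with poles): since $\mu_+$ is group-like with inverse $1-\mu_-$, and $\Res_{x_d=0}$ interacts with $\studot$ by the rule $\Res_{x_{p+q}=0}(f\studot g)=f\studot\Res_{x_q=0}g$ from Lemma \ref{lempropertiesofRes}, most of the algebra reduces to a one-line manipulation on group-like elements; the only genuinely explicit input needed is the residue of the single slice $\tau(A_{2n+1})$ at its last variable. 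Once this is set up, each of the three identities should follow by a direct, if slightly tedious, computation, which the paper already flags as straightforward and whose details it proposes to omit.
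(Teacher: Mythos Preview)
Your plan is exactly what the paper intends: it omits the proof, stating that the formulae follow from ``a more detailed analysis in the proof of proposition \ref{propResstructure}'', and that analysis is precisely the piece-by-piece residue computation via the $A,B,C$ decomposition (and the vineyard description for $\Psi_{-1}$) that you outline, together with the rule $\Res_{x_{p+q}=0}(f\studot g)=f\studot\Res_{x_q=0}g$ from Lemma \ref{lempropertiesofRes} to handle the group-like factors $1\pm\mu_\pm$.

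One small correction on the third identity: for a vine $v=g_{i_1}\cdots g_{i_k}\in\V_d$, the only edge of $v$ incident to the vertex $d$ joins it to the stalk $i_1+\cdots+i_{k-1}$ of the last bunch, so $x_v$ contains a factor $x_d$ (rather than $x_d-x_j$ for some $j\neq 0$) if and only if $k=1$, i.e.\ $v=g_d$. Thus a single term survives, with coefficient $(-1)^{1+1}/1=1$, and the formula $\Res_{x_d=0}(\Psi_{-1}^{(d)}x_d)=(x_1\cdots x_{d-1})^{-1}$ drops out immediately --- no telescoping is needed.
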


\subsection{General residue structure}
The residue structure for the elements $\Psi_{2n+1}$ holds more generally for any commutators in the elements $\Psi_{2n+1}$.

\begin{thm} \label{thmGenResstructure} Let  $\xi \in \Lo$.   Then  for all $i<d$, we have 
\begin{equation}Ê\label{GeneralResstructure}
\Res_{x_i=0} \, \xi^{(d)} = { 1 \over x_1 \ldots x_{i-1} }\,   \studot \,    \Res_{x_1=0} \,\xi^{(d-i+1)} \ .
\end{equation}
\end{thm}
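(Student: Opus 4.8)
The plan is to reduce Theorem \ref{thmGenResstructure} to the special case already established in Proposition \ref{propResstructure}, by showing that the ``residue compatibility'' property \eqref{GeneralResstructure} is preserved under the Ihara bracket. Since every element of $\Lo$ is an iterated Ihara bracket of the generators $\psi_{-1}, \psi_3, \psi_5, \ldots$, and these generators satisfy \eqref{Resstructure} by Proposition \ref{propResstructure}, an induction on the bracket-length of $\xi$ will then finish the argument, provided the base case (generators) and the inductive step (stability under $\{\,,\,\}$) are in place.

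The main work is the inductive step. Given $f \in \Lo$ of depth contributions satisfying \eqref{GeneralResstructure} and similarly for $g$, I want to show $\{f,g\}$ does too; it suffices to work componentwise, so take $f \in \Or_p$, $g \in \Or_q$ (restrictions of $\Lo$-elements) each satisfying the relation, and compute $\Res_{x_i=0}(f\circb g)$ for $i < p+q$. The key tool is the explicit formula \eqref{circformulax} for $f\circb g$ together with the residue identities of Lemma \ref{lempropertiesofRes}: in particular \eqref{Resfcircg}, which already tells us $\Res_{x_{p+q}=0}(f\circb g) = f\circb(\Res_{x_q=0} g) - (\Res_{x_q=0}g)\studot f$, and the analogous statements \eqref{Resfstug}, \eqref{ResnablaRes} for the stuffle product. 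For $i < p+q$ I would split into cases according to whether the divisor $x_i=0$ is ``internal to the $f$-block'', ``internal to the $g$-block'', or at the junction, using the term-by-term structure of \eqref{circformulax}; in each term the variable $x_i$ appears either inside an argument list of $f$, of $g$, or not at all, and one reads off which terms contribute a pole. Using $\Res_{x_i=0}$ on those terms and substituting the hypothesis \eqref{GeneralResstructure} for $f$ and $g$ should collapse everything to an expression of the form $(x_1\cdots x_{i-1})^{-1} \studot (\text{something in the lower-depth pieces of } f\circb g)$, and that ``something'' should be exactly $\Res_{x_1=0}\{f,g\}^{(p+q-i+1)}$; this last identification uses that $\circb$ and $\studot$ are compatible with the shift $(x_1\cdots x_{i-1})^{-1}\studot(-)$, i.e. equation \eqref{IharaderivStuffle} and the behaviour of $\studot$-concatenation under truncation. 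Concretely I expect to use that $(x_1\cdots x_{i-1})^{-1}$ is (a component of) the group-like element $\mu_+$, so that $\studot$-multiplying by it on the left intertwines $\Res_{x_i=0}$ with $\Res_{x_1=0}$ after a shift of indices — this is precisely the mechanism already used in the proofs of Propositions \ref{propResstructure} (via \eqref{Bresi}) and Lemma \ref{lemP1isstuff}.

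The base case is immediate: each generator $\psi_{2n+1}$ ($n\geq 1$ or $n=-1$) satisfies \eqref{Resstructure} by Proposition \ref{propResstructure}, which is exactly \eqref{GeneralResstructure}, and a single generator is a length-one bracket. For the induction, note that the subspace of sequences $(\xi^{(d)})_d$ satisfying \eqref{GeneralResstructure} is a $\Q$-linear subspace of $\Or$, so it is enough to check stability under $\{\psi_a, -\}$ for $a \in \{-1\}\cup\{3,5,\ldots\}$ and then invoke that $\Lo$ is generated by the $\psi$'s; alternatively, stability under the full bracket $\{f,g\}$ with both $f,g$ in the subspace works directly and avoids choosing an ordering of the iterated bracket.

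The hard part will be the bookkeeping in the inductive step: \eqref{circformulax} has two sums of roughly $s$ terms each, the shifted arguments $x_{j}-x_{i}$ versus $x_j$ make the location of the pole $x_i=0$ depend delicately on which block $i$ lies in, and one must track signs $(-1)^{\deg f + r}$ and the order of factors in the $\studot$-products carefully so that the junction terms recombine into $\Res_{x_1=0}\{f,g\}$ rather than something permuted. I expect the cleanest route is to not expand $\{f,g\}$ at all but to argue at the level of the Hopf algebra $\Ostu$: taking $\Res_{x_i=0}$ of an element of $\Ostu$ and left-$\studot$-multiplying by $\mu_+$-components is, up to the index shift, an endomorphism compatible with the stuffle coproduct and with $\circb$, so the identity \eqref{GeneralResstructure} can be phrased as ``$\xi$ and its residues are related by a fixed Hopf-algebraic operation'', which is then manifestly stable under $\circb$ by \eqref{IharaderivStuffle} and the derivation property. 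If that abstraction can be made precise it reduces the whole theorem to Proposition \ref{propResstructure} plus formal nonsense; if not, one falls back on the explicit but tedious computation sketched above.
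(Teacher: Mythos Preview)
Your overall architecture is right and matches the paper's: the theorem is proved by induction on bracket-length, with Proposition \ref{propResstructure} as the base case and a ``stability under $\{\,,\,\}$'' lemma as the inductive step. But there is a real gap in your inductive step, and it is exactly the hard part.

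When you expand $\Res_{x_i=0}(f\circb g)$ using \eqref{circformulax}, the terms do \emph{not} all collapse into the form $\alpha^{(i-1)}\studot(\cdots)$ by formal compatibilities alone. The troublesome case is $i>r$: here the hypothesis on $g$ gives $\Res_{x_{i-r}=0}\,g=\alpha^{(i-r-1)}\studot 1\studot g_B$, and substituting this back into \eqref{circformulax} produces, among the good terms, a pair of bad ones of the shape
\[
\big(f\circb \alpha^{(i-r-1)}\big)\studot 1\studot g_B \;+\; \big(\alpha^{(i-r-1)}\studot \widetilde{f}\,\big)\studot 1\studot g_B\,.
\]
Neither of these is of the form $\alpha^{(i-1)}\studot(\cdots)$, and they do \emph{not} cancel against anything in the antisymmetrization. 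Equation \eqref{IharaderivStuffle} is only a Leibniz-type rule for $\circb$ with respect to $\studot$; it gives you no control over $f\circb\alpha$ itself. The identity you actually need is
\[
f\circb\alpha \;+\; \alpha\studot\widetilde{f}\;=\;0\,,
\]
valid for any $f$ satisfying the full double shuffle equations modulo products (Theorem \ref{thmxicircalpha} in the paper). This is what kills the bad pair above. Its proof is nontrivial: it is equivalent to a six-term relation (Proposition \ref{prop6term}) whose derivation uses both the shuffle and the stuffle equations together, not just one Hopf structure. Your hoped-for ``Hopf-algebraic operation'' argument cannot see this, because the cancellation genuinely requires that $f\in\p\dmr$, not merely that $f\in\Ostu$ or $\Osha$.

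Once Theorem \ref{thmxicircalpha} is in hand, the remainder of your sketch goes through: the first and third families of terms are already of the required form, the fourth family vanishes by the identity, and the second family (the ``junction'' terms $(\Res_{x_s=0}g)\studot 1\studot f$) is symmetric in $f,g$ and cancels in $\{f,g\}$. This is exactly the content of Corollary \ref{corbrackethasgoodres}.
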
 
Before proving the theorem, we state the following important technical result.
Let  $\alpha = 1 + \mu_+$ denote the element which in depth $k$ is 
$$\alpha^{(k)} = (x_1 \ldots x_k)^{-1}\ .$$

\begin{thm} \label{thmxicircalpha} Let $\xi\in \Or$ satisfy the double shuffle equations mod products. Then
\begin{equation} 
\xi \circb \alpha \, +  \, \alpha \, \studot\,  \widetilde{\xi}=0 \nonumber \ , 
\end{equation}
where $\widetilde{f}^{(d)}(x_1,\ldots, x_d) = (-1)^d f^{(d)}(-x_d,-x_{d-1}, \ldots,- x_1)$.
\end{thm}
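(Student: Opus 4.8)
The plan is to prove the identity $\xi \circb \alpha + \alpha \studot \widetilde{\xi} = 0$ by exploiting the two compatibility identities $(\ref{IharaderivShuffle})$ and $(\ref{IharaderivStuffle})$ for the linearized Ihara action, together with the fact that $\alpha = 1 + \mu_+ = \exp_{\stu}(\nu)$ is group-like for the stuffle coproduct. First I would observe that $\alpha = \exp_{\stu}(\nu)$, where $\nu$ has depth-$r$ component $(rx_1\cdots x_r)^{-1}$, so it suffices to understand $\xi \circb \nu$ and then exponentiate. The key computational step is to compute $\xi \circb \nu^{(1)} = \xi \circb x_1^{-1}$ directly from the three-term formula for $\circb$ (the same one used in the proof of $(\ref{soactsonxgn})$): this gives $\xi^{(d)}(x_1,\ldots,x_d)x_{d+1}^{-1} + \xi^{(d)}(x_2-x_1,\ldots,x_{d+1}-x_1)x_1^{-1} + (-1)^d \xi^{(d)}(x_d-x_{d+1},\ldots,x_1-x_{d+1})x_{d+1}^{-1}$, where I then use the shuffle-antipode symmetry $\sigma(\xi^{(d)}) = \xi^{(d)}$ (valid since $\xi$ solves the shuffle equations modulo products, so $\xi$ is primitive and fixed by $-\sigma$ up to sign; here $\xi$ has even weight by Lemma~\ref{lemevenindepth1}, so the signs work out) to simplify the last term.

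Next I would set up the generating-series bookkeeping: introduce $N(t) = \sum_r t^r \nu^{(r)}$ (so $N(t)$ corresponds to $(\ref{oneminustxis})$ up to sign) and $A(t) = \exp_{\stu}(N(t))$, and differentiate the relation we wish to prove with respect to $t$. Using $(\ref{IharaderivStuffle})$ — which says $f \circb (g \studot h) = (f\circb g)\studot h + g \studot (f \circb h) - g \studot f \studot h$ — one gets a first-order ODE in $t$ for the quantity $\xi \circb A(t) + A(t) \studot \widetilde{\xi}$, whose coefficients involve $\xi \circb N'(t)$ and $N'(t)$. The boundary condition at $t = 0$ is trivially $0 = 0$. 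The point is that the stuffle-derivation property of $\circb$ turns the conjugation-type identity into something that propagates from the depth-one case, exactly as in the proof of Proposition on $(\ref{soactsonxgn})$ in \S\ref{sectDepthSplit}. Concretely, I expect the identity to reduce to the single relation
\begin{equation}\nonumber
\xi \circb \nu + \nu \studot \widetilde{\xi} + \widetilde{\xi} \studot \nu = -(\widetilde{\xi} - \xi) \studot \nu
\end{equation}
or something of this shape (the precise right-hand side to be pinned down from the depth-one computation), which combined with the group-likeness of $\alpha$ and $(\ref{IharaderivStuffle})$ yields the full statement by the usual "group times primitive" argument already invoked in Lemma~\ref{lemP1isstuff}.

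The main obstacle, I expect, will be correctly tracking the signs and the reversal involution $\widetilde{(\cdot)}$ through the computation: the operator $\widetilde{f}^{(d)}(x_1,\ldots,x_d) = (-1)^d f^{(d)}(-x_d,\ldots,-x_1)$ is precisely the shuffle antipode $\sigma$ composed with nothing extra, and one must be careful that $\xi$ being fixed by $\sigma$ (rather than anti-fixed) is what makes the depth-one computation collapse to the right form; here the parity result of Lemma~\ref{lemevenindepth1} (evenness in depth one) plus the grading under $\circb$ must be invoked to ensure $\xi$ has even weight in each relevant piece. A secondary technical point is justifying that it is enough to check the identity on the dense subspace of polynomial (rather than rational) $\xi^{(d)}$, but this is exactly the device of \S\ref{trivialdensity} and causes no real trouble. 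Once the depth-one identity and the ODE are in hand, the rest is a formal manipulation with $\exp_{\stu}$ entirely analogous to the proof that $(1+\mu_+)\studot\kappa$ is primitive in Lemma~\ref{lemP1isstuff}.
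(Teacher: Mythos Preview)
Your reduction strategy via the derivation identity $(\ref{IharaderivStuffle})$ and the recursion built into $\alpha$ is broadly in the right spirit, and indeed the paper also reduces the statement to a depth-one relation: using $\alpha = \alpha \studot \mu_- + 1$ and $(\ref{IharaderivShuffle})$, one shows by induction on depth that the theorem is equivalent to
\[
(\xi + \widetilde{\xi}) \studot (1-\mu_-) \;=\; \mu_- \studot \xi - \xi \circb \mu_-,
\]
which only involves $\mu_- = x_1^{-1}$ in depth $1$. (This is cleaner than going through $\nu$, since $\nu$ has components in all depths and your ODE would have to carry them along.) But your proposal has two substantive gaps beyond this.

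First, your identification of $\widetilde{(\cdot)}$ with the shuffle antipode $\sigma$ is incorrect. In the reduced variables, $\sigma$ is given by $(\ref{sigmaonx})$, namely $\sigma(\overline f)(x_1,\dots,x_r) = (-1)^r \overline f(x_r-x_{r-1},\dots,x_r-x_1,x_r)$, whereas $\widetilde f^{(d)}(x_1,\dots,x_d) = (-1)^d f^{(d)}(-x_d,\dots,-x_1)$; these are different involutions. Consequently the ``shuffle-antipode symmetry $\sigma(\xi^{(d)})=\xi^{(d)}$'' you invoke to collapse the three-term formula for $\xi\circb x_1^{-1}$ does not do what you want (and for a primitive element one has $\sigma(\xi)=-\xi$, not $+\xi$). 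Relatedly, Lemma~\ref{lemevenindepth1} only constrains $\xi^{(1)}$ to be even; it does not force the weight of $\xi$ to be even.

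Second, and more seriously, even after the reduction to $\mu_-$ the resulting depth-one identity is \emph{not} a formal consequence of the shuffle equations plus group-likeness. Written out, it is exactly the six-term relation (Proposition~\ref{prop6term}):
\[
f^{(d)}(x_1,\dots,x_d) - f^{(d)}(x_d,\dots,x_1)
= \frac{f^{(d-1)}(x_2,\dots,x_d)}{x_1} - \frac{f^{(d-1)}(x_{d-1},\dots,x_1)}{x_d}
- \frac{f^{(d-1)}(x_2-x_1,\dots,x_d-x_1)}{x_1} - (-1)^d\frac{f^{(d-1)}(x_d-x_{d-1},\dots,x_d-x_1)}{x_d},
\]
and proving this requires genuine use of the $(1,d-1)$ \emph{stuffle} equation $(\ref{1d-1asA})$, combined with the cyclic trick $(\ref{frotasI})$ and a cancellation after interchanging $y_1\leftrightarrow y_2$. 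Your plan contains no mechanism for this step: the ``group times primitive'' argument from Lemma~\ref{lemP1isstuff} handles the Hopf-algebraic bookkeeping but cannot manufacture the six-term identity, which is where both halves of the double shuffle hypothesis are actually consumed. Without it the induction does not close.
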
 
The proof of this theorem is postponed to \S\ref{sectproofofalphaid}.

\begin{lem}  Suppose that  $f \in \Or_r$, and $g\in \Or_s$ have at most simple poles. Let $1\leq i< r+s$, and assume that there exist $g_A, g_B $ such that
\begin{eqnarray} \label{resgassumpinlem}
\Res_{x_i=0} \, g &  = & \alpha^{(i-1)} \, \studot \, 1  \, \studot \, g_A \qquad \quad \hbox{ if } \quad i < s \\
\Res_{x_{i-r}=0} \, g & =  &\alpha^{(i-r-1)}\,  \studot \, 1\,  \studot  \, g_B  \qquad \hbox{ if } \quad i  > r \ .\nonumber 
\end{eqnarray} 
Then the residue of $f\circb g$ is a  sum over the following terms:
\begin{eqnarray} \label{GeneralResFormula}
\Res_{x_i=0} (f \circb g) &  =  &  (\Res_{x_i=0} f) \, \studot\,  g \qquad  \qquad \quad \qquad  \qquad \qquad \qquad \qquad \hbox{ if }  \quad i \leq r \nonumber \\
 & + &    (\Res_{x_s=0} g) \, \studot\, 1  \, \studot\, f  \qquad \qquad \qquad \qquad \qquad \qquad \qquad \hbox{ if }  \quad  i =s   \nonumber \\
  & + &   \alpha^{(i-1)} \, \studot\, 1  \, \studot\,  (f \circb g_A) \qquad \qquad \qquad  \qquad \qquad  \qquad \quad \, \,  \hbox{ if }  \quad  i <s   \nonumber \\
    & + &   (f \circb \alpha^{(i-r-1)}  ) \, \studot\,  1  \, \studot\, g_B   +     (\alpha^{(i-r-1)} \, \studot \, \widetilde{f}  ) \, \studot\, 1  \, \studot\, g_B  \quad \quad \, \,  \hbox{ if }  \quad  i >r   
\end{eqnarray}
\end{lem}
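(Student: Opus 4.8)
The statement to prove is the residue formula for $f \circb g$ in the last lemma, under the hypothesis that $f,g$ have at most simple poles and that the residues of $g$ along $x_i=0$ (when $i<s$) and along $x_{i-r}=0$ (when $i>r$) have the specified factored form $\alpha^{(\bullet)} \studot 1 \studot (\,\cdot\,)$. The only tool needed is the explicit formula $(\ref{circformulax})$ for the linearized Ihara action on reduced power series, together with the elementary residue identities of lemma \ref{lempropertiesofRes} and the compatibility of $\circb$ with the stuffle concatenation product $(\ref{IharaderivStuffle})$. The strategy is to take $\Res_{x_i=0}$ of every term in $(\ref{circformulax})$ and sort the surviving contributions according to which of the four cases ($i\le r$, $i=s$, $i<s$, $i>r$) they fall in; most terms have no pole at $x_i=0$ and contribute nothing, and the bookkeeping is a matter of tracking which summand index $j$ in $(\ref{circformulax})$ puts $x_i$ into the $f$-slot versus the $g$-slot.

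\textbf{Key steps.} First I would write out $(\ref{circformulax})$ with $r,s$ and the summation variable (call it $j$), distinguishing the two lines: the ``direct'' sum $\sum_{j=0}^{s} f(x_{j+1}-x_j,\ldots)\,g(x_1,\ldots,x_j,x_{j+r+1},\ldots)$ and the ``flipped'' sum $(-1)^{\deg f+r}\sum_{j=1}^{s} f(x_{j+r-1}-x_{j+r},\ldots)\,g(x_1,\ldots,x_{j-1},x_{j+r},\ldots)$. Since $f$ has even degree (it lies in $\overline{\p\p}$, so $\deg f \equiv r+1 \pmod 2$ is \emph{not} automatic — but one uses $f(x_1,\ldots,x_r)=(-1)^{r+1}f(-x_r,\ldots,-x_1)$ exactly as in the proof of lemma \ref{lempropertiesofRes} to normalize signs), I would reduce the flipped sum to something expressible through $\widetilde f$. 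Then, case by case: (a) if $i\le r$, the pole at $x_i=0$ can only come from the $f$-factor in those summands where $x_i$ lands in the $f$-slot, and summing these reconstructs $(\Res_{x_i=0}f)\studot g$ after using $f(x_1,\ldots,x_r)=(-1)^{r+1}f(-x_r,\ldots,-x_1)$ — this is essentially $(\ref{Resfcircg})$ read backwards. (b) If $i=s$, there is the extra boundary term from $j=s$ where the residue hits $g$ directly, giving $(\Res_{x_s=0}g)\studot 1 \studot f$; this is again the mechanism of $(\ref{Resfcircg})$. (c) If $i<s$, the pole comes from the $g$-factor; substituting the assumed factored form $(\ref{resgassumpinlem})$ for $\Res_{x_i=0}g$ and using $(\ref{IharaderivStuffle})$ (compatibility of $\circb$ with $\studot$) to pull the prefactor $\alpha^{(i-1)}\studot 1$ outside, one gets $\alpha^{(i-1)}\studot 1 \studot (f\circb g_A)$. (d) If $i>r$, the pole comes from $g$ in the flipped branch; using the second assumption of $(\ref{resgassumpinlem})$ and invoking theorem \ref{thmxicircalpha} (the identity $\xi\circb\alpha + \alpha\studot\widetilde\xi=0$) to rewrite the $f\circb\alpha^{(i-r-1)}$ piece produces the two listed terms $(f\circb\alpha^{(i-r-1)})\studot 1 \studot g_B$ and $(\alpha^{(i-r-1)}\studot\widetilde f)\studot 1\studot g_B$. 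Finally I would check that the cases overlap consistently (e.g.\ when both $i\le r$ and $i>r$ are impossible, or when $i=s$ forces $i<s$ boundary terms to be absorbed) so that the ``sum over the following terms'' is unambiguous.

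\textbf{Main obstacle.} The delicate point is case (d), $i>r$: here the residue lands inside $g$ in the \emph{flipped} half of $(\ref{circformulax})$, where $g$ appears with its arguments in reversed/shifted order, and one must match the factored form $(\ref{resgassumpinlem})$ — which is stated for $\Res_{x_{i-r}=0}g$ in the \emph{unflipped} variables — against what actually appears. Getting the index shift $i \mapsto i-r$ right, and then correctly invoking theorem \ref{thmxicircalpha} to trade $f\circb\alpha$ for $-\alpha\studot\widetilde f$ (which is precisely why two terms rather than one appear on the last line), is where sign and indexing errors are most likely to creep in. The other cases are routine once the normalization $f(x_1,\ldots,x_r)=(-1)^{r+1}f(-x_r,\ldots,-x_1)$ is installed and $(\ref{IharaderivStuffle})$ is used to factor out stuffle-concatenation prefactors. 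I would therefore write case (d) first and in full detail, and treat (a)--(c) as variations of the already-established lemma \ref{lempropertiesofRes}.
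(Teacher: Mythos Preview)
Your overall strategy—take the residue of each summand in the explicit formula for $f\circb g$ and sort the survivors—is exactly the paper's approach, and cases (a) and (b) are essentially right (in (a) there is in fact only \emph{one} term, the $j=0$ summand $f(x_1,\ldots,x_r)g(x_{r+1},\ldots,x_{r+s})$, since every other appearance of $f$ has shifted arguments $x_{j+k}-x_j$ with no pole at $x_i=0$; no summing or antipodal symmetry is needed).

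There is, however, a genuine error in your case (d). You propose to invoke theorem~\ref{thmxicircalpha} to ``produce'' the two terms on the last line, but that theorem plays no role in the proof of this lemma at all. The two terms arise directly and separately from the formula. The terms in both sums with summation index $j\leq i-r-1$ (where $x_i$ sits in slot $i-r$ of $g$ and the $f$-factor does not involve $x_i$) reassemble, after substituting the hypothesis $\Res_{x_{i-r}=0}g=\alpha^{(i-r-1)}\studot 1\studot g_B$, into exactly $(f\circb\alpha^{(i-r-1)})\studot 1\studot g_B$: one simply checks that the ranges and arguments match those of $f\circb\alpha^{(i-r-1)}$. The second term $(\alpha^{(i-r-1)}\studot\widetilde f)\studot 1\studot g_B$ comes from a \emph{single} leftover summand, namely the flipped term with $j=i-r$, where the $f$-factor is $(-1)^{\deg f+r}f(x_{i-1}-x_i,\ldots,x_{i-r}-x_i)$; evaluating at $x_i=0$ and recognising $(-1)^{\deg f+r}f(x_{i-1},\ldots,x_{i-r})=\widetilde f(x_{i-r},\ldots,x_{i-1})$ gives the claimed expression. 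Theorem~\ref{thmxicircalpha} enters only \emph{afterwards}, in corollary~\ref{corbrackethasgoodres}, where it is used to show that these two terms \emph{cancel} when $f$ satisfies double shuffle; invoking it here would collapse the two terms to zero prematurely and lose the content of the lemma.

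Case (c) has a smaller issue: you cannot use $(\ref{IharaderivStuffle})$ to ``pull the prefactor outside'', because that identity applied to $f\circb(\alpha^{(i-1)}\studot g_A)$ gives three terms, not one. What actually happens is simpler: in every contributing summand (those with $j\geq i$), the first $i-1$ slots of $g$ are literally $x_1,\ldots,x_{i-1}$, so after taking the residue the factor $\alpha^{(i-1)}(x_1,\ldots,x_{i-1})$ is common to all terms and pulls out by ordinary factoring; the remaining sum over $j'=j-i$ is then identified with $f\circb g_A$ term by term. Also, the lemma does not assume $f\in\overline{\p\p}$, only $f\in\Or_r$ with simple poles, so your parenthetical about dihedral parity is misplaced.
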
 
\begin{proof} By inspecting formula $(\ref{circformula})$ for $f\circb g$, we see that none of the terms in $f$ has  a pole along $x_i=0$ except the first term in the sum, which is
$$f(x_1,\ldots, x_r) g(x_{r+1}, \ldots, x_{r+s})\ .$$
This gives rise to the first line  of $(\ref{GeneralResFormula})$. All other contributions 
come from residues in the $g$-arguments.  The only place in which a $\Res_{x_s=0} g$ can occur is from the term
$$ g(x_1,\ldots, x_s) f(x_{s+1}, \ldots, x_{r+s})\ ,$$
which gives rise to the second line of $(\ref{GeneralResFormula})$. All other contributions come from 
a $\Res_{x_i=0} \, g$ when $i<s$  or $\Res_{x_{i-r}=0} \, g$ when $i>r$. By $(\ref{resgassumpinlem})$ and analysing the terms in $(\ref{circformula})$,  we obtain the third and fourth lines of 
$(\ref{GeneralResFormula})$. Note that in the case $i>r$, and $f$ homogeneous,   the final term $  (\alpha^{(i-r-1)} \, \studot \, \widetilde{f}  ) \, \studot\, 1  \, \studot\, g_B$ comes from the single term
$$(-1)^{\deg(f) + r} f(x_{i-1}-x_i,\ldots, x_{i-r}-x_i) \Res_{x_i=0} \, g(x_1,\ldots, x_{i-r-1}, x_i, \ldots, x_{r+s})\ .  $$
\end{proof}

\begin{cor} \label{corbrackethasgoodres} Suppose that $f, g\in \Or$ are solutions to the double shuffle equations modulo products, have at most simple poles,  and satisfy, for some $\Phi_0$, 
\begin{equation}\label{ResformasPhi}
\Res_{x_i=0} \, \Phi^{(d)} = \alpha^{(i-1)} \,   \studot \,    \Phi_0^{(d-i+1)} 
\end{equation} 
for all $i<d$, where $\Phi=f$ or $g$. Then the same equation holds 
for $\Phi = \{f,g\}$.
\end{cor}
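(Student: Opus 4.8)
The plan is to prove $(\ref{ResformasPhi})$ for $\{f,g\}$ degree by degree, by computing $\Res_{x_i=0}\{f,g\}^{(d)}$ for each $1\le i<d$ directly from
\[
\{f,g\}^{(d)}=\sum_{p+q=d}\bigl(f^{(p)}\circb g^{(q)}-g^{(p)}\circb f^{(q)}\bigr),
\]
applying the residue formula $(\ref{GeneralResFormula})$ to each summand $f^{(p)}\circb g^{(q)}$ (with $r=p$, $s=q$). First I would record two preliminary points: $\{f,g\}$ again has at most simple poles, by the double-pole cancellation already carried out inside the proof of Proposition~\ref{propCancelofpoles}, so the lemma applies and the left-hand side is a genuine simple-pole residue; and the hypotheses $(\ref{resgassumpinlem})$ needed to invoke $(\ref{GeneralResFormula})$ are exactly what $(\ref{ResformasPhi})$ asserts for $f$ and for $g$, with the auxiliary elements $g_A,g_B$ (and their $f$-analogues) taken to be the ``tails'' of $\Phi_0$.

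Next I would sort the terms produced by $(\ref{GeneralResFormula})$ into three groups. (i) The terms from lines~1 and~3 in which the residue is \emph{not} taken at the last variable of a factor: by $(\ref{ResformasPhi})$ each relevant $\Res_{x_j=0}f^{(p)}$ or $\Res_{x_j=0}g^{(q)}$ factors as $\alpha^{(j-1)}\studot(\cdots)$, so by associativity of $\studot$ and bilinearity of $\studot$ and $\circb$ each such term acquires the form $\alpha^{(i-1)}\studot(\cdots)$; summed over all $p+q=d$ and the two orderings, the inner factors reassemble into exactly what $(\ref{GeneralResFormula})$ produces when run with $i=1$ on $\{f,g\}^{(d-i+1)}$, i.e.\ into $\Res_{x_1=0}\{f,g\}^{(d-i+1)}$ — the desired right-hand side. (ii) The line-4 terms: for fixed $i$ these occur once for each splitting with $p<i$, all carry the \emph{same} right factor (by $(\ref{ResformasPhi})$ the factor from $\Res_{x_{i-p}=0}g^{(q)}$ depends only on $d-i$, not on the splitting), and their left factors sum to $\sum_{p}\bigl(f^{(p)}\circb\alpha^{(i-p-1)}+\alpha^{(i-p-1)}\studot\widetilde{f}^{(p)}\bigr)$, which is the homogeneous depth-$(i-1)$ part of $f\circb\alpha+\alpha\studot\widetilde f$ and hence vanishes by Theorem~\ref{thmxicircalpha}; the $g$-side vanishes likewise. (iii) The remaining ``last-variable'' terms — the line-2 terms together with the boundary instances $i=r$ (resp.\ $i=s$) of line-1, which are precisely the terms \emph{not} governed by $(\ref{ResformasPhi})$ — occur in equal-and-opposite pairs after $f\circb g$ and $g\circb f$ are combined, since interchanging $f$ and $g$ sends a line-2 contribution of one to the matching line-1 contribution of the other. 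Adding (i)--(iii) gives $\Res_{x_i=0}\{f,g\}^{(d)}=\alpha^{(i-1)}\studot\Res_{x_1=0}\{f,g\}^{(d-i+1)}$, which is $(\ref{ResformasPhi})$ for $\{f,g\}$.

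The double-shuffle hypothesis on $f,g$ enters only through Theorem~\ref{thmxicircalpha} in step~(ii); all the rest is formal manipulation in the Hopf algebras $\Osha,\Ostu$ together with the pre-Lie identities $(\ref{IharaderivShuffle})$, $(\ref{IharaderivStuffle})$ for $\circb$. The main obstacle is the bookkeeping in steps~(i) and~(iii): one must track the ``$\studot 1\studot$'' insertions in $(\ref{GeneralResFormula})$ precisely enough to confirm both that the only survivors of the Theorem~\ref{thmxicircalpha} cancellation and the $f\leftrightarrow g$ antisymmetrization are the $\alpha^{(i-1)}$-structured terms, and that those reassemble verbatim into $\alpha^{(i-1)}\studot\Res_{x_1=0}\{f,g\}^{(d-i+1)}$ and not into some variant of it. Fed back into the generation of $\Lo$ from $\psi_{-1},\psi_3,\psi_5,\dots$, whose residues already have the required shape by Proposition~\ref{propResstructure}, this yields Theorem~\ref{thmGenResstructure}.
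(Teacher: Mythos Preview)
Your proposal is correct and follows essentially the same route as the paper's proof: both apply the residue formula $(\ref{GeneralResFormula})$ from the preceding lemma, observe that the line-4 contributions vanish by Theorem~\ref{thmxicircalpha}, that the line-1 terms with $i<r$ together with the line-3 terms already have the required $\alpha^{(i-1)}\studot(\cdots)$ shape, and that the remaining ``last-variable'' terms (line~2 and the $i=r$ instance of line~1) cancel pairwise under the antisymmetrization $f\leftrightarrow g$. Your write-up is somewhat more explicit than the paper's about how the surviving terms reassemble into $\alpha^{(i-1)}\studot\Res_{x_1=0}\{f,g\}^{(d-i+1)}$, but the argument is the same.
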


\begin{proof} Consider the terms in $(\ref{GeneralResFormula})$ for $ f\circ g$. All terms in the first line of 
$(\ref{GeneralResFormula})$ for $i<r$ are of the required form, by assumption $(\ref{ResformasPhi})$ for $\Phi=f$. All terms in the third line 
of $(\ref{GeneralResFormula})$ are clearly of the required form, and all terms in the fourth line vanish by theorem  $\ref{thmxicircalpha}$ applied to $f$. Thus all remaining terms are
$$ \sum_r  (\Res_{x_r=0} f^{(r)}) \, \studot \, 1 \, \studot \,  g^{(d-r)} + (\Res_{x_r=0} g^{(r)}) \, \studot  \, 1 \, \studot \, f^{(d-r)}\ ,$$
which cancel out in the anticommutator $\{f,g\}$. So $\{f,g\}$ has the desired property.
\end{proof}

Theorem  $\ref{thmGenResstructure}$ follows easily by induction from the previous corollary,  
since the  $\Psi_{2n+1}$ have the requisite
residue structure by  proposition \ref{propResstructure}. We know by theorems \ref{theorem: main1} and  \ref{theorem: main2} that all elements of $\Lo$ satisfy the double shuffle equations modulo 
products. It is  worth noting   that $\Psi^{(d)}_{-1}$ in fact has  a double pole along $x_d=0$. However, proposition \ref{propCancelofpoles} says that $\{\Psi_{-1}, \xi\}$ has at most simple poles, and it is easy to see that corollary $\ref{corbrackethasgoodres}$ applies in the case $f= \Psi_{-1}$ and $g \in \Lo$  also.

\subsection{Vanishing of residues (proof of theorem \ref{thmcycpoles})}
Let $d\geq 2$, and  $\xi \in \Lo$ such that $\xi^{(1)}, \ldots, \xi^{(d-1)}$ has no poles. We wish to show that 
$\xi^{(d)}$ has poles along the main cyclic orbit $x_1=0,\ldots,  x_i= x_{i+1}, \ldots, x_d=0$ only. The statement is vacuous for $d=2$.
Therefore, assume that $d\geq 3$. By theorem \ref{thmGenResstructure}, we have for $2\leq i \leq d-1$,
\begin{equation} \label{resxiiszero}Ê\Res_{x_{i}=0}\, \xi^{(d)} = { 1 \over x_1 \ldots x_{d-1}}\, \studot \, \Res_{x_1=0}\, \xi^{(d-i+1)} =0\ .
\end{equation}Ê
By the assumption on $\xi$, all terms of depth $<d$ in the stuffle equations for $\xi^{(d)}$ are pole-free (hereafter referred to as `polynomial'). It follows that 
\begin{equation} \label{xidstufflemodp}
 \xi^{(d)} (x_1\ldots x_i \star  x_{i+1} \ldots x_{d}) \equiv \xi^{(d)}(x_1 \ldots x_i \sha x_{i+1} \ldots x_{d}) \pmod{ \hbox{polynomials}} \ .
 \end{equation}
 Since $\xi$ satisfies the stuffle equations, the left-hand side vanishes. Since the antipode for the shuffle product is signed reversal of words, we deduce that 
$$\xi^{(d)} + \tau \xi^{(d)}   \equiv 0 \pmod{ \hbox{polynomials}} \ . $$
 Similarly, since $\xi$ satisfies the shuffle equations, it satisfies
 $$\xi^{(d)}+ \sigma{\xi^{(d)}} =0\ .$$
 Now, by \S\ref{sectDihedralsymm}, $\sigma, \tau$ generate a dihedral group. By cyclic symmetry, we deduce from
 $(\ref{resxiiszero})$ that $ \Res_{x_{i}=x_j}\, \xi^{(d)} =0$ whenever $2\leq |i-j|$, which proves the theorem.

\subsection{Proof of theorem    \ref{thmxicircalpha}} \label{sectproofofalphaid}
We shall prove theorem   \ref{thmxicircalpha} in a  different form.
\begin{lem} Let $f\in \Or$. The equation $f \circb \alpha + \alpha \, \studot \, \widetilde{f}$ is equivalent to 
\begin{equation}  \label{eqnfcircmuminus}  (f +  \widetilde{f})  \, \studot \, (1- \mu_-)    = \mu_- \, \studot \, f - f \circ \mu_-\ .
\end{equation}

\end{lem}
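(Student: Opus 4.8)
The statement to prove is the Lemma asserting that, for $f\in\Or$, the identity $f\circb\alpha+\alpha\studot\widetilde f=0$ is equivalent to
\begin{equation*}
(f+\widetilde f)\studot(1-\mu_-)=\mu_-\studot f-f\circb\mu_-.
\end{equation*}

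The plan is to exploit the explicit relationship between $\alpha=1+\mu_+$ and $\mu_-$, namely that $1+\mu_+$ is group-like for the stuffle concatenation $\studot$ and its inverse is $1-\mu_-$ (this was recorded in the proof of Lemma \ref{lemP1isstuff}: $1+\mu_+=\exp_\studot(\nu)$ and $1-\mu_-=\exp_\studot(-\nu)$). First I would substitute $\alpha=1+\mu_+$ into $f\circb\alpha+\alpha\studot\widetilde f$ and split off the depth-zero term $1$: since $1\circb$ acting appropriately and the normalization give $f\circb 1=f$, we get $f\circb\alpha=f+f\circb\mu_+$, so the identity $f\circb\alpha+\alpha\studot\widetilde f=0$ becomes $f+f\circb\mu_+ + (1+\mu_+)\studot\widetilde f=0$, i.e.
\begin{equation*}
f\circb\mu_+ = -\,(1+\mu_+)\studot\widetilde f - f.
\end{equation*}
Now I would use the key structural fact $(\ref{IharaderivShuffle})$ for the stuffle version $(\ref{IharaderivStuffle})$, which says $f\circb(g\studot h)=(f\circb g)\studot h+g\studot(f\circb h)-g\studot f\studot h$. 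The idea is to take $g=1+\mu_+$ and $h=1-\mu_-$, whose stuffle product is $1$, so that $f\circb 1 = f$ on the left; this produces a relation linking $f\circb\mu_+$ to $f\circb\mu_-$, the conjugating element $(1+\mu_+)$, and $f$ itself. Carrying out this substitution and simplifying (multiplying through on the right by $(1-\mu_-)$, equivalently on the left by $(1+\mu_+)^{-1}$, and collecting terms) should transform the $\mu_+$-form of the identity into the $\mu_-$-form claimed in the lemma.

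More concretely: starting from $f\circb\mu_+=-(1+\mu_+)\studot\widetilde f-f$, I would stuffle-multiply both sides on the right by $(1-\mu_-)$. On the left, apply $(\ref{IharaderivStuffle})$ in reverse: $(f\circb\mu_+)\studot(1-\mu_-)=f\circb(\mu_+\studot(1-\mu_-))-\mu_+\studot(f\circb(1-\mu_-))+\mu_+\studot f\studot(1-\mu_-)$. Since $\mu_+\studot(1-\mu_-)=(1+\mu_+)\studot(1-\mu_-)-(1-\mu_-)=1-(1-\mu_-)=\mu_-$, and $f\circb(1-\mu_-)=f-f\circb\mu_-$, this rewrites the left side entirely in terms of $f\circb\mu_-$, $f$, $\mu_\pm$. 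The right side becomes $-(1+\mu_+)\studot\widetilde f\studot(1-\mu_-)-f\studot(1-\mu_-)=-\widetilde f\studot(1-\mu_-)-f\studot(1-\mu_-)$ after cancelling the conjugation $(1+\mu_+)\studot(\cdot)\studot(1-\mu_-)$ — here I use that $\widetilde f\studot(1-\mu_-)$ is what appears, and $(1+\mu_+)$ is group-like so conjugation by it is an algebra automorphism, but one must be slightly careful since $\widetilde f$ is not conjugated but left-multiplied. Reassembling and transposing terms yields $(f+\widetilde f)\studot(1-\mu_-)=\mu_-\studot f-f\circb\mu_-$, which is the desired form; and every step is reversible, giving the equivalence.

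The main obstacle I anticipate is bookkeeping around the distinction between "conjugation by $\alpha$" (as in $\rho^\star=(1+\mu_+)\studot\rho\studot(1-\mu_-)$, definition $(\ref{defrhostartwist})$) and plain one-sided stuffle multiplication, together with keeping track of the depth-zero unit $1$ consistently when it interacts with $\circb$ (recall $f\circb 1=f$ and the normalization subtleties when $f$ itself is not primitive). One has to verify that $(\ref{IharaderivStuffle})$ genuinely applies with $1-\mu_-$ in the $h$-slot even though $1-\mu_-$ has a nontrivial depth-zero component; since the identity is $\Q$-linear and continuous, extending it from primitive elements to $1$ and to group-like elements is routine but must be stated. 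Apart from that, the computation is purely formal manipulation in the Hopf algebra $\Ostu$ using group-likeness of $1+\mu_+$, so no genuine analytic or combinatorial difficulty should arise; the residue/pole structure plays no role in this particular lemma.
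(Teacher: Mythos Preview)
Your overall strategy is sound and leads to the result, but the execution has a slip that you yourself flag without resolving. The claim that $(1+\mu_+)\studot\widetilde f\studot(1-\mu_-)$ simplifies to $\widetilde f\studot(1-\mu_-)$ is false: this is a one-sided multiplication, not a conjugation, so the $(1+\mu_+)$ does not disappear. The fix is simple: after right-multiplying by $(1-\mu_-)$ and applying $(\ref{IharaderivStuffle})$ as you do, you obtain
\[
(1+\mu_+)\studot(f\circb\mu_-)-\mu_+\studot f\studot\mu_- \;=\; -f\studot(1-\mu_-)-(1+\mu_+)\studot\widetilde f\studot(1-\mu_-),
\]
and now multiplying the whole equation on the \emph{left} by $(1-\mu_-)$ (using $(1-\mu_-)\studot(1+\mu_+)=1$ and $(1-\mu_-)\studot\mu_+=\mu_-$) yields exactly $(f+\widetilde f)\studot(1-\mu_-)=\mu_-\studot f-f\circb\mu_-$ after routine collection of terms. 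All steps are invertible since $1\pm\mu_\mp$ are units, giving the equivalence.

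Your approach is genuinely different from the paper's. The paper writes $\alpha=\alpha\studot\mu_-+1$, applies $(\ref{IharaderivStuffle})$ to $f\circb(\alpha\studot\mu_-)$ to obtain a recursion for $f\circb\alpha$ in terms of itself in lower depth, and then argues by induction on depth, substituting the assumed identity $f\circb\alpha=-\alpha\studot\widetilde f$ in depths $<d$ and left-multiplying by $1-\mu_-$. Your route is a direct global manipulation with no induction, trading the depth-by-depth substitution for one extra left-multiplication by $1-\mu_-$; it is slightly cleaner once the slip is repaired.
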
 
\begin{proof}
By definition of $\alpha$, we have $\alpha = \alpha \,  \studot \, \mu_-  +1$. Therefore,
$$ f \circb \alpha = ( f \circb \alpha) \, \studot \, \mu_- - \alpha \, \studot \, f \, \studot \, \mu_- + \alpha \, \studot \, ( f\circb \mu_-) + f$$
by $(\ref{IharaderivShuffle})$.  Suppose that $f \circ \alpha + \alpha \, \studot \, \widetilde{f} $ vanishes in all components $<d$. Then
the depth $d$ component of $f \circ \alpha + \alpha \, \studot \, \widetilde{f} $ is equal to the depth $d$ component of
$$   - \alpha \, \studot \, \widetilde{f} \, \studot \, \mu_- - \alpha \, \studot \, f \, \studot \, \mu_- + \alpha \, \studot \, ( f\circb \mu_-) + f + \alpha \, \studot \, \widetilde{f}  \ .$$
Multiplying on the left by $1-\mu_-$,  and using  the fact that $(1- \mu_-) \, \studot \, \alpha=1$,  this is
$$        \widetilde{f} \, \studot \, (1- \mu_-)    -   f \, \studot \, \mu_- +   f\circb \mu_- + (1- \mu_-) \, \studot \, f \ .$$
Thus, by induction on the depth,  theorem $ \ref{thmxicircalpha}$ is equivalent to  $(\ref{eqnfcircmuminus})$.
\end{proof}
Suppose that $f\in \Or$ is homogeneous of even weight, and satisfying the  shuffle and stuffle equations modulo products.  Then 
$$\widetilde{f}^{(d)}(x_1,\ldots, x_d) = (-1)^d (-1)^{d-1} f(x_d,\ldots, x_1)\ . $$
It follows from $(\ref{circformula})$ and $(\ref{muidef})$ that $(\ref{eqnfcircmuminus})$ is equivalent to the following six-term equation.

\begin{prop} \label{prop6term} Let $f\in \Or$ be homogeneous of even weight. Then for all $d\geq 2$, 
\begin{eqnarray}\label{sixtermequation}
&&f^{(d)}(x_1,\ldots, x_d ) -  f^{(d)}(x_d,\ldots, x_1)\quad  = \quad { f^{(d-1)}(x_2,\ldots, x_d) \over x_1} -{ f^{(d-1)}(x_{d-1},\ldots, x_1) \over x_d}  \nonumber    \\
&&\qquad \qquad - {f^{(d-1)}(x_2-x_1,\ldots, x_d- x_1)\over x_1} - (-1)^d{f^{(d-1)}(x_d-x_{d-1},\ldots, x_d- x_1)\over x_d}   
\end{eqnarray}
\end{prop}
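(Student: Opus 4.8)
The plan is to deduce the six-term identity $(\ref{sixtermequation})$ from the fact that $f\in\Or$ satisfies the shuffle and stuffle equations modulo products, by extracting the correct pieces of the depth-$d$ shuffle and stuffle relations and combining them. The key observation is that $(\ref{sixtermequation})$ is essentially a relation between the $(1,d-1)$ shuffle equation and the $(1,d-1)$ stuffle equation, both of which are genuine linear functional relations of finite type in the sense of \S\ref{trivialdensity}, so it suffices to verify everything for $f$ a polynomial and then invoke density.

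First I would write out the depth-$d$ stuffle equation of type $(1,d-1)$. Using proposition \ref{propstuffleequations} applied to the word $\x_1\sha\x_2\ldots\x_d$ augmented by the stuffle correction terms $\y_{i+j}$, one sees that $f^{(d)}(\x_1\ldots\x_d\stu\x_{p+1}\ldots\x_{p+q})=0$ with $p=1$ isolates the term $f^{(d)}(x_1,\ldots,x_d)$ together with shifted copies and the lower-depth correction terms carrying the factors $(x_1-x_j)^{-1}$; since $f$ satisfies stuffle modulo products this whole expression vanishes. Symmetrically, applying the reversal involution $\upsilon$ (which preserves solutions to the stuffle equations by \S\ref{SECTstuff}), or equivalently using the $(d-1,1)$ stuffle equation, isolates $f^{(d)}(x_d,\ldots,x_1)$ and the term $f^{(d-1)}(x_{d-1},\ldots,x_1)/x_d$. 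Next I would do the same for the shuffle equations: the $(1,d-1)$ shuffle equation $(\overline{f}^{(d)})^{\sharp}(\x_1\sha\x_2\ldots\x_d)=0$ from corollary \ref{corshuffleequations}, after undoing the change of variables $\sharp$, produces the terms $f^{(d-1)}(x_2-x_1,\ldots,x_d-x_1)/x_1$ and its reversed counterpart $(-1)^d f^{(d-1)}(x_d-x_{d-1},\ldots,x_d-x_1)/x_d$. The point is that the difference between the stuffle $(1,d-1)$ relation and the shuffle $(1,d-1)$ relation is exactly the difference between the "arithmetic" correction terms $\sum\y_{i+j}\cdots$ and zero, and these correction terms, when translated to power series, are precisely the residue-type terms $f^{(d-1)}(x_2,\ldots,x_d)/x_1$ appearing on the right-hand side of $(\ref{sixtermequation})$, by the identity $\sum_{m,n\geq1}\y_{m+n}x_1^{m-1}x_2^{n-1}=(\alpha(x_1)-\alpha(x_2))/(x_1-x_2)$ from lemma \ref{lemalphastuffequation}.

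The bookkeeping step that assembles these four relations into exactly $(\ref{sixtermequation})$ is the part requiring care: one must check that all the "bulk" terms (the shifted copies $f^{(d)}(x_{\sigma(1)},\ldots,x_{\sigma(d)})$ over non-identity, non-reversal permutations $\sigma$) cancel between the shuffle and stuffle sides, leaving only the six named terms. Here the evenness hypothesis on the weight of $f$ enters, through the sign $(-1)^{\deg f}=1$, exactly as it does in proposition \ref{prop6term}'s statement; this is what makes the reversed shuffle/stuffle terms combine with the correct signs. I expect the main obstacle to be precisely this cancellation of bulk terms: it is a purely combinatorial identity among the shuffle product $\x_1\sha\x_2\ldots\x_d$ and the stuffle product $\x_1\stu\x_2\ldots\x_d$ expanded out, and one has to be scrupulous about which permutations survive at depth $d$ versus which get absorbed into the depth-$(d-1)$ terms. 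Once this is in place, density (\S\ref{trivialdensity}) extends the identity from polynomial $f$ to all $f\in\Or$, and the equivalences proved in the two preceding lemmas then give theorem \ref{thmxicircalpha}. An alternative, possibly cleaner route I would keep in reserve: work entirely on the Hopf-algebra side in $\Osha$ and $\Ostu$, expressing $f\circb\alpha+\alpha\studot\widetilde f$ directly in terms of $\Delta_{\sha}$, $\Delta_{\stu}$ applied to $f$ and to the group-like element $\alpha=\exp_{\stu}(\nu)$, and using that $f$ is primitive for both coproducts; this avoids explicit permutation sums at the cost of tracking the antipode/involution conventions carefully.
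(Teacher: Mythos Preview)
Your main mechanism does not work. You propose to subtract the $(1,d-1)$ shuffle equation from the $(1,d-1)$ stuffle equation and hope that the ``bulk'' depth-$d$ permutation sums cancel. But the two equations do not have the same depth-$d$ part: the shuffle equation is $(\overline{f}^{(d)})^{\sharp}(\x_1\sha\x_2\ldots\x_d)=0$, whose terms are $f^{(d)}$ evaluated at \emph{partial sums} $x_1,x_1+x_2,\ldots$, whereas the depth-$d$ part of the stuffle equation is $f^{(d)}(\x_1\sha\x_2\ldots\x_d)$ with the variables themselves as arguments. These are genuinely different functionals of $f^{(d)}$ and do not cancel against each other. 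Relatedly, the shuffle equation is a pure depth-$d$ identity and contains \emph{no} $f^{(d-1)}$ terms and no $x_1^{-1}$ factors; so the shifted terms $f^{(d-1)}(x_2-x_1,\ldots,x_d-x_1)/x_1$ in $(\ref{sixtermequation})$ cannot be ``produced'' by the shuffle equation as you suggest. They come from the Ihara term $f\circb\mu_-$ in $(\ref{eqnfcircmuminus})$, not from any shuffle relation.

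There is a second, subtler obstacle you do not address. The lower-depth stuffle corrections, when written directly in the $x$-variables, carry denominators of the form $(x_i-x_1)^{-1}$, not $x_1^{-1}$ or $x_d^{-1}$ as required by $(\ref{sixtermequation})$. The paper gets around this by working in the $y$-coordinates and performing a cyclic rotation $y_i\mapsto y_{i+1}$ of the stuffle equation (legitimate by translation invariance), so that the distinguished ``stuffed'' variable becomes $y_0$ and the denominators become $y_i-y_0=x_i$. The shuffle input enters only through the reversal symmetry $f^{(d)}(y_0,\ldots,y_d)=f^{(d)}(y_d,\ldots,y_0)$, used to manipulate the rotated equations and to effect the massive cancellation in the $A$-terms down to the four surviving depth-$(d-1)$ terms. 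Your fallback plan of working Hopf-algebraically with $f\circb\alpha+\alpha\studot\widetilde f$ is exactly what the lemma preceding proposition~\ref{prop6term} already does; the residual content is precisely the six-term relation, and for that you still need the cyclic-rotation trick above rather than a straight subtraction of the two $(1,d-1)$ equations.
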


\begin{rem} The previous equation is an explicit expression of the form
$$ f^{(d)} + (-1)^d  \overline{\tau}(f)^{(d)} =  \hbox{ lower depth } ,$$
where $\overline{\tau}$ was defined in \S\ref{sectDihedralsymm}. On the other hand, the stuffle antipode gives an explicit, but rather different, expression with a different sign of the form
$$ f^{(d)} +  \overline{\tau}(f)^{(d)} = \hbox{ lower depth }. $$
Combining the two gives a canonical way to lift solutions to the double shuffle equations from depth $d-1$ to $d$ in the case
when $f$ is homogeneous of even weight, and $d$ is odd.  This is to be compared  with proposition \ref{propparity}.
\end{rem}

\subsubsection{Proof of the six-term relation $(\ref{sixtermequation})$}
Let $f\in \Or$ be a solution to the double shuffle equations modulo products, which is homogeneous of even weight. 
We shall work in the dihedrally symmetric coordinates $y_0,\ldots, y_d$.
Define
$$I(y_0,\ldots, y_d) = f^{(d)} (y_0,y_1,\ldots, y_d) - f^{(d)}(y_0, y_d ,\ldots, y_1)\ .$$
Since $f$ has even weight, we have $f^{(d)}(y_0,\ldots, y_d) = f^{(d)}(y_d,\ldots, y_0)$, and thus
\begin{equation}\label{frotasI}
f^{(d)}(y_1,\ldots, y_d, y_0)  = f^{(d)}(y_0,y_1,\ldots, y_d) - I(y_0,y_1, \ldots, y_d)\ .
\end{equation}
Let us write the $(1,d-1)^{\mathrm{th}}$ stuffle equation for $f$ in the form
\begin{equation} \label{1d-1asA} f^{(d)}(y_0, y_1 \sha y_2  \ldots y_d) =  A(y_0,y_1 \sha y_2\ldots y_d)\  ,
\end{equation}
where $A$ is a sum of terms involving $f^{(d-1)}$. Change  variables $y_i \mapsto y_{i+1}$ to give
$$  f^{(d)}(y_1, y_2 \sha y_3  \ldots y_d y_0) =  A(y_1,y_2 \sha y_3\ldots y_dy_0)\ . 
$$
Now apply $(\ref{frotasI})$ to each term in the previous equation to give
\begin{multline} \label{sixtermpf1}
  f^{(d)}(y_0, y_1, y_2 \sha y_3  \ldots y_d) + f^{(d)}(y_2,y_1,y_3,\ldots, y_d, y_0  ) \\
  -  I(  y_0, y_1, y_2 \sha y_3  \ldots y_d) -   I(y_2,y_1,y_3,\ldots, y_d, y_0  )  =   A(y_1,y_2 \sha y_3\ldots y_dy_0)\ . 
\end{multline}
On the other hand, interchange the variables $y_1$ and $y_2$ in $(\ref{1d-1asA})$ to give:
\begin{equation} \label{sixtermpf2}
 f^{(d)}(y_0, y_2 \sha y_1 y_3  \ldots y_d) =  A(y_0,y_2 \sha y_1 y_3 \ldots y_d)
 \end{equation}
The difference $(\ref{sixtermpf2})$ $ - $  $(\ref{sixtermpf1})$ is
\begin{multline}
  f^{(d)}(y_0,y_2,y_1,y_3,\ldots, y_d ) - f^{(d)}(y_2, y_1,y_3  \ldots y_d,y_0) \\
  + I(  y_0, y_1, y_2 \sha y_3  \ldots y_d) +  I(y_2,y_1,y_3,\ldots, y_d, y_0  )   \\
  =       A(y_0,y_2 \sha y_1 y_3 \ldots y_d)
- A(y_1,y_2 \sha y_3\ldots y_dy_0)\ . 
\end{multline}
Applying equation $(\ref{frotasI})$  to the first two terms gives the equation
\begin{multline} \label{IandAonlyequation}
  I(y_0,y_2,y_1,y_3,\ldots, y_d)   +        I(  y_0, y_1, y_2 \sha y_3  \ldots y_d) +  I(y_2,y_1,y_3,\ldots, y_d, y_0  )\\
  =       A(y_0,y_2 \sha y_1 y_3 \ldots y_d)
- A(y_1,y_2 \sha y_3\ldots y_dy_0)\ ,
\end{multline}
which involves only $I$'s and $A$'s. It follows from the definition of $I$ and $(\ref{1d-1asA})$ that
$$ I( y_0, y_1 \sha y_2\ldots y_d) = A(y_0,y_1 \sha y_2\ldots y_d) - A(y_0,y_1 \sha y_d\ldots y_2)\ . $$
Replacing the middle term in $(\ref{IandAonlyequation})$ in this way, we obtain
\begin{equation}  
  I(y_2,y_1,y_3,\ldots, y_d, y_0  )
  =     A(y_0,y_2 \sha y_d \ldots  y_3 y_1) - A(y_1,y_2 \sha y_3\ldots y_dy_0)   
     \end{equation}
By changing variables, this gives the equation:
\begin{equation} \label{IasAfinal}
  I(y_0,y_1,y_2,\ldots, y_d  )
  =     A(y_d,y_0 \sha  y_{d-1} \ldots  y_2 y_1) - A(y_1,y_0 \sha y_2\ldots y_{d-1}y_d)   \ .
     \end{equation}
Now, by the definition of the stuffle product, we verify that
\begin{multline}\nonumber
A(y_0, y_1 \sha y_2\ldots y_d) =  { 1 \over y_2 -y_1} \Big(   f^{(d-1)}(y_0, y_1,y_3,   \ldots, y_d)-f^{(d-1)}(y_0,y_2,\ldots, y_d)\Big) \\
+\sum_{i=3}^d { 1 \over y_i -y_1} \Big( f^{(d-1)}(y_0,y_2,\ldots, y_d) - f^{(d-1)}(y_0, y_2, \ldots, y_{i-1}, y_1, y_{i+1}, \ldots, y_d)\Big) \ .
\end{multline}
Since $f$ is of even weight,  $f^{(d-1)}(z_1,\ldots, z_d) = f^{(d-1)}(z_d,\ldots, z_1)$, for all $z_i$. Therefore most  terms in the previous
equation will cancel when we take the difference
\begin{multline}\nonumber
A(y_0, y_1 \sha y_2\ldots y_d) -A(y_d, y_1 \sha y_{d-1}\ldots y_2y_0)    \\
 =  { 1 \over y_d-y_1} \Big( f^{(d-1)}(y_0, y_2,y_3,   \ldots, y_{d-1},y_1)-f^{(d-1)}(y_0,y_2,\ldots, y_d) \Big) \\
- { 1 \over y_{0} -y_1} \Big( f^{(d-1)}(y_d, y_{d-1},   \ldots, y_2,y_1)- f^{(d-1)}(y_d,y_{d-1},\ldots, y_2,y_0) \Big) \ .
\end{multline}
Substituting into $(\ref{IasAfinal})$ after an appropriate change of variables gives
\begin{multline} \nonumber
 I(y_0,y_1,y_2,\ldots, y_d  )
  = { 1 \over y_{1} -y_0} \Big(  f^{(d-1)}(y_d, y_{d-1}, \ldots, y_2,y_0  ) - f^{(d-1)}(y_d,y_{d-1},\ldots, y_1)\Big) \\
- { 1 \over y_{d} -y_0} \Big( f^{(d-1)}( y_1,   \ldots, y_{d-2},y_{d-1},y_0) -f^{(d-1)}(y_1,y_{2},\ldots, y_d) \Big) \ .
\end{multline} 
Reversing all arguments of $f^{(d-1)}$ in the right-hand side, and setting $y_0=0$, gives 
\begin{multline} \nonumber
 \overline{I}(x_1,x_2,\ldots, x_d  )
  = { 1 \over x_{1} } \Big( \overline{f}^{(d-1)}( x_2, \ldots, x_d )- \overline{f}^{(d-1)}(x_{2}-x_1,\ldots, x_d-x_1) \Big) \\
- { 1 \over x_{d}} \Big( \overline{f}^{(d-1)}( x_{d-1},   \ldots, x_1)- \overline{f}^{(d-1)}(x_{d-1}-x_d,\ldots, x_1-x_d)  \Big) \ , 
\end{multline} 
which is precisely equation $(\ref{sixtermequation})$, as required.

\subsection{Full residue structure for $\Lo$}
\begin{lem} Suppose that $f\in \Or$ satisfies the double shuffle equations modulo products and is homogeneous of even weight. If it satisfies the residue  equation $(\ref{GeneralResstructure})$, then more generally, for any 
indices $2 \leq i-j <d$, 
 we have
\begin{eqnarray} \label{multiresgeneral}
\Res_{x_i=x_j} f^{(d)}(x_1,\ldots, x_d)  &=  &  {1 \over (x_{j+1}-x_j) \ldots (x_{i-1}-x_j) } \times \nonumber \\
& & \Res_{x_{j+1}=x_j} f^{(d-i+j+1)}(x_1,\ldots, x_j, x_{j+1}, x_{i+1},\ldots, x_d) 
\end{eqnarray} 
 This expression in the case $j=0$ reduces to  $(\ref{GeneralResstructure})$ on setting $x_0=0$ as usual.
\end{lem}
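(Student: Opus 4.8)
The plan is to deduce the general identity $(\ref{multiresgeneral})$ from its special case $j=0$, which is precisely the hypothesis $(\ref{GeneralResstructure})$, by sliding the divisor $x_i=x_j$ towards $x_i=x_0=0$ and controlling the correction terms that appear along the way by means of the double shuffle equations. The first step is to pass, via Lemma \ref{lemtransinv}, to the unreduced translation invariant series $\Phi^{(d)}(y_0,\dots,y_d)$, so that $\Res_{x_i=x_j}f^{(d)}=\Res_{y_i=y_j}\Phi^{(d)}$ and, with $x_k=y_k-y_0$, the hypothesis reads $\Res_{y_i=y_0}\Phi^{(d)}=\alpha^{(i-1)}\studot\Res_{y_1=y_0}\Phi^{(d-i+1)}$ with $\alpha^{(i-1)}=\big((y_1-y_0)\cdots(y_{i-1}-y_0)\big)^{-1}$. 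In these coordinates the index $0$ plays no privileged role other than labelling the chosen base point, which is what makes a reduction of the shape $y_i=y_j\rightsquigarrow y_i=y_0$ plausible.

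I would then argue by induction on the depth $d$ (the base case $d=2$ forces $i=2,j=0$ and is the hypothesis), using the statement in depths $<d$ to dispose of all lower-depth contributions. Fix $d$ and $i,j$ with $2\le i-j<d$. Applying $\Res_{y_i=y_j}$ to the stuffle equations $\Phi(\x_1\ldots\x_p\stu\x_{p+1}\ldots\x_{p+q})=0$ of Corollary \ref{corstuffleequations}, taken for a suitable family of splittings, together with the six-term relations $(\ref{sixtermequation})$, yields linear relations among the residues $\Res_{y_a=y_b}\Phi^{(d)}$ of $\Phi^{(d)}$ along its various poles — these come from the depth-$d$ interleaving terms — plus explicit terms of depth $<d$ coming from the $\y_{a+b}$-merges in the stuffle products, whose residues along $y_i=y_j$ are already known by the inductive hypothesis and by $(\ref{GeneralResstructure})$. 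The residues along the divisors $y_a=y_0$ are pinned down directly by $(\ref{GeneralResstructure})$, and solving the resulting system for $\Res_{y_i=y_j}\Phi^{(d)}$ should produce the right-hand side of $(\ref{multiresgeneral})$. In particular, the chain of denominators $(y_{j+1}-y_j)\cdots(y_{i-1}-y_j)$ is exactly what iterated $\y_{a+b}$-merges generate, in the same spirit as the chains appearing in formula $(\ref{Resofsd})$ and in the residue computations in the proof of Proposition \ref{propResstructure}; the hypothesis that $f$ is homogeneous of even weight enters only through the six-term relation, supplying the necessary sign cancellations exactly as in the proof of Theorem \ref{thmxicircalpha}.

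The main obstacle is this last piece of bookkeeping: isolating, among the many terms of the expanded stuffle and shuffle equations, exactly those carrying a pole along $y_i=y_j$, and checking that, after substituting $(\ref{GeneralResstructure})$, the inductive hypothesis in lower depth, and $(\ref{sixtermequation})$, the total telescopes to $\big((y_{j+1}-y_j)\cdots(y_{i-1}-y_j)\big)^{-1}\cdot\Res_{y_{j+1}=y_j}\Phi^{(d-i+j+1)}$, placed at the correct variables, with no spurious terms. This is elementary but intricate. For the applications in this paper a less general route suffices, since $(\ref{multiresgeneral})$ is only ever combined with Theorem \ref{thmGenResstructure} and hence only ever used for $f\in\Lo$: one can instead verify $(\ref{multiresgeneral})$ directly for the generators $\psi_{2n+1}$ and $\psi_{-1}$ from their explicit formulae — for $\psi_{-1}$ this is a one-line computation on the monomials $x_v^{-1}$ attached to vines, exactly as in Proposition \ref{propResstructure} — and then show, along the lines of Corollary \ref{corbrackethasgoodres}, that the class of solutions to the double shuffle equations modulo products which are homogeneous of even weight and satisfy both $(\ref{multiresgeneral})$ and the identity of Theorem \ref{thmxicircalpha} is stable under the Ihara bracket $\{\,,\,\}$.
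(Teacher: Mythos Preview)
Your proposal identifies the right ingredients --- the shuffle antipode symmetry and the six-term relation $(\ref{sixtermequation})$ --- but organizes the induction in a way that leaves the main step as an unresolved ``intricate bookkeeping'' problem. The paper's proof avoids this by inducting not on the depth $d$ but on the left index $j$, using the two symmetries as explicit involutions on the set of residue statements rather than as sources of correction terms in a linear system.

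Concretely, the paper introduces an auxiliary function
\[
A^{(d)}(x_1,\dots,x_d)=f^{(d)}(x_1,\dots,x_d)-\tfrac{1}{x_1}f^{(d-1)}(x_2,\dots,x_d)+\tfrac{1}{x_1}f^{(d-1)}(x_2-x_1,\dots,x_d-x_1),
\]
so that the six-term relation becomes simply $A^{(d)}(x_1,\dots,x_d)=-A^{(d)}(x_d,\dots,x_1)$. Writing $P(g^{(d)},j,i)$ for the assertion that $(\ref{multiresgeneral})$ holds for $g$, the shuffle antipode gives $P(f^{(d)},j,i)\Leftrightarrow P(f^{(d)},d-i,d-j)$, the reversal symmetry of $A$ gives $P(A^{(d)},j,i)\Leftrightarrow P(A^{(d)},d-i+1,d-j+1)$ for $j>0$, and the definition of $A$ gives $P(f^{(d+1)},j+1,i+1)\Leftrightarrow P(A^{(d+1)},j+1,i+1)$ once $P(f^{(d)},j,i)$ is known. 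Chaining these three equivalences bumps the bound on $j$ from $k$ to $k+1$, starting from the hypothesis $P(f^{(d)},0,\ast)$. No linear system is ever solved; each step is a direct translation under a known symmetry.

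Your alternative route --- verifying $(\ref{multiresgeneral})$ on the generators of $\Lo$ and proving stability under $\{\,,\,\}$ --- would prove a weaker statement than the lemma as written (which is for arbitrary $f\in\Or$ satisfying the hypotheses, not just for $\Lo$), and the stability step would in any case require essentially the same residue analysis you are trying to avoid.
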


\begin{proof} The proof is by induction on $j$.  First of all, for all $d\geq 1$, define 
$$A^{(d)}(x_1,\ldots, x_d) = f^{(d)}(x_1,\ldots, x_d) - { 1\over x_1} f^{(d-1)}(x_2,\ldots, x_d) + { 1 \over x_1} f^{(d-1)}(x_2-x_1,\ldots, x_d-x_1)$$
We have the following  two  symmetries:
\begin{eqnarray}
f^{(d)}(x_1,\ldots, x_d)  & =  & (-1)^{d+1} f^{(d)}(x_d-x_{d-1}, \ldots, x_d-x_1, x_d) \label{fsymRespf} \\
A^{(d)}(x_1,\ldots, x_d)  & =  & - A^{(d)}(x_d,\ldots, x_1) \ .\label{AsymRespf}
\end{eqnarray} 
The first equation is the usual antipodal symmetry for the shuffle equations, the second is simply a restatement of the six-term relation (proposition \ref{prop6term}).
Now write $P(g^{(d)}, j, i)$ to denote the statement that $(\ref{multiresgeneral})$ holds for a function $g$ satisfying the conditions of the lemma.  Then $(\ref{fsymRespf})$  and $(\ref{AsymRespf})$ yield 
$$P(f^{(d)}, j, i )   \quad  \Longleftrightarrow \quad P(f^{(d)}, d-i,  d-j) $$ 
and, for all indices $2 \leq i -j <d$ with the extra condition $j>0$: 
$$P(A^{(d)}, j, i )   \quad  \Longleftrightarrow \quad P(A^{(d)}, d-i+1,  d-j+1)\ . $$ 
 Finally, using the definition of $A$, one checks  that  if
 $P(f^{(d)}, i, j)$ holds, then 
 $$  P(f^{(d+1)}, i+1, j+1) \qquad  \Longleftrightarrow \qquad   P(A^{(d+1)}, i+1, j+1)$$
 The proof then proceeds by induction using the three previous equivalences.
  We have $P(f^{(d)}, 0, *) $ for all $d$ by assumption  $(\ref{GeneralResstructure})$, where $*$ denotes any choice of index satisfying the conditions of the lemma. Then for all $d$, 
  $$P(f^{(d)}, \leq k , *) \Rightarrow P(f^{(d)}, * , \geq d-k )   \Rightarrow   P(A^{(d)}, * , \geq d-k )  \Rightarrow   P(A^{(d)}, \leq k+1 , *  )  $$
This  implies  $  P(f^{(d)}, \leq k+1 , *  ) $, which completes the induction step. 
 \end{proof}

\bibliographystyle{plain}
\bibliography{main}

\end{document}